\newtheorem{thm}{Theorem}[section]
\newtheorem*{thm*}{Theorem}
\newtheorem{lem}[thm]{Lemma}
\newtheorem{cor}[thm]{Corollary}
\newtheorem{prop}[thm]{Proposition}
\theoremstyle{definition}
\newtheorem{defin}{Definition}[section]
\newtheorem*{exp*}{Example}
\newtheorem{rem}{Remark}[section]
\newtheorem*{rem*}{Remark}
\newtheorem{cond}{Condition}[section]
\def\C{{\mathbb C}}
\def\N{{\mathbb N}}
\def\R{{\mathbb R}}
\def\Z{{\mathbb Z}}
\def\E{{\mathbb E}}
\def\V{{\mathbb V}}
\def\J{{\mathbb J}}
\def\G{{\mathcal G}}
\def\T{{\mathcal T}}
\def\D{{\mathcal D}}
\def\A{{\mathcal A}}
\newcommand{\cO}{{\mathcal O}}
\newcommand{\cC}{{\mathcal C}}
\newcommand{\cE}{{\mathcal E}}
\newcommand{\cL}{{\mathcal L}}
\newcommand{\cV}{{\mathcal V}}
\newcommand{\cW}{{\mathcal W}}
\def\g{{\mathfrak g}}
\def\p{{\mathfrak p}}
\def\q{{\mathfrak q}}
\def\m{{\mathfrak m}}
\newcommand{\gl}{{\mathfrak{gl}}}
\newcommand{\sgl}{{\mathfrak{sl}}}
\DeclareSymbolFont{script}{U}{eus}{m}{n}
\DeclareMathSymbol{\Wedge}{0}{script}{"5E}
\newcommand{\RP}{{\mathbb{RP}}}
\newcommand{\CP}{{\mathbb{CP}}}
\newcommand{\wt}{\widetilde}
\newcommand{\spann}{\operatorname{span}}
\newcommand{\coker}{\operatorname{coker}}
\newcommand{\Id}{\operatorname{Id}}
\newcommand{\Rho}{{\mathsf P}}
\newcommand{\mapsfrom}
{\mathrel{\DOTSB\leftarrow\hspace{-4pt}\rule[1.1pt]{.4pt}{3.6pt}}}
\newcommand{\mapsisoto}{
   \makebox[0pt][l]{\raisebox{5pt}{\hspace*{3.8pt}$\simeq$}}\longrightarrow}
\newcommand{\Lam}{{\mathit\Lambda}}
\newcommand{\vol}{{\mathrm{vol}}}
\newcommand{\scale}{\tau}
\newcommand{\Kf}{\Omega}
\newcommand{\Bt}{U}
\newcommand{\mob}{\m}
\newcommand{\Ns}{{\mathcal N}}
\newcommand{\ts}{\eta}
\newcommand{\ms}{\eta}
\newcommand{\sms}{\tilde\ms}
\newcommand{\hv}{{\mathit\Lambda}}
\newcommand{\hp}{\lambda}
\newcommand{\tor}{\tau}
\newcommand{\Ric}{\mathrm{Ric}}
\newcommand{\Scal}{\mathrm{Scal}}
\newcommand{\cy}{\chi}
\newcommand{\wc}{\psi}
\newcommand{\adend}{\phi}
\newcommand{\epst}{\varepsilon}
\def\bps/{\textup(pseudo\mbox{-}\textup)\nobreak\hspace{0pt}}
\newcommand{\weight}[2]{\def\topwt{#1}\def\botwt{#2}}
\newcommand{\pweight}[9][\times]{\begin{picture}(70,36)(0,-7)
\put(5,3){\makebox(0,0){$#1$}}
\put(20,3){\makebox(0,0){$\bullet$}}\put(35,3){\makebox(0,0){$\bullet$}}
\put(50,3){\makebox(0,0){$\bullet$}}\put(65,3){\makebox(0,0){$\bullet$}}
\put(5,3){\line(1,0){60}}
\put(5,10.2){\makebox(0,0){\tiny$\updownarrow$}}
\put(20,10.2){\makebox(0,0){\tiny$\updownarrow$}}
\put(35,10.2){\makebox(0,0){\tiny$\updownarrow$}}
\put(50,10.2){\makebox(0,0){\tiny$\updownarrow$}}
\put(65,10.2){\makebox(0,0){\tiny$\updownarrow$}}
\put(5,17){\makebox(0,0){$#1$}}
\put(20,17){\makebox(0,0){$\bullet$}}\put(35,17){\makebox(0,0){$\bullet$}}
\put(50,17){\makebox(0,0){$\bullet$}}\put(65,17){\makebox(0,0){$\bullet$}}
\put(5,17){\line(1,0){60}}
\put(5,24){\makebox(0,0){$\scriptstyle\vphantom{pd}\topwt$}}
\put(20,24){\makebox(0,0){$\scriptstyle\vphantom{pd}#2$}}
\put(35,24){\makebox(0,0){$\scriptstyle\vphantom{pd}#3$}}
\put(50,24){\makebox(0,0){$\scriptstyle\vphantom{pd}#4$}}
\put(65,24){\makebox(0,0){$\scriptstyle\vphantom{pd}#5$}}
\put(5,-4){\makebox(0,0){$\scriptstyle\vphantom{pd}\botwt$}}
\put(20,-4){\makebox(0,0){$\scriptstyle\vphantom{pd}#6$}}
\put(35,-4){\makebox(0,0){$\scriptstyle\vphantom{pd}#7$}}
\put(50,-4){\makebox(0,0){$\scriptstyle\vphantom{pd}#8$}}
\put(65,-4){\makebox(0,0){$\scriptstyle\vphantom{pd}#9$}}
\end{picture}}
\begin{document}

\title{C-projective geometry} 
\author[D.M.J. Calderbank]{David M.J. Calderbank}
\address{Department of Mathematical Sciences\\ University of Bath\\
Bath BA2 7AY\\ UK}
\email{D.M.J.Calderbank@bath.ac.uk}
\author[M.G. Eastwood]{Michael G. Eastwood}
\address{School of Mathematical Sciences\\ University of Adelaide\\
SA 5005 Australia}
\email{meastwoo@member.ams.org}
\author[V.S. Matveev]{Vladimir S. Matveev}
\address{Institute of Mathematics\\ FSU Jena\\ 07737 Jena, Germany}
\email{vladimir.matveev@uni-jena.de}
\author[K. Neusser]{Katharina Neusser}
\address{Mathematical Institute\\ Charles University\\ Sokolovsk\'a 83\\
Praha, Czech Republic}
\email{kath.neusser@gmail.com}

\renewcommand{\thefootnote}{}
\footnotetext{This article was initiated when its authors participated in a
workshop at the Kioloa campus of the Australian National University in March
2013\@. We would like to thank the The Edith and Joy London Foundation for
providing the excellent facilities at Kioloa. We would also like to thank the
Group of Eight, Deutscher Akademischer Austausch Dienst, Australia-Germany
Joint Research Cooperation Scheme for financially supporting the workshop in
2013 and a subsequent Kioloa workshop in 2014; for the latter, we thank, in
addition, FSU Jena and the Deutsche Forschungsgemeinschaft (GK 1523/2)
for their financial support. The fourth author was also supported during part
of this  project by the Eduard \v Cech Institute. We would also like to thank
the latter for supporting a meeting of the first, second and fourth author 
at the Mathematical Institute at Charles University in July 2014.}

\begin{abstract}
We develop in detail the theory of c-projective geometry, a natural analogue of
projective differential geometry adapted to complex manifolds. We realise it as
a type of parabolic geometry and describe the associated Cartan or tractor
connection.  A K\"ahler manifold gives rise to a c-projective structure and
this is one of the primary motivations for its study. The existence of two or
more K\"ahler metrics underlying a given c-projective structure has many
ramifications, which we explore in depth. As a consequence of this analysis, we
prove the Yano--Obata Conjecture for complete K\"ahler manifolds: if such a
manifold admits a one parameter group of c-projective transformations that are
not affine, then it is complex projective space, equipped with a multiple of
the Fubini--Study metric.
\end{abstract}

\maketitle

\section*{Introduction}

C-projective geometry is a natural analogue of real projective differential
geometry for complex manifolds.  Like projective geometry, it has many facets,
which have been discovered and explored independently and repeatedly over the
past sixty years. Our aim in this work is to develop in detail a unified theory
of c-projective geometry, which highlights its relation with real projective
geometry as well as its differences.

\smallbreak
Projective geometry is a classical subject concerned with the behaviour of
straight lines, or more generally, (unparametrised) geodesic curves of a metric
or affine connection. It has been known for some time~\cite{Lagrange,T} that
two non-proportional metrics can have the same geodesic curves: central
projection takes great circles on the $n$-sphere, namely the geodesics for the
round metric, to straight lines in Euclidean $n$-space, namely geodesics for
the flat metric. The quotient of the round $n$-sphere under the antipodal
identification may be identified with the \emph{flat model} for $n$-dimensional
projective geometry: the real projective $n$-space $\RP^n$, viewed as
a homogeneous space under the group $\mathrm{PSL}(n+1,\R)$ of projective
transformations, which preserve the family of (linearly embedded) projective
lines $\RP^1\hookrightarrow \RP^n$. More generally, a
\emph{projective structure} on a smooth $n$-manifold (for $n\geq 2$) is an
equivalence class of torsion-free affine connections having the same geodesic
curves. In this setting, it is a nontrivial and interesting question whether
these curves are the geodesic curves of a \bps/Riemannian metric, i.e.~whether
any connection in the projective equivalence class preserves a nondegenerate
metric, possibly of indefinite signature. Such projective structures are called
\emph{metrisable} and the corresponding metrics \emph{compatible}. Rather
surprisingly, the partial differential equations controlling the metrisability
of a given projective structure can be set up as a \emph{linear}
system~\cite{benenti,Dini,Liouville,Sinjukov}. More precisely, there is a
projectively invariant linear differential operator acting on symmetric
contravariant $2$-tensors such that the nondegenerate elements of its kernel
correspond to compatible metrics.

In modern language, a projective structure determines a canonical \emph{Cartan
  connection}~\cite{Cartan} modelled on $\RP^n$, and hence
projective geometry is a \emph{parabolic
  geometry}~\cite{csbook,Eprojectivenotes}. In these terms, the metrisability
operator is a first \emph{BGG \textup(Bernstein--Gelfand--Gelfand\textup)
  operator}, which is a differential operator of finite type~\cite{EM}. Its
solutions correspond to parallel sections of a bundle with connection, which
is, up to curvature corrections, a linear representation of the Cartan
connection. The kernel is thus finite-dimensional; it is zero for generic
projective structures, with the maximal dimension attained on the flat model
$\RP^n$. The parabolic viewpoint on projective geometry has proven
to be very useful, for example in understanding projective compactifications
of Einstein metrics~\cite{CG2,CGH2}, the geometry of holonomy reductions of
projective structures~\cite{Arm}, and (solving problems posed by Sophus Lie in
1882) projective vector fields on surfaces~\cite{bryant,Matveev}.

Projective geometry has been linked to the theory of finite dimensional
integrable systems with great success: the equation for symmetric Killing
tensors is projectively invariant~\cite{Eprojectivenotes}, and (consequently)
the existence of two projectively equivalent metrics on a manifold implies the
existence of nontrivial integrals for the geodesic flows of both metrics. This
method has been effectively employed when the manifold is closed or complete
(see e.g.~\cite{hyperbolic,Matveev2007}). Moreover, the integrability of many
classically studied integrable geodesic flows (e.g., on ellipsoids) is closely
related to the existence of a projectively compatible metric, and many
geometric structures that lead to such integrable geodesic flows have been
directly related to the existence of a projectively compatible metric, see
e.g.~\cite{ben,benenti}.

\smallbreak
C-projective geometry arises when one retells this story, mutatis mutandis, for
complex or, indeed, almost complex manifolds, i.e.~smooth manifolds equipped
with an almost complex structure $J$, which is a smooth endomorphism of the
tangent bundle such that $J^2=-\Id$.  On such a manifold $M$, the
relevant \bps/Riemannian metrics are \emph{Hermitian} with respect to $J$,
i.e.~$J$-invariant, and the relevant affine connections are those which
preserve $J$, called \emph{complex connections}.  Such connections cannot be
torsion-free unless the almost complex structure is \emph{integrable}, i.e.~its
Nijenhuis tensor vanishes identically~\cite{NN}. This holds in particular if
the Levi-Civita connection of a Hermitian metric $g$ preserves $J$, in which
case $g$ is called a \emph{\bps/K\"ahler} metric.

In 1947, Bochner~\cite[Theorem 2]{Bochner} observed that any two metrics that
are K\"ahler with respect to the same complex structure cannot be projectively
equivalent (i.e.~have the same geodesic curves) unless they are affinely
equivalent (i.e.~have the same Levi-Civita connection). This led Otsuki and
Tashiro~\cite{OtsTash} to introduce a broader class of curves, which they
called ``holomorphically flat'', and which depend on both the connection and
the (almost) complex structure. We refer to these curves as $J$-\emph{planar}:
whereas a geodesic curve for an affine connection $\nabla$ is a curve $c$ whose
acceleration $\nabla_{\dot{c}}\dot{c}$ is proportional to its
velocity~$\dot{c}$, a $J$-planar curve is one whose acceleration is in the
linear span of $\dot{c}$ and~$J\dot{c}$.  On a Riemann surface, therefore, all
curves are $J$-planar. The other common manifold where it is possible to see
all $J$-planar curves without computation is complex projective space with its
Fubini--Study connection. The point here is that the linearly embedded complex
lines $\CP^1\hookrightarrow\CP^n$ are totally geodesic.
Therefore, the $J$-planar curves on $\CP^n$ are precisely the smooth
curves lying within such complex lines. Viewed in a standard affine coordinate
patch $\C^n\hookrightarrow\CP^n$, the $J$-planar curves are again the
smooth curves lying inside an arbitrary complex line $\{az+b\}\subset\C^n$ but
otherwise unconstrained. Evidently, these are the intrinsic $J$-planar curves
for the flat connection on~$\C^n$.

The $J$-planar curves provide a nontrivial notion of projective equivalence in
complex differential geometry, due to Otsuki and Tashiro~\cite{OtsTash} in the
K\"ahler setting, and Tashiro~\cite{Tashiro} for almost complex manifolds in
general. Two complex connections on an almost complex manifold $(M,J)$ are
\emph{c-projectively equivalent} if they have the same torsion and the same
$J$-planar curves. An \emph{almost c-projective manifold} is a complex manifold
$(M,J)$ equipped with a c-projective equivalence class of such connections. If
$J$ is integrable, we follow the usual convention and drop the word ``almost''
to arrive at the notion of a \emph{c-projective manifold}. We caution the
reader that Otsuki and Tashiro~\cite{OtsTash}, and many later researchers,
refer to ``holomorphically projective correspondences'', rather than
c-projective equivalences, and many authors use the terminology
``h-projective'' or ``holomorphic(ally) projective'' instead of
``c-projective''. We avoid their terminology because the connections in a
c-projective class are typically not holomorphic, even if the complex structure
is integrable; similarly, we avoid the term ``complex projective structure'',
which is often used for the holomorphic analogue of a real projective
structure, or related concepts.

During the decades following Otsuki and Tashiro's 1954 paper, c-projective
structures provided a prominent research direction in Japanese and Soviet
differential geometry. Many of the researchers involved had some background in
projective geometry, and the dominant line of investigation sought to
generalise methods and results from projective geometry to the c-projective
setting. This was a very productive direction, with more than 300 publications
appearing in the relatively short period from 1960 to 1990. One can compare,
for example, the surveys by Mike\v{s}~\cite{mikes1,mikes}, or the papers of
Hiramatu~\cite{hir,hir1}, to see how successfully c-projective analogues of
results in projective geometry were found. In particular, the linear system
for c-projectively equivalent K\"ahler metrics was obtained by Domashev and
Mike\v{s}~\cite{DM}, and its finite type prolongation to a connection was given
by Mike\v{s}~\cite{Mikes}.

Relatively recently, the linear system for c-projectively equivalent K\"ahler
metrics has been rediscovered, under different names and with different
motivations. On a fixed complex manifold, a compatible \bps/K\"ahler metric is
determined uniquely by its K\"ahler form (a compatible symplectic form), and
under this correspondence, c-projectively equivalent K\"ahler metrics are
essentially the same as \emph{Hamiltonian $2$-forms} defined and investigated
in Apostolov et al.~\cite{ACG,ApostolovII,ApostolovIII,ApostolovIV}: the
defining equation \cite[(12)]{ACG} for a Hamiltonian $2$-form is actually
algebraically equivalent to the metrisability equation \eqref{metriequ1}. In
dimension $\ge 6$, c-projectively equivalent metrics are also essentially the
same as conformal Killing (or twistor) $(1,1)$-forms studied in
\cite{Moroianu,Semmelmann0,Semmelmann1}, see \cite[Appendix~A]{ACG} or
\cite[\S{1.3}]{MatRos} for details.

The work of~\cite{ACG,ApostolovII} provides, \emph{a postiori}, local and
global classification results for c-projectively equivalent K\"ahler metrics,
although the authors were unaware of this interpretation, nor the pre-existing
literature. Instead, as explained in~\cite{ACG,ApostolovII} and~\cite{CMR}, the
notion and study of Hamiltonian $2$-forms was motivated by their natural
appearance in many interesting problems in K\"ahler geometry, and the unifying
role they play in the construction of explicit K\"ahler metrics on projective
bundles. In subsequent papers, e.g.~\cite{ApostolovIII,ApostolovIV},
Hamiltonian $2$-form methods were used to construct many new examples of
K\"ahler manifolds and orbifolds with interesting properties.

Another independent line of research closely related to c-projectively
equivalent metrics (and perhaps underpinning the utility of Hamiltonian
$2$-forms) appeared within the theory of finitely dimensional integrable
systems.  C-projectively equivalent metrics are closely related (see
e.g.~\cite{Kiyohara2010}) to the so-called K\"ahler--Liouville integrable
systems of type $A$ introduced and studied by Kiyohara in \cite{Kiyo1997}. In
fact, Topalov~\cite{Top} (see also \cite{Kiyo}) shows that generic
c-projectively equivalent K\"ahler metrics have integrable geodesic flows,
cf.~\cite{TM} for the analogous result in the projective case. On the one hand,
integrability provides, as in projective geometry, a number of new methods that
can be used in c-projective geometry. On the other hand, examples from
c-projective geometry turn out to be interesting for the theory of integrable
systems, since there are only a few known examples of K\"ahler metrics with
integrable geodesic flows.
 
Despite the many analogies between results in projective and c-projective
geometry, there seem to be very few attempts in the literature to explain why
these two subjects are so similar. In 1978, it was noted by Yoshimatsu~\cite{Y}
that c-projective manifolds admit canonical Cartan connections, and this was
generalised to almost c-projective manifolds by Hrdina~\cite{Hrdina} in 2009.
Thus c-projective geometry, like projective geometry, is a parabolic geometry;
its flat model is $\CP^n$, viewed as a homogeneous space under the
group ${\mathrm{PSL}}(n+1,\C)$ of projective transformations, which preserve
the $J$-planar curves described above. Despite this, c-projective structures
have received very little attention in the parabolic geometry literature: apart
from the work of Hrdina, and some work in dimension $4$~\cite{CorrCap,Mettler},
they have only been studied in~\cite{Arm}, where they appear as holonomy
reductions of projective geometries. A possible explanation for this oversight
is that ${\mathrm{PSL}}(n+1,\C)$ appears in c-projective geometry as a real Lie
group and, as such, its complexification is semisimple, but not simple. This is
related to the subtle point that most interesting c-projective structures are
not holomorphic.

\smallbreak
The development of c-projective geometry, as described above, has been rather
nonlinear until relatively recently, when a number of independent threads have
converged on a coherent set of ideas. However, a thorough description of almost
c-projective manifolds in the framework of parabolic geometries is lacking in
the literature. We therefore believe it is timely to lay down the fundamentals
of such a theory.

The article is organised as follows. In Section~\ref{almostcomplexmanifolds},
we survey the background on almost complex manifolds and complex connections.
As we review in Section~\ref{complexconnections}, the torsion of any complex
connection on an almost complex manifold, of real dimension $2n\geq 4$,
naturally decomposes into five irreducible pieces, one of which is invariantly
defined and can be identified as the Nijenhuis tensor. All other pieces can be
eliminated by a suitable choice of complex connection, which we call
\emph{minimal}. In first four sections of the article we carry along the
Nijenhuis tensor in almost all calculations and discussions.

Section~\ref{sec:elements} begins with the classical viewpoint on almost
c-projective structures, based on $J$-planar curves and equivalence classes of
minimal complex connections~\cite{OtsTash}. We then recall the notion of
parabolic geometries and establish, in Theorem~\ref{EquivCat}, an equivalence
of categories between almost c-projective manifolds and parabolic geometries
with a normal Cartan connection, modelled on $\CP^n$.

As a consequence of this parabolic viewpoint, we can associate a fundamental
local invariant to an almost c-projective manifold, namely the curvature
$\kappa$ of its normal Cartan connection; furthermore, $\kappa\equiv0$ if and
only if the almost c-projective manifold is locally isomorphic to
$\CP^n$ equipped with its standard c-projective structure. Since the
Cartan connection is normal (for this we need the complex connections to be
minimal), its curvature is a $2$-cycle for Lie algebra homology, and is
uniquely determined by its homology class, also known as the \emph{harmonic
curvature}. We construct and discuss this curvature in
section~\ref{sectionharmcurv}. For almost c-projective structures there are
three irreducible parts to the harmonic curvature. One of the pieces is the
Nijenhuis tensor, which is precisely the obstruction to the underlying almost
complex manifold actually being complex. One of the other two parts is
precisely the obstruction to there being a holomorphic connection in the
c-projective class. When it vanishes we end up with \emph{holomorphic
projective geometry}, i.e.~ordinary projective differential geometry but in the
holomorphic category. The remaining piece can then be identified with the
classical projective Weyl curvature (for $n\geq 3$) or Liouville curvature (for
$n=2$).

Another consequence of the parabolic perspective is that representation theory
is brought to the fore, both as the appropriate language for discussing natural
bundles on almost c-projective manifolds, and also as the correct tool for
understanding invariant differential operators on the flat model, and their
curved analogues. The various \emph{BGG complexes} on $\CP^n$ and
their curved analogues are systematically introduced and discussed in
Section~\ref{sec:tractorBGG}.

In particular, there is a BGG operator that controls the \emph{metrisability}
of a c-projective structure just as happens in the projective setting. A large
part of this article is devoted to the metrisability equation, which we
introduce in Section~\ref{sec:metrisability}, where we also obtain its
prolongation to a connection, not only for compatible \bps/K\"ahler metric, but
also in the non-integrable case of quasi-K\"ahler or (2,1)-symplectic
structures. For the remainder of the article, we suppose that the
Nijenhuis tensor vanishes, in other words that we are starting with a complex
manifold. In this case, a compatible metric is exactly a \bps/K\"ahler metric
(and a \emph{normal solution} of the metrisability equation corresponds to a
\bps/K\"ahler--Einstein metric). We shall also restrict our attention to
\emph{metric c-projective structures}, i.e.~to the metrisable case where the
c-projective structure arises from a \bps/K\"ahler metric. Borrowing
terminology from the projective case, we refer to the dimension of the solution
space of the metrisability equation as the \emph{\textup(degree of\textup)
mobility} of the metric c-projective structure (or of any compatible
\bps/K\"ahler metric). We are mainly interested in understanding when the
metric c-projective structure has mobility at least two, and the consequences
this has for the geometry and topology of the manifold.

In Section~\ref{sec:integrability}, we develop the consequences of mobility
for integrability, by showing that a pencil (two dimensional family) of
solutions to the metrisability equation generates a family of holomorphic
Killing vector fields and Hermitian symmetric Killing tensors, which together
provide commuting linear and quadratic integrals for the geodesic flow of any
metric in the pencil. In Section~\ref{metricstructures}, we study an
important, but somewhat mysterious, phenomenon in which tractor bundles for
metric c-projective geometries are naturally equipped with congenial
connections, which are neither induced by the normal Cartan connection nor
equal to the prolongation connection, but which have the property that their
covariant constant sections nevertheless correspond to solutions of the
corresponding first BGG operator.

We bring these tools together in Section~\ref{sec:global}, where we establish
the Yano--Obata Conjecture for complete K\"ahler manifolds, namely that the
identity component of the group of c-projective transformations of the
manifold consists of affine transformations unless the manifold is complex
projective space equipped with a multiple of the Fubini--Study metric. This
result is an analogue of the \emph{the Projective Lichnerowicz Conjecture}
obtained in~\cite{CMH,Matveev2007}, but the proof given there does not
generalise directly to the c-projective situation. Our proof also differs from
the proof for closed manifolds given in~\cite{MR}, and makes use of many
preliminary results obtained by the methods of parabolic geometry, which also
apply in the projective case.

Here, and throughout the article, we see that not only results from projective
geometry, but also methods and proofs, can be generalised to the c-projective
case, and we explain why and how. We hope that this article will set the scene
for what promises to be an interesting series of further developments in
c-projective geometry. In fact, several such developments already appeared
during our work on this article, which we discuss in Section~\ref{s:outlook}.

\tableofcontents

\section{Almost complex manifolds}\label{almostcomplexmanifolds}
Recall that an \emph{almost complex structure} on a smooth manifold $M$ is a
smooth endomorphism $J$ of the tangent bundle $TM$ of $M$ that satisfies
$J^2=-\Id$. Equivalently, an almost complex structure makes $TM$ into
a complex vector bundle in which multiplication by $i$ is decreed to be the
real endomorphism~$J$. In particular, the dimension of $M$ is necessarily even,
say~$2n$, and an almost complex structure is yet equivalently a reduction of
structure group to ${\mathrm{GL}}(n,\C)\subset{\mathrm{GL}}(2n,\R)$.

\subsection{Real and complex viewpoints}\label{barredandunbarred}

If $M$ is a complex manifold in the usual sense of being equipped with
holomorphic transition functions, then $TM$ is a complex vector bundle and
multiplication by $i$ defines a real endomorphism $TM\to TM$, which we write
as~$J$. This is enough to define the holomorphic structure on $M$: holomorphic
functions may be characterised amongst all smooth complex-valued functions
$f=u+iv$ as satisfying $Xu=(JX)v$ for all vector fields $X$ (the
\emph{Cauchy--Riemann equations}).

Thus, complex manifolds may be regarded as a subclass of almost complex
manifolds and the celebrated Newlander--Nirenberg Theorem tells us how to
recognise them:

\begin{thm}[Newlander--Nirenberg,~\cite{NN}] 
An almost complex manifold $(M,J)$ is a complex manifold if and only if the
tensor
\begin{equation}\label{nijenhuis}
N^J(X,Y):= [X,Y]-[JX,JY]+J([JX,Y]+[X,JY])
\end{equation}
vanishes for all vector fields $X$ and $Y$ on $M$, where $[\enskip,\enskip]$
denotes the Lie bracket of vector fields.
\end{thm}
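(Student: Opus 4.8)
The forward implication is routine: if $M$ is a genuine complex manifold, then in any holomorphic chart $(z^1,\dots,z^n)$ with $z^j=x^j+iy^j$ the endomorphism $J$ is the constant standard complex structure, sending $\partial_{x^j}\mapsto\partial_{y^j}$ and $\partial_{y^j}\mapsto-\partial_{x^j}$. Since $N^J$ is a tensor and the brackets of coordinate vector fields vanish, evaluating \eqref{nijenhuis} on the coordinate frame gives $N^J\equiv0$ there, hence everywhere. The content of the theorem is the converse, and the plan is as follows.

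First I would complexify, extending $J$ complex-linearly to $TM\otimes_\R\C=T^{1,0}M\oplus T^{0,1}M$, the decomposition into the $(+i)$- and $(-i)$-eigenbundles of $J$, which are complex conjugate to one another. A direct computation identifies the vanishing of $N^J$ with the \emph{involutivity} of $T^{0,1}M$ under the Lie bracket, $[\Gamma(T^{0,1}M),\Gamma(T^{0,1}M)]\subset\Gamma(T^{0,1}M)$. The goal is then to produce, near each point, $n$ smooth complex-valued functions $z^1,\dots,z^n$ whose differentials are $\C$-linearly independent and each annihilate $T^{0,1}M$ --- equivalently, $n$ independent solutions of the overdetermined $J$-Cauchy--Riemann system $\bar\partial_J z^j=0$. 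Between any two such charts the transition functions are themselves annihilated by $T^{0,1}M$, hence holomorphic, so these charts endow $M$ with the sought complex structure.

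The heart of the matter is the local solvability of $\bar\partial_J z^j=0$ with enough independent solutions, and the role of the hypothesis $N^J=0$ is precisely to guarantee formal integrability: involutivity of $T^{0,1}M$ is exactly the compatibility condition for the overdetermined system. In the real-analytic category this already suffices, since the complexified involutive distribution can be straightened directly by the holomorphic Frobenius theorem (or Cauchy--Kovalevskaya). In the $C^\infty$ category, however, formal integrability does not by itself deliver smooth solutions, and this is where the genuine analytic difficulty resides.

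The hard part, therefore, is the passage from the algebraic condition $N^J=0$ to actual smooth solutions of the nonlinear system. I would regard $J$ as a small perturbation of the standard complex structure on a ball in $\C^n$ and set up $\bar\partial_J$ as an elliptic operator modulo lower-order nonlinear terms, with the integrability condition ensuring that the nonlinearity is compatible with the linearised $\bar\partial$. One then solves by a rapidly converging iteration --- either the Hölder-space Newton scheme of Newlander and Nirenberg, or a modern $L^2$ and elliptic-regularity argument controlling $\bar\partial$ --- and bootstraps to full regularity. Completing this analytic step is the real obstacle; everything else is formal.
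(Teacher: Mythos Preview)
The paper does not give its own proof of this statement: the Newlander--Nirenberg theorem is simply quoted from~\cite{NN} as a classical result and used as input, so there is no in-paper argument to compare your proposal against. Your outline is an accurate sketch of the standard proof (forward direction trivial; converse via involutivity of $T^{0,1}M$ and local solvability of $\bar\partial_J$, with the genuine difficulty in the $C^\infty$ case), and you are right that the analytic step is the whole theorem --- but for the purposes of this paper nothing more than the citation is required.
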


Note that $N^J\colon TM\times TM\to TM$ is a $2$-form with values in $TM$,
which satisfies $N^J(JX,Y)=-JN^J(X,Y)$. It is called the \emph{Nijenhuis
  tensor} of~$J$. When $N^J$ vanishes we say that the almost complex structure
$J$ is \emph{integrable}. This viewpoint on complex manifolds, as
even-dimensional smooth manifolds equipped with integrable almost complex
structures, turns out to be very useful especially from the differential
geometric viewpoint.

It is useful to complexify the tangent bundle of $M$ and decompose the result
into eigenbundles under the action of~$J$. Specifically,
\begin{equation}\label{types}
\C TM=T^{1,0}M\oplus T^{0,1}M
=\{X\mbox{ s.t.\ }JX=iX\}\oplus\{X\mbox{ s.t.\ }JX=-iX\}.
\end{equation}
Notice that $T^{0,1}M=\overline{T^{1,0}M}$. There is a corresponding
decomposition of the complexified cotangent bundle, which we write as
$\Wedge^1M$ or simply $\Wedge^1$ if $M$ is understood. Specifically,
\begin{equation}\label{one-forms}
\Wedge^1=\Wedge^{0,1}\oplus\Wedge^{1,0}
=\{\omega\mbox{ s.t.\ }J\omega=-i\omega\}\oplus
\{\omega\mbox{ s.t.\ }J\omega=i\omega\},
\end{equation}
where sections of $\Wedge^{1,0}$ respectively of $\Wedge^{0,1}$ are known as
$1$-forms of \emph{type}~$(1,0)$ respectively $(0,1)$, see e.g.~\cite{KobNim}.
Notice that the canonical complex linear pairing between $\C TM$ and
$\Wedge^1M$ induces natural isomorphisms $\Wedge^{0,1}=(T^{0,1})^*$ and
$\Wedge^{1,0}=(T^{1,0})^*$ of complex vector bundles.

It is convenient to introduce abstract indices~\cite{PR} for real or complex
tensors on $M$ and also for sections of the bundles $T^{1,0}M$, $\Wedge^{0,1}$,
and so on. Let us write $X^\alpha$ for real or complex fields and
$\omega_\alpha$ for real or complex $1$-forms on~$M$. In local coordinates
$\alpha$ would range over $1,2,\ldots,2n$ where $2n$ is the dimension of~$M$.
Let us denote by $X^a$ a section of $T^{1,0}M$. In any frame, the index $a$
would then range over $1,2,\ldots,n$. Similarly, let us write $X^{\bar{a}}$ for
a section of $T^{0,1}M$ and the conjugate linear isomorphism
$T^{0,1}M=\overline{T^{1,0}M}$ as
$X^a\mapsto\overline{X^a}=\overline{X}{}^{\bar{a}}$. Accordingly, sections of
$\Wedge^{1,0}$ and $\Wedge^{0,1}$ will be denoted by $\omega_a$ and
$\omega_{\bar a}$ respectively, and the canonical pairings between $T^{1,0}M$
and $\Wedge^{1,0}$, respectively $T^{0,1}M$ and $\Wedge^{0,1}$, written as
$X^a\omega_a$, respectively $X^{\bar a}\omega_{\bar a}$, an abstract index
counterpart to the \emph{Einstein summation convention}.

We shall need the complex linear homomorphism $\C TM\to T^{1,0}M$ defined as
projection onto the first summand in the decomposition (\ref{types}) and given
explicitly as $X\mapsto\frac12(X-iJX)$. It is useful to write it in abstract
indices as
$$X^\alpha\mapsto\Pi_\alpha^aX^\alpha.$$
It follows that the dual homomorphism $\Wedge^{1,0}\hookrightarrow\Wedge^1$ is
given in abstract indices by
$$\omega_a\mapsto\Pi^a_\alpha\omega_a$$
and also that the homomorphisms $\C TM\to T^{0,1}M$ and 
$\Wedge^{0,1}\hookrightarrow\Wedge^1$ are given by 
$$X^\alpha\mapsto\overline\Pi{}_\alpha^{\bar{a}}X^\alpha\quad\mbox{and}\quad
\omega_{\bar{a}}\mapsto\overline\Pi{}_\alpha^{\bar{a}}\omega_{\bar{a}},$$
respectively.

Let us denote by $X^a\mapsto\Pi_a^\alpha X^a$, the inclusion
$T^{1,0}M\hookrightarrow\C TM$, paying attention to the distinction in their
indices between $\Pi_\alpha^a$ and~$\Pi_a^\alpha$. Various identities follow,
such as $\Pi_a^\alpha\Pi_\alpha^b=\delta_a{}^b,$ where the Kronecker delta
$\delta_a{}^b$ denotes the identity transformation on~$T^{1,0}M$. The symbol
$\Pi_a^\alpha$ also gives us access to the dual and conjugate homomorphisms.
Thus,
$$\omega_\alpha\mapsto\overline\Pi{}_{\bar{a}}^\alpha\omega_\alpha$$
extracts the $(0,1)$-part of a complex-valued $1$-form $\omega_\alpha$ on~$M$. 
The following identities are immediate from the definitions
\begin{align}\notag
\Pi_\alpha^a\Pi_a^\beta&=\tfrac12(\delta_\alpha{}^\beta-iJ_\alpha{}^\beta)&
\overline\Pi{}_\alpha^{\bar{a}}\overline\Pi{}_{\bar{a}}^\beta&=
\tfrac12(\delta_\alpha{}^\beta+iJ_\alpha{}^\beta)\\
\label{usefulidentities}
\Pi_a^\alpha J_\alpha{}^\beta&=i\Pi_a^\beta&
\overline\Pi{}_{\bar{a}}^\alpha J_\alpha{}^\beta&=
-i\overline\Pi{}_{\bar{a}}^\beta\\
\notag
J_\alpha{}^\beta\Pi_\beta^a&=i\Pi_\alpha^a&
J_\alpha{}^\beta\overline\Pi{}_\beta^{\bar{a}}&=
-i\overline\Pi{}_\alpha^{\bar{a}}.
\end{align}
They are indispensable for the calculations in the following sections. Further
useful abstract index conventions are as follows. Quantities endowed with
several indices denote sections of the tensor product of the corresponding
vector bundles. Thus, a section of $TM\otimes TM$ would be denoted
$X^{\alpha\beta}$ whilst $\Phi_\alpha{}^\beta$ is necessarily a section
of~$T^*M\otimes TM$ or, equivalently, an endomorphism of $TM$, namely
$X^\beta\mapsto\Phi_\alpha{}^\beta X^\alpha$, yet equivalently an endomorphism
of~$T^*M$, namely $\omega_\alpha\mapsto\Phi_\alpha{}^\beta\omega_\beta$. We
have already seen this notation for an almost complex
structure~$J_\alpha{}^\beta$. But it is unnecessary notationally to distinguish
between real-{} and complex-valued tensors. Thus, by $\omega_\alpha$ we can
mean a section of $T^*M$ or of $\Wedge^1M:=\C T^*M$ and if a distinction is
warranted, then it can be made in words or by context. For example, an almost
complex structure $J_\alpha{}^\beta$ is a real endomorphism whereas
$\Pi_\alpha^a$ is necessarily complex.

Symmetry operations can also be written in abstract index notation. For
example, the skew part of a covariant $2$-tensor $\phi_{\alpha\beta}$ is
$\frac12(\phi_{\alpha\beta}-\phi_{\beta\alpha})$, which we write as
$\phi_{[\alpha\beta]}$. Similarly, we write
$\phi_{(\alpha\beta)}=\frac12(\phi_{\alpha\beta}+\phi_{\beta\alpha})$ for the
symmetric part and then
$\phi_{\alpha\beta}=\phi_{(\alpha\beta)}+\phi_{[\alpha\beta]}$ realises the
decomposition of vector bundles
$\Wedge^1\otimes\Wedge^1=S^2\Wedge^1\oplus\Wedge^2$. In general, round brackets
symmetrise over the indices they enclose whilst square brackets take the skew
part, e.g.\
\[
R_{\alpha\beta}{}^\gamma{}_\delta\mapsto R_{[\alpha\beta}{}^\gamma{}_{\delta]}=
\tfrac16(R_{\alpha\beta}{}^\gamma{}_\delta+R_{\beta\delta}{}^\gamma{}_\alpha
+R_{\delta\alpha}{}^\gamma{}_\beta-R_{\beta\alpha}{}^\gamma{}_\delta
-R_{\alpha\delta}{}^\gamma{}_\beta-R_{\delta\beta}{}^\gamma{}_\alpha).
\]
By (\ref{one-forms}) differential forms on almost complex manifolds can be
naturally decomposed according to \emph{type} (see e.g. \cite{KobNim}).  We
pause to examine the decomposition of $2$-forms, especially from the abstract
index point of view. {From} (\ref{one-forms}) it follows that the bundle
$\Wedge^2$ of complex-valued $2$-forms decomposes into types according to
\begin{equation}\label{two-forms}
\Wedge^2=\Wedge^2(\Wedge^{0,1}\oplus\Wedge^{1,0})=
\Wedge^{0,2}\oplus\Wedge^{1,1}\oplus\Wedge^{2,0}\end{equation}
and, as we shall make precise in Section~\ref{BGG}, there is no finer
decomposition available (it is a decomposition into \emph{irreducibles}). Using
the projectors $\Pi_\alpha^a$ and~$\Pi_a^\alpha$, we can explicitly execute
this decomposition:
\begin{align*}\omega_{\alpha\beta}&\mapsto
\bigl(\overline{\Pi}{}_{\bar{a}}^\alpha\overline{\Pi}{}_{\bar{b}}^\beta
\omega_{\alpha\beta},
\Pi_a^\alpha\overline{\Pi}{}_{\bar{b}}^\beta\omega_{\alpha\beta},
\Pi_a^\alpha\Pi_b^\beta\omega_{\alpha\beta}\bigr)\\
\overline{\Pi}{}_\alpha^{\bar{a}}\overline{\Pi}{}_\beta^{\bar{b}}
\omega_{\bar{a}\bar{b}}
+2\Pi_{[\alpha}^a\overline{\Pi}_{\beta]}^{\bar{b}}\omega_{a\bar{b}}+
\Pi_\alpha^a\Pi_\beta^b\omega_{ab}
&\mapsfrom (\omega_{\bar{a}\bar{b}},\omega_{a\bar{b}},\omega_{ab})
\end{align*}
in accordance with~(\ref{usefulidentities}). Notice that we made a choice here,
namely to identify $\Wedge^{1,1}$ as $\Wedge^{1,0}\otimes\Wedge^{0,1}$ \emph{in
this order} and, consequently, write forms of type $(1,1)$
as~$\omega_{a\bar{b}}$. We could equally well choose the opposite convention
or, indeed, use both conventions simultaneously representing a $(1,1)$ form as
$\omega_{a\bar{b}}$ and/or $\omega_{\bar{a}b}$ but now subject to
$\omega_{a\bar{b}}=-\omega_{\bar{b}a}$. Strictly speaking, this goes against
the conventions of the abstract index notation~\cite{PR} but we shall allow
ourselves this extra leeway when it is useful. For example, the reconstructed
form $\omega_{\alpha\beta}$ may then be written as
\begin{equation*}
\omega_{\alpha\beta} =\Pi_\alpha^a\overline{\Pi}{}_\beta^{\bar{b}}\omega_{a\bar{b}}
+\overline{\Pi}{}_\alpha^{\bar{a}}\Pi_\beta^b\omega_{\bar{a}b}.
\end{equation*}
Two-forms of various types may be characterised as
\begin{equation}\label{types_of_two_form} \begin{split}
\omega_{\alpha\beta}\mbox{ is type }(0,2)&\iff
J_\alpha{}^\gamma\omega_{\beta\gamma}=i\omega_{\alpha\beta}\\
\omega_{\alpha\gamma}\mbox{ is type }(1,1)&\iff
J_{[\alpha}{}^\gamma\omega_{\beta]\gamma}=0\\
\omega_{\alpha\beta}\mbox{ is type }(2,0)&\iff
J_\alpha{}^\gamma\omega_{\beta\gamma}=-i\omega_{\alpha\beta}
\end{split}\end{equation}
but already this is a little awkward and becomes more so for higher forms and
extremely awkward when attempting to decompose more general tensors as we shall
have cause to do when considering torsion and curvature. Notice that forms of
type $(1,1)$ in (\ref{types_of_two_form}) are characterised by a real
condition. Indeed, the complex bundle $\Wedge^{1,1}$ is the complexification of
a real irreducible bundle whose sections are the real $2$-forms satisfying
$J_{[\alpha}{}^\gamma\omega_{\beta]\gamma}=0$. As for forms of types $(0,2)$
and $(2,0)$, there is a real bundle whose sections satisfy
$$J_\alpha{}^\gamma J_\beta{}^\delta\omega_{\gamma\delta}=-\omega_{\alpha\beta}$$
(as opposed to $J_\alpha{}^\gamma J_\beta{}^\delta\omega_{\gamma\delta}=
\omega_{\alpha\beta}$ for sections of $\Wedge^{1,1}$) and whose
complexification is $\Wedge^{0,2}\oplus\Wedge^{2,0}$. Thus, the real $2$-forms
split irreducibly into just two kinds but the complex $2$-forms split into
three types~(\ref{two-forms}).

Notice that if $E$ is a complex vector bundle on~$M$, then we can decompose
$2$-forms with values in $E$ into types by using the same
formulae~(\ref{types_of_two_form}). In particular, we can do this on an almost
complex manifold when $E=TM$, regarded as a complex bundle via the action
of~$J$. Writing this out explicitly, a real tensor
$T_{\alpha\beta}{}^\gamma=T_{[\alpha\beta]}{}^\gamma$ is said to be
\begin{equation}\label{torsion_types}\begin{split}
\mbox{of type }(0,2)&\iff
J_\alpha{}^\gamma T_{\beta\gamma}{}^\delta=T_{\alpha\beta}{}^\gamma J_\gamma{}^\delta\\
\mbox{of type }(1,1)&\iff J_{[\alpha}{}^\gamma T_{\beta]\gamma}{}^\delta=0\\
\mbox{of type }(2,0)&\iff
J_\alpha{}^\gamma T_{\beta\gamma}{}^\delta=-T_{\alpha\beta}{}^\gamma J_\gamma{}^\delta.
\end{split}\end{equation}
For example, as the Nijenhuis tensor (\ref{nijenhuis}) satisfies
$N^J(Y,JX)=JN^J(X,Y)$, it is of type $(0,2)$. Further to investigate this
decomposition~(\ref{torsion_types}), it is useful to apply the projectors
$\Pi_\alpha^a$ and $\Pi_a^\alpha$ to obtain 
\begin{align*}
T_{\bar{a}\bar{b}}{}^{\bar{c}}&\equiv
\overline{\Pi}{}_{\bar{a}}^\alpha\overline{\Pi}{}_{\bar{b}}^\beta
\overline{\Pi}{}_\gamma^{\bar{c}}T_{\alpha\beta}{}^\gamma&
T_{a\bar{b}}{}^{\bar{c}}&\equiv \Pi_a^\alpha\overline{\Pi}{}_{\bar{b}}^\beta
\overline{\Pi}{}_\gamma^{\bar{c}}T_{\alpha\beta}{}^\gamma&
T_{ab}{}^{\bar{c}}&\equiv \Pi_a^\alpha\Pi_b^\beta
\overline{\Pi}{}_\gamma^{\bar{c}}T_{\alpha\beta}{}^\gamma\\
T_{\bar{a}\bar{b}}{}^c&\equiv
\overline{\Pi}{}_{\bar{a}}^\alpha\overline{\Pi}{}_{\bar{b}}^\beta
\Pi_\gamma^cT_{\alpha\beta}{}^\gamma&
T_{a\bar{b}}{}^c&\equiv
\Pi_a^\alpha\overline{\Pi}{}_{\bar{b}}^\beta
\Pi_\gamma^cT_{\alpha\beta}{}^\gamma&
T_{ab}{}^c&\equiv
\Pi_a^\alpha\Pi_b^\beta \Pi_\gamma^cT_{\alpha\beta}{}^\gamma
\end{align*}
satisfying
\begin{equation*}
\begin{array}{ccccccc}T_{ab}{}^c=T_{[ab]}{}^c&& 
T_{\bar{a}\bar{b}}{}^c=T_{[\bar{a}\bar{b}]}{}^c&&
T_{ab}{}^{\bar{c}}=T_{[ab]}{}^{\bar{c}}&&
T_{\bar{a}\bar{b}}{}^{\bar{c}}=T_{[\bar{a}\bar{b}]}{}^{\bar{c}}\\[4pt]
&\makebox[0pt]{$\overline{T_{ab}{}^c}=T_{\bar{a}\bar{b}}{}^{\bar{c}}$}&&
\makebox[0pt]{$\overline{T_{a\bar{b}}{}^c}=-T_{b\bar{a}}{}^{\bar{c}}$}&&
\makebox[0pt]{$\overline{T_{ab}{}^{\bar{c}}}=T_{\bar{a}\bar{b}}{}^c$}
\end{array}
\end{equation*}
and from which we can recover $T_{\alpha\beta}{}^\gamma$ according to 
\begin{align*}
T_{\alpha\beta}{}^\gamma&=
\overline{\Pi}{}_\alpha^{\bar{a}}\overline{\Pi}{}_\beta^{\bar{b}}
\overline{\Pi}{}_{\bar{c}}^\gamma T_{\bar{a}\bar{b}}{}^{\bar{c}}
+2\Pi_{[\alpha}^a\overline{\Pi}_{\beta]}^{\bar{b}}
\overline{\Pi}{}_{\bar{c}}^\gamma T_{a\bar{b}}{}^{\bar{c}}
+\Pi_\alpha^a\Pi_\beta^b\overline{\Pi}{}_{\bar{c}}^\gamma T_{ab}{}^{\bar{c}}\\
&\qquad+\overline{\Pi}{}_\alpha^{\bar{a}}\overline{\Pi}{}_\beta^{\bar{b}}
\Pi_c^\gamma T_{\bar{a}\bar{b}}{}^c
+2\Pi_{[\alpha}^a\overline{\Pi}_{\beta]}^{\bar{b}}
\Pi_c^\gamma T_{a\bar{b}}{}^c
+\Pi_\alpha^a\Pi_\beta^b\Pi_c^\gamma T_{ab}{}^c.
\end{align*}
{From} (\ref{usefulidentities}) and (\ref{torsion_types}), the splitting of
$T_{\alpha\beta}{}^\gamma$ into types corresponds exactly to components
\begin{equation}\label{torsion_types_with_indices}\begin{split}
\mbox{type }(0,2)&\leftrightarrow
(T_{ab}{}^{\bar{c}},T_{\bar{a}\bar{b}}{}^c)\\
\mbox{type }(1,1)&\leftrightarrow
(T_{a\bar{b}}{}^c,T_{a\bar{b}}{}^{\bar{c}})\\
\mbox{type }(2,0)&\leftrightarrow
(T_{ab}{}^c,T_{\bar{a}\bar{b}}{}^{\bar{c}}).
\end{split}\end{equation}
Notice that, for each of types $(1,1)$ and $(2,0)$, a complex-valued $1$-form
can be invariantly extracted: 
\begin{equation*}\begin{split}
\mbox{type }(1,1):&
\phi_\alpha\equiv\Pi_\alpha^aT_{a\bar{b}}{}^{\bar{b}}=
\tfrac12\bigl(T_{\alpha\beta}{}^\beta
+iT_{\alpha\beta}{}^\gamma J_\gamma{}^\beta\bigr)\\
\mbox{type }(2,0):&
\psi_\alpha\equiv\Pi_\alpha^aT_{ab}{}^b=
\tfrac12\bigl(T_{\alpha\beta}{}^\beta
-iT_{\alpha\beta}{}^\gamma J_\gamma{}^\beta\bigr).
\end{split}\end{equation*}
On the other hand, just from the index structure, tensors
$T_{\alpha\beta}{}^\gamma=T_{[\alpha\beta]}{}^\gamma$ of type $(0,2)$ seemingly
cannot be further decomposed (and this is confirmed in Section~\ref{BGG}).  In
any case, it follows easily from $J_\alpha{}^\gamma
T_{\beta\gamma}{}^\delta=T_{\alpha\beta}{}^\gamma J_\gamma{}^\delta$ that
$T_{\alpha\beta}{}^\gamma$ of type $(0,2)$ satisfy
$T_{\alpha\beta}{}^\beta=0=T_{\alpha\beta}{}^\gamma J_\gamma{}^\beta$.

\subsection{Complex connections}\label{complexconnections}

The geometrically useful affine connections $\nabla$ on an almost complex
manifold $(M,J)$ are those that preserve $J_\alpha{}^\beta$,
i.e.~$\nabla_\alpha J_\beta{}^\gamma=0$. We call them \emph{complex
  connections}. The space of complex connections is an affine space over the
vector space that consists of $1$-forms with values in the complex
endomorphisms $\gl(TM,J)$ of $TM$. A complex connection $\nabla$ naturally
extends to a linear connection on $\C TM$ that preserves the decomposition into
types~(\ref{types}). Indeed, preservation of type is also a sufficient
condition for an affine connection to be complex.

Given a complex connection $\nabla$, we denote by $T_{\alpha\beta}{}^\gamma$
its torsion, which is a $2$-form with values in $TM$. As such
$T_{\alpha\beta}{}^\gamma$ naturally splits according to type into a direct sum
of three components as in~(\ref{torsion_types}). A straightforward computation
shows that the $(0,2)$-component of the torsion of any complex connection
equals $-\frac{1}{4}N^J$. In particular, this component is an invariant of the
almost complex structure and cannot be eliminated by a suitable choice of
complex connection. However, all other components can be removed. To see this,
suppose $\hat\nabla$ is another complex connection. Then there is an element
$\upsilon\in T^*M\otimes\gl(TM,J)$ such that $\hat\nabla=\nabla+\upsilon$. It
follows that their torsions are related by the formula $\hat T
=T+\partial\upsilon$, where $\partial$ is the composition
\begin{align*}
T^*M\otimes\gl(TM,J)\hookrightarrow
T^*M\otimes T^*M\otimes TM&\to \Wedge^2T^*M\otimes TM\\
\upsilon_{\alpha\beta}{}^\gamma&\mapsto 2\upsilon_{[\alpha\beta]}{}^\gamma.
\end{align*}
Notice that the image of $\partial$ is spanned by $2$-forms of type $(2,0)$ and
$(1,1)$. Consequently, its cokernel can be identified with forms of type
$(0,2)$. Hence, any complex connection can be deformed in such a way that its
torsion is of type $(0,2)$. We have shown the following classical result:
 
\begin{prop}[\cite{KobNim,lich:connections}]\label{lichnerowicz}
On any almost complex manifold $(M,J)$ there is a complex connection such
that $T=-\frac{1}{4}N^J$.
\end{prop}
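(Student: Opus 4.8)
The plan is to exploit the cohomological structure that the text has already laid out, making the result almost immediate. The key observation is that the excerpt has established two facts: first, the $(0,2)$-component of the torsion of \emph{any} complex connection equals $-\tfrac14 N^J$; second, the map $\partial$ sending a deformation $\upsilon_{\alpha\beta}{}^\gamma$ to $2\upsilon_{[\alpha\beta]}{}^\gamma$ has image spanned precisely by the $(2,0)$ and $(1,1)$ parts, so that its cokernel is identified with the $(0,2)$ part. I would start from an arbitrary complex connection $\nabla$ with torsion $T$, decompose $T$ into its three types as in~\eqref{torsion_types}, and isolate the parts I wish to kill, namely the $(2,0)$ and $(1,1)$ components.

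The central step is to produce an explicit deformation $\upsilon\in T^*M\otimes\gl(TM,J)$ such that $\partial\upsilon$ cancels exactly the $(2,0)$ and $(1,1)$ parts of $T$, leaving the $(0,2)$ part untouched. Since the cokernel of $\partial$ is the $(0,2)$ part and the $(2,0)$ and $(1,1)$ parts lie in the image of $\partial$, such a $\upsilon$ exists by construction: one simply splits off the type $(0,2)$ component of $T$, writes the remainder as $\partial\upsilon$ for some $\upsilon$ taking values in the complex endomorphisms (this is where the requirement $\upsilon\in T^*M\otimes\gl(TM,J)$ must be checked, since we need the deformed connection to remain \emph{complex}), and sets $\hat\nabla=\nabla-\upsilon$. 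The torsion transforms as $\hat T=T+\partial(-\upsilon)$, which by design equals the $(0,2)$ part of $T$. Combined with the first established fact, this $(0,2)$ part is exactly $-\tfrac14 N^J$, giving $\hat T=-\tfrac14 N^J$ as required.

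The step I expect to be the main obstacle is verifying that the chosen $\upsilon$ genuinely takes values in $\gl(TM,J)$, i.e.~in the \emph{complex} endomorphisms rather than merely real endomorphisms of $TM$; only then is $\hat\nabla$ again a complex connection and the computation legitimate. The text asserts that the image of $\partial$, restricted to $\gl(TM,J)$-valued deformations, is spanned by the $(2,0)$ and $(1,1)$ two-forms, so the surjectivity onto those types is available, but one should confirm that the preimage can be taken $J$-complex-linear in the endomorphism slot. Concretely, in the type-decomposed indices $\upsilon_{\alpha}{}^{\beta}{}_{\gamma}$ one works component by component using the projectors $\Pi_\alpha^a,\Pi_a^\alpha$ and the identities~\eqref{usefulidentities}, choosing the symmetric-in-the-lower-two-indices part freely while matching the skew part to the target torsion; because this symmetric part is exactly what $\partial$ annihilates, there is enough freedom to keep everything $J$-linear. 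Once this linear-algebraic point is settled, the rest is the routine bookkeeping already supplied by the decomposition in the excerpt, and the proposition follows.
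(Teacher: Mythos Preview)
Your proposal is correct and follows exactly the same approach as the paper, which in fact presents the argument in the paragraph immediately preceding the proposition rather than as a separate proof. The point you flag as the main obstacle---that $\partial$ restricted to $T^*M\otimes\gl(TM,J)$ already surjects onto the $(2,0)\oplus(1,1)$ types---is precisely what the paper asserts without further elaboration, so your caution there is warranted but does not constitute a gap.
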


Since $\partial$ is not injective such a complex connection is not unique.
Complex connections $\nabla$ with $T=-\frac{1}{4}N^J$ form an affine space over
\begin{equation}\label{kerpartial}
\ker\partial=(S^2T^*M\otimes TM)\cap (T^*M\otimes\gl(TM,J))
\end{equation} 
and are called \emph{minimal connections}.

{From} Proposition \ref{lichnerowicz} and the above discussion one also
deduces immediately:

\begin{cor}\label{torsion_free_complex_connection} 
There exists a complex torsion-free connection on an almost complex
manifold $(M,J)$ if and only if $N^J\equiv 0$.
\end{cor}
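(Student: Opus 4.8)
The plan is to deduce the corollary directly from Proposition~\ref{lichnerowicz} together with the characterization of the $(0,2)$-torsion as $-\tfrac14 N^J$. The statement is an if-and-only-if, so I would prove each implication in turn, and both are short.

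First I would treat the ``if'' direction. Suppose $N^J\equiv0$. By Proposition~\ref{lichnerowicz} there exists a complex connection $\nabla$ whose torsion satisfies $T=-\tfrac14 N^J$. Since $N^J$ vanishes identically, this says $T\equiv0$, so $\nabla$ is the desired complex torsion-free connection. This direction is essentially immediate once Proposition~\ref{lichnerowicz} is in hand; there is nothing to do but specialize.

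For the ``only if'' direction, suppose there exists a complex torsion-free connection $\nabla$ on $(M,J)$, so $T\equiv0$. The preceding discussion established that the $(0,2)$-component of the torsion of \emph{any} complex connection equals $-\tfrac14 N^J$, and that this component is an invariant of the almost complex structure --- it does not depend on the choice of complex connection within the affine space of such connections. Hence the $(0,2)$-part of $T$ equals $-\tfrac14 N^J$ regardless; but $T\equiv0$ forces its $(0,2)$-part to vanish, so $-\tfrac14 N^J\equiv0$, i.e.\ $N^J\equiv0$.

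The only genuine content is recognizing that the vanishing of the full torsion forces the vanishing of its invariantly-defined $(0,2)$-piece, which is exactly $-\tfrac14 N^J$; the main (and only mild) subtlety is to use the invariance of the $(0,2)$-component --- the fact recorded just before Proposition~\ref{lichnerowicz} that this piece cannot be altered by changing the complex connection --- so that the conclusion holds for the \emph{given} torsion-free $\nabla$ and not merely for some specially-chosen connection. Both implications are thus corollaries of facts already assembled, and no further computation is required; indeed this is why the authors present it as an immediate corollary.
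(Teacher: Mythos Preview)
Your proof is correct and matches the paper's approach exactly: the paper presents this as an immediate consequence of Proposition~\ref{lichnerowicz} together with the observation that the $(0,2)$-component of the torsion of any complex connection equals $-\tfrac14 N^J$, and you have simply spelled out both implications of that deduction.
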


\begin{rem}\label{almost_complex_natural_bundles}
We have already noted that the cokernel of $\partial$ can be identified with
tensors $T_{\alpha\beta}{}^\gamma$ such that
\[
T_{(\alpha\beta)}{}^\gamma=0\qquad J_\alpha{}^\gamma T_{\beta\gamma}{}^\delta
=T_{\alpha\beta}{}^\gamma J_\gamma{}^\delta.
\] 
Consequently, $T_{\alpha\beta}{}^\beta=T_{\alpha\beta}{}^\gamma J_\gamma{}^\beta=0$.
As we shall see in Section~\ref{BGG}, such tensors~are irreducible.
More precisely, the natural vector bundles on an almost complex manifold
$(M,J)$ correspond to representations of ${\mathrm{GL}}(n,\C)$ and we
shall see that $\coker\partial$ corresponds to an irreducible representation
of ${\mathrm{GL}}(n,\C)$. On the other hand, its kernel (\ref{kerpartial})
decomposes into two irreducible components, namely a trace-free part and a
trace part. We shall see in the next section that deforming a complex
connection by an element from the latter space exactly corresponds to changing
a connection c-projectively.
\end{rem}

\section{Elements of c-projective geometry}\label{sec:elements}

We now introduce almost c-projective structures, first from the classical
perspective of $J$-planar curves and equivalence classes of complex affine
connections~\cite{OtsTash}, then from the modern viewpoint of parabolic
geometries~\cite{csbook,Hrdina,Y}. The (categorical) equivalence between these
approaches is established in Theorem~\ref{EquivCat}. This leads us to study the
intrinsic curvature of an almost c-projective manifold, namely the harmonic
curvature of its canonical normal Cartan connection.

\subsection{Almost c-projective structures}\label{almost_c-projective}

Recall that affine connections $\nabla$ and $\hat\nabla$ on a manifold $M$ are
projectively equivalent if there is a $1$-form $\Upsilon_\alpha$ on $M$ such
that
\begin{equation}\label{projchange}
\hat\nabla_\alpha X^\gamma=\nabla_\alpha X^\gamma
+\Upsilon_\alpha X^\gamma+\delta_\alpha{}^\gamma\Upsilon_\beta X^\beta.
\end{equation}
Suppose now that $(M,J)$ is an almost complex manifold. Then $\nabla$ and
$\hat\nabla$ are called \emph{c-projectively equivalent}, if there is a (real)
$1$-form $\Upsilon_\alpha$ on $M$ such that
\begin{flalign}\label{cprojchange}
&&\hat\nabla_\alpha X^\gamma &=
\nabla_\alpha X^\gamma+\upsilon_{\alpha\beta}{}^\gamma X^\beta,&&\\
\text{where}&&
\upsilon_{\alpha\beta}{}^\gamma &:=
\tfrac{1}{2}(\Upsilon_\alpha \delta_\beta{}^\gamma+
\delta_\alpha{}^\gamma\Upsilon_\beta
-J_\alpha{}^\delta\Upsilon_\delta J_\beta{}^\gamma 
-J_\alpha{}^\gamma\Upsilon_\delta J_\beta{}^\delta).&&\nonumber
\end{flalign}
Note that $\upsilon_{\alpha\beta}{}^\gamma
J_\gamma{}^\delta=\upsilon_{\alpha\gamma}{}^\delta J_{\beta}{}^\gamma$. In
other words $\upsilon_{\alpha\beta}{}^\gamma$ is a $1$-form on $M$ with values
in $\gl(TM,J)$, which implies that if $\nabla$ is a complex connection, 
then so is $\hat\nabla$. Moreover
$\upsilon_{\alpha\beta}{}^\gamma=\upsilon_{(\alpha\beta)}{}^\gamma$ and so
c-projectively equivalent connections have the same torsion. In particular, if
$\nabla$ is minimal, then so is~$\hat\nabla$.

A smooth curve $c\colon (a,b)\to M$ is called a \emph{$J$-planar curve} with
respect to a complex connection $\nabla$, if $\nabla_{\dot c}\dot c$ lies in
the span of $\dot c$ and $J\dot c$. The notion of $J$-planar curves gives rise
to the following geometric interpretation of a c-projective equivalence class
of complex connections.

\begin{prop}[\cite{Hrdina,MikesSinyukov,OtsTash,Tashiro}]
\label{prop:Jplanar-curves} Suppose $(M,J)$ is an almost complex manifold
and let $\nabla$ and $\hat\nabla$ be complex connections on $M$ with the same
torsion. Then $\nabla$ and $\hat\nabla$ are c-projectively equivalent if and
only if they have the same $J$-planar curves.
\end{prop}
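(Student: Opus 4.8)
The plan is to prove the two implications separately, using the explicit
description of the deformation tensor
$\upsilon_{\alpha\beta}{}^\gamma$ in~\eqref{cprojchange}.

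\medskip

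First I would establish the easy direction: that c-projective equivalence
implies having the same $J$-planar curves. Suppose
$\hat\nabla=\nabla+\upsilon$ with $\upsilon$ as in~\eqref{cprojchange},
and let $c$ be any smooth curve with velocity $\dot c^\alpha$. The idea is
to compute the difference of the two accelerations:
\[
\hat\nabla_{\dot c}\dot c^\gamma-\nabla_{\dot c}\dot c^\gamma
=\upsilon_{\alpha\beta}{}^\gamma\dot c^\alpha\dot c^\beta.
\]
Contracting the explicit formula for $\upsilon$ twice with $\dot c$, the
symmetric structure produces terms proportional to
$\dot c^\gamma$ (from $\Upsilon_\alpha\delta_\beta{}^\gamma$ and
$\delta_\alpha{}^\gamma\Upsilon_\beta$) and terms proportional to
$J_\beta{}^\gamma\dot c^\beta=(J\dot c)^\gamma$ (from the two
$J$-dependent terms). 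Thus the difference of accelerations lies in
$\spann\{\dot c,J\dot c\}$, and so $\nabla_{\dot c}\dot c$ lies in that span
if and only if $\hat\nabla_{\dot c}\dot c$ does. This shows the two
connections share the same $J$-planar curves.

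\medskip

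The converse is the substantive direction. Assume $\nabla$ and $\hat\nabla$
are complex connections with the same torsion and the same $J$-planar curves;
I must produce a real $1$-form $\Upsilon$ realising~\eqref{cprojchange}.
Write $\hat\nabla=\nabla+A$ for some
$A_{\alpha\beta}{}^\gamma\in T^*M\otimes\gl(TM,J)$; since the torsions agree,
$A$ is symmetric, i.e.~$A_{\alpha\beta}{}^\gamma=A_{(\alpha\beta)}{}^\gamma$.
The hypothesis that the two connections share all $J$-planar curves means
that for \emph{every} tangent vector $v$, the quantity
$A_{\alpha\beta}{}^\gamma v^\alpha v^\beta$ lies in
$\spann\{v,Jv\}$. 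The core of the proof is the purely algebraic claim that
this pointwise condition forces $A$ to have precisely the form
$\upsilon_{\alpha\beta}{}^\gamma$ for some $1$-form $\Upsilon_\alpha$.
Concretely, the condition says there are functions $\mu(v),\lambda(v)$
(homogeneous of degree~$1$ in $v$) with
$A_{\alpha\beta}{}^\gamma v^\alpha v^\beta=\mu(v)\,v^\gamma+\lambda(v)\,(Jv)^\gamma$;
polarising this identity in $v$ and using that $J$ is complex-linear
(so $A\in\gl(TM,J)$ commutes appropriately with $J$) should pin down $\mu$
and $\lambda$ to be linear in $v$, say $\mu(v)=\Upsilon_\beta v^\beta$ and
$\lambda(v)=-(J_\delta{}^\varepsilon\Upsilon_\varepsilon) v^\delta$, and then
reconstruct $A$ as the symmetrised expression in~\eqref{cprojchange}.

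\medskip

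The main obstacle is exactly this algebraic reconstruction step. The
subtlety is that the naive polarisation of a degree-two identity
$A_{(\alpha\beta)}{}^\gamma v^\alpha v^\beta\in\spann\{v,Jv\}$ does not
immediately yield that the coefficients are linear; one must use in an
essential way the extra constraint that $A$ takes values in
$\gl(TM,J)$, i.e.~$A_{\alpha\beta}{}^\gamma J_\gamma{}^\delta
=A_{\alpha\gamma}{}^\delta J_\beta{}^\gamma$, which is where the almost
complex structure interacts with the $J$-planar condition. The cleanest
route is probably to pass to the complexified picture and apply the
projectors $\Pi_\alpha^a$, $\Pi_a^\alpha$ from
Section~\ref{barredandunbarred}: decompose $A$ into its
$(a b){}^c$, $(a\bar b){}^c$, etc.~components, translate the span condition
into vanishing of the appropriate trace-free pieces, and identify the
surviving trace parts with $\Upsilon$. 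By Remark~\ref{almost_complex_natural_bundles},
the relevant kernel~\eqref{kerpartial} splits into a trace-free part and a
trace part, and the whole point will be that the $J$-planar condition kills
the trace-free part and leaves exactly the one-form's worth of freedom
recorded by $\upsilon_{\alpha\beta}{}^\gamma$.
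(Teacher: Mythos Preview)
Your proposal is correct in outline; the easy direction matches the paper exactly. For the converse, however, you take a genuinely different route from the paper. The paper stays entirely in the real picture: it writes $A(Y,Y)=\gamma(Y)Y+\mu(Y)JY$, uses the identity $A(JY,JY)=-A(Y,Y)$ (which follows from $A\in\gl(TM,J)$ together with symmetry) to obtain $\mu(Y)=-\gamma(JY)$, then polarises and exploits the bilinearity $A(X,tY)=tA(X,Y)$ together with a limit $t\to0$ to prove that $\gamma$ is additive, hence a genuine $1$-form. Your preferred route---passing to barred/unbarred indices---would also work and is arguably cleaner: symmetry of $A$ combined with $A\in\gl(TM,J)$ forces $A_{\bar a b}{}^c=0$ and $A_{a\bar b}{}^{\bar c}=0$, so only $A_{ab}{}^c$ and its conjugate survive; the $J$-planar condition then becomes the purely holomorphic statement $A_{ab}{}^c v^a v^b\parallel v^c$ for all $v^a\in T^{1,0}_pM$, which is exactly the classical projective-equivalence lemma yielding $A_{ab}{}^c=\Upsilon_a\delta_b{}^c+\Upsilon_b\delta_a{}^c$. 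This is the complex decomposition anticipated in Proposition~\ref{change10} and Remark~\ref{rem:proj-vs-cproj}. What you gain is that $J$ disappears from the algebra; what the paper's argument gains is that it is self-contained and does not presuppose the projective lemma.
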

\begin{proof}
Suppose $\nabla$ and $\hat\nabla$ are complex connections with the same
torsion. If $\nabla$ and $\hat\nabla$ are c-projectively equivalent, then they
clearly have the same $J$-planar curves. Conversely, assume that $\nabla$ and
$\hat\nabla$ share the same $J$-planar curves and consider the difference
tensor $A_{\alpha\beta}{}^\gamma Y^\beta=\hat\nabla_\alpha
Y^\gamma-\nabla_\alpha Y^\gamma$. As both connections are complex and have the
same torsion, the difference tensor satisfies
$A_{\alpha\beta}{}^\gamma=A_{(\alpha\beta)}{}^\gamma$ and
$A_{\alpha\beta}{}^\gamma J_\gamma{}^\delta=A_{\alpha\gamma}{}^\delta
J_{\beta}{}^\gamma$. The fact that $\hat\nabla$ and $\nabla$ have the same
$J$-planar curves and that any tangent vector can be realised as the derivative
of such a curve implies that at any point $x\in M$ and for any nonzero vector
$Y\in T_xM$ there exist uniquely defined real numbers $\gamma(Y)$ and $\mu(Y)$
such that
\begin{equation}\label{difference_tensor}
A(Y, Y)=\gamma(Y)Y+\mu(Y)JY.
\end{equation}
Note that $\gamma$ and $\mu$ give rise to well-defined smooth functions
on~$TM\setminus 0$. Extending $\gamma$ and $\mu$ to functions on all of $TM$ by
setting $\gamma(0)=\mu(0)=0$, formula \eqref{difference_tensor} becomes valid
for any tangent vector, and by construction $\gamma$ and $\mu$ are then clearly
homogeneous of degree one. {From} $A(Y,Y)=-A(JY,JY)$ we deduce that
$\mu(X)=-\gamma(JX)$ whence
\[
A(Y,Y)=\gamma(Y)Y-\gamma(JY)JY.
\]
By polarisation we have for any tangent vectors $X$ and $Y$
\begin{align}\label{polarisation}
A(X,Y)&=\tfrac{1}{2}\bigl(A(X+Y, X+Y)-A(X,X)-A(Y,Y)\bigr)\\
&=\tfrac{1}{2}
\bigl((\gamma(X+Y)-\gamma(X))X+(\gamma(X+Y)-\gamma(Y))Y\bigr)\nonumber\\
&\quad-\tfrac{1}{2}
\bigl((\gamma(JX+JY)-\gamma(JX))JX-(\gamma(JX+JY)-\gamma(JY))JY\bigr).\nonumber
\end{align}
Suppose that $X$ and $Y$ are linearly independent and expand the identity
$A(X,tY)=tA(X,Y)$ for all $t\in\R$ using \eqref{polarisation}. Then a
comparison of coefficients shows that
$$\gamma(X+tY)-t\gamma(Y)=\gamma(X+Y)-\gamma(Y).$$
Taking the limit $t\rightarrow 0$, shows that $\gamma(X+Y)=\gamma(X)+\gamma(Y)$.
Hence, $\gamma$ defines a (smooth) $1$-form and 
\begin{align*}
A(X,Y)&=\tfrac{1}{2}\bigl(A(X+Y, X+Y)-A(X,X)-A(Y,Y)\bigr)\\
&=\tfrac{1}{2}
\bigl(\gamma(X)Y+\gamma(Y)X-\gamma(JX)(JY)-\gamma(JY)JX\bigr),
\end{align*}
for any tangent vector $X$ and $Y$ as desired. 
\end{proof}

\begin{defin}\label{definition_cprojective_structure} 
Suppose that $M$ is manifold of real dimension $2n\geq4$. 
\begin{enumerate}
\item An \emph{almost c-projective structure} on $M$ consists of an almost
complex structure $J$ on $M$ and a c-projective equivalence class $[\nabla]$ of
minimal complex connections.
\item The \emph{torsion} of an almost c-projective structure $(M, J, [\nabla])$
is the torsion $T$ of one, hence any, of the connections in $[\nabla]$, 
i.e.~$T=-\frac{1}{4}N^J$.
\item An almost c-projective structure $(M, J,[\nabla])$ is called a
\emph{c-projective structure}, if $J$ is integrable. (This is the case if and 
only if some and hence all connections in the c-projective class are 
torsion-free.)
\end{enumerate}
\end{defin}

\begin{rem}
If $M$ is a $2$-dimensional manifold, any almost complex structure $J$ is
integrable and any two torsion free complex connections are c-projectively
equivalent. Therefore, in this case one needs to modify the definition of a
c-projective structure in order to have something nontrivial
(cf.~\cite{DCMoebius1,DCMoebius2}). We shall not pursue this here.
\end{rem}
\begin{rem}
Recall that the geodesics of an affine connection can be also realised as the
geodesics of a torsion-free connection; hence the definition of a projective
structure as an equivalence class of torsion-free connections does not
constrain the considered families of geodesics. The analogous statement for
$J$-planar curves does not hold: the $J$-planar curves of a complex connection
cannot in general be realised as the $J$-planar curves of a minimal connection.
We discuss the motivation for the restriction to minimal connections in the
definition of almost c-projective manifolds in
Remark~\ref{rem:non-normal-case}.
\end{rem}

\begin{defin}\label{CprojTransDefinition} Let $(M, J_M, [\nabla^M])$
and $(N, J_N, [\nabla^N])$ be almost c-projective manifolds of dimension
$2n\geq 4$.  A diffeomorphism $\Phi\colon M\to N$ is called \emph{c-projective
  transformation or automorphism}, if $\Phi$ is complex (i.e.\, $T\Phi\circ
J_M=J_N\circ T\Phi$) and for a (hence any) connection $\nabla^N\in[\nabla^N]$
the connection $\Phi^*\nabla^N$ is a connection in $[\nabla^M]$.
\end{defin}

{From} Proposition \ref{prop:Jplanar-curves} one deduces straightforwardly
that also the following characterisation of c-projective transformations
holds:

\begin{prop}\label{CprojTransCharacterisation} 
Let $(M, J_M, [\nabla^M])$ and $(N, J_N, [\nabla^N])$ be almost c-projective
manifolds of dimension $2n\geq 4$.  Then a complex diffeomorphism $\Phi\colon
M\to N$ is a c-projective transformation if and only if $\Phi$ maps
$J_M$-planar curves to $J_N$-planar curves.
\end{prop}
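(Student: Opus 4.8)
The plan is to leverage Proposition~\ref{prop:Jplanar-curves}, which already characterises c-projective equivalence of connections (with the same torsion) in terms of having the same $J$-planar curves, and to transport this characterisation through the diffeomorphism $\Phi$. The essential observation is that a complex diffeomorphism $\Phi$ intertwines the pullback connection $\Phi^*\nabla^N$ with $\nabla^N$ in a way that respects both torsion and $J$-planarity, so that the two characterisations of c-projective transformations correspond under pullback.

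First I would fix a connection $\nabla^N\in[\nabla^N]$ and observe, since $\Phi$ is complex (i.e.~$T\Phi\circ J_M=J_N\circ T\Phi$), that the pullback $\Phi^*\nabla^N$ is again a complex connection on $(M,J_M)$, and that it has the same torsion as every connection in $[\nabla^M]$, namely $-\tfrac14 N^{J_M}$; indeed pullback by a complex diffeomorphism carries $N^{J_N}$ to $N^{J_M}$, so $\Phi^*\nabla^N$ is minimal. This places $\Phi^*\nabla^N$ and any chosen $\nabla^M\in[\nabla^M]$ in the hypotheses of Proposition~\ref{prop:Jplanar-curves}: two minimal (hence equal-torsion) complex connections on $(M,J_M)$. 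Consequently, by that proposition, $\Phi^*\nabla^N\in[\nabla^M]$ if and only if $\Phi^*\nabla^N$ and $\nabla^M$ share the same $J_M$-planar curves.

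The second step is the naturality of $J$-planar curves under $\Phi$. A smooth curve $c$ in $M$ is $J_M$-planar for $\Phi^*\nabla^N$ precisely when its image $\Phi\circ c$ is $J_N$-planar for $\nabla^N$: this follows because $(\Phi^*\nabla^N)_{\dot c}\dot c = (T\Phi)^{-1}\bigl(\nabla^N_{\,\widardoverline{}}\bigr)$ pushes forward to the acceleration of $\Phi\circ c$, while $T\Phi$ carries $\dot c$ to $\widehat{\dot c}=\overline{(\Phi\circ c)\,}{}^{\boldsymbol\cdot}$ and, by complexity of $\Phi$, carries $J_M\dot c$ to $J_N\,\widehat{\dot c}$. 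Hence the span of $\{\dot c, J_M\dot c\}$ maps isomorphically onto the span of $\{\widehat{\dot c}, J_N\widehat{\dot c}\}$, and $J_M$-planarity of $c$ is equivalent to $J_N$-planarity of $\Phi\circ c$. Chaining the two equivalences, $\Phi^*\nabla^N$ and $\nabla^M$ have the same $J_M$-planar curves if and only if $\Phi$ maps $J_M$-planar curves (for $\nabla^M$, hence for the class $[\nabla^M]$) to $J_N$-planar curves.

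Combining these with Definition~\ref{CprojTransDefinition} closes the argument: $\Phi$ is a c-projective transformation iff $\Phi^*\nabla^N\in[\nabla^M]$, which by Proposition~\ref{prop:Jplanar-curves} holds iff $\Phi^*\nabla^N$ and $\nabla^M$ share $J_M$-planar curves, which by naturality holds iff $\Phi$ sends $J_M$-planar curves to $J_N$-planar curves. The main obstacle I anticipate is purely bookkeeping rather than conceptual: one must verify carefully that ``the $J_M$-planar curves of a connection'' and ``the $J_M$-planar curves of the class $[\nabla^M]$'' coincide---which is exactly the content of Proposition~\ref{prop:Jplanar-curves}, since all connections in the class share their $J$-planar curves---and that the reparametrisation freedom in $c$ does not affect the span condition, so that the correspondence $c\mapsto\Phi\circ c$ is genuinely a bijection on the families of $J$-planar curves.
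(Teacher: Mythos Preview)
Your proposal is correct and is precisely the straightforward deduction the paper alludes to: the paper gives no explicit proof, merely stating that the result follows from Proposition~\ref{prop:Jplanar-curves}, and your argument---pull back $\nabla^N$ by the complex diffeomorphism $\Phi$ to obtain a minimal complex connection on $(M,J_M)$, then invoke Proposition~\ref{prop:Jplanar-curves} together with the naturality of $J$-planarity under complex diffeomorphisms---is exactly how one fills this in.
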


Suppose that $(M, J, [\nabla])$ is an almost c-projective manifold. Let
$\hat\nabla$ and $\nabla$ be connections of the c-projective class $[\nabla]$
that differ by $\Upsilon_\alpha$ as in (\ref{cprojchange}). Then $\hat\nabla$
and $\nabla$ give rise to linear connections on $\C TM=T^{1,0}M\oplus T^{0,1}M$
that preserve the decomposition into types. Hence, they induce connections on
the complex vector bundles $T^{1,0}M$ and $T^{0,1}M$. To deduce the difference
between the connections $\hat\nabla$ and $\nabla$ on $T^{1,0}M$ (respectively
$T^{0,1}M$), we just need to apply the splittings $\Pi_\alpha^a$ and
$\Pi_a^\alpha$ (respectively their conjugates) from the previous section to
(\ref{cprojchange}). Using the identities~\eqref{usefulidentities}, we obtain
\begin{align*}
\Pi_a^\alpha\Pi_b^\beta\upsilon_{\alpha\beta}{}^\gamma\Pi_\gamma^c&=
\tfrac12\Pi_a^\alpha\Pi_b^\beta(\Upsilon_\alpha \delta_\beta{}^\gamma+
\delta_\alpha{}^\gamma\Upsilon_\beta
-J_\alpha{}^\delta\Upsilon_\delta J_\beta{}^\gamma 
-J_\alpha{}^\gamma\Upsilon_\delta J_\beta{}^\delta)\Pi_\gamma^c\\
&=\tfrac12\Pi_a^\alpha\Pi_b^\beta\bigl(
(\Upsilon_\alpha-iJ_\alpha{}^\delta\Upsilon_\delta)\Pi_\beta^c
+(\Upsilon_\beta-iJ_\beta{}^\delta\Upsilon_\delta)\Pi_\alpha^c\bigr)\\
&=\tfrac12
\Pi_a^\alpha(\Upsilon_\alpha-iJ_\alpha{}^\delta\Upsilon_\delta)\delta_b{}^c
+\tfrac12
\Pi_b^\beta(\Upsilon_\beta-iJ_\beta{}^\delta\Upsilon_\delta)\delta_a{}^c\\
&=\Upsilon_a\delta_b{}^c+\Upsilon_b\delta_a{}^c,\qquad\mbox{where}\quad
\Upsilon_a\equiv\Pi_a^\alpha\Upsilon_\alpha\,.
\end{align*}
Similarly, we find that $\overline{\Pi}{}_{\bar{a}}^\alpha\Pi_b^\beta
\upsilon_{\alpha\beta}{}^\gamma\Pi_\gamma^c=0$. These identities are the key
to the following:

\begin{prop}\label{change10}
Suppose $(M,J,[\nabla])$ is an almost c-projective manifold of dimension
$2n\geq4$. Assume two connections $\hat\nabla$ and $\nabla$ in $[\nabla]$
differ by $\Upsilon_\alpha$ as in {\rm(\ref{cprojchange})}, and set
$\Upsilon_a:=\Pi_a^\alpha\Upsilon_\alpha$ and
$\Upsilon_{\bar{a}}:=\Pi_{\bar{a}}^\alpha\Upsilon_\alpha$. Then we have the
following transformation rules for the induced connections on $T^{1,0}M$ and
$T^{0,1}M$.
\begin{enumerate}
\item $\hat\nabla_aX^c=\nabla_aX^c+\Upsilon_aX^c+\delta_a{}^c\Upsilon_bX^b$
and $\hat\nabla_{\bar a}X^c=\nabla_{\bar a}X^c$,
\item $\hat\nabla_{\bar a}X^{\bar c}=\nabla_{\bar a}X^{\bar c}+
\Upsilon_{\bar a}X^{\bar c}+
\delta_{\bar a}{}^{\bar c}\Upsilon_{\bar b}X^{\bar b}$
and $\hat\nabla_{a}X^{\bar c}=\nabla_{a}X^{\bar c}$.
\end{enumerate}
\end{prop}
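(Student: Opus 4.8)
The plan is to derive the four transformation rules in Proposition~\ref{change10} directly from the c-projective change formula~\eqref{cprojchange} by applying the projectors $\Pi_a^\alpha$, $\Pi_\gamma^c$ and their conjugates, exactly as was done in the paragraph immediately preceding the statement. The key observation is that the induced connection on $T^{1,0}M$ is recovered from $\hat\nabla$ and $\nabla$ on $\C TM$ by sandwiching with the projectors; since both connections preserve the type decomposition~\eqref{types}, the difference tensor $\upsilon_{\alpha\beta}{}^\gamma$ acting on $T^{1,0}M$ is obtained by contracting with $\Pi_a^\alpha\Pi_b^\beta\Pi_\gamma^c$, and the two ``mixed'' pieces (a holomorphic derivative hitting an anti-holomorphic slot, and vice versa) are obtained by contracting with a conjugate projector in at least one slot.

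First I would treat part~(1). The computation
$\Pi_a^\alpha\Pi_b^\beta\upsilon_{\alpha\beta}{}^\gamma\Pi_\gamma^c
=\Upsilon_a\delta_b{}^c+\Upsilon_b\delta_a{}^c$
is already carried out in the text and gives the holomorphic rule
$\hat\nabla_aX^c=\nabla_aX^c+\Upsilon_aX^c+\delta_a{}^c\Upsilon_bX^b$,
using $\Upsilon_a=\Pi_a^\alpha\Upsilon_\alpha$. For the vanishing of the mixed derivative $\hat\nabla_{\bar a}X^c=\nabla_{\bar a}X^c$, the text asserts that
$\overline\Pi{}_{\bar a}^\alpha\Pi_b^\beta\upsilon_{\alpha\beta}{}^\gamma\Pi_\gamma^c=0$;
I would verify this by the same substitution, expanding $\upsilon_{\alpha\beta}{}^\gamma$ and applying the identities~\eqref{usefulidentities}, in particular $\overline\Pi{}_{\bar a}^\alpha J_\alpha{}^\beta=-i\overline\Pi{}_{\bar a}^\beta$ and $\Pi_b^\beta J_\beta{}^\gamma$-type contractions, which force the four terms to cancel in pairs because the $(1,0)$ and $(0,1)$ projectors pick out opposite eigenvalues of $J$.

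Part~(2) then follows purely by conjugation. Since $\Upsilon_\alpha$ is a \emph{real} $1$-form and $J_\alpha{}^\beta$ is a real endomorphism, the defining expression for $\upsilon_{\alpha\beta}{}^\gamma$ in~\eqref{cprojchange} is real; applying the conjugate-linear isomorphism $X^a\mapsto\overline{X^a}=\overline X{}^{\bar a}$ interchanges $\Pi_a^\alpha$ with $\overline\Pi{}_{\bar a}^\alpha$ and sends the relation of part~(1) to
$\hat\nabla_{\bar a}X^{\bar c}=\nabla_{\bar a}X^{\bar c}+\Upsilon_{\bar a}X^{\bar c}+\delta_{\bar a}{}^{\bar c}\Upsilon_{\bar b}X^{\bar b}$,
with $\Upsilon_{\bar a}=\overline\Pi{}_{\bar a}^\alpha\Upsilon_\alpha=\overline{\Upsilon_a}$, and sends the vanishing relation to $\hat\nabla_aX^{\bar c}=\nabla_aX^{\bar c}$.

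I expect no genuine obstacle here: this is a bookkeeping exercise in the projector identities~\eqref{usefulidentities}, and the only point requiring a little care is the sign-tracking in the vanishing identity $\overline\Pi{}_{\bar a}^\alpha\Pi_b^\beta\upsilon_{\alpha\beta}{}^\gamma\Pi_\gamma^c=0$, where one must be sure that the terms containing $J_\alpha{}^\delta\Upsilon_\delta$ are correctly paired against the terms containing $J_\beta{}^\delta\Upsilon_\delta$ so that the total vanishes rather than merely simplifying. Everything else is forced by reality of $\Upsilon_\alpha$ and by the fact that $\hat\nabla$ and $\nabla$, being complex connections, preserve the splitting $\C TM=T^{1,0}M\oplus T^{0,1}M$.
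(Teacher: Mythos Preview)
Your proposal is correct and follows essentially the same approach as the paper: both reduce the proof to the two projector identities $\Pi_a^\alpha\Pi_b^\beta\upsilon_{\alpha\beta}{}^\gamma\Pi_\gamma^c=\Upsilon_a\delta_b{}^c+\Upsilon_b\delta_a{}^c$ and $\overline\Pi{}_{\bar a}^\alpha\Pi_b^\beta\upsilon_{\alpha\beta}{}^\gamma\Pi_\gamma^c=0$ established just before the statement, and then note that the remaining cases are similar (the paper) or follow by complex conjugation from the reality of $\upsilon_{\alpha\beta}{}^\gamma$ (you).
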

\begin{proof} We compute
\begin{align*}\hat\nabla_aX^c&=
\Pi_a^\alpha\hat\nabla_\alpha(\Pi_\gamma^c\Pi_b^\gamma X^b)=
\Pi_a^\alpha\Pi_\gamma^c\hat\nabla_\alpha(\Pi_b^\gamma X^b)\\
&=\Pi_a^\alpha\Pi_\gamma^c\nabla_\alpha(\Pi_b^\gamma X^b)
+\Pi_a^\alpha\Pi_\gamma^c\upsilon_{\alpha\beta}{}^\gamma\Pi_b^\beta X^b\\
&=\Pi_a^\alpha\nabla_\alpha(\Pi_\gamma^c\Pi_b^\gamma X^b)
+(\Pi_a^\alpha\Pi_b^\beta\upsilon_{\alpha\beta}{}^\gamma\Pi_\gamma^c)X^b\\
&=\nabla_aX^c+(\Upsilon_a\delta_b{}^c+\Upsilon_b\delta_a{}^c)X^b=
\nabla_aX^c+\Upsilon_aX^c+\delta_a{}^c\Upsilon_bX^b,
\end{align*}
as required. The remaining calculations are similar.
\end{proof}
\begin{rem}\label{rem:proj-vs-cproj} The differential
operator $\nabla_{\bar{a}}\colon T^{1,0}M\to \Wedge^{0,1}M\otimes T^{1,0} M$
is c-projectively invariant, as is its conjugate $\nabla_{a}\colon T^{0,1}M\to
\Wedge^{1,0}M\otimes T^{0,1} M$.  (Here and throughout, the domain and
codomain of a differential operator are declared as bundles, although the
operator is a map between corresponding spaces of sections.) This is
unsurprising: it is the usual $\bar{\partial}$-operator on an almost complex
manifold whose kernel (in the integrable case) comprises the holomorphic
vector fields.

In contrast, the transformation rules for $\nabla_{a}\colon T^{1,0}M\to
\Wedge^{1,0}M\otimes T^{1,0}M$ and its conjugate are analogues of projective
equivalence~\eqref{projchange} in the $(1,0)$ and $(0,1)$ directions
respectively. When $(M,J)$ is real-analytic and the c-projective class
contains a real-analytic connection $\nabla$, this can be made precise by
extending $J$ and $[\nabla]$ to a complexification $M^\C$ of $M$, so that
$T^{1,0}M$ and $T^{0,1}M$ extend to distributions on $M^\C$.  If $J$ is
integrable, these distributions integrate to two foliations of $M^\C$, and
$[\nabla]$ induces projective structures on the leaves of these foliations.
\end{rem}

Taking the trace in equation (\ref{cprojchange}) and in the formulae in
Proposition~\ref{change10}, we deduce:
 
\begin{cor}\label{changedensity}
On an almost c-projective manifold $(M,J,[\nabla])$, the transformation rules
for the induced linear connections on $\Wedge^{2n}TM$, $\Wedge^{n}T^{1,0}M$,
and $\Wedge^{n} T^{0,1}M$ are\textup:
\begin{enumerate}
\item $\hat\nabla_\alpha\Sigma=
\nabla_\alpha\Sigma+(n+1)\Upsilon_\alpha\Sigma$,\enskip
for $\Sigma\in\Gamma(\Wedge^{2n}TM)$
\item $\hat\nabla_a\sigma=\nabla_a\sigma+(n+1)\Upsilon_a\sigma$ 
and $\hat\nabla_{\bar a}\sigma=\nabla_{\bar a}\sigma$,\enskip
for $\sigma\in\Gamma(\Wedge^{n}T^{1,0}M)$
\item $\hat\nabla_{\bar a}\bar\sigma=
\nabla_{\bar a}\bar\sigma+(n+1)\Upsilon_{\bar a}\bar\sigma$ 
and $\hat\nabla_{ a}\bar\sigma=\nabla_{a}\bar\sigma$,\enskip
for $\bar\sigma\in\Gamma(\Wedge^{n} T^{0,1}M)$.
\end{enumerate}
\end{cor}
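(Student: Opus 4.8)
The plan is to use the standard fact that if two linear connections on a complex or real vector bundle $V$ of rank $r$ differ by an endomorphism-valued $1$-form $A$, so that $\hat\nabla=\nabla+A$, then the induced connections on the top exterior power $\Wedge^r V$ differ by the pointwise trace of $A$. Each of the three transformation rules is then obtained by tracing the corresponding difference tensor for the connection on the underlying bundle, exactly as the statement of the corollary indicates.

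For part~(1), the connections on $TM$ differ by $\upsilon_{\alpha\beta}{}^\gamma$ as in~(\ref{cprojchange}), so the induced connections on $\Wedge^{2n}TM$ differ by the trace $\upsilon_{\alpha\beta}{}^\beta$. First I would compute this trace directly from the defining formula for $\upsilon_{\alpha\beta}{}^\gamma$, contracting $\gamma$ with $\beta$. The relevant facts are that $\delta_\beta{}^\beta=2n$, that $J_\beta{}^\beta=0$ (the almost complex structure is trace-free), and that $J_\alpha{}^\beta J_\beta{}^\delta=-\delta_\alpha{}^\delta$ (since $J^2=-\Id$). These immediately give
\[
\upsilon_{\alpha\beta}{}^\beta
=\tfrac12\bigl(2n\,\Upsilon_\alpha+\Upsilon_\alpha+\Upsilon_\alpha\bigr)
=(n+1)\Upsilon_\alpha,
\]
which is precisely the asserted rule for $\Sigma\in\Gamma(\Wedge^{2n}TM)$.

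For part~(2), I would instead trace the formulae of Proposition~\ref{change10}(1), which describe the induced connections on the complex bundle $T^{1,0}M$. In the $(1,0)$ direction the difference tensor is $\Upsilon_a\delta_b{}^c+\delta_a{}^c\Upsilon_b$; contracting $c$ with $b$ and using that here the index ranges over $1,\ldots,n$, so that $\delta_b{}^b=n$, yields $(n+1)\Upsilon_a$, giving the stated rule for $\sigma\in\Gamma(\Wedge^n T^{1,0}M)$. In the $(0,1)$ direction the difference tensor vanishes by Proposition~\ref{change10}(1), so $\hat\nabla_{\bar a}\sigma=\nabla_{\bar a}\sigma$. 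Part~(3) is then obtained by conjugating part~(2), using the conjugate formulae in Proposition~\ref{change10}(2).

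These computations are routine; the only point requiring care is keeping track of which dimension enters each trace — the real dimension $2n$ in part~(1) against the complex dimension $n$ in parts~(2) and~(3). That all three rules nonetheless carry the same coefficient $(n+1)$ is consistent with the fact that the complexification of $\Wedge^{2n}TM$ is $\Wedge^n T^{1,0}M\otimes\Wedge^n T^{0,1}M$: tracing this tensor-product decomposition recovers part~(1) from parts~(2) and~(3), since the real $1$-form $\Upsilon_\alpha$ is the sum of its $(1,0)$- and $(0,1)$-components $\Upsilon_a$ and $\Upsilon_{\bar a}$. This cross-check is the natural place to confirm the numerology, and is the only conceptual content beyond the elementary trace identities above.
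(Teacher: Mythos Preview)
Your proposal is correct and follows exactly the approach indicated in the paper, which derives the corollary simply by ``taking the trace in equation~(\ref{cprojchange}) and in the formulae in Proposition~\ref{change10}.'' You have supplied the explicit computation of these traces, and your consistency check via the decomposition $\Wedge^{2n}TM\otimes\C\cong\Wedge^n T^{1,0}M\otimes\Wedge^n T^{0,1}M$ is a nice addition.
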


For the convenience of the reader, let us also record the transformation rules
for the induced connections on~$T^*M$, respectively $\Wedge^{1,0}$ and
$\Wedge^{0,1}$. If two complex connections $\hat\nabla$ and $\nabla$ are
related via $\Upsilon_\alpha$ as in~(\ref{cprojchange}), then the induced
connections on $T^*M$ are related by
\begin{equation}
\hat\nabla_\alpha \phi_\gamma = \nabla_\alpha \phi_\gamma-
\tfrac{1}{2}(\Upsilon_\alpha\phi_\gamma+
\Upsilon_\gamma \phi_\alpha-
J_\alpha{}^\beta\Upsilon_\beta J_\gamma{}^\delta \phi_\delta
-J_\alpha{}^\beta\phi_\beta J_\gamma{}^\delta \Upsilon_\delta).
\end{equation}
Therefore, we obtain:
\begin{prop}\label{changeform}
Suppose $(M,J,[\nabla])$ is an almost c-projective manifold of dimension
$2n\geq4$. Assume two connections $\hat\nabla$ and $\nabla$ in $[\nabla]$
differ by $\Upsilon_\alpha$ as in {\rm(\ref{cprojchange})} and set
$\Upsilon_a:=\Pi_a^\alpha\Upsilon_\alpha$ and
$\Upsilon_{\bar{a}}:=\Pi_{\bar{a}}^\alpha\Upsilon_\alpha$. Then we have the
following transformation rules for the induced connections on $\Wedge^{1,0}$
and $\Wedge^{0,1}$\textup:
\begin{enumerate}
\item $\hat\nabla_a\phi_c=\nabla_a\phi_c-\Upsilon_a\phi_c-\phi_a\Upsilon_c$
and $\hat\nabla_{\bar a}\phi_c=\nabla_{\bar a}\phi_c$\,,
\item $\hat\nabla_{\bar a}\phi_{\bar c}=\nabla_{\bar a}\phi_{\bar c}
-\Upsilon_{\bar a}\phi_{\bar c}-\phi_{\bar a}\Upsilon_{\bar c}$
and $\hat\nabla_{a}\phi_{\bar c}=\nabla_{a}\phi_{\bar c}$\,.
\end{enumerate}
\end{prop}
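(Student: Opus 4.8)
The plan is to derive the transformation rules in Proposition~\ref{changeform} directly from those already established for $T^{1,0}M$ and $T^{0,1}M$ in Proposition~\ref{change10}, using duality. The bundle $\Wedge^{1,0}$ is canonically dual to $T^{1,0}M$ (as noted after~\eqref{one-forms}), so a connection on $T^{1,0}M$ induces one on $\Wedge^{1,0}$ by requiring compatibility with the pairing: for $X^c \in \Gamma(T^{1,0}M)$ and $\phi_c \in \Gamma(\Wedge^{1,0})$, the Leibniz rule $\nabla_a(\phi_c X^c) = (\nabla_a \phi_c)X^c + \phi_c(\nabla_a X^c)$ must hold, and the left-hand side is determined by the connection on $\Wedge^n T^{2n}$ or simply by the pairing being a function. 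The key point is that $\phi_c X^c$ is a scalar, on which $\hat\nabla$ and $\nabla$ agree up to the action on functions, so the difference tensors on $\Wedge^{1,0}$ and $T^{1,0}M$ must be negative transposes of one another.

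First I would make this precise. Write $\hat\nabla_a X^c = \nabla_a X^c + Q_{ab}{}^c X^b$ where, from Proposition~\ref{change10}(i), $Q_{ab}{}^c = \Upsilon_a \delta_b{}^c + \delta_a{}^c \Upsilon_b$. Applying $\hat\nabla_a$ and $\nabla_a$ to the scalar $\phi_c X^c$ and subtracting (both connections act identically on functions), one gets $0 = (\hat\nabla_a \phi_c - \nabla_a \phi_c)X^c + \phi_c Q_{ab}{}^c X^b$, valid for all $X^b$. Hence the difference tensor on $\Wedge^{1,0}$ is $-Q_{ab}{}^c$ contracted appropriately, namely $\hat\nabla_a \phi_c = \nabla_a \phi_c - Q_{ac}{}^b \phi_b = \nabla_a \phi_c - (\Upsilon_a \delta_c{}^b + \delta_a{}^b \Upsilon_c)\phi_b = \nabla_a \phi_c - \Upsilon_a \phi_c - \phi_a \Upsilon_c$, which is exactly part~(i). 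The mixed rule $\hat\nabla_{\bar a}\phi_c = \nabla_{\bar a}\phi_c$ follows identically from $\hat\nabla_{\bar a} X^c = \nabla_{\bar a} X^c$ in Proposition~\ref{change10}(i): the difference tensor vanishes, so its transpose does too. Part~(ii) is obtained either by the same argument starting from Proposition~\ref{change10}(ii), or simply by complex conjugation, since $\Upsilon_{\bar a} = \overline{\Upsilon_a}$ and conjugation interchanges the barred and unbarred calculations.

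Alternatively, one could verify the displayed transformation rule for $T^*M$ directly by dualising~\eqref{cprojchange}: if $\hat\nabla_\alpha X^\gamma = \nabla_\alpha X^\gamma + \upsilon_{\alpha\beta}{}^\gamma X^\beta$, then $\hat\nabla_\alpha \phi_\gamma = \nabla_\alpha \phi_\gamma - \upsilon_{\alpha\gamma}{}^\beta \phi_\beta$, and substituting the explicit expression for $\upsilon_{\alpha\beta}{}^\gamma$ from~\eqref{cprojchange} produces the stated formula for $T^*M$. One then applies the projectors $\Pi_a^\alpha$, $\Pi^a_\alpha$ together with the identities~\eqref{usefulidentities}, exactly as in the computation preceding Proposition~\ref{change10}, to split this into the $(1,0)$ and $(0,1)$ pieces. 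This is the mirror image of the calculation already carried out for $T^{1,0}M$, so no genuinely new work is needed.

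There is no real obstacle here; the result is a formal consequence of Proposition~\ref{change10} by duality. The only point requiring minor care is the index bookkeeping in the transpose — ensuring that $\phi_c$ is contracted against the correct slot of $Q_{ab}{}^c$ so that the symmetric structure $-\Upsilon_a \phi_c - \phi_a \Upsilon_c$ emerges rather than a wrong contraction — and confirming that the vanishing of the mixed difference tensors (the $\hat\nabla_{\bar a}$ on a $(1,0)$-form and $\hat\nabla_a$ on a $(0,1)$-form) transfers correctly under dualisation. Since the dual of the zero difference tensor is again zero, these mixed rules are immediate, and the whole proposition reduces to the short duality computation indicated above.
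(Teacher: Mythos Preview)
Your proposal is correct and matches the paper's approach: the paper states the dualised transformation rule on $T^*M$ (your ``alternative'' route) and then simply asserts Proposition~\ref{changeform} with ``Therefore, we obtain,'' leaving the projector computation implicit exactly as you describe. Your primary argument via direct duality from Proposition~\ref{change10} is an equally valid and slightly more streamlined variant of the same idea.
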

Note that the real line bundle $\Wedge^{2n}TM$ is oriented and hence admits
oriented roots.  We denote $(\Wedge^{2n}TM)^{\frac{1}{n+1}}$ by $\cE_\R(1,1)$
and for any $k\in\Z$ we set $\cE_\R(k,k):= \cE(1,1)^{\otimes k}$, where
$\cE_\R(k,k)^*=\cE_\R(-k,-k)$.  It follows from Corollary~\ref{changedensity}
that for a section $\Sigma$ of $\cE_\R(k,k)$ we have
\begin{equation}\label{changes_on_realdensities}
\hat\nabla_\alpha\Sigma=\nabla_\alpha\Sigma+k\Upsilon_\alpha\Sigma.
\end{equation}
In particular, we immediately deduce the following result.
\begin{prop}\label{scalebundle} Suppose $(M, J,[\nabla])$ is an almost
c-projective manifold of dimension $2n\geq 4$. The map sending an affine
connection to its induced connection on $\cE_\R(1,1)$ induces a bijection from
connections in $[\nabla]$ to linear connections on $\cE_\R(1,1)$.
\end{prop}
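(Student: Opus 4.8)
The plan is to recognise both the source and the target of this map as affine spaces (torsors) over the vector space $\Omega^1(M)$ of real $1$-forms on $M$, and then to observe that the map is affine with linear part the identity; a bijection follows at once.

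First I would fix a reference connection $\nabla\in[\nabla]$ and write $D$ for the connection it induces on $\cE_\R(1,1)$. By the definition of c-projective equivalence, every $\hat\nabla\in[\nabla]$ has the form $\nabla+\upsilon(\Upsilon)$ with $\upsilon(\Upsilon)$ given by \eqref{cprojchange} for some real $1$-form $\Upsilon_\alpha$. The point to check is that this exhibits $[\nabla]$ as a torsor over $\Omega^1(M)$, i.e.\ that $\Upsilon\mapsto\upsilon(\Upsilon)$ is injective. Contracting $\gamma=\beta$ in \eqref{cprojchange} and using $J_\beta{}^\beta=0$ and $J_\alpha{}^\beta J_\beta{}^\delta=-\delta_\alpha{}^\delta$ gives $\upsilon_{\alpha\beta}{}^\beta=(n+1)\Upsilon_\alpha$ (this is the trace recorded in Corollary~\ref{changedensity}(1)); since $n\geq2$, $\upsilon(\Upsilon)=0$ forces $\Upsilon=0$. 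Hence distinct $1$-forms give distinct connections in $[\nabla]$, and $[\nabla]$ is genuinely affine over $\Omega^1(M)$.

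On the target side, the linear connections on the real line bundle $\cE_\R(1,1)$ form, as for any line bundle, an affine space over $\Omega^1(M)$: two of them differ by a $1$-form acting through scalar multiplication. The bridge is \eqref{changes_on_realdensities} with $k=1$, which says precisely that the connection induced by $\hat\nabla=\nabla+\upsilon(\Upsilon)$ on $\cE_\R(1,1)$ equals $D+\Upsilon$. Thus in the affine coordinates determined by $\nabla$ and $D$ the map reads $\Upsilon\mapsto\Upsilon$, the identity, which is a bijection; equivalently, injectivity holds because equal induced connections force $\Upsilon=0$ and hence $\hat\nabla=\nabla$, while surjectivity holds because a prescribed connection $D+\theta$ is realised by $\nabla+\upsilon(\theta)$.

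The only non-formal step is this faithfulness of the parameterisation $\Upsilon\mapsto\upsilon(\Upsilon)$: one must know the source is a torsor over $\Omega^1(M)$ and not a proper quotient. That is exactly what the trace computation secures, the factor $(n+1)$ reflecting that the induced connection on the density bundle already detects the whole of $\Upsilon$; everything else is affine-space bookkeeping.
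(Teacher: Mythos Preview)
Your proof is correct and is exactly the argument the paper has in mind. The paper does not write out a proof at all: it simply says ``In particular, we immediately deduce the following result'' after stating \eqref{changes_on_realdensities}, so you have just made explicit the affine-space/torsor reasoning that the authors regard as immediate from that transformation law.
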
 
Since $\Wedge^{2n}TM$ and $\cE_\R(1,1)$ are oriented, they can be trivialised
by choosing a positive section. Such a positive section $\scale$ of
$\cE_\R(1,1)$ gives rise to a linear connection on $\cE_\R(1,1)$ by decreeing
that $\scale$ is parallel and therefore, by Proposition \ref{scalebundle}, to a
connection in the c-projective class. We call a connection $\nabla\in[\nabla]$
that arises in this way a \emph{special connection}.  Suppose $\hat\scale$ and
$\scale$ are two nowhere vanishing sections of $\cE_\R(1,1)$ and denote by
$\hat\nabla$ and $\nabla$ the corresponding connections. Then
$\hat\scale=e^{-f}\scale$ for some smooth function $f$ on $M$ and any
$\sigma\in\Gamma(\cE_\R(1,1))$ can be written as
$\sigma=h\scale=he^f\hat\scale$ for a smooth function $h$ on $M$.  Since
$\nabla\sigma=dh\otimes \scale$, we have
\begin{equation*}
\hat\nabla\sigma=d(he^f)\otimes \hat\scale= dh\otimes\scale+df\otimes\sigma
=\nabla\sigma+(\nabla f)\sigma.
\end{equation*}
Therefore, $\hat\nabla$ and $\nabla$ differ by an exact $1$-form, namely
$\Upsilon_\alpha\equiv\nabla_\alpha f$.

In some of the following sections, like for instance in
Section~\ref{standard_tractors}, we shall assume also that the complex line
bundle $\Wedge^n T^{1,0}M$ admits a $(n+1)^{\mathrm{st}}$ root and that we have
chosen one, which we will denote by $\cE(1,0)$ (following a standard notation
on~$\CP^n$). In that case we shall denote its conjugate bundle
$\overline{\cE(1,0)}$ by $\cE(0,1)$ and the dual bundle $\cE(1,0)^*$ by
$\cE(-1,0)$. In general, we shall also write $\cE(k,\ell):=\cE(1,0)^{\otimes k}
\otimes \cE(0,1)^{\otimes \ell}$ for $(k,\ell)\in\Z\times\Z$ and refer to its
sections as \emph{c-projective densities of weight}~$(k,\ell)$. By
Corollary~\ref{changedensity} we see that, for a c-projective density $\sigma$
of weight~$(k,\ell)$, we have
\begin{equation}\label{changes_on_densities}
\hat\nabla_a\sigma=\nabla_a\sigma+k\Upsilon_a\sigma\qquad
\hat\nabla_{\bar{a}}\sigma=
\nabla_{\bar{a}}\sigma+\ell\Upsilon_{\bar{a}}\sigma.
\end{equation}
Our notion of c-projective density means, in particular, that we may identify
$\Wedge^{n,0}$ with $\cE(-n-1,0)$ and it is useful to have a notation for this
change of viewpoint.  Precisely, we may regard our identification
$\cE(-n-1,0)\mapsisoto\Wedge^{n,0}$ as a tautological section $\epst_{ab\cdots
  c}$ of $\Wedge^{n,0}(n+1,0)$, such that a c-projective density $\rho$ of
weight $(-n-1,0)$ corresponds to $\rho\epst_{ab\cdots c}$, a form of type
$(n,0)$. Note that $\cE(k,k)\cong \cE_\R(k,k)\otimes\C$.

\subsection{Parabolic geometries}\label{intropargeom}

For the convenience of the reader we recall here some basics of parabolic
geometries; for a comprehensive introduction see~\cite{csbook}.

A \emph{parabolic geometry} on a manifold $M$ is a Cartan geometry of type
$(G,P)$, where $G$ is a semisimple Lie group and $P\subset G$ a so-called
\emph{parabolic subgroup}. Hence, it is given by the following data:
\begin{itemize}
\item a principal $P$-bundle $p\colon\G\to M$ 
\item a Cartan connection $\omega\in\Omega^1(\G,\g)$---that is, a
$P$-equivariant $1$-form on $\G$ with values in $\g$ defining a trivialisation
$T\G\cong\G\times\g$ and reproducing the generators of the fundamental vector
fields,
\end{itemize}
where $\g$ denotes the Lie algebra of $G$. Note that the projection
$G\to G/P$, equipped with the (left) Maurer--Cartan form
$\omega_{G}\in\Omega^1(G,\g)$ of $G$, defines a parabolic geometry on $G/P$,
which is called the \emph{homogeneous} or \emph{flat model} for parabolic
geometries of type $(G,P)$.

The \emph{curvature} of a parabolic geometry $(\G\stackrel{p}{\to}M,\omega)$ is
a $2$-form $K$ on $\G$ with values in~$\g$, defined by
\[
K(\chi,\xi)=d\omega(\chi,\xi)+[\omega(\chi),\omega(\xi)]\, \text{ for vector
  fields $\chi$ and $\xi$ on }\G,
\] 
where $d$ denotes the exterior derivative and $[\enskip,\enskip]$ the Lie
bracket of $\g$.

The curvature of the homogeneous model $(G\to G/P,\omega_{G})$
vanishes identically. Furthermore, the curvature $K$ of a parabolic geometry
of type $(G,P)$ vanishes identically if and only if it is locally isomorphic
to $(G\to G/P,\omega_{G})$. Thus, the curvature $K$ measures the
extent to which the geometry differs from its homogeneous model.

Given a parabolic geometry $(\G\stackrel{p}{\to}M,\omega)$ of type $(G,P)$, any
representation $\E$ of $P$ gives rise to an associated vector bundle
$E:=\G\times_P\E$ over~$M$. These are the natural vector bundles on a parabolic
geometry. Notice that the Cartan connection $\omega$ induces an isomorphism
\begin{align*}
\G\times_P\g/\p&\cong TM\\
{[u,X+\p]}&\mapsto T_up\bigl(\omega^{-1}(X)\bigr),
\end{align*}
where $\p$ denotes the Lie algebra of $P$ and the action of $P$ on $\g/\p$ is
induced by the adjoint action of~$G$. Similarly, $\omega$ allows us to identify
all tensor bundles on $M$ with associated vector bundles. The vector bundles
corresponding to $P$-modules obtained by restricting a representation of $G$ to
$P$ are called \emph{tractor bundles}. These bundles play an important role in
the theory of parabolic geometries, since the Cartan connection induces linear
connections, called \emph{tractor connections}, on these bundles. An important
example of a tractor bundle is the \emph{adjoint tractor} bundle $\A
M=\G\times_P\g$, which has a canonical projection to $TM$ corresponding to the
$P$-equivariant projection $\g\to\g/\p$.

\begin{rem}
The abstract theory of tractor bundles and connections even
provides an alternative description of parabolic geometries (see~\cite{CG}).
\end{rem}

By normalising the curvature of a parabolic geometry, the prolongation
procedures of~\cite{CSch,Morimoto,Tanaka} leads to an equivalence of
categories between so-called \emph{regular normal} parabolic geometries and
certain underlying structures, which may be described in more conventional
geometric terms. Among the most prominent of these are conformal structures,
projective structures, and CR-structures of hypersurface type. In the next
section we shall see that almost c-projective manifolds form another class of
examples.

{From} the defining properties of a Cartan connection it follows immediately
that the curvature $K$ of a parabolic geometry of type $(G,P)$ is
$P$-equivariant and horizontal. Hence, $K$ can be identified with a section of
the vector bundle $\Wedge^2T^*M\otimes\A M$ and therefore corresponds via
$\omega$ to a section $\kappa$ of the vector bundle
$$\G\times_P\Wedge^2(\g/\p)^*\otimes\g
\cong\G\times_P\Wedge^2\p_+\otimes\g,$$
where $\p_+$ is the nilpotent radical of $\p$ and the latter isomorphism is
induced by the Killing form of~$\g$. Now consider the complex for computing the
Lie algebra homology $H_*(\p_+,\g)$ of $\p_+$ with values in~$\g$:
\begin{equation*}
0\leftarrow\g\stackrel{\,\,\,\partial^*}{\leftarrow}\p_+\otimes\g
\stackrel{\,\,\,\partial^*}{\leftarrow}\Wedge^2\p_+\otimes\g\leftarrow\ldots
\end{equation*}
Since the linear maps $\partial^*$ are $P$-equivariant, they induce vector
bundle maps between the corresponding associated vector bundles. Moreover, the
homology spaces $H_i(\p_+,\g)$ are naturally $P$-modules and therefore give
rise to natural vector bundles. A parabolic geometry is called \emph{normal},
if $\partial^*\kappa=0$. In this case, we can project $\kappa$ to a section
$\kappa_h$ of $\G\times_PH_2(\p_+,\g)$, called the \emph{harmonic curvature}.
The spaces $H_i(\p_+,\g)$ are completely reducible $P$-modules and hence arise
as completely reducible representations of the reductive Levi factor $G_0$ of
$P$ via the projection $P\to P/\exp(\p_+)=G_0$. In particular, the
harmonic curvature is a section of a completely reducible vector bundle, which
makes it a much simpler object than the full curvature.  Moreover, using the
Bianchi identities of~$\kappa$, it can be shown that the harmonic curvature is
still a complete obstruction to local flatness:

\begin{prop}[see e.g.~\cite{csbook}] \label{harmcurv}
Suppose that $(\G\to M,\omega)$ is a regular normal
parabolic geometry. Then $\kappa\equiv 0$ if and only if $\kappa_h\equiv 0$.
\end{prop}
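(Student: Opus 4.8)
The plan is to prove the nontrivial implication, that $\kappa_h\equiv0$ forces $\kappa\equiv0$; the reverse direction is immediate, since $\kappa_h$ is by construction the image of $\kappa$ under the $P$-equivariant projection $\ker\partial^*\to H_2(\p_+,\g)$, so $\kappa\equiv0$ trivially gives $\kappa_h\equiv0$. The two ingredients I would combine are Kostant's Hodge theory for the codifferential $\partial^*$ and the Bianchi identity for the Cartan curvature, organised by an induction on homogeneity that is made available precisely by the regularity hypothesis.

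First I would recall the algebraic Hodge decomposition. Writing $\partial$ for the Lie algebra cohomology differential adjoint to $\partial^*$, and $\Box=\partial\,\partial^*+\partial^*\partial$ for the associated Kostant Laplacian, Kostant's theorem supplies a $G_0$-invariant orthogonal decomposition $\Wedge^2\p_+\otimes\g=\operatorname{im}\partial\oplus\ker\Box\oplus\operatorname{im}\partial^*$ with $\ker\Box\cong H_2(\p_+,\g)$; in particular $\ker\partial^*=\ker\Box\oplus\operatorname{im}\partial^*$, and the harmonic projection identifies $\kappa_h$ with the $\ker\Box$-component of $\kappa$. Normality, $\partial^*\kappa=0$, places $\kappa$ in $\ker\partial^*$, so the hypothesis $\kappa_h\equiv0$ says exactly that $\kappa$ takes values in $\operatorname{im}\partial^*$.

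Next I would grade everything by homogeneity. The bundle $\Wedge^2\p_+\otimes\g$ inherits a grading from that of $\g$, and $\partial$, $\partial^*$, hence $\Box$, are $G_0$-equivariant of degree zero, so the Hodge decomposition respects this grading. Regularity means the curvature function $\kappa$ has strictly positive homogeneity; I would write $\kappa=\sum_{i\ge1}\kappa_i$ with $\kappa_i$ the homogeneity-$i$ component, suppose for contradiction that $\kappa\not\equiv0$, and let $\ell\ge1$ be the least degree with $\kappa_\ell\not\equiv0$. The crux is the Bianchi identity, which when rewritten for $\kappa$ takes the schematic shape $\partial\kappa=(\text{term in }\nabla\kappa)+(\text{term quadratic in }\kappa)$, where the covariant-derivative term raises homogeneity by at least one and the quadratic term has homogeneity $\ge2\ell$. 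Extracting the homogeneity-$\ell$ part and using $\ell\ge1$, both corrections vanish in that degree, leaving $\partial\kappa_\ell=0$. Since $\partial^*$ preserves homogeneity, normality also gives $\partial^*\kappa_\ell=0$, so $\kappa_\ell$ is harmonic, i.e.\ lies in $\ker\Box$. But then the homogeneity-$\ell$ component of $\kappa_h$ equals $\kappa_\ell\not\equiv0$, contradicting $\kappa_h\equiv0$; hence no such $\ell$ exists and $\kappa\equiv0$.

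I expect the main obstacle to be the careful bookkeeping of homogeneities in the Bianchi identity: verifying that the quadratic term genuinely has homogeneity $\ge2\ell>\ell$ (which is exactly where regularity, $\ell\ge1$, is indispensable) and that the covariant-derivative correction strictly raises degree, so that truncating the identity to homogeneity $\ell$ collapses it cleanly to the algebraic statement $\partial\kappa_\ell=0$.
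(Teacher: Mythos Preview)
Your argument is correct and is precisely the standard proof from \v Cap--Slov\'ak~\cite{csbook} that the paper cites: Kostant's Hodge decomposition together with the Bianchi identity, organised by an induction on homogeneity made possible by regularity. The paper does not supply its own proof of this proposition but only references~\cite{csbook}, remarking beforehand that the result follows ``using the Bianchi identities of~$\kappa$''---exactly the mechanism you invoke---and afterwards that BGG machinery lets one reconstruct $\kappa$ from $\kappa_h$ via a splitting operator~\cite{CD}.
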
 

\begin{rem}
The machinery of BGG sequences shows that the curvature of a regular normal
parabolic geometry can be reconstructed from the harmonic curvature by applying
a BGG splitting operator (see~\cite{CD}).
\end{rem}

\subsection{Almost c-projective manifolds as parabolic geometries}
\label{cproCartan}

It is convenient for our purposes to realise the Lie algebra
$\g:=\sgl(n+1,\C)$ of complex trace-free linear endomorphisms of
$\C^{n+1}$ as block matrices of the form
\begin{equation}\label{block_decom}
\g=\left\{
\begin{pmatrix}-\mathrm{tr}\,A & Z \\ X & A\end{pmatrix}: 
A\in\g\mathfrak l(n,\C), X\in\C^n, Z\in(\C^n)^*\right\},
\end{equation}
where $\mathrm{tr}\colon {\mathfrak{gl}}(n,\C)\to \C$ denotes the
trace. The block form equips $\g$ with the structure of a graded Lie algebra:
\begin{equation*}
\g=\g_{-1}\oplus\g_0\oplus\g_1,
\end{equation*} 
where $\g_0$ is the block diagonal subalgebra isomorphic to $\gl(n,\C)$ and
$\g_{-1}\cong\C^n$, respectively $\g_{1}\cong(\C^n)^*$, as $\g_0$-modules. Note
that the subspace $\p:=\g_0\oplus\g_1$ is a subalgebra of $\g$ (with
$\p\cong \g_0\ltimes\g_1$ as Lie algebra). Furthermore, $\p$ is a parabolic
subalgebra with Abelian nilpotent radical $\p_+:=\g_1$ and Levi factor
isomorphic to~$\g_0$. For later purposes let us remark here that we may
conveniently decompose an element $A\in\g_0$ into its trace-free part and into
its trace part as follows
\begin{equation}\label{decompg0}
\begin{pmatrix}
0& 0 \\ 0 & A -\frac{\mathrm{tr}\,A}{n} \Id_n\end{pmatrix}
\;+\;
\frac{n+1}{n}\mathrm{tr}\,A\!
\begin{pmatrix}
-\frac{n}{n+1} & 0 \\ 0 & \frac{1}{n+1}\Id_n\end{pmatrix}.
\end{equation}

Now set $G:={\mathrm{PSL}}(n+1,\C)$ and let $P$ be the stabiliser in $G$ of the
complex line generated by the first standard basis vector of $\C^{n+1}$. Let
$G_0$ be the subgroup of $P$ that consists of all elements $g\in P$ whose
adjoint action $\mathrm{Ad}(g)\colon \g\to\g$ preserve the grading.
Hence, it consists of equivalence classes of matrices of the form
\begin{equation*}
\begin{pmatrix} (\det_\C C)^{-1} & 0 \\ 0 & C\end{pmatrix}
\quad\text{ where } C\in {\mathrm{GL}}(n,\C),
\end{equation*}
and the adjoint action of $G_0$ on $\g$ induces an isomorphism
$$G_0\cong{\mathrm{GL}}(\g_{-1}, \C)\cong{\mathrm{GL}}(n,\C).$$
Obviously, the subgroups $G_0$ and $P$ of $G$ have corresponding Lie algebras
$\g_0$ and~$\p$, respectively.

{From} now on we shall view $G_0\subset P\subset G$ as real Lie groups in
accordance with the identification of ${\mathrm{GL}}(n+1,\C)$ with the real
subgroup of ${\mathrm{GL}}(2n+2,\R)$ that is given by
\[
{\mathrm{GL}}(2n+2,\J_{2(n+1)})=\left\{A\in
\mathrm{GL}(2(n+1),\R): A\J_{2(n+1)}=\J_{2(n+1)}A\right\},
\]
where $\J_{2(n+1)}$ is the following complex structure on $\R^{2n+2}$:
\begin{equation*} \J_{2(n+1)}=
\begin{pmatrix} \J_2 &  &\\ &\ddots& \\
&&\J_2\end{pmatrix}
\quad\text{ with}\enskip
\J_2=\begin{pmatrix}0&-1\\1&0\end{pmatrix}.
\end{equation*}

Suppose now that $(M,J,[\nabla])$ is an almost c-projective manifold of real
dimension $2n\geq4$. Then $J$ reduces the frame bundle ${\mathcal F}M$ of $M$ to
a principal bundle $p_0\colon\G_0\to M$ with structure group $G_0$
corresponding to the group homomorphism
$$G_0\cong {\mathrm{GL}}(n,\C)\cong {\mathrm{GL}}(2n,\J_{2n})
\hookrightarrow {\mathrm{GL}}(2n,\R).$$
The general prolongation procedures of~\cite{CSch,Morimoto,Tanaka} further show
that $\G_0\to M$ can be canonically extended to a principal $P$-bundle
$p\colon\G\to M$, equipped with a normal Cartan connection
$\omega\in\Omega^1(\G,\g)$ of type $(G,P)$. Moreover,
$(\G\stackrel{p}{\to}M,\omega)$ is uniquely defined up to isomorphism and these
constructions imply:

\begin{thm}[see also~\cite{Hrdina,Y}]\label{EquivCat}
There is an equivalence of categories between almost c-projective manifolds of
real dimension $2n\geq 4$ and normal parabolic geometries of type $(G,P)$,
where $G$ and $P$ are viewed as real Lie groups. The homogeneous model
$(G\to G/P,\omega_{G})$ corresponds to the c-projective manifold
\[
(\CP^n, J_{\mathrm{can}}, [\nabla^{g_{FS}}]),
\] 
where $J_{\mathrm{can}}$ denotes the canonical complex structure on
$\CP^n$ and $\nabla^{g_{FS}}$ the Levi-Civita connection of the
Fubini--Study metric $g_{FS}$.
\end{thm}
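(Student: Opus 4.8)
The plan is to deduce the statement from the general prolongation theorems of \cite{CSch,Morimoto,Tanaka}, which for a fixed $|k|$-graded semisimple Lie algebra establish an equivalence between regular normal parabolic geometries and their underlying structures, and to check that, for the $|1|$-graded algebra $\g=\sgl(n+1,\C)$ displayed in \eqref{block_decom}, this underlying structure is exactly an almost c-projective structure. Since $\p_+=\g_1$ is abelian, the grading is $|1|$-graded, and the first thing I would record is that regularity is automatic: by the discussion preceding Proposition~\ref{harmcurv}, $\kappa$ is a section of $\G\times_P(\Wedge^2\p_+\otimes\g)$, and as $\Wedge^2\p_+=\Wedge^2\g_1$ has homogeneity $2$ while $\g=\g_{-1}\oplus\g_0\oplus\g_1$, the components of $\kappa$ have homogeneities $1,2,3$. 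The lowest is $1$, so every normal geometry of type $(G,P)$ is automatically regular, and ``regular normal'' collapses to ``normal'' as in the statement.

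For the passage from parabolic geometries to almost c-projective structures, I would argue as follows. Given a normal geometry $(\G\to M,\omega)$, the quotient $\G_0=\G/\exp(\g_1)\to M$ is a reduction of the frame bundle to $G_0\cong\mathrm{GL}(n,\C)$, hence an almost complex structure $J$. Each Weyl structure (a $G_0$-equivariant section $\G_0\to\G$) pulls $\omega$ back to a complex linear connection $\nabla$ on $TM$, and since Weyl structures form an affine space modelled on $1$-forms valued in $\p_+=\g_1$, two such connections differ by a $1$-form $\Upsilon_\alpha$; computing the induced change on $\g_{-1}$ from the brackets $\g_1\times\g_{-1}\to\g_0$ and $\g_0\times\g_{-1}\to\g_{-1}$ reproduces precisely the c-projective change \eqref{cprojchange}. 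Finally, normality $\partial^*\kappa=0$ in homogeneity one forces the torsion of $\nabla$ to be harmonic, and the Kostant picture identifies the harmonic torsion with the $\coker\partial$ of Section~\ref{complexconnections}, i.e.\ with the $(0,2)$-part $-\tfrac14N^J$; by Proposition~\ref{lichnerowicz} this is exactly minimality. Thus the Weyl connections of a normal geometry constitute a c-projective class $[\nabla]$ of minimal complex connections.

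The converse and the precise matching of data is the step I expect to require the most care. Starting from $(M,J,[\nabla])$, the reduction $\G_0\to M$ is furnished by $J$, and the prolongation extends it canonically to a normal geometry unique up to isomorphism; the crux is to show that the \emph{additional} datum needed by the prolongation, beyond the bare $G_0$-structure, is exactly the c-projective class. The relevant algebraic fact is that $\g=\sgl(n+1,\C)$ is of projective type: the homogeneity-one part $H^1(\g_{-1},\g)$ is computed by the map $\partial$ of Section~\ref{complexconnections} and equals the nonzero space $\ker\partial$ of \eqref{kerpartial}. Its nonvanishing is precisely the statement that $J$ alone does \emph{not} pin down a normal connection. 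Using the splitting of $\ker\partial$ into trace and trace-free parts from Remark~\ref{almost_complex_natural_bundles}, the trace part is the image of the Weyl-structure freedom, i.e.\ the c-projective changes \eqref{cprojchange}, so the Weyl connections of the reconstructed geometry fill out one coset of $\ker\partial$ modulo this trace part, namely the class $[\nabla]$ we started with; the residual trace-free part parametrises the distinct c-projective classes over a fixed $J$. This makes the two constructions mutually inverse. Functoriality of the prolongation then gives the bijection on morphisms: a complex diffeomorphism intertwines the $G_0$-structures, hence lifts to an isomorphism of normal Cartan connections, if and only if it carries $[\nabla^M]$ to $[\nabla^N]$, which by Proposition~\ref{CprojTransCharacterisation} is the statement that it preserves $J$-planar curves.

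It remains to identify the homogeneous model. The coset space $G/P=\mathrm{PSL}(n+1,\C)/P$ is the space of complex lines in $\C^{n+1}$, namely $\CP^n$, and the complex structure induced by the $G_0$-reduction (coming from $\g_{-1}\cong\C^n$) is the canonical one $J_{\mathrm{can}}$; the Maurer--Cartan form $\omega_{G}$ is a flat, hence normal, Cartan connection, so by the equivalence it induces a c-projective structure on $\CP^n$. To see that its class is $[\nabla^{g_{FS}}]$, I would verify that $\nabla^{g_{FS}}$ belongs to it: as the Levi-Civita connection of a K\"ahler metric it is complex and, $J_{\mathrm{can}}$ being integrable, torsion-free, hence minimal; moreover, as recalled in the Introduction, its $J$-planar curves are exactly the smooth curves lying in the linearly embedded lines $\CP^1\hookrightarrow\CP^n$, which are also the curves realising the $J$-planar curves of the flat model. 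Since both classes are $G$-invariant, minimal, and share these $J$-planar curves, Proposition~\ref{prop:Jplanar-curves} identifies them, so the flat model is $(\CP^n,J_{\mathrm{can}},[\nabla^{g_{FS}}])$ as claimed.
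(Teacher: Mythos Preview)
Your proposal is correct and follows essentially the same route as the paper: both invoke the general prolongation procedures of \cite{CSch,Morimoto,Tanaka} for $|1|$-graded geometries and verify that the underlying structure is precisely an almost c-projective structure, with the bracket $\g_1\times\g_{-1}\to\g_0$ reproducing the c-projective change~\eqref{cprojchange}. The paper complements this by an explicit description of the Cartan bundle $\G$ (as fibres $\{\theta(u)+\gamma^\nabla(u):\nabla\in[\nabla]\}$ over $\G_0$) and its tautological form, whereas you stay on the abstract cohomological side; one small imprecision is your identification of the degree-one part of $H^1(\g_{-1},\g)$ with $\ker\partial$ itself rather than with its quotient by the image of $\g_1$ (the trace part), but you use the correct trace/trace-free splitting immediately afterwards, so the argument goes through.
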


Let us explain briefly how the Cartan bundle $\G$ and the normal Cartan
connection $\omega$ of an almost c-projective manifold $(M,J,[\nabla])$ of
dimension $2n\geq4$ are constructed.  The reduction $\G_0\to{\mathcal F}M$ is
determined by the pullback of the soldering form on ${\mathcal F}M$ and
hence can be encoded by a strictly horizontal $G_0$-equivariant $1$-form
$\theta\in\Omega^1(\G_0,\g_{-1})$. Recall also that any connection
$\nabla\in[\nabla]$ can be equivalently viewed as a principal connection
$\gamma^\nabla\in\Omega^1(\G_0,\g_{0})$ on $\G_0$. Then $\G$ is defined to be
the disjoint union $\sqcup_{u\in\G_0} \G_{u}$, where
\begin{equation*}
\G_{u}:=\{\theta(u)+\gamma^\nabla(u): 
\nabla\in[\nabla]\}\quad\text{ for any }u\in\G_0.
\end{equation*}
The projection $p:= p_0\circ q\colon\G\to M$, where
$\G\stackrel{q}{\to}\G_0\stackrel{p_0}{\to}M$, naturally acquires the structure
of a $P$-principal bundle. Any element $p\in P$ can be uniquely written as
$p=g_0\exp(Z)$, where $g_0\in G_0$ and $Z\in\g_1$. The right action of an
element $g_0\exp(Z)\in P$ on an element $\theta(u)+\gamma^\nabla(u)\in\G_u$ is
given by
\begin{equation}\label{rightaction}
(\theta(u)+\gamma^\nabla(u))\cdot g_0\exp(Z):=
\theta(u\cdot g_0)(\cdot)+\gamma^\nabla(u\cdot g_0)(\cdot)
+[Z,\theta(u\cdot g_0)(\cdot)],\end{equation}
where $[\enskip,\enskip]$ denotes the Lie bracket
$\g_1\times\g_{-1}\to\g_0$.

\begin{rem}
The soldering form $\theta\in\Omega^1(\G_0,\g_{-1})$ gives rise to isomorphisms
$TM\cong\G_0\times_{G_0}\g_{-1}$ and $T^*M\cong\G_0\times_{G_0}\g_{1}$. For
elements $X\in \g_{-1}$ and $Z\in\g_1$, the Lie bracket
$[Z,X]\in\g_0\cong\gl(\g_{-1},\J_{2n})$ evaluated on an element
$Y\in\g_{-1}$ is given by
\begin{equation}\label{bracket}
[[Z,X],Y]=-(ZXY+ZYX-Z\J_{2n}X\J_{2n}Y-
Z\J_{2n}Y\J_{2n}X).
\end{equation} 
This shows that changing a connection form $\theta+\gamma^\nabla$ by a
$G_0$-equivariant function $Z\colon\G_0\to \g_1$ according to
(\ref{rightaction}) corresponds precisely to changing it c-projectively
(cf.~formula \eqref{cprojchange}).
\end{rem}

The definition of $\G$ easily implies that the following holds.
\begin{cor}
The projection $q\colon\G\to\G_0$ is a trivial principal bundle with
structure group $P_+:=\exp(\p_+)$ and its global $G_0$-equivariant sections,
called Weyl structures, are in bijection with principal connections in the
c-projective class. Moreover, any Weyl structure
$\sigma\colon\G_0\to\G$ induces an vector bundle isomorphism
\begin{align*}
\G_0\times_{G_0}\E&\cong\G\times_{P}\E\\
{[u,X]}&\mapsto[\sigma(u),X],
\end{align*}
for any $P$-module~$\E$.
\end{cor}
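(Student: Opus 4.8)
The plan is to verify the three assertions using the explicit description of $\G$ as the bundle whose fibre over $u\in\G_0$ is $\G_u=\{\theta(u)+\gamma^\nabla(u):\nabla\in[\nabla]\}$, together with the right $P$-action (\ref{rightaction}).

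First I would show that $q\colon\G\to\G_0$ is a principal $P_+$-bundle. Setting $g_0=e$ in (\ref{rightaction}), the element $\exp(Z)\in P_+$ sends $\theta(u)+\gamma^\nabla(u)$ to $\theta(u)+\gamma^\nabla(u)+[Z,\theta(u)(\cdot)]$; since $[\g_1,\g_{-1}]\subseteq\g_0$ and $q$ is unchanged, $P_+$ preserves each fibre $\G_u$. Comparing the bracket formula (\ref{bracket}) with (\ref{cprojchange}) shows, as noted in the Remark following (\ref{bracket}), that $Z\mapsto[Z,\theta(u)(\cdot)]$ realises precisely the c-projective change of connection associated with the corresponding $1$-form. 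Because the assignment $\Upsilon_\alpha\mapsto\upsilon_{\alpha\beta}{}^\gamma$ is injective --- tracing over $\beta=\gamma$ returns $(n+1)\Upsilon_\alpha$ --- and both $\g_1$ and the space of real $1$-forms have real dimension $2n$, this assignment is a linear isomorphism. Hence $P_+$ acts freely, and transitively on $\G_u$ since any two connections in $[\nabla]$ differ by such a change; thus $q$ is a principal $P_+$-bundle. It is trivial because $P_+\cong\g_1$ is a contractible vector group, or equivalently because any $\nabla\in[\nabla]$ (which exists by Proposition~\ref{lichnerowicz}) yields a global section.

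Next I would establish the bijection with connections. Each $\nabla\in[\nabla]$ gives a section $\sigma^\nabla(u):=\theta(u)+\gamma^\nabla(u)$, and putting $Z=0$ in (\ref{rightaction}) yields $\sigma^\nabla(u)\cdot g_0=\theta(u\cdot g_0)+\gamma^\nabla(u\cdot g_0)=\sigma^\nabla(u\cdot g_0)$, so $\sigma^\nabla$ is $G_0$-equivariant. Conversely, a $G_0$-equivariant section $\sigma$ takes values in the fibres $\G_u$ and so has the shape $\sigma(u)=\theta(u)+\gamma(u)$ for a $\g_0$-valued $1$-form $\gamma$ on $\G_0$; its $G_0$-equivariance is exactly $\mathrm{Ad}$-equivariance of $\gamma$, while the shape $\theta+\gamma$ of points of $\G_u$ forces $\gamma$ to reproduce the fundamental vector fields. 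Thus $\gamma=\gamma^\nabla$ for a unique principal connection, which lies in $[\nabla]$ because $\sigma(u)\in\G_u$. These two constructions are mutually inverse.

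Finally, for a Weyl structure $\sigma$ I would verify that $[u,X]\mapsto[\sigma(u),X]$ is a vector bundle isomorphism. It is well defined by $G_0$-equivariance: if $u'=u\cdot g_0$ and $X'=g_0^{-1}X$, then $[\sigma(u'),X']=[\sigma(u)\cdot g_0,g_0^{-1}X]=[\sigma(u),X]$ since $g_0\in P$. Surjectivity follows by writing an arbitrary point of $\G$ as $\sigma(u)\cdot\exp(Z)$, using that $\sigma$ is a section of the $P_+$-bundle $q$, and absorbing $\exp(Z)$ into the $\E$-factor; injectivity follows from freeness of the $P$-action together with $G_0$-equivariance, which forces the connecting element of $P$ to lie in $G_0$. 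As the map is fibrewise the identity on $\E$, it is an isomorphism of vector bundles over $M$. I expect the main obstacle to be the backward direction of the bijection: confirming that an arbitrary $G_0$-equivariant section assembles into a genuine principal connection in $[\nabla]$ rather than a mere fibrewise choice, which is precisely where the interplay between $\mathrm{Ad}$-equivariance and the defining shape $\theta+\gamma$ of the fibres must be used.
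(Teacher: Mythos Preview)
Your proof is correct and fills in the details that the paper omits: the paper simply states that this corollary ``easily follows from the definition of $\G$'' and gives no argument at all. Your three-step verification (principal $P_+$-bundle structure and triviality via a global section $\sigma^\nabla$; the bijection with connections in $[\nabla]$ via the explicit description of $\G_u$ and the $P$-action~(\ref{rightaction}); the vector bundle isomorphism) is exactly what the definition is designed to make transparent, and each step is carried out accurately.

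One small remark on the backward direction of the bijection, which you flagged as the possible sticking point: your argument is fine, but it may help to make explicit why the principal connection $\gamma$ you construct actually lies in the c-projective class. The point is that $\sigma(u)\in\G_u$ forces $\gamma(u)=\gamma^{\nabla_u}(u)$ for some $\nabla_u\in[\nabla]$ at each $u$, so for a fixed reference $\nabla_0\in[\nabla]$ the difference $\gamma-\gamma^{\nabla_0}$ takes values pointwise in the subspace of $\ker\partial$ corresponding to the trace part (i.e.\ to the $\upsilon_{\alpha\beta}{}^\gamma$'s of~(\ref{cprojchange})); since this is a pointwise linear condition, it globalises to a smooth $\Upsilon_\alpha$ with $\nabla\in[\nabla]$. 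This is implicit in your sentence ``which lies in $[\nabla]$ because $\sigma(u)\in\G_u$,'' but spelling it out removes any doubt that the pointwise condition assembles globally.
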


Note that there is a tautological $1$-form
$\nu\in\Omega^1(\G,\g_{-1}\oplus\g_0)$ on $\G$ given by
\begin{equation}\label{taut}
\nu(\theta(u)+\gamma^\nabla(u))(\xi):=
(\theta(u)+\gamma^\nabla(u))((Tq)\xi).
\end{equation}
Extending this form to a normal Cartan connection $\omega\in\Omega^1(\G,\g)$
establishes the equivalence of categories in Theorem~\ref{EquivCat}.

\begin{rem}
In Section~\ref{almost_c-projective} we observed that there are always
so-called special connections in the c-projective class. A Weyl structure
corresponding to a special connection is precisely what in the literature on
parabolic geometries is called an \emph{exact Weyl structure} (see 
\cite{csbook,CSl}). The name is due to the fact that they form an affine 
space over the space of exact $1$-forms on $M$.
\end{rem}

Note also that the almost complex structure $J$ on $M$ induces an almost
complex structure $J^{\G_0}$ on the complex frame bundle $\G_0$ of~$M$. If $J$
is integrable, so is $J^{\G_0}$ and $\G_0$ is a holomorphic vector bundle over
$M$. Moreover, the complex structure on $\g$ induces, by means of the
isomorphism $\omega\colon T\G\cong\G\times\g$, an almost complex structure
$J^\G$ on $\G$, satisfying $Tp\circ J^\G=J\circ Tp$ and $Tq\circ
J^\G=J^{\G_0}\circ Tq$. Note that the definition of the almost complex
structure on $J^{\G_0}$ and $J^{\G}$ implies that $\theta$ and $\omega$ are of
type~$(1,0)$.

Let us also remark that an immediate consequence of Theorem~\ref{EquivCat} and
the Liouville Theorem for Cartan geometries (see e.g.~\cite[Proposition
  1.5.3]{csbook}) is the following classical result.

\begin{prop}\label{CprojTrans_CPn}  For $n\geq 2$ the c-projective
transformations of $(\CP^n, J_{\mathrm{can}}, [\nabla^{g_{FS}}])$
\textup(which by Proposition~\textup{\ref{CprojTransCharacterisation}} are the
complex diffeomorphisms of $\CP^n$ that map complex lines to complex
lines\textup) are precisely given by the left multiplications of elements in
$\emph{PSL}(n+1,\C)$.  Moreover, any local c-projective transformation of
$(\CP^n, J_{\mathrm{can}}, [\nabla^{g_{FS}}])$ uniquely extends to a
global one.
\end{prop}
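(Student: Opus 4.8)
The plan is to derive both assertions from the categorical equivalence of Theorem~\ref{EquivCat} together with the Liouville Theorem for Cartan geometries \cite[Proposition~1.5.3]{csbook}. By Theorem~\ref{EquivCat}, the almost c-projective manifold $(\CP^n, J_{\mathrm{can}}, [\nabla^{g_{FS}}])$ corresponds to the homogeneous model $(G\to G/P,\omega_{G})$ with $G=\mathrm{PSL}(n+1,\C)$, and c-projective transformations correspond exactly to automorphisms of this Cartan geometry, namely $P$-bundle automorphisms of $G\to G/P$ that pull back the Maurer--Cartan form $\omega_{G}$ to itself. By Proposition~\ref{CprojTransCharacterisation} these are, at the level of the base, the complex diffeomorphisms of $\CP^n$ preserving the family of complex lines, so the task reduces to describing these Cartan automorphisms concretely.

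First I would invoke the Liouville Theorem in the stated form: every local automorphism of the homogeneous model $(G\to G/P,\omega_{G})$ is the restriction of left translation $L_{g}$ by a unique element $g\in G$, and consequently extends uniquely to a global automorphism. Since $\omega_{G}$ is the left Maurer--Cartan form, each $L_{g}$ manifestly preserves it, so this simultaneously yields the description of the automorphism group and the local-to-global extension statement. The left translation $L_{g}$ on $G$ descends to the map $hP\mapsto ghP$ on $G/P$, which under the standard identification $G/P\cong\CP^n$ is precisely the projective linear action of the class of $g$ in $\mathrm{PSL}(n+1,\C)$.

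The main point requiring care is that the equivalence of Theorem~\ref{EquivCat} must intertwine \emph{local} c-projective transformations with \emph{local} automorphisms of the associated Cartan geometry, not merely the global ones. This follows from the canonical and natural character of the normal Cartan connection: a local c-projective diffeomorphism lifts canonically to a local bundle isomorphism preserving $\omega_{G}$, and uniqueness of the normal Cartan connection up to isomorphism ensures these lifts are compatible on overlaps; conversely every local Cartan automorphism descends to a local c-projective transformation of the base. I would also note explicitly that the action of $\mathrm{PSL}(n+1,\C)$ on $\CP^n$ is effective, so that the assignment $g\mapsto L_{g}$ is a bijection onto the group of c-projective transformations rather than merely a surjection; this is exactly why the group appearing is $\mathrm{PSL}$ and not $\mathrm{SL}$. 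Granting the cited Liouville Theorem, the only genuine obstacle is this bookkeeping needed to transport locality faithfully through the equivalence of categories; once it is in place, both assertions of the proposition follow at once.
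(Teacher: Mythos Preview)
Your proposal is correct and follows exactly the approach the paper indicates: the paper states the proposition as ``an immediate consequence of Theorem~\ref{EquivCat} and the Liouville Theorem for Cartan geometries (see e.g.~\cite[Proposition~1.5.3]{csbook})'' without further elaboration, and you have simply fleshed out this argument with the appropriate care about locality and effectiveness of the $\mathrm{PSL}(n+1,\C)$-action.
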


We finish this section by introducing some notation. The $P$-module $\g$ admits
an invariant filtration $\g\supset \p\supset \g_1$ and hence the adjoint
tractor bundle $\A M:=\G\times_P\g$ is naturally filtered
\begin{equation*}
\A M=\A^{-1}M\supset\A^0M\supset \A^1M,
\end{equation*}
with $\A^1M\cong T^*M$ and  $\A M/\A^0 M \cong TM$.
Hence, the associated graded vector bundle to $\A M$ is given by
\begin{equation}\label{associated_graded_adjoint_tractor}
\mathrm{gr}(\A M)=\mathrm{gr}_{-1}(\A M)\oplus\mathrm{gr}_0(\A M)\oplus
\mathrm{gr}_1(\A M)=TM\oplus\gl(TM,J)\oplus T^*M,
\end{equation}
which can be identified with $\G_0\times_{G_0}\g$.

\subsection{The curvature of the canonical Cartan connection}
\label{almost_cproj_curvature}

Suppose $\sigma\colon \G_0\to\G$ is a Weyl structure and let
$\gamma^\nabla$ be the corresponding principal connection in the c-projective
class. Since the normal Cartan connection $\omega$ is $P$-equivariant and
$\sigma$ is $G_0$-equivariant, the pullback $\sigma^*\omega\in\Omega^1(\G_0,\g)$
is $G_0$-equivariant and hence decomposes according to the grading on $\g$ into
three components. Since $\omega$ extends the tautological form $\nu$ on $\G$, 
defined by~(\ref{taut}), we deduce that
\begin{equation}\label{cartanconn}
\sigma^*\omega=\theta+\gamma^\nabla-{\mathsf p}^\nabla,
\end{equation}
where ${\mathsf p}^\nabla\in\Omega^1(\G_0,\g_1)$ is horizontal and
$G_0$-equivariant and hence can be viewed as a section $\Rho^\nabla$ of
$T^*M\otimes T^*M$, called the \emph{Rho tensor} of~$\nabla$. Via~$\sigma$, the
curvature $\kappa\in\Omega^2(M,\A M)$ of $\omega$ can be identified with a
section $\kappa^\sigma$ of
\begin{multline*}
\Wedge^2 T^*M\otimes\mathrm{gr}(\A M)\\
=(\Wedge^2T^*M\otimes TM)\oplus(\Wedge^2T^*M\otimes\gl(TM,J))
\oplus(\Wedge^2T^*M\otimes T^*M),
\end{multline*} 
which decomposes according to this splitting into three components
\begin{equation*}
\kappa^\sigma=T+W^\nabla-C^\nabla.
\end{equation*}
One computes straightforwardly that $T\in\Gamma(\Wedge^2T^*M\otimes TM)$ is the
torsion of the almost c-projective structure and that $C^\nabla= d^\nabla
\Rho^\nabla\in\Gamma(\Wedge^2T^*M\otimes T^*M)$, where $d^\nabla$ denotes the
covariant exterior derivative on differential forms with values in $T^*M$
induced by $\nabla$. The tensor $C^\nabla$ is called the \emph{Cotton--York
tensor} of~$\nabla$. To describe the component
$W^\nabla\in\Gamma(\Wedge^2T^*M\otimes\gl(TM,J))$, called the (c-projective)
\emph{Weyl curvature} of $\nabla$, let us denote by
$R^\nabla\in\Omega^2(M,\gl(TM,J))$ the curvature of $\nabla$. Then one has
\begin{equation*}
W^\nabla= R^\nabla-\partial \Rho^\nabla,
\end{equation*}
where 
\begin{equation}\label{partial}
(\partial\Rho^\nabla)_{\alpha\beta}{}^\gamma{}_\epsilon:=
\delta_{[\alpha}{}^\gamma\Rho^\nabla_{\beta]\epsilon}
-J_{[\alpha}{}^\gamma\Rho^\nabla_{\beta]\zeta}J_\epsilon{}^\zeta
-\Rho^\nabla_{[\alpha\beta]}\delta_{\epsilon}{}^\gamma
- J_{[\alpha}{}^\zeta\Rho^\nabla_{\beta]\zeta}J_\epsilon{}^{\gamma}.
\end{equation}

\begin{rem}\label{partial_remark}
The map $\partial\colon T^*M\otimes T^*M\to\Wedge^2T^*M\otimes
\gl(TM,J)$ given by (\ref{partial}) is related to Lie algebra cohomology. It is
easy to see that the Lie algebra differentials in the complex computing the Lie
algebra cohomology of the Abelian real Lie algebra $\g_{-1}$ with values in the
representation $\g$ are $G_0$-equivariant and that $\partial$ is induced by the
restriction to $\g_{-1}^*\otimes\g_1\cong \g_{1}\otimes\g_1$ of half of the
second differential in this complex.
\end{rem}

The normal Cartan connection $\omega$ is characterised as the unique extension
of $\nu$ to a Cartan connection such that $\partial^*\kappa^\sigma=0$ for all
Weyl structures $\sigma\colon\G_0\to\G$. Analysing $\ker\partial^*$
shows that $T_{\alpha\beta}{}^\gamma$ is in there, since forms of type $(0,2)$
are, and $C^\nabla$ is, since $\Wedge^2T^*M\otimes T^*M\subset
\ker\partial^*$. Hence, $\Rho^\nabla$ is uniquely determined by requiring
that $W^\nabla$ be in the kernel of $\partial^*$.

\begin{rem}\label{rem:non-normal-case} Recall that in
Definition~\ref{definition_cprojective_structure} we restricted our definition
of almost c-projective structures to c-projective equivalence classes of
minimal connections. Since the kernel of
\[
\partial^*\colon
\Wedge^2 T^*M\otimes TM\to T^*M\otimes \mathfrak{gl}(TM,J)
\]
consists precisely of all the $2$-forms with values in $TM$ of type $(0,2)$,
the discussion of the construction of the Cartan connection above shows that
the minimality condition is forced by the normalisation condition of the Cartan
connection. The requirement for the almost c-projective structure to be minimal
is however not necessary in order to construct a canonical Cartan connection.
In fact, starting with any complex connection, one can show that there is a
complex connection with the same $J$-planar curves whose torsion has only two
components, namely the $(0,2)$-component $-\frac{1}{4}N_J$ and a component in
the subspace of $(1,1)$-tensors in $\Wedge^2T^*M\otimes TM$ that are trace and
$J$-trace free. Imposing this normalisation condition on an almost c-projective
structure allows then analogously as above to associate a canonical Cartan
connection (see \cite{KMT}).
\end{rem}

\begin{prop} Suppose $(M,J,[\nabla])$ is an almost c-projective manifold of
dimension $2n\geq 4$. Let $\nabla\in[\nabla]$ be a connection in the
c-projective class. Then the Rho tensor corresponding to $\nabla$ is given by
\begin{equation}\label{RhoTensor}
\Rho^\nabla_{\alpha\beta}=\tfrac{1}{n+1}(\Ric^\nabla_{\alpha\beta}+
\tfrac{1}{n-1}(\Ric^\nabla_{(\alpha\beta)}-J_{(\alpha}{}^\gamma J_{\beta)}{}^\delta
\Ric^\nabla_{\gamma\delta})),
\end{equation}
where $\Ric^\nabla_{\alpha\beta}:= R^\nabla_{\gamma\alpha}{}^\gamma{}_\beta$ is
the Ricci tensor of~$\nabla$. Moreover, if $\hat\nabla\in[\nabla]$ is another
connection in the class, related to $\nabla$ according
to~{\rm(\ref{cprojchange})}, then
\begin{equation}\label{changeRho}
\Rho^{\hat\nabla}_{\alpha\beta} =
\Rho^\nabla_{\alpha\beta}-\nabla_\alpha\Upsilon_\beta
+\tfrac{1}{2}(\Upsilon_\alpha\Upsilon_\beta -J_\alpha{}^\gamma
J_\beta{}^\delta\Upsilon_\gamma\Upsilon_\delta).
\end{equation}
\end{prop}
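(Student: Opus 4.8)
The plan is to exploit the normalisation recorded just above the proposition: $\Rho^\nabla$ is the unique section of $T^*M\otimes T^*M$ for which the Weyl curvature $W^\nabla=R^\nabla-\partial\Rho^\nabla$ lies in $\ker\partial^*$. Rewriting this condition as $\partial^*\partial\,\Rho^\nabla=\partial^*R^\nabla$, formula~(\ref{RhoTensor}) will drop out once both sides are made explicit and the algebraic operator $\partial^*\partial$ is inverted. The whole computation is $G_0$-equivariant and purely pointwise, so no Bianchi identities are needed for the derivation, only the internal symmetries of $R^\nabla$.

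First I would write the Kostant codifferential $\partial^*$ in abstract indices, as the map dual under the Killing form to the differential $\partial$ of~(\ref{partial}) (cf.\ Remark~\ref{partial_remark}). Applied to the $\gl(TM,J)$-valued two-form $R^\nabla_{\alpha\beta}{}^\gamma{}_\delta$ it involves only traces, so $\partial^*R^\nabla$ is a $G_0$-invariant combination of the Ricci contraction $\Ric^\nabla_{\alpha\beta}=R^\nabla_{\gamma\alpha}{}^\gamma{}_\beta$ and its $J$-twist $J_\alpha{}^\gamma J_\beta{}^\delta\Ric^\nabla_{\gamma\delta}$, with coefficients read off mechanically from the bracket~(\ref{bracket}) and the identities~(\ref{usefulidentities}). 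Since $\partial^*\partial$ is a $G_0$-equivariant endomorphism of $T^*M\otimes T^*M$, by Schur's lemma (equivalently Kostant's theorem) it acts by a scalar on each $\mathrm{GL}(n,\C)$-irreducible summand; decomposing $T^*M\otimes T^*M$ into its symmetric and skew parts and each of these into $J$-Hermitian and $J$-anti-Hermitian types, these scalars come out of the form $n\pm1$, positive for $n\ge2$, so $\partial^*\partial$ is invertible and $\Rho^\nabla$ is genuinely determined. Inverting summand by summand, and using that for a symmetric tensor $S_{\alpha\beta}$ the combination $S_{\alpha\beta}-J_{(\alpha}{}^\gamma J_{\beta)}{}^\delta S_{\gamma\delta}$ is twice its anti-Hermitian part, the $\tfrac1{n+1}$- and $\tfrac1{n-1}$-weighted projections of $\partial^*R^\nabla$ recombine into exactly~(\ref{RhoTensor}).

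For the transformation rule~(\ref{changeRho}) the cleanest route is the Weyl structure picture, rather than substituting the change of $\Ric^\nabla$ into~(\ref{RhoTensor}). A c-projective change $\nabla\mapsto\hat\nabla$ corresponds to a gauge change $\sigma\mapsto\hat\sigma=\sigma\cdot\exp(\Upsilon)$ of Weyl structures, under which the pullback $\sigma^*\omega=\theta+\gamma^\nabla-{\mathsf p}^\nabla$ of~(\ref{cartanconn}) transforms by $\mathrm{Ad}(\exp(-\Upsilon))$ together with a Maurer--Cartan term. Since $\p_+=\g_1$ is abelian and $\g$ has grading length one, the series $\mathrm{Ad}(\exp(-\Upsilon))=\Id-\mathrm{ad}_\Upsilon+\tfrac12\mathrm{ad}_\Upsilon^2$ terminates, and reading off the $\g_1$-component (which records $-\Rho^{\hat\nabla}$) produces three contributions: the term $-{\mathsf p}^\nabla$ reproduces $\Rho^\nabla$, the Maurer--Cartan and $\g_0$-cross terms assemble into $-\nabla_\alpha\Upsilon_\beta$, and the double bracket $\tfrac12[\Upsilon,[\Upsilon,\theta]]$, evaluated through~(\ref{bracket}), yields the quadratic term $\tfrac12(\Upsilon_\alpha\Upsilon_\beta-J_\alpha{}^\gamma J_\beta{}^\delta\Upsilon_\gamma\Upsilon_\delta)$. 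This is precisely~(\ref{changeRho}), and it is consistent with the direct substitution, which I would use only as a cross-check.

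I expect the main obstacle to be the middle step: the $J$-type bookkeeping needed to extract the correct symmetric/skew and Hermitian/anti-Hermitian projectors, and to pin down both the scalars $n\pm1$ and the precise combination of $\Ric^\nabla$ and its $J$-twist that constitutes $\partial^*R^\nabla$. By comparison the transformation rule is essentially forced once the Weyl structure gauge-change formula is written down.
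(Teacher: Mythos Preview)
Your approach is correct and essentially the same as the paper's. For~(\ref{RhoTensor}) the paper also equates $\partial^*R^\nabla=\partial^*\partial\,\Rho^\nabla$; it is slightly more concrete in that it simply notes $\partial^*$ is a multiple of the Ricci contraction (so the left-hand side is just $\Ric^\nabla$, with no $J$-twist needed there) and then computes $(\partial\Rho)_{\alpha\beta}{}^\alpha{}_\epsilon$ directly from~(\ref{partial}), splitting into symmetric and skew parts rather than invoking Schur on the full irreducible decomposition---but this is the same inversion you describe, done by hand. For~(\ref{changeRho}) the paper likewise points to the Weyl structure gauge-change formula from~\cite{CSl}, exactly your route, noting only that one must track the convention discrepancy (their $\Rho$ is $-\tfrac12$ times the one in~\cite{CSl}).
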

\begin{proof}
The map 
$\partial^*\colon \Wedge^2 T^*M\otimes \gl(TM,J)\to T^*M\otimes T^*M$
is a multiple of a Ricci-type contraction. Hence, the normality of $\omega$
implies
\begin{equation}
R^\nabla_{\alpha\beta}{}^\alpha{}_\epsilon=
(\partial \Rho^\nabla)_{\alpha\beta}{}^\alpha{}_\epsilon=
(n+\tfrac12)\Rho^\nabla_{\beta\epsilon}-
\tfrac{1}{2}\Rho^\nabla_{\epsilon\beta}
+J_{(\beta}{}^\gamma J_{\epsilon)}{}^\zeta\Rho^\nabla_{\gamma\zeta}.
\end{equation} 
Therefore, $\Ric^\nabla_{[\beta\epsilon]}=(n+1)\Rho^\nabla_{[\beta\epsilon]}$
and $\Ric^\nabla_{(\beta\epsilon)}=n\Rho^\nabla_{(\beta\epsilon)}
+J_{(\beta}{}^\gamma J_{\epsilon)}{}^\zeta\Rho_{\gamma\zeta}$, which implies
that
\begin{equation*}
\Ric^\nabla_{(\beta\epsilon)}
-J_{(\beta}{}^\gamma J_{\epsilon)}{}^\zeta \Ric^\nabla_{\gamma\zeta}=
(n-1)(\Rho^\nabla_{(\beta\epsilon)}
-J_{(\beta}{}^\gamma J_{\epsilon)}{}^\zeta\Rho^\nabla_{\gamma\zeta}).
\end{equation*} 
Using these identities one verifies immediately that formula (\ref{RhoTensor})
holds. The formula (\ref{changeRho}) for the change of the Rho tensor can
easily be verified directly or follows from the general theory of Weyl
structures for parabolic geometries established in~\cite{CSl} taking into
account that the Rho tensor in ~\cite{CSl} is $-\frac{1}{2}$ times the Rho
tensor given by \eqref{RhoTensor} and our conventions for the definition of
$\Upsilon$ as in \eqref{cprojchange}.
\end{proof}

As an immediate consequence (writing out (\ref{changeRho}) in terms of its
components using the various projectors $\Pi_\alpha^a,\ldots$ and the
formulae (\ref{usefulidentities})) we have:

\begin{cor}\label{Rho_changes}
\mbox{ }\begin{itemize}
\item $\overline{\Rho^\nabla_{ab}}=\Rho^\nabla_{\bar a\bar b}$
and $\overline{\Rho^\nabla_{\bar a b}}=\Rho^\nabla_{a\bar b}$ 
\item $\Rho^{\hat\nabla}_{ab} = 
\Rho^\nabla_{ab}-\nabla_a\Upsilon_b+\Upsilon_a\Upsilon_b$
\item $\Rho^{\hat\nabla}_{\bar a b} = 
\Rho^\nabla_{\bar a b}-\nabla_{\bar a}\Upsilon_{b}$
\end{itemize}
\end{cor}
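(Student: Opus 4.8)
The plan is to obtain all three assertions by contracting the pointwise tensor identity (\ref{changeRho}) with the projectors $\Pi_a^\alpha$, $\Pi_b^\beta$, $\overline\Pi{}_{\bar a}^\alpha$ and their conjugates, and then simplifying with the algebraic relations (\ref{usefulidentities}). The reality statements come first and essentially for free: since $\Rho^\nabla_{\alpha\beta}$ is a real tensor and complex conjugation interchanges $\Pi_a^\alpha$ with $\overline\Pi{}_{\bar a}^\alpha$, conjugating $\Rho^\nabla_{ab}=\Pi_a^\alpha\Pi_b^\beta\Rho^\nabla_{\alpha\beta}$ yields $\overline{\Rho^\nabla_{ab}}=\overline\Pi{}_{\bar a}^\alpha\overline\Pi{}_{\bar b}^\beta\Rho^\nabla_{\alpha\beta}=\Rho^\nabla_{\bar a\bar b}$, and likewise $\overline{\Rho^\nabla_{\bar a b}}=\Rho^\nabla_{a\bar b}$.

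For the first transformation rule I would contract (\ref{changeRho}) with $\Pi_a^\alpha\Pi_b^\beta$. The linear term $\Pi_a^\alpha\Pi_b^\beta\nabla_\alpha\Upsilon_\beta$ reduces to $\nabla_a\Upsilon_b$ because $\nabla$ is a complex connection and hence preserves type: the projectors are parallel (as $\nabla J=0$), so $\Pi_b^\beta$ passes through $\nabla_\alpha$ and $\Pi_a^\alpha\nabla_\alpha$ becomes $\nabla_a$. The quadratic piece is where (\ref{usefulidentities}) does the work: since $\Pi_a^\alpha J_\alpha{}^\gamma=i\Pi_a^\gamma$ and $\Pi_b^\beta J_\beta{}^\delta=i\Pi_b^\delta$, the $J$-contracted term $\Pi_a^\alpha\Pi_b^\beta J_\alpha{}^\gamma J_\beta{}^\delta\Upsilon_\gamma\Upsilon_\delta$ equals $i^2\Upsilon_a\Upsilon_b=-\Upsilon_a\Upsilon_b$, so that $\tfrac12\bigl(\Upsilon_a\Upsilon_b-(-\Upsilon_a\Upsilon_b)\bigr)=\Upsilon_a\Upsilon_b$, giving precisely $\Rho^{\hat\nabla}_{ab}=\Rho^\nabla_{ab}-\nabla_a\Upsilon_b+\Upsilon_a\Upsilon_b$.

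For the mixed rule I would instead contract with $\overline\Pi{}_{\bar a}^\alpha\Pi_b^\beta$. The linear term again collapses to $\nabla_{\bar a}\Upsilon_b$ by the same parallelism argument. The essential difference lies in the quadratic term: now $\overline\Pi{}_{\bar a}^\alpha J_\alpha{}^\gamma=-i\overline\Pi{}_{\bar a}^\gamma$ while $\Pi_b^\beta J_\beta{}^\delta=i\Pi_b^\delta$, so the $J$-contracted term picks up a factor $(-i)(i)=+1$ and equals $+\Upsilon_{\bar a}\Upsilon_b$ rather than its negative. Hence $\tfrac12\bigl(\Upsilon_{\bar a}\Upsilon_b-\Upsilon_{\bar a}\Upsilon_b\bigr)=0$, the quadratic contribution cancels, and one is left with $\Rho^{\hat\nabla}_{\bar a b}=\Rho^\nabla_{\bar a b}-\nabla_{\bar a}\Upsilon_b$.

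The only point requiring genuine care---the nearest thing to an obstacle---is the sign bookkeeping in these two $J$-contractions: the quadratic term survives in the holomorphic--holomorphic component because its two factors contribute $i\cdot i=-1$ and thereby \emph{reinforce} the $\Upsilon\Upsilon$ term, whereas in the mixed component the factors $(-i)\cdot i=+1$ make it \emph{cancel}. Beyond this one must only remember that the induced operators $\nabla_a$ and $\nabla_{\bar a}$ appearing in the statement are exactly the type-$(1,0)$ and type-$(0,1)$ parts of $\nabla$ acting on $\Upsilon_b\in\Gamma(\Wedge^{1,0})$, which is legitimate precisely because $\nabla$ preserves the splitting (\ref{types}).
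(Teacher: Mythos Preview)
Your argument is correct and is exactly the approach the paper takes: the corollary is stated as an immediate consequence of writing out (\ref{changeRho}) in components via the projectors $\Pi_a^\alpha$, $\overline\Pi{}_{\bar a}^\alpha$ and the identities~(\ref{usefulidentities}), which is precisely the contraction-and-sign-tracking you carry out.
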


For any connection $\nabla\in[\nabla]$, its Weyl curvature $W^\nabla$ is, by
construction, a section of $\Wedge^2T^*M\otimes\gl(TM,J)$ that satisfies
$W^\nabla_{\alpha\beta}{}^\alpha{}_\gamma\equiv 0$. This implies that also
$J_{\zeta}{}^\alpha W^\nabla_{\alpha\beta}{}^\zeta{}_\epsilon=
W^\nabla_{\alpha\beta}{}^{\alpha}{}_\zeta J_{\epsilon}{}^\zeta\equiv 0$. In
the sequel we will often simply write $W$ instead of $W^\nabla$, and similarly
for other tensors such as the Rho tensor, the dependence of $\nabla$ being
understood. Viewing $W$ as a $2$-form with values in the complex bundle vector
bundle ${\mathfrak{gl}}(TM,J)\cong \mathfrak{gl}(T^{1,0}M,\C)$, it decomposes
according to $(p,q)$-types into three components:
$$W_{ab}{}^c{}_d\quad\quad W_{a\bar b}{}^c{}_{d}\quad\quad
W_{\bar a\bar b}{}^c{}_{d}.$$
The vanishing of the trace and $J$-trace above, then imply that
$$W_{ab}{}^a{}_d=W_{a\bar b}{}^a{}_{d}\equiv 0.$$

In these conclusions and in Corollary~\ref{Rho_changes} we begin to see the
utility of writing our expressions in using the barred and unbarred indices
introduced in Section~\ref{almostcomplexmanifolds}. In the following discussion
we pursue this systematically, firstly by describing exactly how the curvature
of a complex connection decomposes. We analyse these decompositions from the
perspective of c-projective geometry: some pieces are invariant whilst others
transform simply. For the convenience of the reader, we reiterate some of our
previous conclusions in the following theorem (but prove them more easily using
barred and unbarred indices, as just advocated).

\begin{prop}\label{rosetta} Suppose $(M,J,[\nabla])$ is an almost
c-projective manifold of dimension $2n\geq 4$. Let $T_{\bar{a}\bar{b}}{}^c$
denote its torsion \textup(already observed to be a constant multiple of the
Nijenhuis tensor of $(M,J)$\textup). Then the curvature $R$ of a connection
$\nabla$ in the c-projective class decomposes as follows\textup:
\begin{equation}\label{full_curvature_decomposition}
\begin{split}
R_{ab}{}^c{}_d&=
W_{ab}{}^c{}_d+2\delta_{[a}{}^c\Rho_{b]d}+\beta_{ab}\delta_d{}^c\\
R_{a\bar{b}}{}^c{}_d&=
W_{a\bar{b}}{}^c{}_d+\delta_a{}^c\Rho_{\bar{b}d}+\delta_d{}^c\Rho_{\bar{b}a}\\
W_{a\bar{b}}{}^c{}_d&=H_{a\bar{b}}{}^c{}_d
-\tfrac{1}{2(n+1)}\bigl(\delta_a{}^cT_{df}{}^{\bar{e}}T_{\bar{e}\bar{b}}{}^f
+\delta_d{}^cT_{af}{}^{\bar{e}}T_{\bar{e}\bar{b}}{}^f\bigr)
-\tfrac12 T_{ad}{}^{\bar{e}}T_{\bar{e}\bar{b}}{}^c\\
R_{\bar{a}\bar{b}}{}^c{}_d&=W_{\bar{a}\bar{b}}{}^c{}_d
\enskip=\enskip\nabla_dT_{\bar{a}\bar{b}}{}^c
\end{split}\end{equation}
where
\begin{gather*}
W_{ab}{}^c{}_d=W_{[ab]}{}^c{}_d\qquad W_{[ab}{}^c{}_{d]}=0
\qquad W_{ab}{}^a{}_d=0\qquad\beta_{ab}=-2\Rho_{[ab]}\\
H_{a\bar{b}}{}^c{}_d=H_{d\bar{b}}{}^c{}_a\hspace{60pt}
H_{a\bar{b}}{}^a{}_d=0.
\end{gather*}
Let $\hat\nabla$ be another connection in the c-projective class, related to
$\nabla$ by \eqref{cprojchange}, and denote its curvature components by $\hat
W$, $\hat H$, and $\hat\Rho$. Then we have\textup:
\begin{enumerate}
\item $\hat W_{ab}{}^c{}_d=W_{ab}{}^c{}_d$ and
$\hat W_{a\bar b}{}^c{}_{d}=W_{a\bar b}{}^c{}_{d}$ and
$\hat{H}_{a\bar{b}}{}^c{}_d=H_{a\bar{b}}{}^c{}_d$, 
\item $\hat W_{\bar a\bar b}{}^c{}_{d}=
W_{\bar a\bar b}{}^c{}_{d}+ T_{\bar a\bar b}{}^e\upsilon_{ed}{}^c$ and if
$J$ is integrable, then $W_{\bar{a}\bar{b}}{}^c{}_d\equiv 0$,
\item $W_{ab}{}^c{}_c\equiv 0$,
\item $W_{a\bar{b}}{}^c{}_c=T_{fa}{}^{\bar{e}}T_{\bar{e}\bar{b}}{}^f$,
\end{enumerate}
whilst we recall that 
$\hat{\Rho}_{ab}=\Rho_{ab}-\nabla_a\Upsilon_b+\Upsilon_a\Upsilon_b,\quad
\hat{\Rho}_{\bar{b}d}=\Rho_{\bar{b}d}-\nabla_{\bar{b}}\Upsilon_d$.

The tensor $\beta_{ab}=-2\Rho_{[ab]}$ satisfies 
\begin{equation}\label{what_beta_satifies}
\nabla_{[b}\beta_{ce]}=\Rho_{\bar{f}[b}T_{ce]}{}^{\bar{f}}
-\tfrac{1}{n+1}T_{[bc}{}^{\bar{f}}T_{e]a}{}^{\bar{d}}T_{\bar{d}\bar{f}}{}^a.
\end{equation}
Finally, the Cotton--York tensors~$C_{abc}$ and~$C_{a\bar bc}$ are defined as
\begin{equation}\label{what_is_cottonyork}
C_{abc}:=\nabla_a\Rho_{bc}-\nabla_b\Rho_{ac}
+T_{ab}{}^{\bar{d}}\Rho_{\bar{d}c} \quad\text{and}\quad
C_{a\bar b c}:=\nabla_a\Rho_{\bar b c}-\nabla_{\bar b}\Rho_{ac}.
\end{equation}
The first of these satisfies a Bianchi identity
\begin{multline}\label{contracted_bianchi}
\nabla_aW_{bc}{}^a{}_e-(n-2)C_{bce}\\
\enskip{}=2T_{a[b}{}^{\bar{f}}H_{c]\bar{f}}{}^a{}_e
+\frac{2}{n+1}T_{bc}{}^{\bar{f}}T_{ea}{}^{\bar{d}}T_{\bar{d}\bar{f}}{}^a
-\frac{n}{n+1}
T_{e[b}{}^{\bar{f}}T_{c]a}{}^{\bar{d}}T_{\bar{d}\bar{f}}{}^a
\end{multline}
and transforms as
\begin{equation}\label{cottonyork_transformation}
\hat{C}_{bce}=C_{bce}+\Upsilon_aW_{bc}{}^a{}_e
\end{equation}
under c-projective change~{\rm(\ref{cprojchange})}.  Another part of
the Bianchi identity reads
\begin{equation}\label{anotherBianchi}
C_{a\bar{b}c}-C_{c\bar{b}a} =\tfrac{1}{n+1}\bigl(
T_{\bar{b}\bar{f}}{}^d\nabla_dT_{ac}{}^{\bar{f}}
+R_{\bar{b}\bar{f}}{}^d{}_dT_{ac}{}^{\bar{f}}
-2R_{\bar{b}\bar{f}}{}^d{}_{[a}T_{c]d}{}^{\bar{f}}\bigr).
\end{equation}
\end{prop}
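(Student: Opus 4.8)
The strategy is to reduce every assertion to the algebraic formula \eqref{partial} for $\partial\Rho$ and to the first and second Bianchi identities for a complex connection, feeding each through the projectors $\Pi_a^\alpha,\overline\Pi{}_{\bar a}^\alpha$ of Section~\ref{barredandunbarred} and simplifying with \eqref{usefulidentities}. Since the curvature $R$ of a complex connection is valued in $\gl(TM,J)\cong\gl(T^{1,0}M,\C)$, type-preservation in the endomorphism slot forces its nonzero components acting on $T^{1,0}M$ to be exactly $R_{ab}{}^c{}_d$, $R_{a\bar b}{}^c{}_d$ and $R_{\bar a\bar b}{}^c{}_d$, sorted by the $(2,0)$, $(1,1)$, $(0,2)$ type of the form indices; this is the origin of the three-line shape of \eqref{full_curvature_decomposition}.

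I would begin with the purely algebraic step. Contracting \eqref{partial} with the projectors and using that $J$ acts as $\pm i$ on unbarred and barred indices, the four terms of \eqref{partial} collapse: in the $(2,0)$ slot they sum to $2\delta_{[a}{}^c\Rho_{b]d}+\beta_{ab}\delta_d{}^c$ with $\beta_{ab}=-2\Rho_{[ab]}$, in the $(1,1)$ slot to $\delta_a{}^c\Rho_{\bar b d}+\delta_d{}^c\Rho_{\bar b a}$, and in the $(0,2)$ slot the two surviving terms cancel, so $(\partial\Rho)_{\bar a\bar b}{}^c{}_d=0$. With $W=R-\partial\Rho$ this already yields the first, second and last lines of \eqref{full_curvature_decomposition} together with the identification $\beta_{ab}=-2\Rho_{[ab]}$; this step needs only the definition of $W$ and not the Ricci formula \eqref{RhoTensor}.

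The middle line, expressing $W_{a\bar b}{}^c{}_d$ through the Hermitian tensor $H$, is where the torsion enters and is the main obstacle. Here I would use the first Bianchi identity $R_{[\alpha\beta}{}^\gamma{}_{\delta]}=\nabla_{[\alpha}T_{\beta\delta]}{}^\gamma+T_{[\alpha\beta}{}^\zeta T_{|\zeta|\delta]}{}^\gamma$. Projecting $\gamma$ to an unbarred index and the antisymmetrised slots to $\bar a,\bar b,d$ makes all mixed-type endomorphism terms and all $(1,1)$- and $(2,0)$-type torsion terms vanish, leaving $R_{\bar a\bar b}{}^c{}_d=\nabla_d T_{\bar a\bar b}{}^c$ (the last line). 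The crucial computation is the projection to slots $a,\bar b,d$: the $\nabla T$ terms still vanish by type, but the quadratic term survives through the single nonzero contraction $T_{da}{}^{\bar e}T_{\bar e\bar b}{}^c$ --- exploiting that the $(0,2)$ torsion has a nonzero piece $T_{ab}{}^{\bar c}$ --- giving $R_{a\bar b}{}^c{}_d-R_{d\bar b}{}^c{}_a=-T_{ad}{}^{\bar e}T_{\bar e\bar b}{}^c$. Thus the antisymmetric-in-$(a,d)$ part of $W_{a\bar b}{}^c{}_d$ is exactly $-\tfrac12 T_{ad}{}^{\bar e}T_{\bar e\bar b}{}^c$; defining $H_{a\bar b}{}^c{}_d$ to be the symmetric, endomorphism-trace-free remainder then forces the symmetric correction term, whose coefficient $-\tfrac{1}{2(n+1)}$ is pinned down by the normalisation $\partial^*W\equiv0$, equivalently $W_{a\bar b}{}^a{}_d=0$, a one-line trace check. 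Tracing instead over the endomorphism indices delivers \emph{(3)} and \emph{(4)}: statement \emph{(3)}, $W_{ab}{}^c{}_c=0$, follows because the $(2,0)$-projection of the first Bianchi identity reduces to $R_{[ab}{}^c{}_{d]}=0$ (its torsion terms vanishing by type), which with $\beta_{ab}=-2\Rho_{[ab]}$ and the skew-Ricci relation $\Ric_{[ab]}=(n+1)\Rho_{[ab]}$ gives $R_{ab}{}^c{}_c=(n+1)\beta_{ab}$; statement \emph{(4)}, $W_{a\bar b}{}^c{}_c=T_{fa}{}^{\bar e}T_{\bar e\bar b}{}^f$, is a direct trace of the middle line using $H_{a\bar b}{}^c{}_c=0$.

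The transformation rules \emph{(1)} and \emph{(2)} I would deduce from the explicit connection changes of Proposition~\ref{change10} and Corollary~\ref{Rho_changes} rather than recomputing curvatures: on $T^{1,0}M$ the change $\nabla\mapsto\hat\nabla$ is an ordinary projective change in the unbarred direction and trivial in the barred direction, so the $\partial\Rho$-corrections picked up by $R$ are cancelled by $\Rho\mapsto\hat\Rho$ in $W=R-\partial\Rho$, giving the invariance of $W_{ab}{}^c{}_d$, $W_{a\bar b}{}^c{}_d$ and $H_{a\bar b}{}^c{}_d$; meanwhile $W_{\bar a\bar b}{}^c{}_d=\nabla_dT_{\bar a\bar b}{}^c$ picks up only the term $T_{\bar a\bar b}{}^e\upsilon_{ed}{}^c$ from the change of $\nabla_d$ on the upper index of $T$, and integrability ($T\equiv0$) then forces $W_{\bar a\bar b}{}^c{}_d\equiv0$. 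For the last block, the identity \eqref{what_beta_satifies}, the contracted Bianchi identity \eqref{contracted_bianchi}, and the mixed identity \eqref{anotherBianchi} all come from projecting and contracting the second Bianchi identity $\nabla_{[\alpha}R_{\beta\gamma]}{}^\delta{}_\epsilon+T_{[\alpha\beta}{}^\zeta R_{\gamma]\zeta}{}^\delta{}_\epsilon=0$, substituting \eqref{full_curvature_decomposition} and the definitions \eqref{what_is_cottonyork} of the Cotton--York tensors; the factor $(n-2)$ in \eqref{contracted_bianchi} arises from the trace over the $n$-dimensional bundle $T^{1,0}M$. Finally, \eqref{cottonyork_transformation} follows by differentiating the change $\hat\Rho_{bc}=\Rho_{bc}-\nabla_b\Upsilon_c+\Upsilon_b\Upsilon_c$ of Corollary~\ref{Rho_changes} and substituting into \eqref{what_is_cottonyork}, whereupon the first-derivative and cubic $\Upsilon$ terms reorganise, via \eqref{full_curvature_decomposition}, into $\Upsilon_aW_{bc}{}^a{}_e$. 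Throughout, the persistent difficulty is disciplined bookkeeping of the quadratic and cubic contractions of the Nijenhuis torsion, but each reduces mechanically to the two Bianchi identities once the algebraic first step is in place.
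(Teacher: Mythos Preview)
Your proposal is correct and follows essentially the same route as the paper: compute $(\partial\Rho)$ in each type slot via the projectors, feed the first Bianchi identity (with torsion) through the projectors to obtain $R_{[ab}{}^c{}_{d]}=0$, $R_{a\bar b}{}^c{}_d-R_{d\bar b}{}^c{}_a=-T_{ad}{}^{\bar e}T_{\bar e\bar b}{}^c$, and $R_{\bar a\bar b}{}^c{}_d=\nabla_dT_{\bar a\bar b}{}^c$, then use the second Bianchi identity for the Cotton--York relations. The only noteworthy difference is that for \eqref{cottonyork_transformation} the paper, for $n>2$, avoids the direct computation you propose by instead tracing $\hat\nabla_a\hat W_{bc}{}^d{}_e$ and invoking \eqref{contracted_bianchi} (whose right-hand side is manifestly c-projectively invariant); your direct substitution works uniformly in $n$ and is exactly what the paper leaves to the reader for $n=2$.
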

\begin{proof}
In this proof we also take the opportunity to develop various useful formulae
for torsion and curvature and for how these quantities transform under
c-projective change (\ref{cprojchange}). As in the statement of
Proposition~\ref{rosetta}, we express all these formulae in terms of the
abstract indices on almost complex manifolds developed in
Section~\ref{almostcomplexmanifolds}. Firstly, recall that since we are working
with minimal connections
(cf.~Definition~\ref{definition_cprojective_structure}), their torsions are
restricted to being of type $(0,2)$ and this means precisely that
\begin{equation}\label{torsion_in_action}\begin{aligned}
(\nabla_a\nabla_b-\nabla_b\nabla_a)f+T_{ab}{}^{\bar{c}}\nabla_{\bar{c}}f&=0
&\quad(\nabla_{\bar{a}}\nabla_b-\nabla_b\nabla_{\bar{a}})f&=0\\
(\nabla_{\bar{a}}\nabla_{\bar{b}}-\nabla_{\bar{b}}\nabla_{\bar{a}})f
+T_{\bar{a}\bar{b}}{}^c\nabla_cf&=0
&\quad(\nabla_a\nabla_{\bar{b}}-\nabla_{\bar{b}}\nabla_a)f&=0,
\end{aligned}\end{equation}
where $T_{ab}{}^{\bar{c}}\equiv\Pi_a^\alpha\Pi_b^\beta\overline{\Pi}{}_\gamma{}^c
T_{\alpha\beta}{}^\gamma$, equivalently its complex
conjugate~$T_{\bar{a}\bar{b}}{}^c=\overline{T_{ab}{}^{\bar{c}}}$, represents
the Nijenhuis tensor as in~(\ref{torsion_types_with_indices}). Notice that the
second line of (\ref{torsion_in_action}) is the complex conjugate of the first.
In this proof, we take advantage of this general feature by listing only
one of such conjugate pairs, its partner being implicitly valid. For example,
here are characterisations of sufficiently many components of the general
curvature tensor $R_{\alpha\beta}{}^\gamma{}_{\delta}$.
\begin{equation}\label{curvatures}\begin{split}
(\nabla_a\nabla_b-\nabla_b\nabla_a)X^c+T_{ab}{}^{\bar{d}}\nabla_{\bar{d}}X^c
&= R_{ab}{}^c{}_dX^d\\
(\nabla_a\nabla_{\bar{b}}-\nabla_{\bar{b}}\nabla_a)X^c
&= R_{a{\bar{b}}}{}^c{}_dX^d\\
\bigl[\mbox{or\qquad}(\nabla_{\bar{a}}\nabla_b-\nabla_b\nabla_{\bar{a}})X^c
&= R_{{\bar{a}}b}{}^c{}_dX^d,\quad\mbox{if preferred}\bigr]\\
(\nabla_a\nabla_b-\nabla_b\nabla_a)X^{\bar{c}}
+T_{ab}{}^{\bar{d}}\nabla_{\bar{d}}X^{\bar{c}}
&= R_{ab}{}^{\bar{c}}{}_{\bar{d}}X^{\bar{d}}.
\end{split}\end{equation}
For convenience, the dual formulae are sometimes preferred: for example,
\begin{equation}\label{dual_curvature}
(\nabla_a\nabla_b-\nabla_b\nabla_a)\phi_d
+T_{ab}{}^{\bar{c}}\nabla_{\bar{c}}\phi_d
=-R_{ab}{}^c{}_d\phi_c\,.
\end{equation}
The tensor $\upsilon_{\alpha\beta}{}^\gamma$ employed in a c-projective change
of connection~(\ref{cprojchange}) was already broken into irreducible pieces in
deriving Proposition~\ref{change10}, e.g.\
\begin{flalign}
\upsilon_{ab}{}^c&=
\Pi_a^\alpha\Pi_b^\beta\Pi_\gamma^c\upsilon_{\alpha\beta}{}^\gamma=
\Upsilon_a\delta_b{}^c+\Upsilon_b\delta_a{}^c &\Rightarrow\;&
\hat\nabla_aX^c=\nabla_aX^c+\Upsilon_aX^c+\Upsilon_bX^b\delta_a{}^c\\
\text{and }\upsilon_{\bar{a}b}{}^c&=
\overline{\Pi}{}_{\bar{a}}^\alpha\Pi_b^\beta\Pi_\gamma^c
\upsilon_{\alpha\beta}{}^\gamma=0 &\Rightarrow\;&
\hat\nabla_{\bar{a}}X^c=\nabla_{\bar{a}}X^c.
\end{flalign}
It is an elementary matter, perhaps more conveniently executed in the dual
formulation
\begin{equation}\label{dual_rules}
\hat\nabla_a\phi_b=\nabla_a\phi_b-\Upsilon_a\phi_b-\Upsilon_b\phi_a\qquad
\hat\nabla_{\bar{a}}\phi_b=\nabla_{\bar{a}}\phi_b\,,
\end{equation}
to compute the effect of these changes on curvature, namely
\begin{equation}\label{changes_on_curvature}\begin{split}
\hat{R}_{ab}{}^c{}_d&=R_{ab}{}^c{}_d
-2\delta_{[a}{}^d(\nabla_{b]}\Upsilon_c)
+2\delta_{[a}{}^d\Upsilon_{b]}\Upsilon_c
+2(\nabla_{[a}\Upsilon_{b]})\delta_d{}^c\\
\hat{R}_{a{\bar{b}}}{}^c{}_d&=R_{a{\bar{b}}}{}^c{}_d
-\delta_a{}^c\nabla_{\bar{b}}\Upsilon_d
-\delta_d{}^c\nabla_{\bar{b}}\Upsilon_a\\
\hat{R}_{ab}{}^{\bar{c}}{}_{\bar{d}}&=R_{ab}{}^{\bar{c}}{}_{\bar{d}}+
T_{ab}{}^{\bar{c}}\Upsilon_{\bar{d}}
+\Upsilon_{\bar{e}}T_{ab}{}^{\bar{e}}\delta_{\bar{d}}{}^{\bar{c}}
=R_{ab}{}^{\bar{c}}{}_{\bar{d}}+
T_{ab}{}^{\bar{e}}\upsilon_{\bar{e}\bar{d}}{}^{\bar{c}}.
\end{split}\end{equation}

We shall also need the Bianchi symmetries derived from (\ref{curvatures}) or,
more conveniently in the dual formulation, as follows. Evidently,
\begin{multline*}\nabla_a(\nabla_b\nabla_c-\nabla_c\nabla_b)f+
\nabla_b(\nabla_c\nabla_a-\nabla_a\nabla_c)f+
\nabla_c(\nabla_a\nabla_b-\nabla_b\nabla_a)f\\
\quad{}=(\nabla_a\nabla_b-\nabla_b\nabla_a)\nabla_cf+
(\nabla_b\nabla_c-\nabla_c\nabla_b)\nabla_af+
(\nabla_c\nabla_a-\nabla_a\nabla_c)\nabla_bf,
\end{multline*}
which we may expand using (\ref{torsion_in_action}) and (\ref{curvatures}) to
obtain
$$(\nabla_aT_{bc}{}^{\bar{d}}+\nabla_bT_{ca}{}^{\bar{d}}
+\nabla_cT_{ab}{}^{\bar{d}})\nabla_{\bar{d}}f
=(R_{ab}{}^d{}_c+R_{bc}{}^d{}_a+R_{bc}{}^d{}_a)\nabla_df$$
and hence that
\begin{equation}\label{bianchi1}
\nabla_{[a}T_{bc]}{}^{\bar{d}}=0\qquad R_{[ab}{}^c{}_{d]}=0.
\end{equation}
Similarly, by looking at different orderings for the indices of
$\nabla_a\nabla_{\bar{b}}\nabla_cf$, we find that
\begin{equation}\label{bianchi2}
R_{a{\bar{b}}}{}^c{}_{d}-R_{d{\bar{b}}}{}^c{}_{a}
+T_{ad}{}^{\bar{e}}T_{{\bar{e}}{\bar{b}}}{}^c=0
\qquad R_{ab}{}^{\bar{c}}{}_{\bar{d}}=\nabla_{\bar{d}}T_{ab}{}^{\bar{c}}.
\end{equation}
Already, the final statement of~\eqref{full_curvature_decomposition} is
evident and if $T_{ab}{}^{\bar{c}}=0$ then both
$R_{ab}{}^{\bar{c}}{}_{\bar{d}}$ and its complex conjugate
$R_{\bar{a}\bar{b}}{}^c{}_d$ vanish. Notice that $\partial\Rho$ does not
contribute to this piece of curvature. Specifically, from~(\ref{partial})
$$(\partial\Rho)_{\bar{a}\bar{b}}{}^c{}_d
=\overline{\Pi}{}_{\bar{a}}^\alpha\overline{\Pi}{}_{\bar{b}}^\beta\Pi_\gamma^c
\Pi_d^\epsilon(\partial\Rho)_{\alpha\beta}{}^\gamma{}_\epsilon
=-\Rho_{[\bar{a}\bar{b}]}\delta_d{}^c-\Rho_{[\bar{b}\bar{a}]}\delta_d{}^c=0.$$
It follows that $W_{\bar{a}\bar{b}}{}^c{}_d=R_{\bar{a}\bar{b}}{}^c{}_d$ in
general and that $W_{\bar{a}\bar{b}}{}^c{}_d=R_{\bar{a}\bar{b}}{}^c{}_d=0$ in
the integrable case. The rest of statement (2) also follows, either from the
last line of (\ref{changes_on_curvature}) or, more easily, from the
c-projective invariance of~$T_{ab}{}^{\bar{c}}$ (depending only on the
underlying almost complex structure), the second identity of (\ref{bianchi2}),
and the transformation rules~(\ref{dual_rules}).

Now let us consider the curvature $R_{ab}{}^c{}_d$. {From}~(\ref{partial}), we
compute that
\[
(\partial\Rho)_{ab}{}^c{}_d=\Pi_a^\alpha\Pi_b^\beta\Pi_\gamma^c
\Pi_d^\epsilon(\partial\Rho)_{\alpha\beta}{}^\gamma{}_\epsilon
=2\delta_{[a}{}^c\Rho_{b]d}-2\Rho_{[ab]}\delta_d{}^c
\]
and from (\ref{RhoTensor}) that
\[
\Rho_{ab}=\Pi_a^\alpha\Pi_b^\beta\Rho_{\alpha\beta}=
\tfrac{1}{n+1}\bigl(\Ric_{ab}+\tfrac{2}{n-1}\Ric_{(ab)}\bigr)=
\tfrac{1}{n+1}\bigl(\Ric_{ab}+\tfrac{2}{n-1}\Ric_{(ab)}\bigr),
\]
equivalently that $\Ric_{ab}=(n-1)\Rho_{ab}+2\Rho_{[ab]}$. Bearing in mind the
Bianchi symmetry (\ref{bianchi1}) for $R_{ab}{}^c{}_d$, this means that we may
write
\begin{equation}\label{Rabcd_decomposition}
R_{ab}{}^c{}_d=W_{ab}{}^c{}_d+2\delta_{[a}{}^c\Rho_{b]d}
+\beta_{ab}\delta_d{}^c,\end{equation}
where 
$$W_{ab}{}^c{}_d=W_{[ab]}{}^c{}_d\qquad W_{[ab}{}^c{}_{d]}=0
\qquad W_{ab}{}^a{}_d=0\qquad
\beta_{ab}=-2\Rho_{[ab]}.$$
Comparing this decomposition with the first line of
(\ref{changes_on_curvature}) implies that $W_{ab}{}^c{}_d$ is invariant and
confirms that $\Rho_{ab}$ transforms according to Corollary~\ref{Rho_changes}.
In summary,
$$\hat{W}_{ab}{}^c{}_d=W_{ab}{}^c{}_d\qquad
\hat{\Rho}_{ab}=\Rho_{ab}-\nabla_a\Upsilon_c+\Upsilon_b\Upsilon_c\qquad
\hat{\beta}_{ab}=\beta_{ab}+2\nabla_{[a}\Upsilon_{b]}.$$
We have shown (3) and the first statement of (1).

The remaining statements concern the curvature $R_{a\bar{b}}{}^c{}_d$. {From}
(\ref{partial}), we compute that
\begin{equation*}
(\partial\Rho)_{a\bar{b}}{}^c{}_d
=\Pi_a^\alpha\overline{\Pi}{}_{\bar{b}}^\beta\Pi_\gamma^c\Pi_d^\epsilon
(\partial\Rho)_{\alpha\beta}{}^\gamma{}_\epsilon
=\delta_a{}^c\Rho_{\bar{b}d}+\delta_d{}^c\Rho_{\bar{b}a}
\end{equation*}
and from (\ref{RhoTensor}) that
\begin{equation*}
\Rho_{\bar{b}d}=\overline{\Pi}{}_{\bar{b}}^\beta\Pi_d^\epsilon
\Rho_{\beta\epsilon}=\tfrac{1}{n+1}\Ric_{\bar{b}d}=
\tfrac1{n+1}R_{a\bar{b}}{}^a{}_d.
\end{equation*}
{From} (\ref{bianchi2}) it now follows that
\begin{equation}\label{another_curvature_decomposition}
R_{a\bar{b}}{}^c{}_d+\tfrac12T_{ad}{}^{\bar{e}}T_{\bar{e}\bar{b}}{}^c
=H_{a\bar{b}}{}^c{}_d
-\tfrac{1}{2(n+1)}\bigl(\delta_a{}^cT_{df}{}^{\bar{e}}T_{\bar{e}\bar{b}}{}^f
+\delta_d{}^cT_{af}{}^{\bar{e}}T_{\bar{e}\bar{b}}{}^f\bigr)
+(\partial\Rho)_{a\bar{b}}{}^c{}_d,
\end{equation}
where
$$H_{a\bar{b}}{}^c{}_d=H_{d\bar{b}}{}^c{}_a\qquad H_{a\bar{b}}{}^a{}_d=0.$$
Recall that by definition
$$W_{a\bar{b}}{}^c{}_d=
R_{a\bar{b}}{}^c{}_d-(\partial\Rho)_{a\bar{b}}{}^c{}_d.$$
Therefore
\[
W_{a\bar{b}}{}^c{}_d=H_{a\bar{b}}{}^c{}_d
-\tfrac{1}{2(n+1)}\bigl(\delta_a{}^cT_{df}{}^{\bar{e}}T_{\bar{e}\bar{b}}{}^f
+\delta_d{}^cT_{af}{}^{\bar{e}}T_{\bar{e}\bar{b}}{}^f\bigr)
-\tfrac12T_{ad}{}^{\bar{e}}T_{\bar{e}\bar{b}}{}^c.
\]
Comparison with the formula for $\hat{R}_{a\bar{b}}{}^c{}_d$ in
(\ref{changes_on_curvature}) immediately shows that $W_{a\bar{b}}{}^c{}_d$ and
$H_{a\bar{b}}{}^c{}_d$ are c-projectively invariant and also that
\[
W_{a\bar{b}}{}^c{}_c=-T_{af}{}^{\bar{e}}T_{\bar{e}\bar{b}}{}^f,
\]
as required to complete (1) and~(4). Next we demonstrate the behaviour of the
Cotton--York tensor. For this, we need a Bianchi identity with torsion, which
may be established as follows. Evidently,
\begin{multline*}\nabla_a(\nabla_b\nabla_c-\nabla_c\nabla_b)\phi_e+
\nabla_b(\nabla_c\nabla_a-\nabla_a\nabla_c)\phi_e+
\nabla_c(\nabla_a\nabla_b-\nabla_b\nabla_a)\phi_e\\
\;{}=(\nabla_a\nabla_b-\nabla_b\nabla_a)\nabla_c\phi_e+
(\nabla_b\nabla_c-\nabla_c\nabla_b)\nabla_a\phi_e+
(\nabla_c\nabla_a-\nabla_a\nabla_c)\nabla_b\phi_e,
\end{multline*}
the left hand side of which may be expanded by (\ref{dual_curvature}) as
$$\nabla_a(-R_{bc}{}^d{}_e\phi_d-T_{bc}{}^{\bar{d}}\nabla_{\bar{d}}\phi_e)
+\cdots+\cdots,$$
where $\cdots$ represent similar terms where the indices $abc$ are cycled
around. On the other hand, the right hand side may be expanded as
$$\cdots
-R_{bc}{}^d{}_a\nabla_d\phi_e
-R_{bc}{}^d{}_e\nabla_a\phi_d
-T_{bc}{}^{\bar{d}}\nabla_{\bar{d}}\nabla_a\phi_e-\cdots.$$
Comparison yields
$$(\nabla_{[a}R_{bc]}{}^d{}_e)\phi_d+
(\nabla_{[a}T_{bc]}{}^{\bar{d}})\nabla_{\bar{d}}\phi_e
-T_{[bc}{}^{\bar{d}}R_{a]\bar{d}}{}^f{}_e\phi_f=
R_{[bc}{}^d{}_{a]}\nabla_d\phi_e$$
and, from the Bianchi symmetries~(\ref{bianchi1}), we conclude that
$$\nabla_{[a}R_{bc]}{}^d{}_e
=T_{[ab}{}^{\bar{f}}R_{c]\bar{f}}{}^d{}_e.$$
Using (\ref{full_curvature_decomposition}) and tracing over $a$ and $d$ yields
\begin{multline*}
\nabla_aW_{bc}{}^a{}_e-2(n-2)\nabla_{[b}\Rho_{c]e}
+3\nabla_{[b}\beta_{ce]}\\
\enskip{}=2T_{a[b}{}^{\bar{f}}H_{c]\bar{f}}{}^a{}_e
+\frac{1}{n+1}T_{bc}{}^{\bar{f}}T_{ea}{}^{\bar{d}}T_{\bar{d}\bar{f}}{}^a
-\frac{n+2}{n+1}
T_{e[b}{}^{\bar{f}}T_{c]a}{}^{\bar{d}}T_{\bar{d}\bar{f}}{}^a
+(n-2)T_{bc}{}^{\bar{f}}\Rho_{\bar{f}e}+3\Rho_{\bar{f}[b}T_{ce]}{}^{\bar{f}}.
\end{multline*}
Skewing this identity over $bce$ gives (\ref{what_beta_satifies}) and
substituting back gives
\begin{multline*}
\nabla_aW_{bc}{}^a{}_e-2(n-2)\nabla_{[b}\Rho_{c]e}\\
=2T_{a[b}{}^{\bar{f}}H_{c]\bar{f}}{}^a{}_e
+\frac{2}{n+1}T_{bc}{}^{\bar{f}}T_{ea}{}^{\bar{d}}T_{\bar{d}\bar{f}}{}^a
-\frac{n}{n+1}T_{e[b}{}^{\bar{f}}T_{c]a}{}^{\bar{d}}T_{\bar{d}\bar{f}}{}^a
+(n-2)T_{bc}{}^{\bar{f}}\Rho_{\bar{f}e}.
\end{multline*}
The contracted Bianchi identity (\ref{contracted_bianchi}) follows from the
definition (\ref{what_is_cottonyork}) of the Cotton--York tensor.  Notice that
the right hand side of (\ref{contracted_bianchi}) is c-projectively
invariant. Also, by computing that
\begin{align*}
\hat\nabla_a\hat{W}_{bc}{}^d{}_e&=\hat\nabla_aW_{bc}{}^d{}_e\\
&=\nabla_aW_{bc}{}^d{}_e
-2\Upsilon_aW_{bc}{}^d{}_e
-\Upsilon_bW_{ac}{}^d{}_e
-\Upsilon_cW_{ba}{}^d{}_e
+\delta_a{}^d\Upsilon_fW_{bc}{}^f{}_e
-\Upsilon_eW_{bc}{}^d{}_a
\end{align*}
and tracing over $a$ and $d$, we see that
$$\hat\nabla_a\hat{W}_{bc}{}^a{}_e=\nabla_aW_{bc}{}^a{}_e
+(n-2)\Upsilon_aW_{bc}{}^a{}_e$$
and for $n>2$ conclude that (\ref{cottonyork_transformation}) is valid. The
case $n=2$ is somewhat degenerate. Although (\ref{cottonyork_transformation})
is still valid, as we shall see below in Proposition~\ref{Weyl_gone}, the Weyl
curvature $W_{bc}{}^a{}_d$ vanishes by symmetry considerations and
(\ref{cottonyork_transformation}) reads $\hat{C}_{bce}=C_{bce}$, the
straightforward verification of which is left to the reader. Similarly, by
considering different orderings for the indices of
$\nabla_a\nabla_{\bar{b}}\nabla_c\phi_e$, we are rapidly led to
$$\nabla_aR_{c\bar{b}}{}^d{}_e-\nabla_cR_{a\bar{b}}{}^d{}_e
+\nabla_{\bar{b}}R_{ac}{}^d{}_e=T_{ac}{}^{\bar{f}}R_{\bar{b}\bar{f}}{}^d{}_e$$
as another piece of the Bianchi identity, which may then be further split into 
irreducible parts. In particular, tracing over $d$ and~$e$ (equivalently, 
tracing over $a$ and $d$ and then skewing over $c$ and $e$) 
gives~(\ref{anotherBianchi}).
\end{proof}

\begin{prop}\label{Weyl_gone} Suppose that $W_{ab}{}^c{}_d\in
\Wedge^{1,0}\otimes\Wedge^{1,0}\otimes T^{1,0}M\otimes\Wedge^{1,0}$ has
the following symmetries\textup:
\begin{equation*}
W_{ab}{}^c{}_d=W_{[ab]}{}^c{}_d\qquad W_{[ab}{}^c{}_{d]}=0\qquad W_{ab}{}^a{}_d=0.
\end{equation*}
If $2n=4$, then $W_{ab}{}^c{}_d\equiv 0$.
\end{prop}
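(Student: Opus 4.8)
The plan is to exploit the low dimension directly: when $2n=4$, i.e.\ $n=2$, the holomorphic indices $a,b,c,d$ run only over $\{1,2\}$, so the bundle $\Wedge^2\Wedge^{1,0}=\Wedge^{2,0}$ of $(2,0)$-forms is a line bundle. Since the statement to be proved is pointwise and algebraic, I would fix a point and a frame of $T^{1,0}M$, and let $\epst_{ab}=\epst_{[ab]}$ be the skew symbol normalised by $\epst_{12}=1$; it spans the one-dimensional space of skew $(2,0)$-tensors. The first hypothesis $W_{ab}{}^c{}_d=W_{[ab]}{}^c{}_d$ then says precisely that, for each fixed pair $(c,d)$, the skew quantity $W_{ab}{}^c{}_d$ is a multiple of $\epst_{ab}$. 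Hence there is a unique tensor $\Phi^c{}_d\in T^{1,0}M\otimes\Wedge^{1,0}$ with
\begin{equation*}
W_{ab}{}^c{}_d=\epst_{ab}\,\Phi^c{}_d .
\end{equation*}

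First I would feed this factorisation into the trace condition $W_{ab}{}^a{}_d=0$. Setting $c=a$ and contracting gives $\epst_{ab}\Phi^a{}_d=0$ for all $b,d$. The key point is that in two dimensions the contraction $v^a\mapsto\epst_{ab}v^a$ is an isomorphism $T^{1,0}M\to\Wedge^{1,0}$, because $\epst_{ab}$ is nondegenerate; explicitly $\epst_{a1}\Phi^a{}_d=-\Phi^2{}_d$ and $\epst_{a2}\Phi^a{}_d=\Phi^1{}_d$. Thus $\epst_{ab}\Phi^a{}_d=0$ forces $\Phi^1{}_d=\Phi^2{}_d=0$, i.e.\ $\Phi^c{}_d\equiv0$, and therefore $W_{ab}{}^c{}_d\equiv0$, as required.

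I do not expect a genuine obstacle here: the entire content is the observation that $\Wedge^{2,0}$ is a line bundle in complex dimension two, after which the vanishing is immediate linear algebra. It is worth remarking that the third hypothesis, the Bianchi-type symmetry $W_{[ab}{}^c{}_{d]}=0$, is not actually needed for this argument---antisymmetry in $ab$ together with the trace-freeness $W_{ab}{}^a{}_d=0$ already suffices---though of course it is automatically satisfied, so listing it does no harm. As a sanity check one may instead enumerate components: by skew-symmetry the only independent entries are $W_{12}{}^c{}_d$ with $c,d\in\{1,2\}$, and the four trace equations obtained from $W_{ab}{}^a{}_d=0$ (one for each choice of $b,d$) set each of these four entries to zero.
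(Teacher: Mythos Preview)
Your proof is correct and follows essentially the same approach as the paper: both exploit that $\Wedge^{2,0}$ is one-dimensional when $n=2$ to write $W_{ab}{}^c{}_d=\epst_{ab}\Phi^c{}_d$ (the paper uses a generic nonzero skew tensor $V_{ab}$), and then use the nondegeneracy of $\epst_{ab}$ together with $W_{ab}{}^a{}_d=0$ to force $\Phi^c{}_d=0$. Your observation that the Bianchi-type symmetry is not actually used is also consistent with the paper's proof, which likewise never invokes it.
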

\begin{proof} Fix a nonzero skew tensor~$V_{ab}$. As $W_{ab}{}^c{}_d$ is skew
in $a$ and $b$, it follows that $W_{ab}{}^c{}_d=V_{ab}S^c{}_d$ for some unique
tensor~$S^c{}_d$. Now $W_{ab}{}^a{}_d=V_{ab}S^a{}_d$ but $V_{ab}$ is also
nondegenerate so $W_{ab}{}^a{}_d=0$ implies $S^c{}_d=0.$
\end{proof}

\begin{rem}
When $n=2$, the identity (\ref{contracted_bianchi}) is vacuous.
Proposition~\ref{Weyl_gone} implies that the left hand side vanishes. For the
right hand side, the vanishing of $T_{a[b}{}^{\bar{f}}H_{c]\bar{f}}{}^a{}_e$
follows by tracing the identity $T_{[ab}{}^{\bar{f}}H_{c]\bar{f}}{}^d{}_e=0$
over $a$ and $d$, bearing in mind that $H_{a\bar{f}}{}^d{}_e$ is trace-free in
$a$ and~$d$. The remaining terms also evaporate because, when $n=2$, the tensor
$T_{bc}{}^{\bar{f}}T_{ea}{}^{\bar{d}}$ is symmetric in ${\bar{f}\bar{d}}$
whereas $T_{\bar{d}\bar{f}}{}^a$ is skew.
\end{rem}

The torsion $T_{ab}{}^{\bar{c}}$ (equivalently, its complex
conjugate~$T_{\bar{a}\bar{b}}{}^c$) is c-projectively invariant. The same is
true, not only of the Weyl curvature $W_{a\bar{b}}{}^c{}_d$, but also of its
trace-free symmetric part $H_{a\bar{b}}{}^c{}_d$ (which will be identified as
part of the \emph{harmonic curvature} in Section~\ref{sectionharmcurv}). The
Weyl curvature $W_{ab}{}^c{}_d$ is c-projectively invariant and forms the final
piece of harmonic curvature except when $2n=4$, in which case $W_{ab}{}^c{}_d$
necessarily vanishes, its role being taken by~$C_{abc}$, the c-projectively
invariant part of the Cotton--York tensor. In Section~\ref{sectionharmcurv}, we
place this discussion in the context of general parabolic geometry but, before
that, we collect in the following section some useful formulae for the various
curvature operators on c-projective densities.

\subsection{Curvature operators on c-projective densities}
\label{curvature_operators_on_densities}

Suppose $X^{cd\cdots e}=X^{[cd\cdots e]}$ is a section of
$\cE(n+1,0)=\Wedge^nT^{1,0}M$ and $Y^{\bar c\bar d\cdots\bar e}$ a section of
$\cE(0,n+1)=\Wedge^nT^{0,1}M$. Then it follows from (\ref{curvatures}) that
\begin{align*}
(\nabla_{\bar{a}}\nabla_b-\nabla_b\nabla_{\bar{a}})X^{cd\cdots e}
&=R_{{\bar{a}}b}{}^c{}_fX^{fd\cdots e}+R_{{\bar{a}}b}{}^d{}_fX^{cf\cdots e}
+\cdots +R_{{\bar{a}}b}{}^e{}_fX^{cd\cdots f}\\
&=R_{{\bar{a}}b}{}^f{}_fX^{cd\cdots e}.\\
(\nabla_{\bar{a}}\nabla_b-\nabla_b\nabla_{\bar{a}})Y^{\bar c\bar d\cdots \bar e}
&=R_{{\bar{a}}b}{}^{\bar c}{}_{\bar f}Y^{\bar f\bar d\cdots \bar e}
+R_{{\bar{a}}b}{}^{\bar d}{}_{\bar f}Y^{\bar c\bar f\cdots\bar e}
+ \cdots +R_{{\bar{a}}b}{}^{\bar e}{}_{\bar f}Y^{\bar c\bar d\cdots \bar f}\\
&=R_{{\bar{a}}b}{}^{\bar f}{}_{\bar f}Y^{cd\cdots e}.
\end{align*}
However, from Proposition~\ref{rosetta} part~(4), we find that
\begin{align*}
R_{{\bar{a}}b}{}^f{}_f&=-R_{b{\bar{a}}}{}^f{}_f=-W_{b{\bar{a}}}{}^f{}_f
-(\partial\Rho)_{b\bar{a}}{}^f{}_f
=-T_{fb}{}^{\bar{e}}T_{\bar{e}\bar{a}}{}^f-(n+1)\Rho_{\bar{a}b}\\
R_{{\bar{a}}b}{}^{\bar f}{}_{\bar f}
&=W_{{\bar{a} b}}{}^{\bar f}{}_{\bar f}+(\partial\Rho)_{\bar{a} b}{}^{\bar f}{}_{\bar f}
=T_{fb}{}^{\bar{e}}T_{\bar{e}\bar{a}}{}^f+(n+1)\Rho_{b\bar a}.
\end{align*}
We conclude immediately that for a section $\sigma$ of $\cE(k,\ell)$ we have
\begin{equation}\label{curvature_on_densities}
(\nabla_{\bar{a}}\nabla_b-\nabla_b\nabla_{\bar{a}})\sigma=
\tfrac{\ell-k}{n+1}\,T_{fb}{}^{\bar{e}}T_{\bar{e}\bar{a}}{}^{\smash f}\sigma
+\ell \Rho_{b\bar a}\sigma-k\Rho_{\bar{a}b}\sigma.
\end{equation}
Similarly, from (\ref{curvatures}) it also follows that 
\begin{align*}
\nabla_a\nabla_b-\nabla_b\nabla_a)X^{cd\cdots e}
+T_{ab}{}^{\bar{f}}\nabla_{\bar{f}}X^{cd\cdots e}&=
R_{ab}{}^f{}_fX^{cd\cdots e}\\
(\nabla_a\nabla_b-\nabla_b\nabla_a)Y^{\bar c\bar d\cdots \bar e}
+T_{ab}{}^{\bar{f}}\nabla_{\bar{f}}Y^{\bar c\bar d\cdots \bar e}&=
R_{ab}{}^{\bar f}{}_{\bar f}Y^{\bar c\bar d\cdots \bar e}.
\end{align*}
{From} Proposition~\ref{rosetta} we conclude that
\begin{align*}
R_{ab}{}^f{}_f&=2\Rho_{[ba]}+n\beta_{ab}=(n+1)\beta_{ab}\\
R_{ab}{}^{\bar f}{}_{\bar f}&=\nabla_{\bar f}T_{ab}{}^{\bar f}.
\end{align*}
Therefore, if $\sigma$ is c-projective density of weight $(k,\ell)$, then 
\begin{equation}\label{another_curvature_on_densities}
(\nabla_a\nabla_b-\nabla_b\nabla_a)\sigma+T_{ab}{}^{\smash{\bar f}}\nabla_{\bar f}\sigma
=k\beta_{ab}\sigma+\tfrac{\ell}{n+1}(\nabla_{\bar f}T_{ab}{}^{\smash{\bar f}})\sigma
\end{equation}
and, accordingly,
\begin{equation}\label{final_curvature_on_densities}
(\nabla_{\bar{a}}\nabla_{\bar{b}}-\nabla_{\bar{b}}\nabla_{\bar{a}})\sigma
+T_{\bar{a}\bar{b}}{}^f\nabla_f\sigma
=\ell \beta_{\bar a\bar b}+\tfrac{k}{n+1}(\nabla_fT_{\bar{a}\bar{b}}{}^f)\sigma.
\end{equation}    

Recall that for any connection $\nabla\in[\nabla]$ its Rho tensor, by
definition, satisfies $\Rho_{\bar a b}=\frac{1}{n+1}\Ric_{\bar a b}$ and
$\Rho_{[ab]}=\frac{1}{n+1}\Ric_{[ab]}$. Hence, the identities
(\ref{curvature_on_densities}) and (\ref{another_curvature_on_densities}) imply
that the Ricci tensor of a special connection $\nabla\in[\nabla]$ satisfies
\begin{itemize}
\item $\Ric_{\bar a b}=\Ric_{b\bar a}$ 
\item $\Ric_{[ab]}=\frac{1}{2}\nabla_{\bar c}T_{ab}{}^{\bar c}$.
\end{itemize}
If $\nabla_{\bar c}T_{ab}{}^{\bar c}$ vanishes, the special connection has
symmetric Ricci tensor. In particular, if $J$ is integrable all special
connections have symmetric Ricci tensor.

\subsection{The curvature of complex projective space}\label{CPn_curvature}
In Section~\ref{almost_cproj_curvature}, and especially in
Proposition~\ref{rosetta}, the curvature of a complex connection on a general
almost complex manifold was decomposed into various irreducible pieces
(irreducibility to be further discussed in Section~\ref{BGG}). Here, we pause
to examine this decomposition on complex projective space~$\CP^n$
with its standard Fubini--Study metric.
\begin{lem}
The Riemannian curvature tensor for the Fubini--Study metric $g_{\alpha\beta}$
on $\CP^n$ is given by
\begin{equation}\label{FS_curvature}
R_{\alpha\beta\gamma\delta}=
g_{\alpha\gamma}g_{\beta\delta}-g_{\beta\gamma}g_{\alpha\delta}
+\Omega_{\alpha\gamma}\Omega_{\beta\delta}
-\Omega_{\beta\gamma}\Omega_{\alpha\delta}
+2\Omega_{\alpha\beta}\Omega_{\gamma\delta}\end{equation}
where $J_\alpha{}^\beta$ is the complex structure and
$\Omega_{\alpha\gamma}\equiv J_\alpha{}^\beta g_{\beta\gamma}$ \textup(the K\"ahler
form\textup).
\end{lem}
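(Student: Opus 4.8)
**The plan is to compute the curvature of the Fubini--Study metric directly from a known, homogeneous expression and then massage it into the stated algebraic form.** The Fubini--Study metric is a K\"ahler--Einstein metric of constant holomorphic sectional curvature, and $\CP^n$ is the canonical example of a space of constant holomorphic sectional curvature. For such spaces there is a classical formula (see e.g.\ Kobayashi--Nomizu) expressing the curvature tensor entirely in terms of the metric $g_{\alpha\beta}$, the complex structure $J_\alpha{}^\beta$, and the K\"ahler form $\Omega_{\alpha\gamma}=J_\alpha{}^\beta g_{\beta\gamma}$. So the first step is to recall that, after normalising the holomorphic sectional curvature appropriately, the curvature of a space of constant holomorphic sectional curvature takes exactly the shape on the right-hand side of \eqref{FS_curvature}: a term quadratic in the metric (the ``real part'', giving constant ordinary sectional curvature) plus the two terms quadratic in $\Omega$ that encode the $J$-dependence of the sectional curvature.

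\emph{A cleaner self-contained route} would be to exploit homogeneity. Since $\CP^n=\mathrm{SU}(n+1)/\mathrm{S}(\mathrm{U}(1)\times\mathrm{U}(n))$ is a symmetric space, the curvature at the base point is given by the bracket $R(X,Y)Z=-[[X,Y],Z]$ on the tangent space identified with the $(-1)$-eigenspace $\m$ of the Cartan involution. First I would set up this identification, compute the brackets $[\m,\m]\subset\k$ and then $[\k,\m]\subset\m$ explicitly for the standard basis of $\m\cong\C^n$, and read off the resulting tensor. Because the metric, $J$, and $\Omega$ are the only $\mathrm{S}(\mathrm{U}(1)\times\mathrm{U}(n))$-invariant tensors of the appropriate valence at the base point, the answer is forced to be a linear combination of the five invariant monomials $g_{\alpha\gamma}g_{\beta\delta}$, $g_{\beta\gamma}g_{\alpha\delta}$, $\Omega_{\alpha\gamma}\Omega_{\beta\delta}$, $\Omega_{\beta\gamma}\Omega_{\alpha\delta}$, and $\Omega_{\alpha\beta}\Omega_{\gamma\delta}$; homogeneity then propagates the base-point computation to all of $\CP^n$.

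\textbf{The genuinely load-bearing step is fixing the five coefficients.} I expect the main work to be pinning these down rather than identifying the admissible monomials. The cleanest way is to impose the structural constraints the curvature must satisfy: the pair symmetry $R_{\alpha\beta\gamma\delta}=R_{\gamma\delta\alpha\beta}$, skew-symmetry in $\alpha\beta$ and in $\gamma\delta$, the first Bianchi identity $R_{[\alpha\beta\gamma]\delta}=0$, and the K\"ahler identity $R_{\alpha\beta\gamma\delta}=R_{\alpha\beta\epsilon\zeta}J_\gamma{}^\epsilon J_\delta{}^\zeta$ expressing invariance of curvature under $J$. These constraints cut the coefficient space down drastically; the Bianchi identity in particular forces the relative weight of the $\Omega_{\alpha\beta}\Omega_{\gamma\delta}$ term to be $2$ (using $\Omega_{[\alpha\beta}\Omega_{\gamma]\delta}$ contractions and $J_\alpha{}^\beta J_\beta{}^\gamma=-\delta_\alpha{}^\gamma$). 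The overall normalisation is then fixed by a single contraction: computing the Ricci tensor $\Ric_{\beta\delta}=R_{\alpha\beta}{}^\alpha{}_\delta$ of the right-hand side should reproduce the known Einstein constant of $g_{FS}$ with the curvature convention in force, and checking the holomorphic sectional curvature $R(X,JX,X,JX)/|X|^4$ on a unit vector gives an independent confirmation.

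\textbf{The chief obstacle I anticipate is bookkeeping with sign and normalisation conventions}, not any conceptual difficulty: the formula \eqref{FS_curvature} has a specific normalisation (no overall scalar factor, holomorphic sectional curvature presumably equal to a fixed value such as $4$), and one must verify that the metric $g_{FS}$ as scaled in this paper matches it. Since $\Omega_{\alpha\gamma}=J_\alpha{}^\beta g_{\beta\gamma}$, the substitutions $\Omega_{\alpha\gamma}\Omega_{\beta\delta}=J_\alpha{}^\epsilon J_\beta{}^\zeta g_{\epsilon\gamma}g_{\zeta\delta}$ let one re-express everything in terms of $g$ and $J$ and check all symmetries mechanically; the $+2\Omega_{\alpha\beta}\Omega_{\gamma\delta}$ term is the only subtle one, and I would verify it last via the first Bianchi identity. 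Once the coefficients are fixed at the base point, invariance under the isometry group completes the proof globally.
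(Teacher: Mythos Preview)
Your proposal is correct and aligns closely with the paper's own argument. The paper's proof is terse---it simply says ``direct calculation from the definition'' or ``by invariant theory noting that (up to scale) the right-hand side is the only covariant expression in $g_{\alpha\beta}$ and $\Omega_{\alpha\beta}$'' satisfying $R_{\alpha\beta\gamma\delta}=R_{[\alpha\beta][\gamma\delta]}$, $R_{[\alpha\beta\gamma]\delta}=0$, and $R_{\alpha\beta\gamma[\delta}J_{\epsilon]}{}^\gamma=0$---and your plan is an elaboration of exactly this: your symmetric-space bracket computation is one way to carry out the ``direct calculation'', and your coefficient-fixing via curvature symmetries and the K\"ahler identity is precisely the invariant-theory route the paper sketches.
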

\begin{proof} A direct calculation from the definition of the Fubini--Study
metric (e.g.~\cite{CdS}) or by invariant theory noting that (up to scale) the 
right hand side of (\ref{FS_curvature}) is the only covariant expression in 
$g_{\alpha\beta}$ and $\Omega_{\alpha\beta}$ such that
\[
R_{\alpha\beta\gamma\delta}=R_{[\alpha\beta][\gamma\delta]}\qquad R_{[\alpha\beta\gamma]\delta}=0
\qquad R_{\alpha\beta\gamma[\delta}J_{\epsilon]}{}^\gamma=0
\]
where the last condition is a consequence of the K\"ahler
condition~$d\Omega=0$ (or, more precisely, a consequence of
$\nabla_\alpha\Omega_{\beta\gamma}=0$ as one can check, by direct computation
in case the almost complex structure $J_\alpha{}^\beta$ is orthogonal
(i.e.~$J_\alpha{}^\beta g_{\beta\gamma}$ is skew), that
$$2\nabla_\alpha\Omega_{\beta\gamma}=3\nabla_{[\alpha}\Omega_{\beta\gamma]}-
3J_\alpha{}^\delta J_\beta{}^\epsilon\nabla_{[\alpha}\Omega_{\delta\epsilon]}-
\Omega_{\alpha\delta}N_{\alpha\beta}{}^\delta,$$
where recall that $N_{\alpha\beta}{}^\gamma$ is the Nijenhuis 
tensor~(\ref{nijenhuis}),
which vanishes when the complex structure is integrable, as it is
on~$\CP^n$).
\end{proof}
To apply the decompositions of Proposition~\ref{rosetta} to (\ref{FS_curvature})
we should raise an index
$$R_{\alpha\beta}{}^\gamma{}_\epsilon=
\delta_\alpha{}^\gamma g_{\beta\epsilon}
-\delta_\beta{}^\gamma g_{\alpha\epsilon}
+J_\alpha{}^\gamma\Omega_{\beta\epsilon}
-J_\beta{}^\gamma\Omega_{\alpha\epsilon}
-2\Omega_{\alpha\beta}J_\epsilon{}^\gamma$$
and then apply the various projectors such as 
$\Pi_a^\alpha\Pi_b^\beta\Pi_\gamma^c\Pi_d^\epsilon$. However, firstly note 
that applying $\Pi_a^\alpha\Pi_c^\gamma$ to 
$J_\alpha{}^\beta g_{\beta\gamma}+J_\gamma{}^\beta g_{\beta\alpha}=0$ implies
that $g_{ac}=0$ (consequently $\Omega_{ac}=0$) whilst applying 
$\Pi_a^\alpha\overline\Pi{}_{\bar{c}}^\gamma$ to 
$\Omega_{\alpha\gamma}=J_\alpha{}^\beta g_{\beta\gamma}$ shows that 
$\Omega_{a\bar{c}}=ig_{a\bar{c}}$. We conclude that
$R_{ab}{}^c{}_d=0$ and
$$R_{a\bar{b}}{}^c{}_d
\equiv\Pi_a^\alpha\overline\Pi{}_{\bar{b}}^\beta\Pi_\gamma^c\Pi_d^\epsilon 
R_{\alpha\beta}{}^\gamma{}_\epsilon=
\delta_a{}^cg_{d\bar{b}}-i\delta_a{}^c\Omega_{d\bar{b}}
-2i\Omega_{a\bar{b}}\delta_d{}^c=2\delta_a{}^cg_{d\bar{b}}
+2\delta_d{}^cg_{a\bar{b}}.$$
Thus, with reference to Proposition~\ref{rosetta}, we see that all irreducible
pieces of curvature vanish save for $\Rho_{\bar{b}d}=2g_{d\bar{b}}$. In
particular, all invariant pieces
$$T_{\bar{a}\bar{b}}{}^c\qquad H_{a\bar{b}}{}^c{}_d\qquad W_{ab}{}^c{}_d$$
of \emph{harmonic curvature} (as identified the following section) vanish.
This is, of course, consistent with~$\CP^n$, equipped with its
standard complex structure and Fubini--Study connection, being the flat model of
c-projective geometry, as discussed in Section~\ref{cproCartan} and especially
Theorem~\ref{EquivCat}.

Finally, observe that if we regard $\CP^n$ as 
\[
{\mathrm{SL}}(n+1,\C)\bigg/\left\{\begin{pmatrix}
\lambda&*&\cdots&*\\
0&*&\cdots&*\\
\vdots&\vdots&\ddots&\vdots\\
0&*&\cdots&*
\end{pmatrix}\right\},
\] 
rather than as a homogeneous ${\mathrm{PSL}}(n+1,\C)$-space as in
Section~\ref{cproCartan}, then the character
$\lambda\mapsto\lambda^{-k}\overline\lambda{}^{-\ell}$ induces a homogeneous
line bundle $\cE(k,\ell)$ on $\CP^n$ as we were supposing earlier and
as we shall soon suppose in Section~\ref{standard_tractors}. This observation
also explains our copacetic choice of notation: on $\CP^n$ it is
standard to write $\cO(k)$ for the holomorphic bundle that is $\cE(k,0)$ just
as a complex bundle (and then $\overline{\cE(k,0)}=\cE(0,k)$).

\subsection{The harmonic curvature}\label{sectionharmcurv}

A normal Cartan connection gives rise to a simpler local invariant than the
Cartan curvature $\kappa$, called the \emph{harmonic curvature} $\kappa_h$,
which still provides a full obstruction to local flatness, as discussed in
Section~\ref{intropargeom} (cf.~especially Proposition \ref{harmcurv}). The
harmonic curvature $\kappa_h$ of an almost c-projective manifold is the
projection of $\kappa\in\ker\partial^*$ to its homology class in
\[
\G\times_P H_2(\g_1,\g)\cong \G_0\times_{G_0} H_2(\g_1,\g).
\]
By Kostant's version of the Bott--Borel--Weil Theorem \cite{Kostant} the
$G_0$-module $H_2(\g_{1},\g)$ can be naturally identified with a
$G_0$-submodule in
$\Wedge^{2}\g_{1}\otimes_\R\g\cong\Wedge^{2}\g_{-1}^*\otimes_\R\g$ that
decomposes into three irreducible components as follows:
\begin{itemize}
\item for $n=2$
$$(\Wedge^{0,2}\g_{-1}^*\otimes_\C \g_{-1})
\oplus(\Wedge^{1,1}\g_{-1}^*\circledcirc_\C\sgl(\g_{-1},\C))
\oplus(\Wedge^{2,0}\g_{-1}^*\otimes_\C\g_1)$$
\item for $n>2$ 
$$(\Wedge^{0,2}\g_{-1}^*\otimes_\C \g_{-1})
\oplus(\Wedge^{1,1}\g_{-1}^*\circledcirc_\C\sgl(\g_{-1},\C))
\oplus(\Wedge^{2,0}\g_{-1}^*\circledcirc_\C\sgl(\g_{-1},\C)),$$
\end{itemize}
where these are complex vector spaces but regarded as real, and where
$\circledcirc$ denotes the Cartan product. Correspondingly, we decompose
the harmonic curvature as
$$\kappa_h=\tor+\wc+\cy$$
in case $n=2$ and 
$$\kappa_h=\tor+\wc_1+\wc_2$$ 
in case~$n>3$. 

Note that $\partial^*$ preserves homogeneities,
i.e.~$\partial^*(\Wedge^{i}\g_1\otimes \g_j)\subset
\Wedge^{i-1}\g_1\otimes\g_{j+1}$. In particular, the induced vector bundle map
$\partial^*$ maps $\Wedge^3T^*M\otimes\A M$ to $\Wedge^2T^*M\otimes\A^0M$.
Hence, we conclude that $\tor$ must equal the torsion
$T_{\alpha\beta}{}^\gamma$. If $n=2$, then $\wc$ is the component
$H_{a\bar{b}}{}^c{}_d$ in $(\Wedge^{1,1}\circledcirc_\C\sgl(T^{1,0}M))$ of the
Weyl curvature of any connection in the c-projective class, and $\cy$ is the
$(2,0)$-part of the Cotton--York tensor. If $n>2$, then $\wc_1$,
respectively~$\wc_2$, is the totally trace-free $(1,1)$-part, respectively
$(2,0)$-part, of the Weyl curvature of any connection in the class.

We now give a geometric interpretation of the three harmonic curvature
components.
\begin{thm}\label{Intharmcurv}
Suppose $(M,J,[\nabla])$ is an almost c-projective manifold of dimension
$2n\geq 4$ and denote by $\kappa_h$ the harmonic curvature of its normal Cartan
connection. Then the following statements hold.
\begin{enumerate}
\item $\kappa_h\equiv 0$ if and only if the almost c-projective manifold
$(M,J,[\nabla])$ is locally isomorphic to $(\C P^n, J_{\mathrm{can}},
[\nabla^{g_{FS}}])$. 
\item $\tor$ is the torsion of $(M,J,[\nabla])$. In particular, $\tor\equiv 0$
if and only if $J$ is integrable, i.e.~$(M,J,[\nabla])$ is a c-projective
manifold. Moreover, in this case, $J^\G$ is integrable and the Cartan bundle
$p\colon\G\to M$ is a holomorphic principal $P$-bundle.
\item Suppose $\tor\equiv0$. Then $\wc_1\equiv 0$ \textup(resp.~$\wc\equiv
0$\textup) if and only if $\omega$ is a holomorphic Cartan connection on the
holomorphic principal bundle $p\colon\G\to M$. This is the
case if and only if $[\nabla]$ locally admits a holomorphic connection,
i.e.~for any connection $\nabla\in[\nabla]$ and any point $x\in M$ there is an
open neighbourhood $U\ni x$ such that $\nabla|_U$ is c-projectively equivalent
to a holomorphic connection on $U$.
\end{enumerate}
\end{thm}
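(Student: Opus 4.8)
The plan is to prove part~(3); parts (1) and (2) are the general flatness statement (Proposition~\ref{harmcurv}) and the integrability discussion of Section~\ref{cproCartan}, which I take as given and use freely. By part~(2), when $J$ is integrable $J^\G$ is integrable, $\G\to M$ is a holomorphic principal bundle, and $\omega$ is a $\g$-valued form of type $(1,0)$. Since the bracket of $\g=\sgl(n+1,\C)$ is complex bilinear, $[\omega,\omega]$ is of type $(2,0)$, so $K=d\omega+\tfrac{1}{2}[\omega,\omega]$ has no $(0,2)$-part and its $(1,1)$-part equals $\bar\partial\omega$. Hence $\omega$ is holomorphic if and only if $K^{1,1}=0$; and because $p$ is holomorphic and $K$ is horizontal, this is equivalent to the vanishing of the $(1,1)$-part of the tractor curvature $\kappa$, which in any Weyl structure is recorded by the pair $(W_{a\bar b}{}^c{}_d,\,C_{a\bar bd})$---the torsion, being of type $(0,2)$, contributing nothing in type $(1,1)$. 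I then prove the cyclic chain: $\omega$ holomorphic $\Rightarrow\wc_1\equiv0\Rightarrow[\nabla]$ locally admits a holomorphic connection $\Rightarrow\omega$ holomorphic.

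The implication $\omega$ holomorphic $\Rightarrow\wc_1\equiv0$ is immediate: $K^{1,1}=0$ forces $W_{a\bar b}{}^c{}_d=0$, and since $T_{\bar a\bar b}{}^c=0$ gives $W_{a\bar b}{}^c{}_d=H_{a\bar b}{}^c{}_d$ by Proposition~\ref{rosetta}, we get $\wc_1\equiv0$ (for $n=2$ the same identity gives $\wc\equiv0$). For the closing implication, suppose $\nabla\in[\nabla]$ is holomorphic on some $U$. Then its curvature is of type $(2,0)$, so $R_{a\bar b}{}^c{}_d=0$; tracing over $a=c$ gives $\Rho_{\bar bd}=\tfrac{1}{n+1}\Ric_{\bar bd}=0$, and $\Rho_{ab}$ is holomorphic. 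Hence $W_{a\bar b}{}^c{}_d=0$ and $C_{a\bar bd}=\nabla_a\Rho_{\bar bd}-\nabla_{\bar b}\Rho_{ad}=0$, so $K^{1,1}=0$ over $U$; as such neighbourhoods cover $M$, $\omega$ is holomorphic.

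The substantive step is $\wc_1\equiv0\Rightarrow$ local holomorphic connection. If $W_{a\bar b}{}^c{}_d=0$ then $R_{a\bar b}{}^c{}_d=\delta_a{}^c\Rho_{\bar bd}+\delta_d{}^c\Rho_{\bar ba}$, so $\hat\nabla\in[\nabla]$ is holomorphic (that is, $\hat R_{a\bar b}{}^c{}_d=0$) exactly when $\hat\Rho_{\bar bd}=0$. By Corollary~\ref{Rho_changes}, $\hat\Rho_{\bar bd}=\Rho_{\bar bd}-\nabla_{\bar b}\Upsilon_d$, so I must solve $\nabla_{\bar b}\Upsilon_d=\Rho_{\bar bd}$ for a local $(1,0)$-form $\Upsilon_d$. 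Since $J$ is integrable, $\Wedge^{1,0}$ is a holomorphic bundle and $\nabla_{\bar b}$ on it is the $\bar\partial$-operator (c-projectively invariant by Proposition~\ref{changeform}); by the Dolbeault--Grothendieck lemma the equation is locally solvable provided $\Rho_{\bar bd}$ is $\bar\partial$-closed, i.e. $\nabla_{[\bar a}\Rho_{\bar b]d}=0$. Setting $\Upsilon_{\bar a}=\overline{\Upsilon_a}$ yields a real $1$-form and a holomorphic $\hat\nabla$ c-projectively equivalent to $\nabla$; a posteriori this also forces $C_{a\bar bd}\equiv0$ for the original connection.

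The main obstacle is the integrability condition $\nabla_{[\bar a}\Rho_{\bar b]d}=0$. I would derive it from the differential Bianchi identity, which in the torsion-free case reads $\nabla_{[\alpha}R_{\beta\gamma]}{}^\delta{}_\epsilon=0$: its $(\bar a\bar bf)$-component is $\nabla_{\bar a}R_{\bar bf}{}^g{}_d+\nabla_{\bar b}R_{f\bar a}{}^g{}_d+\nabla_fR_{\bar a\bar b}{}^g{}_d=0$, and using $R_{\bar a\bar b}{}^g{}_d=0$ (Proposition~\ref{rosetta}), skew-symmetry in the first two indices, and the trace $g=f$ together with $\Rho_{\bar bd}=\tfrac{1}{n+1}R_{f\bar b}{}^f{}_d$, this collapses to $\nabla_{\bar a}\Rho_{\bar bd}=\nabla_{\bar b}\Rho_{\bar ad}$. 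Conceptually this is the vanishing of the $(0,2)$ Cotton--York component $C_{\bar a\bar bd}$, forced by $K^{0,2}=0$; the delicate point to get right is that the type of $K$ refers to its form indices with the $\g$-value kept unbarred, so that $C_{\bar a\bar bd}$ (valued in $\g_1$) lies in $K^{0,2}$, whereas the generically nonzero $(2,0)$ Cotton--York $C_{abd}$ does not. For $n=2$ the argument is unchanged with $\wc$ in place of $\wc_1$, the $(2,0)$ Weyl curvature being absent by Proposition~\ref{Weyl_gone}.
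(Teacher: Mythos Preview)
Your proof of part~(3) is correct but takes a genuinely different route from the paper. The paper invokes \v{C}ap's results \cite{CorrCap} to obtain directly that $\kappa_h$ being of type $(2,0)$ is equivalent to $\kappa$ being of type $(2,0)$, which in turn is equivalent to $\omega$ being holomorphic; it then produces the local holomorphic connection in the class by choosing a local \emph{holomorphic} Weyl structure $\sigma\colon\G_0\to\G$ (via local holomorphic trivialisations of both bundles) and reading off that $\sigma^*d\omega$, and hence $d\gamma^\nabla$, is of type $(2,0)$. You instead argue directly that $K^{1,1}=\bar\partial\omega$ once $J^\G$ is integrable (since $\omega$ is of type $(1,0)$ and the bracket is $\C$-bilinear), identify $K^{1,1}$ in a Weyl structure with $(W_{a\bar b}{}^c{}_d,\,C_{a\bar bd})$, and then construct the holomorphic connection by solving the $\bar\partial$-equation $\nabla_{\bar b}\Upsilon_d=\Rho_{\bar bd}$ via the Dolbeault--Grothendieck lemma, with the integrability condition $\nabla_{[\bar a}\Rho_{\bar b]d}=0$ extracted from the differential Bianchi identity.

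What each buys: the paper's approach is structural and shows how the result sits in the general parabolic framework, at the cost of importing the type-preservation and holomorphicity criteria from \cite{CorrCap}. Your approach is self-contained and makes the local holomorphic representative completely explicit---in particular, it pinpoints exactly which c-projective change is needed ($\Upsilon_d$ a local $\bar\partial$-primitive of $\Rho_{\bar bd}$), and your observation that the integrability condition is the vanishing of the $(0,2)$ Cotton--York component $C_{\bar a\bar bd}$ (forced by $K^{0,2}=0$) is a nice conceptual bonus that the paper does not make explicit.
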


\begin{proof}
We have already observed (1) and the first two assertions of (2). To prove the
last statement of (2) and (3), assume that $\tor\equiv0$, which says, in
particular, that the Cartan geometry is torsion-free. Since $P$ acts on the
complex vector space $\Wedge^2\g_1\otimes_\R\g$ by complex linear maps, $P$
preserves the decomposition of this vector space into the three
$(p,q)$-types. Therefore~\cite[Corollary 3.2]{CorrCap} applies and hence
$\kappa_h$ has components of type $(p,q)$ if and only if $\kappa$ has
components of type $(p,q)$. Therefore, $\tor\equiv0$ implies that $\kappa$ has
no $(0,2)$-part, which by the proof of~\cite[Theorem 3.4]{CorrCap}
(cf.~\cite[Proposition 3.1.17]{csbook}) implies that $J^\G$ is integrable and
$p\colon\G\to M$ a holomorphic principal bundle.  This finishes the
proof of (2). We know that the component $\wc_1$ (respectively $\wc$) vanishes
identically if and only if $\kappa$ is of type $(2,0)$, which by~\cite[Theorem
  3.4]{CorrCap} is the case if and only if $\omega\in\Omega^{1,0}(\G,\g)$ is
holomorphic, i.e.~$d\omega$ is of type $(2,0)$. Hence, it just remains to prove
the last assertion of (3). Assume firstly that $\wc_1$ (respectively $\wc$)
vanishes identically and hence that $(p\colon\G\to M,\omega)$ is a
holomorphic Cartan geometry. Then we can find around each point of $M$ an open
neighbourhood $U\subset M$ such that $\G$ and $\G_0$ trivialise as holomorphic
principal bundles over $U$.  Having chosen such trivialisations, the
holomorphic inclusion $G_0\hookrightarrow P$ induces a holomorphic
$G_0$-equivariant section $\sigma\colon p_0^{-1}(U)\to
p^{-1}(U)$. Since $d\omega$ is of type $(2,0)$ and $\sigma$ is holomorphic
$$\sigma^*d\omega=d\sigma^*\omega=d\theta+d\gamma^\nabla-d\mathsf{p}^\nabla$$ 
is also of type $(2,0)$. In particular, $d\gamma^\nabla$ is of type $(2,0)$ and
it follows that $\gamma^\nabla\in\Omega^{1,0}(p_0^{-1}(U),\g_0)$ is a
holomorphic principal connection in the c-projective class. Conversely, assume
that $U\subset M$ is an open set and that
$\gamma^\nabla\in\Omega^{1,0}(p_0^{-1}(U),\g_0)$ is a holomorphic principal
connection that belongs to the c-projective class. Since the Lie bracket on
$\g$ is complex linear, the holomorphicity of $\gamma^\nabla$ implies that its
curvature $d\gamma^\nabla+[\gamma^\nabla,\gamma^\nabla]$ is of type $(2,0)$. By
definition of the Weyl curvature this implies that also its Weyl curvature is
of type $(2,0)$ and hence so is $\kappa_h|_U$. By assumption there exists
locally around any point a holomorphic connection and hence $\kappa_h$ is of
type $(2,0)$ on all of $M$.
\end{proof}

\section{Tractor bundles and BGG sequences}\label{sec:tractorBGG}

The normal Cartan connection of an almost c-projective manifold induces a
canonical linear connection on all associated vector bundles corresponding to
representations of $\mathrm{PSL}(n+1,\C)$ (cf.~Section~\ref{intropargeom}).
These, in the theory of parabolic geometries, so-called \emph{tractor
  connections}, provide an efficient calculus, especially well suited for
explicit constructions of local invariants and invariant differential
operators. We develop in this section the basics of the theory of tractor
connections for almost c-projective manifolds, and explain their relation to
geometrically significant overdetermined systems of PDE and sequences of
invariant differential operators.

\subsection{Standard complex tractors}\label{standard_tractors}

Suppose that $(M,J,[\nabla])$ is an almost c-projective manifold of dimension
$2n\geq 4$. Further, assume that the complex line bundle $\Wedge^n T^{1,0}M$
admits an $(n+1)^{\mathrm{st}}$ root and choose one, denoted $\cE(1,0)$, with
conjugate $\cE(0,1)$. More generally, we write $\cE(k,\ell) =\cE(1,0)^{\otimes
  k}\otimes\cE(0,1)^{\ell}$ for any $(k,\ell)\in\Z\times \Z$
(cf.~Section~\ref{almost_c-projective}). Note that such a choice of a root
$\cE(1,0)$ is at least locally always possible and the assumption that such
roots exists globally is a relatively minor constraint. The choice of
$\cE(1,0)$ canonically extends the Cartan bundle of $(M,J,[\nabla])$ to a
$\tilde{P}$-principal bundle $\tilde{p}\colon\tilde\G\to M$, where
$\tilde{P}$ is the stabiliser in ${\mathrm{SL}}(n+1,\C)$ of the complex line
generated by the first basis vector in $\C^{n+1}$, and the normal Cartan
connection of $(M,J,[\nabla])$ naturally extends to a normal Cartan connection
on $\tilde\G$ of type $({\mathrm{SL}}(n+1,\C), \tilde{P})$, which we
also denote by~$\omega$. The groups $\mathrm{SL}(n+1,\C)$ and $\tilde{P}$ are
here viewed as real Lie groups as in Section~\ref{cproCartan}, and we obtains
in this way, analogously to Theorem \ref{EquivCat}, an equivalence of
categories between almost c-projective manifolds equipped with an $(n+1)$st
root $\cE(1,0)$ of $\Wedge^n T^{1,0}M$ and normal Cartan geometries of type
$(\mathrm{SL}(n+1,\C), \tilde{P})$.  The homogeneous model of such structures
is again $\CP^n$, but now viewed as a homogeneous space
$\mathrm{SL}(n+1,\C)/ \tilde{P}$ with $\cE(1,0)$ being dual to the
tautological line bundle $\cO(-1)$, cf.~Section~\ref{CPn_curvature}.

The extended normal Cartan geometry of type $(\mathrm{SL}(n+1,\C), \tilde{P})$ 
allows us to form the \emph{standard complex tractor bundle}
$$\T:=\tilde{\G}\times_{\tilde{P}}\V$$
of $(M,J,[\nabla],\cE(1,0))$, where $\V=\R^{2n+2}$ is the
defining representation of the real Lie group
$\mathrm{SL}(2(n+1),\J_{2(n+1)})\cong{\mathrm{SL}}(n+1,\C)$. Note
that the complex structure $\J_{2(n+1)}$ on $\V$ induces a complex
structure $J^{\T}$ on $\T$.  Analogously to the discussion of the tangent
bundle of an almost complex manifold in Section~\ref{almostcomplexmanifolds},
$(\T, J^{\T})$ can be identified 
with the $(1,0)$-part of its complexification $\T_\C$, on which $J^{\T}$ acts
by multiplication by $i$.  We will implement this identification in the sequel
without further comment, and similarly  for all the other tractor bundles with
complex structures in the following Sections~\ref{standard_tractors},
\ref{BGG} and~\ref{infinitesimal_automorphisms}. 
Since $\tilde P$ stabilises the complex line
generated by the first basis vector in $\C^{n+1}$, this line defines a complex
$1$-dimensional submodule of $\C^{n+1}$. Correspondingly, the standard complex
tractor bundle (identified with the $(1,0)$-part of its
complexification~$\T_\C$) is filtered as
\begin{equation}\label{filtrationT}
\T=\T^0\supset \T^1
\end{equation}
where $\T^1\cong\cE(-1,0)$ and $\T^0/\T^1\cong T^{1,0}M(-1,0)$. Since $\T$ is
induced by a representation of ${\mathrm{SL}}(n+1,\C)$, the Cartan connection
induces a linear connection $\nabla^\T$ on~$\T$, called the \emph{tractor
connection} (see Section~\ref{intropargeom}). Any choice of a linear connection
$\nabla\in[\nabla]$, splits the filtration of the tractor bundle $\T$ and the
splitting changes by
\begin{equation}\label{splittingT}
\widehat{\left( \begin{matrix}X^b\\ \rho \end{matrix}\right )}
=\left( \begin{matrix}X^b\\ \rho-\Upsilon_cX^c \end{matrix}\right ),
\text{ where }
\left\{\begin{array}{l} X^b\in T^{1,0}M(-1,0)\\ 
\rho\in \cE(-1,0),
\end{array}\right.
\end{equation}
if one changes the connection according to (\ref{cprojchange}). In terms of a
connection $\nabla\in[\nabla]$, the tractor connection is given by
\begin{equation}\label{tractorconn}
\nabla^\T_\alpha \begin{pmatrix}X^b\\ \rho \end{pmatrix}
=\begin{pmatrix}\nabla_\alpha X^b+\rho\Pi_\alpha^b\\
\nabla_\alpha\rho-\Rho_{\alpha b}X^b
\end{pmatrix}.
\end{equation}
Applying $\Pi_a^\alpha$ and $\overline{\Pi}{}_{\bar{a}}^\alpha$
to~(\ref{tractorconn}), we can write the tractor connection as
\begin{equation}\label{tractorconna}
\nabla^\T_a\left( \begin{matrix}X^b\ \\ \rho \end{matrix}\right )
=\left( \begin{matrix}\nabla_aX^b+\rho\delta_a{}^b\\
\nabla_a\rho-\Rho_{ab}X^b \end{matrix}\right )
\end{equation}
and
\begin{equation}\label{tractorconnbara}
\nabla^\T_{\bar a}\left( \begin{matrix}X^b\\ \rho \end{matrix}\right )
=\left( \begin{matrix} \nabla_{\bar{a}}X^b\\
\nabla_{\bar{a}}\rho-\Rho_{\bar{a}b}X^b \end{matrix}\right ).
\end{equation}
By (\ref{filtrationT}) the dual (or ``co-standard'') complex tractor bundle
$\T^*$ admits a natural subbundle isomorphic to $\Wedge^{1,0}(1,0)$
and the quotient $\T^*/\Wedge^{1,0}(1,0)$ is isomorphic to
$\cE(1,0)$. One immediately deduces from (\ref{tractorconna}) and
(\ref{tractorconnbara}) that in terms of a connection $\nabla\in[\nabla]$,
the tractor connection on $\T^*$ is given by
\begin{align*}
\nabla^{\T^*}_a \begin{pmatrix}\sigma\\ \mu_b \end{pmatrix}
&=\begin{pmatrix}\nabla_a\sigma-\mu_a\\ \nabla_a \mu_b+\Rho_{ab}\sigma
\end{pmatrix}\\
\nabla^{\T^*}_{\bar a} \begin{pmatrix}\sigma\\ \mu_b \end{pmatrix}
&=\begin{pmatrix}\nabla_{\bar a}\sigma\\
\nabla_{\bar a}\mu_b+\Rho_{\bar ab}\sigma \end{pmatrix}.
\end{align*}

For a choice of connection $\nabla\in[\nabla]$ consider now the following
overdetermined system of PDE on sections $\sigma$ of $\cE(1,0)$:
\begin{equation}\label{riccicur}
{\mathrm{(i)}}\: \nabla_{\bar a}\sigma=0\quad\quad\quad
{\mathrm{(ii)}}\: \nabla_{(a}\nabla_{b)}\sigma+\Rho_{(ab)}\sigma=0.
\end{equation}
Suppose $\hat\nabla\in[\nabla]$ is another connection in the c-projective
class. Then the formulae (\ref{changes_on_densities}) imply that
$\hat\nabla_{\bar a}\sigma=\nabla_{\bar a}\sigma$.  Moreover, we deduce from
Proposition \ref{changeform} and the formulae (\ref{changes_on_densities})
that
\begin{equation*}
\hat\nabla_a\hat\nabla_b\sigma=\nabla_a\nabla_b\sigma
+(\nabla_a\Upsilon_b)\sigma-\Upsilon_a\Upsilon_b\sigma,
\end{equation*}
which together with Corollary \ref{Rho_changes} implies that 
\begin{equation*}
\hat\nabla_a\hat\nabla_b\sigma+\hat \Rho_{ab}\sigma
=\nabla_a\nabla_b\sigma+\Rho_{ab}\sigma.
\end{equation*}
This shows that the overdetermined system (\ref{riccicur}) is c-projectively
invariant. Note also that by equation (\ref{another_curvature_on_densities})
we have
\begin{equation}\label{skewpart}
\nabla_{[a}\nabla_{b]}\sigma+P_{[ab]}\sigma
=-\tfrac{1}{2}T_{ab}{}^{\bar c}\nabla_{\bar c}\sigma.
\end{equation}
Therefore $\nabla_a\nabla_b\sigma+\Rho_{ab}\sigma$ is symmetric provided that
$J$ is integrable or that $\sigma$ satisfies equation (i) of
(\ref{riccicur}). The following proposition shows that, if $J$ is integrable,
the tractor connection on $\T^*$ can be viewed as the prolongation of
(\ref{riccicur}):

\begin{prop}\label{PDEcotractor} Suppose $(M,J,[\nabla],\cE(1,0))$ is
a c-projective manifold of dimension $2n\geq 4$. The projection $\pi\colon
\T^*\to\T^*/\Wedge^{1,0}(1,0)\cong\cE(1,0)$ induces a bijection between
sections of $\T^*$ parallel for $\nabla^{\T^*}$ and sections $\sigma$ of
$\cE(1,0)$ that satisfy~\eqref{riccicur} for some \textup(and hence
any\textup) connection $\nabla\in[\nabla]$. Moreover, suppose that
$\sigma\in\cE(1,0)$ is a nowhere vanishing section, then for any connection
$\nabla\in[\nabla]$ the connection
\begin{equation}\label{ricciconn}
\hat\nabla_a\sigma'=\nabla_a\sigma'-(\nabla_a\sigma)\sigma^{-1}\sigma'
\end{equation}
is induced from a connection in the c-projective class, and $\sigma$ with
$\nabla_{\bar a}\sigma=0$ is a solution of \eqref{riccicur} if and only
if \eqref{ricciconn} is Ricci-flat.
\end{prop}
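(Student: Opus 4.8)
The plan is to treat the two assertions separately but with the same underlying mechanism, namely the explicit form of $\nabla^{\T^*}$ recorded just above the statement. For the first assertion, I would take a section of $\T^*$ and write it, relative to a chosen connection $\nabla\in[\nabla]$, as a pair $(\sigma,\mu_b)$ with $\sigma\in\cE(1,0)$ and $\mu_b\in\Wedge^{1,0}(1,0)$. Setting $\nabla^{\T^*}s=0$ splits into four block equations coming from $\nabla^{\T^*}_a$ and $\nabla^{\T^*}_{\bar a}$. The $(0,1)$-block gives $\nabla_{\bar a}\sigma=0$ together with $\nabla_{\bar a}\mu_b+\Rho_{\bar a b}\sigma=0$, while the $(1,0)$-block gives $\mu_a=\nabla_a\sigma$ and $\nabla_a\mu_b+\Rho_{ab}\sigma=0$. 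The first of these $(1,0)$-equations shows that $\mu_b$ is \emph{determined} by $\sigma$ (this is precisely why $\pi$ is injective on parallel sections), and substituting it into the second recovers $\nabla_a\nabla_b\sigma+\Rho_{ab}\sigma=0$, whose symmetric part is equation (ii). I would then note that, since $J$ is integrable, the torsion vanishes, so by \eqref{skewpart} the skew part $\nabla_{[a}\nabla_{b]}\sigma+\Rho_{[ab]}\sigma=0$ holds automatically; hence the full equation $\nabla_a\nabla_b\sigma+\Rho_{ab}\sigma=0$ is equivalent to its symmetric part (ii). Conversely, given $\sigma$ satisfying \eqref{riccicur}, I would define $\mu_b:=\nabla_b\sigma$ and verify directly that the pair is parallel, using equation (i) to kill the $\nabla_{\bar a}\mu_b$-block via the integrability-driven identity and using (ii) together with the vanishing skew part to kill the $\nabla_a\mu_b$-block. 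Finally I would observe that both \eqref{riccicur} and this reconstruction are c-projectively invariant, as already established in the paragraph preceding the statement, so the correspondence is independent of the choice of $\nabla$.

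For the second assertion, I would first check that the formula \eqref{ricciconn} defines a connection in the c-projective class. Writing $\Upsilon_a:=(\nabla_a\sigma)\sigma^{-1}$ and its conjugate, I would verify that $\hat\nabla_a\sigma'=\nabla_a\sigma'-\Upsilon_a\sigma'$ and (using $\nabla_{\bar a}\sigma=0$, so $\Upsilon_{\bar a}=0$) that $\hat\nabla_{\bar a}\sigma'=\nabla_{\bar a}\sigma'$ are exactly the weight-$(1,0)$ density transformation rules \eqref{changes_on_densities}; by Proposition~\ref{scalebundle} (equivalently the density reformulation), a prescription on $\cE(1,0)$ matching these rules is induced by a genuine connection in $[\nabla]$, with defining $1$-form $\Upsilon_\alpha=\nabla_\alpha\log|\sigma|$ (this is the special/exact-Weyl situation discussed around \eqref{changes_on_realdensities}). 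Here the point is that $\hat\nabla$ is the connection for which $\sigma$ is parallel.

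Then I would compute the Rho tensor of $\hat\nabla$ using Corollary~\ref{Rho_changes}. With $\Upsilon_a=(\nabla_a\sigma)\sigma^{-1}$ we get $\hat\Rho_{ab}=\Rho_{ab}-\nabla_a\Upsilon_b+\Upsilon_a\Upsilon_b=\Rho_{ab}-(\nabla_a\nabla_b\sigma)\sigma^{-1}+2(\nabla_a\sigma)(\nabla_b\sigma)\sigma^{-2}-(\nabla_a\sigma)(\nabla_b\sigma)\sigma^{-2}$, which simplifies so that $\sigma\,\hat\Rho_{ab}=\sigma\Rho_{ab}-\nabla_a\nabla_b\sigma+\Upsilon_a\nabla_b\sigma$; the condition (ii), i.e.\ $\nabla_{(a}\nabla_{b)}\sigma+\Rho_{(ab)}\sigma=0$, then forces the symmetric part $\hat\Rho_{(ab)}$ to vanish, and similarly Corollary~\ref{Rho_changes} gives $\hat\Rho_{\bar a b}=\Rho_{\bar a b}-\nabla_{\bar a}\Upsilon_b$, which vanishes precisely because $\nabla_{\bar a}\sigma=0$ makes $\Upsilon$ holomorphic along $(0,1)$-directions. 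Invoking the curvature formulae on densities \eqref{curvature_on_densities}, the vanishing of these Rho-components is equivalent to the relevant Ricci components of $\hat\nabla$ vanishing; combined with the conjugate statements, this says $\hat\nabla$ is Ricci-flat. I expect the main obstacle to be bookkeeping the density weights correctly through Corollary~\ref{Rho_changes} and matching the index types, rather than any conceptual difficulty; in particular one must be careful that ``Ricci-flat'' here refers to the full (barred and unbarred) Ricci tensor and that the integrability of $J$ is used to discard the torsion terms throughout.
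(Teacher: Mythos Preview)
Your treatment of the first assertion is correct and matches the paper's proof: both directions use the splitting $(\sigma,\mu_b)$, the reconstruction $\mu_b=\nabla_b\sigma$, the identity~\eqref{skewpart} for the $(1,0)$-block, and the curvature identity on densities (what the paper labels~\eqref{curv10}) for the $(0,1)$-block.

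For the second assertion your overall strategy---compute $\hat\Rho$ directly via Corollary~\ref{Rho_changes}---is viable and close in spirit to the paper, but two points need correction. First, your sign for $\Upsilon$ is inconsistent with the paper's conventions: from~\eqref{changes_on_densities} with weight $(1,0)$ one has $\hat\nabla_a\sigma'=\nabla_a\sigma'+\Upsilon_a\sigma'$, so to achieve $\hat\nabla_a\sigma=0$ you must take $\Upsilon_a=-(\nabla_a\sigma)\sigma^{-1}$, not $+(\nabla_a\sigma)\sigma^{-1}$. With the correct sign the computation collapses cleanly to $\sigma\hat\Rho_{ab}=\nabla_a\nabla_b\sigma+\Rho_{ab}\sigma$, whose vanishing is equivalent to (ii) by~\eqref{skewpart}. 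Second, your justification for $\hat\Rho_{\bar ab}=0$ is wrong as stated: the condition $\nabla_{\bar a}\sigma=0$ does \emph{not} make $\nabla_{\bar a}\Upsilon_b$ vanish. Since $\sigma$ has weight $(1,0)$, the commutator~\eqref{curvature_on_densities} contributes, and one finds $\nabla_{\bar a}\Upsilon_b=\Rho_{\bar ab}$ (in the integrable case), which then \emph{cancels} the $\Rho_{\bar ab}$ term in Corollary~\ref{Rho_changes} to give $\hat\Rho_{\bar ab}=0$. This is exactly the identity~\eqref{curv10} applied to $\nabla$ rather than to $\hat\nabla$.

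The paper's own argument for the second part is slightly slicker: rather than computing $\hat\Rho$ from $\Rho$ via the change formulae, it uses the c-projective invariance of~\eqref{riccicur} to evaluate everything in the gauge $\hat\nabla$, where $\hat\nabla_a\sigma=0$ reduces (ii) directly to $\hat\Rho_{(ab)}\sigma=0$, while~\eqref{skewpart} and~\eqref{curv10} (now for $\hat\nabla$) force $\hat\Rho_{[ab]}=0$ and $\hat\Rho_{\bar ab}=0$ automatically. This avoids the explicit transformation computation entirely, but your route, once the two fixes above are made, reaches the same conclusion.
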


\begin{proof} Suppose $s$ is a parallel section of $\T^*$ and
set $\sigma:=\pi(s)\in\Gamma(\cE(1,0))$. It follows from (\ref{splittingT})
that changing from connection to another in $[\nabla]$ changes the splitting of
$\T^*$ by $\widehat{(\sigma, \mu_b)}=(\sigma, \mu_b+\Upsilon_b\sigma)$.  Hence,
for any connection $\nabla\in[\nabla]$ we can identify $s$ with a section of
the form $(\sigma, \mu_b)$ for some $\mu_b\in\Gamma(\Wedge^{1,0}(1,0))$ and by
definition of the tractor connection $\sigma\in\Gamma(\cE(1,0))$ satisfies
(\ref{riccicur}) for any connection $\nabla\in[\nabla]$. So $\pi$ induces a map
from parallel sections of $\T^*$ to solutions of (\ref{riccicur}).

Conversely, let us contemplate the differential operator $L\colon
\cE(1,0)\to \T^*$, which, for a choice of
connection in~$[\nabla]$, is given by $L\sigma=(\sigma,\nabla_b\sigma)$.
Suppose now $\sigma$ is a solution of~\eqref{riccicur}. Then obviously
$\nabla^{\T^*}_aL\sigma=0$, since \eqref{skewpart} vanishes. By
(\ref{curvature_on_densities}) we have
\begin{align}\label{curv10}
\nabla_{\bar a}\nabla_b\sigma+\Rho_{\bar ab}\sigma&
=(\nabla_{\bar a}\nabla_b-\nabla_b\nabla_{\bar a})\sigma+
\Rho_{\bar ab}\sigma\\
&=\tfrac{1}{n+1}T_{bf}{}^{\bar{e}}T_{\bar{e}\bar{a}}{}^f\sigma
-\Rho_{\bar{a}b}\sigma+\Rho_{\bar ab}\sigma\nonumber\\
&=\tfrac{1}{n+1}
T_{bf}{}^{\bar{e}}T_{\bar{e}\bar{a}}{}^f\sigma\,,\nonumber
\end{align}
which vanishes, since $J$ is integrable. Hence, we also have
$\nabla^{\T^*}_{\bar a}L\sigma=0$. Therefore $L$ maps solutions
$\sigma\in\Gamma(\cE(1,0))$ of (\ref{riccicur}) to parallel sections of $\T^*$
and defines an inverse to the restriction of $\pi$ to parallel section. For the
second statement, assume now that $\sigma$ is a section of $\cE(1,0)$ that is
nowhere vanishing and let $\nabla\in[\nabla]$ be a connection in the
c-projective class. If we change $\nabla$ according to (\ref{cprojchange}) by
$\Upsilon_a=-(\nabla_a\sigma)\sigma^{-1}$ to a connection
$\hat\nabla\in[\nabla]$, then we deduce from Corollary~\ref{changedensity} that
the induced connection on $\cE(1,0)$ is given by~(\ref{ricciconn}).  Moreover,
$\hat\nabla_{a}\sigma=0$. Therefore, using that (\ref{curv10}) vanishes, we
deduce that $\sigma$ with $\hat\nabla_{\bar a}\sigma=0$ satisfies
(\ref{riccicur}) if and only if $\hat\Rho_{ab}\sigma=0$ and $\hat\Rho_{\bar a
  b}\sigma=0$, and hence if and only if $\hat\nabla$ is Ricci-flat.
\end{proof}

More generally, we immediately conclude from equation (\ref{curv10}) that, in
the case of an almost c-projective manifold, i.e.~$J$ is not necessarily
integrable, the corresponding proposition reads as follows:

\begin{prop}\label{PDEcotractor2} Let $(M,J,[\nabla],\cE(1,0))$ be an almost
c-projective manifold of dimension $2n\geq 4$. Then sections $\sigma$
of $\cE(1,0)$ that satisfy {\rm(\ref{riccicur})} are in bijection with
sections of $\T^*$ that are parallel for the connection given by
\begin{equation}\nabla^{\T^*}_a\quad\text{ and }\quad
\nabla^{\T^*}_{\bar a}\begin{pmatrix}\sigma\\ \mu_b
\end{pmatrix}-\frac{1}{n+1}\begin{pmatrix}0\\
T_{bf}{}^{\bar{e}}T_{\bar{e}\bar{a}}{}^f\sigma
\end{pmatrix}.
\end{equation} 
Moreover, suppose $\sigma\in\cE(1,0)$ is a nowhere vanishing section
with $\nabla_{\bar a}\sigma=0$. Then $\sigma$ is a solution of
\eqref{riccicur} if and only if~$\hat\nabla$, defined as in
\eqref{ricciconn}, satisfies $\hat\Rho_{ab}=0$ and $\hat\Rho_{\bar{a}b}
=\frac1{n+1} T_{bf}{}^{\bar{e}}T_{\bar{e}\bar{a}}{}^f$.
\end{prop}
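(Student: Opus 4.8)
The plan is to transcribe the proof of Proposition~\ref{PDEcotractor} almost verbatim, the only change being that the torsion term on the right of~\eqref{curv10}, which vanished there under integrability, now survives and is exactly absorbed by the correction in the modified connection. Write $\tilde\nabla$ for the connection on $\T^*$ appearing in the statement, whose unbarred part is $\nabla^{\T^*}_a$ and whose barred part is $\nabla^{\T^*}_{\bar a}$ adjusted by $-\tfrac1{n+1}(0,T_{bf}{}^{\bar e}T_{\bar e\bar a}{}^f\sigma)$. For the bijection, I first introduce, for a choice of $\nabla\in[\nabla]$, the operator $L\colon\cE(1,0)\to\T^*$ with $L\sigma=(\sigma,\nabla_b\sigma)$. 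This is independent of the connection: under~\eqref{cprojchange} the lower slot satisfies $\hat\nabla_b\sigma=\nabla_b\sigma+\Upsilon_b\sigma$ by~\eqref{changes_on_densities} (weight $(1,0)$), which is precisely how the splitting of $\T^*$ transforms, so $L\sigma$ is a well-defined section.

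Next I would check that $L$ sends solutions of~\eqref{riccicur} to $\tilde\nabla$-parallel sections. In the unbarred direction the top slot of $\nabla^{\T^*}_aL\sigma$ is $\nabla_a\sigma-\nabla_a\sigma=0$, while the bottom slot is $\nabla_a\nabla_b\sigma+\Rho_{ab}\sigma$; its symmetric part vanishes by~\eqref{riccicur}(ii), and its skew part equals $-\tfrac12 T_{ab}{}^{\bar c}\nabla_{\bar c}\sigma$ by~\eqref{skewpart}, hence vanishes by~\eqref{riccicur}(i). In the barred direction the top slot is $\nabla_{\bar a}\sigma=0$ by~\eqref{riccicur}(i), and the bottom slot of $\nabla^{\T^*}_{\bar a}L\sigma$ is $\nabla_{\bar a}\nabla_b\sigma+\Rho_{\bar a b}\sigma$, which by~\eqref{curv10} equals $\tfrac1{n+1}T_{bf}{}^{\bar e}T_{\bar e\bar a}{}^f\sigma$---exactly the term subtracted in $\tilde\nabla$, so $\tilde\nabla_{\bar a}L\sigma=0$. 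Conversely, a $\tilde\nabla$-parallel $(\sigma,\mu_b)$ has $\mu_b=\nabla_b\sigma$ (top unbarred slot), satisfies~\eqref{riccicur}(i) (top barred slot), and satisfies~\eqref{riccicur}(ii) (symmetric part of the bottom unbarred slot, the skew part being automatic once~(i) holds); thus $\pi$ and $L$ are mutually inverse.

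For the second assertion I would argue as in Proposition~\ref{PDEcotractor}. Since $\sigma$ is nowhere vanishing, the change~\eqref{cprojchange} with $\Upsilon_a=-(\nabla_a\sigma)\sigma^{-1}$ yields $\hat\nabla$ with $\hat\nabla_a\sigma=0$ via~\eqref{ricciconn}, and $\hat\nabla_{\bar a}\sigma=\nabla_{\bar a}\sigma=0$ because $\cE(1,0)$ has barred weight $\ell=0$. Substituting $\hat\nabla_b\sigma=0$ into~\eqref{curv10} gives $\hat\Rho_{\bar a b}=\tfrac1{n+1}T_{bf}{}^{\bar e}T_{\bar e\bar a}{}^f$ outright, and substituting $\hat\nabla\sigma=0$ into~\eqref{another_curvature_on_densities} with $(k,\ell)=(1,0)$ gives $\hat\beta_{ab}=0$, i.e.\ $\hat\Rho_{[ab]}=0$; neither conclusion uses~\eqref{riccicur}(ii). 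By the established c-projective invariance $\hat\nabla_{(a}\hat\nabla_{b)}\sigma+\hat\Rho_{(ab)}\sigma=\nabla_{(a}\nabla_{b)}\sigma+\Rho_{(ab)}\sigma$ together with $\hat\nabla_b\sigma=0$, equation~\eqref{riccicur}(ii) holds if and only if $\hat\Rho_{(ab)}=0$, which in view of $\hat\Rho_{[ab]}=0$ is equivalent to $\hat\Rho_{ab}=0$. As~\eqref{riccicur}(i) is the standing hypothesis, this is the claimed equivalence.

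The only genuinely delicate point is to confirm that $\tilde\nabla$ really is a linear connection on $\T^*$: the correction endomorphism factors through the invariant quotient $\T^*/\Wedge^{1,0}(1,0)\cong\cE(1,0)$ and takes values in the invariant subbundle $\Wedge^{1,0}(1,0)$, so it is tensorial and splitting-independent, and the coefficient $\tfrac1{n+1}$ is pinned down precisely by~\eqref{curv10}. Everything else is a direct transcription of the integrable case.
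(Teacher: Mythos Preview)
Your proof is correct and follows essentially the same approach as the paper, which derives the proposition directly from the computation~\eqref{curv10} carried out in the proof of Proposition~\ref{PDEcotractor}. You have spelled out in more detail what the paper leaves implicit, including the well-definedness of $L$ and of the modified connection, and the useful observation that the condition $\hat\Rho_{\bar a b}=\tfrac1{n+1}T_{bf}{}^{\bar e}T_{\bar e\bar a}{}^f$ already follows from~(i) alone.
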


\begin{rem}
Recall that a parallel section of a linear connection of a vector bundle over
a connected manifold, is already determined by its value at one point. The
correspondences established in Propositions \ref{PDEcotractor} and
\ref{PDEcotractor2} between solutions of (\ref{riccicur}) and parallel sections
of a linear connection on $\T^*$ therefore implies, that on a
connected almost c-projective manifold
\[
U:=\{x\in M: \sigma(x)\neq 0\}\subset M
\]
is a dense open subset for any nontrivial solution $\sigma\in\Gamma(\cE(1,0))$
of (\ref{riccicur}). In particular, the second statement of
Proposition~\ref{PDEcotractor}, respectively of
Proposition~\ref{PDEcotractor2}, holds always on the dense open subset~$U$.
\end{rem}

The equations (\ref{riccicur}) define an invariant differential operator of
order two
\begin{equation*}
D^{\T^*}\colon \cE(1,0)\to \Wedge^{0,1}(1,0)\oplus S^2\Wedge^{1,0}(1,0),
\end{equation*}
whose kernel are the solutions of~\eqref{riccicur}.  The differential operator
$D^{\T^*}$ is the first operator in the BGG sequence corresponding to
the co-standard complex tractor bundle; see~\cite{CD,CSS}. The
proof of Proposition~\ref{PDEcotractor2} implies that in order for a (nonzero)
parallel section of the tractor connection on $\T^*$ to exist, it is
necessary that Nijenhuis tensor of $J$ satisfy
$N^J_{\,bf}{}^{\bar{e}}N^J_{\,\bar{e}\bar{a}}{}^f\equiv 0$ and, in this case, parallel
sections of the tractor connection are in bijection with sections in the
kernel of $D^{\T^*}$.

Similarly, one may consider the first BGG operator in the sequence
corresponding to the standard complex tractor bundle $\T$, which is a
first order invariant differential operator, defined, for a choice of
connection $\nabla\in[\nabla]$, by
\begin{align}\notag
D^\T\colon T^{1,0}M(-1,0)&\to
(\Wedge^{0,1}\otimes T^{1,0}M(-1,0))\oplus 
(\Wedge^{1,0}\otimes_\circ T^{1,0}M(-1,0)),\\
X^b&\;\;\mapsto\;
(\nabla_{\bar a}X^b, \nabla_aX^b-\tfrac{1}{n}\nabla_cX^c\delta_{a}{}^b),
\label{StandardBGG}
\end{align}
where the subscript $\circ$ denotes the trace-free part.

\begin{prop}\label{standard_prolong}
Suppose $(M,J,[\nabla],\cE(1,0))$ is an almost c-projective manifold
of dimension $2n\geq 4$. The projection $\pi\colon\T\to
\T/\cE(-1,0)\cong T^{1,0}M(-1,0)$ induces a bijection between sections
of $\T$ that are parallel for the connection given by
\begin{equation}\label{prolongconn2}
\nabla^{\T}_a\quad\quad\mathrm{ and }\quad
\nabla^{\T}_{\bar a}\left(
\begin{matrix}X^b\\ \rho \end{matrix}\right )
-\frac{1}{n(n+1)}\left(
\begin{matrix}0\\ T_{bf}{}^{\bar{e}}T_{\bar{e}\bar{a}}{}^fX^b
\end{matrix}\right )
\end{equation} 
and sections $X^b\in \Gamma(T^{1,0}M(-1,0))$ that are in the kernel of $D^\T$. In
particular, elements $X^b\in\ker D^\T$ with
$N^J_{\,bf}{}^{\bar{e}}N^J_{\,\bar{e}\bar{a}}{}^fX^b=0$ are in bijection with
parallel sections of the tractor connection $\nabla^\T$.
\end{prop}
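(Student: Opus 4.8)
The plan is to mimic the proofs of Propositions~\ref{PDEcotractor} and~\ref{PDEcotractor2}, producing the BGG splitting operator as an explicit two-sided inverse to $\pi$. For a choice of connection $\nabla\in[\nabla]$ I would define $L\colon T^{1,0}M(-1,0)\to\T$ by $L(X^b)=(X^b,\rho)$ with $\rho:=-\tfrac1n\nabla_cX^c$; this is tautologically a right inverse to $\pi$. The splitting is in fact forced by the equations: if $s=(X^b,\rho)$ is parallel for the connection displayed in the statement, then the top slot $\nabla_aX^b+\rho\,\delta_a{}^b$ of its $\nabla^\T_a$-derivative [see~(\ref{tractorconna})] must vanish; its trace gives $\rho=-\tfrac1n\nabla_cX^c$, while its trace-free part is exactly the second component of $D^\T X$ in~(\ref{StandardBGG}). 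Since the $\nabla^\T_{\bar a}$-correction in the statement affects only the bottom slot, the top slot of the $\nabla^\T_{\bar a}$-derivative is $\nabla_{\bar a}X^b$, the first component of $D^\T X$. Hence $\pi$ carries parallel sections into $\ker D^\T$, and $L\circ\pi$ is the identity on them. It remains to show that $L$ sends $\ker D^\T$ to parallel sections.

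So fix $X^b\in\ker D^\T$, so that $\nabla_{\bar a}X^b=0$ and $\nabla_aX^b=-\rho\,\delta_a{}^b$. By construction the top slots of the covariant derivatives of $L(X)$ vanish, and the correction sits in the bottom slot, so everything reduces to the two bottom-slot expressions $\nabla_a\rho-\Rho_{ab}X^b$ and $\nabla_{\bar a}\rho-\Rho_{\bar ab}X^b-\tfrac1{n(n+1)}T_{bf}{}^{\bar e}T_{\bar e\bar a}{}^fX^b$. The decisive point is that $X^b$ is a \emph{weighted} field of weight $(-1,0)$, so the relevant Ricci identities acquire density-curvature contributions: combining the tensorial commutators~(\ref{curvatures}) with the density formulae~(\ref{another_curvature_on_densities}) and~(\ref{curvature_on_densities}) evaluated at weight $(-1,0)$, and feeding in $\nabla_{\bar a}X^b=0$ and $\nabla_aX^b=-\rho\,\delta_a{}^b$, one obtains $\nabla_a\nabla_cX^c$ and $\nabla_{\bar a}\nabla_cX^c$ in closed form. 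The needed traces come from Proposition~\ref{rosetta}: the symmetries there give $W_{a\bar b}{}^a{}_d=0$, whence $R_{c\bar a}{}^c{}_d=(n+1)\Rho_{\bar ad}$, and tracing~(\ref{full_curvature_decomposition}) gives $R_{ac}{}^c{}_d=-n\Rho_{ad}+\Rho_{da}$.

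In the $a$-direction the density term $-\beta_{ac}X^b$ (with $\beta_{ab}=-2\Rho_{[ab]}$) combines with $R_{ac}{}^c{}_d$ so that the antisymmetric $\Rho_{da}$ contributions cancel, leaving $\nabla_a\rho=\Rho_{ad}X^d$ and hence $\nabla_a\rho-\Rho_{ab}X^b=0$; no correction is needed here. In the $\bar a$-direction the same bookkeeping yields $\nabla_{\bar a}\rho=\Rho_{\bar ab}X^b-\tfrac1{n(n+1)}T_{fb}{}^{\bar e}T_{\bar e\bar a}{}^fX^b$; substituting into the corrected bottom slot, the $\Rho$ terms cancel and, using the skew-symmetry $T_{fb}{}^{\bar e}=-T_{bf}{}^{\bar e}$, the two Nijenhuis-squared terms combine to zero. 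Aligning these weighted Ricci identities with the coefficient $-\tfrac1{n(n+1)}$ so that both torsion contributions annihilate is the step most prone to sign and normalisation errors, and is the main obstacle. Finally, because $T_{bf}{}^{\bar e}$ is a constant multiple of the Nijenhuis tensor (Definition~\ref{definition_cprojective_structure}), the hypothesis $N^J_{\,bf}{}^{\bar e}N^J_{\,\bar e\bar a}{}^fX^b=0$ kills the correction on $L(X)$; the same computation then shows $\nabla^\T_{\bar a}L(X)=0$ for the \emph{unmodified} tractor connection, and conversely any $\nabla^\T$-parallel section satisfies this Nijenhuis condition by exactly this identity, giving the final assertion.
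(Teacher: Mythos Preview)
Your proposal is correct and follows essentially the same approach as the paper's own proof: both define the splitting $L(X^b)=(X^b,-\tfrac1n\nabla_cX^c)$, verify that parallel sections project into $\ker D^\T$ by reading off the top slots, and then check the bottom-slot identities by combining the weighted commutators~(\ref{curvatures}), (\ref{another_curvature_on_densities}), (\ref{curvature_on_densities}) with the curvature decomposition of Proposition~\ref{rosetta}. The only cosmetic difference is that the paper keeps the Weyl tensor explicit in the commutator (equations~(\ref{eq})--(\ref{eq3})) and then traces, whereas you first record the traces $R_{ac}{}^c{}_d=-n\Rho_{ad}+\Rho_{da}$ and $R_{c\bar a}{}^c{}_d=(n+1)\Rho_{\bar ad}$ and substitute; the computations are equivalent. (Minor typo: your ``density term $-\beta_{ac}X^b$'' should read $-\beta_{ab}X^c$.)
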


\begin{proof}
Suppose firstly that $s\in\Gamma(\T)$ is parallel for the connection
(\ref{prolongconn2}) and set $X^b:=\pi(s)$. For a choice of connection
$\nabla\in[\nabla]$ we can identify $s$ with an element of the form
$(X^b,\rho)$, where $\rho\in\Gamma(\cE(-1,0))$. By assumption
$\nabla_{\bar{a}}X^b=0$ and $\nabla_aX^b=-\rho\delta_a{}^b$. Taking the trace
of the latter equation shows that $\rho=-\frac{1}{n}\nabla_cX^c$. Hence, $X^b$
is in the kernel of $D^\T$.

Conversely, suppose $X^b\in\ker D^\T$ and pick a connection
$\nabla\in[\nabla]$.  Then we deduce from Proposition~\ref{rosetta} and
equation~\eqref{another_curvature_on_densities} that
\begin{align}\label{eq}
(\nabla_a\nabla_b-\nabla_b\nabla_a)X^c
&=(\nabla_a\nabla_b-\nabla_b\nabla_a)X^c+T_{ab}{}^{\bar d}\nabla_{\bar d}X^c\\
&=R_{ab}{}^c{}_dX^d+2\Rho_{[ab]}X^c=
W_{ab}{}^c{}_dX^d+2\delta_{[a}{}^c\Rho_{b]d}X^d.\nonumber
\end{align}
By assumption $\nabla_aX^b=\frac{1}{n}\nabla_cX^c\delta_{a}{}^b$ and therefore
(\ref{eq}) implies
\begin{equation}\label{eq2}
\tfrac{1}{n}
(\nabla_a\nabla_dX^d\delta_{b}{}^c-\nabla_b\nabla_dX^d\delta_{a}{}^c)=
W_{ab}{}^c{}_dX^d+2\delta_{[a}{}^c\Rho_{b]d}X^d.
\end{equation}
Taking the trace in (\ref{eq2}) over $a$ and $c$ shows that
$-\frac{1}{n}\nabla_b\nabla_dX^d=\Rho_{bd}X^d$. Hence,
$(X^b,-\frac{1}{n}\nabla_cX^c)$ defines a section $s$ of $\T$ that satisfies
$\nabla_a^\T s=0$. Similarly, since $\nabla_{\bar{a}}X^b=0$,
Proposition~\ref{rosetta} and equation~\eqref{curvature_on_densities} imply
\begin{align}\label{eq3}
\nabla_{\bar a}\nabla_bX^c&=
(\nabla_{\bar a}\nabla_b-\nabla_{b}\nabla_{\bar a})X^c\\
\nonumber&= R_{\bar{a}b}{}^c{}_dX^d
-\tfrac{1}{n+1}T_{bf}{}^{\bar e}T_{\bar e\bar a}{}^fX^c
+\Rho_{\bar{a}b}X^c\\
\nonumber&=
W_{\bar ab}{}^c{}_dX^d-2\Rho_{\bar{a}(b}\delta_{d)}{}^cX^d+\Rho_{\bar{a}b}X^c
-\tfrac{1}{n+1}T_{bf}{}^{\bar e}T_{\bar e\bar a}{}^fX^c.
\end{align} 
Taking the trace in (\ref{eq3}) over $b$ and $c$ implies that
\[
-\tfrac{1}{n}\nabla_{\bar{a}}\nabla_cX^c-\Rho_{\bar{a}c}X^c
=\tfrac{1}{n(n+1)}T_{cf}{}^{\bar e}T_{\bar e\bar a}{}^fX^c.
\] 
Hence, $s$ is parallel for the connection (\ref{prolongconn2}) and it follows
immediately that the differential operator
$X^b\mapsto(X^b,-\frac{1}{n}\nabla_cX^c)$
defines an inverse to the restriction of $\pi$ to parallel sections of
(\ref{prolongconn2}).
\end{proof}

\subsection{Cone description of almost c-projective structures}
\label{cone_construction}

For (real) projective structures there is an alternative description of the
tractor connection as an affine connection on a cone manifold over the
projective manifold \cite{csbook,Fox}. This point of view, which (at least in
spirit) goes back to work of Tracy Thomas, is often convenient---it has for
instance been used in \cite{Arm} to classify holonomy reductions of projective
structures. An analogue holds for almost c-projective manifolds, which we will
now sketch, following the presentation in \cite{csbook} of the projective
case.  This canonical cone connection was used in~\cite{Arm} to realise
c-projective structures as holonomy reductions of projective structures. It
also underlies metric cone constructions~\cite{MatRos,mikes} which we discuss
later.

Let $(M,J, [\nabla], \cE (1,0))$ be an almost c-projective manifold and, as in
Section~\ref{standard_tractors}, let $\tilde P\subset \tilde
G=\textrm{SL}(n+1,\C)$ be the stabiliser of the complex line $\V^{1}$ through
the first basis vector $e_1$ of $\V=\C^{n+1}$. Denote by $\tilde Q\subset
\tilde P$ the stabiliser of $e_1$, which is the derived group of $\tilde P$,
hence a normal complex Lie subgroup. Now set
\begin{equation*} \cC:=
\tilde\G/\tilde Q=\tilde\G\times_{\tilde P}\tilde P/\tilde Q.
\end{equation*}
The natural projection $p_\cC\colon \tilde\G\to \cC$ defines a
(real) principal bundle with structure group $\tilde Q$.  Since the canonical
Cartan connection $\omega$ of $(M,J, [\nabla], \cE (1,0))$ can also be viewed
as a Cartan connection of type $(\tilde G,\tilde Q)$ for $p_\cC$, it induces
an isomorphism
\begin{equation*}
T\cC\cong\tilde\G\times_{\tilde Q}\g/\tilde{\q}.
\end{equation*}
Note that $\cC$ inherits an almost complex structure $J^\cC$ from the almost
complex structure on $\tilde\G$ characterised by $Tp_\cC\circ J^\G=J^\cC\circ
Tp_\cC$.  Furthermore, the projection $\pi_\cC\colon \cC\to M$ defines a
principal bundle with structure group $\tilde P/\tilde Q\cong
\C^\times$. Since $\tilde P/\tilde Q$ can be identified with the nonzero
elements in the complex $\tilde P$-submodule $\V^{1}\subset \V$, we see that
$\cC$ can be identified with the space of nonzero elements in $\cE(-1,0)$ or
with the complex frame bundle of $\cE(-1,0)$. Note that, by construction
(recall that $Tp\circ J^\G=J\circ Tp$), we have $T\pi_\cC\circ J^\cC=J\circ
T\pi_\cC$. By the compatibility of the almost complex structures $J^\G$,
$J^\cC$ and $J$ with the various projections, it follows immediately that
vanishing of the Nijenhuis tensor $N^{J^\G}$ of $J^\G$ implies vanishing of
the Nijenhuis tensors $N^{J^\cC}$, which in turn implies vanishing of
$N^J$. Conversely, Theorem \ref{Intharmcurv} shows that $N^J\equiv 0$ implies
$N^{J^\G}\equiv 0$. This shows in particular that
\begin{equation}\label{integrability_cone} 
N^J\equiv 0\iff N^{J^\cC}\equiv 0,
\end{equation} 
in which case $\pi_\cC\colon\cC\to M$ is a holomorphic principal bundle with
structure group $\C^\times$.
\begin{lem}\label{tangent_bundle_cone} There are canonical
isomorphisms $T\cC\cong\tilde\G\times_{\tilde Q}\V\cong\pi_\cC^* \T$.
\end{lem}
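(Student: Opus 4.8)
The plan is to establish the two isomorphisms separately, taking as given the identification $T\cC\cong\tilde\G\times_{\tilde Q}\g/\tilde{\q}$ coming from the Cartan connection of type $(\tilde G,\tilde Q)$ recorded just above, and relating it to $\T=\tilde\G\times_{\tilde P}\V$ through the inclusion $\tilde Q\subset\tilde P$.

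First I would identify the $\tilde Q$-module $\g/\tilde{\q}$ with the standard module $\V=\C^{n+1}$. Since $\tilde Q$ is by definition the stabiliser of $e_1$ in $\tilde G=\mathrm{SL}(n+1,\C)$, its Lie algebra is $\tilde{\q}=\{\xi\in\g:\xi e_1=0\}$; in the block form \eqref{block_decom} these are exactly the matrices whose first column vanishes. Consider the evaluation map $\mathrm{ev}\colon\g\to\V$, $\xi\mapsto\xi e_1$, which reads off the first column. It is complex linear, has kernel precisely $\tilde{\q}$, and is surjective (the first column of a trace-free matrix is unconstrained, since any prescribed diagonal entry can be compensated elsewhere), so it descends to a complex linear isomorphism $\g/\tilde{\q}\cong\V$. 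Crucially it is $\tilde Q$-equivariant: for $q\in\tilde Q$ one has $q^{-1}e_1=e_1$, whence $\mathrm{ev}(\mathrm{Ad}(q)\xi)=q\xi q^{-1}e_1=q\cdot\mathrm{ev}(\xi)$, so $\mathrm{ev}$ intertwines the adjoint action on $\g/\tilde{\q}$ with the standard action on $\V$. Applying the associated bundle construction to $\tilde\G$, viewed as a principal $\tilde Q$-bundle over $\cC$, then yields $T\cC\cong\tilde\G\times_{\tilde Q}\g/\tilde{\q}\cong\tilde\G\times_{\tilde Q}\V$, the first asserted isomorphism. The complex linearity of $\mathrm{ev}$ moreover matches $J^\cC$ with the complex structure $J^{\T}$ on $\V$, using that $\omega$, and hence the induced identification of $T\cC$, is of type $(1,0)$.

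For the second isomorphism I would exploit $\tilde Q\subset\tilde P$ together with $p=\pi_\cC\circ p_\cC$, where $p\colon\tilde\G\to M$ and $p_\cC\colon\tilde\G\to\cC$ are the two principal projections. Define $\Phi\colon\tilde\G\times_{\tilde Q}\V\to\pi_\cC^*\T$ by $[u,v]_{\tilde Q}\mapsto\bigl(p_\cC(u),[u,v]_{\tilde P}\bigr)$, where $[\enskip,\enskip]_{\tilde Q}$ and $[\enskip,\enskip]_{\tilde P}$ denote the respective equivalence classes. This lands in the fibre product defining $\pi_\cC^*\T$ because $\pi_\cC(p_\cC(u))=p(u)$ is exactly the image of $[u,v]_{\tilde P}$ under $\T\to M$; it is well defined since $\tilde Q\subset\tilde P$ and $p_\cC$ is $\tilde Q$-invariant, and it is fibrewise complex linear. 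Bijectivity then reduces to a single observation: if $p_\cC(u)=p_\cC(u')$ then $u'=ug$ with $g\in\tilde Q$, whereas $p(u)=p(u')$ only forces $u'=ug$ with $g\in\tilde P$, and the two are reconciled by absorbing $g$ into the $\V$-component. Hence $\Phi$ is the desired vector bundle isomorphism $\tilde\G\times_{\tilde Q}\V\cong\pi_\cC^*\T$.

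I expect the only genuinely delicate point to be the $\tilde Q$-equivariance and complex linearity underlying the first isomorphism, namely correctly matching $\xi\mapsto\xi e_1$ with the standard representation and ensuring that $J^\cC$ and $J^{\T}$ correspond. The second isomorphism is the standard fact that pulling an associated bundle back along $\tilde\G/\tilde Q\to\tilde\G/\tilde P$ reproduces the associated bundle for the smaller group, and is essentially formal once the maps are in place.
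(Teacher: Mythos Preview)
Your proof is correct and follows essentially the same approach as the paper. The paper's argument is a one-liner: it observes from the block decomposition that $\g/\tilde{\q}=(\V^1)^*\otimes\V\cong\V$ as $\tilde Q$-modules (since $\tilde Q$ acts trivially on $\V^1$), and then writes the chain $T\cC\cong\tilde\G\times_{\tilde Q}\g/\tilde{\q}\cong\tilde\G\times_{\tilde Q}\V\cong\pi_\cC^*(\tilde\G\times_{\tilde P}\V)=\pi_\cC^*\T$ without further comment. Your evaluation map $\xi\mapsto\xi e_1$ is exactly the concrete realisation of the identification $(\V^1)^*\otimes\V\cong\V$, and your explicit construction of $\Phi$ spells out the standard pullback-of-associated-bundles fact that the paper invokes implicitly.
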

\begin{proof} From the block decomposition~\eqref{block_decom} of $\g$ it
follows that $\g/\tilde{\q}=(\V^1)^*\otimes \V$ and hence $\g/\tilde{\q}\cong
\V$ as $\tilde Q$-modules, i.e.
\begin{equation*}
T\cC\cong\tilde\G\times_{\tilde Q}\g/\tilde{\q}
\cong\tilde\G\times_{\tilde Q}\V \cong \pi_\cC^*(
\tilde\G\times_{\tilde P}\V)=\pi_C^*\T. \qedhere
\end{equation*}
\end{proof}
Hence the standard tractor connection induces an affine connection
$\nabla^\cC$ on $\cC$ which preserves $J^\cC$ and the complex volume form
$\textrm{vol}^\cC\in\Wedge^{n,0}T^*\cC$ that is induced by the standard
complex volume on $\V=\C^{n+1}$. Alternatively, note that $\omega$ can be
extended to a principal connection on the principal $G$-bundle
$\tilde\G\times_{\tilde Q} G$, and since $\V=\C^{n+1}$ is a $G$-module, we
obtain an induced connection on $T\cC\cong\tilde\G\times_{\tilde Q}
G\times_G\V$.

If we identify a vector field $Y\in\mathfrak X(\cC)$ with a $\tilde
Q$-equivariant function $f\colon\tilde\G\to \V$ via
Lemma~\ref{tangent_bundle_cone}, then by~\cite[Theorem 1.5.8]{csbook}, the
equivariant function corresponding to $\nabla^\cC_{X}Y$ for a vector field
$X\in\mathfrak X(\cC)$ is given by
\begin{equation}\label{tractor_formula}
\tilde X\cdot f+\omega(\tilde X)f
\end{equation}
where $\tilde X\in\mathfrak X(\tilde\G)$ is an arbitrary lift of
$X$. Moreover~\cite[Corollary 1.5.7]{csbook} shows that the curvature
$R^\cC\in\Wedge^2 T^* \cC\otimes\A\cC$ of $\nabla^\cC$ is given by
\begin{equation}\label{cone_curvature}
R^\cC(X,Y)(Z)=\kappa(X, Y)\bullet Z,
\end{equation}
where $\A\cC=\tilde\G\times_{\tilde Q}\g \cong\mathfrak{sl}(T\cC, J^\cC)$ and
$\bullet\colon \A\cC\times T\cC\to T\cC$ denotes the vector bundle map induced
by the action of $\g$ on $\V$.  Let us write $T^\cC\in\Wedge^2 T^*\cC\otimes
T\cC$ for the torsion of $\nabla^\cC$. It follows straightforwardly from
\eqref{tractor_formula} that $T^\cC$ is the projection of
$\kappa\in\Gamma(\Wedge^2T^*\cC\otimes \A\cC)$ to $\Wedge^2 T^*\cC\otimes
T\cC$, i.e.~it is the torsion of $\omega$ viewed as a Cartan connection of
type $(\tilde Q, \tilde G)$. In particular, like $\kappa$, the $2$-forms
$T^\cC$ and $R^\cC$ vanish upon insertion of sections of the vertical bundle
of $\pi_\cC$, which is canonically trivialised by the fundamental vector
fields $E$ and $J^\cC E$ generated by $1$ and $i$ respectively.

\begin{prop}\label{cone_connection} 
Suppose $(M,J, [\nabla], \cE (1,0))$ is an almost c-projective manifold. Then
there is a unique affine connection $\nabla^\cC$ on the total space of the
principal bundle $\pi_\cC\colon\cC\to M$ with the following properties\textup:
\begin{enumerate}
\item $\nabla^\cC J^\cC=0$ and $\nabla^\cC \emph{vol}^\cC=0$\textup;
\item $\nabla_{X}^\cC E=X$ for all $X\in\mathfrak X(\cC)$\textup;
\item $\mathcal L_{E}\nabla^\cC=0$ and $\mathcal L_{J^\cC E}\nabla^\cC=0$\textup;
\item $i_{E}T^\cC=0$ and $i_{J^{\cC }E}T^\cC=0$\textup;
\item $T\pi\circ T^\cC$ is of type $(0,2)$ and the $(2,0)$-part of $T^\cC$
vanishes\textup;
\item $\nabla^\cC$ is Ricci-flat\textup;
\item for any \textup(local\textup) section $s$ of $\pi_\cC$ the connection
$s^*\nabla^\cC$ lies in $[\nabla]$.
\end{enumerate}
Moreover, if $J$ is integrable, $\pi_\cC\colon\cC\to M$ is a holomorphic
principal bundle and $T^\cC\equiv 0$.
\end{prop}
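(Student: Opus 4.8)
The plan is to take $\nabla^\cC$ to be the affine connection already induced on the total space of $\pi_\cC\colon\cC\to M$ by the standard tractor connection through the identification $T\cC\cong\pi_\cC^*\T$ of Lemma~\ref{tangent_bundle_cone}, to verify properties (1)--(7) for this connection, and then to establish uniqueness by a difference-tensor argument. Properties (1) are immediate, since $\tilde G=\mathrm{SL}(n+1,\C)$ preserves both the complex structure and the complex volume form on $\V=\C^{n+1}$, so the induced $\nabla^\cC$ preserves $J^\cC$ and $\vol^\cC$. For (2) I would identify $E$ with the tautological section of $\pi_\cC^*\T$; since $\cC=\tilde\G/\tilde Q$ with $\tilde Q=\mathrm{Stab}(e_1)$, this section is the constant $\tilde Q$-equivariant function $e_1\colon\tilde\G\to\V$, and feeding it into \eqref{tractor_formula} together with $\tilde{\q}\cdot e_1=0$ and the identification $\g/\tilde{\q}\cong\V$, $Z+\tilde{\q}\mapsto Z\cdot e_1$, yields $\nabla^\cC_XE=X$. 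Property (3) follows from the $\tilde P$-equivariance of all the data, which makes $\nabla^\cC$ invariant under the residual $\tilde P/\tilde Q\cong\C^\times$-action generated by $E$ and $J^\cC E$, and (4) was already recorded in the text, as $T^\cC$ is the projection of the horizontal two-form $\kappa$.

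For (5) and the integrability claim I would project $\kappa=T+W^\nabla-C^\nabla$ to $\g/\tilde{\q}\cong\V$ via a Weyl structure: the term $C^\nabla\in\g_1\subset\tilde{\q}$ dies, the $\g_{-1}$-part contributes the c-projective torsion $T$ to the $T^{1,0}M(-1,0)$-summand, and $W^\nabla\in\g_0$ contributes only its complex trace to the $\cE(-1,0)$-summand $\V^1$. Thus $T\pi\circ T^\cC=T$ is of type $(0,2)$, while the $(2,0)$-part of $T^\cC$ vanishes because $T$ has no $(2,0)$-part and $W_{ab}{}^c{}_c\equiv0$ by Proposition~\ref{rosetta}. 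When $J$ is integrable, $T=-\tfrac14N^J=0$ and every component of the trace of $W^\nabla$ vanishes by Proposition~\ref{rosetta} (parts (2)--(4)), so $T^\cC\equiv0$, while holomorphicity of $\pi_\cC$ is exactly \eqref{integrability_cone}. Property (6) is normality: the Ricci contraction of $R^\cC(X,Y)Z=\kappa(X,Y)\bullet Z$ is, as in the projective case \cite{csbook}, a nonzero multiple of $\partial^*\kappa$, which vanishes. Finally (7) holds because a local section $s$ of $\pi_\cC$ is a Weyl structure, and reading off \eqref{tractorconn} in the induced splitting of $\T$ identifies $s^*\nabla^\cC$ with the corresponding connection in $[\nabla]$.

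For uniqueness, let $\nabla'$ satisfy (1)--(7) and set $D:=\nabla'-\nabla^\cC$, a section of $T^*\cC\otimes\mathrm{End}(T\cC)$. Property (2) gives $D_XE=0$, and since (1) forces $D_X\in\sgl(T\cC,J^\cC)$ we also get $D_X(J^\cC E)=J^\cC D_XE=0$, so each $D_X$ kills the vertical bundle; dually, applying $i_E$ to the torsion difference $T'-T^\cC$ (where $T'$ is the torsion of $\nabla'$) gives $0$ by (4), whence $D_E=D_{J^\cC E}=0$. Hence $D$ vanishes whenever either slot is vertical, and in the tractor picture each $D_X$ lies in $\tilde{\q}=\g_1\oplus\sgl(\g_{-1})$, so $D$ splits into an $\sgl(\g_{-1})$-valued horizontal-to-horizontal part $A$ and a $\g_1$-valued horizontal-to-vertical part $Z$.

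Now $A=T\pi\circ D$, so by (7) it equals, along any local section, the difference of two connections in $[\nabla]$, that is a c-projective change $\upsilon(\Upsilon)$ as in \eqref{cprojchange}; since $A$ is trace-free by (1) while $\upsilon(\Upsilon)$ has complex trace $(n+1)\Upsilon$, we conclude $\Upsilon=0$ and $A=0$. It then remains to kill the surviving ``Rho-like'' part $Z$: its skew $(2,0)$-component vanishes by the normalisation (5), and I expect the remaining components to be determined by Ricci-flatness (6) together with the homogeneity (3), which turns $\nabla^\cC_ED$ into an algebraic multiple of $D$ and so should reduce the vanishing of the Ricci contraction of $R'-R^\cC$ (the curvatures of $\nabla'$ and $\nabla^\cC$) to an algebraic condition forcing $Z=0$. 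This last step --- the determination of $Z$ by Ricci-flatness, mirroring the way $\Rho^\nabla$ is fixed by normality in \eqref{RhoTensor} --- is the technical heart of the argument and the step I expect to be the main obstacle; it is the c-projective counterpart of the projective-cone computation in \cite{csbook}.
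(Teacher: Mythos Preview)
Your existence argument is essentially the paper's: both take $\nabla^\cC$ to be the connection induced on $\cC$ by the normal Cartan/tractor connection via $T\cC\cong\pi_\cC^*\T$, and verify (1)--(7) from properties of~$\omega$. One small difference: for (3) the paper does not invoke $\tilde P$-equivariance directly but instead combines $i_ER^\cC=0=i_{J^\cC E}R^\cC$ (horizontality of $\kappa$) with properties (2) and (4) and the general formula~\eqref{LieNabla} for $\cL_X\nabla$, which gives $\cL_E\nabla^\cC=0$ algebraically. Your equivariance argument is also fine.

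The genuine divergence is in the uniqueness argument. You attempt a direct difference-tensor computation, correctly reduce $D=\nabla'-\nabla^\cC$ to a horizontal $\g_1$-valued form~$Z$, and then acknowledge that the final step---eliminating $Z$ via Ricci-flatness and homogeneity---is the ``main obstacle'' you have not carried out. The paper bypasses this computation entirely: it observes that properties (1)--(6) force $\nabla^\cC$ to descend to the \emph{normal} Cartan connection on $T\cC/\C^\times\cong\T$. Indeed, (2)--(4) give the $\C^\times$-invariance and compatibility with~$E$ needed to descend to a connection on~$\T$ (equivalently, a Cartan connection on~$\tilde\G$), while (1), (5) and (6) translate into exactly the normalisation condition $\partial^*\kappa=0$. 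Since the normal Cartan connection is unique by Theorem~\ref{EquivCat}, so is~$\nabla^\cC$, and (7) then follows as a consequence rather than being used as an input. This is both shorter and conceptually cleaner than completing your Ricci computation; if you wish to finish your direct argument, the missing step is precisely the computation that identifies the Ricci contraction of $d^{\nabla^\cC}Z$ with (a nonzero multiple of) $\partial^*$ applied to the modification of the Rho tensor by~$Z$, forcing $Z=0$ by injectivity of~$\partial$ on $\g_1\otimes\g_1$ modulo the torsion constraints in~(5).
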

\begin{proof}
We already observed that (1) and (4) hold and (2) is an immediate consequence
of \eqref{tractor_formula}. Since we have in addition $i_{E}R^\cC=0$ and
$i_{J^{\cC }E}R^\cC=0$ by \eqref{cone_curvature}, statement (3) follows from
\eqref{LieNabla}.  The statements (5) and (6) are consequences of
$\partial^*\kappa=0$. More explicitly, note that (5) can be simply read off
Proposition \ref{rosetta}, which also shows that if $J$ is integrable,
$T^\cC\equiv 0$. In this case $N^{J^\cC}$, which is up to a constant multiple
the $(0,2)$-part of $T^\cC$, vanishes and $\pi_\cC$ is a holomorphic principal
bundle as calimed.  Statement (6) follows because $T\pi\circ T^\cC$,
viewed as a section of $\Wedge^{0,2} T^*M\otimes TM$, has vanishing trace and
$\partial^*\colon\Wedge^2 T^*M\otimes \mathfrak{gl}(TM,J)\to T^*M\otimes T^*M$
is a multiple of a Ricci-type contraction.  The proof of statement (7) and the
uniqueness of $\nabla^\cC$ we leave to the reader, but note that (1)--(6)
imply that $\nabla^\cC$ descends to the normal Cartan connection on
$T\cC/\C^\times\cong \T$.
\end{proof}

\subsection{BGG sequences}\label{BGG}
For a general parabolic geometry, it was shown in~\cite{CD,CSS} that there are
natural sequences of invariant linear differential operators generalising the
corresponding complexes on the flat model. These are the
Bernstein--Gelfand--Gelfand (BGG) sequences, named after the
constructors~\cite{bgg} of complexes of Verma modules, roughly dual to the
current circumstances.

Here is not the place to say much about the general theory. Instead, we would
like to like to present something of the theory as it applies in the
c-projective case. The point is that the invariant operators that we have
already encountered and are about to encounter, all can be seen as curved
analogues of operators from the BGG complex on~$\CP^n$ (as the flat
model of c-projective geometry).

In fact, the main hurdle in presenting the BGG complex and sequences is in
having a suitable notation for the vector bundles involved. Furthermore, this
notation is already of independent utility since, as foretold in
Remark~\ref{almost_complex_natural_bundles}, it neatly captures the natural
irreducible bundles on an almost complex manifold.

Recall from Section~\ref{almostcomplexmanifolds} that the complexified tangent 
bundle on an almost complex manifold decomposes
\[
\C TM=T^{1,0}M\oplus T^{0,1}M
\]
as does its dual. An alternative viewpoint on these decompositions is that the
tangent bundle $TM$ on any $2n$-dimensional manifold is tautologically induced
from its frame-bundle by the defining representation of ${\mathrm{GL}}(2n,\R)$,
that an almost complex structure is a reduction of structure group for the
frame bundle to ${\mathrm{GL}}(n,\C)\subset{\mathrm{GL}}(2n,\R)$, that the
defining representation of ${\mathrm{GL}}(2n,\R)$ on $\R^{2n}$ complexifies as
${\mathrm{GL}}(2n,\R)$ acting on~$\C^{2n}$ (as real matrices acting on complex
vectors), and finally that this complex representation when restricted to
${\mathrm{GL}}(n,\C)$ decomposes into two irreducibles inducing the bundles
$T^{1,0}M$ and $T^{0,1}M$, respectively. Of course, the dual decomposition
comes from the dual representation, namely ${\mathrm{GL}}(n,\C)$ acting
on~($\C^{2n})^*$. Our notation arises by systematically using the
representation theory of ${\mathrm{GL}}(n,\C)$ as a real Lie group but adapted
to its embedding
$${\mathrm{GL}}(n,\C)\cong G_0\subset P\subset 
G={\mathrm{PSL}}(n+1,\C)$$
as described in Section~\ref{cproCartan}.

For relatively simple bundles, there is no need for any more advanced notation.
In several complex variables, for example, it is essential to break up the
complex-valued differential forms into \emph{types} but that's about it. 
Recall already with $2$-forms
$$\Wedge^2=\Wedge^{0,2}\oplus\Wedge^{1,1}\oplus\Wedge^{2,0}$$
that this complex decomposition is finer that the real decomposition
\begin{equation}\label{real_two_forms}
\Wedge^2T^*M=\left[\Wedge^{0,2}\oplus\Wedge^{2,0}\right]_\R\oplus
\Wedge_\R^{1,1}\end{equation} already discussed in
Section~\ref{almostcomplexmanifolds} following~(\ref{types_of_two_form}). Of
course, as soon as one speaks of holomorphic functions on a complex manifold
one is obliged to work with complex-valued differential forms. However, even if
one is concerned only with real-valued forms and tensors, it is convenient
firstly to decompose the complex versions and then impose reality as, for
example, in~(\ref{real_two_forms}). In fact, this is already a feature of
representation theory in general.

For more complicated bundles, we shall use Dynkin diagrams from~\cite{csbook}
decorated in the style of~\cite{beastwood}. The formal definitions will not be
given here but the upshot is that the general complex irreducible bundle on an 
almost complex manifold will be denoted as 
\begin{equation}\label{the_general_bundle}
\raisebox{-14pt}{\weight pq \pweight abcd efgh\quad
\raisebox{14pt}{(in the $10$-dimensional case ($2n$ nodes in 
general))}}
\end{equation}
where $a,b,c,d,e,f,g,h$ are nonnegative integers whilst, in the first
instance, $p,q$ are real numbers restricted by the requirement that
\begin{equation}\label{integrality}
p+2a+3b+4c+5d = q+2e+3f+4g+5h \bmod{6}\end{equation} 
(again, in the $10$-dimensional case). For example,
\begin{equation*}
\raisebox{14pt}{$T^{1,0}M={}$}
\weight 10 \pweight 0001 0000
\quad\enskip\raisebox{14pt}{$\Wedge^{1,0}={}$}
\weight{-2}0 \pweight 1000 0000
\quad\enskip\raisebox{14pt}{$\Wedge^{0,2}={}$}
\weight 0{-3} \pweight 0000 0100
\end{equation*}
but the point is that this notation covers all bases and, in particular, the
various awkward bundles that have already arisen and will now arise in
this article. In general, the integrality condition (\ref{integrality}) is 
needed, as typified by 
\begin{equation*}
\raisebox{14pt}{$\det\Wedge^{1,0}=\Wedge^{5,0}={}$}
\weight{-6}0 \pweight 0000 0000
\qquad
\raisebox{14pt}{$\cE(p,p)={}$}
\weight pp \pweight 0000 0000
\end{equation*}
but, as already discussed at the end of Section~\ref{almost_c-projective}, on
an almost c-projective manifold we shall suppose that there is a bundle
$\cE(1,0)$ and an identification $\cE(n+1,0):=\cE(1,0)^{n+1}=\Wedge^nT^{1,0}M$,
in which case we shall add
\begin{equation*}
\raisebox{14pt}{$\cE(1,0)={}$}
\weight 10 \pweight 0000 0000
\end{equation*}
to our notation and relax (\ref{integrality}) to requiring merely that 
$p-q$ be integral. In fact, all of $p,q,a,b,c,d,e,f,g,h$ will be integral for 
the rest of this article. 

Our Dynkin diagram notation is well suited to the barred and unbarred
indices that we have already been using. Specifically, a section of
\begin{equation*}
\weight p0 \pweight abcd 0000
\end{equation*}
may be realised as tensors with $a+2b+3c+4d$ unbarred covariant indices, 
having symmetries specified by the Young diagram
$$\begin{picture}(168,35)
\put(0,8){\line(1,0){42}}
\put(0,14){\line(1,0){84}}
\put(0,20){\line(1,0){126}}
\put(0,26){\line(1,0){168}}
\put(0,32){\line(1,0){168}}
\put(0,8){\line(0,1){24}}
\put(6,8){\line(0,1){24}}
\put(12,8){\line(0,1){24}}
\put(18,8){\line(0,1){24}}
\put(36,8){\line(0,1){24}}
\put(42,8){\line(0,1){24}}
\put(48,14){\line(0,1){18}}
\put(54,14){\line(0,1){18}}
\put(60,14){\line(0,1){18}}
\put(78,14){\line(0,1){18}}
\put(84,14){\line(0,1){18}}
\put(90,20){\line(0,1){12}}
\put(96,20){\line(0,1){12}}
\put(102,20){\line(0,1){12}}
\put(120,20){\line(0,1){12}}
\put(126,20){\line(0,1){12}}
\put(132,26){\line(0,1){6}}
\put(138,26){\line(0,1){6}}
\put(144,26){\line(0,1){6}}
\put(162,26){\line(0,1){6}}
\put(168,26){\line(0,1){6}}
\put(20.8,8.2){$\cdots$}
\put(20.8,14.2){$\cdots$}
\put(20.8,20.2){$\cdots$}
\put(20.8,26.2){$\cdots$}
\put(62.8,14.2){$\cdots$}
\put(62.8,20.2){$\cdots$}
\put(62.8,26.2){$\cdots$}
\put(104.8,20.2){$\cdots$}
\put(104.8,26.2){$\cdots$}
\put(146.8,26.2){$\cdots$}
\put(21,3){\makebox(0,0){\small $d$}}
\put(17,3){\vector(-1,0){16}}
\put(25,3){\vector(1,0){16}}
\put(63,9){\makebox(0,0){\small $c$}}
\put(59,9){\vector(-1,0){16}}
\put(67,9){\vector(1,0){16}}
\put(105,15){\makebox(0,0){\small $b$}}
\put(101,15){\vector(-1,0){16}}
\put(109,15){\vector(1,0){16}}
\put(147,21){\makebox(0,0){\small $a$}}
\put(143,21){\vector(-1,0){16}}
\put(151,21){\vector(1,0){16}}
\end{picture}$$
and of c-projective weight $(p+2a+3b+4c+5d,0)$. Indeed, for those reluctant 
to trace through the conventions in~\cite{beastwood}, this suffices as a 
definition and then
\begin{equation*}
\weight 0q \pweight 0000 efgh
\quad
\raisebox{14pt}{is the complex conjugate of}\quad
\weight q0 \pweight efgh 0000
\end{equation*}
corresponding to tensors with barred indices in the obvious fashion and
\begin{equation*}
\weight pq \pweight abcd efgh
\quad\raisebox{14pt}{$=$}\quad
\weight p0 \pweight abcd 0000
\enskip\raisebox{14pt}{$\otimes$}\enskip
\weight 0q \pweight 0000 efgh
\raisebox{14pt}{.}
\end{equation*}

Already, these bundles provide locations for the tensors we encountered
earlier. For example,
\begin{equation*}
\raisebox{14pt}{$T_{ab}{}^{\bar{c}}\in\Gamma\biggl($\:}
\weight{-3} 1 \pweight 0100 0001
\raisebox{14pt}{$\biggr)$}
\raisebox{14pt}{\quad\mbox{and}\quad$H_{a\bar{b}}{}^c{}_d\in\Gamma\biggl($\:}
\weight{-3}{-2} \pweight 2001 1000
\raisebox{14pt}{$\biggr)$}\raisebox{14pt}{.}
\end{equation*}
Although the Dynkin diagram notation may at first seem arcane, it comes into
its own when discussing invariant linear differential operators. The
complex-valued de~Rham complex
\begin{equation}\label{deRham}\Wedge^{0,0}\to
\begin{array}c\Wedge^{1,0}\\ \oplus\\ \Wedge^{0,1}\end{array}\to
\begin{array}c\Wedge^{2,0}\\ \oplus\\ \Wedge^{1,1}\\ \oplus\\
\Wedge^{0,2}\end{array}\to\cdots
\end{equation}
becomes
\begin{equation*}
\begin{array}{cccccc}&&&&
\weight{-3}0 \pweight 0100 0000\\[-16pt]
&&
\weight{-2}0 \pweight 1000 0000\\[-16pt] 
\weight 00 \pweight 0000 0000&
\raisebox{14pt}{\enskip$\to$\enskip}&
&\raisebox{14pt}{\enskip$\to$\enskip}&
\weight{-2}{-2} \pweight 1000 1000
&\raisebox{14pt}{\enskip$\to\cdots$}\\[-16pt]
&&
\weight 0{-2} \pweight 0000 1000\\[-16pt]
&&&&
\weight 0{-3} \pweight 0000 0100
\end{array}
\end{equation*}
and in either of them one sees the torsion
$T_{ab}{}^{\bar{c}}\colon\Wedge^{0,1}\to\Wedge^{2,0}$ and its complex
conjugate $T_{\bar{a}\bar{b}}{}^c\colon\Wedge^{1,0}\to\Wedge^{0,2}$ as the
restriction of the exterior derivative $d\colon \Wedge^1 \to \Wedge^2$ to the
relevant bundles (note that
\begin{equation*}
\raisebox{14pt}{${\mathrm{Hom}}(\Wedge^{0,1},\Wedge^{2,0})
=T^{0,1}\otimes\Wedge^{2,0}={}$}
\weight{-3} 1 \pweight 0100 0001
\raisebox{14pt}{,}
\end{equation*}
as expected). In the torsion-free case, the de~Rham complex takes the form
\begin{equation}\label{beginHasse}\setlength{\arraycolsep}{2pt}
\begin{array}{ccccc}
&&&&\Wedge^{2,0}\\[-8pt]
&&\Wedge^{1,0}&\begin{array}c\nearrow\\ \searrow\end{array}\\[-8pt]
\Wedge^{0,0}&\begin{array}c\nearrow\\ \searrow\end{array}
&&&\Wedge^{1,1}\\[-8pt]
&&\Wedge^{0,1}&\begin{array}c\nearrow\\ \searrow\end{array}\\[-8pt]
&&&&\Wedge^{0,2}
\end{array}\end{equation}
familiar from complex analysis and the remarkable fact about c-projectively
invariant linear differential operators is firstly that this pattern is
repeated on the flat model starting with any bundle (\ref{the_general_bundle})
with $p,q\in \Z_{\geq 0}$, for example
\begin{equation}\label{adjoint_resolution}
\begin{array}{lllll}&&&&
\weight{-4} 0 \pweight 1101 0000\\[-16pt]
&&
\weight{-3} 2 \pweight 2001 0000
&\raisebox{14pt}{$\begin{array}c\nearrow\\ \searrow\end{array}$}\\[-16pt] 
\weight 10 \pweight 0001 0000
&\raisebox{14pt}{$\begin{array}c\nearrow\\ \searrow\end{array}$}&&&
\weight{-3}{-2} \pweight 2001 1000 \\[-16pt]
&&
\weight 1{-2} \pweight 0001 1000
&\raisebox{14pt}{$\begin{array}c\nearrow\\ \searrow\end{array}$}\\[-16pt]
&&&&
\weight 1{-3} \pweight 0001 0100
\raisebox{8pt}{.}
\end{array}\end{equation}
The algorithm for determining the bundles in these patterns is detailed
in~\cite{beastwood} (it is the affine action $\lambda\mapsto
w(\lambda+\rho)-\rho$ of the Weyl group for $G$ along the Hasse diagram
corresponding to the parabolic subgroup~$P$). On $G/P$ in general,
these are complexes of differential operators referred to as
\emph{Bernstein--Gelfand--Gelfand \textup(BGG\textup)} complexes. In our case,
i.e.~on~$\CP^n$, they provide resolutions of the finite-dimensional
representations
\begin{equation*}
\raisebox{-14pt}{
\weight pq \pweight[\bullet] abcd efgh
\quad \raisebox{14pt}{(in case $n=5$ ($2n$ nodes in general))}}
\end{equation*}
of the group $G={\mathrm{PSL}}(n+1,\C)$ as a real Lie group. More
precisely, any finite-dimensional representation $\E$ of $G$ gives
rise to a constant sheaf $G/P\times\E$, which may in turn be
identified with the corresponding homogeneous bundle induced on $G/P$ by means of
\begin{equation}\label{untwist}
G/P\times_P\E\ni[g,e]\mapsto([g],ge)\in G/P\times\E.
\end{equation}
Since the first bundle in the BGG complex is a quotient of this bundle, we
obtain a mapping of $\E$ to the sections of this first bundle and to
say that the complex is a resolution of $\E$ is to say that these
sections are locally precisely the kernel of the first BGG operator (just as
the locally constant functions are precisely the kernel of the first exterior
derivative $d\colon\Wedge^0\to\Wedge^1$). In our
example~(\ref{adjoint_resolution}), this means that 
\begin{equation*}
\begin{array}{lll}
&&
\weight{-3} 0 \pweight 2001 0000\\[-16pt] 
\raisebox{14pt}{$0\to$\enskip}
\weight 10 \pweight[\bullet] 0001 0000
\raisebox{14pt}{\enskip$\longrightarrow$\enskip}
\weight 10 \pweight 0001 0000&
\raisebox{14pt}{$\begin{array}c\nearrow\\ \searrow\end{array}$}\\[-16pt]
&&
\weight 1{-2} \pweight 0001 1000
\end{array}
\end{equation*}
is exact, the $G$-module $\E$ in this case being the adjoint
representation of ${\mathrm{PSL}}(n+1,\C)$ as a complex Lie algebra. 
More generally, the BGG resolutions on $\CP^n$ as a homogeneous 
space for ${\mathrm{PSL}}(n+1,\C)$ begin
\begin{equation}\label{general_firstBGG}
\begin{array}{lll}
&&\begin{picture}(70,36)(0,-7)
\put(5,3){\makebox(0,0){$\times$}}
\put(30,3){\makebox(0,0){$\bullet$}}
\put(50,3){\makebox(0,0){$\bullet$}}
\put(65,3){\makebox(0,0){$\bullet$}}
\put(80,3){\makebox(0,0){$\bullet$}}
\put(5,3){\line(1,0){75}}
\put(5,10.2){\makebox(0,0){\tiny$\updownarrow$}}
\put(30,10.2){\makebox(0,0){\tiny$\updownarrow$}}
\put(50,10.2){\makebox(0,0){\tiny$\updownarrow$}}
\put(65,10.2){\makebox(0,0){\tiny$\updownarrow$}}
\put(80,10.2){\makebox(0,0){\tiny$\updownarrow$}}
\put(5,17){\makebox(0,0){$\times$}}
\put(30,17){\makebox(0,0){$\bullet$}}
\put(50,17){\makebox(0,0){$\bullet$}}
\put(65,17){\makebox(0,0){$\bullet$}}
\put(80,17){\makebox(0,0){$\bullet$}}
\put(5,17){\line(1,0){75}}
\put(0,24){\makebox(0,0){$\scriptstyle\vphantom{pd}-p-2$}}
\put(30,24){\makebox(0,0){$\scriptstyle\vphantom{pd}p+a+1$}}
\put(50,24){\makebox(0,0){$\scriptstyle\vphantom{pd}b$}}
\put(65,24){\makebox(0,0){$\scriptstyle\vphantom{pd}c$}}
\put(80,24){\makebox(0,0){$\scriptstyle\vphantom{pd}d$}}
\put(5,-4){\makebox(0,0){$\scriptstyle\vphantom{pd}q$}}
\put(30,-4){\makebox(0,0){$\scriptstyle\vphantom{pd}e$}}
\put(50,-4){\makebox(0,0){$\scriptstyle\vphantom{pd}f$}}
\put(65,-4){\makebox(0,0){$\scriptstyle\vphantom{pd}g$}}
\put(80,-4){\makebox(0,0){$\scriptstyle\vphantom{pd}h$}}
\end{picture}\\[-16pt] 
\raisebox{14pt}{$0\to$\enskip}
\weight pq \pweight[\bullet] abcd efgh
\raisebox{14pt}{\enskip$\longrightarrow$\enskip}
\weight pq \pweight abcd efgh
&\raisebox{14pt}{$\begin{array}c\nearrow\\ \searrow\end{array}$}\\[-16pt]
&&\begin{picture}(85,36)(0,-7)
\put(5,3){\makebox(0,0){$\times$}}
\put(30,3){\makebox(0,0){$\bullet$}}
\put(50,3){\makebox(0,0){$\bullet$}}
\put(65,3){\makebox(0,0){$\bullet$}}
\put(80,3){\makebox(0,0){$\bullet$}}
\put(5,3){\line(1,0){75}}
\put(5,10.2){\makebox(0,0){\tiny$\updownarrow$}}
\put(30,10.2){\makebox(0,0){\tiny$\updownarrow$}}
\put(50,10.2){\makebox(0,0){\tiny$\updownarrow$}}
\put(65,10.2){\makebox(0,0){\tiny$\updownarrow$}}
\put(80,10.2){\makebox(0,0){\tiny$\updownarrow$}}
\put(5,17){\makebox(0,0){$\times$}}
\put(30,17){\makebox(0,0){$\bullet$}}
\put(50,17){\makebox(0,0){$\bullet$}}
\put(65,17){\makebox(0,0){$\bullet$}}
\put(80,17){\makebox(0,0){$\bullet$}}
\put(5,17){\line(1,0){75}}
\put(5,24){\makebox(0,0){$\scriptstyle\vphantom{pd}p$}}
\put(30,24){\makebox(0,0){$\scriptstyle\vphantom{pd}a$}}
\put(50,24){\makebox(0,0){$\scriptstyle\vphantom{pd}b$}}
\put(65,24){\makebox(0,0){$\scriptstyle\vphantom{pd}c$}}
\put(80,24){\makebox(0,0){$\scriptstyle\vphantom{pd}d$}}
\put(0,-4){\makebox(0,0){$\scriptstyle\vphantom{pd}-q-2$}}
\put(30,-4){\makebox(0,0){$\scriptstyle\vphantom{pd}q+e+1$}}
\put(50,-4){\makebox(0,0){$\scriptstyle\vphantom{pd}f$}}
\put(65,-4){\makebox(0,0){$\scriptstyle\vphantom{pd}g$}}
\put(80,-4){\makebox(0,0){$\scriptstyle\vphantom{pd}h$}}
\end{picture}
\end{array}\end{equation}
for nonnegative integers $p,a,b,c,d,q,e,f,g,h$ constrained
by~(\ref{integrality}). We may drop the constraint (\ref{integrality}) by
considering $\CP^n$ instead as a homogeneous space for
${\mathrm{SL}}(n+1,\C)$, as is perhaps more usual. Having done that,
the standard representation of ${\mathrm{SL}}(n+1,\C)$ on
$\C^{n+1}$ gives rise to the BGG resolution
\begin{equation}\label{standardBGGresolution}\begin{array}{lll}
&&
\weight{-2}0 \pweight 1001 0000\\[-16pt] 
\raisebox{14pt}{$0\to$\enskip}
\weight 00 \pweight[\bullet] 0001 0000
\raisebox{14pt}{\enskip$\longrightarrow$\enskip}
\weight 00 \pweight 0001 0000
&\raisebox{14pt}{$\begin{array}c
\nabla_a\\ \nearrow\\ \searrow\\ \nabla_{\bar{a}}
\end{array}$}\\[-16pt]
&&
\weight 0{-2} \pweight 0001 1000
\end{array}\end{equation}
where the operators $\nabla_a$ and $\nabla_{\bar{a}}$ are, more explicitly and 
as noted in~(\ref{StandardBGG}),
\begin{equation}\label{standard_firstBGGoperator}
X^b\mapsto(\nabla_aX^b)_\circ\quad\mbox{and}\quad
X^b\mapsto\nabla_{\bar{a}}X^b\end{equation}
where $X^b$ is a vector field of type $(1,0)$ and of c-projective weight
$(-1,0)$ and the subscript $\circ$ means to take the trace-free part. Notice
that these are exactly the operators implicitly encoded in the standard tractor
connection (\ref{tractorconna}) and~(\ref{tractorconnbara}). More precisely,
the filtration (\ref{filtrationT}) is equivalent to the short exact sequence 
of vector bundles
$$\begin{array}{ccccccc}\raisebox{14pt}{$0\to$}&
\weight{-1} 0 \pweight 0000 0000
&\raisebox{14pt}{$\longrightarrow$}&
\weight 00 \pweight[\bullet] 0001 0000
&\raisebox{14pt}{$\longrightarrow$}&
\weight 00 \pweight 0001 0000
&\raisebox{14pt}{$\to 0$}\\
&\|&&\|&&\|\\[4pt]
&\cE(-1,0)&&\T&&T^{1,0}M(-1,0)\end{array}$$
and on the flat model, namely $\CP^n$ as a homogeneous space 
for~${\mathrm{SL}}(n+1,\C)$, the tractor connection is the exactly 
the flat connection induced by~(\ref{untwist}). In the c-projectively flat
case, the remaining entries in~(\ref{tractorconna}) and 
(\ref{tractorconnbara}), namely
$$\nabla_a\rho-\Rho_{ab}X^b\quad\mbox{and}\quad
\nabla_{\bar{a}}\rho-\Rho_{\bar{a}b}X^b$$
may be regarded as quantities whose vanishing are differential consequences of
setting
\[
\nabla_aX^b+\rho\delta_a{}^b=0\quad\mbox{and}\quad\nabla_{\bar{a}}X^b=0.
\]
Hence, they add no further conditions to being in the kernel of the first BGG
operator (\ref{standard_firstBGGoperator}) and the exactness of
(\ref{standardBGGresolution}) follows. The same reasoning pertains in the
curved but torsion-free setting and leads to the standard tractor connection
being obtained by prolongation of the first BGG operator. This is detailed in
Proposition~\ref{standard_prolong}. For more complicated representations, the
tractor connection may not be obtained by prolongation in the curved setting,
even if torsion-free. This phenomenon will soon be seen in two key examples,
specifically in the connection~(\ref{Infautconn}) and
Proposition~\ref{Infautconn2} concerned with infinitesimal automorphisms and in
Proposition~\ref{compKaehler}, Theorem~\ref{MetriProlongation}, and
Corollary~\ref{MetriCorollary} dealing with the metrisability of c-projective 
structures. With reference to the general first BGG 
operators~(\ref{general_firstBGG}), the following cases occur prominently in 
this article.

\smallskip\noindent\framebox{
\weight 00 \pweight[\bullet] 0001 0000
\raisebox{14pt}{$\;\xrightarrow{\;\pi\;}$}
\weight 00 \pweight 0001 0000} 
This is the standard complex tractor bundle $\T$ and its canonical
projection to $T^{1,0}M(-1,0)$.

\noindent\framebox{
\weight 10 \pweight[\bullet] 0001 0000 
\raisebox{14pt}{$\;\xrightarrow{\;\pi\;}$}
\weight 10 \pweight 0001 0000} 
This is the adjoint tractor bundle $\A M$ to be considered in
Section~\ref{infinitesimal_automorphisms} and its canonical projection to
$T^{1,0}M$. A first BGG operator acting on $T^{1,0}M$ is given in
Remark~\ref{firstBGG_is_different}.

\noindent\framebox{
\weight 00 \pweight[\bullet] 0001 0001
\raisebox{14pt}{$\;\xrightarrow{\;\pi\;}$}
\weight 00 \pweight 0001 0001} 
This is the tractor bundle arising in the metrisability of c-projective
structures to be discussed in Section~\ref{sec:metrisability} and a first 
BGG operator is given in~(\ref{firstBGG_metrisability}).

\noindent\framebox{
\weight 11 \pweight[\bullet] 0000 0000
\raisebox{14pt}{$\;\xrightarrow{\;\pi\;}$}
\weight 11 \pweight 0000 0000} 
This is the dual of the previous case and arises in
Section~\ref{cpHessianSection}, which is concerned with the first BGG operator
$\D^\cW$ defined in (\ref{cprojHessian})
and acting on c-projective densities of weight~$(1,1)$. It is a second order 
and c-projectively invariant operator.

In fact, there is quite a bit of flexibility in what one might allow as BGG
operators, already for the first ones~(\ref{general_firstBGG}). For example,
the operator $\D^{\A}$ in
Remark~\ref{firstBGG_is_different} is rather different from the c-projectively
invariant operators occurring as the left hand sides of (1) and (2) in
Proposition~\ref{Killingoperator}. Even for the bundle $T^{1,0}M(-1,0)$ in
(\ref{standardBGGresolution}) corresponding to standard complex tractors, there
is the option of replacing the second operator in
(\ref{standard_firstBGGoperator}) by
$$X^c\mapsto\nabla_{\bar{a}}X^c+T_{\bar{a}\bar{b}}{}^cX^{\bar{a}}$$
in line with equation~(1) in Proposition~\ref{Killingoperator}. Only in the
torsion-free case do these operators agree (with each other and the usual
$\bar\partial$-operator).

On the flat model, however, there is no choice. The operators occurring in the
BGG complexes are unique up to scale. Moreover, there are no other
c-projectively invariant linear differential operators: every such operator is
determined by its symbol and the BGG operators comprise a classification. In
the curved setting it is necessary to add curvature correction terms and
there is almost always some choice. Regarding existence, it is shown in
\cite{CD} and \cite{CSS} that such curved analogues always exist. However,
even for the BGG sequence associated to the trivial representation, the
resulting operators are different if there is torsion. Specifically, the
construction in~\cite{CSS} follows the Hasse diagram beginning as
in~(\ref{beginHasse}). In particular, there is no place for the torsion as an
operator $\Wedge^{0,1}\to\Wedge^{2,0}$ whereas, in~\cite{CD}, the first BGG
sequence associated to the trivial representation for the case of $|1|$-graded
geometry such as c-projective geometry, is just the de~Rham
complex~(\ref{deRham}).

In summary, the BGG operators on $\CP^n$ provide models for what one
should expect in the curved setting. In the flat case, there is no choice. In
the curved case, there is a certain degree of flexibility, more so when there
is torsion. Finally, the general theory of parabolic geometry~\cite{csbook}
provides a location for \emph{harmonic curvature}, as already discussed in
Sections~\ref{intropargeom} and~\ref{sectionharmcurv} and Kostant's
Theorem~\cite{Kostant} on Lie algebra cohomology provides the location for this 
curvature, namely the three bundles appearing in the second step of the BGG 
sequence~(\ref{adjoint_resolution}) for the adjoint representation whilst the 
two bundles at the first step locate the infinitesimal deformations of an 
almost c-projective structure, in line with the general theory~\cite{InfautCap}.

\subsection{Adjoint tractors and infinitesimal automorphisms}
\label{infinitesimal_automorphisms}

For a vector field $X$ on a manifold $M$ we write $\cL_X$ for the \emph{Lie
derivative} along $X$ of tensor fields on $M$.  Recall that there is also a
notion of a \emph{Lie derivative of an affine connection} $\nabla$ along a
vector field $X\in\Gamma(TM)$. It is given by the tensor field
$$\cL_X\nabla\colon TM\to T^*M\otimes TM$$
characterised by
$$(\cL_X\nabla)(Y)\equiv \cL_X(\nabla Y)-\nabla \cL_XY$$
for any vector field $Y\in\Gamma(TM)$. In abstract index notation we adopt the
convention that $(\cL_X\nabla)_{\alpha\beta}{}^\gamma
Y^\beta=\cL_X(\nabla_\alpha Y^\gamma)$.

\begin{defin}
A \emph{c-projective vector field} or \emph{infinitesimal automorphism} of
an almost c-projective manifold $(M,J,[\nabla])$ of dimension $2n\geq 4$ is
a vector field $X$ on $M$ that satisfies
\begin{itemize}
\item $\cL_XJ\equiv 0$ (i.e.~$[X,JY]=J[X,Y]$ for all vector fields
$Y\in\Gamma(TM)$) 
\item $(\cL_X\nabla)_{\alpha\beta}{}^\gamma=
\upsilon_{\alpha\beta}{}^\gamma$, where 
$\upsilon_{\alpha\beta}{}^\gamma\in\Gamma(S^2T^*M\otimes TM)$ is a tensor of 
the form~(\ref{cprojchange}).
\end{itemize}
\end{defin}

\noindent Note that $X\in\Gamma(TM)$ is an infinitesimal automorphism of an
almost c-projective manifold precisely if its flow acts by local automorphisms
thereof. 

Let us rewrite the two conditions defining a c-projective vector field as a
system of differential equations on a vector field $X$ of $M$. Expressing the
Lie bracket in terms of a connection $\nabla\in[\nabla]$ and its torsion shows
that $\cL_XJ=0$ is equivalent to
\begin{equation}\label{LieJ}
T_{\alpha\beta}{}^\gamma X^\alpha=-\tfrac{1}{2}(\nabla_\beta X^\gamma
+J_{\epsilon}{}^\gamma J_{\beta}{}^\zeta\nabla_\zeta X^\epsilon).
\end{equation} 
for one (and hence any) connection $\nabla\in[\nabla]$. Moreover, one deduces
straightforwardly from the definition of the Lie derivative of a connection
that for any connection $\nabla\in[\nabla]$ we have
\begin{equation}\label{LieNabla}
(\cL_X\nabla)_{\beta\delta}{}^\gamma=
R_{\alpha\beta}{}^\gamma{}_\delta X^\alpha
+\nabla_\beta\nabla_\delta X^\gamma
+\nabla_\beta(T_{\alpha\delta}{}^\gamma X^\alpha).
\end{equation}
Via the isomorphism $TM\cong T^{1,0}M$ we may write the result as a
differential equation on~$X^a$: equation~(\ref{LieJ}) then becomes
\begin{equation}\label{LieJ2}
\nabla_{\bar b}X^c+T_{\bar a\bar b}{}^cX^{\bar a}=0.
\end{equation}
Since a tensor $\upsilon_{\beta \delta}{}^\gamma$ of the form
(\ref{cprojchange}) satisfies
$\upsilon_{\bar{b}d}{}^c=0=\upsilon_{\bar{b}\bar{d}}{}^c$, the
equation $(\cL_X\nabla)_{\beta}{}^\gamma{}_\delta
=\upsilon_{\beta\delta}{}^\gamma$ can be equivalently encoded by the three
equations
\begin{equation}
(\cL_X\nabla)_{bd}{}^c=\upsilon_{bd}{}^c\qquad
(\cL_X\nabla)_{\bar bd}{}^c\equiv 0\qquad
(\cL_X\nabla)_{\bar b\bar d}{}^c\equiv 0,
\end{equation}
or, alternatively, their complex conjugates.
\begin{lem}\label{Lemma1}
If $X^a\in\Gamma(T^{1,0}M)$ satisfies the invariant differential equation
{\rm(\ref{LieJ2})}, then
$$(\cL_X\nabla)_{\bar bd}{}^c\equiv 0\qquad
(\cL_X\nabla)_{\bar b\bar d}{}^c\equiv 0,$$
for any connection $\nabla\in[\nabla]$.
\end{lem}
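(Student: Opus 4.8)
The plan is to compute both components directly from the formula (\ref{LieNabla}) for the Lie derivative of a connection, specialised to barred and unbarred abstract indices, and then eliminate the resulting terms using three inputs: that the torsion is of type $(0,2)$, so only $T_{\bar a\bar b}{}^c$ and its conjugate $T_{ab}{}^{\bar c}$ survive (cf.~(\ref{torsion_types_with_indices})); that the curvature $R$ of the complex connection $\nabla$ takes values in $\gl(TM,J)$, so all components of mixed endomorphism type such as $R_{\alpha\bar b}{}^c{}_{\bar d}$ vanish; and the hypothesis (\ref{LieJ2}), namely $\nabla_{\bar b}X^c=-T_{\bar a\bar b}{}^cX^{\bar a}$, together with its complex conjugate $\nabla_bX^{\bar c}=-T_{ab}{}^{\bar c}X^a$.

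First I would dispatch the component $(\cL_X\nabla)_{\bar b\bar d}{}^c$, the easy one. Reading off (\ref{LieNabla}) with derivative index $\bar b$, input index $\bar d$ and output $c$, the curvature term $R_{\alpha\bar b}{}^c{}_{\bar d}X^\alpha$ vanishes identically because its endomorphism type ${}^c{}_{\bar d}$ is incompatible with $\gl(TM,J)$; the torsion contribution reduces to $\nabla_{\bar b}(T_{\bar a\bar d}{}^cX^{\bar a})$, since $T_{\alpha\bar d}{}^c$ is nonzero only for $\alpha$ barred; and the Hessian term $\nabla_{\bar b}\nabla_{\bar d}X^c$ becomes $-\nabla_{\bar b}(T_{\bar a\bar d}{}^cX^{\bar a})$ upon substituting (\ref{LieJ2}). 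These last two cancel, giving $(\cL_X\nabla)_{\bar b\bar d}{}^c\equiv0$.

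The substantive step is $(\cL_X\nabla)_{\bar bd}{}^c$. Here the torsion term of (\ref{LieNabla}) vanishes outright, because $T_{\alpha d}{}^c$ has output and second input both unbarred and so is forced to zero by the type $(0,2)$ restriction, leaving $(\cL_X\nabla)_{\bar bd}{}^c=R_{\alpha\bar b}{}^c{}_dX^\alpha+\nabla_{\bar b}\nabla_dX^c$. I would then commute the Hessian using the Ricci identity of (\ref{curvatures}), writing $\nabla_{\bar b}\nabla_dX^c=\nabla_d\nabla_{\bar b}X^c-R_{d\bar b}{}^c{}_eX^e$, and substitute (\ref{LieJ2}) into the inner derivative to produce $-(\nabla_dT_{\bar a\bar b}{}^c)X^{\bar a}-T_{\bar a\bar b}{}^c\nabla_dX^{\bar a}$. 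The plan is now to organise the outcome into three groups. The purely unbarred curvature terms $R_{a\bar b}{}^c{}_dX^a-R_{d\bar b}{}^c{}_aX^a$ collapse, by the Bianchi identity (\ref{bianchi2}), to $-T_{ad}{}^{\bar e}T_{\bar e\bar b}{}^cX^a$. The terms $R_{\bar a\bar b}{}^c{}_dX^{\bar a}-(\nabla_dT_{\bar a\bar b}{}^c)X^{\bar a}$ cancel identically, since $R_{\bar a\bar b}{}^c{}_d=\nabla_dT_{\bar a\bar b}{}^c$ by the last line of (\ref{full_curvature_decomposition}). Finally, the remaining term $-T_{\bar a\bar b}{}^c\nabla_dX^{\bar a}$ is rewritten, via the conjugate of (\ref{LieJ2}) in the form $\nabla_dX^{\bar a}=-T_{fd}{}^{\bar a}X^f$, as $+T_{ad}{}^{\bar e}T_{\bar e\bar b}{}^cX^a$, which exactly cancels the leftover from the first group.

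I expect the main obstacle to be the careful bookkeeping of this last computation: getting the index contractions and signs in the Bianchi identity right, and in particular recognising that the two quadratic-in-torsion expressions $-T_{ad}{}^{\bar e}T_{\bar e\bar b}{}^cX^a$ and $+T_{ad}{}^{\bar e}T_{\bar e\bar b}{}^cX^a$ are genuinely negatives of one another after relabelling. In the integrable case all these torsion terms vanish and the identity is immediate; the content here is precisely that the Nijenhuis contributions organise themselves to cancel.
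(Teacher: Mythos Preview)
Your proposal is correct and follows essentially the same route as the paper's proof: both start from~(\ref{LieNabla}), observe that the torsion term in $(\cL_X\nabla)_{\bar bd}{}^c$ drops out by type, commute the Hessian via the Ricci identity, and then cancel the resulting terms using the Bianchi symmetry~(\ref{bianchi2}), the identity $R_{\bar a\bar b}{}^c{}_d=\nabla_dT_{\bar a\bar b}{}^c$ from~(\ref{full_curvature_decomposition}), and~(\ref{LieJ2}) together with its conjugate. Your three-group organisation simply makes explicit the cancellations that the paper leaves as ``evidently vanishes''.
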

\begin{proof}
Equation \eqref{LieNabla} and the formulae of Proposition \ref{rosetta} imply 
\begin{align*}
(\cL_X\nabla)_{\bar bd}{}^c&
=R_{a\bar b}{}^c{}_dX^a+R_{\bar a\bar b}{}^c{}_dX^{\bar a}
+\nabla_{\bar b}\nabla_d X^c\\
&=-R_{\bar ba}{}^c{}_dX^a+R_{\bar bd}{}^c{}_aX^a
+\nabla_d\nabla_{\bar b}X^c+R_{\bar a\bar b}{}^c{}_dX^{\bar a}\\
&=2 R_{\bar b[d}{}^c{}_{a]}X^a+\nabla_d\nabla_{\bar b}X^c
-(\nabla_dT_{\bar b\bar a}{}^c)X^{\bar a}.
\end{align*}
Hence, the Bianchi symmetry \eqref{bianchi2} shows that
\[
(\cL_X\nabla)_{\bar bd}{}^c=T_{da}{}^{\bar e}
T_{\bar e\bar b}{}^c X^a+\nabla_d\nabla_{\bar b}X^c
-(\nabla_dT_{\bar b\bar a}{}^c)X^{\bar a},
\]
which evidently vanishes if 
$\nabla_{\bar b}X^c=T_{\bar b\bar a}{}^cX^{\bar a}$, and consequently also  
$\nabla_dX^{\bar e}=T_{da}{}^{\bar{e}}X^a$. 
As $(\cL_X\nabla)_{\bar b\bar d}{}^c{}=
\nabla_{\bar b}\nabla_{\bar d} X^c
+\nabla_{\bar b}(T_{\bar a\bar d}{}^cX^{\bar a})$, 
the second assertion is obvious.
\end{proof}

According to Lemma~\ref{Lemma1}, it remains to rewrite
$(\cL_X\nabla)_{b}{}^c{}_d=\upsilon_{bd}{}^c$ as a differential equation
on~$X^a$. Note that we have
\begin{equation}\label{LieNabla2}
(\cL_X\nabla)_{b}{}^c{}_d=\nabla_b\nabla_d X^c
+R_{ab}{}^c{}_dX^a+R_{\bar ab}{}^c{}_dX^{\bar a}.
\end{equation}
The Bianchi symmetry~\eqref{bianchi1} $R_{[bd}{}^c{}_{a]}\equiv 0$ implies 
$R_{bd}{}^c{}_a=-2 R_{a[b}{}^c{}_{d]}$. Moreover, 
\begin{align*}
\nabla_b\nabla_d X^c=\nabla_{(b}\nabla_{d)} X^c+\nabla_{[b}\nabla_{d]} X^c
=\nabla_{(b}\nabla_{d)} X^c
+\tfrac{1}{2}(R_{bd}{}^c{}_aX^a-T_{bd}{}^{\bar e}\nabla_{\bar e}X^c). 
\end{align*}
Therefore, we may rewrite (\ref{LieNabla2}) as 
\begin{align}\label{LieNabla3}
(\cL_X\nabla)_{b}{}^c{}_d&=\nabla_{(b}\nabla_{d)} X^c
+R_{a(b}{}^c{}_{d)}X^a+R_{\bar a(b}{}^c{}_{d)}X^{\bar a}
\nonumber\\
&\qquad{}-\tfrac{1}{2}T_{bd}{}^{\bar e}\nabla_{\bar e}X^c
+\tfrac{1}{2}T_{bd}{}^{\bar e}T_{\bar e\bar a}{}^cX^{\bar a},
\end{align}
where we used the Bianchi symmetry \eqref{bianchi2} given by 
$R_{\bar a[b}{}^c{}_{d]}=
\frac{1}{2}T_{bd}{}^{\bar e}T_{\bar e\bar a}{}^cX^{\bar a}$. The 
torsion terms of (\ref{LieNabla3}) evidently cancel if $X^a$
satisfies~(\ref{LieJ2}). 

Suppose now that $2n\geq 6$. Then we deduce from Proposition~\ref{rosetta} that
\begin{equation}
R_{a(b}{}^c{}_{d)}X^{a}= W_{a(b}{}^c{}_{d)}X^{a}+\Rho_{(bd)}X^{c}
+\delta_{(b}{}^c\Rho_{d)a}X^{a}-\delta_{b}{}^c\Rho_{ad}X^{a}-\delta_{d}{}^c\Rho_{ab}X^{a},
\end{equation}
where the third term and the two last terms already define two tensors of the 
form~(\ref{cprojchange}). Moreover, we 
obtain by Proposition \ref{rosetta} that
\begin{align}
R_{\bar a(b}{}^c{}_{d)}X^{\bar a}=H_{\bar{a}b}{}^c{}_{d}X^{\bar{a}}
+\tfrac{1}{n+1} \delta_{(b}{}^cT_{d)f}{}^{\bar{e}}T_{\bar{e}\bar{a}}{}^fX^{\bar{a}}
-2\Rho_{\bar a(b}\delta_{d)}{}^cX^{\bar a},
\end{align}
where the last two terms are again already of the form~\eqref{cprojchange}.
Therefore, we conclude:

\begin{prop}\label{Killingoperator} Suppose $(M,J,[\nabla])$ is an almost
c-projective manifold of dimension $2n\geq 6$. A vector field 
$X^a\in\Gamma(T^{1,0}M)$ is c-projective if and only if it satisfies the 
following equations
\begin{enumerate}
\item $\nabla_{\bar b}X^c+T_{\bar a\bar b}{}^cX^{\bar a}=0$
\item $(\nabla_{(b}\nabla_{d)}X^c+\Rho_{(bd)}X^c
+W_{a(b}{}^c{}_{d)}X^a+H_{\bar a b}{}^c{}_dX^{\bar a})_\circ=0,$
\end{enumerate}
where the subscript $\circ$ denotes the trace-free part.
\end{prop}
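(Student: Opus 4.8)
The plan is to translate the two abstract conditions defining a c-projective vector field---namely $\cL_X J\equiv 0$ and $(\cL_X\nabla)_{\alpha\beta}{}^\gamma=\upsilon_{\alpha\beta}{}^\gamma$ for some $\upsilon$ of the form~\eqref{cprojchange}---into the two tensorial equations (1) and (2) on $X^a\in\Gamma(T^{1,0}M)$. The bulk of the work has in fact already been assembled in the discussion preceding the statement, so the proof is essentially a matter of collecting those computations and checking the final trace-free reduction.

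First I would recall that equation (1) is exactly the reformulation~\eqref{LieJ2} of $\cL_X J=0$; this equivalence was established in the text via~\eqref{LieJ}, using the identification $TM\cong T^{1,0}M$. So condition (1) is equivalent to $\cL_X J\equiv 0$. Next, assuming (1) holds, Lemma~\ref{Lemma1} shows that the mixed and barred components $(\cL_X\nabla)_{\bar b d}{}^c$ and $(\cL_X\nabla)_{\bar b\bar d}{}^c$ automatically vanish, so these already match the required shape of $\upsilon$ (recall $\upsilon_{\bar bd}{}^c=0=\upsilon_{\bar b\bar d}{}^c$). Hence the only remaining content of the second defining condition is that the unbarred component $(\cL_X\nabla)_{b}{}^c{}_d$ equals $\upsilon_{bd}{}^c$ for some $\upsilon$ of the form~\eqref{cprojchange}.

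The core step is then to analyse this last equation using~\eqref{LieNabla3}, which expresses $(\cL_X\nabla)_b{}^c{}_d$ as $\nabla_{(b}\nabla_{d)}X^c$ plus curvature terms plus torsion terms that vanish under (1). Substituting the curvature decompositions from Proposition~\ref{rosetta} for $R_{a(b}{}^c{}_{d)}X^a$ and $R_{\bar a(b}{}^c{}_{d)}X^{\bar a}$, as displayed just before the proposition, I would separate out precisely those terms that are already of the form~\eqref{cprojchange} (the terms carrying a free $\delta$-factor, such as $\delta_{(b}{}^c\Rho_{d)a}X^a$, $\delta_b{}^c\Rho_{ad}X^a$, $\delta_d{}^c\Rho_{ab}X^a$ and $\delta_{(b}{}^cT_{d)f}{}^{\bar e}T_{\bar e\bar a}{}^fX^{\bar a}$, together with $-2\Rho_{\bar a(b}\delta_{d)}{}^cX^{\bar a}$). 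What survives is the tensor
\[
\nabla_{(b}\nabla_{d)}X^c+\Rho_{(bd)}X^c+W_{a(b}{}^c{}_{d)}X^a+H_{\bar ab}{}^c{}_dX^{\bar a},
\]
and the condition becomes that this survivor differs from $(\cL_X\nabla)_b{}^c{}_d$ only by a tensor of the form~\eqref{cprojchange}. Since a tensor of the form~\eqref{cprojchange} is exactly one built from $\delta$'s contracted against a $1$-form, the requirement that $(\cL_X\nabla)_b{}^c{}_d$ be of this form is equivalent to the vanishing of the trace-free (in $c$ and $d$, after symmetrising) part of the survivor---that is, precisely equation (2).

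The step I expect to be the main obstacle is the careful bookkeeping in this last reduction: one must verify that every term peeled off from the curvature decompositions genuinely has the shape $\frac12(\Upsilon_b\delta_d{}^c+\delta_b{}^c\Upsilon_d-\cdots)$ dictated by~\eqref{cprojchange}, and conversely that no contribution to the trace-free symmetric part has been inadvertently discarded. Concretely, I would check that a symmetric trace-free tensor $S_{bd}{}^c$ (symmetric in $bd$, with $S_{bd}{}^b=0$) lies in the image of the map $\Upsilon\mapsto\upsilon$ of~\eqref{cprojchange} only when $S\equiv 0$, so that $(\cL_X\nabla)_b{}^c{}_d$ being of the form~\eqref{cprojchange} is detected exactly by the vanishing of the trace-free part. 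Granting this, equation (2) is equivalent to the existence of the required $\upsilon$, and combining with (1) and Lemma~\ref{Lemma1} gives both implications of the proposition. The converse direction---that (1) and (2) together imply $X$ is c-projective---then follows by reversing these identifications, reading~\eqref{LieNabla3} as producing a genuine $\upsilon$ of the form~\eqref{cprojchange}.
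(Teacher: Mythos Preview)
Your proposal is correct and follows essentially the same approach as the paper: the paper's ``proof'' is precisely the discussion preceding the statement, which you have faithfully recapitulated---translating $\cL_X J=0$ into (1) via~\eqref{LieJ2}, invoking Lemma~\ref{Lemma1} for the mixed and barred components, expanding $(\cL_X\nabla)_b{}^c{}_d$ via~\eqref{LieNabla3}, peeling off the $\delta$-terms from the curvature decompositions of Proposition~\ref{rosetta}, and recognising that what remains must have vanishing trace-free part. Your explicit observation that the image of $\Upsilon\mapsto\upsilon_{bd}{}^c=\Upsilon_b\delta_d{}^c+\Upsilon_d\delta_b{}^c$ is exactly the pure-trace subspace of $S^2\Wedge^{1,0}\otimes T^{1,0}M$ is the algebraic fact underlying the final reduction, which the paper leaves implicit.
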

Due to Proposition \ref{Weyl_gone} for $2n=4$ the equations take a simpler form:
\begin{prop}\label{Killingoperator2} Suppose $(M,J,[\nabla])$ is an almost
c-projective manifold of dimension $2n=4$. A vector field 
$X^a\in\Gamma(T^{1,0}M)$ is c-projective if and only if it satisfies the
following equations
\begin{enumerate}
\item $\nabla_{\bar b}X^c+T_{\bar a\bar b}{}^cX^{\bar a}=0$
\item $(\nabla_{(b}\nabla_{d)}X^c+\Rho_{(bd)}X^c
+H_{\bar a b}{}^c{}_dX^{\bar a})_\circ=0,$
\end{enumerate}
where the subscript $\circ$ denotes the trace-free part.
\end{prop}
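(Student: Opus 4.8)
The plan is to observe that the entire derivation preceding Proposition~\ref{Killingoperator} remains valid in dimension $2n=4$, so that only a single term in the resulting equation is affected by the drop in dimension. Concretely, a vector field $X^a\in\Gamma(T^{1,0}M)$ is c-projective precisely when $\cL_XJ=0$ and $(\cL_X\nabla)_{\beta\delta}{}^\gamma$ has the form~\eqref{cprojchange}. The first condition is equivalent to equation~(1), namely $\nabla_{\bar b}X^c+T_{\bar a\bar b}{}^cX^{\bar a}=0$, which is just~\eqref{LieJ2}; and Lemma~\ref{Lemma1} shows that once~(1) holds the components $(\cL_X\nabla)_{\bar bd}{}^c$ and $(\cL_X\nabla)_{\bar b\bar d}{}^c$ vanish automatically, hence are trivially of the form~\eqref{cprojchange}. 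So it remains only to rewrite the unbarred symmetric component $(\cL_X\nabla)_{bd}{}^c$ and to express the requirement that it be of $\upsilon$-form.

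First I would use~\eqref{LieNabla3} to rewrite $(\cL_X\nabla)_b{}^c{}_d$ in terms of $\nabla_{(b}\nabla_{d)}X^c$ and the curvature contractions $R_{a(b}{}^c{}_{d)}X^a$ and $R_{\bar a(b}{}^c{}_{d)}X^{\bar a}$, noting that the torsion terms there cancel as soon as~(1) is assumed; none of these steps use the dimension. Next I would decompose the two curvature contractions via Proposition~\ref{rosetta}, whose decompositions are valid for every $2n\geq4$. The terms proportional to $\delta_b{}^c$ or $\delta_d{}^c$ produced this way are exactly of the form~\eqref{cprojchange}, and a symmetric tensor $S_{bd}{}^c$ is of $\upsilon$-form if and only if its trace-free part vanishes (tracing recovers $\Upsilon_d=\tfrac{1}{n+1}S_{bd}{}^b$). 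This is why the condition is encoded as the vanishing of a trace-free part.

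The one place where the argument for $2n\geq6$ genuinely differs is the decomposition of $R_{a(b}{}^c{}_{d)}X^a$, which in general contributes the Weyl term $W_{a(b}{}^c{}_{d)}X^a$. The key, and essentially the only substantive, point is that Proposition~\ref{Weyl_gone} forces $W_{ab}{}^c{}_d\equiv0$ when $2n=4$, so this term drops out identically, while the Hermitian component $H_{\bar a b}{}^c{}_d$ survives unchanged (it is the nonvanishing harmonic-curvature piece $\wc$ in this dimension). Collecting the remaining trace-free terms then yields equation~(2) in the stated form. I do not expect a real obstacle here beyond verifying that every intermediate identity is dimension-independent and that the trace-free projection is unaffected in dimension four; the result is thus a direct specialisation of Proposition~\ref{Killingoperator} obtained by invoking Proposition~\ref{Weyl_gone}.
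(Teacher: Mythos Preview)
Your proposal is correct and follows exactly the paper's approach: the paper simply remarks that ``due to Proposition~\ref{Weyl_gone} for $2n=4$ the equations take a simpler form,'' i.e.~the entire derivation leading to Proposition~\ref{Killingoperator} goes through unchanged except that the term $W_{a(b}{}^c{}_{d)}X^a$ vanishes identically. Your write-up is a more explicit unpacking of precisely this one-line reduction.
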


The equations in Propositions~\ref{Killingoperator} and~\ref{Killingoperator2}
define an invariant differential operator
\begin{equation*}
D^{\mathrm{aut}}\colon T^{1,0}M\to
(\Wedge^{1,0}\otimes T^{0,1}M)\oplus (S^2\Wedge^{1,0}\otimes T^{1,0}M)_\circ,
\end{equation*}
whose kernel comprises the infinitesimal automorphisms of $(M,J,[\nabla])$.

Let us recall some facts about infinitesimal automorphisms of Cartan
geometries.
\begin{defin} Suppose $(p\colon\G\to M,\omega)$ is a Cartan geometry.
A vector field $\tilde{X}\in\Gamma(T\G)$ is called an \emph{infinitesimal
  automorphism} of $(p\colon\G\to M,\omega)$, if $\tilde{X}$ is
right-invariant for the principal right action on $\G$ and
$\cL_{\tilde{X}}\omega=0$.
\end{defin}
A Cartan connection $\omega$ on $p\colon\G\to M$ induces a bijection
between right-invariant vector fields $\tilde{X}\in\Gamma(T\G)$ and
equivariant functions $\omega(\tilde{X})\colon\G\to\g$. Hence,
right-invariant vector fields on $\G$ are in bijection with sections of the
adjoint tractor bundle~$\A M$. A section $s$ of $\A M$ corresponds to an
infinitesimal automorphism of the Cartan geometry if and only if $s$ is
parallel for the linear connection
\begin{equation}\label{Infautconn}
\nabla^{\A}s+\kappa(\Pi (s),\,\cdot),
\end{equation}
where $\nabla^{\A}$ is the adjoint tractor connection, $\Pi\colon \A
M\to TM$ the natural projection, and $\kappa\in\Omega^2(M,\A M)$ the
curvature of the Cartan geometry; see~\cite{InfautCap,csbook}.

The equivalence of categories established in Theorem~\ref{EquivCat} implies
that any infinitesimal automorphism $X\in\Gamma(TM)$ of an almost c-projective
manifold can be lifted uniquely to an infinitesimal automorphism of its normal
Cartan geometry and conversely, any infinitesimal automorphism of the Cartan
geometry projects to an infinitesimal automorphism of the underlying almost
c-projective manifold.  This implies, in particular, that $\Pi$ induces a
bijection between sections of the adjoint tractor bundle of the almost
c-projective manifold that are parallel for the connection (\ref{Infautconn})
and infinitesimal automorphisms of the almost c-projective manifold.

For the convenience of the reader let us explicitly compute the modified
adjoint tractor connection~(\ref{Infautconn}). For these purposes let us
identify the adjoint tractor bundle with the $(1,0)$-part of its
complexification. As such it is filtered as
\begin{equation*}
\A M=\A^{-1}M\supset \A^0M\supset \A^1M,
\end{equation*}
where $\A^{-1}M/\A^{0}M\cong T^{1,0}M$, $\A^{0}M/\A^{1}M\cong\gl(T^{1,0}M,\C)$
and $\A^{1}M\cong \Wedge^{1,0}$. Hence, for any choice of
connection~$\nabla\in[\nabla]$, we can identify an element of $\A M$ with a
triple
\begin{equation*}
\begin{pmatrix}X^b\\ \adend_b{}^c \\ \mu_b \end{pmatrix},\text{ where }
\left\{\begin{array}{l} X^b\in T^{1,0}M\\
\adend_b{}^c\in \gl(T^{1,0}M,\C)\\
\mu_b\in \Wedge^{1,0}.
\end{array}\right.
\end{equation*}
Note that $\adend_b{}^c$ may be decomposed further into its trace-free and
trace parts according to the decomposition~\eqref{decompg0}
$\gl(T^{1,0}M,\C)\cong \sgl(T^{1,0}M,\C)\oplus\cE(0,0)$.  However, we
shall not make use of this decomposition. {From} the formulae
(\ref{tractorconna}) and (\ref{tractorconnbara}) defining the tractor
connection on the standard complex tractor bundle $\T$ one easily deduces that
tractor connection on $\A M=\sgl (\T)$ is given by
\begin{align}\label{Atractor1}
\nabla^{\A}_a \begin{pmatrix}X^b\\ \adend_b{}^c\\ \mu_b \end{pmatrix}
&=\begin{pmatrix}
\nabla_aX^b-\adend_a{}^{b}\\
\nabla_a\adend_{b}{}^c+\delta_{a}{}^c\mu_b+\Rho_{ab}X^c+
(\mu_a+\Rho_{ad}X^d)\delta_{b}{}^c\\
\nabla_a\mu_b-\Rho_{ac}\adend_{b}{}^c
\end{pmatrix}
\\
\label{Atractor2}
\nabla^{\A}_{\bar{a}}
\begin{pmatrix}X^b\\ \adend_b{}^c\\ \mu_b
\end{pmatrix}
&=\begin{pmatrix}\nabla_{\bar{a}}X^b\\
\nabla_{\bar{a}}\adend_{b}{}^c+\Rho_{\bar{a}b}X^c
+\Rho_{\bar{a}d}X^d\delta_{b}{}^c\\
\nabla_{\bar{a}}\mu_b-\Rho_{\bar{a}c}\adend_{b}{}^c
\end{pmatrix}.
\end{align}
{From} (\ref{Infautconn}) we deduce that:
\begin{prop} \label{Infautconn2}
Suppose $(M,J,[\nabla])$ is an almost c-projective manifold of dimension
$2n\geq6$. Then the projection $\Pi\colon \A M\to\A M/\A^{0}M\cong T^{1,0}M$
induces a bijection between sections of $\A M$ that are parallel for
\begin{gather*}
\nabla^{\A}_a\begin{pmatrix}X^b\\ \adend_b{}^c\\ \mu_b
\end{pmatrix}
+\begin{pmatrix}0\\ 
W_{da}{}^c{}_b X^d+W_{\bar{d}a}{}^c{}_b X^{\bar{d}}
+T_{af}{}^{\bar e}T_{\bar e\bar d}{}^fX^{\bar d}\delta _b{}^c\\
C_{cab}X^c+C_{\bar c ab}X^{\bar c}
\end{pmatrix}
\\
\nabla^{\A}_{\bar{a}}\begin{pmatrix}X^b\\ \adend_b{}^c\\ \mu_b\end{pmatrix}
+\begin{pmatrix}T_{\bar c\bar a}{}^bX^{\bar c}\\ 
W_{d\bar a}{}^c{}_bX^d+ W_{\bar d\bar a}{}^c{}_bX^{\bar d}
+(\nabla_eT_{\bar d\bar a}{}^eX^{\bar d}-T_{df}{}^{\bar e}T_{\bar e\bar a}{}^fX^d)
\delta_{b}{}^c\\
C_{c\bar ab}X^c+ C_{\bar c\bar a b}X^{\bar c}
\end{pmatrix}
\end{gather*}
and infinitesimal automorphisms of the almost c-projective manifold.
\end{prop}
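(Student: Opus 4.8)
The plan is to rely on the correspondence already in force: by the general theory of infinitesimal automorphisms of Cartan geometries \cite{InfautCap,csbook}, combined with the equivalence of categories of Theorem~\ref{EquivCat}, the projection $\Pi\colon\A M\to T^{1,0}M$ identifies the infinitesimal automorphisms of $(M,J,[\nabla])$ with those sections of $\A M$ that are parallel for the modified connection~(\ref{Infautconn}), namely $\nabla^\A s+\kappa(\Pi(s),\,\cdot\,)$. Thus the entire content of the proposition is to rewrite (\ref{Infautconn}) as the two displayed formulas, for a fixed $\nabla\in[\nabla]$ and the induced splitting $s=(X^b,\adend_b{}^c,\mu_b)$; the asserted bijection then requires nothing further. (The hypothesis $2n\geq6$ enters only to guarantee that the $(2,0)$-Weyl curvature $W_{ab}{}^c{}_d$ is a genuine curvature component, cf.~Proposition~\ref{Weyl_gone}, exactly as in Proposition~\ref{Killingoperator}.)

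First I would record that, relative to the Weyl structure determined by $\nabla$, the Cartan curvature decomposes as $\kappa^\sigma=T+W-C$ according to the splitting $\mathrm{gr}(\A M)=TM\oplus\gl(TM,J)\oplus T^*M$ of Section~\ref{almost_cproj_curvature}. Inserting $X=\Pi(s)$ into the first argument of the two-form $\kappa$ therefore produces an $\A M$-valued one-form $\kappa(X,\,\cdot\,)$ whose three components feed into the three slots of the tractor: the torsion $T$ into the $X^b$-slot, the Weyl curvature $W$ into the $\adend_b{}^c$-slot, and the Cotton--York tensor $C$ into the $\mu_b$-slot. Since $\nabla^\A$ is already displayed in (\ref{Atractor1}) and (\ref{Atractor2}), it remains only to compute these three contributions and add them.

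The computation is then a matter of projecting $\kappa(X,\,\cdot\,)$ into barred and unbarred types via the identities (\ref{usefulidentities}) and Proposition~\ref{rosetta}. Because the torsion is of type $(0,2)$, in the $\bar a$-direction it contributes $T_{\bar c\bar a}{}^bX^{\bar c}$ to the $X^b$-slot, while in the $a$-direction the relevant $(2,0)$- and $(1,1)$-projections vanish, leaving $0$; this gives the top entries of the two formulas. The Weyl contributions to the $\adend_b{}^c$-slot are read off as $W_{da}{}^c{}_bX^d+W_{\bar da}{}^c{}_bX^{\bar d}$ and $W_{d\bar a}{}^c{}_bX^d+W_{\bar d\bar a}{}^c{}_bX^{\bar d}$, and the Cotton--York contributions to the $\mu_b$-slot are $C_{cab}X^c+C_{\bar cab}X^{\bar c}$ and $C_{c\bar ab}X^c+C_{\bar c\bar ab}X^{\bar c}$, in the notation of (\ref{what_is_cottonyork}).

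The main obstacle will be the torsion-quadratic terms that decorate the $\adend_b{}^c$-slot. The $(1,1)$-part $W_{a\bar b}{}^c{}_d$ of the Weyl curvature carries torsion-squared corrections and a nonzero trace $W_{a\bar b}{}^c{}_c=-T_{af}{}^{\bar e}T_{\bar e\bar b}{}^f$ (Proposition~\ref{rosetta} part~(4)), and the curvature acting on the density components mixes in $R_{ab}{}^{\bar f}{}_{\bar f}=\nabla_{\bar f}T_{ab}{}^{\bar f}$ from Section~\ref{curvature_operators_on_densities}. Keeping careful track of these, and of how they recombine with the clean $W$-terms, is exactly what produces the extra summands $T_{af}{}^{\bar e}T_{\bar e\bar d}{}^fX^{\bar d}\delta_b{}^c$ and $(\nabla_eT_{\bar d\bar a}{}^eX^{\bar d}-T_{df}{}^{\bar e}T_{\bar e\bar a}{}^fX^d)\delta_b{}^c$ in the middle entries of the two formulas. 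Once this bookkeeping is complete and the three slots are assembled, (\ref{Infautconn}) takes the stated form and the proof is finished.
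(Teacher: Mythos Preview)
Your proposal is correct and follows essentially the same approach as the paper: the proposition is stated immediately after the phrase ``From~(\ref{Infautconn}) we deduce that,'' so the paper's argument is precisely to invoke the general Cartan-geometric bijection $\nabla^\A s+\kappa(\Pi(s),\cdot)$ together with Theorem~\ref{EquivCat}, and then to decompose $\kappa^\sigma=T+W-C$ into barred and unbarred components using the formulae of Proposition~\ref{rosetta} and Section~\ref{curvature_operators_on_densities}. The paper also remarks afterward that the same result can be obtained by direct prolongation of the equations in Proposition~\ref{Killingoperator}, exhibiting the explicit inverse $X^b\mapsto(X^b,\nabla_bX^c,-\tfrac{1}{n+1}(\nabla_a\nabla_bX^a+2\Rho_{(ab)}X^b))$, but this is presented as an alternative rather than the primary derivation.
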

Proposition~\ref{Infautconn2} can, of course, also be obtained directly by
prolonging cleverly the equations of Proposition \ref{Killingoperator}. Note
that the form of the equations in Proposition \ref{Killingoperator}
immediately shows that $\Pi$ maps parallel sections for the connection in
Proposition \ref{Infautconn2} to c-projective vector fields. To see the
converse, one may verify that that for a c-projective vector field $X^b$ and
for any choice of $\nabla\in[\nabla]$ the section
$$\begin{pmatrix}X^b\\ \adend_b{}^c\\ \mu_b
\end{pmatrix}=\begin{pmatrix}X^b\\ \nabla_bX^c\\
-\frac{1}{n+1}(\nabla_a\nabla_bX^a+2\Rho_{(ab)}X^b)
\end{pmatrix}$$
is parallel for the connection given in Proposition~\ref{Infautconn2} and
observe that this differential operator indeed defines the inverse to the
claimed bijection.

\begin{rem}
If the dimension of the almost c-projective manifold is $2n=4$, then
Proposition~\ref{Infautconn2} still holds taking into account that
$W_{ab}{}^c{}_d\equiv 0$.
\end{rem}

\begin{rem}\label{firstBGG_is_different}
Note that the differential operator
\begin{align*}
D^{\A}\colon T^{1,0}M&\to(\Wedge^{1,0}\otimes T^{0,1}M)\oplus
(S^2\Wedge^{1,0}\otimes T^{1,0}M)_\circ\\
X^c&\mapsto
(\nabla_{\bar b} X^c, (\nabla_{(b}\nabla_{d)}X^c+X^c\Rho_{(bd)})_\circ)
\end{align*}
is also invariant. It is the first operator in the BGG sequence of the adjoint
tractor bundle. As for projective structures, this operator differs from
$\D^{\mathrm{aut}}$, the operator that controls the infinitesimal
automorphisms of the almost c-projective manifold. For a discussion of this
phenomena in the context of general parabolic geometries see~\cite{InfautCap}.
\end{rem}

\section{Metrisability of almost c-projective structures}
\label{sec:metrisability}

On any \bps/K\"ahler manifold $(M,J,g)$ one may consider the c-projective
structure that is induced by the Levi-Civita connection of~$g$. The
c-projective manifolds that arise in this way from a \bps/K\"ahler metric are
the most extensively studied c-projective manifolds;
see~\cite{DM,IT,Mikes,OtsTash} and, more recently,~\cite{FKMR,MR,Mettler}. A
natural but difficult problem in this context is to characterise the
c-projective structures that arise from \bps/K\"ahler metrics or, more
generally, the almost c-projective structures that arise from
$(2,1)$-symplectic (also called quasi-K\"ahler) metrics.  In the following
sections we shall show that, suitably interpreted, this problem is controlled
by an invariant linear overdetermined system of PDE and we shall explicitly
prolong this system. Under the assumptions that $J$ is integrable and the
c-projective manifold $(M,J,[\nabla^g])$ arose via the Levi-Civita connection
$\nabla^g$ of a K\"ahler metric~$g$, a prolongation of the system of PDE
governing the K\"ahler metrics that are c-projectively equivalent to $g$ was
first given in~\cite{DM,Mikes} and rediscovered in the setting of Hamiltonian
$2$-forms on K\"ahler manifolds in~\cite{ACG}.

\subsection{Almost Hermitian manifolds}\label{Kaehlersection}
We begin by recalling some basic facts.

\begin{defin} Suppose $(M, J)$ is an almost complex manifold of dimension $2n
\geq 4$.  A \emph{Hermitian metric} on $(M, J)$ is a \bps/Riemannian metric
$g_{\alpha\beta}\in\Gamma(S^2T^*M)$ that is $J$-invariant:
\begin{equation*}
J_{\alpha}{}^\gamma J_{\beta}{}^\delta g_{\gamma\delta}=g_{\alpha\beta}.
\end{equation*}
We call such a triple $(M, J, g)$ an \emph{almost Hermitian manifold},
or, if $J$ is integrable, a \emph{Hermitian manifold}. Note that we drop the
awkward \bps/ prefix.
\end{defin}
\noindent To an almost Hermitian manifold $(M, J, g)$ one can associate a
nondegenerate $J$-invariant $2$-form $\Kf\in\Gamma(\Wedge^2T^*M)$ given by
\begin{equation}\label{Kaehlerform}
\Kf_{\alpha\beta}:= J_{\alpha}{}^\gamma g_{\gamma\beta}. 
\end{equation}
It is called the \emph{fundamental $2$-form} or \emph{K\"ahler form} of $(M,
J, g)$. If $\Kf$ is closed ($d\Kf=0$), we say $(M,J,g)$ is \emph{almost
  K\"ahler} or \emph{almost pseudo-K\"ahler} accordingly as $g$ is Riemannian
or pseudo-Riemannian; the ``almost'' prefix is dropped if $J$ is integrable.

We write $g^{\alpha\beta}$ for the inverse of the metric $g_{\alpha\beta}$:
\begin{equation*}
g_{\alpha\gamma}g^{\gamma\beta}=\delta_{\alpha}{}^{\beta}.
\end{equation*}
We raise and lower indices of tensors on an almost Hermitian manifold $(M, J,
g)$ with the metric and its inverse. The \emph{Poisson tensor} on $M$ is
$\Kf^{\alpha\beta}= J_{\gamma}{}^\beta g^{\alpha\gamma}$, with
\begin{equation}\label{Poisson_structure}
\Kf_{\alpha\beta}\Kf^{\beta\gamma}=-\delta_\alpha{}^\gamma.
\end{equation}
Viewing $\Wedge^{1,0}\otimes\Wedge^{0,1}$ as a complex vector bundle equipped
with the real structure given by swapping its factors, a Hermitian metric can,
by definition, also be seen as a real nondegenerate section $g_{a\bar b}$ of
$\Wedge^{1,0}\otimes\Wedge^{0,1}$. We denote by $g^{\bar a b}\in
\Gamma(T^{0,1}M\otimes T^{1,0}M)$ its inverse, characterised by
\[
g_{a\bar b}g^{\bar b c}=\delta_{a}{}^c\quad\text{and}\quad
g_{a\bar b}g^{\bar c a}=\delta_{\bar  b}{}^{\bar c}.
\]

Let us denote by $\nabla^g$ the Levi-Civita connection of a Hermitian metric
$g$.  Differentiating the identity $J_{\alpha}{}^{\gamma}J_{\gamma}{}^\beta
=-\delta_{\alpha}{}^\beta$ shows that
\begin{equation}\label{trivial_identity}
(\nabla_{\alpha}^gJ_{\beta}{}^\epsilon)J_{\epsilon}{}^{\gamma}
+J_{\beta}{}^\epsilon\nabla_{\alpha}^g J_{\epsilon}{}^\gamma=0.
\end{equation}
Since $\nabla_\alpha^g\Kf_{\beta\gamma}=g_{\gamma\epsilon} \nabla^g_\alpha
J_{\beta}{}^\epsilon$, it follows immediately from (\ref{trivial_identity})
that
\begin{equation}\label{(11)_part_vanishes}
\nabla_{\alpha}^g\Kf_{\beta\gamma}+
J_{\beta}{}^\epsilon J_{\gamma}{}^\zeta \nabla_\alpha^g\Kf_{\epsilon\zeta}=0.
\end{equation}
Viewing $\nabla_\alpha^g\Kf_{\beta\gamma}$ as $2$-form with values in $T^*M$,
equation (\ref{(11)_part_vanishes}) says that the part of type $(1,1)$
vanishes identically.  On the other hand, the vector bundle map
\begin{align*}
\Wedge^2T^*M\otimes T^*M&\to \Wedge^3T^*M\\
\Psi_{\alpha\beta\gamma}&\mapsto \Psi_{[\alpha\beta\gamma]}
\end{align*}
induces an isomorphism between $2$-forms with values in $T^*M$ of type $(0,2)$
and $3$-forms on $M$ of type $(2,1)+(1,2)$, i.e.~real sections of
$\Wedge^{2,1}\oplus\Wedge^{1,2}$. Since
\begin{equation}\label{ext_deriv_of_two_form}
\nabla_{[\alpha}^g\Kf_{\beta\gamma]}=\tfrac13(d\Kf)_{\alpha\beta\gamma},
\end{equation}
the identity
\begin{equation}\label{standard_identity}
2\nabla_\alpha^g\Kf_{\beta\gamma} = (d\Kf)_{\alpha\beta\gamma}
-J_{\beta}{}^\epsilon J_{\gamma}{}^\zeta (d\Kf)_{\alpha\epsilon\zeta}
-N^J_{\,\beta\gamma}{}^\epsilon\Kf_{\epsilon\alpha}
\end{equation}
shows that type $(0,2)$ component of $\nabla_\alpha^g\Kf_{\beta\gamma}$ is
identified with the $(2,1)+(1,2)$ component of $d\Kf$~\cite{Gauduchon}. The
type $(3,0)+(0,3)$ component of $d\Kf$ is determined by the Nijenhuis tensor
$N^J$, hence so is the type $(2,0)$ part of $\nabla_\alpha^g\Kf_{\beta\gamma}$
(which has type $(0,2)$ when viewed as a $2$-form with values in $TM$ using
$g$).

If $M$ has dimension $2n\geq 6$, $\nabla^g_\alpha\Kf_{\beta\gamma}$ can be
decomposed into $4$ components, which correspond to $4$ real irreducible
${\mathrm{U}}(p,q)$-submodules in $\Wedge^2\C^{n}\otimes\C^{n}$, where
${\mathrm{U}}(p,q)$ denotes the \bps/unitary group of signature $(p,q)$ with
$p+q=n$, the signature of $g_{\alpha\beta}$. If $2n=4$, then
$\nabla^g_\alpha\Kf_{\beta\gamma}$ has only two components. The different
possibilities of a subset of these invariants vanishing leads to the
Gray--Hervella classification of almost Hermitian manifolds into~16,
respectively~4, classes in dimension $2n\geq 6$, respectively $2n=4$,
see~\cite{GrayHervella}. In the following we shall be interested in the class
of almost Hermitian manifolds which in the literature (at least in the case of
metrics of definite signature) are referred to as \emph{quasi-K\"ahler} or
$(2,1)$-\emph{symplectic}, see~\cite{Gauduchon,GrayHervella}. We
extend this terminology to indefinite signature, as we have done for Hermitian
metrics in general.

\begin{defin} Suppose $(M,J,g)$ is an almost Hermitian manifold of dimension
\mbox{$2n\geq 4$}. Then $(M,J,g)$ is called a \emph{quasi-K\"ahler} or
$(2,1)$-\emph{symplectic} manifold, if
\begin{equation}\label{21symplectic}
\nabla_\alpha^g\Kf_{\beta\gamma}
+J_{\alpha}{}^\epsilon J_{\beta}{}^\zeta\nabla_{\epsilon}^g\Kf_{\zeta \gamma}=0,
\end{equation}
which is the case if and only if
\begin{equation}\label{21symplectic2}
\nabla_\alpha^gJ_{\beta}{}^\gamma
=-J_{\alpha}{}^\epsilon J_{\beta}{}^\zeta\nabla_{\epsilon}^gJ_{\zeta}{}^\gamma.
\end{equation}
\end{defin}
Since $\nabla_{\alpha}\Kf_{\beta\gamma}$, as a $2$-form with values in $T^*M$,
has no component of type $(1,1)$, (\ref{21symplectic}) means, equivalently,
that $\nabla_{\alpha}\Kf_{\beta\gamma}$ has type $(2,0)$, i.e.~has no $(0,2)$
part; equivalently $d\Kf$ has no component of type $(2,1)+(1,2)$, i.e.~it has
type $(3,0)+(0,3)$, which explains the ``$(2,1)$-symplectic'' terminology. The
class of $(2,1)$-symplectic manifolds of dimension $2n\geq 6$ contains as a
subclass the \emph{almost \bps/K\"ahler} manifolds, which are symplectic, and
the subclass of \emph{nearly K\"ahler} manifolds, i.e.~those almost Hermitian
manifolds that satisfy
$\nabla^g_\alpha\Kf_{\beta\gamma}=-\nabla^g_\beta\Kf_{\alpha\gamma}$, which is
manifestly equivalent to
$3\nabla^g_\alpha\Kf_{\beta\gamma}=(d\Kf)_{\alpha\beta\gamma}$. Since in
dimension $2n=4$ any $3$-form has type $(2,1)+(1,2)$, the condition for an
almost Hermitian manifold of dimension $4$ to be $(2,1)$-symplectic is
equivalent to the condition to be almost \bps/K\"ahler. If $J$ is integrable,
i.e.~$(M,J,g)$ a Hermitian manifold, then $(M,J,g)$ is $(2,1)$-symplectic if
and only if $d\Kf=0$, i.e.~$(M,J,g)$ is \bps/K\"ahler.

\begin{defin} Let $(M, J, g)$ be an almost Hermitian manifold of dimension
$2n\geq4$. Then a \emph{Hermitian connection} on $M$ is an affine connection
  $\nabla$ with $\nabla J=0$ and $\nabla g=0$.
\end{defin}

Such Hermitian connections exist and are uniquely determined by their torsion.
A discussion of Hermitian connections and of the freedom in prescribing their
torsion can for instance be found in \cite{Gauduchon} (see also
\cite{lich:connections}).  The following proposition shows that
$(2,1)$-symplectic manifolds can be characterised as those almost Hermitian
manifolds which admit a minimal Hermitian connection; for a proof see
\cite{Gauduchon}.

\begin{prop}\label{minimal_Hermitian_connection}  Suppose $(M, J, g)$ is
an almost Hermitian manifold of dimension $2n\geq 4$.  Then $(M, J, g)$ admits
a \textup(unique\textup) Hermitian connection whose torsion $T$ is of type
$(0,2)$ as $2$-form with values in $TM$, equivalently $T=-\frac{1}{4} N^J$, if
and only if it is $(2,1)$-symplectic.
\end{prop}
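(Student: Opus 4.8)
The plan is to prove both directions of the equivalence, using the torsion decomposition from Section~\ref{complexconnections} together with the structural identity \eqref{standard_identity} relating $\nabla^g\Kf$ to $d\Kf$ and $N^J$. The key observation is that a Hermitian connection $\nabla$ differs from the Levi-Civita connection $\nabla^g$ by a tensor $A_{\alpha\beta}{}^\gamma$ that must be compatible with both $g$ and $J$; since $\nabla$ preserves $g$, the endomorphism part $A_{\alpha\beta\gamma}:=A_{\alpha\beta}{}^\delta g_{\delta\gamma}$ is skew in $\beta\gamma$, and since $\nabla$ preserves $J$ while $\nabla^g$ does not in general, $A$ is constrained by the covariant derivative $\nabla^g_\alpha J_\beta{}^\gamma$. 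First I would set up this difference tensor explicitly and express the torsion $T$ of $\nabla$ in terms of $A$ via $T_{\alpha\beta}{}^\gamma=2A_{[\alpha\beta]}{}^\gamma$ (the Levi-Civita connection being torsion-free).

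The strategy for the forward direction is to suppose $(M,J,g)$ is $(2,1)$-symplectic, hence by \eqref{21symplectic} that $\nabla^g_\alpha\Kf_{\beta\gamma}$ has pure type $(2,0)$ as a $2$-form with values in $T^*M$. The requirement that $\nabla=\nabla^g+A$ be Hermitian is the condition $\nabla_\alpha J_\beta{}^\gamma=0$, which rewrites as an algebraic equation forcing $A$ to absorb exactly $\nabla^g_\alpha J$. Because $\nabla^g_\alpha\Kf_{\beta\gamma}$ lives in the type-$(2,0)$ piece only, and because this piece is by \eqref{standard_identity} governed entirely by $N^J$, one can solve for $A$ and check that the resulting torsion $T=2A_{[\alpha\beta]}{}^\gamma$ is purely of type $(0,2)$ and equals $-\tfrac14 N^J$. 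Here I would lean on Proposition~\ref{lichnerowicz} and the identification of the $(0,2)$-torsion component with $-\tfrac14 N^J$ for orientation, and on the fact \eqref{kerpartial} that minimal connections form an affine space over $\ker\partial$; uniqueness of the Hermitian connection with prescribed torsion (stated just before the proposition) then pins down $\nabla$.

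For the converse, I would suppose a Hermitian connection with $T=-\tfrac14 N^J$ exists and run the computation backwards: the Hermitian condition $\nabla g=\nabla J=0$ together with $T$ being of type $(0,2)$ forces $\nabla^g_\alpha\Kf_{\beta\gamma}$ to have no $(0,2)$ component as a $2$-form valued in $T^*M$. Combined with \eqref{(11)_part_vanishes}, which already kills the $(1,1)$ part, this leaves $\nabla^g_\alpha\Kf_{\beta\gamma}$ of type $(2,0)$, which is precisely \eqref{21symplectic}. The main obstacle I anticipate is the bookkeeping in converting between the ``$2$-form with values in $TM$'' viewpoint (where type $(0,2)$ is the good torsion type) and the ``$2$-form with values in $T^*M$'' viewpoint used for $\nabla^g\Kf$, since raising and lowering an index with $g$ interchanges the roles of $(0,2)$ and $(2,0)$ type conditions, as \eqref{standard_identity} and the parenthetical remarks after it warn. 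Keeping these type conventions straight, rather than any deep idea, is where the care is needed; the cited reference~\cite{Gauduchon} supplies the detailed Gray--Hervella-style computation, so I would structure the argument to reduce precisely to the type of $\nabla^g\Kf$ and then invoke the standard identity.
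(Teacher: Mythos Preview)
Your proposal is correct and matches the paper's approach. Note that the paper does not actually supply its own proof here---it refers to \cite{Gauduchon}---but immediately after the proposition it carries out precisely the forward-direction verification you outline: it writes down the explicit difference tensor $A_{\alpha\epsilon}{}^\beta=\tfrac12(\nabla^g_\alpha J_\gamma{}^\beta)J_\epsilon{}^\gamma$ in \eqref{canonical_connection}, checks directly that the resulting connection is complex and metric, and then computes its torsion \eqref{Torsion_of_can_conn}, using the $(2,1)$-symplectic identity \eqref{21symplectic2} to reduce it to \eqref{Torsion_of_can_conn_2} and hence to $-\tfrac14 N^J$. Your sketch of the converse (and your warning about the $(0,2)$ versus $(2,0)$ bookkeeping when passing between $TM$-valued and $T^*M$-valued viewpoints) is a sound completion of what the paper leaves to the reference.
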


For a a $(2,1)$-symplectic manifold $(M,J,g)$ we refer to the unique Hermitian
connection $\nabla$ of Proposition \ref{minimal_Hermitian_connection} as the
\emph{canonical connection} of $(M,J,g)$. In terms of the Levi-Civita
connection $\nabla^g$ of $g$ it is given by
\begin{equation}\label{canonical_connection}
\nabla_{\alpha}X^\beta=\nabla_{\alpha}^g X^\beta
+\tfrac{1}{2}(\nabla_{\alpha}^gJ_{\gamma}{}^\beta)J_{\epsilon}{}^\gamma X^\epsilon. 
\end{equation}
For the convenience of the reader let us check that this connection has the
desired properties.  For an arbitrary almost Hermitian manifold $(M, J, g)$
the formula (\ref{canonical_connection}) is obviously a complex connection,
since
\begin{align*}
&\nabla_\alpha (J_{\gamma}{}^\beta X^\gamma)
=\tfrac{1}{2}(\nabla_\alpha^g (J_{\gamma}{}^\beta X^\gamma)
+J_{\gamma}{}^\beta\nabla_\alpha^g X^\gamma)\\
&J_{\gamma}{}^\beta\nabla_\alpha X^\gamma
=\tfrac{1}{2}(\nabla_\alpha^g (J_{\gamma}{}^\beta X^\gamma)
+J_{\gamma}{}^\beta\nabla_\alpha^g X^\gamma),
\end{align*}
which implies $(\nabla_{\alpha}J_{\beta}{}^\gamma)X^\beta=\nabla_\alpha
(J_{\gamma}{}^\beta X^\gamma)-J_{\gamma}{}^\beta\nabla_\alpha X^\gamma=0$ for
all vector fields $X^\alpha$.  Since $\nabla^g$ is a metric connection, the
connection given by \eqref{canonical_connection} is a metric connection if and
only if $(\nabla_{\alpha}^g J_{\zeta}{}^\epsilon) J_{\beta}{}^\zeta
g_{\gamma\epsilon}=(\nabla_{\alpha}^g\Kf_{\zeta\gamma})J_{\beta}{}^\zeta$ is
skew in $\gamma$ and $\beta$, which follows immediately from
\eqref{standard_identity}. Hence, on any almost Hermitian manifold formula
\eqref{canonical_connection} defines a Hermitian connection. Moreover, since
$\nabla^g$ is torsion free, the torsion $T$ of \eqref{canonical_connection}
satisfies
 \begin{equation}\label{Torsion_of_can_conn}
 T_{\alpha\beta}{}^\gamma=\tfrac{1}{2}((\nabla_{\alpha}^gJ_{\epsilon}{}^\gamma)
J_{\beta}{}^\epsilon-(\nabla_{\beta}^gJ_{\epsilon}{}^\gamma) J_{\alpha}{}^\epsilon).
\end{equation}
Recall that the Nijenhuis tensor can be expressed in terms of $\nabla^g$
(actually in terms of any torsion free connection) as
\begin{equation}
N^J_{\,\alpha\beta}{}^{\gamma}=
-(\nabla_{\alpha}^gJ_{\epsilon}{}^\gamma) J_{\beta}{}^\epsilon
+(\nabla_{\beta}^gJ_{\epsilon}{}^\gamma) J_{\alpha}{}^\epsilon-
J_{\alpha}{}^\epsilon\nabla_{\epsilon}J_{\beta}{}^\gamma
+ J_{\beta}{}^\epsilon\nabla_{\epsilon} J_{\alpha}{}^\gamma,
\end{equation}
which by (\ref{21symplectic2}) reduces in the case of a $(2,1)$-symplectic
manifold to the equation
\begin{equation}\label{Torsion_of_can_conn_2}
N^J_{\,\alpha\beta}{}^{\gamma}=
-2((\nabla_{\alpha}^gJ_{\epsilon}{}^\gamma) J_{\beta}{}^\epsilon
-(\nabla_{\beta}^gJ_{\epsilon}{}^\gamma) J_{\alpha}{}^\epsilon).
\end{equation}
Comparing (\ref{Torsion_of_can_conn}) with (\ref{Torsion_of_can_conn_2}) shows
that on a $(2,1)$-symplectic manifold the torsion $T$ of
(\ref{canonical_connection}) satisfies $T=-\frac{1}{4}N^J$ as required.  Note
that, if the Levi-Civita connection $\nabla^g$ of $(M, J, g)$ is a complex
connection, then also $\nabla^g_{\alpha}\Kf_{\beta\gamma}=0$, which by
(\ref{ext_deriv_of_two_form}) implies that $\Kf_{\alpha\beta}$ is
closed. Moreover, the identity (\ref{standard_identity}) shows that $J$ is
necessarily integrable in this case. Conversely, the same identity shows that,
if $J$ is integrable and the fundamental $2$-form closed, then the Levi-Civita
connection is a complex connection,
cf.~Corollary~\ref{torsion_free_complex_connection}. Hence, the connection in
\eqref{canonical_connection} coincides with the Levi-Civita connection of on
an almost Hermitian manifold if and only if $(M,J,g)$ is \bps/K\"ahler.

\begin{rem}
We have already observed that formula \eqref{canonical_connection} defines a
Hermitian connection on any almost Hermitian manifold, which is usually
referred to as the \emph{first canonical connection} following
\cite{lich:connections}. In the case of a $(2,1)$-symplectic manifold the first
canonical connection coincides also with \emph{the second canonical connection}
of \cite{lich:connections}, which is also called \emph{Chern connection}; see
\cite{Gauduchon}.
\end{rem}

Let $(M,J,g)$ be a $(2,1)$-symplectic manifold and denote by $R$ the curvature
of its canonical connection $\nabla$.  Since $\nabla$ is Hermitian, we have
$R\in\Omega^2(M,{\mathfrak u}(TM))$, where ${\mathfrak u}(TM)\subset
T^*M\otimes TM$ denotes the subbundle of unitary bundle endomorphisms of
$(TM,J,g)$. Setting $R_{\alpha\beta\gamma\delta}\equiv
R_{ab}{}^\epsilon{}_\delta g_{\epsilon\gamma}$, the property
$R\in\Omega^2(M,{\mathfrak u}(TM))$ of the curvature of a $(2,1)$-symplectic
manifold can be expressed as
\begin{equation}\label{21symp_Curv}
R_{\alpha\beta\gamma\delta}=R_{[\alpha\beta][\gamma\delta]}\quad \text{ and }\quad
R_{\alpha\beta\gamma[\delta}J_{\epsilon]}{}^\gamma=0.
\end{equation}
Moreover, recall that for any linear connection the Bianchi symmetry
holds. Hence, $R$ satisfies
\begin{equation}\label{Bianchi_with_torsion}
R_{[\alpha\beta}{}^\gamma{}_{\delta]}
=\nabla_{[\alpha} T_{\beta\delta]}{}^\gamma+T_{\epsilon[\alpha}{}^\gamma T_{\beta\delta]}{}^\epsilon,
\end{equation}
where $T_{\alpha\beta}{}^\gamma=-\frac{1}{4} N^J_{\alpha\beta}{}^\gamma$ is
the torsion of $\nabla$. Note that \eqref{Bianchi_with_torsion} for a minimal
connection is of course precisely equivalent to the already established
identities \eqref{bianchi1} and \eqref{bianchi2}.  Since $\nabla$ is a complex
connection, $R$ decomposes as a $2$-form with values in the complex
endomorphism of $TM$ into three components according to type as explained in
Section~\ref{almost_cproj_curvature}. In barred and unbarred indices $R$ can
therefore be encoded by the three tensors
\[
R_{ab}{}^c{}_d\quad\quad R_{a\bar b}{}^c{}_{d}\quad\quad R_{\bar a\bar b}{}^c{}_{d},
\]
or equivalently by their complex conjugates, where
$R_{ab}{}^c{}_d\equiv\Pi_a^\alpha \Pi{}_{b}^\beta\Pi{}^{c}_\gamma\Pi_d^\delta
R_{\alpha\beta}{}^{\gamma}{}_{\delta}$ and so on.  Since $\nabla$ preserves in
addition a \bps/Riemannian metric, the additional symmetry
$R_{\alpha\beta\gamma\delta}=-R_{\alpha\beta\delta\gamma}$ implies
\begin{align*}
&R_{ab\bar c d}\equiv R_{ab}{}^e{}_d g_{e\bar c}
=-R_{ab}{}^{\bar e}{}_{\bar c}g_{\bar ed}\equiv -R_{abd\bar c}
\qquad R_{\bar a\bar b\bar c d}\equiv R_{\bar a\bar b}{}^e{}_d g_{e\bar c}
=-R_{\bar a\bar b}{}^{\bar e}{}_{\bar c}g_{\bar ed}\equiv -R_{\bar a\bar bd\bar c}\\
&R_{a\bar b\bar c d}\equiv R_{a\bar b}{}^e{}_d g_{e\bar c}
=-R_{a\bar b}{}^{\bar e}{}_{\bar c}g_{\bar ed}\equiv -R_{a\bar bd\bar c}
\qquad R_{\bar ab\bar c d}\equiv R_{\bar ab}{}^e{}_d g_{e\bar c}
=-R_{\bar ab}{}^{\bar e}{}_{\bar c}g_{\bar ed}\equiv -R_{\bar abd\bar c}.
\end{align*}
Note that the first two identities (which are conjugates of each other) show
that for the canonical connection of a $(2,1)$-symplectic manifold (in
contrast to a general minimal complex connection) the curvature components
$R_{ab}{}^c{}_d$ and $R_{ab}{}^{\bar c}{}_{\bar d}=\overline{R_{\bar a\bar
    b}{}^c{}_d}$ are not independent of each other, since they are related by
$g$. Hence, the curvature $R$ of the canonical connection of a
$(2,1)$-symplectic manifold can be encoded by the two tensors
\[
R_{ab\bar c d}=R_{[ab] \bar c d}\quad\text{ and }\quad R_{a\bar b\bar cd}
\]
(or their complex conjugates).  By~\eqref{Bianchi_with_torsion},
\eqref{21symp_Curv} and the fact that the torsion of $\nabla$ has type
$(0,2)$ one deduces straightforwardly that the curvature and the torsion of a
$(2,1)$-symplectic manifold satisfy the symmetries
\begin{align}
&R_{ab\bar c d}=-\nabla_{\bar c} T_{abd}\qquad
R_{ab\bar c d}+ R_{bd\bar c a}+ R_{da\bar c b}=0\label{21Curv1a}\\
&\nabla_{[a} T_{bc]d}=0\label{21Curv1b}\\
&R_{a\bar b\bar cd}-R_{d\bar b\bar c a}
=- T_{ad}{}^{\bar e}T_{\bar e\bar b\bar c}\qquad
R_{a\bar b\bar cd}-R_{a\bar c\bar b d}
=- T_{\bar b\bar c}{}^{e}T_{e ad}\label{21Curv2}\\
&R_{a\bar b\bar c d}-R_{\bar c\bar d a\bar b}
=T_{\bar a\bar b\bar c} T_{da}{}^{\bar e} 
+T_{fda}T_{\bar c\bar b}{}^f,\label{21Curv3}
\end{align}
where $T_{abc}=T_{ab}{}^{\bar d}g_{c\bar d}$ and $T_{\bar a\bar b\bar c}
=T_{\bar a\bar b}{}^{d}g_{d\bar c}$ (cf.~also \eqref{bianchi1} and
\eqref{bianchi2}).  Now let us consider the Ricci tensor $\Ric$ of the
canonical connection of a $(2,1)$-symplectic manifold. By definition we have
\[
\Ric_{ab}=R_{ca}{}^c{}_b\qquad
\Ric_{\bar a\bar b}=R_{\bar c\bar a}{}^{\bar c}{}_{\bar b}\qquad
\Ric_{a\bar b}=R_{\bar ca}{}^{\bar c}{}_{\bar b}\qquad
\Ric_{\bar ab}=R_{c\bar a}{}^{c}{}_{b}.
\]
{From} the identities \eqref{21Curv1a} we we conclude that 
\begin{equation}\label{21Curv_Ricci1}
\begin{aligned}
\Ric_{ab}&=-\nabla_{\bar c} T^{\bar c}{}_{ab}&&&&&
\Ric_{[ab]}&=\tfrac{1}{2} \nabla_{\bar c} T_{ab}{}^{\bar c}\\
\Ric_{\bar a\bar b}&=-\nabla_{c} T^{c}{}_{\bar a\bar b} &&&&&
\Ric_{[\bar a\bar b]}&=\tfrac{1}{2} \nabla_{c} T_{\bar a\bar b}{}^{c}.
\end{aligned}\end{equation}
Moreover, taking a Ricci type contraction in \eqref{21Curv3} shows immediately
that the $J$-invariant part of the Ricci tensor of the
canonical connection of a $(2,1)$-symplectic manifold is symmetric:
\begin{equation}\label{21Curv_Ricci2}
\Ric_{a\bar b}=\Ric_{\bar b a}.
\end{equation}
The canonical connection of a $(2,1)$-symplectic manifold is special in the
sense of Section \ref{curvature_operators_on_densities}, since it preserves
the volume form of $g$; hence \eqref{21Curv_Ricci1} and \eqref{21Curv_Ricci2}
confirm in particular what we deduced there for the Ricci curvature of special
connections.

We already observed that $(2,1)$-symplectic is equivalent to \bps/K\"ahler when
$J$ is integrable. Hence, in this case, the canonical connection simply
coincides with the Levi-Civita connection. Suppose $(M,J,g)$ is now a
\bps/K\"ahler manifold.  Then the identities \eqref{21Curv1a}--\eqref{21Curv3}
imply that $R$ is determined by any of the following tensors
\[
R_{a\bar{b}c\bar{d}}\qquad R_{\bar{a}bc\bar{d}}\qquad
R_{a\bar{b}\bar{c}d}\qquad R_{\bar{a}b\bar{c}d}
\]
which are now subject to the following symmetries
\begin{equation}\label{Kurvature}
R_{a\bar{b}c\bar{d}}=-R_{\bar{b}ac\bar{d}} \qquad
R_{a\bar{b}c\bar{d}}=-R_{a\bar{b}\bar{d}c} \qquad
R_{a\bar{b}c\bar{d}}=R_{c\bar{b}a\bar{d}}  \qquad
R_{a\bar{b}c\bar{d}}=R_{a\bar{d}c\bar{b}}
\end{equation}
as well as
\[
\overline{R}_{\bar{a}b\overline{c}d}\equiv \overline{R_{a\bar{b}c\bar{d}}}
=R_{b\bar{a}d\bar{c}}.
\]
\begin{rem} Let us remark that for the curvature $R$ of a \bps/K\"ahler
manifold, we have $R_{[\alpha\beta}{}^\gamma{}_{\delta]}=0$ which, together
with the symmetries \eqref{21symp_Curv}, implies
\begin{equation}\label{KaehlerCurv}
R_{\alpha\beta\gamma\delta}=R_{\gamma\delta\alpha\beta}\qquad\text{and}\qquad
J_\alpha{}^\epsilon J_{\beta}{}^\zeta R_{\epsilon\zeta}{}^\gamma{}_\delta=
R_{\alpha\beta}{}^\gamma{}_\delta.
\end{equation}
The symmetries of \eqref{21symp_Curv} and \eqref{KaehlerCurv} are precisely
the ones in \eqref{Kurvature} expressed in real indices. Note also that
\eqref{KaehlerCurv} shows immediately that the Ricci tensor
$\Ric_{\alpha\beta}=R_{\epsilon\alpha}{}^\epsilon{}_\beta$ 
is symmetric and $J$-invariant, which is consistent with \eqref{21Curv_Ricci2}.
\end{rem}
Moreover, note that it is immediate that the rank of the bundle of
\bps/K\"ahler  curvatures is $(n(n+1)/2)^2$ and that this bundle further
decomposes under ${\mathrm{U}}(n)$ as
\begin{equation*}
S^2\Wedge^{1,0}\otimes S^2\Wedge^{0,1}=
(S^2\Wedge^{1,0}\otimes_\circ S^2\Wedge^{0,1})\;\oplus\;
(\Wedge^{1,0}\otimes_\circ\Wedge^{0,1})\;\oplus\;\R,
\end{equation*}
where the subscript $\circ$ means trace-free part and $\R$ stands for
the trivial bundle. Under this decomposition, the \bps/K\"ahler curvature
splits as
\begin{equation}\label{BochnerDecomp}
R_{a\bar{b}c\bar{d}}=\Bt_{a\bar{b}c\bar{d}}
-2(\Xi_{a\bar{b}}g_{c\bar{d}}+\Xi_{c\bar{d}}g_{a\bar{b}}
+\Xi_{a\bar{d}}g_{c\bar{b}}+\Xi_{c\bar{b}}g_{a\bar{d}})
-2\Lambda(g_{a\bar{b}}g_{c\bar{d}}+g_{a\bar{d}}g_{c\bar{b}}),
\end{equation}
where
\begin{equation*}
\Bt_{a\bar{b}c\bar{d}}=\Bt_{c\bar{b}a\bar{d}}=\Bt_{a\bar{d}c\bar{b}}\qquad
g^{\bar{b}c}\Bt_{a\bar{b}c\bar{d}}=0\qquad g^{\bar{b}a}\Xi_{a\bar{b}}=0.
\end{equation*}
This is a K\"ahler analogue of the usual decomposition of Riemannian curvature
into the conformal Weyl tensor, the trace-free Ricci tensor, and the scalar
curvature.  The tensor $\Bt_{a\bar{b}c\bar{d}}$ is called the \emph{Bochner
  curvature} (or \emph{tensor}) and is the orthogonal projection of the
conformal Weyl curvature onto the intersection of the space of K\"ahler
curvatures with the space of conformal Weyl tensors~\cite{ACG}.  The analogue
of constant curvature in \bps/K\"ahler geometry is to insist that
$R_{a\bar{b}\bar{d}c}=\Lambda(g_{a\bar{b}}g_{c\bar{d}}
+g_{c\bar{b}}g_{a\bar{d}})$, where the (a priori) smooth function $\Lambda$ is
constant by the Bianchi identity.  This is called \emph{constant holomorphic
  sectional curvature} and (for $\Lambda>0$) locally characterises
$\CP^n$ and its Fubini--Study metric as in Section~\ref{CPn_curvature}
(where the normalisation is such that $\Lambda=1$).

\subsection{Other curvature decompositions}
It will be useful, both in this article and elsewhere, to decompose the
\bps/K\"ahler curvature tensor from various different viewpoints, some of which
ignore the complex structure. Without a complex structure, barred and unbarred
indices are unavailable so firstly we should rewrite the irreducible
decomposition (\ref{BochnerDecomp}) using only real indices. We recall that
\begin{equation}\label{KK}
R_{\alpha\beta\gamma\delta}=R_{[\alpha\beta][\gamma\delta]}\qquad
R_{[\alpha\beta\gamma]\delta}=0\qquad
R_{\alpha\beta\gamma[\delta}J_{\epsilon]}{}^\gamma=0
\end{equation}
and the real version of (\ref{BochnerDecomp}) will apply to any tensor
satisfying these identities. Recalling that
$\Omega_{\alpha\beta}=J_{\alpha\beta}=J_\alpha{}^\gamma g_{\beta\gamma}$, we 
obtain
\begin{equation}\label{Bochner_in_real_money}
\begin{split}
R_{\alpha\beta\gamma\delta}
&=\Bt_{\alpha\beta\gamma\delta}\\
&\qquad+g_{\alpha\gamma}\Xi_{\beta\delta}-g_{\beta\gamma}\Xi_{\alpha\delta}
-g_{\alpha\delta}\Xi_{\beta\gamma}+g_{\beta\delta}\Xi_{\alpha\gamma}\\
&\qquad+\Omega_{\alpha\gamma}\Sigma_{\beta\delta}
-\Omega_{\beta\gamma}\Sigma_{\alpha\delta}
-\Omega_{\alpha\delta}\Sigma_{\beta\gamma}
+\Omega_{\beta\delta}\Sigma_{\alpha\gamma}
+2\Omega_{\alpha\beta}\Sigma_{\gamma\delta}
+2\Omega_{\gamma\delta}\Sigma_{\alpha\beta}\\
&\qquad
+\Lambda(g_{\alpha\gamma}g_{\beta\delta}-g_{\beta\gamma}g_{\alpha\delta}
+\Omega_{\alpha\gamma}\Omega_{\beta\delta}
-\Omega_{\beta\gamma}\Omega_{\alpha\delta}
+2\Omega_{\alpha\beta}\Omega_{\gamma\delta}),
\end{split}\end{equation}
where
\begin{itemize}
\item $\Bt_{\alpha\beta\gamma\delta}$ is totally trace-free with respect to
$g^{\alpha\beta}$ and $\Omega^{\alpha\beta}$ 
\item $\Sigma_{\alpha\beta}\equiv J_\alpha{}^\gamma\Xi_{\beta\gamma}$
whilst $\Xi_{\alpha\beta}$ is
symmetric, trace-free, and of type $(1,1)$:
$$\Xi_{\alpha\beta}=\Xi_{(\alpha\beta)}\qquad\Xi_\alpha{}^\alpha=0\qquad 
\Sigma_{\alpha\beta}=\Sigma_{[\alpha\beta]}.$$
\end{itemize}
A simple way to see this is to check that all parts of this decomposition 
satisfy (\ref{KK}) as they should and then apply 
$\Pi_a^\alpha\overline{\Pi}{}_{\bar{b}}^\beta
\Pi_c^\gamma\overline{\Pi}{}_{\bar{d}}^\delta$, using the various identities
from Section~\ref{barredandunbarred} including~(\ref{usefulidentities}), to 
recover (\ref{BochnerDecomp}).
One can also read off from (\ref{Bochner_in_real_money}) the corresponding
decomposition of the Ricci tensor in \bps/K\"ahler geometry. Specifically,
\[
\Ric_{\beta\delta}=2(n+2)\Xi_{\beta\delta}+2(n+1)\Lambda g_{\beta\delta}\qquad
\Scal=4n(n+1)\Lambda
\]
and, conversely,
\[
\Lambda=\tfrac1{4n(n+1)}\Scal\qquad
\Xi_{\alpha\beta}=\tfrac1{2(n+2)}\left(\Ric_{\alpha\beta}-\tfrac1{2n}\Scal\,
g_{\alpha\beta}\right).
\]

Other natural realms in which one may view \bps/K\"ahler geometry are
\begin{itemize}
\item projective
\item conformal
\item c-projective
\item symplectic
\end{itemize}
and in each case decompose the curvature accordingly. The projective Weyl
curvature tensor \cite{Eprojectivenotes} on a Riemannian manifold of dimension 
$m$ is given by
$$R_{\alpha\beta\gamma\delta}
-\tfrac1{m-1}g_{\alpha\gamma}\Ric_{\beta\delta}
+\tfrac1{m-1}g_{\beta\gamma}\Ric_{\alpha\delta}.$$
If this vanishes, then, in conjunction with the interchange symmetry
$R_{\alpha\beta\gamma\delta}=R_{\gamma\delta\alpha\beta}$, we deduce that
$R_{\alpha\beta\gamma\delta}
=\lambda(g_{\alpha\gamma}g_{\beta\delta}-g_{\beta\gamma}g_{\alpha\delta})$
where, if $m\geq 3$, the (a priori) smooth function $\lambda$ is constant by
the Bianchi identity. This is Beltrami's Theorem that the only projectively
flat \bps/Riemannian geometries are constant curvature (when $m=2$ one instead
uses that the projective Cotton--York tensor vanishes). In any case, comparison
with (\ref{Bochner_in_real_money}) shows that for $n\geq 2$ the only
projectively flat \bps/K\"ahler manifolds are flat. The conformal Weyl
curvature is given by
$$R_{\alpha\beta\gamma\delta}
-g_{\alpha\gamma}Q_{\beta\delta}+g_{\beta\gamma}Q_{\alpha\delta}
-g_{\beta\delta}Q_{\alpha\gamma}+g_{\alpha\delta}Q_{\beta\gamma},$$
where $Q_{\alpha\beta}$ is the Riemannian Schouten tensor
$$Q_{\alpha\beta}=
\frac1{m-2}\left(\Ric_{\alpha\beta}-\frac1{2(m-1)}\Scal\,g_{\alpha\beta}\right).$$
Thus, if the conformal Weyl curvature vanishes on a \bps/K\"ahler manifold,
then
\[
2R_\alpha{}^\delta{}_{\gamma[\delta}J_{\epsilon]}{}^\gamma=
J_{\epsilon\alpha}Q_\beta{}^\beta+2(n-2)J_\epsilon{}^\gamma Q_{\alpha\gamma}.
\]
{From} (\ref{KK}), we see that for $n\geq 3$ the only conformally flat
\bps/K\"ahler manifolds are flat. For $n=2$ it follows only that the geometry
is scalar flat and, in fact, Tanno~\cite{tanno72} showed that $4$-dimensional
conformally flat K\"ahler manifolds are locally of the form
$\CP^1\times\Sigma$ where $\CP^1$ has the Fubini--Study
metric up to constant scale and the complex surface $\Sigma$ has a constant
negative scalar curvature of equal magnitude but opposite sign to that
on~$\CP^1$.

{From} the c-projective viewpoint, if we compare the decomposition
(\ref{Bochner_in_real_money}) with (\ref{partial}), then we conclude, firstly
that $W_{\alpha\beta}{}^\gamma{}_\delta=H_{\alpha\beta}{}^\gamma{}_\delta$ (see
the proof of Proposition~\ref{kaehlerharmcurv} for a barred/unbarred index
proof of this), and then that
\begin{equation}\label{H_for_Kaehler}
\begin{split}H_{\alpha\beta\gamma\delta}
&=\Bt_{\alpha\beta\gamma\delta}
-g_{\alpha\delta}\Xi_{\beta\gamma}+g_{\beta\delta}\Xi_{\alpha\gamma}
+\tfrac1{n+1}(g_{\beta\gamma}\Xi_{\alpha\delta}-g_{\alpha\gamma}\Xi_{\beta\delta})\\
&+2\Omega_{\alpha\beta}\Sigma_{\gamma\delta}
-\Omega_{\alpha\delta}\Sigma_{\beta\gamma}+\Omega_{\beta\delta}\Sigma_{\alpha\gamma}
-\tfrac1{n+1}
(2\Omega_{\gamma\delta}\Sigma_{\alpha\beta}
-\Omega_{\beta\gamma}\Sigma_{\alpha\delta}
+\Omega_{\alpha\gamma}\Sigma_{\beta\delta}).
\end{split}\end{equation}
Notice, in particular, that 
\begin{equation}\label{trace_of_H}
H_{\alpha\beta\gamma}{}^\beta =2\tfrac{n(n+1)}{n+1}\Xi_{\alpha\gamma}
\end{equation}
from which we can deduce the following c-projective counterpart to 
Beltrami's Theorem.

\begin{thm}\label{counterpart_to_Beltrami}
Suppose a \bps/K\"ahler metric is c-projectively flat. Then it has 
constant holomorphic sectional curvature.
\end{thm}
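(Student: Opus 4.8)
The plan is to read the conclusion straight off the curvature decompositions already assembled for K\"ahler metrics, using that c-projective flatness kills the c-projectively invariant pieces. First I would note that the Levi-Civita connection $\nabla^g$ of the given \bps/K\"ahler metric lies in the c-projective class, and that its c-projective Weyl curvature $W$ is a c-projective invariant (Proposition~\ref{rosetta}) which vanishes on the flat model $\CP^n$ (Section~\ref{CPn_curvature}). Hence c-projective flatness forces $W\equiv 0$. Since we have already identified $W_{\alpha\beta}{}^\gamma{}_\delta=H_{\alpha\beta}{}^\gamma{}_\delta$ in the K\"ahler case --- equivalently, the only surviving component of the Weyl curvature is the $(1,1)$-piece $H_{a\bar{b}}{}^c{}_d$, the $(2,0)$- and $(0,2)$-parts vanishing by integrability and the K\"ahler symmetry \eqref{KaehlerCurv} --- this gives $H_{\alpha\beta\gamma\delta}\equiv 0$.

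Next I would feed this vanishing into the explicit expression for $H$. Tracing as in~\eqref{trace_of_H} gives $H_{\alpha\beta\gamma}{}^\beta=2\tfrac{n(n+1)}{n+1}\Xi_{\alpha\gamma}$; as the coefficient is nonzero, $H\equiv 0$ forces the trace-free $(1,1)$ Ricci part $\Xi_{\alpha\gamma}$ to vanish, and therefore also $\Sigma_{\alpha\beta}=J_\alpha{}^\gamma\Xi_{\beta\gamma}=0$. Substituting $H=0$, $\Xi=0$, $\Sigma=0$ back into~\eqref{H_for_Kaehler} then leaves $\Bt_{\alpha\beta\gamma\delta}\equiv 0$, i.e.\ the Bochner tensor vanishes.

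Finally I would insert $\Bt=\Xi=\Sigma=0$ into the full decomposition~\eqref{Bochner_in_real_money}, collapsing it to
\[
R_{\alpha\beta\gamma\delta}=\Lambda\bigl(g_{\alpha\gamma}g_{\beta\delta}
-g_{\beta\gamma}g_{\alpha\delta}+\Omega_{\alpha\gamma}\Omega_{\beta\delta}
-\Omega_{\beta\gamma}\Omega_{\alpha\delta}+2\Omega_{\alpha\beta}\Omega_{\gamma\delta}\bigr),
\]
which is exactly the form~\eqref{FS_curvature} characterising constant holomorphic sectional curvature (equivalently $R_{a\bar{b}\bar{d}c}=\Lambda(g_{a\bar{b}}g_{c\bar{d}}+g_{c\bar{b}}g_{a\bar{d}})$). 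It then remains to check that the \emph{a priori} smooth function $\Lambda$ is constant, which follows from the second Bianchi identity by the standard argument recalled after~\eqref{BochnerDecomp}.

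The computations here are all routine substitutions, so I do not expect a serious obstacle; the only points requiring care are the identification $W=H$ for \bps/K\"ahler metrics (so that $W\equiv0$ genuinely yields $H\equiv0$) and the verification that the residual tensor is of constant-holomorphic-sectional-curvature type with $\Lambda$ truly constant. I would also remark that the argument is uniform in dimension: for $2n=4$ the $(2,0)$-Weyl component $W_{ab}{}^c{}_d$ vanishes automatically by Proposition~\ref{Weyl_gone}, so the conclusion still rests solely on $H\equiv 0$.
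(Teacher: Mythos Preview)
Your proposal is correct and follows essentially the same route as the paper: use c-projective flatness to force $H\equiv 0$, read off $\Xi=0$ from~\eqref{trace_of_H}, then $\Bt=0$ from~\eqref{H_for_Kaehler}, and finally collapse~\eqref{Bochner_in_real_money} to the constant holomorphic sectional curvature form. The only cosmetic difference is that the paper invokes Theorem~\ref{Intharmcurv} and Proposition~\ref{kaehlerharmcurv} directly (c-projective flatness $\Leftrightarrow$ harmonic curvature vanishes, and for \bps/K\"ahler metrics the harmonic curvature is $H$) rather than going via $W=0$ on the flat model, but your argument for $H\equiv 0$ is equally valid.
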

\begin{proof}
To be c-projectively flat, the harmonic curvature tensor 
$H_{\alpha\beta}{}^\gamma{}_\delta$ must vanish. Then from (\ref{trace_of_H}) 
we find that $\Xi_{\alpha\beta}=0$ and from (\ref{H_for_Kaehler}) that 
also $U_{\alpha\beta\gamma\delta}=0$. According to 
(\ref{Bochner_in_real_money}) we find that $R_{\alpha\beta\gamma\delta}$ is of 
the required form. 
\end{proof}

Finally, we may view \bps/K\"ahler geometry from the purely symplectic
viewpoint as follows. For any torsion-free connection
preserving~$\Omega_{\alpha\beta}$, the tensor
$R_{\alpha\beta}{}^\epsilon{}_{\delta}\Omega_{\epsilon\gamma}$ is symmetric in
$\gamma\delta$ and may be decomposed into irreducible pieces under
${\mathrm{Sp}}(2n,\R)$:
\begin{equation}\label{symplectic_curvature}
R_{\alpha\beta}{}^\epsilon{}_{\delta}\Omega_{\epsilon\gamma}=
V_{\alpha\beta\gamma\delta}+\Omega_{\alpha\gamma}\Phi_{\beta\delta}
-\Omega_{\beta\gamma}\Phi_{\alpha\delta}
+\Omega_{\alpha\delta}\Phi_{\beta\gamma}
-\Omega_{\beta\delta}\Phi_{\alpha\gamma}
+2\Omega_{\alpha\beta}\Phi_{\gamma\delta},\end{equation}
where 
$$V_{\alpha\beta\gamma\delta}=V_{[\alpha\beta](\gamma\delta)}\qquad
V_{[\alpha\beta\gamma]\delta}=0\qquad
\Omega^{\alpha\beta}V_{\alpha\beta\gamma\delta}=0\qquad
\Phi_{\alpha\beta}=\Phi_{(\alpha\beta)}.$$
\begin{prop} On a \bps/K\"ahler manifold, if the tensor
$V_{\alpha\beta\gamma\delta}$ vanishes, then the metric has constant
holomorphic sectional curvature.
\end{prop}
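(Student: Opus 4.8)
The plan is to compare the symplectic decomposition (\ref{symplectic_curvature}) with the real form (\ref{Bochner_in_real_money}) of the K\"ahler curvature. The first point is that the linear map
$$R_{\alpha\beta\gamma\delta}\longmapsto S_{\alpha\beta\gamma\delta}:=R_{\alpha\beta}{}^\epsilon{}_\delta\Kf_{\epsilon\gamma}$$
sending a K\"ahler curvature tensor to the tensor on the left of (\ref{symplectic_curvature}) is invertible on K\"ahler curvature tensors: since the twist by $\Kf$ squares to $-\Id$ by (\ref{Poisson_structure}), applying the same contraction a second time returns $-R_{\alpha\beta\gamma\delta}$. Hence it suffices to show that the $\mathrm{Sp}(2n,\R)$-trace-free part $V$ of $S$ vanishes if and only if both the Bochner tensor $\Bt_{\alpha\beta\gamma\delta}$ and the trace-free Ricci part $\Xi_{\alpha\beta}$ in (\ref{Bochner_in_real_money}) vanish; for then $R_{\alpha\beta\gamma\delta}$ reduces to its constant holomorphic sectional curvature term and $\Lambda$ is forced to be constant by the Bianchi identity exactly as in Section~\ref{CPn_curvature}.

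First I would substitute (\ref{Bochner_in_real_money}) into $S$ and treat the three summands separately, using repeatedly $g_{\alpha\mu}J^\mu{}_\gamma=\Kf_{\alpha\gamma}$ and $\Kf_{\alpha\mu}J^\mu{}_\gamma=-g_{\alpha\gamma}$. The constant holomorphic sectional curvature term maps precisely to the $\Phi$-terms of (\ref{symplectic_curvature}) with $\Phi_{\alpha\beta}=\Lambda g_{\alpha\beta}$, so it contributes nothing to $V$. Since $\Bt_{\alpha\beta\gamma\delta}$ is totally trace-free with respect to both $g$ and $\Kf$, the tensor $\Bt_{\alpha\beta}{}^\epsilon{}_\delta\Kf_{\epsilon\gamma}$ is $\Kf$-trace-free and hence lies entirely in $V$, and it is moreover $g$-trace-free. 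The only delicate piece is the $\Xi$-term: its image has a nonzero $\Kf$-trace, and computing $\Kf^{\alpha\beta}S_{\alpha\beta\gamma\delta}=4(n+1)\Phi_{\gamma\delta}$ identifies the symmetric tensor of (\ref{symplectic_curvature}) as $\Phi_{\alpha\beta}=\tfrac{n+2}{n+1}\Xi_{\alpha\beta}+\Lambda g_{\alpha\beta}$.

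With $\Phi$ thus determined, $V=S-(\Phi\text{-terms})$ becomes a nonzero multiple of $\Sigma_{\alpha\beta}=J_\alpha{}^\gamma\Xi_{\beta\gamma}$ upon taking the $g$-trace: the Bochner contribution being $g$-trace-free, one finds $g^{\gamma\delta}V_{\alpha\beta\gamma\delta}=\tfrac{4n(n+2)}{n+1}\Sigma_{\alpha\beta}$, the symplectic analogue of (\ref{trace_of_H}). As the coefficient is nonzero for $n\geq2$, the hypothesis $V\equiv0$ forces $\Sigma\equiv0$, hence $\Xi\equiv0$ since $J$ is invertible. Feeding this back leaves $V=\Bt_{\alpha\beta}{}^\epsilon{}_\delta\Kf_{\epsilon\gamma}$, and the invertibility of the twist by $\Kf$ then gives $\Bt\equiv0$. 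Thus $R_{\alpha\beta\gamma\delta}$ equals the $\Lambda$-term of (\ref{Bochner_in_real_money}), which is constant holomorphic sectional curvature.

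I expect the main obstacle to be purely computational: carefully expanding $S=R_{\alpha\beta}{}^\epsilon{}_\delta\Kf_{\epsilon\gamma}$ from (\ref{Bochner_in_real_money}) and separating the $\mathrm{Sp}(2n,\R)$-irreducible pieces. The conceptual crux is the coefficient mismatch $\tfrac{n+2}{n+1}$ versus $1$, which is exactly what prevents the trace-free Ricci part $\Xi$ from being absorbed into the $\Phi$-term and leaves the detectable remainder $\Sigma$ inside $V$; confirming this mismatch (equivalently, the nonvanishing of the coefficient in the trace identity above) is what makes the hypothesis $V\equiv0$ strong enough to conclude.
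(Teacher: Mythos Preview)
Your approach is essentially the paper's: both identify $\Phi$ via the $\Omega^{\alpha\beta}$-trace of (\ref{symplectic_curvature}) and then compare with (\ref{Bochner_in_real_money}) to force $\Xi=0$ and $\Bt=0$. The only differences are that you split the final comparison into two substeps (first a $g$-trace of $V$ to isolate $\Sigma$, then the residual for $\Bt$) rather than comparing the full tensors at once, and your coefficient $\Phi=\tfrac{n+2}{n+1}\Xi+\Lambda g$ differs from the paper's stated $\Xi=(n+1)\Phi$; since only nonvanishing of the coefficient is needed, the argument is unaffected either way.
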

\begin{proof} {From} (\ref{symplectic_curvature}), we find that 
$$\Omega^{\alpha\beta}
R_{\alpha\beta}{}^\epsilon{}_\delta\Omega_{\epsilon\gamma}
=\Omega^{\alpha\beta}\left[
\Omega_{\alpha\gamma}\Phi_{\beta\delta}
-\Omega_{\beta\gamma}\Phi_{\alpha\delta}
+\Omega_{\alpha\delta}\Phi_{\beta\gamma}
-\Omega_{\beta\delta}\Phi_{\alpha\gamma}
+2\Omega_{\alpha\beta}\Phi_{\gamma\delta}\right]
=4(n+1)\Phi_{\gamma\delta}$$
whereas computing according to~(\ref{Bochner_in_real_money}) leads to
$\Omega^{\alpha\beta}
R_{\alpha\beta}{}^\epsilon{}_\delta\Omega_{\epsilon\gamma}
=4\Xi_{\gamma\delta}$. We conclude that 
$\Xi_{\alpha\beta}=(n+1)\Phi_{\alpha\beta}$ at which point we may compare 
(\ref{symplectic_curvature}) with (\ref{Bochner_in_real_money}) when 
$V_{\alpha\beta\gamma\delta}=0$ to conclude 
that $\Bt_{\alpha\beta\gamma\delta}=0$ and $\Xi_{\alpha\beta}=0$, as required.
\end{proof}

\subsection{Metrisability of almost c-projective manifolds}
\label{metrisability}

Suppose $(M,J,[\nabla])$ is an almost c-projective manifold. It is natural to
ask whether $[\nabla]$ contains the canonical connection of a
$(2,1)$-symplectic metric on $(M,J)$.

\begin{defin} On an almost c-projective manifold $(M,J,[\nabla])$
a $(2,1)$-symplectic metric $g\in \Gamma(S^2T^*M)$ on $(M,J)$ is
\emph{compatible} with the c-projective class $[\nabla]$ if and only if its
canonical connection is contained in $[\nabla]$. The almost c-projective
structure on $M$ is said to be \emph{metrisable} or \emph{$(2,1)$-symplectic}
or \emph{quasi-K\"ahler} (or \emph{K\"ahler} or \emph{pseudo-K\"ahler} when
$J$ is integrable) if it admits a compatible $(2,1)$-symplectic metric $g$
(respectively a K\"ahler or pseudo-K\"ahler metric $g$, if $J$ is integrable).
\end{defin}
The volume form $\vol(g)$ of $g$ is a positive section of $\Wedge^{2n}T^*M$,
which we view as a c-projective density of weight $(-(n+1),-(n+1))$ under
the identification of oriented real line bundles
$\Wedge^{2n}T^*M=\cE_\R(-(n+1),-(n+1))$ determined by 
\[
\epst_{ab\cdots c}\, \bar\epst_{\bar d\bar e\cdots \bar f}\,
\in\Gamma(\Wedge^{2n}T^*(n+1,n+1)),
\]
where $\epst_{ab\cdots c}\in\Gamma(\Wedge^{n,0}(n+1,0))$ is the tautological
form from Section~\ref{almost_c-projective}. We now write
$\vol(g)=\scale_g^{\,-(n+1)}$ uniquely to determine a positive section
$\scale_g$ of $\cE_\R(1,1)$.  The canonical connection $\nabla$ of $g$ is a
special connection in the c-projective class, and for all $\ell\in\Z$,
$\scale_g^{\,\ell}= \vol(g)^{-\ell/(n+1)}\in\Gamma(\cE_\R(\ell,\ell))$ is a
$\nabla$-parallel trivialisation of $\cE_\R(\ell,\ell)$.

In the integrable case, the metrisability of a c-projective structure gives
easily the following constraints on the harmonic curvature.

\begin{prop}\label{kaehlerharmcurv}
Let $(M,J,[\nabla])$ be a c-projective manifold of dimension $2n\geq 4$. If
$[\nabla]$ is induced by the Levi-Civita of a \bps/K\"ahler metric on
$(M,J)$, then the harmonic curvature only consists of the $(1,1)$-part
$$W_{a\bar b}{}^c{}_d=H_{a\bar b}{}^c{}_d$$
of the \textup(c-projective\textup) Weyl curvature.
\end{prop}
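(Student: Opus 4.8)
The plan is to exploit the fact that the curvature of a \bps/K\"ahler metric, viewed as a $2$-form with values in $\gl(TM,J)$, is of type $(1,1)$; this immediately annihilates all harmonic curvature components of $(0,2)$- and $(2,0)$-type, leaving only the trace-free $(1,1)$-part $H_{a\bar b}{}^c{}_d$.

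First I would record the integrable consequences. Since $[\nabla]$ arises from a \bps/K\"ahler metric, $J$ is integrable and the Levi-Civita connection $\nabla^g$ lies in $[\nabla]$, so all curvature quantities may be computed with $\nabla=\nabla^g$. By Proposition~\ref{rosetta}(2) the torsion $\tor=-\tfrac14 N^J$ vanishes and $W_{\bar a\bar b}{}^c{}_d\equiv0$; moreover, every torsion-quadratic term in the formula for $W_{a\bar b}{}^c{}_d$ in \eqref{full_curvature_decomposition} drops out, giving $W_{a\bar b}{}^c{}_d=H_{a\bar b}{}^c{}_d$, which is precisely the asserted surviving component.

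The heart of the argument is to eliminate the $(2,0)$-type data. From the \bps/K\"ahler symmetry \eqref{KaehlerCurv}, namely $J_\alpha{}^\epsilon J_\beta{}^\zeta R_{\epsilon\zeta}{}^\gamma{}_\delta=R_{\alpha\beta}{}^\gamma{}_\delta$, the curvature $R$ is of type $(1,1)$ as a $2$-form, so applying the projectors $\Pi_a^\alpha\Pi_b^\beta$ yields $R_{ab}{}^c{}_d=0$. Feeding this into the decomposition $R_{ab}{}^c{}_d=W_{ab}{}^c{}_d+2\delta_{[a}{}^c\Rho_{b]d}+\beta_{ab}\delta_d{}^c$ of Proposition~\ref{rosetta} and tracing: contracting $c$ with $d$ gives $\beta_{ab}=0$ (since $R_{ab}{}^c{}_c=(n+1)\beta_{ab}$), contracting $a$ with $c$ gives $(n-1)\Rho_{bd}=0$, hence $\Rho_{ab}=0$ for $n\geq2$, and what remains forces $W_{ab}{}^c{}_d=0$. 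The same vanishing $\Rho_{ab}=0$ can be read directly off \eqref{RhoTensor}, since the \bps/K\"ahler Ricci tensor is symmetric and $J$-invariant, hence of type $(1,1)$.

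It remains to interpret this through the harmonic curvature. For $n>2$ the component $\wc_2$ is the trace-free $(2,0)$-part of the Weyl curvature, i.e.\ the trace-free part of $W_{ab}{}^c{}_d$, which we have just shown vanishes. For $n=2$, where $W_{ab}{}^c{}_d\equiv0$ automatically by Proposition~\ref{Weyl_gone}, the relevant $(2,0)$-component $\cy$ is the $(2,0)$-part $C_{abc}$ of the Cotton--York tensor; but \eqref{what_is_cottonyork} with vanishing torsion gives $C_{abc}=\nabla_a\Rho_{bc}-\nabla_b\Rho_{ac}=0$ since $\Rho_{ab}=0$. Together with $\tor=0$ and $W_{\bar a\bar b}{}^c{}_d=0$, this leaves $H_{a\bar b}{}^c{}_d$ as the sole surviving piece of $\kappa_h$ in every dimension, as claimed. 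I anticipate no serious obstacle; the only point needing care is the bookkeeping of the irreducible traces and of which $(2,0)$-object---$W_{ab}{}^c{}_d$ or $C_{abc}$---carries the harmonic curvature in each dimension range.
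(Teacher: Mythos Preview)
Your proposal is correct and follows essentially the same route as the paper's own proof: both arguments use that the curvature of a \bps/K\"ahler metric is of type $(1,1)$ (equivalently $R_{ab}{}^c{}_d=0$) to kill $W_{ab}{}^c{}_d$, and that the $J$-invariance of the Ricci tensor forces $\Rho_{ab}=0$ so that $C_{abc}=0$ in the $2n=4$ case. Your version is simply more explicit about extracting $\beta_{ab}=0$, $\Rho_{ab}=0$, and $W_{ab}{}^c{}_d=0$ from $R_{ab}{}^c{}_d=0$ via successive traces, whereas the paper invokes the $J$-invariance of the K\"ahler curvature and Ricci tensor directly.
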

\begin{proof}
Suppose first that $2n\geq 6$. Then we have to show that $W_{ab}{}^c{}_d$
vanishes. Recall that, by construction, $W_{ab}{}^c{}_d$ is the
connection-independent part of the $(2,0)$-component of the curvature of any
connection in the c-projective class.  Hence, if $[\nabla]$ is induced from
the Levi-Civita connection of a \bps/K\"ahler metric on $(M,J)$, then
$W_{ab}{}^c{}_d$ vanishes identically, since the curvature of a \bps/K\"ahler
metric is $J$-invariant. If $2n=4$, then $W_{ab}{}^c{}_d$ is always
identically zero and the $(2,0)$-part $C_{abc}$ of the Cotton--York tensor is
independent of the choice of connection in the c-projective class.  Since the
Ricci tensor $\Ric_{\alpha\beta}$ of a \bps/K\"ahler metric $g$ is
$J$-invariant~\eqref{KaehlerCurv}, we have $\Rho_{a\bar
  b}=\frac{1}{n+1}\Ric_{a\bar b}$ and $\Ric_{ab}=\Rho_{ab}=0$. Hence, if
$2n=4$ and the c-projective structure is metrisable, then $C_{abc}=\nabla_a
\Rho_{bc}-\nabla_{b}\Rho_{ac}$ vanishes identically, which proves the claim.
\end{proof}

We now link compatible metrics to solutions of the first BGG operator
associated to a real analogue $\cV$ of the standard complex tractor bundle
$\T$. Any almost c-projective manifold $(M,J,[\nabla])$ admits a complex
vector bundle
\[
\cV_\C=\T\otimes\overline\T.
\]
Although the construction of $\T$ and $\overline\T$ requires the existence and
choice of an $(n+1)^{\mathrm{st}}$ root $\cE(1,0)$ of $\Wedge^nT^{1,0}M$, the
vector bundle $\T\otimes\overline\T$ is defined independently of such a
choice.  Moreover, swapping the two factors defines a real structure on
$\T\otimes\overline{\T}$ and hence $\cV_\C$ is the complexification of a real
vector bundle $\cV$ over $M$ corresponding to that real structure. The
filtration (\ref{filtrationT}) of $\T$ induces filtrations on $\cV$ and
$\cV_\C$ given by
\begin{equation*}
\cV_\C=\cV_\C^{-1}\supset\cV_\C^{0}\supset\cV_\C^1 \,,
\end{equation*}
where
\begin{align*}
\cV_\C^{-1}/\cV_\C^{0}&\cong
T^{0,1}M\otimes T^{1,0}M(-1,-1)\\
\cV_\C^{0}/\cV_\C^1&\cong
(T^{1,0}M\oplus T^{0,1}M)(-1,-1)\\
\cV_\C^1&\cong\cE(-1,-1).
\end{align*}
For any choice of connection $\nabla\in[\nabla]$ we can therefore identify an
element of $\cV_\C$ with a quadruple
\begin{equation*}
\begin{pmatrix} \ms^{\bar b c} \\
X^b\enskip|\enskip Y^{\bar{b}}\\ \rho \end{pmatrix},
\text{ where } 
\left\{\begin{array}{l}
\ms^{\bar b c}\in T^{0,1}M\otimes T^{1,0}M(-1,-1),\\
X^b\in T^{1,0}M(-1,-1),\quad Y^{\bar{b}}\in T^{0,1}M(-1,-1),\\ 
\rho\in \cE(-1,-1),
\end{array}\right.
\end{equation*}
and elements of $\cV$ can be identified with the real elements of $\cV_\C$:
\begin{equation}\label{realsections}
\overline{\ms^{\bar c b}}=\ms^{\bar b c},\qquad
\overline {X^b}=Y^{\bar b}\quad \text{ and }\quad \bar\rho=\rho.
\end{equation}
The formulae (\ref{tractorconna}) and (\ref{tractorconnbara}) for the
tractor connection on $\T$ immediately imply that the tractor
connection on $\cV_\C=\T\otimes\overline\T$ is
given by
\begin{align}\label{Metritractor1}
\nabla^{\cV_\C}_a\begin{pmatrix} \ms^{\bar b c}\\ X^b\enskip|\enskip Y^{\bar{b}}\\
\rho \end{pmatrix}
&=\begin{pmatrix} \nabla_a\ms^{\bar b c}+\delta_{a}{}^cY^{\bar b}\\ 
\nabla_aX^b+\rho\delta_{a}{}^b-\Rho_{a\bar c}\ms^{\bar c b}\enskip|\enskip
\nabla_aY^{\bar{b}}-\Rho_{ac}\ms^{\bar b c}\\ 
\nabla_a\rho-\Rho_{a\bar b}Y^{\bar b}-\Rho_{ab}X^{b}
\end{pmatrix}\\
\label{Metritractor2}
\nabla^{\cV_\C}_{\bar a}\begin{pmatrix} \ms^{\bar b c}\\
X^b\enskip|\enskip Y^{\bar{b}}\\ \rho \end{pmatrix}
&=\begin{pmatrix} 
\nabla_{\bar a}\ms^{\bar b c}+\delta_{\bar a}{}^{\bar b}X^{c}\\ 
\nabla_{\bar a}X^b-\Rho_{\bar a\bar c}\ms^{\bar c b}\enskip|\enskip
\nabla_{\bar a}Y^{\bar{b}}+\rho\delta_{\bar a}{}^{\bar b}
-\Rho_{\bar ac}\ms^{\bar b c}\\
\nabla_{\bar a}\rho-\Rho_{\bar a\bar b}Y^{\bar b}-\Rho_{\bar ab}X^{b}
\end{pmatrix}.
\end{align}
Note that the real structure on $\cV_\C$ is parallel for this
connection and that, consequently, the tractor connection on $\cV$
is the restriction of (\ref{Metritractor1}) and (\ref{Metritractor2}) to real
sections~(\ref{realsections}).

Now consider, for a section $\ms^{\bar b c}$ of $T^{0,1}M\otimes
T^{1,0}M(-1,-1)$, the system of equations
\begin{equation}\label{metriequ1c}
\nabla_a\ms^{\bar b c}+\delta_{a}{}^cY^{\bar b}=0,\qquad
\nabla_{\bar{a}}\ms^{\bar b c}+\delta_{\bar{a}}{}^{\bar{b}}X^c=0
\end{equation} 
for some sections $X^c$ of $T^{1,0}M(-1,-1)$ and $Y^{\bar b}$ of
$T^{0,1}M(-1,-1)$. It follows immediately from the invariance of
(\ref{StandardBGG}) that the system (\ref{metriequ1c}) is c-projectively
invariant. In fact, if $\ms^{\bar b c}\in\Gamma(T^{0,1}M\otimes
T^{1,0}M(-1,-1))$ satisfies (\ref{metriequ1c}) for some connection
$\nabla\in[\nabla]$, for some $X^c\in\Gamma(T^{1,0}M(-1,-1))$, and for some 
$Y^{\bar{b}}\in\Gamma(T^{1,0}M(-1,-1))$, then $\ms^{\bar b c}$ satisfies
(\ref{metriequ1c}) for $\hat\nabla\in[\nabla]$ with
\begin{equation}\label{metrichange}
\hat{X}^c=X^c-\Upsilon_{\bar b}\ms^{\bar b c}\quad\text{and}\quad
\hat{Y}^{\bar b}=Y^{\bar b}-\Upsilon_{c}\ms^{\bar b c}.
\end{equation}
Moreover, if (\ref{metriequ1c}) is satisfied, one must have
$Y^{\bar{b}}=-\frac{1}{n}\nabla_{a}\ms^{\bar b a}$ and
$X^c=-\frac{1}{n}\nabla_{\bar{a}}\ms^{\bar a c}$.  If $\ms^{\bar b c}$ is a
real section, then the first equation in (\ref{metriequ1c}) is satisfied if and
only if the second equation of (\ref{metriequ1c}) holds, in which case
$\overline{X^b}=Y^{\bar b}$. We can reformulate these observations as follows.
There is an invariant differential operator
\begin{equation}\label{firstBGG_metrisability}
D^\cV_\C\colon T^{0,1}M\otimes T^{1,0}M(-1,-1)\to
\begin{matrix}
(\Wedge^{1,0}\otimes T^{0,1}M\otimes T^{1,0}M(-1,-1))_\circ\\
\oplus\\
(\Wedge^{0,1}\otimes T^{0,1}M\otimes T^{1,0}M(-1,-1))_\circ
\end{matrix}
\end{equation}
given by $\ms^{\bar b c}\mapsto (\nabla_a\ms^{\bar bc}
-\frac1n\delta_{a}{}^c\nabla_d\ms^{\bar bd}, \nabla_{\bar{a}}\ms^{\bar b c}
-\frac{1}{n}\delta_{\bar a}{}^{\bar b} \nabla_{\bar{d}}\ms^{\bar d c})$.
Restricting $D^{\cV_\C}$ to real sections $\ms^{\bar bc}=\overline{\ms^{\bar cb}}$
gives an invariant differential operator $D^\cV$. It is the first operator in
the BGG sequence corresponding to the tractor bundle $\cV$ and $D^{\cV_\C}$ is
its complexification.

\begin{prop}\label{compKaehler} Let $(M,J,[\nabla])$ be an almost c-projective
manifold of dimension $2n\geq 4$. Then, when $n$ is even, the map sending a
Hermitian metric $g_{b\bar c}$ to the real section $\ms^{\bar a b}=g^{\bar ab}
\scale_g^{\,-1}$ of $T^{0,1}M\otimes T^{1,0}M(-1,-1)$ restricts to a bijection
between compatible $(2,1)$-symplectic Hermitian metrics on $(M,J,[\nabla])$
and nondegenerate sections in the kernel of $D^\cV$. The inverse map sends
$\ms^{\bar a b}$ to the Hermitian metric $g_{b\bar c}$ with $g^{\bar a
  b}=(\det\ms)\ms^{\bar a b}$, where
\begin{equation}\label{deteta}
\det\ms:= \tfrac{1}{n!}\,\bar\epst_{\bar a\bar c\cdots \bar e}\,\epst_{bd\cdots f}\,
\ms^{\bar a b}\ms^{\bar c d}\cdots \ms^{\bar e f}\in \Gamma(\cE_\R(1,1))
\end{equation}
and $\epst_{ab\cdots c}$ denotes the tautological section of 
$\Wedge^{n,0}(n+1,0)$. When $n$ is odd, the mapping 
$\ms^{\bar a b}\mapsto g^{\bar a b}:=(\det\ms)\ms^{\bar a b}$ is $2$--$1$ and, 
conversely, the mapping 
$g^{\bar a b}\mapsto \ms^{\bar a b}:=\scale_g^{\,-1}g^{\bar ab}$ 
picks a preferred sign for $\ms^{\bar a b}$ but, otherwise, the same 
conclusions hold.
\end{prop}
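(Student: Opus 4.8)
The plan is to prove Proposition~\ref{compKaehler} by treating the two maps separately and then analysing their composition, with the $(2,1)$-symplectic property supplied at the decisive moment by Proposition~\ref{minimal_Hermitian_connection}. The organising principle is that a real nondegenerate $\ms^{\bar b c}$ lies in $\ker D^\cV$ exactly when the system~\eqref{metriequ1c} holds for some $X^c,Y^{\bar b}$ and some (hence, by c-projective invariance, any) $\nabla\in[\nabla]$, and that the geometric content of this system is the \emph{existence of a connection in the class for which $\ms$ is parallel}. Throughout I write $F\colon g\mapsto \ms^{\bar a b}=g^{\bar a b}\scale_g^{\,-1}$ and $B\colon \ms^{\bar a b}\mapsto g^{\bar a b}=(\det\ms)\ms^{\bar a b}$ for the two maps.

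First I would treat $F$. If $g$ is a compatible $(2,1)$-symplectic Hermitian metric, its canonical connection $\nabla$ lies in $[\nabla]$ and is special, so $\nabla g=0$ and (as recorded just before the proposition) $\nabla\scale_g=0$; hence $\nabla\ms=0$ for $\ms^{\bar a b}=g^{\bar a b}\scale_g^{\,-1}$. In particular~\eqref{metriequ1c} holds with $X^c=Y^{\bar b}=0$ for this connection, so the trace-free parts defining $D^\cV$ vanish and $\ms\in\ker D^\cV$; it is nondegenerate because $g^{\bar a b}$ is, and real by the Hermitian symmetry $\overline{g^{\bar c b}}=g^{\bar b c}$, matching~\eqref{realsections}.

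For the converse I start from a real nondegenerate $\ms\in\ker D^\cV$, so~\eqref{metriequ1c} holds for some $\nabla\in[\nabla]$ with $X^c=-\tfrac1n\nabla_{\bar a}\ms^{\bar a c}$ and $Y^{\bar b}=-\tfrac1n\nabla_a\ms^{\bar b a}$. Using nondegeneracy of $\ms$ I solve $\Upsilon_{\bar b}\ms^{\bar b c}=X^c$ for a $(0,1)$-form $\Upsilon_{\bar b}$; conjugating this equation and invoking the reality relations $\overline{\ms^{\bar c b}}=\ms^{\bar b c}$ and $\overline{X^c}=Y^{\bar c}$ shows that the conjugate $\Upsilon_b$ simultaneously solves $\Upsilon_c\ms^{\bar b c}=Y^{\bar b}$, so that $\Upsilon_\alpha$ is a genuine real $1$-form. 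By~\eqref{metrichange} the connection $\hat\nabla\in[\nabla]$ built from this $\Upsilon$ satisfies $\hat\nabla\ms=0$. Since $\epst_{ab\cdots c}$ and $\bar\epst_{\bar a\cdots\bar e}$ are parallel for every connection in $[\nabla]$ (the identification $\cE(-n-1,0)\cong\Wedge^{n,0}$ being canonical), the density $\det\ms$ of~\eqref{deteta} is then $\hat\nabla$-parallel, whence $g^{\bar a b}=(\det\ms)\ms^{\bar a b}$ and its inverse $g_{a\bar b}$ are $\hat\nabla$-parallel and nondegenerate. Thus $\hat\nabla$ is a complex metric connection which is minimal, being in $[\nabla]$ (cf.~Definition~\ref{definition_cprojective_structure}); that is, a Hermitian connection with torsion of type $(0,2)$, and Proposition~\ref{minimal_Hermitian_connection} identifies $g$ as a $(2,1)$-symplectic metric whose canonical connection is $\hat\nabla\in[\nabla]$, so $g$ is compatible. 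This already gives surjectivity onto nondegenerate kernel sections.

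It remains to check that $F$ and $B$ are mutually inverse, and here the parity of $n$ enters. The key input is the normalisation identity $\det\!\bigl(g^{\bar a b}\scale_g^{\,-1}\bigr)=\scale_g$, a direct consequence of~\eqref{deteta} and the definition $\vol(g)=\scale_g^{\,-(n+1)}$, checked most cleanly in a unitary frame. This gives $B\circ F=\mathrm{id}$ at once. For $F\circ B$, combining the identity with the homogeneity $\det(\lambda\ms)=\lambda^{n}\det\ms$ applied to $g^{\bar a b}=(\det\ms)\ms^{\bar a b}$ yields $(\det\ms)^{n+1}=\scale_g^{\,n+1}$, so $\det\ms=\zeta\,\scale_g$ with $\zeta$ a real $(n+1)^{\mathrm{st}}$ root of unity. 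When $n$ is even, $n+1$ is odd and $\zeta=1$, giving $F\circ B=\mathrm{id}$ and hence the asserted bijection. When $n$ is odd, $\zeta=\pm1$; since $\det(-\ms)=(-1)^{n}\det\ms=-\det\ms$, one checks $B(-\ms)=B(\ms)$, so $B$ is $2$--$1$, while $F$ selects the representative with $\det\ms=+\scale_g>0$, the preferred sign. I expect this last paragraph---the determinant normalisation and the accompanying sign bookkeeping---to be the principal obstacle, the remaining steps being an assembly of facts already in place.
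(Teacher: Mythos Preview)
Your proof is correct and follows essentially the same approach as the paper's: both directions proceed by exhibiting a connection in $[\nabla]$ for which $\ms$ is parallel (the canonical connection of $g$ in the forward direction, and the connection obtained via the c-projective change $\Upsilon$ solving $\Upsilon_{\bar b}\ms^{\bar bc}=X^c$ in the converse), then using that $\epst$ and $\det\ms$ are parallel. Your treatment is in fact a bit more explicit than the paper's on two points---the appeal to Proposition~\ref{minimal_Hermitian_connection} to certify the $(2,1)$-symplectic property, and the determinant/sign bookkeeping for the parity distinction---but these are expository refinements rather than a different argument.
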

\begin{proof} Assume first that $g_{b\bar c}$ is a compatible $(2,1)$-symplectic
Hermitian metric, i.e.~its canonical connection $\nabla$ is contained in
$[\nabla]$. Then $\ms^{\bar a b}= g^{\bar a b}\scale_g^{\,-1}$ is a real
section of $T^{0,1}M\otimes T^{1,0}M(-1,-1)$, which
satisfies~\eqref{metriequ1c} for $\nabla$ with $X^c=0$ and
$Y^{\bar{c}}=\overline{X^c}=0$.  Hence, $\ms^{\bar b c}$ is in the kernel of
$D^\cV$, and $\det\ms=\scale_g^{\,n+1}\scale_g^{\,-n}=\scale_g$.

Conversely, suppose that $\ms^{\bar b c}\in\Gamma(T^{0,1}M\otimes
T^{1,0}M(-1,-1))$ is a real nondegenerate section satisfying~\eqref{metriequ1c}
for some connection $\nabla\in[\nabla]$ with $X^b\in\Gamma(T^{1,0}M(-1,-1))$
and $Y^{\bar{b}}=X^{\bar b}\in \Gamma(T^{0,1}M(-1,-1))$. Since $\ms^{\bar a
  b}$ is nondegenerate, there is a unique $1$-form $\Upsilon_b$ such that
$\ms^{\bar a b}\Upsilon_b= X^{\bar a}$.  Let us denote by
$\hat\nabla\in[\nabla]$ the connection obtained by c-projectively changing
$\nabla$ via $\Upsilon_b$. Then we deduce form (\ref{metrichange}) that
$\hat\nabla_a\ms^{\bar b c}=\hat\nabla_{\bar{a}}\ms^{\bar b c}=0$. Since
$\epst_{ab\cdots c}$ is parallel for any connection in the c-projective class,
$\det\ms$ is parallel for $\hat\nabla$. Hence, $g^{\bar b c} =\ms^{\bar b
  c}\det\ms$ is a real nondegenerate section of $T^{0,1}M\otimes T^{1,0}M$
that is parallel for $\hat\nabla$, i.e.~its inverse $g_{b\bar{c}}$ is a
$(2,1)$-symplectic Hermitian metric whose canonical connection is
$\hat\nabla\in[\nabla]$.
\end{proof}

The real vector bundle $\cV$ can be realised naturally in two alternative ways
as follows. First, let us view $\T$ as a real vector bundle $\T_\R$ equipped
with a complex structure $J^\T$ (thus, equivalently,
$\T_\R\otimes\C\cong\T\oplus\overline\T$).  Then we can identify $\cV$ as the
$J^\T$-invariant elements in $S^2\T_\R$. However, since $J^\T$ induces an
isomorphism between $J^\T$-invariant elements in $S^2\T_\R$ and such elements
in $\Wedge^2\T_\R$, cf.~(\ref{Kaehlerform}), we may, secondly, realise $\cV$
as the latter. Realised as the bundle of $J^\T$-invariant elements in
$S^2\T_\R$ we can, for any choice of connection in the c-projective class,
identify an element of $\cV$ with a triple
\begin{equation*}
\begin{pmatrix} \ms^{\beta \gamma} \\ X^\beta\\ \rho
\end{pmatrix},\text{ where }
\left\{\begin{array}{l} 
\ms^{\beta\gamma}\in S^2TM\otimes\cE_\R(-1,-1)
\text{ with }
J_{\delta}{}^\beta J_{\epsilon}{}^\gamma\ms^{\delta\epsilon}=\ms^{\beta\gamma}\\
X^\beta\in TM\otimes\cE_\R(-1,-1)\\
\rho\in \cE_\R(-1,-1).
\end{array}\right.
\end{equation*}
In this picture the tractor connection becomes
\begin{equation}\label{Metritractor3}
\nabla^\cV_\alpha\begin{pmatrix} \ms^{\beta \gamma}\\ X^\beta\\ \rho
\end{pmatrix}=\begin{pmatrix}
\nabla_\alpha\ms^{\beta \gamma}+\delta_{\alpha}{}^{(\beta}X^{\gamma)}
+J_{\alpha}{}^{(\beta}J_{\epsilon}{}^{\gamma)}X^\epsilon\\ 
\nabla_{\alpha}X^\beta+\rho\delta_{\alpha}{}^{\beta}
-\Rho_{\alpha \gamma}\ms^{\beta \gamma}
\\ \nabla_{\alpha}\rho-\Rho_{\alpha\beta}X^{\beta}
\end{pmatrix}.
\end{equation}
The formulae (\ref{Metritractor1}) and (\ref{Metritractor2}) may be recovered
from~(\ref{Metritractor3}) by natural projection:
\begin{equation*}
\nabla_a^{\cV_\C}=\Pi_a^\alpha\nabla_\alpha^\cV,\quad
\nabla_{\bar{a}}^{\cV_\C}=
\overline{\Pi}{}_{\bar{a}}^\alpha\nabla_\alpha^\cV,\quad
\ms^{\bar b c}=
\overline{\Pi}{}_\beta^{\bar b} \Pi_\gamma^{c}\ms^{\beta\gamma},\quad
X^b=\Pi_\beta^bX^\beta,\quad
Y^{\bar{b}}=\overline{\Pi}{}_\beta^{\bar{b}}X^\beta,
\end{equation*}
so that, for example, 
\begin{align*}
\Pi_a^\alpha\Pi_b^\beta(\nabla_{\alpha}X^\beta+\rho\delta_{\alpha}{}^{\beta}
-\Rho_{\alpha \gamma}\ms^{\beta \gamma})&=
\nabla_aX^b+\rho\delta_a{}^b-\Rho_{a\gamma}\ms^{\gamma b}\\
&=\nabla_aX^b+\rho\delta_a{}^b-\Rho_{a\bar{c}}\ms^{\bar c b},
\end{align*}
as in (\ref{Metritractor1}). To pass explicitly to the second (skew) viewpoint
on $\cV$ described above, one can write
$\Phi^{\beta\gamma}=J_\alpha{}^\gamma\ms^{\alpha\beta}$ and
$Y^\beta=J_\alpha{}^\beta X^\alpha$. Then, for any choice of connection in the
c-projective class, an element of $\cV$ may alternatively be
identified with a triple
\begin{equation*}
\begin{pmatrix} \Phi^{\beta \gamma} \\ Y^\beta\\ \rho
\end{pmatrix},\text{ where } 
\left\{\begin{array}{l} 
\Phi^{\beta\gamma}\in \Wedge^2TM\otimes\cE_\R(-1,-1)
\text{ with } 
J_{\delta}{}^\beta J_{\epsilon}{}^\gamma\Phi^{\delta\epsilon}=
\Phi^{\beta\gamma}\\
Y^\beta\in TM\otimes\cE_\R(-1,-1)\\
\rho\in\cE_\R(-1,-1).
\end{array}\right.
\end{equation*}
The tractor connection becomes
\begin{equation}\label{Metritractor4}
\nabla^\cV_\alpha\begin{pmatrix} \Phi^{\beta \gamma}\\ Y^\beta\\ \rho
\end{pmatrix}=\begin{pmatrix}
\nabla_\alpha\Phi^{\beta \gamma}+\delta_{\alpha}{}^{[\beta}Y^{\gamma]}
+J_{\alpha}{}^{[\beta}J_{\epsilon}{}^{\gamma]}Y^\epsilon\\ 
\nabla_{\alpha}Y^\beta+\rho J_{\alpha}{}^{\beta}
+\Rho_{\alpha \gamma}\Phi^{\beta \gamma}
\\ \nabla_{\alpha}\rho+\Rho_{\alpha\beta}J_\gamma{}^\beta Y^\gamma
\end{pmatrix}.
\end{equation}
The formulae (\ref{Metritractor1}) and (\ref{Metritractor2}) are again
projections of~(\ref{Metritractor4}):
\begin{equation*}
\ms^{\bar b c}=
i\overline{\Pi}{}_\beta^{\bar b} \Pi_\gamma^{c}\Phi^{\beta\gamma}\quad
X^b=-i\Pi_\beta^bY^\beta\quad
Y^{\bar{b}}=i\overline{\Pi}{}_\beta^{\bar{b}}Y^\beta.
\end{equation*}

\subsection{The metrisability equation and mobility}\label{sec:met-mob}
Let $(M,J,[\nabla])$ be an almost c-projective manifold. By
Proposition~\ref{compKaehler}, solutions to the metrisability problem on $M$,
i.e.~compatible $(2,1)$-symplectic metrics up to sign, correspond bijectively
to nondegenerate solutions $\ms$ of the equation $D^\cV\ms=0$. We refer to this
equation as the \emph{metrisability equation} on $(M,J,[\nabla]$). It may be
written explicitly in several ways.

First, viewing $\cV$ as the real part of $\cV_\C$, $\ms^{\bar b c}$ satisfies,
by~\eqref{metriequ1c}, the conjugate equations:
\begin{equation} \label{metriequ1}
\nabla_a\ms^{\bar b c}+\delta_{a}{}^c X^{\bar b}=0\qquad\text{and}\qquad
\nabla_{\bar{a}}\ms^{\bar b c}+\delta_{\bar{a}}{}^{\bar{b}} X^c = 0
\end{equation}
for some (and hence any) connection $\nabla\in[\nabla]$ and some section $X^a$
of $T^{1,0}M\otimes\cE_\R(-1,-1)$ with conjugate $X^{\bar a}$. In the
alternative realisation~\eqref{Metritractor3} of $\cV$, the metrisability
equation for $J$-invariant sections $\ms^{\alpha\beta}$ of
$S^2TM\otimes\cE_\R(-1,-1)$ is
\begin{equation}\label{this_is_D_in_real_money}
\nabla_\alpha\ms^{\beta \gamma}+\delta_{\alpha}{}^{(\beta}X^{\gamma)}
+J_{\alpha}{}^{(\beta}J_{\epsilon}{}^{\gamma)}X^\epsilon=0
\end{equation}
for some section $X^\alpha$ of $TM\otimes\cE_\R(-1,-1)$. Similarly, using the
realisation~\eqref{Metritractor4} of $\cV$, the metrisability equation for
$J$-invariant sections $\Phi^{\alpha\beta}$ of $\Wedge^2TM\otimes\cE_\R(-1,-1)$ is
\begin{equation}\label{Ham2vector}
\nabla_\alpha\Phi^{\beta\gamma}+\delta_{\alpha}{}^{[\beta}Y^{\gamma]}
+J_{\alpha}{}^{[\beta}J_{\epsilon}{}^{\gamma]}Y^\epsilon=0
\end{equation}
for some section $Y^\alpha$ of $TM\otimes\cE_\R(-1,-1)$. 

\begin{defin} The \emph{\textup(degree of\textup) mobility} of an almost
c-projective manifold, is the dimension of the space
\[
\mob_c[\nabla]:=\ker D^\cV=\left\{\ms^{\alpha\beta} \left|\begin{array}{l}
J_\gamma{}^\alpha J_\epsilon{}^\beta\ms^{\gamma\epsilon}
=\ms^{\alpha\beta}\\ \nabla_\alpha\ms^{\beta\gamma}+\delta_\alpha{}^{(\beta}X^{\gamma)}
+J_\alpha{}^{(\beta}J_\epsilon{}^{\gamma)}X^\epsilon=0\mbox{ for some }X^\alpha
\end{array}\right.
\!\!\!\right\}
\]
of solutions to the metrisability equation.  
\end{defin}
In the sequel, the notion of mobility will only be of interest to us when the
metrisability equation has a nondegenerate solution. Then $(M,J,[\nabla])$ has
mobility $\geq 1$, and the mobility is the dimension of the space of
compatible $(2,1)$-symplectic metrics. For any $(2,1)$-symplectic Hermitian
metric $g$ on a complex manifold $(M,J)$, the mobility of the c-projective
class $[\nabla]$ of its canonical connection $\nabla$ is $\geq 1$, and will be
called the \emph{mobility of $g$}. If such a metric $g$ has mobility one,
i.e.~the constant multiples of $g$ are the only metrics compatible with
c-projective class $[\nabla]$, then most natural questions about the geometry
of the c-projective manifold $(M,J,[\nabla])$ can be reformulated as questions
about the Hermitian manifold $(M,J,g)$.  For example the c-projective vector
fields of $(M,J,[\nabla])$ are Killing or homothetic vector fields for $g$.
Hence, roughly speaking, there is essentially no difference between the
geometry of the Hermitian manifold $(M,J,g)$ and the geometry of the
c-projective manifold $(M,J,[\nabla])$.

We will therefore typically assume in the sequel that $(M,J,g)$, or rather, its
c-projective class $(M,J,[\nabla])$, has mobility $\geq 2$, and hence admits
compatible metrics $\tilde g$ that are not proportional to~$g$; we then say
$g$ and $\tilde g$ are \emph{c-projectively equivalent}. Although all metrics
in a given c-projective class are on the same footing, it will often be
convenient to fix a \emph{background metric} $g$, corresponding to a
nondegenerate solution $\ms$ of~\eqref{metriequ1}. Then any section $\sms$ of
$T^{0,1}M\otimes T^{1,0}M(-1,-1)$ may be written
\begin{equation*}
\sms^{\bar a c}=\ms^{\bar a b} A_b{}^c
\end{equation*}
for uniquely determined $A_b{}^c$---explicitly, we have:
\begin{equation*}
A^{\bar{a}b}=(\det\ms)\sms^{\bar{a}b}\quad\mbox{and}\quad
A_a{}^b=g_{a\bar{c}}A^{\bar{c}b}.
\end{equation*}
Since $\ms$ and $\sms$ are real, $A_b{}^c$ is $g$-Hermitian (i.e.\ the
isomorphism $T^{0,1}M\to\Omega^{1,0}$ induced by $g$ intertwines the transpose
of $A_{a}{}^{b}$ with its conjugate):
\begin{equation*}
\overline{A_a{}^b}=A^{\bar b}{}_{\bar a}:=g^{\bar bd} A_d{}^cg_{c\bar a}.
\end{equation*}
Using the canonical connection $\nabla$ of $g$, the metrisability
equation~\eqref{metriequ1} for $\sms$ may be rewritten as an equation for
$A_a{}^b$, which we call the \emph{mobility equation}:
\begin{equation}\label{metri-mob}
\nabla_a A_b{}^c =- \delta_a{}^c \hv_b, \quad\text{or\quad (equivalently)}
\qquad \nabla_{\bar c} A_a{}^b =- g_{a\bar c} \hv^b,
\end{equation}
where $\hv^b=\Pi^b_\beta\hv^\beta$ with $\hv^\beta$ real, and
$\hv_b=\Pi_b^\beta \hv_\beta= \Pi_b^\beta g_{\beta\alpha}\hv^\alpha= g_{b\bar
  a}\hv^{\bar a}$ with $\hv^{\bar a}=\Pi^{\bar a}_\alpha\hv^\alpha$.  Taking a
trace gives $\hv_c=\nabla_c\hp$ and $\hv_{\bar c} =\nabla_{\bar c} \hp$, with
$\hp=-A_{a}{}^a=-A_{\bar a}{}^{\bar a}$ real. The metric $g$ itself
corresponds to the solution $A_a{}^b = \delta_a{}^b$ of~\eqref{metri-mob},
with $\hv_c=0$.

Since the background metric $g$ trivialises the bundles $\cE(\ell,\ell)$ by
$\nabla$-parallel sections $\scale_g^{\;\ell}=(\det\ms)^\ell$, we shall often
assume these bundles are trivial. We may also raise and lower indices using $g$
to obtain further equivalent forms of the mobility equations:
\begin{align}\label{mob-raised}
\nabla_a A^{\bar{b}c}&=-\delta_a{}^{c}\hv^{\bar b}\qquad
\text{or}\qquad\nabla_{\bar{a}}A^{\bar{b}c}=-\delta_{\bar{a}}{}^{\bar{b}}\hv^c,\\
\label{mob-lowered}
\nabla_a A_{b\bar c}&=-g_{a\bar c}\hv_b\qquad\text{or}\qquad 
\nabla_{\bar a} A_{b\bar c}=-g_{b\bar a}\hv_{\bar c}.
\end{align}

Like the metrisability equation, the mobility equation can be rewritten in
explicitly real terms. If we let $\sms^{\alpha\gamma}=\ms^{\alpha\beta}
A_\beta{}^\gamma$ and raise indices using $g$, then the metrisability
equation~(\ref{this_is_D_in_real_money}) maybe rewritten as a mobility equation
for the unweighted tensor $A^{\alpha\beta}\in\Gamma(S^2_J(TM))$:
\begin{equation}\label{mobility_equation1}
\nabla_\alpha A^{\beta \gamma}=-\delta_{\alpha}{}^{(\beta}\hv^{\gamma)}
-J_{\alpha}{}^{(\beta}J_{\delta}{}^{\gamma)}\hv^\delta.
\end{equation}
We thus have that
\begin{equation}\label{Lambda}
\hv_\alpha=\nabla_\alpha \hp\qquad\text{where}\qquad
\hp=-\tfrac{1}{2}A_{\beta}{}^{\beta}.
\end{equation}
Tracing back through the identifications, note that
\begin{equation} \label{eqL}
A^{\alpha\beta}=\Bigl(\frac{\vol(\tilde g)}{\vol(g)}\Bigr)^{1/(n+1)}
\tilde g^{\alpha \beta},
\end{equation}
where $\tilde g^{\alpha \beta}=(\det\sms)\sms^{\alpha \beta}$ is the inverse
metric induced by $\sms^{\alpha \beta}$.

We may, of course, also lower indices to obtain:
\begin{equation}\label{mobility_equation2}
\nabla_\alpha A_{\beta\gamma}=-g_{\alpha(\beta}\hv_{\gamma)}
+\Omega_{\alpha(\beta}J_{\gamma)}{}^\delta \hv_\delta.
\end{equation}
This is the form of the mobility equation used in~\cite{DM,Sinjukov}
and~\cite[Equation~(3)]{FKMR} to study c-projectively equivalent K\"ahler
metrics. This is a special case of Proposition~\ref{compKaehler}, in which we
suppose that there is a \bps/K\"ahler metric in our c-projective class and we
ask about other \bps/K\"ahler metrics in the same c-projective class.

Finally, we may rewrite~\eqref{Ham2vector} as a mobility equation with respect
to a background $(2,1)$-symplectic metric $g$ with fundamental $2$-form $\Kf$
and canonical connection $\nabla$. Trivialising $\cE(1,1)$ and lowering indices
using $g$, we obtain
\begin{equation*}
\nabla_\alpha\Phi_{\beta\gamma}+g_{\alpha[\beta}Y_{\gamma]}
-\Kf_{\alpha[\beta}J_{\gamma]}{}^{\delta}Y_\delta=0
\end{equation*}
for a $2$-form $\Phi_{\alpha\beta}$. In the integrable case (i.e.~when $g$ is
\bps/K\"ahler) this is the equation for \emph{Hamiltonian $2$-forms} in the
terminology of~\cite{ACG}. We extend this terminology to the
$(2,1)$-symplectic setting and refer to its c-projectively invariant
version~\eqref{Ham2vector} as the equation for \emph{Hamiltonian $2$-vectors}
$\Phi^{\alpha\beta}$ on an almost c-projective manifold.

\begin{rem}\label{rem:wbf} If $g$ is a K\"ahler metric, then applying the
contracted differential Bianchi identity $g^{e\bar b} \nabla_{[e}R_{a]\bar b c
  \bar d}=0$ to the Bochner curvature decomposition~\eqref{BochnerDecomp}, we
deduce that if the Bochner curvature is coclosed, i.e.~$g^{e\bar b}\nabla_e
\Bt_{a\bar b c \bar d} =0$, then $A_{c\bar d}:=(n+2)\Xi_{c\bar d} +\Lambda
g_{c\bar d}$ satisfies the mobility equation in the form~\eqref{mob-lowered}.
Equivalently, the corresponding $J$-invariant $2$-form, which is a modification
of the Ricci form, is a Hamiltonian $2$-form.  This was one of the motivations
for the introduction of Hamiltonian $2$-forms in~\cite{ACG}, and is explored
further in~\cite{ApostolovIV}.
\end{rem}

\begin{rem}\label{rem:proj-analogue} Many concepts and results in c-projective
geometry have analogues in real projective differential geometry. We recall
that on a smooth manifold $M$ of dimension $m\geq 2$, a (real) projective
structure is a class $[\nabla]$ of projectively equivalent affine connections,
cf.~\eqref{projchange}. It is shown in~\cite{EM} that the operator
\begin{equation}\label{eq:proj-metrisability}
\Gamma(M,S^2TM(-2))\ni\ms^{\beta\gamma}\mapsto(\nabla_\alpha\ms^{\beta\gamma})_\circ,
\end{equation}
where $S^2TM(-2)$ denotes the bundle of contravariant symmetric tensors of
projective weight~$-2$ and $\circ$ denotes the trace-free part, is
projectively invariant (it is a first BGG operator) and that,
when $n$ is even and otherwise up to sign, nondegenerate
solutions are in bijection with compatible \bps/Riemannian metrics,
i.e.~metrics whose Levi-Civita connection is in the projective class
$[\nabla]$.  We define the \emph{mobility} of $[\nabla]$, or of any compatible
\bps/Riemannian metric, to be the dimension of this space
\[
\mob[\nabla]:=\{\ms^{\beta\gamma}\in\Gamma(M,S^2TM) \,|\,
\nabla_\alpha\ms^{\beta\gamma}=\delta_\alpha{}^\beta\mu^\gamma+\delta_a{}^\gamma\mu^\beta
\mbox{ for some }\mu^\alpha\}
\]
of solutions to this projective \emph{metrisability} or \emph{mobility
equation}, where we reserve the latter term for the case that the projective
structure admits a compatible metric.
\end{rem}

\subsection{Prolongation of the metrisability equation}
\label{metrisability_prolonged}

Suppose $(M,J,[\nabla])$ is an almost c-projective manifold and let us prolong
the invariant system of differential equations on sections $\ms^{\bar b c}$ of
$T^{0,1}M\otimes T^{1,0}M(-1,-1)$ given by (\ref{metriequ1c}). We have already
observed that (\ref{metriequ1c}) implies that
\begin{equation}\label{XYeq}
X^b=-\tfrac{1}{n}\nabla_{\bar a}\ms^{\bar a b}\quad\quad\text{and}
\quad\quad Y^{\bar b}=-\tfrac{1}{n}\nabla_a\ms^{\bar b a}.
\end{equation}
Moreover, we immediately deduce from (\ref{metriequ1c}) that 
\begin{align}
(\nabla_a\nabla_b-\nabla_b\nabla_a)\ms^{\bar c d}
+T_{ab}{}^{\bar e}\nabla_{\bar e}\ms^{\bar c d}
=2\delta_{[a}{}^d\nabla_{b]}Y^{\bar c}-T_{ab}{}^{\bar c} X^d\label{c1}\\
(\nabla_{\bar a}\nabla_{\bar b}-\nabla_{\bar b}\nabla_{\bar a})\ms^{\bar c d}
+T_{\bar a\bar b}{}^{e}\nabla_{e}\ms^{\bar c d}
=2\delta_{[\bar a}{}^{\bar c}\nabla_{\bar b]}X^d-T_{\bar a\bar b}{}^{d}Y^{\bar c}.\label{c2}
\end{align}
The left hand sides of equations (\ref{c1}) and (\ref{c2}) equal
\begin{align}
R_{ab}{}^{d}{}_{e}\ms^{\bar c e}&+R_{ab}{}^{\bar c}{}_{\bar e}\ms^{\bar e d}
+2\Rho_{[ab]}\ms^{\bar c d}
-\tfrac{1}{n+1}(\nabla_{\bar e} T_{ab}{}^{\bar e})\ms^{\bar c d}\nonumber\\
&=W_{ab}{}^d{}_e\ms^{\bar c e}+2\delta_{[a}{}^d\Rho_{b]e}\ms^{\bar c e}
+(\nabla_{\bar e} T_{ab}{}^{\bar c})\ms^{\bar e d}
-\tfrac{1}{n+1}(\nabla_{\bar e} T_{ab}{}^{\bar e})\ms^{\bar c d}\label{d1}\\
R_{\bar a\bar b}{}^{\bar c}{}_{\bar e}\ms^{\bar ed}&+R_{\bar a\bar b}{}^{d}{}_{e}\ms^{\bar ce}
+2\Rho_{[\bar a\bar b]}\ms^{\bar c d}
-\tfrac{1}{n+1}(\nabla_{e}T_{\bar a\bar b}{}^e)\ms^{\bar c d}\nonumber\\
&=W_{\bar a\bar b}{}^{\bar c}{}_{\bar e}\ms^{\bar e d}
+2\delta_{[\bar a}{}^{\bar c}\Rho_{\bar b]\bar e}\ms^{\bar ed}
+(\nabla_{e}T_{\bar a\bar b}{}^{d})\ms^{\bar c e}
-\tfrac{1}{n+1}(\nabla_{e}T_{\bar a\bar b}{}^e)\ms^{\bar c d}\label{d2},
\end{align}
where we have used Theorem~\ref{rosetta} to rewrite the curvature tensors
$R_{ab}{}^c{}_d$, $R_{\bar a\bar b}{}^{\bar c}{}_{\bar d}$, $R_{\bar a\bar b}{}^c{}_d$,
and $R_{ab}{}^{\bar c}{}_{\bar d}$. We conclude from \eqref{c1} and \eqref{d1},
taking a trace with respect to $a$ and $d$, and from \eqref{c2} and \eqref{d2},
taking a trace with respect to $\bar a$ and $\bar c$, that
\begin{equation}\label{e1}
\nabla_b Y^{\bar c}=\Rho_{be}\ms^{\bar c e}+\tfrac{1}{n}U_{b}{}^{\bar c}\qquad\qquad
\nabla_{\bar b} X^d=\Rho_{\bar b\bar e}\ms^{\bar e d}+\tfrac{1}{n}V_{\bar b}{}^d,
\end{equation}
where
\begin{align}
&U_{b}{}^{\bar c}:=\tfrac{n}{n-1} T_{ab}{}^{\bar c} X^a
+\tfrac{n}{n-1}(\nabla_{\bar e} T_{ab}{}^{\bar c})\ms^{\bar e a}
-\tfrac{n}{(n+1)(n-1)}(\nabla_{\bar e} T_{ab}{}^{\bar e})\ms^{\bar c a} \label{e4}\\
&V_{\bar b}{}^d:=\tfrac{n}{n-1} T_{\bar a\bar b}{}^{d} Y^{\bar a}
+\tfrac{n}{n-1}(\nabla_{e} T_{\bar a\bar b}{}^{d})\ms^{\bar a e}
-\tfrac{n}{(n+1)(n-1)}(\nabla_{e} T_{\bar a\bar b}{}^{e})\ms^{\bar a d},\label{e5}
\end{align}
depend linearly on $\ms^{\bar b c}$ and on $X^a$ respectively $Y^{\bar a}$.

\begin{rem} Suppose $J$ is integrable. Then the equations \eqref{e1} imply
$\nabla_b Y^{\bar c}=\Rho_{be}\ms^{\bar c e}$ and $\nabla_{\bar b} X^d
=\Rho_{\bar b\bar e}\ms^{\bar e d}$. Hence, in this case, the equalities between
(\ref{c1}) and (\ref{d1}) and between (\ref{c2}) and (\ref{d2}) show that
\begin{equation}\label{e3}
W_{ab}{}^d{}_e\ms^{\bar c e}\equiv 0\quad
\quad W_{\bar a\bar b}{}^{\bar c}{}_{\bar e}\ms^{\bar e d}\equiv 0.
\end{equation}
If $\ms^{\bar a b}$ is a nondegenerate solution of (\ref{metriequ1c}), then
(\ref{e3}) implies that $W_{ab}{}^c{}_d$ and its conjugate are identically
zero, which confirms again Proposition~\ref{kaehlerharmcurv} for $2n\geq 6$.
\end{rem}
Now consider
\begin{equation}\label{f1}
(\nabla_a\nabla_{\bar b}-\nabla_{\bar b}\nabla_a)\ms^{\bar c d}=
R_{a\bar b}{}^{d}{}_e\ms^{\bar c e}
+R_{a\bar b}{}^{\bar c}{}_{\bar e}\ms^{\bar e d}
+\Rho_{a\bar b}\ms^{\bar c d}-\Rho_{\bar b a}\ms^{\bar c d}.
\end{equation}
By Equation (\ref{metriequ1c}) and Theorem~\ref{rosetta} we may rewrite
(\ref{f1}) as
\begin{equation}\label{f11}
-\delta_{\bar b}{}^{\bar c}\nabla_aX^d
+\delta_{a}{}^d\nabla_{\bar b} Y^{\bar c}=
W_{a\bar b}{}^d{}_e\ms^{\bar c e}
+W_{a\bar b}{}^{\bar c}{}_{\bar e}\ms^{\bar e d}
+\delta_{a}{}^d\Rho_{\bar b e}\ms^{\bar c e}-
\delta_{\bar b}{}^{\bar c}\Rho_{a\bar e}\ms^{\bar e d}.
\end{equation}
Taking the trace in (\ref{f11}) with respect to $\bar{b}$ and $\bar{c}$ shows
that
\begin{equation}\label{f2}
\nabla_a X^d=\Rho_{a\bar e}\ms^{\bar e d}
-\tfrac{1}{n}\delta_{a}{}^d(\Rho_{\bar b e}\ms^{\bar b e}
-\nabla_{\bar b} Y^{\bar b})-\tfrac{1}{n}W_{a\bar b}{}^d{}_e\ms^{\bar b e}
\end{equation}
and with respect $a$ and $d$ that
\begin{equation}\label{f3}
\nabla_{\bar{b}}Y^{\bar{c}}=\Rho_{\bar{b}e}\ms^{\bar c e}
-\tfrac{1}{n}\delta_{\bar{b}}{}^{\bar{c}}(\Rho_{a\bar{e}}\ms^{\bar e a}
-\nabla_aX^d)+\tfrac{1}{n}W_{a\bar{b}}{}^{\bar{c}}{}_{\bar{e}}\ms^{\bar e a}.
\end{equation}
As the contraction of (\ref{f2}) with respect to $a$ and $d$ and the
contraction of (\ref{f3}) with respect $\bar{b}$ and $\bar{c}$ must lead to the
same result, we see that
\begin{equation}\label{f4}
\tfrac{1}{n}(\Rho_{\bar b e}\ms^{\bar b e}-\nabla_{\bar b} Y^{\bar b})=
\tfrac{1}{n}(\Rho_{a\bar e}\ms^{\bar e a}-\nabla_aX^a),
\end{equation}
which we denote by $\rho\in\Gamma(\cE(-1,-1))$.  Inserting (\ref{XYeq}) into
(\ref{f4}) therefore implies that
\begin{equation}\label{f5}
\rho=\tfrac{1}{n^2}
(\nabla_{\bar a}\nabla_b\ms^{\bar a b}+n\Rho_{\bar a b}\ms^{\bar a b})=
\tfrac{1}{n^2}
(\nabla_{b}\nabla_{\bar a}\ms^{\bar a b}+n\Rho_{a \bar b}\ms^{\bar b a}).
\end{equation}
By Theorem~\ref{rosetta} we have
\begin{align}\label{g1}
(\nabla_a\nabla_{\bar b}-\nabla_{\bar b}\nabla_a)X^c
&=R_{a\bar b}{}^c{}_d X^d+\Rho_{a\bar b}X^c-\Rho_{\bar ba}X^c\\
&=W_{a\bar b}{}^c{}_dX^d+\delta_{a}{}^c\Rho_{\bar b d}X^d
+\Rho_{a\bar b}X^c\nonumber.
\end{align}
Inserting the second equation of (\ref{e1}) and (\ref{f2}) into the left hand
side of (\ref{g1}) one computes that
\begin{equation} \label{g2}
(\nabla_a\nabla_{\bar b}-\nabla_{\bar b}\nabla_a)X^a
=n\nabla_{\bar b}\rho+\Rho_{a\bar b}X^a-n\Rho_{\bar b\bar e}Y^{\bar e}
+C_{a\bar b\bar e}\ms^{\bar e a}+Z_{\bar b},
\end{equation}
with
\begin{align}\label{t1}
Z_{\bar b}:=&\tfrac{n}{(n+1)(n-1)}(\nabla_{a}T_{\bar e\bar b}{}^a)Y^{\bar e}
+\tfrac{1}{n-1}T_{\bar e\bar b}{}^a\Rho_{ad}\ms^{\bar ed}
+\tfrac{1}{n(n-1)}T_{\bar e\bar b}{}^aU_a{}^{\bar e}\nonumber\\
&-\tfrac{1}{(n+1)(n-1)}(\nabla_a\nabla_dT_{\bar e\bar b}{}^d)\ms^{\bar e a}
+\tfrac{1}{(n-1)}(\nabla_a\nabla_dT_{\bar e\bar b}{}^a)\ms^{\bar e d},
\end{align}
where we have used \eqref{metriequ1c}, \eqref{e4} and that $W_{a\bar b}{}^a{}_d$
is zero. Note again that $Z_{\bar b}$ depends linearly on $\ms^{\bar a b}$,
$X^{a}$ and $Y^{\bar a}$.  {From}~(\ref{g1}), the expression (\ref{g2}) must
be equal to $n\Rho_{\bar b d}X^d+\Rho_{a\bar b}X^a$, which implies that
\begin{equation}\label{g3}
\nabla_{\bar b}\rho=\Rho_{\bar b a}X^a+\Rho_{\bar b\bar e}Y^{\bar e}
-\tfrac{1}{n}C_{a\bar b\bar e}\ms^{\bar e a}-\tfrac{1}{n} Z_{\bar b}.
\end{equation}
Rewriting $(\nabla_{a}\nabla_{\bar b}-\nabla_{\bar b}\nabla_{a})Y^{\bar c}$ 
analogously shows immediately that
\begin{equation}\label{g4}
\nabla_a\rho=\Rho_{a d}X^{d}+\Rho_{a\bar e}Y^{\bar e}
+\tfrac{1}{n}C_{a \bar b d}\ms^{\bar b d}+\tfrac{1}{n} Q_a,
\end{equation}
where 
\begin{align}\label{t2}
Q_{a}:=&\tfrac{n}{(n+1)(n-1)}(\nabla_{\bar e}T_{da}{}^{\bar e})X^{d}
+\tfrac{1}{n-1}T_{da}{}^{\bar b}\Rho_{\bar b\bar e}\ms^{\bar ed}
+\tfrac{1}{n(n-1)}T_{d a}{}^{\bar b}V_{\bar b}{}^{d}\nonumber\\
&-\tfrac{1}{(n+1)(n-1)}(\nabla_{\bar b}\nabla_{\bar e}T_{da}{}^{\bar e})\ms^{\bar b d}
+\tfrac{1}{(n-1)}(\nabla_{\bar b}\nabla_{\bar e}T_{da}{}^{\bar b})\ms^{\bar e d},
\end{align}
depends linearly on $\ms^{\bar a b}$, $X^{a}$ and $Y^{\bar a}$.
In summary, we have proved the following.
\begin{thm}\label{MetriProlongation}
Suppose $(M,J,[\nabla])$ is an almost c-projective manifold. The canonical
projection $\pi\colon\cV_\C:=\T\otimes \overline\T\to T^{0,1}M\otimes
T^{1,0}M(-1,-1)$ induces a bijection between sections of $\cV_\C$ that are
parallel for the linear connection
\begin{equation}\label{Metriprol1}
\nabla^{\cV_\C}_a\begin{pmatrix} \ms^{\bar b c}\\ X^b\enskip|\enskip Y^{\bar{b}}\\
\rho \end{pmatrix} +\frac{1}{n}\begin{pmatrix} 0\\
W_{a\bar d}{}^b{}_c\ms^{\bar d c}\enskip|\enskip -U_{a}{}^{\bar b}\\ 
\;-C_{a \bar b c}\ms^{\bar b c}-Q_a
\end{pmatrix}
\end{equation}
and
\begin{equation}\label{Metriprol2}
\nabla^{\cV_\C}_{\bar a}\begin{pmatrix} \ms^{\bar b c}\\
X^b\enskip|\enskip Y^{\bar{b}}\\ \rho\end{pmatrix}
+\frac{1}{n}\begin{pmatrix} 0\\ -V_{\bar a}{}^b\enskip|\enskip  
W_{\bar a c}{}^{\bar b}{}_{\bar d}\ms^{\bar d c}\\ 
\;-C_{\bar ac\bar b}\ms^{\bar b c}-Z_a
\end{pmatrix}
\end{equation}
and elements in the kernel of $D^{\cV_\C}$, where $U_{a}{}^{\bar b}$, $V_{\bar
  a}{}^b$, $Q_a$ and $Z_a$ are defined as in \eqref{e4}, \eqref{e5},
\eqref{t1} and \eqref{t2}.  The inverse of this bijection is induced by a
differential operator $L\colon T^{0,1}M\otimes T^{1,0}M(-1,-1)\to
\cV_\C$, which for a choice of connection $\nabla\in[\nabla]$ can be written
as
\begin{equation*}
L\colon \ms^{\bar b c}\mapsto\begin{pmatrix}\ms^{\bar b c}\\
-\frac{1}{n}\nabla_{\bar a}\ms^{\bar a b}
\enskip|\enskip{-\frac{1}{n}}\nabla_a\ms^{\bar b a}\\ 
\frac{1}{n^2}
(\nabla_{\bar a}\nabla_b\ms^{\bar a b}+n\Rho_{\bar a b}\ms^{\bar a b})
\end{pmatrix}.
\end{equation*}
If $J$ is integrable, $W_{a\bar b}{}^{c}{}_{d}=H_{a\bar b}{}^{c}{}_{d}$ and
$W_{\bar a b}{}^{\bar c}{}_{\bar d}=H_{\bar a b}{}^{\bar c}{}_{\bar d}$
\textup(by Theorem~\ref{rosetta}\textup) and $U_{a}{}^{\bar b}$,
$V_{\bar a}{}^b$, $Q_a$ and $Z_{\bar a}$ are identically zero.
\end{thm}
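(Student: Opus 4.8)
The plan is to read Theorem~\ref{MetriProlongation} as the packaging of the prolongation computation already carried out in \eqref{c1}--\eqref{g4}, splitting the proof into the two directions of the asserted bijection. For the easy direction, I would take a section $s=(\ms^{\bar b c},X^b\mid Y^{\bar b},\rho)$ that is parallel for the connection \eqref{Metriprol1}--\eqref{Metriprol2} and note that the curvature correction in each of \eqref{Metriprol1} and \eqref{Metriprol2} has vanishing entry in the top slot $T^{0,1}M\otimes T^{1,0}M(-1,-1)$. Hence the $\ms$-row of the parallel equation is precisely $\nabla_a\ms^{\bar b c}+\delta_a{}^cY^{\bar b}=0$ (and its conjugate), which is \eqref{metriequ1c}; so $\pi(s)=\ms$ lies in $\ker D^{\cV_\C}$ and $\pi$ sends parallel sections to solutions.

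For the converse I would construct the splitting operator $L$ explicitly and check it lands in the parallel sections. Given $\ms\in\ker D^{\cV_\C}$, tracing \eqref{metriequ1c} forces $X^b$ and $Y^{\bar b}$ as in \eqref{XYeq}, and $\rho$ is defined by \eqref{f5}; this is the stated formula for $L$. It then remains to verify that $L\ms$ is annihilated by \eqref{Metriprol1}--\eqref{Metriprol2} row by row. The middle rows are exactly the content of \eqref{e1} together with \eqref{f2}--\eqref{f3}: comparing with the $X$- and $Y$-entries of \eqref{Metritractor1}--\eqref{Metritractor2} shows the discrepancies are absorbed precisely by the correction tensors $U_a{}^{\bar b}$, $V_{\bar a}{}^b$ of \eqref{e4}--\eqref{e5} and by $-\tfrac1n W\ms$. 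The bottom rows are the content of \eqref{g3}--\eqref{g4}, matching the $\rho$-entries of \eqref{Metritractor1}--\eqref{Metritractor2} up to the corrections $Q_a$, $Z_{\bar a}$ of \eqref{t1}--\eqref{t2} and the Cotton--York terms. Since the $\ms$-rows of the parallel condition force $X,Y$ and the lower rows force $\rho$ to be exactly the components produced by $L$, one has $L\circ\pi=\mathrm{id}$ on parallel sections, while $\pi\circ L=\mathrm{id}$ is immediate; thus $\pi$ and $L$ are mutually inverse bijections.

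The step I expect to be the main obstacle is the internal consistency that makes $\rho$ well defined and the system close at this order. Concretely, $\rho$ is obtained by contracting \eqref{f2} and \eqref{f3} in two different ways, and one must confirm these agree; this is exactly \eqref{f4}, and it is where the Bianchi symmetries of Theorem~\ref{rosetta} and the trace-freeness $W_{a\bar b}{}^a{}_d=0$ are essential. Differentiating once more to reach \eqref{g3}--\eqref{g4} generates second covariant derivatives of the torsion, which must repackage cleanly into $Q$ and $Z$; the delicate bookkeeping is to see that no further independent curvature component survives, so that the prolonged system genuinely closes rather than imposing extra integrability conditions on $\ms$.

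Finally, for the integrable case I would use $N^J\equiv0$, hence $T_{\bar a\bar b}{}^c=0$. Every summand in \eqref{e4}, \eqref{e5}, \eqref{t1} and \eqref{t2} carries a factor of the torsion or its covariant derivatives, so $U$, $V$, $Q$ and $Z$ vanish identically; and the torsion-quadratic corrections in \eqref{full_curvature_decomposition} drop out, giving $W_{a\bar b}{}^c{}_d=H_{a\bar b}{}^c{}_d$ together with its conjugate. This reduces \eqref{Metriprol1}--\eqref{Metriprol2} to the purely $H$-corrected connection claimed in the final sentence of the theorem.
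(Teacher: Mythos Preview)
Your proposal is correct and follows essentially the same approach as the paper: the theorem is stated as a summary of the prolongation computation \eqref{c1}--\eqref{g4} immediately preceding it, and you have correctly identified how each row of the prolonged connection arises from \eqref{XYeq}, \eqref{e1}, \eqref{f2}--\eqref{f5}, and \eqref{g3}--\eqref{g4}, with \eqref{f4} providing the consistency of $\rho$. The only organisational difference is that you make the two directions of the bijection and the mutual inversion $L\circ\pi=\mathrm{id}$, $\pi\circ L=\mathrm{id}$ explicit, whereas the paper simply says ``In summary, we have proved the following'' after the computation.
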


Let now $\cV$ be the real form of the vector bundle~$\cV_\C$, as defined in
the previous section. Obviously, the connection in
Theorem~\ref{MetriProlongation} preserves $\cV$ and therefore
Proposition~\ref{compKaehler} and Theorem~\ref{MetriProlongation} imply that:

\begin{cor}\label{MetriCorollary} Suppose $(M,J,[\nabla])$ is an almost
c-projective manifold of dimension $2n\geq4$. Then, up to sign, 
there exists a bijection
between compatible $(2,1)$-symplectic Hermitian metrics and sections $s$ of
$\cV$ that satisfy
\begin{itemize}
\item $\pi(s)\equiv\ms^{\bar b c}$ is nondegenerate 
\item $s$ is parallel for the connection given by {\rm(\ref{Metriprol1})} 
and~{\rm(\ref{Metriprol2})}. 
\end{itemize}
Note, that since $s$ is a real section, it is covariant constant for 
{\rm(\ref{Metriprol1})} if and only if it is covariant constant 
for~{\rm(\ref{Metriprol2})}.
\end{cor}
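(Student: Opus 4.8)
The plan is to derive the claimed correspondence by composing the two bijections already in hand, Proposition~\ref{compKaehler} and Theorem~\ref{MetriProlongation}, after restricting the prolongation connection to the real form $\cV$ of $\cV_\C$.

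First I would record that the prolongation connection given by (\ref{Metriprol1}) and (\ref{Metriprol2}) preserves the real structure on $\cV_\C=\T\otimes\overline\T$. The undeformed tractor connection $\nabla^{\cV_\C}$ already has this property, as noted after (\ref{Metritractor2}); and the two correction terms are interchanged by the conjugation swapping the tensor factors, since $\overline{U_a{}^{\bar b}}=V_{\bar a}{}^b$, $\overline{W_{a\bar d}{}^b{}_c\,\ms^{\bar d c}}=W_{\bar a c}{}^{\bar b}{}_{\bar d}\,\ms^{\bar d c}$, and $\overline{C_{a\bar bc}\,\ms^{\bar bc}+Q_a}=C_{\bar ac\bar b}\,\ms^{\bar bc}+Z_{\bar a}$, all of which follow from the definitions (\ref{e4}), (\ref{e5}), (\ref{t1}), (\ref{t2}), the reality conventions (\ref{realsections}), and the conjugation rules for $T$, $W$ and $C$ established earlier. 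Consequently the connection restricts to $\cV$, and a real section $s\in\Gamma(\cV)\subset\Gamma(\cV_\C)$ is parallel for this restriction precisely when it is parallel for (\ref{Metriprol1}) and (\ref{Metriprol2}) as a section of~$\cV_\C$.

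Next I would assemble the bijection. By Theorem~\ref{MetriProlongation} the projection $\pi$ identifies sections of $\cV_\C$ parallel for the prolongation connection with $\ker D^{\cV_\C}$; restricting to real sections, $\pi$ sends $\Gamma(\cV)$ to the real sections $\ms^{\bar bc}=\overline{\ms^{\bar cb}}$ of $T^{0,1}M\otimes T^{1,0}M(-1,-1)$ and hence identifies parallel real sections with $\ker D^\cV$, the real first BGG operator obtained from $D^{\cV_\C}$ by restriction. Composing with Proposition~\ref{compKaehler}, which matches nondegenerate elements of $\ker D^\cV$ with compatible $(2,1)$-symplectic Hermitian metrics---genuinely up to sign when $n$ is odd, and bijectively when $n$ is even---yields the asserted correspondence, the nondegeneracy hypothesis on $\ms^{\bar bc}=\pi(s)$ being exactly what distinguishes metrics among solutions.

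Finally, for the concluding sentence I would argue directly from the conjugacy of the two prolongation connections established in the first step. Writing $\tilde\nabla$ for the prolongation connection, this conjugacy gives $\tilde\nabla_{\bar a}\bar s=\overline{\tilde\nabla_a s}$, so for a real section $s=\bar s$ we obtain $\tilde\nabla_{\bar a}s=\overline{\tilde\nabla_a s}$; hence $\tilde\nabla_a s=0$ if and only if $\tilde\nabla_{\bar a}s=0$. I do not expect a genuine obstacle here, since the mathematical content is carried by Proposition~\ref{compKaehler} and Theorem~\ref{MetriProlongation}; the only step demanding attention is the bookkeeping of the real structure---checking that $\pi$ carries real sections to real sections and that conjugation exchanges (\ref{Metriprol1}) with (\ref{Metriprol2})---which is routine given the conventions (\ref{realsections}).
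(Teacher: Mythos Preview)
Your proposal is correct and follows essentially the same approach as the paper, which derives the corollary in one sentence from Proposition~\ref{compKaehler} and Theorem~\ref{MetriProlongation} after observing that the prolongation connection preserves the real form~$\cV$. You have simply spelled out in more detail the reality check that the paper dismisses as ``obvious,'' and your argument for the final sentence via $\tilde\nabla_{\bar a}s=\overline{\tilde\nabla_a s}$ is exactly the right bookkeeping.
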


Suppose $s$ is a section of $\cV$ that is parallel for the tractor
connection. Then $\pi(s)\equiv\ms^{\bar b c}$ is still in the kernel of
$D^\cV$ and hence Theorem~\ref{MetriProlongation} implies that $s$ is also
parallel for the connection given by (\ref{Metriprol1})
and~(\ref{Metriprol2}), i.e.~$\pi(s)\equiv\ms^{\bar a b}$ must satisfy
$W_{a\bar{d}}{}^b{}_c\ms^{\bar d c}=0$,
$W_{\bar{a}c}{}^{\bar{b}}{}_{\bar{d}}\ms^{\bar d c}=0$, $U_{a}{}^{\bar b}=0$,
$V_{\bar a}{}^{b}=0$, $C_{a\bar{b}c}\ms^{\bar b c}+Q_a=0$ and
$C_{\bar{a}b\bar{c}}\ms^{\bar c b}+Z_{\bar a}=0$. The following proposition
gives a geometric interpretation of parallel sections of the tractor
connection and hence of so-called \emph{normal} solutions of the first
BGG operator~$D^\cV$ in the terminology of \cite{CGH}.

\begin{prop}\label{normal_sol_ms} 
Suppose $(M,J,[\nabla])$ is an almost c-projective manifold of dimension
$2n\geq4$.  Then, if $n$ is even, there is a 
bijection between sections $s$ of $\cV$ such that
\begin{itemize}
\item $\pi(s)\equiv\ms^{\bar b c}$ is nondegenerate
\item $s$ is parallel for the tractor connection $\nabla^\cV$ 
on~$\cV$
\end{itemize}
and compatible $(2,1)$-symplectic metrics $g$ satisfying the generalised
Einstein condition\textup:
\begin{equation}\label{generalised_Einstein}
\Ric_{ab}=0\quad \text{ and }\quad
\Ric_{a\bar b}=kg_{a\bar b} \text{ for some constant } k\in\R, 
\end{equation}
where $\Ric$ is the Ricci tensor of the canonical connection of $g$. If $J$ is
integrable, then \eqref{generalised_Einstein} simply characterises
\bps/K\"ahler--Einstein metrics. If $n$ is odd, the same conclusions are valid 
up to sign.
\end{prop}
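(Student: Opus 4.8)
The plan is to combine Corollary~\ref{MetriCorollary} with a direct evaluation of the tractor connection $\nabla^\cV$ in the gauge determined by the canonical connection of the compatible metric. First I would note that if $s$ is parallel for $\nabla^\cV$ and $\pi(s)=\ms^{\bar b c}$ is nondegenerate, then the top slot of \eqref{Metritractor1} forces $\nabla_a\ms^{\bar b c}=-\delta_a{}^cY^{\bar b}$, whose trace-free part in $(a,c)$ vanishes; together with its conjugate this is exactly $D^\cV\ms=0$. Hence $\ms$ is a nondegenerate element of $\ker D^\cV$ and, by Proposition~\ref{compKaehler} and Corollary~\ref{MetriCorollary}, corresponds to a compatible $(2,1)$-symplectic metric $g$ whose canonical connection $\nabla$ lies in $[\nabla]$. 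All subsequent computation is then carried out in this canonical gauge.

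In the canonical gauge one has $\nabla_a\ms^{\bar b c}=\nabla_{\bar a}\ms^{\bar b c}=0$, and from the splitting operator $L$ of Theorem~\ref{MetriProlongation} this yields $X^b=Y^{\bar b}=0$ and $\rho=\tfrac1n\Rho_{\bar a b}\ms^{\bar a b}$. Substituting $X=Y=0$ and $\nabla\ms=0$ into \eqref{Metritractor1}, the equation $\nabla^\cV s=0$ collapses to
\[
\Rho_{ac}\ms^{\bar b c}=0,\qquad \Rho_{a\bar c}\ms^{\bar c b}=\rho\,\delta_a{}^b,\qquad \nabla_a\rho=0,
\]
together with their conjugates. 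Since $\nabla$ is special, I would invoke the relations $\Ric_{(ab)}=(n-1)\Rho_{(ab)}$, $\Ric_{[ab]}=(n+1)\Rho_{[ab]}$ and $\Rho_{a\bar b}=\tfrac1{n+1}\Ric_{a\bar b}$ coming from \eqref{RhoTensor} and Section~\ref{curvature_operators_on_densities}. Nondegeneracy of $\ms$ turns the first condition into $\Rho_{ab}=0$, hence $\Ric_{ab}=0$; using $\ms^{\bar c b}=\scale_g^{-1}g^{\bar c b}$ (recall $\det\ms=\scale_g$), the second reads $\Ric_{a\bar b}=k\,g_{a\bar b}$ with $k:=(n+1)\rho\,\scale_g$; and the third, because $\scale_g$ is $\nabla$-parallel, shows $k\in\Gamma(\cE_\R(0,0))$ is parallel, i.e.\ constant. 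This is precisely the generalised Einstein condition \eqref{generalised_Einstein}.

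For the converse, given a compatible $(2,1)$-symplectic $g$ satisfying \eqref{generalised_Einstein}, I would define $s$ in the canonical gauge by $\ms^{\bar b c}=\scale_g^{-1}g^{\bar b c}$, $X=Y=0$, and $\rho=\tfrac{k}{n+1}\scale_g^{-1}$, and simply reverse the substitutions above to verify $\nabla^\cV_a s=0$ directly from \eqref{Metritractor1}; reality and Corollary~\ref{MetriCorollary} then give $\nabla^\cV_{\bar a}s=0$ as well. The parity statement---a genuine bijection for $n$ even and a two-to-one correspondence up to sign for $n$ odd---is inherited verbatim from Proposition~\ref{compKaehler}, since replacing $\ms$ by $-\ms$ multiplies $g^{\bar b c}=(\det\ms)\ms^{\bar b c}$ by $(-1)^{n+1}$. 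Finally, when $J$ is integrable the torsion vanishes, so $\Ric_{ab}=-\nabla_{\bar c}T^{\bar c}{}_{ab}=0$ automatically by \eqref{21Curv_Ricci1}; thus \eqref{generalised_Einstein} reduces to $\Ric_{a\bar b}=k\,g_{a\bar b}$, the \bps/K\"ahler--Einstein condition.

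The computation is essentially routine; the two places demanding care are the density bookkeeping---checking that $k=(n+1)\rho\,\scale_g$ genuinely lands in the trivial bundle $\cE_\R(0,0)$, so that ``$\nabla$-parallel'' upgrades to ``constant''---and the correct use of the special-connection Ricci identities, in particular the symmetry $\Ric_{a\bar b}=\Ric_{\bar b a}$ from \eqref{21Curv_Ricci2}, which is what makes the $a$- and $\bar a$-equations mutually consistent and lets the single scalar $k$ control both the holomorphic and conjugate slots.
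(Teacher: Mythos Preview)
Your argument is correct. The forward direction is essentially identical to the paper's: pass to the canonical connection of the compatible metric, read off $X=Y=0$ from the top slot, and extract $\Rho_{ab}=0$, $\Rho_{a\bar c}\ms^{\bar cb}=\rho\,\delta_a{}^b$, $\nabla_a\rho=0$ from the remaining slots of~\eqref{Metritractor1}.

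For the converse, however, you take a more direct route than the paper. The paper starts from the parallel section for the \emph{prolongation} connection \eqref{Metriprol1}--\eqref{Metriprol2} (guaranteed by Theorem~\ref{MetriProlongation}) and then shows that under~\eqref{generalised_Einstein} all the correction terms $W_{a\bar d}{}^b{}_c\ms^{\bar dc}$, $U_a{}^{\bar b}$, $V_{\bar a}{}^b$, $C_{a\bar bc}\ms^{\bar bc}$, $Q_a$, $Z_{\bar a}$ vanish; this involves in particular a curvature computation using the $(2,1)$-symplectic symmetries to verify $R_{a\bar d\bar bc}g^{\bar dc}=k\,g_{a\bar b}$. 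You instead write down the candidate section and check $\nabla^\cV s=0$ slot by slot from~\eqref{Metritractor1}, needing only $\Rho_{ab}=0$, $\Rho_{a\bar b}=\tfrac{k}{n+1}g_{a\bar b}$, and $\nabla\scale_g=0$. Your route is shorter; the paper's detour has the side benefit of explicitly exhibiting $W_{a\bar d}{}^b{}_c g^{\bar dc}=0$ and $C_{a\bar bc}g^{\bar bc}=0$ as consequences of~\eqref{generalised_Einstein}, which is precisely what ``normality'' of the solution means and is of independent interest.

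One minor expository point: your appeal to the splitting operator $L$ to obtain $X=Y=0$ and $\rho$ is a shortcut, since $L$ is designed for the prolongation connection rather than $\nabla^\cV$. The cleaner logic is the one you also indicate: in the canonical gauge $\nabla\ms=0$, so the top slot of $\nabla^\cV_a s=0$ forces $Y^{\bar b}=0$, and then tracing the middle slot identifies $\rho$. The two lifts happen to coincide here, but that is an outcome, not an input.
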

\begin{proof}
Suppose $s\in\Gamma(\cV)$ is parallel for the tractor connection $\nabla^\cV$
and that $\pi(s)\equiv\ms^{\bar b c}\in\ker D^{\cV}$ is nondegenerate.  Then
Proposition \ref{compKaehler} implies that the inverse of $g^{\bar a b}
\equiv\ms^{\bar ab}\det\ms$ is a compatible $(2,1)$-symplectic Hermitian
metric. Now let $\nabla\in[\nabla]$ be the canonical connection of $g_{a\bar
  b}$.  With respect to the splitting of $\cV$ determined by $\nabla$ the
section $s$ corresponds to the section
\begin{equation}\label{s_in_splitting}
\begin{pmatrix} \ms^{\bar b c}\\ X^b\enskip|\enskip X^{\bar b}\\ \rho \end{pmatrix}
=\begin{pmatrix} \ms^{\bar b c}\\ 0\enskip|\enskip 0\\
\frac{1}{n}\Rho_{a\bar b}\ms^{\bar b a}\end{pmatrix}.
\end{equation}
{From} $\nabla^\cV s=0$ it therefore follows on the one hand that
$\Rho_{ac}\ms^{\bar b c}=0$, which implies $\Rho_{ac}=0$ by
the nondegeneracy of $\ms^{\bar bc}$.  Since
$\Ric_{ab}=(n-1)\Rho_{ab}+2\Rho_{[ab]}$, we see that
the first condition of \eqref{generalised_Einstein} holds for $g$.  On the
other hand, we deduce from $\nabla^\cV s=0$ that
$\Rho_{a\bar{c}}\ms^{\bar c b}=\rho\delta_{a}{}^b$ and
$\nabla_a\rho=\nabla_{\bar a}\rho=0$.  Since
$\Rho_{a\bar{c}}=\frac{1}{n+1}\Ric_{a\bar{b}}$, we conclude
that
\[
\Ric_{a\bar c}g^{\bar c b}=(n+1)\rho (\det\ms)\delta_{a}{}^b.
\]
Hence $g_{a \bar b}$ satisfies also the second condition of
\eqref{generalised_Einstein}.

Conversely, suppose $g_{a\bar b}$ is a compatible $(2,1)$-symplectic Hermitian
metric satisfying \eqref{generalised_Einstein}.  Let us write
$s\in\Gamma(\cV)$ for the corresponding parallel section of the prolongation
connection given by (\ref{Metriprol1}) and (\ref{Metriprol2}). With respect to
the splitting of $\cV$ determined by the canonical connection $\nabla$ of
$g_{a \bar b}$, the section $s$ is again given by \eqref{s_in_splitting},
where $\ms^{\bar a b}\det(\ms)=g^{\bar a b}$.  By assumption we have
\begin{equation}\label{condition_1}
\Ric_{ab}=0,
\end{equation}
which is equivalent to $\Rho_{ab}=0$, and also that
\begin{equation}\label{Einsteinscale}
\Rho_{a\bar b}=\tfrac{1}{n+1}\Ric_{a\bar b}=\tfrac{k}{n+1}g_{a\bar b}
\end{equation}
for some constant $k$. Moreover, \eqref{condition_1} yields 
\begin{equation}\label{deriv_torsion_Ricci}
0=\Ric_{ab}=-g^{\bar cd}\nabla_{\bar c} T_{da}{}^{\bar f}g_{b\bar f}\qquad\qquad
0=\Ric_{[ab]}=\tfrac{1}{2}\nabla_{\bar c} T_{ab}{}^{\bar c}
\end{equation} 
which shows immediately that (with respect to $\nabla$) in (\ref{Metriprol1})
and (\ref{Metriprol2}) we have
\[
U_{a}{}^{\bar b}=0\qquad V_{\bar a}{}^{b}=\overline{U_{a}{}^{\bar b}}=0\qquad
Q_a=0\qquad Z_{\bar a}=\overline{Q_{\bar a}}=0.
\]
Hence, to prove that $s$ is parallel for $\nabla^\cV$ it remains to show that
$W_{a\bar{d}}{}^b{}_cg^{\bar d c}$ and $C_{a\bar{b}c}g^{\bar b c}$ (or
equivalently their conjugates) are identically zero.  {From}
Theorem~\ref{rosetta} and (\ref{Einsteinscale}) we obtain
\begin{align}\label{Einsteincurv}
W_{a\bar{d}}{}^b{}_cg^{\bar d c}=
R_{a\bar{d}}{}^b{}_cg^{\bar d c}-\delta_{a}{}^b\Rho_{\bar{d}c}g^{\bar d c}
-\Rho_{\bar{d}a}g^{\bar{d}b}
=R_{a\bar{d}}{}^b{}_cg^{\bar d c}-k\delta_{a}{}^b.
\end{align}
Therefore, if we lower the $b$ index in (\ref{Einsteincurv}) with the metric,
we obtain
\[
W_{a\bar{d}\bar{b}c}g^{\bar d c} =R_{a\bar{d}\bar{b}c}g^{\bar d c}-kg_{a\bar{b}}.
\]
Since $\nabla$ preserves $g$, the tensors $R_{a\bar{d}\bar{b}c}
=-R_{\bar{d}a\bar bc}$ and $-R_{a\bar{d}c\bar{b}}$ coincide.  Hence,
$R_{a\bar{d}\bar{b}c}g^{\bar d c}=R_{\bar{d}ac\bar{b}}g^{\bar d c}
=\Ric_{a\bar{b}}=kg_{a\bar{b}}$, which shows that (\ref{Einsteincurv})
vanishes identically. {From} \eqref{condition_1} and \eqref{Einsteinscale} it
follows immediately that $C_{a\bar{b}c}=\nabla_a\Rho_{\bar bc}-\nabla_{\bar b}
\Rho_{ac}$ vanishes identically, which completes the proof.
\end{proof}

\begin{rem}\label{rem:metric-cone} As observed in
Section~\ref{metrisability}, $\cV_\C= \T\otimes\bar\T$, and sections of $\cV$
may be viewed as Hermitian forms on $\T^*$.  This has an interpretation in
terms of the construction of the complex affine cone $\pi_\cC\colon \cC\to M$
described in Section~\ref{cone_construction}: by
Lemma~\ref{tangent_bundle_cone}, a Hermitian form on $\T^*$ pulls back to a
Hermitian form on $T^*\cC$.  If this form is nondegenerate, its inverse
defines a Hermitian metric on $\cC$.  Further, if the section of $\cV$ is
parallel with respect to a connection on $\cV$ induced by a connection on
$\T$, then the latter connection induces a metric connection on $\cC$.

In particular, if we have a compatible metric satisfying the generalised
Einstein condition of Proposition~\ref{normal_sol_ms}, then it generically
induces a metric on $\cC$ which is parallel for the connection $\nabla^\cC$
induced by the tractor connection on $\T$.
\end{rem}

\subsection{The c-projective Hessian}\label{cpHessianSection}

Let us consider the dual $\cW$ of the tractor bundle $\cV$ of an almost
c-projective manifold. Its complexification is given by $\cW_\C=
\T^*\otimes \overline\T{}^*$, which admits a filtration
\[
\cW_\C=\cW_\C^{-1}\supset \cW_\C^{0}\supset \cW_\C^1,
\]
such that for any connection $\nabla\in[\nabla]$ we can write an element of
$\cW_\C$ as
\begin{equation*}
\begin{pmatrix} \sigma \\
\mu_b\enskip|\enskip \lambda_{\bar{b}}\\ \zeta_{b\bar{c}}
\end{pmatrix},\text{ where } 
\left\{\begin{array}{l}
\sigma\in \cE(1,1),\\
\mu_b\in \Wedge^{1,0}M(1,1),\quad \lambda_{\bar{b}}\in \Wedge^{0,1}M(1,1),\\ 
\zeta_{b\bar{c}}\in \Wedge^{1,1}M(1,1),
\end{array}\right.
\end{equation*}
and the tractor connection as
\begin{equation}\label{Hessiantractor1}
\nabla^{\cW_\C}_a\begin{pmatrix} \sigma\\
\mu_b\enskip|\enskip \lambda_{\bar{b}}\\ \zeta_{b\bar c} \end{pmatrix}
=\begin{pmatrix} \nabla_a\sigma-\mu_{a}\\ 
\nabla_a\mu_b+\Rho_{ab}\sigma\enskip|\enskip
\nabla_a\lambda_{\bar{b}}+\Rho_{a\bar b}\sigma-\zeta_{a\bar b}\\ 
\nabla_a\zeta_{b\bar c}+\Rho_{a\bar c}\mu_{b}+\Rho_{ab}\lambda_{\bar c}
\end{pmatrix}
\end{equation}
and
\begin{equation}\label{Hessiantractor2}
\nabla^{\cW_\C}_{\bar a}\begin{pmatrix} \sigma\\
\mu_b\enskip|\enskip \lambda_{\bar{b}}\\ \zeta_{b\bar c} \end{pmatrix}
=\begin{pmatrix} \nabla_{\bar a}\sigma-\lambda_{\bar a}\\ 
\nabla_{\bar a}\mu_b+\Rho_{\bar a b}\sigma-\zeta_{b\bar a}\enskip|\enskip
\nabla_{\bar a}\lambda_{\bar{b}}+\Rho_{\bar a\bar b}\sigma\\
\nabla_{\bar a}\zeta_{b\bar c}+\Rho_{\bar a\bar c}\mu_{b}+\Rho_{\bar ab}\lambda_{\bar c}
\end{pmatrix}.
\end{equation}
The first BGG operator associated to $\cW_\C$ or $\cW$ is a
c-projectively invariant operator of order two, which we call the
\emph{c-projective Hessian}.  It can be written as
\begin{equation}\label{cprojHessian}
D^\cW\colon \cE(1,1)\to\!\!
\begin{matrix} S^2\Wedge^{1,0}M(1,1)
\oplus
S^2\Wedge^{0,1}M(1,1)
\end{matrix}
\end{equation}
\begin{equation*}
D^\cW\sigma=(\nabla_ {(a}\nabla_{b)}\sigma+\Rho_{(ab)}\sigma\,,
\, \nabla_ {(\bar a}\nabla_{\bar b)}\sigma+\Rho_{(\bar a\bar b)}\sigma),
\end{equation*}
or alternatively as
\begin{equation} \label{cprojHessian3}
D^\cW\sigma=\nabla_{(\alpha}\nabla_{\beta)}\sigma+\Rho_{(\alpha\beta)}\sigma-
J_{(\alpha}{}^{\gamma}J_{\beta)}{}^{\delta}(\nabla_\gamma\nabla_\delta\sigma
+\Rho_{\gamma\delta}\sigma),
\end{equation}
for any connection $\nabla\in[\nabla]$. The reader might easily verify the
c-projective invariance of $D^\cW$ directly using 
Proposition~\ref{changeform}, the identities~(\ref{changes_on_densities}), 
and the formulae for the change of Rho tensor in Corollary \ref{Rho_changes}.
The following Proposition gives a geometric interpretation of nonvanishing
real solutions $\sigma=\bar\sigma\in\Gamma(\cE(1,1))$ of the invariant
overdetermined system $D^{\cW}\sigma=0$.

\begin{prop}\label{J_invariant_Ricci} Let $(M,J,[\nabla])$ be an almost
c-projective manifold and $\sigma\in\Gamma(\cE(1,1))$ a real nowhere vanishing
section.  Then $D^\cW\sigma=0$ if and only if the Ricci tensor of
the special connection $\nabla^{\sigma}\in[\nabla]$ associated to $\sigma$
satisfies $\Ric_{(ab)}=0$. In particular, if $J$ is integrable, then
$D^\cW\sigma=0$ if and only if the Ricci tensor of $\nabla^{\sigma}$
satisfies $\Ric_{ab}=0$, i.e.~the Ricci tensor is symmetric and $J$-invariant.
\end{prop}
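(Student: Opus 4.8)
The plan is to compute the c-projective Hessian in the distinguished gauge furnished by $\sigma$ itself. Since $\sigma$ is a real nowhere vanishing section of $\cE(1,1)=\cE_\R(1,1)\otimes\C$, declaring it parallel determines, via Proposition~\ref{scalebundle}, a special connection $\nabla=\nabla^\sigma\in[\nabla]$ with $\nabla_a\sigma=0$ and $\nabla_{\bar a}\sigma=0$. Because $D^\cW$ is c-projectively invariant, I am free to evaluate $D^\cW\sigma$ using precisely this connection, and that is what makes the argument short.

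First I would substitute $\nabla_a\sigma=0$ into the explicit formula~\eqref{cprojHessian} for $D^\cW$. Then $\nabla_{(a}\nabla_{b)}\sigma=0$, so the first slot of $D^\cW\sigma$ collapses to $\Rho_{(ab)}\sigma$ and, by reality, its conjugate second slot to $\Rho_{(\bar a\bar b)}\sigma$. As $\sigma$ vanishes nowhere, $D^\cW\sigma=0$ holds if and only if $\Rho_{(ab)}=0$, the conjugate equation being automatic. Next I would convert this into a condition on the Ricci tensor: taking the symmetric $(2,0)$-part of the identity $\Ric_{ab}=(n-1)\Rho_{ab}+2\Rho_{[ab]}$ recorded in the proof of Proposition~\ref{rosetta} gives $\Ric_{(ab)}=(n-1)\Rho_{(ab)}$, whence $\Rho_{(ab)}=0$ if and only if $\Ric_{(ab)}=0$. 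This settles the first assertion.

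For the integrable case I would invoke the facts established in Section~\ref{curvature_operators_on_densities} that any special connection satisfies $\Ric_{[ab]}=\tfrac12\nabla_{\bar c}T_{ab}{}^{\bar c}$ and $\Ric_{\bar a b}=\Ric_{b\bar a}$. When $J$ is integrable the torsion $T_{ab}{}^{\bar c}$ vanishes, so $\Ric_{[ab]}=0$ and hence $\Ric_{(ab)}=\Ric_{ab}$; thus $D^\cW\sigma=0$ becomes $\Ric_{ab}=0$, which is exactly the vanishing of the anti-$J$-invariant part of $\Ric$, i.e.\ $J$-invariance. Combined with the symmetry $\Ric_{\bar a b}=\Ric_{b\bar a}$ already enjoyed by special connections, this says precisely that the Ricci tensor is symmetric and $J$-invariant. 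The computation is wholly routine once the special gauge is chosen, so I anticipate no serious obstacle; the only point demanding care is the bookkeeping relating $\Rho$ to $\Ric$ and the correct identification of $\Ric_{ab}=0$ with $J$-invariance of $\Ric$.
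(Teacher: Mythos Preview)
Your proof is correct and follows essentially the same approach as the paper: compute $D^\cW\sigma$ in the special gauge $\nabla^\sigma$ where $\sigma$ is parallel, reduce to $\Rho_{(ab)}=0$, and translate to $\Ric_{(ab)}=0$ via the identity $\Ric_{(ab)}=(n-1)\Rho_{(ab)}$; the integrable case then follows from $\Ric_{[ab]}=\tfrac12\nabla_{\bar c}T_{ab}{}^{\bar c}$ vanishing with the torsion.
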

\begin{proof}
Let $\sigma=\bar \sigma\in\Gamma(\cE(1,1))$ be nowhere vanishing. Recall
that the Ricci tensor of the special connection $\nabla^{\sigma}$ associated to
$\sigma$ satisfies
$$\Ric_{\bar a b}=\Ric_{b\bar a}\quad\quad\quad
\Ric_{[ab]}=\tfrac{1}{2}\nabla_{\bar c}^{\sigma}T_{ab}{}^{\bar c}.$$
With respect to $\nabla^{\sigma}$ the equation $D^{\cW_\C}\sigma=0$
reduces to
\[
\Rho_{(ab)}\sigma=0\quad\quad\quad\Rho_{(\bar a\bar b)}\sigma=0,
\]
i.e.~to $\Ric_{(ab)}=\frac{1}{n-1}\Rho_{(ab)}=0$ and $\Ric_{(\bar a\bar
  b)}=\frac{1}{n-1}\Rho_{(\bar a\bar b)}=0$, since $\sigma$ is nonvanishing.
\end{proof}
It follows immediately that if a c-projective manifold $(M,J,[\nabla])$
admits a compatible \bps/K\"ahler metric $g$, then $\scale_g=\vol(g)^{-1/(n+1)}
\in\Gamma(\cE(1,1))$ satisfies $D^\cW\scale_g=0$. By
Proposition~\ref{compKaehler}, $\scale_g=\det\ms$, where $\ms$ is the
nondegenerate solution of the metrisability equation corresponding to $g$.
This observation continues to hold without the nondegeneracy assumption.
\begin{prop}\label{det_of_metrisability} 
Let $(M,J, [\nabla])$ be a c-projective manifold and suppose that
$\ms^{\bar b c}\in\Gamma(T^{0,1}M\otimes T^{1,0}M(-1,-1))$ is a real section
satisfying~\eqref{metriequ1}. Then
$\sigma\equiv\det\ms\in\Gamma(\cE(1,1))$ is a real section in the
kernel of the c-projective Hessian \textup(which might be identically
zero\textup).
\end{prop}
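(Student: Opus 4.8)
The plan is to compute the second covariant derivative of $\sigma=\det\ms$ directly from the metrisability equation~\eqref{metriequ1}, working throughout with the \emph{cofactor} (adjugate) of $\ms^{\bar b c}$ rather than with its inverse; this is exactly what allows the argument to proceed without any nondegeneracy hypothesis, so that the possibly-degenerate case (where $\sigma$ might even vanish identically) is covered on the same footing. Since $J$ is integrable the torsion vanishes, so \eqref{metriequ1} reads $\nabla_a\ms^{\bar b c}=-\delta_a{}^cX^{\bar b}$ and $\nabla_{\bar a}\ms^{\bar b c}=-\delta_{\bar a}{}^{\bar b}X^c$ with $X^{\bar b}=\overline{X^b}$. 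It then suffices to verify the unbarred component $\nabla_{(a}\nabla_{b)}\sigma+\Rho_{(ab)}\sigma=0$ of $D^\cW\sigma$, because $\sigma$ is real and the $(\bar a\bar b)$ component is its complex conjugate (using $\overline{\Rho_{ab}}=\Rho_{\bar a\bar b}$ from Corollary~\ref{Rho_changes}).

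First I would introduce the cofactor
\[
\Theta_{\bar e a}:=\tfrac{1}{(n-1)!}\,\bar\epst_{\bar e\bar a_2\cdots\bar a_n}\,\epst_{ab_2\cdots b_n}\,\ms^{\bar a_2b_2}\cdots\ms^{\bar a_nb_n}
\]
and record the two facts that drive the proof. The first is the adjugate identity $\ms^{\bar c d}\Theta_{\bar c a}=\sigma\,\delta_a{}^d$, the abstract-index form of the pointwise statement $M\,\mathrm{adj}(M)=(\det M)\,\mathrm{Id}$, which holds regardless of the rank of $\ms$ (the normalisation is fixed by tracing back to~\eqref{deteta}). The second is the differentiation formula $\nabla_a\sigma=-X^{\bar e}\Theta_{\bar e a}$, obtained by differentiating the definition~\eqref{deteta}, using that $\epst$ and $\bar\epst$ are parallel for every connection in $[\nabla]$, and substituting $\nabla_a\ms^{\bar b c}=-\delta_a{}^cX^{\bar b}$.

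Differentiating once more gives $\nabla_b\nabla_a\sigma=-(\nabla_bX^{\bar e})\Theta_{\bar e a}-X^{\bar e}\nabla_b\Theta_{\bar e a}$, and the crux is to evaluate the two terms. For the first, the prolongation identity~\eqref{e1} reduces in the integrable case to $\nabla_bX^{\bar e}=\Rho_{bd}\ms^{\bar e d}$; combined with the adjugate identity this term becomes exactly $-\Rho_{ba}\sigma$. For the second, I would expand $\nabla_b\Theta_{\bar e a}$ by the product rule, substitute $\nabla_b\ms^{\bar a_k b_k}=-\delta_b{}^{b_k}X^{\bar a_k}$ into each of the $(n-1)$ equal summands, and arrive at an expression in which a single $X^{\bar f}$ is contracted into $\bar\epst$ next to the free index $\bar e$. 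Contracting with $X^{\bar e}$ then pairs the symmetric tensor $X^{\bar e}X^{\bar f}$ against the factor $\bar\epst_{\bar e\bar f\cdots}$, which is skew in $\bar e,\bar f$; the contraction therefore vanishes. Hence $\nabla_b\nabla_a\sigma+\Rho_{ba}\sigma=0$, which is even stronger than the symmetrised identity required, and taking the symmetric part (together with its conjugate) yields $D^\cW\sigma=0$.

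The only step I expect to demand care is the second term: correctly combining the $(n-1)$ summands of $\nabla_b\Theta_{\bar e a}$ so that the skew-versus-symmetric cancellation becomes manifest. Everything else is a short formal manipulation. I would stress that the argument never inverts $\ms$, which is precisely why it applies to degenerate solutions; for nondegenerate $\ms$ one could alternatively invoke Proposition~\ref{compKaehler} together with Proposition~\ref{J_invariant_Ricci} and the $J$-invariance~\eqref{KaehlerCurv} of the K\"ahler Ricci tensor, but that route does not reach the degenerate solutions that the present statement explicitly allows.
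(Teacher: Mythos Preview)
Your argument is correct, and the skew--symmetric cancellation for the second term is exactly as you describe: after substituting $\nabla_b\ms^{\bar a_k b_k}=-\delta_b{}^{b_k}X^{\bar a_k}$, the $(n-1)$ summands are equal by the total antisymmetry of $\bar\epst$ and $\epst$, and the contraction of $X^{\bar e}X^{\bar f}$ against $\bar\epst_{\bar e\bar f\cdots}$ vanishes. The identity $\nabla_bX^{\bar e}=\Rho_{bd}\ms^{\bar e d}$ is indeed the torsion-free specialisation of~\eqref{e1}, so the first term collapses to $-\Rho_{ba}\sigma$ via the adjugate identity.

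Your route is, however, genuinely different from the paper's. The paper does not compute: it restricts to the open set $U=\{\sigma\neq 0\}$, where $\ms$ corresponds to a compatible \bps/K\"ahler metric by Proposition~\ref{compKaehler}, invokes Proposition~\ref{J_invariant_Ricci} together with~\eqref{KaehlerCurv} to conclude $D^\cW\sigma=0$ on $U$, extends to $\overline U$ by continuity, and notes that $\sigma$ vanishes identically on $M\setminus\overline U$. So the alternative you mention in your last paragraph \emph{is} the paper's proof, once patched with this density-and-continuity step. What your approach buys is uniformity: the adjugate is polynomial in $\ms$, so no case split or continuity argument is needed, and you obtain the unsymmetrised identity $\nabla_b\nabla_a\sigma+\Rho_{ba}\sigma=0$ directly (consistently with $\Rho_{[ab]}=0$ and $\nabla_{[a}\nabla_{b]}\sigma=0$ in the integrable case). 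What the paper's approach buys is economy: it reuses the geometric content of Propositions~\ref{compKaehler} and~\ref{J_invariant_Ricci} rather than reproving it by hand.
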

\begin{proof} Let $U\subset M$ be the open subset (possibly empty),
where $\sigma$ is nowhere vanishing or equivalently where $\ms^{\bar b c}$ is
invertible.  By Proposition \ref{compKaehler} the section $\ms^{\bar b c}
(\det\ms)\in\Gamma(T^{0,1}M\otimes T^{1,0}M)$ defines the inverse of a
compatible \bps/K\"ahler metric on $U$ and its Levi-Civita connection on
$U$ is $\nabla^{\sigma}$.  Since the Ricci tensor of a \bps/K\"ahler metric
is $J$-invariant~\eqref{KaehlerCurv}, i.e.~$\Ric_{ab}=\Ric_{\bar a\bar b}=0$, we
deduce from Proposition \ref{J_invariant_Ricci} that $\sigma$ satisfies
$D^{\cW}\sigma=0$ on $U$ whence, by continuity, on~$\overline{U}$. 
Since $\sigma$ vanishes identically on the open set $M\setminus \overline{U}$,
we obtain that $D^\cW\sigma$ is identically zero on all of $M$.
\end{proof}

\begin{rem} For an almost c-projective manifold admitting a
compatible $(2,1)$-symplectic metric $g$, the section
$\scale_g\in\Gamma(\cE(1,1))$ is in the kernel of the c-projective Hessian, if
the Ricci tensor of the canonical connection $\nabla$ of $g$ satisfies
\begin{equation*}
\Ric_{(ab)}=-\nabla_{\bar c} T^{\bar c}{}_{(ab)}
=\tfrac{1}{4}\nabla_{\bar c} N^{\bar c}{}_{(ab)}=0,
\end{equation*}
where we use $g$ to raise and lower indices. It is well known that nearly
K\"ahler manifolds can be characterised as $(2,1)$-symplectic manifolds such
that $T_{abc}$ is totally skew (see e.g.\,\cite{Kob}). It then follows
straightforwardly from the identities \eqref{21Curv1a}--\eqref{21Curv1b} that
the canonical connection of a nearly K\"ahler manifold preserves its torsion,
i.e.~$\nabla T=-\frac{1}{4}\nabla N=0$ (see \cite{Kir,Nagy}), and
$R_{ab\bar c d}$ vanishes identically. Hence, Proposition
\ref{det_of_metrisability} extends to the nearly K\"ahler setting.
\end{rem}

\subsection{Prolongation of the c-projective Hessian}
\label{Section_Prolong_Hessian}

The c-projective Hessian will play a crucial role in the sequel. We therefore
prolong the associated equation. Suppose $\sigma\in\Gamma(\cE(1,1))$ is in the
kernel of the c-projective Hessian:
\begin{equation}\label{cprojHessian2}
\nabla_ {(a}\nabla_{b)}\sigma+\Rho_{(ab)}\sigma=0\quad\quad
\nabla_ {(\bar a}\nabla_{\bar b)}\sigma+\Rho_{(\bar a\bar b)}\sigma=0,
\end{equation}
Then we deduce from \eqref{another_curvature_on_densities} that
\eqref{cprojHessian2} is equivalent to
\begin{align}
\nabla_{a}\nabla_{b}\sigma+\Rho_{ab}\sigma
=\nabla_{[a}\nabla_{b]}\sigma+\Rho_{[ab]}\sigma
=\tfrac{1}{2(n+1)}(\nabla_{\bar c} T_{ab}{}^{\bar c})\sigma
-\tfrac{1}{2}T_{ab}{}^{\bar c}\nabla_{\bar c}\sigma\label{Phi}\\
\nabla_{\bar a}\nabla_{\bar b}\sigma+\Rho_{\bar a\bar b}\sigma
=\nabla_{[\bar a}\nabla_{\bar b]}\sigma+\Rho_{[\bar a\bar b]}\sigma
=\tfrac{1}{2(n+1)}(\nabla_{c} T_{\bar a\bar b}{}^{c})\sigma
-\tfrac{1}{2}T_{\bar a\bar b}{}^{c}\nabla_{c}\sigma\label{Psi},
\end{align}
where we abbreviate the left-hand sides by $\Phi_{ab}$ respectively
$\Psi_{\bar a \bar b}$, which depend linearly on $\sigma$ and on
$\lambda_{\bar a}:= \nabla_{\bar a}\sigma$ respectively $\mu_a:=
\nabla_a\sigma$.  {From} (\ref{curvature_on_densities}) we moreover deduce that
\[
\nabla_a\lambda_{\bar b}+\Rho_{a\bar b}\sigma
=\nabla_a\nabla_{\bar b}\sigma+\Rho_{a\bar b}\sigma
=\nabla_{\bar b}\nabla_a\sigma+\Rho_{\bar b a}\sigma
=\nabla_{\bar b} \mu_a+\Rho_{\bar b a}\sigma,
\]
which we shall denote by $\zeta_{a\bar b}\in \Wedge^{1,1}M(1,1)$.
Consequently, we have
\begin{align}\label{eq1Hessian}
&\nabla_a\nabla_{\bar c} \mu_b-\nabla_{\bar c}\nabla_a\mu_b=\\
&\nabla_a\zeta_{b\bar c}-(\nabla_{a}\Rho_{\bar c b})\sigma-\Rho_{\bar c b}\mu_a
+ (\nabla_{\bar c}\Rho_{ab})\sigma+\Rho_{ab}\lambda_{\bar c}+\Sigma_{ab\bar c},
\nonumber
\end{align}
where 
\begin{align}\label{Sigma}
\Sigma_{ab\bar c}:=&
-\tfrac{1}{2(n+1)}((\nabla_{\bar c}\nabla_{\bar d} T_{ab}{}^{\bar d})\sigma
+(\nabla_{\bar d} T_{ab}{}^{\bar d})\lambda_{\bar c})\\
&+\tfrac{1}{2}((\nabla_{\bar c}T_{ab}{}^{\bar d})\lambda_{\bar d}
-T_{ab}{}^{\bar d}\Rho_{\bar c\bar d}+T_{ab}{}^{\bar d}\Psi_{\bar c\bar d}) \nonumber
\end{align}
depends linearly on $\sigma$, $\mu_a$ and $\lambda_{\bar a}$. {From} Proposition
\ref{rosetta} and the identity (\ref{curvature_on_densities}) we obtain that
the expression (\ref{eq1Hessian}) must be also equal to
\begin{align}
\nabla_a\nabla_{\bar c} \mu_b-\nabla_{\bar c}\nabla_a\mu_b=
-W_{a\bar c}{}^d{}_b\mu_d-\Rho_{\bar c b}\mu_a-\Rho_{a\bar c}\mu_b,
\end{align}
which shows that
\begin{align}
\nabla_a\zeta_{b\bar c}=-\Rho_{ab}\lambda_{\bar c}
-\Rho_{a\bar c}\mu_b-W_{a\bar c}{}^{d}{}_b\mu_d+C_{a\bar c b}\sigma-\Sigma_{ab\bar c}.
\end{align}
Similarly, one shows that
\begin{align}
\nabla_{\bar a}\zeta_{b\bar c}=-\Rho_{\bar a b}\lambda_{\bar c}-\Rho_{\bar a\bar c}\mu_b-
W_{\bar ab}{}^{\bar d}{}_{\bar c}\lambda_{\bar d}+C_{\bar ab\bar c}\sigma-\Xi_{\bar a b\bar c},
\end{align}
where
\begin{align}\label{Xi}
\Xi_{\bar a b\bar c}:=&-\tfrac{1}{2(n+1)}((\nabla_{b}\nabla_dT_{\bar a\bar c}{}^d)\sigma
+(\nabla_dT_{\bar a\bar c}{}^d)\mu_b)\\
&+\tfrac{1}{2}((\nabla_{b}T_{\bar a\bar c}{}^d)\mu_d-T_{\bar a\bar c}{}^d\Rho_{bd}\sigma
+T_{\bar a\bar c}{}^d\Phi_{bd})\nonumber
\end{align}
depends linearly on $\sigma$, $\mu_a$ and $\lambda_{\bar a}$.  In summary, we
have shown the following theorem:
\begin{thm}\label{HessianProlongation} 
Suppose $(M, J, [\nabla])$ is a c-projective manifold. Then the canonical
projection $\pi\colon \cW_\C\to\cE(1,1)$ induces a bijection
between sections of $\cW_\C$ that are parallel for the linear
connection
\begin{align}\label{Hessian_prolongation}
\nabla^{\cW_\C}_a\begin{pmatrix} \sigma\\
\mu_b\enskip|\enskip \lambda_{\bar{b}}\\ \zeta_{b\bar c}\end{pmatrix}
&+\begin{pmatrix} 0\\  -\Phi_{ab}\enskip|\enskip 0\\ 
W_{a\bar c}{}^d{}_b\mu_d-C_{a\bar cb}\sigma+\Sigma_{ab\bar c}
\end{pmatrix}\\
\label{Hessian_prolongation2}
\nabla^{\cW_\C}_{\bar a}\begin{pmatrix} \sigma\\
\mu_b\enskip|\enskip \lambda_{\bar{b}}\\ \zeta_{b\bar c}\end{pmatrix}
&+\begin{pmatrix}  0\\  0\enskip|\enskip -\Psi_{\bar a\bar b}\\
W_{\bar a b}{}^{\bar d}{}_{\bar c} \lambda_{\bar d}
-C_{\bar ab\bar c}\sigma+\Xi_{\bar a b\bar c}
\end{pmatrix}.
\end{align}
and sections $\sigma\in\Gamma(\cE(1,1))$ in the kernel of the c-projective
Hessian, where $\Phi_{ab}$, $\Psi_{\bar a\bar b}$, $\Sigma_{ab\bar c}$ and
$\Xi_{\bar a b\bar c}$ are defined as in \eqref{Phi}, \eqref{Psi},
\eqref{Sigma} and \eqref{Xi}.  The inverse of this bijection is induced by a
linear differential operator $L$, which, for a choice of connection
$\nabla\in[\nabla]$, can be written as
\begin{align*}
L\colon \cE(1,1)&\to \cW_\C\\
L(\sigma)&=\begin{pmatrix} \sigma \\ \nabla_a\sigma \enskip|\enskip
\nabla_{\bar{a}}\sigma\\
\nabla_a\nabla_{\bar b}\sigma+\Rho_{a\bar b}\sigma \end{pmatrix}.
\end{align*}
\end{thm}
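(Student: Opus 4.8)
The plan is to recognise this as the standard prolongation of a first BGG operator, exactly parallel to Proposition~\ref{PDEcotractor}, Proposition~\ref{standard_prolong} and Theorem~\ref{MetriProlongation}: I build the splitting operator $L$ and check that $\pi$ and $L$ are mutually inverse bijections between $\ker D^\cW$ and the sections parallel for the modified connection \eqref{Hessian_prolongation}--\eqref{Hessian_prolongation2}. Since $\pi\circ L=\mathrm{id}_{\cE(1,1)}$ holds by inspection of the top slot of $L$, the whole statement reduces to two implications: that $L(\sigma)$ is parallel whenever $\sigma\in\ker D^\cW$, and that every parallel section $s$ equals $L(\pi(s))$ with $\pi(s)\in\ker D^\cW$.

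For the forward implication I would fix $\sigma$ with $D^\cW\sigma=0$, set $\mu_a:=\nabla_a\sigma$, $\lambda_{\bar a}:=\nabla_{\bar a}\sigma$ and $\zeta_{a\bar b}:=\nabla_a\nabla_{\bar b}\sigma+\Rho_{a\bar b}\sigma$ (so that $s=L(\sigma)$), and verify that each slot of \eqref{Hessian_prolongation}--\eqref{Hessian_prolongation2} applied to $s$ vanishes. The top slots vanish by the definitions of $\mu,\lambda$. For the $\mu$- and $\lambda$-slots the vanishing is precisely the content of \eqref{Phi} and \eqref{Psi}: by \eqref{another_curvature_on_densities} the skew part of $\nabla_a\nabla_b\sigma+\Rho_{ab}\sigma$ always equals the torsion expression abbreviated $\Phi_{ab}$, so subtracting $\Phi_{ab}$ leaves exactly the symmetric part, which is zero by the Hessian equation \eqref{cprojHessian2}. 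The mixed slots vanish because $\zeta_{a\bar b}$ is defined as the connection coefficient appearing there, where one uses \eqref{curvature_on_densities} at weight $(1,1)$ (the torsion term drops since $k=\ell$) to see that $\nabla_a\nabla_{\bar b}\sigma+\Rho_{a\bar b}\sigma=\nabla_{\bar b}\nabla_a\sigma+\Rho_{\bar b a}\sigma$, so the two mixed slots are consistent. The only genuine computation is the bottom slot: here I would compute $\nabla_a\nabla_{\bar c}\mu_b-\nabla_{\bar c}\nabla_a\mu_b$ in two ways — once by feeding in the already-established expressions for $\nabla_a\mu_b$ and $\nabla_{\bar c}\mu_b=\zeta_{b\bar c}-\Rho_{\bar c b}\sigma$, and once from the Ricci identity via the curvature decomposition of Proposition~\ref{rosetta} together with \eqref{curvature_on_densities} — and equate them. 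Collecting the torsion contributions into $\Sigma_{ab\bar c}$ as in \eqref{Sigma} turns this identity into exactly the vanishing of the bottom slot of \eqref{Hessian_prolongation}; the conjugate computation yields the bottom slot of \eqref{Hessian_prolongation2} with $\Xi_{\bar a b\bar c}$ as in \eqref{Xi}.

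For the converse I would start from a section $s=(\sigma,\mu_b\mid\lambda_{\bar b},\zeta_{b\bar c})$ parallel for \eqref{Hessian_prolongation}--\eqref{Hessian_prolongation2} and read off the slots in order. The two top slots force $\mu_a=\nabla_a\sigma$ and $\lambda_{\bar a}=\nabla_{\bar a}\sigma$; the $\mu$-slot of \eqref{Hessian_prolongation} and the $\lambda$-slot of \eqref{Hessian_prolongation2} then say $\nabla_a\nabla_b\sigma+\Rho_{ab}\sigma=\Phi_{ab}$ and $\nabla_{\bar a}\nabla_{\bar b}\sigma+\Rho_{\bar a\bar b}\sigma=\Psi_{\bar a\bar b}$, i.e.\ the symmetric parts vanish, which is $D^\cW\sigma=0$; and the two remaining mixed slots force $\zeta_{a\bar b}=\nabla_a\nabla_{\bar b}\sigma+\Rho_{a\bar b}\sigma$. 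Hence $s=L(\pi(s))$ with $\pi(s)=\sigma\in\ker D^\cW$, and combined with $\pi\circ L=\mathrm{id}$ this gives the asserted bijection.

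The hard part is the bottom-slot verification, which is the bookkeeping of all torsion and curvature correction terms; everything else is algebraic or definitional. I expect the cleanest route is to keep Proposition~\ref{rosetta} and \eqref{curvature_on_densities} in hand and to absorb the torsion contributions step by step into $\Sigma_{ab\bar c}$. Note that under the standing hypothesis that $(M,J,[\nabla])$ is a genuine c-projective manifold, $J$ is integrable and the torsion $T_{ab}{}^{\bar c}$ vanishes, so $\Phi,\Psi,\Sigma,\Xi$ are all identically zero and the prolongation connection reduces to its bare tractor part \eqref{Hessiantractor1}--\eqref{Hessiantractor2}; in that case the bottom-slot identity is the plain Ricci identity for $\mu_b$, and the argument is short. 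The general torsion terms recorded in the statement are precisely those needed if one drops integrability, and the same proof goes through verbatim.
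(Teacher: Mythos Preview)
Your proposal is correct and mirrors the paper's approach exactly: the paper carries out precisely this prolongation computation in the paragraphs immediately preceding the theorem (the theorem is then stated as a summary), defining $\mu_a,\lambda_{\bar a},\zeta_{a\bar b}$ from $\sigma$, using \eqref{another_curvature_on_densities} and \eqref{curvature_on_densities} for the middle slots, and obtaining the bottom slot by computing $\nabla_a\nabla_{\bar c}\mu_b-\nabla_{\bar c}\nabla_a\mu_b$ two ways via Proposition~\ref{rosetta}. Your closing observation that in the genuinely c-projective (integrable) case all of $\Phi,\Psi,\Sigma,\Xi$ vanish is also correct and a useful simplification.
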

The following Proposition characterises normal solutions of
$D^\cW(\sigma)=0$, i.e.~real sections $\sigma=\bar\sigma\in
\Gamma(\cE(1,1))$ in the kernel of the c-projective Hessian that in
addition satisfy:
\begin{align}\label{normalsolHessian}
\Phi_{ab}&=0 & \Psi_{\bar a\bar b}&=0\\
W_{a\bar c}{}^{d}{}_b\nabla_d\sigma-C_{a\bar c b}\sigma&+\Sigma_{ab\bar c}=0
&W_{\bar ab}{}^{\bar d}{}_{\bar c}\nabla_{\bar d}\sigma-C_{\bar ab\bar c}\sigma
&+\Xi_{\bar a b\bar c}=0\label{normalsolHessian2},
\end{align}
where $\Phi$, $\Psi$, $\Sigma$ and $\Xi$ depend linearly on $\sigma$ and
$\nabla\sigma$.

\begin{prop} Let $(M, J,[\nabla])$ be an almost c-projective manifold and
suppose that $\sigma\in \Gamma(\cE(1,1))$ is a real nowhere vanishing section
in the kernel of the c-projective Hessian. Then $\sigma$ satisfies
\textup{(\ref{normalsolHessian})} if and only if the Ricci tensor
$\Ric_{\alpha\beta}$ of the special connection $\nabla^{\sigma}\in[\nabla]$
corresponding to $\sigma$ satisfies
\[
\Ric_{ab}=0\quad\quad\text{and}\quad\quad
\nabla_a^{\sigma}\Ric_{b\bar c}=0=\nabla_{\bar a}^{\sigma}\Ric_{b\bar c}.
\] 
If the Ricci tensor $\Ric_{b\bar c}=\Ric_{\bar c b}$ is, in addition, nondegenerate,
then it defines a $(2,1)$-symplectic Hermitian metric satisfying the
generalised Einstein condition \eqref{generalised_Einstein} with canonical
connection $\nabla^{\sigma}$.
\end{prop}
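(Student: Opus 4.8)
The plan is to work throughout with the special connection $\nabla^\sigma\in[\nabla]$ determined by $\sigma$ (Proposition~\ref{scalebundle}), for which $\sigma$ is parallel. Relative to this scale the splitting operator $L$ of Theorem~\ref{HessianProlongation} sends $\sigma$ to the section with $\mu_b=\nabla^\sigma_b\sigma=0$, $\lambda_{\bar b}=\nabla^\sigma_{\bar b}\sigma=0$ and $\zeta_{b\bar c}=\Rho_{b\bar c}\sigma$. The key observation is that, once $\mu$ and $\lambda$ vanish, each curvature-correction term in \eqref{Hessian_prolongation} and \eqref{Hessian_prolongation2} collapses to $\sigma$ times a curvature quantity of $\nabla^\sigma$, so that the normality conditions \eqref{normalsolHessian}--\eqref{normalsolHessian2} translate into conditions on the Ricci and Cotton--York tensors of $\nabla^\sigma$.

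First I would treat $\Phi_{ab}=0$ and $\Psi_{\bar a\bar b}=0$. Evaluating \eqref{Phi} in the scale $\nabla^\sigma$ gives $\Phi_{ab}=\Rho_{ab}\sigma$; since $\sigma$ lies in the kernel of the c-projective Hessian, Proposition~\ref{J_invariant_Ricci} forces $\Rho_{(ab)}=0$, whence $\Rho_{ab}=\Rho_{[ab]}=\tfrac1{n+1}\Ric_{[ab]}$ and $\Ric_{ab}=(n+1)\Rho_{ab}$. As $\sigma$ is nowhere zero, $\Phi_{ab}=0$ is equivalent to $\Ric_{ab}=0$, and conjugately $\Psi_{\bar a\bar b}=0\iff\Ric_{\bar a\bar b}=0$.

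Next, assuming $\Ric_{ab}=0$, I would simplify the remaining corrections. For a special connection $\Ric_{[ab]}=\tfrac12\nabla^\sigma_{\bar c}T_{ab}{}^{\bar c}$ (Section~\ref{curvature_operators_on_densities}), so $\Ric_{ab}=0$ gives $\nabla^\sigma_{\bar d}T_{ab}{}^{\bar d}=0$; feeding this, together with $\mu=\lambda=0$ and $\Psi_{\bar c\bar d}=\Rho_{\bar c\bar d}\sigma$, into \eqref{Sigma} makes $\Sigma_{ab\bar c}$ vanish, and likewise $\Xi_{\bar a b\bar c}=0$ from \eqref{Xi}. The condition in \eqref{normalsolHessian2} then reduces to $C_{a\bar c b}\sigma=0$; and since $\Ric_{ab}=0$ forces $\Rho_{ab}=0$, the definition \eqref{what_is_cottonyork} gives $C_{a\bar c b}=\nabla^\sigma_a\Rho_{\bar c b}=\tfrac1{n+1}\nabla^\sigma_a\Ric_{\bar c b}$. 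Hence \eqref{normalsolHessian2} is equivalent to $\nabla^\sigma_a\Ric_{b\bar c}=0$, and its conjugate to $\nabla^\sigma_{\bar a}\Ric_{b\bar c}=0$. Running these equivalences in both directions establishes the stated characterisation of normal solutions.

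For the final assertion, suppose $\Ric_{b\bar c}$ is nondegenerate. We know from Section~\ref{curvature_operators_on_densities} that $\Ric_{b\bar c}=\Ric_{\bar c b}$, and reality of the Ricci tensor makes $\Ric_{b\bar c}$ a (possibly indefinite) Hermitian form; I would set $g_{b\bar c}:=\Ric_{b\bar c}$. The conditions just derived say precisely that $\nabla^\sigma$ annihilates $g_{b\bar c}$ in both the holomorphic and antiholomorphic directions, and since $\nabla^\sigma$ preserves the type decomposition it annihilates the associated real metric $g_{\alpha\beta}$. As $\nabla^\sigma$ is a minimal complex connection its torsion is $-\tfrac14 N^J$, so Proposition~\ref{minimal_Hermitian_connection} identifies $g$ as a $(2,1)$-symplectic metric with canonical connection $\nabla^\sigma$; its Ricci tensor is $\Ric$, which satisfies $\Ric_{ab}=0$ and $\Ric_{a\bar b}=g_{a\bar b}$, i.e.\ the generalised Einstein condition \eqref{generalised_Einstein} with $k=1$. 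I expect the main obstacle to be the bookkeeping in the third step: verifying that the torsion-dependent terms in $\Sigma_{ab\bar c}$ and $\Xi_{\bar a b\bar c}$ genuinely cancel once $\Ric_{ab}=0$ is imposed, so that normality is governed purely by $C$ and $\Ric$.
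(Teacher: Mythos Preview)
Your proposal is correct and follows essentially the same approach as the paper: both work in the scale $\nabla^\sigma$, observe that $\Phi_{ab}=\Rho_{ab}\sigma$ reduces \eqref{normalsolHessian} to $\Ric_{ab}=0$, verify that $\Sigma_{ab\bar c}$ and $\Xi_{\bar ab\bar c}$ then vanish, and identify \eqref{normalsolHessian2} with the Cotton--York condition $C_{a\bar cb}=\nabla^\sigma_a\Rho_{\bar cb}=0$. Your treatment is in fact slightly more explicit than the paper's---you spell out why the torsion terms in $\Sigma_{ab\bar c}$ cancel and you argue the final (2,1)-symplectic claim via Proposition~\ref{minimal_Hermitian_connection}, whereas the paper leaves both as immediate.
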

\begin{proof}
Let $\sigma=\bar\sigma\in\Gamma(\cE(1,1))$ be a real nowhere 
vanishing section in the kernel of (\ref{cprojHessian}). With respect to the 
special connection $\nabla^{\sigma}\in[\nabla]$ corresponding to $\sigma$, the
equations (\ref{normalsolHessian}) reduce to
\begin{align*}
0&=\tfrac{1}{2(n+1)}\nabla^\sigma_{\bar c} T_{ab}{}^{\bar c}
=\tfrac{1}{n+1}\Ric_{[ab]}=\Rho_{[ab]}\\
0&=\tfrac{1}{2(n+1)}\nabla^\sigma_{c} T_{\bar a\bar b}{}^{c}
=\tfrac{1}{n+1}\Ric_{[\bar a\bar b]}=\Rho_{[\bar a\bar b]},
\end{align*}
which, since $\sigma$ is in the kernel of the c-projective Hessian, is
equivalent to $\Ric_{ab}=0=\Ric_{\bar a\bar b}$. If these equations are
satisfied, is follows immediately that also $\Sigma_{ab\bar c}$ and $\Xi_{\bar
  a b\bar c}$ are identically zero (with respect to $\nabla^\sigma$) and that
the equations (\ref{normalsolHessian2}) reduces to
\begin{align*}
C_{a\bar c b}\sigma&=(\nabla_a^{\sigma}\Rho_{\bar c b})\sigma
=(\nabla_a^{\sigma}\Rho_{b\bar c})\sigma
=\tfrac{1}{n+1}(\nabla_a^{\sigma}\Ric_{b\bar c})\sigma=0\\
C_{\bar ab\bar c}\sigma&=(\nabla_{\bar a}^{\sigma}\Rho_{b \bar c})\sigma
=\tfrac{1}{n+1}(\nabla_{\bar a}^{\sigma}\Ric_{b\bar c})\sigma=0
\end{align*}
which proves the claim, since $\sigma$ is nowhere vanishing.
\end{proof}

\section{Metrisability, conserved quantities and integrability}
\label{sec:integrability}

In this section we investigate the implications of mobility $\geq 2$ for the
geodesic flow of a \bps/K\"ahler manifold $(M,J,g)$: we show that any metric
$\tilde g$ c-projectively equivalent, but not homothetic, to $g$ gives rise to
families of commuting linear and quadratic integrals for the geodesic flow of
$g$, and characterise when this implies integrability of the flow.

\subsection{Conserved quantities for the geodesic flow}

For any smooth manifold $M$, the total space of its cotangent bundle $p\colon
T^*M\to M$ has a canonical exact symplectic structure $d\Theta$, where
$\Theta\colon T T^*M\to \R$ is the \emph{tautological $1$-form} defined by
$\Theta_\alpha(X)=\alpha(Tp(X))$.  The Poisson bracket of smooth functions on
$T^*M$ preserves the subalgebra
\begin{equation*}
C^\infty_{\mathrm{pol}}(T^*M,\R)\cong\bigoplus_{k\geq 0} C^\infty(M,S^kTM)
\end{equation*}
of functions which are polynomial on the fibres of $p$, where a symmetric
tensor $Q$ of valence $(k,0)$, i.e.~a section of $S^kTM$, is identified with
the function $\alpha\mapsto Q(\alpha,\ldots,\alpha)$ on $T^*M$ (which is
homogeneous of degree $k$ on each fibre of $p$). The induced bracket
\begin{equation*}
\{\cdot,\cdot\}\colon C^\infty(M,S^jTM)\times C^\infty(M,S^k TM)
\to C^\infty(S^{j+k-1}TM)
\end{equation*}
on symmetric multivectors is sometimes called the (symmetric)
\emph{Schouten--Nijenhuis} bracket. It may be computed using any torsion-free
connection $\nabla$ on $TM$ as
\begin{equation}\label{Schouten_Nijenhuis}
\{Q,R\}^{\alpha\cdots\epsilon}= 
j\,Q^{\zeta(\alpha\cdots\beta}\nabla_\zeta R^{\gamma\delta\cdots\epsilon)} 
- k\,R^{\zeta(\delta\cdots\epsilon}\nabla_\zeta Q^{\alpha\cdots\beta\gamma)}.
\end{equation}
When $j=1$ and $Q$ is a vector field, $\{Q,R\}$ is just the Lie derivative
$\cL_Q R$.

Now suppose $g$ is a \bps/Riemannian metric on $M$. Then the inverse metric
$g^{\alpha\beta}$ induces a function on $T^*M$ which is quadratic on each
fibre. The flow of the corresponding Hamiltonian vector field on $T^*M$ is the
image of the geodesic flow on $TM$ under the vector bundle isomorphism $TM\to
T^*M$ defined by $g$.

\begin{defin} A smooth function $I\colon TM\to \R$ on a
\bps/Riemannian manifold $(M,g)$ is called an \emph{integral of the geodesic
  flow} (or an \emph{integral}) of $g$, if for any affinely parametrised
geodesic $\gamma$, the function $s\mapsto I(\dot\gamma(s))$ is constant.
\end{defin}
The interpretation of the geodesic flow as a Hamiltonian flow on $T^*M$ allows
us to describe integrals as functions on $T^*M$.
\begin{prop} $Q\colon T^*M\to \R$ defines an integral $I$ of the geodesic flow
of $g$ if and only if it is a conserved quantity for $g^{\alpha\beta}$
i.e.~has vanishing Poisson bracket with $g^{\alpha\beta}$.
\end{prop}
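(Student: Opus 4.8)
The plan is to transport the entire question to the cotangent bundle via the bundle isomorphism $\flat\colon TM\to T^*M$ induced by $g$, and to exploit the Hamiltonian description of the geodesic flow recalled just above the statement. I regard the inverse metric as the smooth function $H:=\tfrac12 g^{\alpha\beta}$ on $T^*M$, quadratic on each fibre; by the remark preceding the proposition, the Hamiltonian flow $\phi_s$ of $H$ with respect to the canonical symplectic form $d\Theta$ is precisely the image under $\flat$ of the geodesic flow on $TM$. Thus an affinely parametrised geodesic $\gamma$ corresponds to the curve $s\mapsto \flat(\dot\gamma(s))$, which is an integral curve of the Hamiltonian vector field $X_H$, and under the natural identification $I=Q\circ\flat$ we have $I(\dot\gamma(s))=Q(\flat(\dot\gamma(s)))=(Q\circ\phi_s)(\flat(\dot\gamma(0)))$.

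First I would invoke the fundamental identity of Hamiltonian mechanics: for any smooth $Q$ on $T^*M$ and any integral curve of $X_H$,
\[
\frac{d}{ds}\,\bigl(Q\circ\phi_s\bigr)=\{Q,H\}\circ\phi_s,
\]
which follows directly from $X_HQ=\{Q,H\}$. Consequently $s\mapsto I(\dot\gamma(s))$ is constant along every geodesic if and only if $\{Q,H\}$ vanishes along the image of every such curve; since $\flat$ is a fibrewise linear isomorphism and geodesics exist with arbitrary initial velocity through every point, these images cover $T^*M$, so the condition holds if and only if $\{Q,H\}\equiv 0$ on all of $T^*M$. Finally, $\R$-bilinearity of the Poisson bracket gives $\{Q,H\}=\tfrac12\{Q,g^{\alpha\beta}\}$, so this is exactly the condition $\{Q,g^{\alpha\beta}\}=0$, i.e.\ that $Q$ is a conserved quantity for $g^{\alpha\beta}$, as required.

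The computations here are all routine, and the only point demanding care is the identification of the geodesic spray on $TM$ with $X_H$ on $T^*M$ under $\flat$; this is the substance of the remark quoted above the statement and may be taken as given, but were one to verify it directly, one would check in local coordinates that Hamilton's equations for $H=\tfrac12 g^{\alpha\beta}p_\alpha p_\beta$ reproduce the geodesic equation $\ddot{x}^\gamma+\Gamma^\gamma_{\alpha\beta}\dot{x}^\alpha\dot{x}^\beta=0$. I expect the only genuinely delicate issue to be ensuring the covering argument is airtight---that every point of $T^*M$ lies on some cogeodesic---which reduces to the standard local existence of geodesics with prescribed initial position and velocity.
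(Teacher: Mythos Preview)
Your argument is correct. The paper itself does not supply a proof of this proposition: it is stated as a standard fact from Hamiltonian mechanics, with the surrounding text having already identified the geodesic flow on $TM$ with the Hamiltonian flow of the function $\alpha\mapsto g^{\alpha\beta}(\alpha,\alpha)$ on $T^*M$. Your write-up fills in exactly that standard argument---transport via $\flat$, the identity $\tfrac{d}{ds}(Q\circ\phi_s)=\{Q,H\}\circ\phi_s$, and the covering of $T^*M$ by cogeodesics---so there is nothing to compare; you have simply supplied the routine verification the paper leaves to the reader.
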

We shall only consider integrals defined by $Q\in
C^\infty_{\mathrm{pol}}(T^*M,\R)$.  Without loss of generality, we may assume
such an integral is homogeneous, hence given by a symmetric tensor
$Q^{\alpha\cdots\gamma}\in C^\infty(M,S^kTM)$. Using the Levi-Civita
connection of $g$ to compute the Schouten--Nijenhuis bracket, we obtain
\begin{equation*}
\{g,Q\}^{\beta\gamma\cdots\epsilon}= 2\, g^{\alpha(\beta}\nabla_\alpha
Q^{\gamma\cdots\epsilon)},
\end{equation*}
which is obtained from $\nabla_{(\alpha}Q_{\gamma\cdots\epsilon)}$ by raising
all indices (using $g$) and multiplying by $2$. When $k=1$, $\{g,Q\}=0$ if and
only if $Q^\alpha$ is a Killing vector field. Thus we recover Clairaut's
Theorem, that Killing vector fields define integrals of the geodesic flow.
More generally, a \emph{symmetric Killing tensor} of valence $(0,\ell)$ on a
\bps/Riemannian manifold $(M, g)$ is a tensor $H_{\alpha\beta\cdots \delta}\in
S^\ell T^*M$ that satisfies
\begin{equation}\label{Killingtensor}
\nabla_{(\alpha} H_{\beta\gamma\cdots\epsilon)}=0, 
\end{equation}
where $\ell\geq 1$ can be any integer and $\nabla$ is the Levi-Civita
connection of $g$.
\begin{cor} $Q^{\alpha\cdots\gamma}\in C^\infty(M,S^kTM)$ defines an integral
of the geodesic flow of $g$ if and only if $Q_{\alpha\cdots\gamma}$ is a
symmetric Killing tensor of $g$.
\end{cor}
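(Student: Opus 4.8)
The plan is to combine the two results immediately preceding the corollary: the characterisation of integrals as conserved quantities (vanishing Poisson bracket with $g^{\alpha\beta}$) furnished by the Proposition above, and the explicit formula \eqref{Schouten_Nijenhuis} for the Schouten--Nijenhuis bracket. First I would invoke that Proposition to reduce the claim to the assertion that a symmetric multivector $Q^{\alpha\cdots\gamma}\in C^\infty(M,S^kTM)$ satisfies $\{g^{\alpha\beta},Q\}=0$ if and only if the fully lowered tensor $Q_{\alpha\cdots\gamma}$ obeys the Killing equation \eqref{Killingtensor} with $\ell=k$.

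Next I would compute $\{g,Q\}$ by feeding $g^{\alpha\beta}$ (valence $j=2$) and $R=Q$ (valence $k$) into \eqref{Schouten_Nijenhuis}, using the Levi-Civita connection $\nabla$ of $g$ to evaluate the bracket; the result is a section of $S^{k+1}TM$. The formula has two terms, and the second one, proportional to $Q^{\zeta(\cdots}\nabla_\zeta g^{\cdots)}$, vanishes identically because the Levi-Civita connection is metric, i.e.\ $\nabla_\zeta g^{\alpha\beta}=0$. Hence only the first term survives and
\[
\{g,Q\}^{\beta\gamma\cdots\epsilon}=2\,g^{\alpha(\beta}\nabla_\alpha Q^{\gamma\cdots\epsilon)}
=2\,\nabla^{(\beta}Q^{\gamma\cdots\epsilon)},
\]
where in the last step I use $g^{\alpha\beta}\nabla_\alpha=\nabla^\beta$.

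Finally I would argue that this right-hand side vanishes precisely when $Q$ is Killing. Since $g_{\alpha\beta}$ is nondegenerate and $\nabla$-parallel, lowering all $k+1$ indices is a vector bundle isomorphism that commutes both with $\nabla$ and with symmetrisation; therefore $\nabla^{(\beta}Q^{\gamma\cdots\epsilon)}\equiv 0$ if and only if $\nabla_{(\alpha}Q_{\beta\cdots\gamma)}\equiv 0$, which is exactly \eqref{Killingtensor}. Stringing the equivalences together gives the corollary.

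I do not expect a serious obstacle here; the substance is entirely contained in the two preceding results, and what remains is bookkeeping. The only points requiring a little care are the vanishing of the second term in \eqref{Schouten_Nijenhuis} (which rests on metricity of $\nabla$) and the passage between raised and lowered indices: one must observe that the symmetrisation in \eqref{Schouten_Nijenhuis} runs over all $k+1$ free indices and that raising and lowering with the parallel metric preserves this symmetrisation, so that nothing is lost in identifying $\nabla^{(\beta}Q^{\gamma\cdots\epsilon)}$ with the covariant Killing operator applied to $Q_{\gamma\cdots\epsilon}$.
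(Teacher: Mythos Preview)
Your proposal is correct and follows exactly the route the paper takes: the paper has already computed $\{g,Q\}^{\beta\gamma\cdots\epsilon}=2\,g^{\alpha(\beta}\nabla_\alpha Q^{\gamma\cdots\epsilon)}$ in the paragraph preceding the corollary (noting this is $\nabla_{(\alpha}Q_{\gamma\cdots\epsilon)}$ with indices raised and a factor of $2$), so the corollary is stated without further proof. Your write-up simply makes explicit the bookkeeping the paper leaves implicit.
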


\subsection{Holomorphic Killing fields}\label{KillingFields}

Let $(M,J,g)$ be a \bps/K\"ahler manifold with Levi-Civita connection $\nabla$
and K\"ahler form $\Kf_{\alpha\beta}=J_{\alpha}{}^\gamma g_{\gamma\beta}$.
\begin{defin}
A vector field $X$ on $(M, J, g)$ is called a \emph{holomorphic Killing
  field} if it preserves the complex structure $J$ and the metric $g$,
i.e.~$\cL_X J=0$ and $\cL_X g=0$.
\end{defin}

\noindent In terms of the Levi-Civita connection $\nabla$ the defining
properties of a holomorphic Killing field can be rewritten as:
\begin{equation}\label{holomorphic_Killing}
\nabla_\alpha X^\beta=-J_{\alpha}{}^\gamma 
J_{\delta}{}^\beta\nabla_{\gamma} X^{\delta}
\quad \text{ and }\quad
\nabla_\alpha X_\beta+\nabla_\beta X_\alpha=0.
\end{equation} 
It follows immediately from the definition of a holomorphic Killing field $X$
that $X$ also preserves the K\"ahler form, which means that $\cL_X\Kf=
d(i_X\Kf)=0$ or equivalently
\begin{equation}\label{symplectic_field}
\nabla_\alpha(\Kf_{\gamma\beta}X^\gamma)
-\nabla_\beta(\Kf_{\gamma\alpha}X^\gamma)=0.
\end{equation}
In particular, this equation is satisfied if there exists a smooth function
$f\colon M\to \R$ such that $-i_X\Kf=d f$, i.e.~$\Kf_{\alpha\gamma}X^\gamma
=\nabla_\alpha f$, or, using the Poisson structure $\Kf^{\alpha\beta}$,
\begin{equation}\label{Killingpotential}
X^{\beta}=\Kf^{\alpha\beta}\nabla_\alpha f= J_{\alpha}{}^\beta\nabla^\alpha f,
\end{equation}
in which case $X$ is said to be the \emph{symplectic gradient} of $f$.

\begin{prop}\label{PoissonCommute} If $X$ and $Y$ are symplectic gradients
of functions $f$ and $h$, then $\cL_X h=0$ if and only if $\cL_Y f =0$ if and
only if $\Kf^{\alpha\beta}(\nabla_\alpha f)(\nabla_\beta h)=0$ if and only if
$\Kf_{\alpha\beta} X^\alpha Y^\beta=0$. These equivalent conditions imply that
$X$ and $Y$ commute\textup: $[X,Y]=0$.
\end{prop}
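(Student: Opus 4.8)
The plan is to reduce the four stated equivalences to a single algebraic identity and then deduce commutativity from the nondegeneracy of the K\"ahler form. First I would record that the Poisson tensor $\Kf^{\alpha\beta}$ is skew: it is the inverse of the $2$-form $\Kf_{\alpha\beta}$, so the skewness of the latter transfers to the former, and this is compatible with the contraction rule \eqref{Poisson_structure}. Writing the two symplectic gradients from \eqref{Killingpotential} as $X^\beta=\Kf^{\alpha\beta}\nabla_\alpha f$ and $Y^\beta=\Kf^{\alpha\beta}\nabla_\alpha h$, I would then unwind each of
\[
\cL_X h=X^\alpha\nabla_\alpha h,\qquad \cL_Y f=Y^\alpha\nabla_\alpha f,\qquad
\Kf^{\alpha\beta}(\nabla_\alpha f)(\nabla_\beta h),\qquad \Kf_{\alpha\beta}X^\alpha Y^\beta,
\]
showing that each equals $\pm\,\Kf^{\alpha\beta}(\nabla_\alpha f)(\nabla_\beta h)$. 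The first and third are literally this quantity; $\cL_Y f$ picks up a sign by skewness; and the fourth collapses after contracting $\Kf_{\alpha\beta}\Kf^{\mu\alpha}=-\delta_\beta{}^\mu$ via \eqref{Poisson_structure}. Hence all four vanish simultaneously, which is the asserted chain of equivalences with no content beyond index bookkeeping.

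For the final implication I would first note that a symplectic gradient preserves $\Kf$: since $d\Kf=0$ and $i_X\Kf=-df$ by \eqref{Killingpotential}, Cartan's formula gives $\cL_X\Kf=d\,i_X\Kf+i_X\,d\Kf=-d\,df=0$, and likewise $\cL_Y\Kf=0$. Applying the standard identity $i_{[X,Y]}=\cL_X\,i_Y-i_Y\,\cL_X$ to $\Kf$ then yields
\[
i_{[X,Y]}\Kf=\cL_X(i_Y\Kf)-i_Y(\cL_X\Kf)=\cL_X(-dh)=-d(\cL_X h),
\]
where the last equality commutes $\cL_X$ past $d$. Under the equivalent hypotheses $\cL_X h=0$ as a function, so $i_{[X,Y]}\Kf=0$, and nondegeneracy of $\Kf$ forces $[X,Y]=0$.

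There is no substantial obstacle here: the first half is purely algebraic and the second is a formal application of the Cartan calculus on the symplectic manifold $(M,\Kf)$. The only points requiring care are the sign conventions --- in particular confirming the skew-symmetry of $\Kf^{\alpha\beta}$ and tracking signs through \eqref{Poisson_structure} --- and observing that the hypothesis $\cL_X h=0$ is an \emph{identity of functions} (equivalently, the Poisson bracket of $f$ and $h$ vanishes identically), so that its exterior derivative, and hence $i_{[X,Y]}\Kf$, vanishes everywhere rather than merely being constant. I would also emphasise that the full holomorphic Killing hypothesis is not needed for this proposition; only that $X$ and $Y$ are symplectic gradients is used.
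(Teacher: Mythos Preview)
Your proposal is correct and follows essentially the same approach as the paper: the equivalences are unwound to a single skew contraction (the paper does this with interior products, you with indices, but it is the same computation), and the commutativity is deduced from $\cL_X\Kf=0$ together with $i_{[X,Y]}\Kf=\cL_X(i_Y\Kf)-i_Y(\cL_X\Kf)$ and nondegeneracy of~$\Kf$. Your write-up is simply more explicit about the Cartan-calculus bookkeeping than the paper's compressed one-line version.
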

\begin{proof} $i_X dh=-i_X(i_Y\Kf)=i_Y(i_X\Kf)=-i_Y df$ and so the
equivalences are trivial. Now $\cL_X h = 0$ implies $0=\cL_X dh=-\cL_X
(i_Y\Kf)=-i_{[X,Y]}\Kf$, since $\cL_X\Kf=0$. Hence $[X,Y]=0$, since
$\Kf$ is nondegenerate.
\end{proof}
In this situation, $X$ and $Y$ have \emph{isotropic} span with respect to
$\Kf$, and they are said to \emph{Poisson commute}, since $f$ and $h$ have
vanishing Poisson bracket.

We now return to holomorphic Killing fields.
\begin{prop}\label{KillingPot} Let $f\colon M\to \R$ be a smooth function.
Then the symplectic gradient $X^{\beta}=\Kf^{\alpha\beta}\nabla_\alpha f$ is a
holomorphic Killing field if and only if the Hessian $\nabla^2f$ is
$J$-invariant, i.e.
\begin{equation}\label{holom_Killing3}
\nabla_a\nabla_bf=0=\nabla_{\bar a}\nabla_{\bar b}f.
\end{equation}
\end{prop}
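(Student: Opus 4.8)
The plan is to express both defining conditions of a holomorphic Killing field, equation~\eqref{holomorphic_Killing}, in the barred and unbarred index notation developed in Section~\ref{barredandunbarred}, and then to show they are jointly equivalent to the single condition~\eqref{holom_Killing3}. The starting observation is that for a symplectic gradient $X^\beta=\Kf^{\alpha\beta}\nabla_\alpha f=J_\alpha{}^\beta\nabla^\alpha f$, the skew-symmetry condition $\nabla_{(\alpha}X_{\beta)}=0$ is automatic. Indeed, since $g$ is K\"ahler, $\nabla$ preserves both $J$ and $\Kf$, so lowering an index gives $X_\beta=\Kf_{\alpha\beta}\nabla^\alpha f=-\Kf_{\beta\alpha}\nabla^\alpha f$; computing $\nabla_\gamma X_\beta=\Kf_{\alpha\beta}\nabla_\gamma\nabla^\alpha f$, symmetrising over $\gamma\beta$, and using $\Kf_{\alpha\beta}=\Kf_{[\alpha\beta]}$ together with the symmetry of the Hessian $\nabla_\gamma\nabla^\alpha f$ (valid because $\nabla$ is torsion-free), one sees the symmetric part vanishes identically. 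Thus the Killing equation imposes no constraint beyond the symplectic-gradient ansatz, and all the content lies in the $J$-invariance condition (the first equation of~\eqref{holomorphic_Killing}), namely $\cL_XJ=0$.

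Next I would translate $\cL_XJ=0$ into the barred/unbarred language. The cleanest route is to use the characterisation of $\cL_XJ=0$ already recorded in Section~\ref{infinitesimal_automorphisms}: in the torsion-free (integrable K\"ahler) case equation~\eqref{LieJ2} reduces to $\nabla_{\bar b}X^c=0$, i.e.\ $X^c$ is holomorphic, together with its conjugate $\nabla_b X^{\bar c}=0$. So the task becomes showing that, for $X^\beta=J_\alpha{}^\beta\nabla^\alpha f$, the condition $\nabla_{\bar b}X^c=0$ is equivalent to $\nabla_{\bar a}\nabla_{\bar b}f=0$ (and the conjugate statement correspondingly). The key step is to compute $X^c$ in terms of the $(1,0)$ and $(0,1)$ parts of $df$. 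Using the identities~\eqref{usefulidentities} relating $J_\alpha{}^\beta$ to the projectors $\Pi$, one finds that $X^\beta=J_\alpha{}^\beta\nabla^\alpha f$ projects, up to the expected factor of $i$, to $X^c\propto g^{\bar a c}\nabla_{\bar a}f$ and $X^{\bar c}\propto g^{\bar c a}\nabla_a f$; that is, the symplectic gradient is, up to sign and a factor of $i$ coming from $J$ acting as $\pm i$ on the two summands of~\eqref{types}, the metric dual of $\bar\partial f$ and $\partial f$ respectively.

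With this expression in hand, the final step is a short computation: applying $\nabla_{\bar b}$ to $X^c\propto g^{\bar a c}\nabla_{\bar a}f$ and using that $\nabla g=0$ (so the metric passes through the derivative) gives $\nabla_{\bar b}X^c\propto g^{\bar a c}\nabla_{\bar b}\nabla_{\bar a}f$. Since $g^{\bar a c}$ is nondegenerate, the vanishing of $\nabla_{\bar b}X^c$ is equivalent to $\nabla_{\bar b}\nabla_{\bar a}f=0$, which is the second equation in~\eqref{holom_Killing3}; the conjugate computation yields $\nabla_a\nabla_b f=0$. I expect the main obstacle to be purely bookkeeping rather than conceptual: one must be careful with the factors of $i$ and the signs introduced when passing between $J_\alpha{}^\beta$, the projectors $\Pi_\alpha^a$, $\overline\Pi{}_\alpha^{\bar a}$, and the raising/lowering conventions of Section~\ref{Kaehlersection}, and one must correctly invoke the equivalence of $\cL_XJ=0$ with~\eqref{LieJ2} (valid here because $J$ is integrable and $\nabla$ torsion-free, so the torsion terms drop out). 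Once the identification $X^c\propto g^{\bar a c}\nabla_{\bar a}f$ is established cleanly, the equivalence is immediate from the nondegeneracy of the metric and $\nabla g=0$.
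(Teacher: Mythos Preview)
Your reduction step contains a genuine error. You claim that for a symplectic gradient $X^\beta=\Kf^{\alpha\beta}\nabla_\alpha f$ the Killing equation $\nabla_{(\alpha}X_{\beta)}=0$ is automatic, arguing from the skewness of $\Kf$ and the symmetry of the Hessian. But the computation does not close: with $X_\beta=\Kf_{\alpha\beta}\nabla^\alpha f$ and $\nabla\Kf=0$ one gets
\[
2\nabla_{(\gamma}X_{\beta)}
=\Kf_{\alpha\beta}\nabla_\gamma\nabla^\alpha f+\Kf_{\alpha\gamma}\nabla_\beta\nabla^\alpha f
=-\bigl(J_\beta{}^\epsilon H_{\epsilon\gamma}+J_\gamma{}^\epsilon H_{\epsilon\beta}\bigr),
\]
where $H_{\alpha\beta}=\nabla_\alpha\nabla_\beta f$. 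The skewness of $\Kf$ is in the pair $\alpha\beta$, whereas you are symmetrising over $\gamma\beta$; there is no cancellation. In fact the right-hand side vanishes precisely when $J_{(\beta}{}^\epsilon H_{\gamma)\epsilon}=0$, i.e.\ when the Hessian is of type $(1,1)$, which is exactly condition~\eqref{holom_Killing3}. So the Killing equation, far from being automatic, is \emph{equivalent} to the conclusion you are trying to reach.

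The argument is easily repaired, and then becomes a legitimate alternative to the paper's. What \emph{is} automatic for a symplectic gradient is $\cL_X\Kf=0$ (since $i_X\Kf=-df$ is closed). The paper observes that any two of the three conditions $\cL_XJ=0$, $\nabla_{(\alpha}X_{\beta)}=0$, $\cL_X\Kf=0$ imply the third; hence, for a symplectic gradient, the Killing equation and $\cL_XJ=0$ are equivalent, and checking either one suffices. The paper checks the Killing equation directly in real indices and obtains~\eqref{holom_Killing2}, the $J$-invariance of $\nabla^2 f$. Your route --- checking $\cL_XJ=0$ via $\nabla_{\bar b}X^c=0$ with $X^c=ig^{\bar a c}\nabla_{\bar a}f$, hence $\nabla_{\bar b}X^c=ig^{\bar a c}\nabla_{\bar b}\nabla_{\bar a}f$ --- is correct once the reduction is justified properly, and has the minor advantage of landing directly in the barred/unbarred form~\eqref{holom_Killing3}.
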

\begin{proof} Since any two equations of
\eqref{holomorphic_Killing} and \eqref{symplectic_field} imply the third, we
deduce that a vector field of the form $X^{\beta}
=\Kf^{\alpha\beta}\nabla_\alpha f$ is a
holomorphic Killing field if and only if
\begin{equation}\label{holom_Killing1}
\nabla_\alpha J_{\beta}{}^\gamma\nabla_\gamma f+
\nabla_\beta J_{\alpha}{}^\gamma\nabla_\gamma f=0
\end{equation}
or equivalently
\begin{equation}\label{holom_Killing2}
\nabla_\alpha\nabla_\beta f
=J_\alpha{}^\gamma J_{\beta}{}^\delta \nabla_\gamma\nabla_\delta f,
\end{equation}
which is equivalent to~\eqref{holom_Killing3}.
\end{proof}
We call $f$ in this case a \emph{Killing potential} or a \emph{Hamiltonian}
for the holomorphic Killing field $X$. Note that a holomorphic Killing field
always admits such a potential locally (and on any open subset $U$ with
$H^1(U,\R)=0$).

Suppose now that $g$ is a compatible \bps/K\"ahler metric on a c-projective
manifold $(M, J, [\nabla])$. Then we may write any real section
$\sigma\in\Gamma(\cE(1,1))$ as $\sigma = h \scale_g$ for some function $h\colon
M\to\R$, where $\scale_g$ is the trivialisation of $\cE(1,1)$ determined by
$g$.

\begin{prop}\label{HessKillingPot} Let $(M, J, [\nabla])$ be a c-projective
manifold and $h\in C^\infty(M,\R)$.
\begin{enumerate}
\item If $\scale_g$ is the \textup(real\textup) trivialisation of $\cE(1,1)$
corresponding to a compatible metric~$g$, then $\sigma=h\scale_g$ is in the
kernel of the c-projective Hessian $D^\cW\sigma=0$ if and only if
$h$ is a Killing potential with respect to $(g,J)$.

\item If $g$ and $\tilde g$ are compatible metrics whose corresponding
trivialisations of $\cE(1,1)$ are related by $\scale_{\tilde g}
=e^{-f}\scale_g$, then $h$ is a Killing potential with respect to $(g,J)$ if
and only if $e^fh$ is a Killing potential with respect to $(\tilde g,J)$.
\end{enumerate}
\end{prop}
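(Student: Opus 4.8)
\textbf{Proof proposal for Proposition~\ref{HessKillingPot}.}

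The plan is to prove the two statements together, anchoring everything in the characterisation of the c-projective Hessian obtained in Proposition~\ref{J_invariant_Ricci} and the description of special connections at the end of Section~\ref{almost_c-projective}. For statement~(1), the key observation is that $\scale_g$ is precisely the parallel trivialisation of $\cE(1,1)$ associated to the canonical (here Levi--Civita) connection $\nabla^g$ of the compatible K\"ahler metric $g$, so that $\nabla^g$ is exactly the special connection $\nabla^\sigma$ attached to the scale $\scale_g$. First I would compute, for $\sigma = h\scale_g$ and with respect to $\nabla^g$, the quantity $\nabla^g_{(a}\nabla^g_{b)}\sigma + \Rho_{(ab)}\sigma$. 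Because $\scale_g$ is $\nabla^g$-parallel and $g$ is K\"ahler, the Rho tensor has no $(2,0)$-part (by the proof of Proposition~\ref{kaehlerharmcurv}, $\Ric_{ab}=0$ hence $\Rho_{ab}=0$), so the Hessian equation $D^\cW\sigma=0$ collapses to $(\nabla^g_a\nabla^g_b h)\scale_g = 0$ together with its conjugate, i.e.\ to $\nabla^g_a\nabla^g_b h = 0 = \nabla^g_{\bar a}\nabla^g_{\bar b}h$. This is exactly condition~\eqref{holom_Killing3} of Proposition~\ref{KillingPot}, which says that $h$ is a Killing potential for $(g,J)$. Thus statement~(1) reduces to unwinding the definition of $D^\cW$ in the trivialisation furnished by~$g$.

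For statement~(2), I would argue that it is an immediate consequence of the c-projective invariance of the Hessian operator $D^\cW$ together with statement~(1). The point is that $D^\cW\colon\cE(1,1)\to S^2\Wedge^{1,0}(1,1)\oplus S^2\Wedge^{0,1}(1,1)$ is c-projectively invariant (as noted just after~\eqref{cprojHessian3}), so its kernel is a well-defined subspace of $\Gamma(\cE(1,1))$ independent of any choice of connection or scale. Now $\scale_{\tilde g}=e^{-f}\scale_g$ means that the \emph{same} density $\sigma$ is written as $h\scale_g$ relative to $g$ and as $(e^f h)\scale_{\tilde g}$ relative to $\tilde g$, since $\sigma = h\scale_g = (e^f h)(e^{-f}\scale_g) = (e^f h)\scale_{\tilde g}$. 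Applying statement~(1) with the background metric $g$ shows $\sigma\in\ker D^\cW$ iff $h$ is a Killing potential for $(g,J)$; applying statement~(1) with the background metric $\tilde g$ shows $\sigma\in\ker D^\cW$ iff $e^f h$ is a Killing potential for $(\tilde g,J)$. Chaining these two equivalences through the common condition $\sigma\in\ker D^\cW$ yields statement~(2).

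I expect the only genuine work to lie in statement~(1), specifically in verifying cleanly that, in the $g$-trivialisation, the curvature/Rho contributions to $D^\cW(h\scale_g)$ reduce to the bare Hessian $\nabla^g\nabla^g h$. The two facts that make this go through are that $\scale_g$ is $\nabla^g$-parallel (so all derivatives fall on $h$) and that $\Rho_{(ab)}$ vanishes for a K\"ahler metric because $\Ric_{ab}=0$ (equivalently $\Rho_{ab}=0$), which is the $J$-invariance of the K\"ahler Ricci tensor recorded in~\eqref{KaehlerCurv} and used in Proposition~\ref{kaehlerharmcurv}. Once this reduction is in place, the identification of~\eqref{holom_Killing3} with the Killing-potential condition is supplied verbatim by Proposition~\ref{KillingPot}. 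The main conceptual obstacle is therefore purely bookkeeping: making sure that the special connection $\nabla^\sigma=\nabla^g$ is used consistently so that the invariant equation $D^\cW\sigma=0$ really does reduce, term by term, to the vanishing of the $(2,0)$- and $(0,2)$-parts of $\nabla^g\nabla^g h$, with no residual Rho terms surviving.
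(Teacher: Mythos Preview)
Your proposal is correct and follows essentially the same approach as the paper: for (1), compute $D^\cW(h\scale_g)$ using the Levi-Civita connection of $g$, use that $\scale_g$ is $\nabla^g$-parallel and that $\Rho_{ab}=0$ (from the $J$-invariance of the K\"ahler Ricci tensor), and invoke Proposition~\ref{KillingPot}; for (2), exploit the c-projective invariance of $D^\cW$ together with $h\scale_g=(e^f h)\scale_{\tilde g}$. The paper's proof is considerably more terse (it simply says ``the second part follows from the first''), but your explicit chaining argument is exactly what is intended.
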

\begin{proof} For the first part, compute $D^\cW\sigma$ using the
Levi-Civita connection $\nabla^g$. Since $\scale_g$ is parallel, and the Ricci
tensor of $g$ is $J$-invariant, $D^\cW\sigma=0$ if and only if the
$J$-invariant part of the Hessian of $h$ is zero, and
Proposition~\ref{KillingPot} applies. The second part follows from the first.
\end{proof}

These observations may be generalised to (possible degenerate) solutions $\ms$
of the metrisability equation. Given any $J$-invariant section
$\ms^{\alpha\beta}$ of $S^2 TM\otimes\cE_\R(-1,-1)$ and any section $\sigma$ of
$\cE_\R(1,1)$, we define vector fields $\hv(\ms,\sigma)$ and $K(\ms,\sigma)$ by
\begin{align}
\hv^\gamma(\ms,\sigma)&=\ms^{\alpha\gamma}\nabla_{\alpha}\sigma
-\tfrac{1}{n}\sigma\nabla_{\alpha}\ms^{\alpha\gamma}\\
K^\beta(\ms,\sigma)=J_{\gamma}{}^\beta \hv^\gamma(\ms,\sigma)&=
\Phi^{\alpha\beta}\nabla_{\alpha}\sigma
-\tfrac{1}{n}\sigma\nabla_{\alpha}\Phi^{\alpha\beta},
\end{align}
where $\Phi^{\alpha\beta}=J_\gamma{}^\beta\ms^{\alpha\gamma}$.
\begin{prop}\label{HolKillingPairing} $\hv(\ms,\sigma)$ and $K(\ms,\sigma)$
are c-projectively invariant, and if $\ms$ is a nondegenerate solution of the
metrisability equation corresponding to a metric $g$ and $\sigma=h\det\ms$ is
in the kernel of the c-projective Hessian, then $\hv(\ms,\sigma)$ is
holomorphic, and $K(\ms,\sigma)$ is the holomorphic Killing field of $g$ with
Killing potential $h$.
\end{prop}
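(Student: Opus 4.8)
The plan is to verify the two claims of Proposition~\ref{HolKillingPairing} separately: first the c-projective invariance of the vector fields $\hv(\ms,\sigma)$ and $K(\ms,\sigma)$, and then their geometric interpretation when $\ms$ is nondegenerate and $\sigma$ lies in the kernel of the c-projective Hessian. For the invariance, I would compute directly how each term transforms under a c-projective change of connection~\eqref{cprojchange}. Since $\ms^{\alpha\beta}$ has c-projective weight $(-1,-1)$ and $\sigma$ has weight $(1,1)$, their transformation rules are governed by Corollary~\ref{changedensity} together with Proposition~\ref{changeform}. The key point is that the two terms $\ms^{\alpha\gamma}\nabla_\alpha\sigma$ and $-\frac1n\sigma\nabla_\alpha\ms^{\alpha\gamma}$ each pick up correction terms proportional to $\Upsilon_\alpha$, and one checks that these corrections cancel. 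This is precisely the kind of calculation underlying the c-projective invariance of the first BGG operators~\eqref{StandardBGG} and~\eqref{firstBGG_metrisability}; indeed $\hv(\ms,\sigma)$ is assembled from the weighted trace $\nabla_\alpha\ms^{\alpha\gamma}$ (which up to scale is the quantity $X^\gamma$ appearing in~\eqref{metriequ1}) paired against $\sigma$ and its derivative. The invariance of $K(\ms,\sigma)=J_\gamma{}^\beta\hv^\gamma(\ms,\sigma)$ then follows immediately since $J$ is c-projectively fixed.

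For the geometric interpretation, I would fix the background connection to be the Levi--Civita connection $\nabla=\nabla^g$ of the K\"ahler metric $g$ corresponding to the nondegenerate solution $\ms$. With this choice $\scale_g$ is parallel, $g^{\bar a b}=\ms^{\bar a b}\det\ms$ (by Proposition~\ref{compKaehler}), and the defining equation~\eqref{metriequ1} reads $\nabla_a\ms^{\bar bc}=0$ and its conjugate, i.e.~the auxiliary fields $X,Y$ vanish in this gauge. Writing $\sigma=h\det\ms=h\,\scale_g$, the trace term $\nabla_\alpha\ms^{\alpha\gamma}$ vanishes, so $\hv^\gamma(\ms,\sigma)=\ms^{\alpha\gamma}\nabla_\alpha\sigma = (\det\ms)\,g^{\alpha\gamma}\nabla_\alpha h$; thus $\hv(\ms,\sigma)$ is, up to the parallel factor $\det\ms=\scale_g$, the Riemannian gradient $\grad h$. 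Consequently $K^\beta(\ms,\sigma)=J_\gamma{}^\beta\hv^\gamma=\scale_g\,J_\gamma{}^\beta g^{\alpha\gamma}\nabla_\alpha h=\scale_g\,\Kf^{\alpha\beta}\nabla_\alpha h$, which is exactly the symplectic gradient of $h$ (in the $\scale_g$-trivialisation). By Proposition~\ref{HessKillingPot}(1), the hypothesis $D^\cW\sigma=0$ is equivalent to $h$ being a Killing potential for $(g,J)$, so Proposition~\ref{KillingPot} identifies $K(\ms,\sigma)$ as the holomorphic Killing field with potential~$h$.

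It then remains to show $\hv(\ms,\sigma)$ is holomorphic, meaning $\nabla_{\bar a}\hv^c=0$ in the $(1,0)$ picture. Since $\hv^c=\ms^{\bar a c}\nabla_{\bar a}\sigma$ in this gauge (the trace term being zero), I would differentiate using $\nabla_{\bar a}\ms^{\bar bc}=0$, reducing the claim to $\nabla_{\bar a}\nabla_{\bar b}\sigma=0$, i.e.~the vanishing of the $(0,2)$-part of the Hessian of $\sigma$. Because $\nabla=\nabla^g$ is K\"ahler (so $\Rho_{\bar a\bar b}=0$ as the Ricci tensor is $J$-invariant, cf.~\eqref{KaehlerCurv}), the c-projective Hessian equation~\eqref{cprojHessian2} precisely asserts $\nabla_{(\bar a}\nabla_{\bar b)}\sigma=0$, and integrability of $J$ makes the skew part vanish by~\eqref{another_curvature_on_densities}; hence $\nabla_{\bar a}\nabla_{\bar b}\sigma=0$ and holomorphicity follows.

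The main obstacle I anticipate is bookkeeping in the invariance calculation: one must track the weight-$(-1,-1)$ and weight-$(1,1)$ transformation laws consistently across both the undifferentiated and the trace terms, and confirm that the $\frac1n$ normalisation is exactly what makes the $\Upsilon$-corrections cancel. The geometric identifications, by contrast, are routine once the Levi--Civita gauge is adopted, since in that gauge almost every auxiliary term vanishes and the statements collapse to Propositions~\ref{KillingPot} and~\ref{HessKillingPot} already proved above.
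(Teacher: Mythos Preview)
Your approach is correct and matches the paper's: a direct check that the $\Upsilon$-corrections cancel for invariance, followed by evaluation in the Levi--Civita gauge where $\nabla\ms=0$ so that $K(\ms,\sigma)$ reduces to the symplectic gradient of $h$ and Propositions~\ref{KillingPot}--\ref{HessKillingPot} apply. One small slip: since $\ms^{\alpha\gamma}=g^{\alpha\gamma}\scale_g^{-1}$ and $\nabla_\alpha\sigma=(\nabla_\alpha h)\scale_g$, you get $\hv^\gamma(\ms,\sigma)=g^{\alpha\gamma}\nabla_\alpha h$ and $K^\beta(\ms,\sigma)=\Kf^{\alpha\beta}\nabla_\alpha h$ exactly, not up to a factor of $\det\ms$.
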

\begin{proof} For a c-projectively equivalent connection
$\hat\nabla\in[\nabla]$, we have
\begin{equation*}
\ms^{\alpha\gamma}\hat\nabla_{\alpha}\sigma
-\tfrac{1}{n}\sigma\hat\nabla_{\alpha}\ms^{\alpha\gamma}
=\ms^{\alpha\gamma}\nabla_{\alpha}\sigma+\ms^{\alpha\gamma}\Upsilon_{\alpha}\sigma
-\tfrac{1}{n}\sigma\nabla_{\alpha}\ms^{\alpha\gamma}
-\ms^{\alpha\gamma}\Upsilon_{\alpha}\sigma
\end{equation*}
and the $\Upsilon$ terms cancel, showing that $\hv(\ms,\sigma)$---and hence
also $K(\ms,\sigma)$---is independent of the choice of $\nabla\in[\nabla]$.

Now if $\ms$ is nondegenerate, corresponding to a compatible metric $g$
with $\scale_g=\det\ms$, we use $\nabla^g$ to compute
\begin{equation*}
K^\beta(\ms,\sigma)=\Phi^{\alpha\beta}\nabla^g_{\alpha}(h\scale_g)
=\Kf^{\alpha\beta}\nabla_\alpha h,
\end{equation*}
which is the holomorphic Killing field associated to $h$.
\end{proof}

\begin{rem} Suppose that $(M, J, [\nabla])$ is an almost c-projective manifold
and consider the tensor product
\[
\cV_\C\otimes\cW_\C=\T^*\otimes\overline\T^*\otimes\T\otimes\overline\T.
\]
Since $\T^*\otimes \T=\A M\oplus\cE(0,0)$, there is a natural projection
\begin{equation}
\Pi\colon\cV_\C\otimes\cW_\C\to\!\! 
\begin{matrix}
\A M\oplus \cE(0,0)\\
\bigoplus\\
\overline{\A M}\oplus \cE(0,0)
\end{matrix}
\end{equation}
or equivalently a natural projection
\begin{equation}
\Pi\colon \cV\otimes\cW\to
\A M\oplus\cE(0,0).
\end{equation}
Hence, the results in~\cite{CD} imply that there are two invariant bilinear
differential operators
\begin{gather}
\hv\colon T^{0,1}M\otimes T^{1,0}M(-1,-1)\times \cE(1,1)
\to T^{1,0}M\oplus T^{0,1}M\\
c\colon T^{0,1}M\otimes T^{1,0}M(-1,-1)\times \cE(1,1)
\to \cE(0,0)\oplus \cE(0,0),
\end{gather}
which are constructed as follows. Consider the two differential operators
$L\colon T^{0,1}M\otimes T^{1,0}M(-1,-1))\to \cV_\C$ and $L\colon
\cE(1,1))\to\cW_\C$ from Theorem~\ref{MetriProlongation} respectively
\ref{HessianProlongation}.  Recall that in terms of a connection
$\nabla\in[\nabla]$ they can be written as
\begin{equation*}
L(\ms^{\bar b c})=
\begin{pmatrix}\ms^{\bar b c}\\-\frac{1}{n}\nabla_{\bar a}\ms^{\bar a b}
\enskip|\enskip{-\frac{1}{n}}\nabla_a\ms^{\bar b a}\\ 
\frac{1}{n}(\nabla_{\bar a}\nabla_b\ms^{\bar a b}
+\Rho_{\bar a b}\ms^{\bar a b})
\end{pmatrix}\quad\quad
L(\sigma)=\begin{pmatrix} \sigma \\ \nabla_a\sigma \enskip|
\enskip \nabla_{\bar{a}}\sigma\\ \nabla_a\nabla_{\bar b}\sigma
+\Rho_{a\bar b}\sigma
\end{pmatrix}.
\end{equation*}
Then $\hv$ and $c$ are defined as the projections to $T_\C M=\A_\C M/
\A^0_\C M$ respectively to $\cE(0,0)\oplus \cE(0,0)$ of
$\Pi(L(\ms^{\bar b c})\otimes L(\sigma))$.

In particular, for a choice of connection $\nabla\in[\nabla]$, the invariant
differential operator $\hv$ is given by
\begin{equation}\label{diffpairing}
\hv(\ms,\sigma) =\bigl(
\ms^{\bar b c}\nabla_{\bar b}\sigma
-\tfrac{1}{n}\sigma\nabla_{\bar b}\ms^{\bar b c}
\enskip\ | \enskip
\ms^{\bar b c}\nabla_{c}\sigma-\tfrac{1}{n}\sigma\nabla_{c}\ms^{\bar b c}\bigr).
\end{equation}
Note that if $\ms^{\bar b c}$ and $\sigma$ are real sections, the two
components of (\ref{diffpairing}) are conjugate to each other. In this case
we may identify $\ms^{\bar b c}$ with a $J$-invariant section
$\ms^{\beta\gamma}$ of $S^2TM$.
\end{rem}

\subsection{Hermitian symmetric Killing tensors}

Suppose $(M, J)$ is an almost complex manifold and $k\geq 1$.  Then we call a
symmetric tensor $H_{\alpha\beta\cdots\epsilon}\in \Gamma(S^{2k}T^*M)$
\emph{Hermitian}, if it satisfies
\begin{equation}\label{symmHermitian}
J_{(\alpha}{}^{\beta} H_{\beta\gamma\cdots \epsilon)}=0.
\end{equation}
Since, by definition,
$H_{\beta\gamma\cdots\epsilon}=H_{(\beta\gamma\cdots \epsilon)}$,
equation (\ref{symmHermitian}) is equivalent to
\begin{equation}\label{symmHermitian2}
J_{\alpha}{}^\beta H_{\beta\gamma\cdots \epsilon}+ 
J_{\gamma}^{\beta} H_{\alpha\beta\cdots
\epsilon}+\cdots +J_{\epsilon}^{\beta} H_{\alpha \gamma\cdots\beta}=0.
\end{equation}
Viewing a symmetric tensor $H$ of valence $(0,2k)$ as an element in
$S^{2k}T^*M\otimes\C=S^{2k}\Wedge^1$ via complexification, we can use the
projectors from Section \ref{almostcomplexmanifolds} to decompose $H$ into
components according to the decomposition of $S^{2k}\Wedge^1$ into irreducible
vector bundles:
\begin{equation}\label{S2k}
S^{2k} \Wedge^{1}
=\bigoplus _{j=0}^{2k} S^{2k-j}\Wedge^{1,0}\otimes S^{j}\Wedge^{0,1}.
\end{equation}
Since this decomposition is in particular invariant under the action of $J$,
all the components of a tensor $H_{\alpha\beta\cdots\gamma\delta}\in
S^{2k}\Wedge^1$ that satisfies (\ref{symmHermitian2}) must independently
satisfy (\ref{symmHermitian2}). If $H_{ab\cdots d\,\bar e\bar f\cdots \bar h}$
is a section of $S^{2k-j}\Wedge^{1,0}\otimes S^{j}\Wedge^{0,1}$ that
satisfies (\ref{symmHermitian2}), then this equation says that
$2(k-j)iH=0$, which implies that $H\equiv 0$ unless $j=k$.  We conclude
that Hermitian symmetric tensors of valence $(0,2k)$ can be viewed as real
sections of the vector bundle
\[
S^k\Wedge^{1,0}\otimes S^k\Wedge^{0,1},
\]
which is the complexification of the vector bundle that consists of those
elements in $S^{2k}T^*M$ that satisfy (\ref{symmHermitian}).
\begin{rem}
Note that, if $H$ is a symmetric tensor of valence $(0,2k+1)$ satisfying
(\ref{symmHermitian}), then the above reasoning immediately implies that
$H\equiv0$. The same arguments apply, mutatis mutandis, to symmetric tensors
$Q^{\alpha\beta\cdots\epsilon}$ of valence $(2k,0)$, and to weighted tensors
of valence $(0,2k)$ and $(2k,0)$.
\end{rem}

Suppose now $(M,J, g)$ is a \bps/K\"ahler manifold and
$\ell=2k$ is even, then we can restrict equation~\eqref{Killingtensor} for
symmetric Killing tensors of valence $(0,2k)$ to Hermitian tensors.  If we
complexify~\eqref{Killingtensor}, we obtain the following system of
differential equations on tensors $H\in \Gamma(S^k\Wedge^{1,0}\otimes
S^k\Wedge^{0,1})$:
\begin{equation}\label{HermitianKilling}
\nabla_{(a} H_{bc\cdots d) \bar e\bar f\cdots\bar h}=0 \qquad\text{and}\qquad
\nabla_{(\bar a} H_{|bc\cdots d| \bar e\bar f\cdots \bar g)}=0,
\end{equation}
where $|\cdots|$ means that one does not symmetrise over these indices. Real
solutions of (\ref{HermitianKilling}) thereby correspond to Hermitian symmetric
Killing tensors of valence $(0,2k)$ and obviously for real solutions
the two equations of (\ref{HermitianKilling}) are conjugates of each other.
The following proposition shows that (suitably interpreted) the Killing
equation for Hermitian symmetric tensors of valence $(0,2k)$ is c-projectively
invariant.

\begin{prop}\label{HermitianKillingInvariance}
Suppose $(M, J, [\nabla])$ is an almost c-projective manifold of dimension
$2n\geq 4$.  If $H_{ab\cdots d\,\bar e\bar f\cdots\bar h}\in
\Gamma(S^k\Wedge^{1,0}\otimes S^k\Wedge^{0,1}(2k,2k))$ satisfies
\begin{equation}\label{HermitianKilling2}
\nabla_{(a} H_{bc\cdots d) \bar e\bar f\cdots\bar h}=0 \quad\quad
\nabla_{(\bar a} H_{|bc\cdots d| \bar e\bar f\cdots\bar h)}=0,
\end{equation}
for some connection in $\nabla\in[\nabla]$, then it does so for any other
connection in the c-projective class.
 \end{prop}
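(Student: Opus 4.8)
The plan is to reduce the claim to a direct computation of how the two left-hand sides of \eqref{HermitianKilling2} transform under a c-projective change of connection, using the transformation rules already recorded for the elementary bundles. Since $H$ is a section of $S^k\Wedge^{1,0}\otimes S^k\Wedge^{0,1}(2k,2k)$, I would expand $\hat\nabla$ by the Leibniz rule over its $k$ factors of $\Wedge^{1,0}$, its $k$ factors of $\Wedge^{0,1}$, and the density factor of weight $(2k,2k)$, inserting the formulae of Proposition~\ref{changeform} for the covector factors and \eqref{changes_on_densities} for the density factor.

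Concretely, for the first equation I would compute $\hat\nabla_a H_{b_1\cdots b_k\bar e_1\cdots\bar e_k}$. By Proposition~\ref{changeform}(1), each unbarred index $b_i$ contributes a correction $-\Upsilon_a H_{\cdots}-\Upsilon_{b_i}H_{\cdots a\cdots}$, where in the second term the $i$-th slot is replaced by $a$; each barred index contributes nothing since $\hat\nabla_a\phi_{\bar c}=\nabla_a\phi_{\bar c}$; and the weight $(2k,2k)$ contributes $+2k\,\Upsilon_a H_{\cdots}$. Summing the diagonal contributions gives $(-k+2k)\Upsilon_a H=k\,\Upsilon_a H$, while the off-diagonal contributions give $-\sum_{i}\Upsilon_{b_i}H_{\cdots a\cdots}$. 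Symmetrising over the $k+1$ unbarred indices $a,b_1,\dots,b_k$, each of the $k$ off-diagonal terms becomes $-\Upsilon_{(a}H_{b_1\cdots b_k)}$ after relabelling, so together they equal $-k\,\Upsilon_{(a}H_{b_1\cdots b_k)}$ and exactly cancel the diagonal term $k\,\Upsilon_{(a}H_{b_1\cdots b_k)}$. Hence $\hat\nabla_{(a}H_{b_1\cdots b_k)\bar e\cdots}=\nabla_{(a}H_{b_1\cdots b_k)\bar e\cdots}=0$.

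The second equation is handled identically, applying $\hat\nabla_{\bar a}$ and Proposition~\ref{changeform}(2): now only the barred indices and the weight produce corrections (the unbarred indices being inert, as $\hat\nabla_{\bar a}\phi_c=\nabla_{\bar a}\phi_c$), and symmetrising over $\bar a,\bar e_1,\dots,\bar e_k$ yields the same cancellation $k\,\Upsilon_{(\bar a}\cdots-k\,\Upsilon_{(\bar a}\cdots=0$. For real $H$ this second equation is merely the complex conjugate of the first, but treating it directly covers the general complex case stated.

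There is no genuine obstacle beyond careful bookkeeping; the one point that must be verified rather than assumed is that the weight is precisely $(2k,2k)$, since this is exactly what makes the diagonal coefficient equal $-k+2k=+k$ and match the $-k$ coming from the index-replacement terms after symmetrisation. Any other weight would leave an uncancelled multiple of $\Upsilon_{(a}H_{b_1\cdots b_k)}$, so c-projective invariance in fact pins down the weight, consistent with the interpretation of \eqref{HermitianKilling2} as a first BGG operator.
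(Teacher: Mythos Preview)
Your proposal is correct and follows essentially the same route as the paper's proof: both compute $\hat\nabla_a H$ via Proposition~\ref{changeform} and the density transformation~\eqref{changes_on_densities}, obtain the correction $k\Upsilon_a H_{b\cdots d\bar e\cdots\bar h}-\Upsilon_b H_{a\cdots d\bar e\cdots\bar h}-\cdots-\Upsilon_d H_{b\cdots a\bar e\cdots\bar h}$, and observe that this symmetrises to zero over the unbarred indices (the paper phrases the last step via the identity $\Upsilon_{(a}H_{b\cdots d)}=\tfrac{1}{k+1}(\Upsilon_a H_{b\cdots d}+\cdots)$, which is exactly your relabelling argument). Your remark that the weight $(2k,2k)$ is forced by the cancellation is a nice addendum not made explicit in the paper.
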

\begin{proof}
Suppose $H\in\Gamma(S^k\Wedge^{1,0}\otimes S^k\Wedge^{0,1}(2k,2k))$ satisfies
(\ref{HermitianKilling2}) for some connection $\nabla\in[\nabla]$ and let
$\hat\nabla\in[\nabla]$ be another connection in the c-projective class. Then
it follows from Proposition \ref{changeform} and Corollary \ref{changedensity}
that
\begin{align*}
\hat\nabla_aH_{b\cdots d\bar e\cdots \bar h}
&=\nabla_aH_{b\cdots d\bar e\cdots \bar h}
-k\Upsilon_aH_{b\cdots d\bar e\cdots\bar h}
-\Upsilon_bH_{a\cdots d\bar e\cdots\bar h}
-\cdots-\Upsilon_d H_{b\cdots a\bar e\cdots\bar h}\\
&\qquad+2k\Upsilon_aH_{b\cdots d\bar e\cdots\bar h}\\
&=\nabla_a H_{b\cdots d\bar e\cdots\bar h}
+k \Upsilon_aH_{b\cdots d\bar e\cdots\bar h}
-\Upsilon_bH_{a\cdots d\bar e\cdots\bar h}-\cdots
-\Upsilon_dH_{b\cdots a\bar e\cdots\bar h}.
\end{align*}
Since $\nabla_{(a} H_{b\cdots d) \bar e\cdots \bar h}=0$ by assumption and 
\begin{equation*}
\Upsilon_{(a}H_{b\cdots d)\bar e\cdots \bar h}
=\tfrac{1}{k+1}(\Upsilon_a H_{b\cdots d\bar e\cdots\bar h}
+\Upsilon_b H_{a\cdots d\bar e\cdots\bar h}+\cdots
+\Upsilon_d H_{b\cdots a\bar e\cdots\bar h}),
\end{equation*}
we conclude that the symmetrisation over the unbarred indices on the right
hand side is zero, which proves that the first equation of
(\ref{HermitianKilling2}) is independent of the connection. Analogous
reasoning shows that this is also true for the second equation of
(\ref{HermitianKilling2}).
\end{proof}

We refer to solutions of the c-projectively invariant equation
\eqref{HermitianKilling2} as \emph{c-projective Hermitian symmetric Killing
  tensors of valence $(0,2k)$}.

\begin{cor}\label{MetricKillingTensor} Suppose $(M,J,[\nabla])$ is a metrisable
c-projective manifold with compatible \bps/K\"ahler metric $g$. Then a real
section $H\in\Gamma(S^k\Wedge^{1,0}\otimes S^k\Wedge^{0,1})$ is a Hermitian
symmetric Killing tensor of $g$ (i.e.~a solution of~\eqref{HermitianKilling}
with respect to $\nabla^g$) if and only if $\scale_g^{\,2k}H$ is a
c-projective Hermitian symmetric Killing tensor.  In particular, in this case,
if $\tilde g$ is another compatible \bps/K\"ahler metric, then $e^{2kf}H$ is a
Hermitian symmetric Killing tensor of $\tilde g$, where $f$ is given by
$\scale_{\tilde g}= e^{-f} \scale_g$.
\end{cor}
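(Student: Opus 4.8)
The plan is to leverage Proposition~\ref{HermitianKillingInvariance} together with the density transformation rules already established, so that the whole statement reduces to a bookkeeping exercise in c-projective weights. The central observation is that the notion of a \emph{c-projective} Hermitian symmetric Killing tensor --- a solution of the weighted, connection-independent equation \eqref{HermitianKilling2} on sections of $S^k\Wedge^{1,0}\otimes S^k\Wedge^{0,1}(2k,2k)$ --- is intrinsic to $(M,J,[\nabla])$, whereas the metric notion depends on the choice of compatible metric through its Levi-Civita connection. The trivialisation $\scale_g$ of $\cE_\R(1,1)$ determined by $g$ is exactly the device that converts between the two.

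First I would make precise the weight bookkeeping. A Hermitian symmetric Killing tensor $H$ of valence $(0,2k)$ for $g$ is, by the discussion preceding \eqref{HermitianKilling}, a real section of $S^k\Wedge^{1,0}\otimes S^k\Wedge^{0,1}$ (of weight $(0,0)$) satisfying \eqref{HermitianKilling} with respect to $\nabla=\nabla^g$. Multiplying by $\scale_g^{\,2k}\in\Gamma(\cE_\R(2k,2k))$ produces a section $\scale_g^{\,2k}H$ of $S^k\Wedge^{1,0}\otimes S^k\Wedge^{0,1}(2k,2k)$, which is the correct bundle for \eqref{HermitianKilling2}. Since $g$ is K\"ahler, its canonical connection coincides with $\nabla^g$ and is special; hence $\scale_g$ is $\nabla^g$-parallel, so $\nabla^g_a(\scale_g^{\,2k}H)=\scale_g^{\,2k}\nabla^g_aH$ and likewise in the barred direction. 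Therefore $H$ satisfies \eqref{HermitianKilling} for $\nabla^g$ if and only if $\scale_g^{\,2k}H$ satisfies \eqref{HermitianKilling2} for $\nabla^g$. By Proposition~\ref{HermitianKillingInvariance}, the latter equation is independent of the choice of connection in $[\nabla]$, so $\scale_g^{\,2k}H$ being a c-projective Hermitian symmetric Killing tensor is equivalent to its satisfying \eqref{HermitianKilling2} for $\nabla^g$, and the first equivalence follows.

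For the final assertion, I would apply the first part twice. Given a second compatible K\"ahler metric $\tilde g$ with $\scale_{\tilde g}=e^{-f}\scale_g$, the section $\scale_g^{\,2k}H$ is, by hypothesis and the established equivalence, a c-projective Hermitian symmetric Killing tensor. Writing $\scale_g^{\,2k}H=\scale_{\tilde g}^{\,2k}\,(e^{2kf}H)$ and invoking the equivalence again --- now with respect to $\tilde g$ --- shows that $e^{2kf}H$ is a Hermitian symmetric Killing tensor of $\tilde g$. The only step requiring genuine care, rather than the routine parallelism of $\scale_g$, is confirming that the \emph{weighted} density transformation is exactly $\scale_g^{\,2k}$ and not some other power; this is pinned down by matching the weight $(2k,2k)$ in \eqref{HermitianKilling2} against the valence $2k$ of the tensor, using \eqref{changes_on_densities} and the fact that each unbarred (resp.\ barred) index of $H$ contributes one unit to the first (resp.\ second) weight. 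Once this weight count is verified, the proof is a direct two-line consequence of Proposition~\ref{HermitianKillingInvariance}.
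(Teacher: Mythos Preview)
Your proposal is correct and is exactly the argument the paper has in mind: the corollary is stated without proof precisely because it follows immediately from Proposition~\ref{HermitianKillingInvariance} together with the fact that $\scale_g$ is $\nabla^g$-parallel, which is just what you have spelled out. The weight count and the two applications of the equivalence (once for $g$, once for $\tilde g$) are the only content, and you have them right.
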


The differential equation~\eqref{HermitianKilling2} gives rise to a
c-projectively invariant operator, which is the first BGG operator
\begin{equation}\label{HK-BGGoperator}\begin{array}{lll}
&\weight{\!\!-2}0 \pweight {k{+}1}000 k000\\[-16pt] 
\weight 00 \pweight k000 k000
\raisebox{14pt}{$\begin{array}c
\nearrow\\ \searrow \end{array}$}\\[-16pt]
&\weight 0{\!\!-2} \pweight k000 {k{+}1}000
\end{array}\end{equation}
corresponding to the tractor bundle $\cW$, where $\cW$ is the Cartan product
of $k$ copies of $\Wedge^2\T^*$ and $k$ copies of $\Wedge^2\overline{\T}{}^*$.
As for the BGG operators discussed in previous sections, this implies
(see~\cite{BCEG,HSSS,neusser}), that there is a linear connection on $\cW$
whose parallel sections are in bijection to solution
of~\eqref{HermitianKilling2}.  Hence, the dimension of the solution space is
bounded by the rank of $\cW$.

\begin{prop} Suppose $(M, J, g)$ is a \bps/K\"ahler manifold of
dimension $2n\geq 4$ and let $k\geq 1$ be an integer.  Then the space of
Hermitian symmetric Killing tensors of valence $(0,2k)$ of $(M, J, g)$ has
dimension at most
\begin{equation}
\left(\frac{(k+1)(k+2)^2\cdots(k+(n-1))^2(k+n)}{(n-1)!n!}\right)^2.
\end{equation}
\end{prop}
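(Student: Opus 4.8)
The plan is to obtain the bound directly from the prolongation of the first BGG operator, thereby reducing the assertion to a single representation-theoretic dimension count. First I would invoke Corollary~\ref{MetricKillingTensor}: on the K\"ahler manifold $(M,J,g)$ the map $H\mapsto\scale_g^{\,2k}H$ identifies real Hermitian symmetric Killing tensors of valence $(0,2k)$ with the real solutions of the c-projectively invariant equation~\eqref{HermitianKilling2}. This equation is precisely the first BGG operator associated with the tractor bundle $\cW$ appearing in~\eqref{HK-BGGoperator}. By the general prolongation theory for first BGG operators (\cite{BCEG,HSSS,neusser}), its solutions are in bijection with the sections of $\cW$ that are parallel for a suitable linear connection on $\cW$. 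A parallel section of a linear connection over a connected manifold is determined by its value at any single point, so the space of parallel sections injects into a fibre of $\cW$; hence the real dimension of the solution space is at most $\operatorname{rank}_\R\cW$ (on a disconnected manifold one argues component by component). It therefore remains only to compute the rank of $\cW$.

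Being a tractor bundle, $\cW$ has rank equal to the dimension of its inducing representation. By construction $\cW$ is the Cartan product of $k$ copies of $\Wedge^2\T^*$ with $k$ copies of $\Wedge^2\overline\T{}^*$, so its complexification $\cW_\C$ is the tensor product of the bundle induced by the irreducible $\mathrm{SL}(n+1,\C)$-representation $V$, namely the Cartan product of $k$ copies of $\Wedge^2\V^*$ (where $\V=\C^{n+1}$ is the standard representation and $\T=\tilde\G\times_{\tilde P}\V$), with the conjugate bundle induced by $\overline V$. As $\cW$ is a real bundle with $\cW_\C=\cW\otimes_\R\C$, it follows that $\operatorname{rank}_\R\cW=\dim_\C\cW_\C=(\dim_\C V)^2$, reducing the problem to the evaluation of $\dim_\C V$.

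To finish I would compute $\dim_\C V$ via the hook--content (equivalently Weyl dimension) formula. Since $\Wedge^2\V^*\cong\Wedge^{n-1}\V$ is the fundamental representation whose Young diagram is a single column of height $n-1$, the Cartan product $V$ is the irreducible $\mathrm{GL}(n+1,\C)$-module attached to the rectangular Young diagram $R$ with $n-1$ rows and $k$ columns. Writing $m=n+1$, and letting $c(i,j)=j-i$ and $h(i,j)=(k-j)+(n-i)$ denote the content and hook length of the cell in row $i$ and column $j$, the formula reads
\[
\dim_\C V=\prod_{(i,j)\in R}\frac{m+c(i,j)}{h(i,j)}
=\frac{\displaystyle\prod_{i=1}^{n-1}\frac{(n+1-i+k)!}{(n+1-i)!}}
{\displaystyle\prod_{a=1}^{n-1}\frac{(a+k-1)!}{(a-1)!}}.
\]
Evaluating the quotient row by row makes it telescope, giving
\[
\dim_\C V=\prod_{i=2}^{n}\frac{(i+k)(i+k-1)}{i(i-1)}
=\frac{(k+1)(k+2)^2\cdots(k+n-1)^2(k+n)}{(n-1)!\,n!},
\]
and squaring yields the claimed upper bound. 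The only genuinely delicate point is the bookkeeping in this last computation; everything before it is an immediate application of results already in place. As consistency checks, at $k=1$ this returns $\dim_\C\Wedge^2\V^*=\binom{n+1}{2}$, while for $n=2$ it returns $\tfrac12(k+1)(k+2)=\dim_\C S^k\V$, as it must.
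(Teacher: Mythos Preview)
Your proposal is correct and follows exactly the approach the paper outlines: the paper simply observes (in the sentences immediately preceding the proposition) that solutions of~\eqref{HermitianKilling2} correspond to parallel sections of a linear connection on the tractor bundle $\cW$, so that the solution space has dimension at most $\operatorname{rank}\cW$, and then states the proposition without further proof. You have supplied the missing representation-theoretic computation of $\operatorname{rank}\cW=(\dim_\C V)^2$ via the hook--content formula, and your calculation and consistency checks are correct.
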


In the sequel, we shall be interested in the case $k=1$, where any compatible
metric $g$ defines a c-projective Hermitian symmetric Killing tensor $H_{b\bar
  a}= \scale_g^{\,2} g_{b\bar a}$ by Corollary~\ref{MetricKillingTensor}.
This has the following c-projectively invariant formulation.

\begin{prop}\label{HKTensors} Let $(M,J,[\nabla])$ be an almost c-projective
manifold and let $\ms^{\bar a b}$ be a real section of $T^{0,1}M\otimes
T^{1,0}M(-1,-1)$. Then
\begin{equation}\label{eta-->H}
H_{b\bar a} := \tfrac1{(n-1)!}\,\bar\epst_{\bar a\bar c\cdots \bar e}\,
\epst_{bd\cdots f}\,\ms^{\bar c d}\cdots \ms^{\bar e f}
\end{equation}
is a real section of $\Wedge^{1,0}\otimes\Wedge^{0,1}(2,2)$ with $H_{b\bar a}
\ms^{\bar ac}=\sigma\delta_b{}^c$, where $\sigma=\det\ms$. If $\ms^{\bar ab}$
satisfies~\eqref{metriequ1c} for some $X^d,Y^{\bar c}$ \textup(depending on
$\nabla$\textup) then $H_{b\bar a}$ is a c-projective Hermitian symmetric
Killing tensor and
\begin{equation}\label{eq:H-deriv}
\ms^{\bar cd}\nabla_d H_{b\bar a} = Y^{\bar e}H_{b\bar e}\delta_{\bar a}{}^{\bar c}-
Y^{\bar c} H_{b\bar a}, \qquad
\ms^{\bar c d}\nabla_{\bar c} H_{b\bar a} = X^e H_{e \bar a}\delta_b{}^d
- X^d H_{b\bar a}.
\end{equation}
\end{prop}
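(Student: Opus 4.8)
The plan is to establish the three claims of Proposition~\ref{HKTensors} in sequence, working throughout with a fixed connection $\nabla\in[\nabla]$ and exploiting the tautological forms $\epst_{ab\cdots c}$ and $\bar\epst_{\bar a\bar b\cdots\bar c}$ introduced in Section~\ref{almost_c-projective}, which are parallel for every connection in the c-projective class. First I would verify the algebraic identities. Reality of $H_{b\bar a}$ follows from reality of $\ms^{\bar c d}$ together with $\overline{\epst_{bd\cdots f}}=\bar\epst_{\bar b\bar d\cdots\bar f}$. The pairing identity $H_{b\bar a}\ms^{\bar a c}=\sigma\delta_b{}^c$ is the essential cofactor relation: contracting~\eqref{eta-->H} against $\ms^{\bar a c}$ produces $\tfrac1{(n-1)!}\bar\epst_{\bar a\bar c\cdots\bar e}\epst_{bd\cdots f}\ms^{\bar ac}\ms^{\bar cd}\cdots\ms^{\bar ef}$, and the standard expansion of an $n\times n$ determinant by minors (recognising $H_{b\bar a}$ as the matrix of signed cofactors of $\ms^{\bar ab}$ weighted by $\epst$, $\bar\epst$) gives exactly $\det\ms\,\delta_b{}^c=\sigma\delta_b{}^c$, by comparison with~\eqref{deteta}.

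Next I would prove the derivative formulae~\eqref{eq:H-deriv}, which are the computational heart of the proposition. Applying $\nabla_d$ to~\eqref{eta-->H} and using that $\epst$, $\bar\epst$ are parallel, one gets a sum of $n-1$ terms in which $\nabla_d$ hits one factor $\ms^{\bar c f}$. I would substitute the metrisability equation~\eqref{metriequ1} in the form $\nabla_d\ms^{\bar c f}=-\delta_d{}^f X^{\bar c}$ (the first equation of~\eqref{metriequ1}), so that each differentiated factor contributes a term proportional to $X^{\bar c}$ with a $\delta_d{}^f$ that gets absorbed into the $\epst$. Contracting with $\ms^{\bar c d}$ as in the statement and repeatedly invoking the pairing identity $H_{b\bar a}\ms^{\bar a c}=\sigma\delta_b{}^c$ just established, the $n-1$ terms should collapse into the two-term right-hand side $Y^{\bar e}H_{b\bar e}\delta_{\bar a}{}^{\bar c}-Y^{\bar c}H_{b\bar a}$; here I expect the combinatorial bookkeeping of which index of $\bar\epst$ is being contracted to be the fiddly part, and careful tracking of the skew-symmetrisation in $\bar\epst$ is what converts the sum into the antisymmetrised difference. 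The second formula of~\eqref{eq:H-deriv} follows by the conjugate computation using $\nabla_{\bar a}\ms^{\bar b c}=-\delta_{\bar a}{}^{\bar b}X^c$.

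Finally, to see that $H_{b\bar a}$ is a c-projective Hermitian symmetric Killing tensor of valence $(0,2)$ (the $k=1$ case of Definition via~\eqref{HermitianKilling2}), I would show $\nabla_{(b}H_{c)\bar a}=0$ and its conjugate. By Proposition~\ref{HermitianKillingInvariance} the equation is c-projectively invariant, so it suffices to check it for one connection; moreover, on the dense open set where $\ms$ is nondegenerate the metric $g_{b\bar c}$ of Proposition~\ref{compKaehler} exists and $H_{b\bar a}=\scale_g^{\,2}g_{b\bar a}$, whence $\nabla^g$-parallelism of $g$ and $\scale_g$ gives the Killing property there; by continuity it extends to all of $M$. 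Alternatively, and more directly, I would differentiate~\eqref{eta-->H}, apply~\eqref{metriequ1} exactly as above without the extra contraction by $\ms$, and symmetrise over the two unbarred indices: the term $-\delta_b{}^f X^{\bar a}$-type contributions symmetrise to zero because the surviving $\delta$ against $\epst_{cd\cdots f}$ is killed by the symmetrisation in $b,c$ against the skew $\epst$. The main obstacle I anticipate is purely organisational rather than conceptual: managing the index gymnastics of the $(n-1)$-fold products of $\ms$ paired against the top forms $\epst$, $\bar\epst$, and confirming that the cofactor/minor expansion has precisely the right normalisation factors $(n-1)!$ versus $n!$ to make~\eqref{eta-->H} dovetail with~\eqref{deteta}.
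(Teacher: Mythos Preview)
Your proposal is correct in outline, but it takes a more laborious route than the paper and one of your shortcuts has a gap.

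For the derivative formulae~\eqref{eq:H-deriv} and the Killing property, the paper does not attempt the direct combinatorial computation you sketch. Instead it assumes first that $\ms^{\bar a b}$ is nondegenerate, so there is a connection $\hat\nabla\in[\nabla]$ with $\hat\nabla\ms=0$ (related to $\nabla$ by $\Upsilon_b$ with $\ms^{\bar b c}\Upsilon_c=Y^{\bar b}$, $\ms^{\bar b c}\Upsilon_{\bar b}=X^c$). Then $H_{b\bar a}$ is $\hat\nabla$-parallel, which immediately gives the Killing property, and~\eqref{eq:H-deriv} drops out of the c-projective change formulae (Propositions~\ref{changeform} and~\ref{HermitianKillingInvariance}) rewriting $\hat\nabla_d H_{b\bar a}=0$ in terms of $\nabla$ and contracting with $\ms^{\bar c d}$. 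This replaces all your index combinatorics with a two-line computation.

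The extension to degenerate $\ms$ is where your argument (a) has a gap: you invoke density of the nondegeneracy locus in $M$, but that locus can be empty (a solution of~\eqref{metriequ1c} need not be anywhere nondegenerate). The paper's continuity is of a different kind: at each point the identities~\eqref{eq:H-deriv} and $\nabla_{(b}H_{c)\bar a}=0$ are polynomial in the $1$-jet of $\ms$, and since they hold whenever the $0$-jet is invertible, they hold identically as polynomial identities. Your alternative (b), the direct computation, would also close this gap, so your overall argument survives---but the paper's approach is both shorter and avoids the case analysis. (Minor slip: the first equation of~\eqref{metriequ1c} involves $Y^{\bar c}$, not $X^{\bar c}$.)
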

\begin{proof} The first statement is straightforward.  For the rest, suppose
first that $\ms^{\bar a b}$ is nondegenerate, hence parallel with respect to
some connection $\hat\nabla$ in $[\nabla]$, related to $\nabla$ by $\Upsilon$
with $\Upsilon_b=H_{b\bar a}Y^{\bar a}$ and $\Upsilon_{\bar a}=H_{b\bar a}
X^b$.  Then $H_{b\bar a}$ is parallel with respect to $\hat\nabla$, hence a
Hermitian symmetric Killing tensor, and equation~\eqref{eq:H-deriv} follows by
rewriting this condition in terms of $\nabla$.  At each point, these are
statements about the $1$-jet of $H$, which depends polynomially on the $1$-jet
of $\ms$.  They hold when the $0$-jet of $\ms$ is invertible (at a given
point, hence in a neighbourhood of that point), hence in general by
continuity.
\end{proof}

\subsection{Metrisability pencils, Killing fields and Killing tensors}
\label{Canonical_Killing}

Suppose we have two (real) linearly independent solutions $\ms^{\bar a b}$ and
$\sms^{\bar a b}$ of the metrisability equation~\eqref{metriequ1}.  Since the
metrisability equation is linear, the one parameter family
\begin{equation}\label{pencil}
\sms^{\bar a b}(t) := \sms^{\bar a b}-t\ms^{\bar a b}
\end{equation}
also satisfies~\eqref{metriequ1}, and we refer to such a family as a
\emph{pencil} of solutions of the metrisability equation, or
\emph{metrisability pencil} for short.

By Proposition~\ref{det_of_metrisability}, the determinant
\begin{equation}\label{det_pencil}
\tilde\sigma(t):=\det\sms(t)
\end{equation}
of the pencil~\eqref{pencil} lies in the kernel of the c-projective Hessian
for all $t\in\R$ (as does $\sigma:=\det\ms$).  If $\sms(t)$ is degenerate for
all $t$, then $\tilde\sigma(t)$ is identically zero.  Otherwise, we may
assume, at least locally:
\begin{cond}\label{Nondegen} $\ms$ is nondegenerate, i.e.~$\sigma=\det\ms$
is nonvanishing, and hence $g^{\alpha\beta}=(\det\ms)\ms^{\alpha\beta}$ is
inverse to a compatible metric $g$.
\end{cond}
Assuming Condition~\ref{Nondegen}, we may write $\sms^{\bar a c}=\ms^{\bar a
  b} A_b{}^c$ as in Section~\ref{sec:met-mob}, where the $(g,J)$-Hermitian
metric $A$ satisfies~\eqref{metri-mob}. Setting
$A_a{}^b(t):=A_a{}^b-t\delta_a{}^b$, we have
\begin{equation*}
\sms^{\bar a c}(t)=\ms^{\bar a b} A_b{}^c(t)
\quad\text{and}\quad \tilde\sigma(t)=(\det\ms)(\det A(t)).
\end{equation*}
Thus $\tilde\sigma(t)$ is essentially the characteristic polynomial $\det
A(t)$ of $A_a{}^b$, regarded as a complex linear endomorphism of the complex
bundle~$T^{1,0}M$.

\begin{rem}\label{rem:pencil} A pencil is another name for a projective line:
if we make a projective change $s=(at+b)/(ct+d)$ of parameter (with $ad-bc\neq
0$) then the pencil may be rewritten, up to overall scale, as $a\sms+b\ms - s
(c\sms+d\ms) = (\sms - t \ms) (ad-bc)/(ct+d)$. Assuming that $c\sms+d\ms$ is
nondegenerate, the rescaled and reparameterized pencil is thus
$(c\sms+d\ms)(\tilde A - s \Id)$, where $\tilde A= (c A +
d\Id)^{-1}(a A+ b\Id)$.
\end{rem}

We next set $H_{b\bar a}:=\frac{1}{(n-1)!}\,\bar\epst_{\bar a\bar c\cdots\bar e}
\, \epst_{bd\cdots f}\, \ms^{\bar c d}\cdots \ms^{\bar e f}$ as in~\eqref{eta-->H}
and introduce
\begin{equation}\label{adj_pencil}
\wt H_{b\bar a}(t) := \tfrac{1}{(n-1)!}\,\bar\epst_{\bar a\bar c\cdots \bar e}\,
\epst_{bd\cdots f}\, \sms^{\bar c d}(t)\cdots \sms^{\bar e f}(t)= 
(\mathrm{adj}\, A(t))_b{}^c H_{c\bar a},
\end{equation}
where $\mathrm{adj}\, B$ denotes the endomorphism adjugate to $B$, with
$B\,\mathrm{adj}\, B=(\det B) I$.

Proposition~\ref{HKTensors} implies that for all $t\in\R$, $\wt H_{b \bar
  a}(t)$ is a c-projective Hermitian symmetric Killing tensor of
$(M,J,[\nabla])$. Hence for any $s\in\R$ with $\sms(s)$ nondegenerate,
$\tilde\sigma(s)^{-2}\wt H_{b\bar a}(t)$ defines a family of Hermitian
symmetric Killing tensors for the corresponding metric.

Similarly, Proposition~\ref{HolKillingPairing} implies that if $\sms(s)$ is
nondegenerate (for $s\in\R$), then for all $t\in\R$,
$K(\sms(s),\tilde\sigma(t))$ is a holomorphic Killing field with respect to
the corresponding metric (hence an \emph{inessential} c-projective
vector field). Now observe that
\begin{equation*}
K(\sms(s),\tilde\sigma(t))=K(\sms(t)+(t-s)\ms,\tilde\sigma(t))
=(t-s)K(\ms,\tilde\sigma(t)),
\end{equation*}
since $K$ is bilinear and $K(\sms(t),\tilde\sigma(t))=0$.  By continuity,
the vector fields
\begin{equation}\label{def_canonical_Killing}
\wt K(t):= K(\ms,\tilde\sigma(t)), \qquad\text{i.e.}\quad
\wt K^{\beta}(t)=\Kf^{\alpha\beta}\nabla_\alpha \det A(t),
\end{equation}
which are holomorphic Killing fields with respect to $g$, preserve $\sms(s)$
for all $s,t\in\R$, i.e.~$\cL_{\wt K(t)}\sms(s)=0$, and hence also
$\cL_{\wt K(t)}\wt H(s)=0=\cL_{\wt K(t)}\tilde\sigma(s)$. Thus
$\wt K(t)$ preserves the Killing potential $\det A(s)$ of $\wt K(s)$
with respect to $g$, so Proposition~\ref{PoissonCommute} implies that $\wt
K(s)$ and $\wt K(t)$ Poisson-commute. We summarise what we have proven as
follows.

\begin{thm}\label{Commuting_Killing_objects}
Let $(M, J, [\nabla])$ be a c-projective manifold with metrisability
solutions $\ms$ and $\sms$ corresponding to compatible \bps/K\"ahler metric
metrics $g$ and $\tilde g$ that are not homothetic. Let $\sms(t)$ be the
corresponding metrisability pencil~\eqref{pencil}.
\begin{enumerate}
\item The vector fields $\wt K(t):t\in\R$ defined
  by~\eqref{det_pencil}--\eqref{def_canonical_Killing} are Poisson-commuting
  holomorphic Killing fields with respect to $g$ and $\tilde g$.
\item The tensors $\wt H(t):t\in\R$ defined
  by~\eqref{det_pencil}--\eqref{adj_pencil} are c-projective Hermitian
  symmetric Killing tensors, invariant with respect to $\wt K(s)$ for any
  $s\in\R$. In particular, by Corollary \ref{MetricKillingTensor}, they induce
  Hermitian symmetric Killing tensors of $g$ respectively $\tilde g$ (by
  tensoring with $\tau_g^{-2}$ respectively $\tau_{\tilde g}^{-2}$).
\end{enumerate}
\end{thm}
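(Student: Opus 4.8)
The plan is to assemble Theorem~\ref{Commuting_Killing_objects} from the pieces already established, treating the pencil $\sms(t)=\sms-t\ms$ as the central object. First I would verify the standing hypotheses: since $g$ and $\tilde g$ are compatible \bps/K\"ahler metrics that are not homothetic, the solutions $\ms$ and $\sms$ of~\eqref{metriequ1} are linearly independent, so the pencil is genuinely one-parameter and generically nondegenerate. Throughout I would invoke Condition~\ref{Nondegen} locally, writing $\sms^{\bar a c}(t)=\ms^{\bar a b}A_b{}^c(t)$ with $A_b{}^c(t)=A_b{}^c-t\delta_b{}^c$, so that $\tilde\sigma(t)=\det\sms(t)=(\det\ms)(\det A(t))$ is, up to the nonvanishing factor $\det\ms$, the characteristic polynomial of the $(g,J)$-Hermitian endomorphism $A_b{}^c$ of $T^{1,0}M$.

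For part~(1) I would argue as follows. By Proposition~\ref{det_of_metrisability}, each $\tilde\sigma(t)$ lies in the kernel of the c-projective Hessian, so $\wt K(t):=K(\ms,\tilde\sigma(t))$ is defined and, by Proposition~\ref{HolKillingPairing} (applied with the nondegenerate background solution $\ms$), is a holomorphic Killing field of $g$. The key identity is the bilinearity of $K$ together with $K(\sms(t),\tilde\sigma(t))=0$, which holds because $\tilde\sigma(t)=\det\sms(t)$ is exactly the Hessian solution attached to the nondegenerate metric solution $\sms(t)$, whence $K$ of a solution against its own determinant vanishes. Writing $\sms(s)=\sms(t)+(t-s)\ms$ and expanding linearly gives $K(\sms(s),\tilde\sigma(t))=(t-s)\wt K(t)$, so $\wt K(t)$ is (up to the scalar $t-s$) the holomorphic Killing field of the metric determined by $\sms(s)$ for every nondegenerate $s$; by continuity it is a holomorphic Killing field for all compatible metrics in the pencil, in particular for both $g$ and $\tilde g$. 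To get Poisson commutativity I would use that $\wt K(t)$ preserves $\ms$ (being inessential, or directly because it is holomorphic Killing for $g$ whose solution is $\ms$) and hence preserves $\sms(s)$ and its determinant $\tilde\sigma(s)$; thus $\wt K(t)$ preserves the Killing potential $\det A(s)$ of $\wt K(s)$ relative to $g$, and Proposition~\ref{PoissonCommute} yields $\{\wt K(s),\wt K(t)\}=0$.

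For part~(2) I would set $\wt H_{b\bar a}(t)=\tfrac{1}{(n-1)!}\,\bar\epst_{\bar a\bar c\cdots\bar e}\,\epst_{bd\cdots f}\,\sms^{\bar cd}(t)\cdots\sms^{\bar ef}(t)=(\mathrm{adj}\,A(t))_b{}^cH_{c\bar a}$ and invoke Proposition~\ref{HKTensors}, which guarantees that for each fixed $t$, $\wt H_{b\bar a}(t)$ is a c-projective Hermitian symmetric Killing tensor, since $\sms(t)$ solves the metrisability equation. Invariance under $\wt K(s)$ then follows from $\cL_{\wt K(t)}\sms(s)=0$ established in part~(1), because $\wt H(s)$ is a polynomial (via the adjugate/$\epst$ construction) in the entries of $\sms(s)$. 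Finally, Corollary~\ref{MetricKillingTensor} converts each c-projective Hermitian symmetric Killing tensor into an honest Hermitian symmetric Killing tensor of $g$ (resp.\ $\tilde g$) upon tensoring with $\scale_g^{-2}$ (resp.\ $\scale_{\tilde g}^{-2}$).

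The main obstacle I expect is the careful handling of the degenerate locus. All the clean statements ($K(\sms(t),\tilde\sigma(t))=0$, the linear expansion giving $\wt K(t)=K(\ms,\tilde\sigma(t))$, the invariance $\cL_{\wt K(t)}\sms(s)=0$) are most transparent where $\sms(t)$ is nondegenerate, i.e.\ away from the zeros of the real-analytic polynomial $t\mapsto\det\sms(t)$, and where Proposition~\ref{HolKillingPairing} literally applies. The remark after Propositions~\ref{PDEcotractor}--\ref{PDEcotractor2} shows that the nonvanishing locus of any nontrivial metrisability solution is dense and open, so I would prove every identity first on this dense open set and then extend by continuity, exactly as in the proof of Proposition~\ref{HKTensors}; the constructions are polynomial in finite jets of $\ms$ and $\sms$, so continuity is enough. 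The secondary subtlety is bookkeeping between the real formulation (tensors $\Phi^{\alpha\beta}$, $\hv^\gamma$, $K^\beta=J_\gamma{}^\beta\hv^\gamma$) and the barred/unbarred formulation, but this is routine given the dictionary already set up in Section~\ref{metrisability} and the definitions~\eqref{def_canonical_Killing}.
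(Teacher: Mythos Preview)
Your proposal is correct and follows essentially the same route as the paper: the paper's argument (given in the paragraphs immediately preceding the theorem statement) likewise uses Proposition~\ref{HolKillingPairing} together with the bilinear identity $K(\sms(s),\tilde\sigma(t))=(t-s)K(\ms,\tilde\sigma(t))$ (from $K(\sms(t),\tilde\sigma(t))=0$) to show each $\wt K(t)$ is Killing for every nondegenerate $\sms(s)$, hence $\cL_{\wt K(t)}\sms(s)=0$ by continuity, and then invokes Proposition~\ref{PoissonCommute} and Proposition~\ref{HKTensors} exactly as you do. Your treatment of the degenerate locus via density and continuity is also the same as the paper's.
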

We call the vector fields $\wt K(t)$ and tensor densities $\wt H(t)$ the
\emph{canonical Killing fields} and \emph{canonical Killing tensors}
(respectively) for the pair $(g,\tilde g)$; the former are Killing vector
fields with respect to any nondegenerate metric in the family~\eqref{pencil},
and the latter give rise to symmetric Killing tensor fields for any such
metric by tensoring with the corresponding trivialisation of $\mathcal
E(-2,-2)$.

Since the canonical Killing fields $\wt K(t)$
are holomorphic with $[\wt K(t),\wt K(s)]=0$ for all $s,t\in\R$, we also
have $[\wt K(t),J \wt K(s)]=0$. Since $J$ is integrable, $J\wt K(t)$
are also holomorphic vector fields, and $[J \wt K(t), J \wt K(s)]=0$ for
all $s,t\in\R$.

The fact that for all $t\in \R$, $\wt K(t)$ is a holomorphic Killing field
means equivalently (by linearity) that the coefficients of $\wt K(t)$ are
holomorphic Killing fields, whose Killing potentials with respect $g$ are the
coefficients of the characteristic polynomial $\det A(t)$. Up to scale, the
nontrivial coefficients of $\det A(t)$ can be written
\begin{equation}\label{canonicalpotentials}
\tilde\sigma_1:= A_b{}^b,\quad \tilde\sigma_2:= A_b{}^{[b}A_c{}^{c]},\quad
\ldots\quad \tilde\sigma_n:=  A_b{}^{[b}A_c{}^c\cdots A_d{}^{d]},
\end{equation}
which are real-valued because $A$ is Hermitian with respect to $g$. Raising
an index in~\eqref{metri-mob}, we have
\begin{equation}\label{metri-mob-vec}
\nabla^{\bar c} A_a{}^b = -g^{\bar cb} \hv_a,\quad\text{or equivalently,}\quad 
\nabla^c A_a{}^b = -\delta_a{}^c \hv^b.
\end{equation}
Hence, applying the $(1,0)$-gradient operator $\nabla^a=\Pi^a_\alpha
g^{\alpha\beta} \nabla_\beta$ to the canonical potentials
in~\eqref{canonicalpotentials}, we obtain (up to sign) holomorphic vector
fields
\begin{equation}\label{hvfields}
\hv_{(1)}^{\,a}:=\hv^a,\quad \hv_{(2)}^{\,a}:= 2 \hv^{[a}A_b{}^{b]},
\quad\ldots\quad \hv_{(n)}^{\,a}:=n \hv^{[a}A_b{}^bA_c{}^c\cdots A_d{}^{d]}
\end{equation}
whose imaginary parts are (up to scale) the coefficients of $\wt K(t)$.  In
particular, $\Pi^\alpha_a \hv^a= \frac12 (\hv^\alpha-i K^\alpha)$, where the
holomorphic Killing field $K^\alpha=J_{\beta}{}^\alpha \hv^\beta$ is the
leading coefficient of $\wt K(t)$. In general, the coefficients satisfy the
recursive relation
\begin{equation}\label{hvf-rec}
\hv_{{(k+1)}}^{\,a}= A_b{}^a \hv_{(k)}^{\,b}  + \tilde\sigma_1 \hv^a.
\end{equation}
\begin{prop}\label{open_dense_Killing_vector_fields} 
Let $g$ and $\tilde g$ be compatible metrics on $(M,J,[\nabla])$
related by a \textup(real\textup) solution $A$ of~\eqref{metri-mob}. Then
there is an integer $\ell$, with $0\leq\ell\leq n$, such that
$\hv_{(1)}^{\,a},\ldots, \hv_{(\ell)}^{\,a}$ are linearly independent on a dense
open subset of $M$, and $\dim\spann \smash{\wt K}(t)\leq \ell$ on $M$.
\end{prop}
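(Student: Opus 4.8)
The plan is to identify, pointwise, the complex span of $\hv_{(1)}^a,\dots,\hv_{(n)}^a$ with a cyclic (Krylov) subspace for the endomorphism $A_a{}^b$ of $T^{1,0}M$, and then to read off both assertions from the eigenstructure of $A$ together with the holomorphicity of the fields $\hv_{(k)}^a$. First I would record the algebraic content of the recursion~\eqref{hvf-rec}: writing $A$ for $A_a{}^b$ as a complex-linear endomorphism of $T^{1,0}M$ and noting that $\tilde\sigma_1=A_b{}^b$ is real, an induction gives $\hv_{(k)}^a=p_{k-1}(A)^a{}_b\,\hv^b$ for monic \emph{real} polynomials $p_{k-1}$ of degree $k-1$. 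Since the change of basis $p_0,\dots,p_{k-1}$ is triangular, this yields $\spann_\C\{\hv_{(1)}^a,\dots,\hv_{(k)}^a\}=\spann_\C\{\hv^a,A\hv^a,\dots,A^{k-1}\hv^a\}$ at each point. Let $r(x)$ be the complex dimension of the full cyclic subspace $\C[A]\hv^a$; then $\hv_{(1)}^a,\dots,\hv_{(k)}^a$ are $\C$-independent exactly when $k\le r(x)$, and once they become dependent all later $\hv_{(k)}^a$ remain in the same span. I set $\ell:=\max_{x\in M}r(x)$, which satisfies $0\le\ell\le n$; this is the integer sought in the first assertion.

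For the first assertion I would use that the $\hv_{(k)}^a$ are holomorphic vector fields. Condition~\ref{Nondegen} holds globally here, since $\ms$ corresponds to the genuine metric $g$, so each $\tilde\sigma(t)=\det\sms(t)$ lies in the kernel of the c-projective Hessian by Proposition~\ref{det_of_metrisability}, whence by Proposition~\ref{HolKillingPairing} the coefficients $\hv_{(k)}^a$ of $\hv(\ms,\tilde\sigma(t))$ are holomorphic. In a local holomorphic frame their components are holomorphic functions, so every $\ell\times\ell$ minor of the matrix with columns $\hv_{(1)}^a,\dots,\hv_{(n)}^a$ is holomorphic; since rank $\ell$ is attained somewhere, at least one such minor is not identically zero, and its zero set is nowhere dense. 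The locus $\{r<\ell\}$ is the common zero set of all these minors, hence nowhere dense, so $\{r=\ell\}$ is open and dense, and there $\hv_{(1)}^a,\dots,\hv_{(\ell)}^a$ are independent.

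For the second assertion I would compute $\dim\spann\{\wt K(t):t\in\R\}$ pointwise. By \eqref{def_canonical_Killing} and \eqref{canonicalpotentials}, $\wt K(t)=\sum_k(-t)^{n-k}L_{(k)}$ with $L_{(k)}^\beta:=\Kf^{\alpha\beta}\nabla_\alpha\tilde\sigma_k$, so this span equals $\spann_\R\{L_{(1)},\dots,L_{(n)}\}$. The identities~\eqref{usefulidentities} show that the real-linear isomorphism $X^\beta\mapsto\Pi_\beta^aX^\beta$, $TM\to T^{1,0}M$, carries $J$ to multiplication by $i$ and sends $L_{(k)}$ to $i\,\hv_{(k)}^a$; hence $\dim\spann\{\wt K(t)\}=\dim_\R\spann_\R\{\hv_{(k)}^a\}$, the real dimension of $\R[A]\hv^a\subset\C[A]\hv^a$. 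It remains to bound this by $\ell$. As $A$ is $g$-Hermitian, at a point where it is diagonalisable I decompose $\hv^a=\sum_j v_j$ into eigencomponents for the distinct eigenvalues $\mu_j$ actually occurring, so that $\hv_{(k)}^a=\sum_j p_{k-1}(\mu_j)v_j$; because the $p_{k-1}$ are real and nonreal eigenvalues occur in conjugate pairs (the characteristic polynomial of $A$ being real), the values $p_{k-1}(\mu_j)$ at conjugate eigenvalues are conjugate, and a Vandermonde argument forces $\spann_\R\{\hv_{(k)}^a\}=\spann_\R\{v_j\}$, of real dimension $r(x)\le\ell$. This gives $\dim\spann\{\wt K(t)\}=r(x)\le\ell$ everywhere.

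The main obstacle is exactly this last dimension count: a priori the real span of complex vectors could have dimension up to $2r(x)$, and getting the sharp bound $\le\ell$ relies essentially on the $g$-Hermitian structure of $A$. In the positive-definite (K\"ahler) case it is clean, since $A$ then has real eigenvalues and $g$-orthogonal eigenspaces, so the decomposition above applies verbatim; in the indefinite case one must accommodate nondiagonalisable $A$ and carefully exploit that real polynomials take conjugate values at conjugate eigenvalues, which keeps conjugate-paired components locked together and prevents the real dimension from exceeding $r(x)$. By contrast, the density in the first assertion is comparatively routine once the holomorphicity of the $\hv_{(k)}^a$ is in hand.
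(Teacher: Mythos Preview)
Your argument follows the same skeleton as the paper's: both use the recursion~\eqref{hvf-rec} to identify the span of $\hv_{(1)}^a,\ldots,\hv_{(k)}^a$ with a Krylov subspace for $A$, and both invoke holomorphicity of the $\hv_{(k)}^a$ (the paper via the holomorphic $k$-vector $\hv_{(1)}^{[a}\cdots\hv_{(k)}^{e]}$, you via holomorphic minors) for density. The paper is much terser: it simply asserts $\dim\spann\{\hv_{(k)}^a\}=\dim\spann\wt K(t)$ pointwise and moves on, without separating real and complex dependence.

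You are right to scrutinise this equality, since the holomorphic wedge detects $\C$-dependence while $\spann\wt K(t)$ is a real span in $TM$. Your argument that the two dimensions agree is complete when all eigenvalues of $A$ are real (the positive-definite case). In the indefinite case there is a genuine gap in your sketch: you use that nonreal eigenvalues of $A$ occur in conjugate pairs, but what you actually need is that the eigenvalues \emph{occurring in} $\hv^a$ do---i.e.\ that $v_\mu\neq 0$ forces $v_{\bar\mu}\neq 0$. Without this, a lone complex eigenvalue contributes $1$ to the complex Krylov dimension but $2$ to the real span (since $\spann_\R\{\mu^k v_\mu:k\geq 0\}=\C v_\mu$ when $\mu\notin\R$), and your bound fails. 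A related slip: even when the pairing holds, $\spann_\R\{\hv_{(k)}^a\}$ and $\spann_\R\{v_j\}$ are generally \emph{different} $r(x)$-dimensional real subspaces of $\spann_\C\{v_j\}$; only their dimensions agree, which is all you need.

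The missing pairing does hold here, since (by the analysis in Lemmas~\ref{eigenvalues}--\ref{alg-mult} and Corollary~\ref{cor:eigenvectors}, which are logically independent of this proposition) the $\mu$-component of $\hv^a$ at a stable point is a nonzero multiple of $\nabla^a\mu$, and $\mu$ is nonconstant iff $\bar\mu$ is. Alternatively, once you know $r_\R=r_\C$ on a dense open set, lower semicontinuity of $r_\R$ gives $r_\R\leq\ell$ everywhere. Either way, you should close this gap explicitly rather than leaving the indefinite case as a sketch.
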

\begin{proof} Suppose for some $p\in M$ and $1\leq k\leq n$, $\hv_{(k)}^{\,a}$
is a linear combination of $\hv_{(1)}^{\,a},\ldots, \hv_{(k-1)}^{\,a}$ at $p$.
Then $A_b{}^a \hv_{(k)}^{\,b}$ is a linear combination of
$A_b{}^a \hv_{(1)}^{\,b},\ldots, A_b{}^a\hv_{(k-1)}^{\,a}$, hence of
$\hv_{(1)}^{\,a},\ldots, \hv_{(k)}^{\,a}$
by~\eqref{hvf-rec}. Applying~\eqref{hvf-rec} once more, we see that
$\hv_{(k+1)}^{\,a}$ is a linear combination of $\hv_{(1)}^{\,a},\ldots,
\hv_{(k)}^{\,a}$.  Hence at each $p\in M$, $\dim\spann \{\hv_{(1)}^{\,a},\ldots,
\hv_{(n)}^{\,a}\}=\dim\spann \wt K(t)$ is the largest
integer $\ell$ such that $\hv_{(1)}^{\,a},\ldots, \hv_{(\ell)}^{\,a}$ are
linearly independent at $p$.  However, for any integer $k$,
$\hv_{(1)}^{\,a},\ldots, \hv_{(k)}^{\,a}$ are linearly dependent if and only if
the holomorphic $k$-vector $\hv_{(1)}^{\,[a}\hv_{(2)}^{\vphantom[\,b}\cdots
\hv_{(k)}^{\,e]}$ is zero. Hence the set where $\hv_{(1)}^{\,a},\ldots,
\hv_{(k)}^{\,a}$ are linearly independent is empty (for $k>\ell$) or dense (for
$k\leq \ell$). The result follows.
\end{proof}
Following~\cite{ACG}, the integer $\ell$ of this Proposition will be called
the \emph{order} of the pencil.

\begin{prop}\label{source_of_things_awesome} Let $g$ and $\tilde g$ be
compatible metrics on $(M,J,[\nabla])$ related by a \textup(real\textup)
solution $A$ of~\eqref{metri-mob}.  Then the endomorphisms $\nabla\hv$ and
$A$ commute, i.e.
\begin{equation}\label{endo_commute}
A_a{}^c\nabla_c\hv^b-A_c{}^b\nabla_a\hv^c=0.
\end{equation}
\end{prop}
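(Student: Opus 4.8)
The plan is to prove the commutator identity \eqref{endo_commute} by working with the holomorphic vector field $\hv^b$ and exploiting the mobility equation~\eqref{metri-mob} in its raised form~\eqref{metri-mob-vec}. The key structural fact I would use is that $\hv^b$ is a holomorphic vector field (established in the preceding discussion, since the $\hv^a_{(k)}$ are holomorphic), so that $\nabla_{\bar c}\hv^b = 0$, and that $\hv_b = \nabla_b\hp$ is a gradient, where $\hp = -A_a{}^a$. This means $\nabla_a\hv_b$ is symmetric in the $(1,0)$-indices, i.e.\ $\nabla_a\hv_b = \nabla_b\hv_a$, or after raising an index, $\nabla_a\hv^c$ has a symmetry we can leverage. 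Concretely, the plan is to differentiate the mobility equation once more and feed in the curvature/Bianchi information from Proposition~\ref{rosetta}.

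First I would differentiate $\nabla_c A_a{}^b = -\delta_a{}^c\hv^b$ --- more precisely the form $\nabla^c A_a{}^b = -\delta_a{}^c\hv^b$ from~\eqref{metri-mob-vec} --- and commute covariant derivatives to bring in the curvature. The essential computation is to apply $\nabla_d$ to the mobility equation and antisymmetrise (or contract appropriately) so that the Ricci/Weyl terms acting on $A_a{}^b$ appear. Since $g$ is K\"ahler its canonical connection is the Levi-Civita connection and its curvature is $J$-invariant with the symmetries~\eqref{Kurvature}; in particular the purely holomorphic curvature component $R_{ab}{}^c{}_d$ is controlled, and $\Ric_{ab}=0$. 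I expect that after commuting derivatives, the second derivatives $\nabla_a\nabla_c A$ can be eliminated using the first-order equation~\eqref{metri-mob-vec} applied to $\nabla A$, leaving an algebraic identity relating $A_a{}^c\nabla_c\hv^b$, $A_c{}^b\nabla_a\hv^c$, and curvature terms contracted with $A$. The symmetry $\nabla_a\hv_b = \nabla_b\hv_a$ (holomorphicity of $\hv$) is what should force the curvature contributions to cancel and produce exactly~\eqref{endo_commute}.

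A cleaner route, which I would try first, is to observe that since $\hv^b$ is holomorphic and $A$ is $\nabla$-related to $\hv$ via $\nabla^c A_a{}^b = -\delta_a{}^c\hv^b$, the quantity $\nabla_a\hv^b$ is itself essentially $-\tfrac1n$ times a trace of $\nabla\nabla A$, and one can compute $A_a{}^c\nabla_c\hv^b$ by contracting the twice-differentiated mobility equation. The point is that $\nabla_c\hv^b = \nabla_c\nabla^b\hp$ is a (complex) Hessian of $\hp$, and its contraction with $A$ should be expressible symmetrically. I would compute $A_a{}^c\nabla_c\hv^b$ and $A_c{}^b\nabla_a\hv^c$ separately, express both in terms of $\nabla_a\nabla_c\hp$ contracted with $A$, and check directly that the difference reduces to a curvature term of the form $A_a{}^c R_{c}{}^b{}\hv$ which vanishes by the $\Ric_{ab}=0$ and $W_{ab}{}^c{}_d\equiv 0$ conditions (valid for K\"ahler metrics by Proposition~\ref{kaehlerharmcurv}).

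The main obstacle I anticipate is bookkeeping the curvature terms correctly: when commuting $\nabla_a\nabla_c$ on $A$ or $\hv$, one picks up $R_{ac}{}^\bullet{}_\bullet$ contracted with $A$, and one must verify these are exactly the components killed by the K\"ahler conditions rather than some residual Weyl/Bochner piece. The delicate point is that $\nabla\hv$ has both a $(1,0)$-derivative acting on a $(1,0)$-field, so the relevant curvature is $R_{ab}{}^c{}_d$, which for K\"ahler metrics vanishes as a Weyl-type obstruction ($W_{ab}{}^c{}_d\equiv0$) while the remaining $\Rho_{ab}$-part vanishes since $\Ric_{ab}=0$; I would need to confirm that the Bianchi-type rearrangement genuinely isolates these components and does not require additional hypotheses on $A$ beyond the mobility equation. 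Establishing that the second-derivative terms cancel --- leaving a purely curvature-algebraic identity --- is where the real care lies, but once it is in that form the vanishing is immediate from the earlier results.
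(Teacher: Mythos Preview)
Your proposal has a genuine gap: you have misidentified which curvature component is relevant. You claim that ``$\nabla\hv$ has both a $(1,0)$-derivative acting on a $(1,0)$-field, so the relevant curvature is $R_{ab}{}^c{}_d$,'' and plan to kill it using $W_{ab}{}^c{}_d\equiv 0$ and $\Ric_{ab}=0$. But $\nabla_a\hv^b = g^{\bar c b}\nabla_a\hv_{\bar c} = g^{\bar c b}\nabla_a\nabla_{\bar c}\hp$ is the \emph{mixed} Hessian of $\hp$; the purely holomorphic part $\nabla_a\hv_b=\nabla_a\nabla_b\hp$ already vanishes (since $\hp$ is a Killing potential), so commuting two $(1,0)$-derivatives on $A$ only recovers $0=0$. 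To say anything about $\nabla_a\hv^b$ you must differentiate the barred form $\nabla_{\bar c}A_a{}^b=-g_{a\bar c}\hv^b$ by an unbarred derivative, and then the commutator $[\nabla_a,\nabla_{\bar b}]$ produces the $(1,1)$-curvature $R_{a\bar b}{}^c{}_d$, which is the full K\"ahler curvature and does \emph{not} vanish.

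The paper's direct proof does exactly this, obtaining
\[
R_{a\bar{b}}{}^{\bar{c}}{}_{\bar{e}}A^{\bar{e}d}
+R_{a\bar{b}}{}^d{}_eA^{\bar{c}e}=\delta_a{}^d\nabla_{\bar{b}}\bar\hv^{\bar{c}}
-\delta_{\bar{b}}{}^{\bar{c}}\nabla_a\hv^d,
\]
and then extracts~\eqref{endo_commute} by a reality trick: transvecting once with $A^{\bar b}{}_{\bar c}$ shows that $A^{\bar b c}R_{a\bar b c\bar e}A^{\bar e}{}_{\bar d}$ is real, and a second transvection with $A_f{}^a\delta_{\bar c}{}^{\bar b}$ then combines with this reality to give $A_a{}^b\nabla_b\hv^d = A^{\bar b d}\nabla_{\bar b}\bar\hv_a = A^{\bar b d}\nabla_a\hv_{\bar b} = A_b{}^d\nabla_a\hv^b$. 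The paper also gives a shorter conceptual proof you may prefer: since $K=J\hv$ is a holomorphic Killing field for both $g$ and $\tilde g$ (Theorem~\ref{Commuting_Killing_objects}), one has $\cL_K A=0$; a direct check from~\eqref{metri-mob} shows $\nabla_K A=0$, whence $[\nabla K,A]=0$, and taking $(1,0)$-parts yields~\eqref{endo_commute}.
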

\begin{proof} We first give a proof using
Theorem~\ref{Commuting_Killing_objects}, which implies that $K^\alpha$ is a
holomorphic Killing field with respect to both $g$ and $\tilde g$. It follows
that $\cL_K A =0$. However, $\nabla_K A =0$ (since $\hv^{\bar c}\nabla_{\bar
  c} A_a{}^b = -\hv_a \hv^b=\hv^c\nabla_{c} A_a{}^b$) and so $[\nabla K,A]=0$.
Equation~\eqref{endo_commute} is obtained by taking $(1,0)$-parts.

We now give a more direct proof of~\eqref{endo_commute}, starting from the
observation that
\begin{equation*}
\nabla_a \hv_{\bar{b}}=-\nabla_a\nabla_{\bar{b}}A_c{}^c
=-\nabla_{\bar{b}}\nabla_a\bar{A}^{\bar{c}}{}_{\bar{c}}
=\nabla_{\bar{b}}\bar \hv_a,
\end{equation*}
(i.e.~$\nabla_a \hv_{\bar{b}}$ is real). Now expand
$(\nabla_a\nabla_{\bar{b}}-\nabla_{\bar{b}}\nabla_a)A^{\bar{c}d}$ by curvature
and also by using~\eqref{mob-raised} to obtain
\begin{equation}\label{expand_two_ways}
R_{a\bar{b}}{}^{\bar{c}}{}_{\bar{e}}A^{\bar{e}d}
+R_{a\bar{b}}{}^d{}_eA^{\bar{c}e}=\delta_a{}^d\nabla_{\bar{b}}\bar\hv^{\bar{c}}
-\delta_{\bar{b}}{}^{\bar{c}}\nabla_a\hv^d.
\end{equation}
Transvect with $A^{\bar{b}}{}_{\bar{c}}$ to conclude that 
\begin{equation*}
A^{\bar{b}c}R_{a\bar{b}c\bar{e}}A^{\bar{e}}{}_{\bar{d}}
+A^{\bar{b}}{}_{\bar{c}}A^{\bar{c}e}R_{a\bar{b}\bar{d}e}
=g_{a\bar{d}}A^{\bar{b}}{}_{\bar{c}}\nabla_{\bar{b}}\bar\hv^{\bar{c}}
-A^{\bar{b}}{}_{\bar{b}}\nabla_a \hv_{\bar{d}}
\end{equation*}
and hence that 
\begin{equation}\label{another_reality}
A^{\bar{b}c}R_{a\bar{b}c\bar{e}}A^{\bar{e}}{}_{\bar{d}}
-A^{\bar{c}b}R_{b\bar{d}e\bar{c}}A_a{}^e=0
\end{equation}
(i.e.~$A^{\bar{b}c}R_{a\bar{b}c\bar{e}}A^{\bar{e}}{}_{\bar{d}}$ is real). Now
transvect (\ref{expand_two_ways}) with $A_f{}^a\delta_{\bar{c}}{}^{\bar{b}}$ to
conclude that
\[
-A_f{}^a\Ric_{a\bar{e}}A^{\bar{e}}{}_{\bar{d}}+A_f{}^aR_{a\bar{c}\bar{d}e}A^{\bar{c}e}=
A_{f\bar{d}}\nabla_{\bar{c}}\bar\hv^{\bar{c}}-nA_f{}^a\nabla_a \hv_{\bar{d}}
\]
and hence that
\[
A_a{}^bR_{b\bar{c}\bar{d}e}A^{\bar{c}e}-A^{\bar{b}}{}_{\bar{d}}R_{\bar{b}ca\bar{e}}A^{\bar{e}c}=
n(A^{\bar{b}}{}_{\bar{d}}\nabla_{\bar{b}}\bar\hv_a-A_a{}^b\nabla_b \hv_{\bar{d}}).
\]
But from (\ref{another_reality}) the left hand side vanishes whence
\[
A_a{}^b\nabla_b\hv^d=A^{\bar{b}d}\nabla_{\bar{b}}\bar\hv_a
=A^{\bar{b}d}\nabla_a\hv_{\bar{b}}=A_b{}^d\nabla_a\hv^b,
\]
as required.
\end{proof}

Note that~\eqref{endo_commute} can be used to provide an alternative proof
that the holomorphic vector fields~\eqref{hvfields} commute. If $V^a$ and
$W^b$ are two such fields, we must show that
\begin{equation}\label{key_to_commutation}
0=[V,W]^b=V^a\nabla_a W^b-W^a\nabla_a V^b.
\end{equation}
Let us take $V^a=\hv^a$ and $W^b=2\hv^{[b}A_c{}^{c]}$.
Then~\eqref{metri-mob} yields
\begin{align*}
\nabla_a W^b &=(\nabla_a\hv^b)A_c{}^c+\hv^b\nabla_aA_c{}^c
-(\nabla_a\hv^c)A_c{}^b-\hv^c\nabla_aA_c{}^b\\
&=(\nabla_a\hv^b)A_c{}^c-\hv^b\bar\hv_a
-(\nabla_a\hv^c)A_c{}^b+\delta_a{}^b\hv^c\bar\hv_c
\end{align*}
whence $V^a\nabla_a
W^b=\hv^a(\nabla_a\hv^b)A_c{}^c-\hv^a(\nabla_a\hv^c)A_c{}^b$ and
\begin{equation*}
V^a\nabla_a W^b-W^a\nabla_a V^b=\hv^a(A_a{}^c\nabla_c\hv^b-A_c{}^b\nabla_a\hv^c).
\end{equation*}
Similar computations show that all the fields in~\eqref{hvfields} commute.

\begin{rem}
It is interesting to compare Proposition~\ref{source_of_things_awesome} with
what happens in the real projective setting. The mobility
equations~\eqref{metri-mob} are replaced by
\[
\nabla^\alpha A_\beta{}^\gamma=-\delta_\beta{}^\alpha \hv^\gamma-g^{\alpha\gamma}\hv_\beta
\]
and the development runs in parallel. These equations control the existence of
another metric in the projective class other than the assumed background
metric and, from the coefficients of the characteristic polynomial
of~$A_\alpha{}^\beta$, a solution gives rise to $n$ canonically defined
potentials for $n$ canonically defined vector fields. These are counterparts
to the fields (\ref{hvfields}) and, as such, need not be Killing.
Nevertheless, they commute and to see this it is necessary to employ the
alternative reasoning that we encountered near the end of the proof just
given. The key observation, like~(\ref{endo_commute}), is that the
endomorphisms $A_\alpha{}^\beta$ and $\nabla_\alpha \hv^\beta$ commute and its
proof follows exactly the course just given.
\end{rem}

\subsection{Conserved quantities on c-projective manifolds}

On a c-projective manifold $(M,J,[\nabla])$, the construction of an integral
from a compatible metric and Killing tensor has a c-projectively invariant
formulation: in particular, given a nondegenerate solution $\ms^{\alpha\beta}$
of the metrisability equation, and a c-projective Hermitian symmetric Killing
tensor $H_{\gamma\delta}$ of valence $(0,2)$, the Hermitian $(2,0)$-tensor
$\ms^{\alpha\gamma}\ms^{\beta\delta}H_{\gamma\delta}$ defines an integral of
the geodesic flow of the metric corresponding to $\ms$; if $H_{\gamma\delta}$
is associated to $\ms^{\alpha\beta}$ by Proposition~\ref{HKTensors}, then this
integral is the Hamiltonian associated to the inverse metric
$g^{\alpha\beta}=(\det \ms)\, \ms^{\alpha\beta}$.

\begin{defin} Let $(M,J,[\nabla])$ be a c-projective manifold, and let
$\sms^{\alpha\beta}(t):= \sms^{\alpha\beta}-t\ms^{\alpha\beta}$ be a metrisability pencil
satisfying Condition~\ref{Nondegen}, so that $g^{\alpha\beta}=
(\det\ms)\ms^{\alpha\beta}$ is inverse to a (nondegenerate) compatible metric $g$,
and we may write $\sms^{\bar a c}(t)= \ms^{\bar a b} A_b{}^c(t)$. Then the
\emph{linear} and \emph{quadratic integrals} $L_t,I_t\colon TM\to\R$ of (the
geodesic flow of) $g$ are defined by $L_t(X):=g(\wt K(t),X)=
g_{\alpha\beta}\wt K^\alpha(t) X^\beta$ and $I_t(X):= \tau_g^{-2}\wt H(t)(X,X)
= \tau_g^{-2}\wt H_{\alpha\beta}(t) X^\alpha X^\beta$, where $\wt K(t)$ and $\wt H(t)$
are the holomorphic Killing fields and c-projective Hermitian symmetric Killing
tensors associated to $g$ by Theorem~\ref{Commuting_Killing_objects}.
\end{defin}

\begin{prop} \label{Integrals-in-involution} The integrals $I_t,L_t$ of $g$
\textup(for all $t\in\R$\textup) mutually commute under the Poisson bracket
on $TM$ induced by $g$.
\end{prop}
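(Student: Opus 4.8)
The plan is to prove Proposition~\ref{Integrals-in-involution} by reducing all the required Poisson brackets to statements already established in Section~\ref{Canonical_Killing}, principally Theorem~\ref{Commuting_Killing_objects} together with Propositions~\ref{PoissonCommute} and~\ref{HolKillingPairing}. There are three families of brackets to control: $\{L_s,L_t\}$, $\{L_s,I_t\}$, and $\{I_s,I_t\}$. Throughout I would work on the dense open set where Condition~\ref{Nondegen} holds (so that $g$ is a genuine metric and the $\wt K(t)$ are honest holomorphic Killing fields), and extend the conclusions to all of $M$ by continuity, since each Poisson bracket is a polynomial expression in the jets of $\ms$, $\sms$ and hence varies continuously (indeed real-analytically in the relevant data).

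First I would dispose of the linear--linear brackets. Under the isomorphism $TM\cong T^*M$ induced by $g$, the function $L_t$ is precisely the Hamiltonian of the Killing field $\wt K(t)$, and the Poisson bracket of two such functions is $g$-dual to the Lie bracket $[\wt K(s),\wt K(t)]$ up to the standard sign. By Theorem~\ref{Commuting_Killing_objects}(1) the $\wt K(t)$ Poisson-commute for all $s,t$, so $\{L_s,L_t\}=0$. For the linear--quadratic brackets $\{L_s,I_t\}$, the key point is that $I_t$ is built from the canonical Killing tensor $\wt H(t)$, and $\{L_s,I_t\}$ is $g$-dual (again up to sign and a universal constant) to the Lie derivative $\cL_{\wt K(s)}\bigl(\tau_g^{-2}\wt H(t)\bigr)$ viewed as a symmetric contravariant tensor; this is the standard fact that the Poisson bracket of the Hamiltonian of a vector field with the function attached to a symmetric tensor computes the Lie derivative of that tensor along the field (a special case of~\eqref{Schouten_Nijenhuis} with $j=1$). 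By Theorem~\ref{Commuting_Killing_objects}(2) the $\wt H(t)$ are invariant under $\wt K(s)$, i.e.~$\cL_{\wt K(s)}\wt H(t)=0$, and $\cL_{\wt K(s)}\tau_g=0$ because $\wt K(s)$ is Killing for $g$ (it preserves $\vol(g)$, hence its root $\tau_g$). Therefore $\cL_{\wt K(s)}\bigl(\tau_g^{-2}\wt H(t)\bigr)=0$ and $\{L_s,I_t\}=0$.

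The quadratic--quadratic brackets $\{I_s,I_t\}$ are the main obstacle and require genuine input from the pencil structure rather than a one-line appeal. My plan is to exploit the reparametrisation freedom of the pencil (Remark~\ref{rem:pencil}): for fixed generic $s$ with $\sms(s)$ nondegenerate, reparametrise so that $\sms(s)$ becomes a new background metric $g_s$, whose associated Hermitian endomorphism $\tilde A$ is a M\"obius transform of $A$. The Killing tensor $\wt H(s)$ is, up to the scalar density $\tilde\sigma(s)$, exactly the metric Killing tensor $H$ attached to $\sms(s)$ via Proposition~\ref{HKTensors}, while $\wt H(t)$ for $t\neq s$ is a polynomial in the adjugate of $A(t)$ applied to $H$. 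The strategy is to show that $\tau_{g_s}^{-2}\wt H(t)$ is, for each $t$, a symmetric Killing tensor \emph{with respect to the single fixed metric $g_s$} (this is Corollary~\ref{MetricKillingTensor}), so both $I_s$ and $I_t$ are quadratic-in-momentum conserved quantities of the \emph{same} geodesic flow, and then to verify that these particular Killing tensors for $g_s$ commute. Concretely I would show that the whole family $\{\tau_{g_s}^{-2}\wt H(t):t\in\R\}$ arises as polynomials in the single $g_s$-Hermitian endomorphism $\tilde A$, and that the corresponding fibrewise-quadratic functions on $T^*M$ are functions of the commuting integrals $\tilde\sigma_1,\dots,\tilde\sigma_\ell$ from~\eqref{canonicalpotentials} and the momenta contracted against powers of $\tilde A$. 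The crux is the computation that the Poisson bracket of two such ``powers-of-$\tilde A$'' quadratic integrals vanishes; this is where Proposition~\ref{source_of_things_awesome}, that $\nabla\hv$ commutes with $A$, enters decisively, controlling the derivative terms in the Schouten--Nijenhuis bracket~\eqref{Schouten_Nijenhuis}. I expect the bookkeeping here---expanding $\{I_s,I_t\}$ via~\eqref{Schouten_Nijenhuis}, substituting $\nabla A = -\delta\otimes\hv$ from~\eqref{metri-mob-vec}, and collapsing the result using~\eqref{endo_commute}---to be the only substantial calculation, and the step most likely to hide sign or symmetrisation subtleties. Once the brackets vanish on the nondegenerate locus, continuity finishes the proof on all of $M$.
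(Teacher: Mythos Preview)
Your treatment of the $\{L_s,L_t\}$ and $\{L_s,I_t\}$ brackets matches the paper exactly: both are dispatched by Theorem~\ref{Commuting_Killing_objects} in one line.

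For $\{I_s,I_t\}$ you take a genuinely different route. The paper does \emph{not} reparametrise the pencil or invoke Proposition~\ref{source_of_things_awesome}. Instead it writes out the Schouten--Nijenhuis bracket directly in barred/unbarred indices, then uses the algebraic trick of multiplying by $(s-t)$ and observing $(s-t)\ms^{\bar c f}=\sms^{\bar c f}(t)-\sms^{\bar c f}(s)$. Each piece $\sms(t)^{\bar c f}\nabla_f H^{(t)}_{\bar a b}$ is then evaluated via the derivative formula~\eqref{eq:H-deriv} from Proposition~\ref{HKTensors}; the cross-terms cancel in pairs and the remaining terms vanish because $H^{(s)}$ and $H^{(t)}$ are c-projective Hermitian symmetric Killing tensors. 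This is a short, self-contained computation that never needs $[A,\nabla\hv]=0$.

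Your Benenti-style plan (expressing $\tau_g^{-2}\wt H(t)$ as adjugate polynomials in $A$ and then showing the ``powers-of-$A$'' quadratic integrals commute using~\eqref{endo_commute}) is a legitimate alternative and is essentially the approach of Topalov~\cite{Top} referenced in the introduction. Two cautions: first, your reparametrisation to $g_s$ is unnecessary overhead, since the same argument runs with the original $g$ and $A$; second, your statement that the quadratic integrals ``are functions of the commuting integrals $\tilde\sigma_1,\dots,\tilde\sigma_\ell$'' is imprecise---the $\sigma_i$ are functions on $M$, not on $T^*M$, so they enter as coefficients rather than as integrals, and the cancellation of the terms coming from their gradients is exactly the ``bookkeeping'' you flag but do not carry out. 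The paper's $(s-t)$ trick sidesteps all of this.
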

\begin{proof} Since, by Theorem~\ref{Commuting_Killing_objects}, the canonical
Killing fields commute and Lie preserve the canonical Killing tensors, it
remains only to show that for any $s,t\in \R$ the quadratic integrals $I_s$
and $I_t$ commute.  The Hermitian symmetric tensors of valence $(2,0)$
corresponding to $I_t$ are $Q^{\bar a b}_{(t)}:=\ms^{\bar a d}\ms^{\bar c
  b}H_{d\bar c}^{(t)}$, where we write $H_{d\bar c}^{(t)}$ instead of $\wt
H_{d\bar c}(t)$.  It thus suffices to show that for all $s\neq t\in\R$,
$Q^{\bar a b}_{(s)}$ and $Q^{\bar a b}_{(t)}$ have vanishing
Schouten--Nijenhuis bracket, which (in barred and unbarred indices) means
\begin{equation*}
\left (\nabla_b Q^{d(\bar a}_{(t)}\right)Q^{\bar c)b}_{(s)}
- \left(\nabla_b Q^{d(\bar a}_{(s)} \right)Q^{\bar c) b}_{(t)}=0,\qquad
\left(\nabla_{\bar a} Q^{\bar c(b}_{(t)}\right)Q^{d)\bar a}_{(s)}
-\left( \nabla_{\bar a} Q^{\bar c(b}_{(s)}\right)Q^{d)\bar a}_{(t)}=0.
\end{equation*}
We prove the first equation (the second is analogous); taking for $\nabla$ the
Levi-Civita connection of $g$ (so $\nabla\ms=0$), this reduces to:
\begin{equation*}
\ms^{\bar c f}\left(\nabla_f H^{(t)}_{\bar a (b}\right) H^{(s)}_{d)\bar c}
-\ms^{\bar c f}\left(\nabla_f H^{(s)}_{\bar a (b}\right) H^{(t)}_{d)\bar c}=0.
\end{equation*}
The key trick is to multiply the left hand side by $s-t$ and observe that
$(s-t)\ms^{\bar c f}=\sms^{\bar c f}(t)-\sms^{\bar cf}(s)$.  Now using
equation~\eqref{eq:H-deriv} for $\sms^{\bar a b}(t)$ and $H_{d\bar c}^{(t)}$,
we obtain
\begin{equation*}
\sms^{\bar c f}(t)\left(\nabla_f H^{(t)}_{\bar a (b}\right) H^{(s)}_{d)\bar c}=
Y^{\bar e}\delta_{\bar a}{}^{\bar c} H_{\bar e (b}^{(t)}H_{d)\bar c}^{(s)}-
Y^{\bar c} H_{\bar a (b}^{(t)}H_{d)\bar c}^{(s)}
=Y^{\bar c} H_{\bar c (b}^{(t)}H_{d)\bar a}^{(s)}-Y^{\bar c} H_{\bar a (b}^{(t)}H_{d)\bar c}^{(s)}
\end{equation*}
for some $Y^{\bar a}$. The same reasoning applies to $\sms^{\bar a b}(s)$
and $H_{d\bar c}^{(s)}$ with the same vector field $Y^{\bar a}$ to obtain the
same expression with $s$ and $t$ interchanged. These two expressions sum to zero
and hence
\begin{equation*}
\left(\sms^{\bar c f}(t)-\sms^{\bar c f}(s)\right)\left(
\left(\nabla_f H^{(t)}_{\bar a (b}\right) H^{(s)}_{d)\bar c}
-\left(\nabla_f H^{(s)}_{\bar a (b}\right) H^{(t)}_{d)\bar c}\right)=
- \tilde\sigma(s)\nabla^{\vphantom{y}}_{(d} H^{(s)}_{b)\bar a}
- \tilde\sigma(t)\nabla^{\vphantom{y}}_{(d} H^{(t)}_{b)\bar a}
\end{equation*}
which vanishes because $\wt H(s)$ and $\wt H(t)$ are c-projective Hermitian
symmetric Killing tensors.
\end{proof}

We now discuss the question how many of the functions $L_t$ and $I_t$
($t\in\R$) are \emph{functionally independent} on $TM$, i.e.~have linearly
independent differentials. Since $TM$ has dimension $4n$, and the functions
$L_t$ and $I_t$ mutually commute (i.e.~they span an Abelian subalgebra under
the Poisson bracket induced by $g$), at most $2n$ of these functions can be
functionally dependent at each point of $TM$. If equality holds on the fibres
of $TM$ over a dense open subset of $M$, the geodesic flow of $g$ is said to be
\emph{integrable}.

Since $A_{a}{}^{b}(t)= A_{a}{}^{b}-t\delta_a{}^b$, integrability turns out to
be related to the spectral theory of the field $A_{a}{}^{b}$ of endomorphisms
of $T^{1,0}M$. In particular, using the trivialisation of $\cE(1,1)$
determined by $g$, the determinant $\tilde\sigma(t):=\det\sms(t)$ becomes the
characteristic polynomial $\chi_A(t):=\det A(t)$ of $A_{a}{}^{b}$.  Since $A$
is Hermitian, the coefficients of $\chi_A(t)$ are smooth real-valued functions
on $M$. Any complex-valued function $\mu$ on an open subset $U\subseteq M$ has
an associated \emph{algebraic multiplicity} $m_\mu\colon U\to \N$, where
$m_\mu(p)$ is the multiplicity of $\mu(p)$ as a root of $\chi_A(t)$ at $p\in
U$, or equivalently the rank of the generalised $\mu(p)$-eigenspace of
$A_a{}^b$ in $T^{1,0}_p M$; additionally, its \emph{geometric multiplicity}
$d_\mu(p)$ is the dimension of the $\mu(p)$-eigenspace of $A_a{}^b$ in
$T^{1,0}_p M$, and its \emph{index} $h_\mu(p)$ is the multiplicity of $\mu(p)$
in the minimal polynomial of $A_a{}^b$ at $p$.

\begin{rem}\label{nilpotent} If $\mu\colon U\to\C$ is smooth with $m_\mu$
constant on $U$, then the restriction of $A_a{}^b-\mu\delta_a{}^b$ to the
generalised $\mu$-eigendistribution, defines, using an arbitrary local frame
of this distribution, a family of nilpotent $m_\mu\times m_\mu$ matrices
$N$. There are only finitely many conjugacy classes of such matrices,
parametrised by partitions of $m_\mu$: we can either use the \emph{Segre
  characteristics}, which are the sizes of the Jordan blocks of $N$, or the
dual partition by the \emph{Weyr characteristics} $\dim\ker N^k-\dim\ker
N^{k-1}$, $k\in\Z^+$.  The index $h_\mu$ is the first Segre characteristic
(i.e.~the size of the largest Jordan block), while the geometric multiplicity
is the first Weyr characteristic $\dim\ker N$ (so $\max\{d_\mu,h_\mu\}\leq
m_\mu$ with equality if and only if $d_\mu=1$ or $h_\mu=1$). The unique (hence
dense) open orbit consists of nilpotent matrices of degree $m_\mu$, whose
Jordan normal forms have a single Jordan block, and in general, the orbit
closures stratify the nilpotent matrices (with the partial ordering of strata
corresponding to the dominance ordering of partitions). Thus $N$ maps a
sufficiently small neighbourhood of a point $p\in U$ into a unique minimal
stratum, and for generic $p\in U$, this stratum is the orbit closure of
$N(p)$. In other words, the type of $N$ may be assumed constant in
neighbourhood of any point in a dense open subset of $U$.
\end{rem}

The general theory of families of matrices is considerably simplified here by
Proposition~\ref{open_dense_Killing_vector_fields} and the following two
lemmas.

\begin{lem}\label{eigenvalues} Suppose $U$ is an open subset of $M$
and $T^{1,0}U=E\oplus F$ where $E$ and $F$ are smooth $A$-invariant subbundles
over $U$ such that the restriction of $A$ to $E$ has a single Jordan block
with smooth eigenvalue $\mu\colon U\to\C$.  Then the gradient of $\mu$ is a
section over $U$ of $E\oplus F^\perp\subseteq T^{1,0}U\oplus T^{0,1}U =
TU\otimes\C$, where $F^\perp$ denotes the subspace of $T^{0,1}U$ orthogonal
to $F$ with respect to $g$.
\end{lem}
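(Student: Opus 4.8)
The plan is to realise the eigenvalue $\mu$ as a normalised trace of $A$ and then differentiate it by means of the mobility equation~\eqref{metri-mob}. Let $P_a{}^b$ be the field of endomorphisms of $T^{1,0}U$ projecting onto $E$ along $F$; it is smooth because $E$ and $F$ are smooth subbundles, and it commutes with $A_a{}^b$ since $E$ and $F$ are both $A$-invariant, so that $A=PAP+(\Id-P)A(\Id-P)$ is block diagonal for the splitting $E\oplus F$. If $m$ denotes the rank of $E$, then the hypothesis that $A|_E$ is a single Jordan block with eigenvalue $\mu$ gives $\operatorname{tr}(A|_E)=m\mu$, that is $\mu=\tfrac1m P_c{}^b A_b{}^c$ on $U$.

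Next I would differentiate this expression, with $\nabla$ the Levi-Civita connection of $g$. From $P^2=P$ one gets $P(\nabla P)P=0$ and $(\Id-P)(\nabla P)(\Id-P)=0$, so $\nabla P$ is purely off-diagonal for $E\oplus F$; together with the block-diagonal form of $A$ this makes the trace of $(\nabla_\alpha P)A$ vanish, by the cyclic identities $\operatorname{tr}\bigl((\nabla P)PAP\bigr)=\operatorname{tr}\bigl(P(\nabla P)P\,A\bigr)=0$ and similarly with $\Id-P$ in place of $P$. Hence $\nabla_\alpha\mu=\tfrac1m P_c{}^b\nabla_\alpha A_b{}^c$, and inserting the two forms $\nabla_a A_b{}^c=-\delta_a{}^c\hv_b$ and $\nabla_{\bar a}A_b{}^c=-g_{b\bar a}\hv^c$ of~\eqref{metri-mob} yields $\nabla_a\mu=-\tfrac1m P_a{}^b\hv_b$ and $\nabla_{\bar a}\mu=-\tfrac1m g_{b\bar a}P_c{}^b\hv^c$.

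Finally I would raise indices with $g$ to extract the two components of $\grad\mu$. The $(1,0)$-part is $g^{\bar b a}\nabla_{\bar b}\mu=-\tfrac1m P_b{}^a\hv^b$, the image under $P$ of the $(1,0)$-field $\hv^b$, hence a section of $E$. The $(0,1)$-part is $g^{\bar a b}\nabla_b\mu=-\tfrac1m g^{\bar a b}P_b{}^c\hv_c$; lowering its index again gives the $(1,0)$-covector $-\tfrac1m P_b{}^c\hv_c$, which lies in the image of the transpose projector and so annihilates $\ker P=F$. Since a $(0,1)$-vector lies in $F^\perp$ exactly when its $g$-dual covector annihilates $F$, the $(0,1)$-part is a section of $F^\perp$, and therefore $\grad\mu$ is a section of $E\oplus F^\perp$. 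The one place demanding care is this last step: one must pin down the metric duality between $T^{1,0}U$ and $T^{0,1}U$ and check that ``orthogonal to $F$'' translates into ``$g$-dual covector in $\operatorname{im}P^{\!*}$''; the genuinely useful idea, that $\mu$ is a trace so that the algebraic mobility equation can be applied termwise, makes the rest a routine contraction.
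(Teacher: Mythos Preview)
Your argument is correct and is a genuinely different, cleaner route than the paper's. The paper chooses an explicit Jordan frame $Z^a(1),\ldots,Z^a(m)$ for $E$, differentiates the Jordan-block relations $(A_a{}^b-\mu\delta_a{}^b)Z^a(k)=Z^b(k-1)$ via the mobility equation, contracts with the dual frame in the annihilator of $F$, and obtains $m\nabla^a\mu=\sum_k(\hv^bZ_b(k))Z^a(k)$ by a telescoping sum; the $(0,1)$-part is then handled by applying the same argument to the Hermitian-transposed block on $\overline F^\perp$. Your approach bypasses the frame entirely: realising $\mu=\tfrac1m\operatorname{tr}(PA)$ and using $P(\nabla P)P=0=(\Id-P)(\nabla P)(\Id-P)$ together with the block-diagonality of $A$ to kill $\operatorname{tr}((\nabla_\alpha P)A)$ is the key simplification, after which the mobility equation gives $\nabla^a\mu=-\tfrac1m P_c{}^a\hv^c\in E$ and $\nabla_a\mu=-\tfrac1m P_a{}^c\hv_c\in\operatorname{ann}(F)$ immediately. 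Your method only uses that $A|_E$ has the single eigenvalue $\mu$ (so $\operatorname{tr}(A|_E)=m\mu$), not the full single-Jordan-block hypothesis; the paper's explicit formula in terms of the Jordan basis is more informative but is not needed for the stated conclusion.
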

\begin{proof} In a neighbourhood of any point in $U$, we may choose a
frame $Z^a(1),\ldots,Z^a(m)$ of $E$ such that $A$ is in Jordan normal form on
$E$. We identify $E^*$ with the annihilator of $F$ in $\Omega^{1,0}$ and let
$Z_a(1),\ldots,Z_a(m)$ be the dual frame (with $Z_a(i)Z^a(j)=\delta_{ij}$).
Then the transpose of $A$ is in Jordan normal form with respect to this dual
frame in reverse order: for $k=1,\ldots m$ we thus have
\begin{align}\label{jordan-cond}
(A_a{}^b - \mu\,\delta_a{}^b) Z^a(k) &= Z^b(k-1),\\
(A_a{}^b - \mu\,\delta_a{}^b) Z_b(k) &= Z_b(k+1),
\label{jordan-dual}
\end{align}
where $Z^b(-1)=0=Z_b(m+1)$. By~\eqref{metri-mob}, the $(0,1)$-derivative
of~\eqref{jordan-cond} yields
\begin{equation}\label{T0}
(g_{a\bar c}\hv^b+ \delta_a{}^b\nabla_{\bar c}\mu)Z^a(k)
= (A_a{}^b - \mu\,\delta_a{}^b) \nabla_{\bar c} Z^a(k) - \nabla_{\bar c}Z^b(k-1),
\end{equation}
which we may contract with $Z_b(k)$, using~\eqref{jordan-dual}, to obtain
\begin{equation*}
-\hv^b Z_b(k) g_{a\bar c} Z^a(k)  + \nabla_{\bar c}\mu
= Z_b(k+1) \nabla_{\bar c} Z^b(k) - Z_b(k) \nabla_{\bar c}Z^b(k-1).
\end{equation*}
Summing from $k=1$ to $m$, the right hand side sums to zero, and hence
\begin{equation*}
m\nabla^a \mu =  \sum_{k=1}^m \hv^b Z_b(k) Z^a(k)
\end{equation*}
so the $(1,0)$-gradient of $\mu$ is a linear combination of $Z^a(1),\ldots,
Z^a(m)$, hence a section of $E$. Since $A$ is Hermitian, its restriction to
$\overline F^\perp$ also has a single Jordan block, with eigenvalue $\bar\mu$,
and so $\nabla^{\bar a}\mu=\overline{\nabla^a\bar\mu}$ belongs to $F^\perp$ by
the same argument.
\end{proof}

It follows that if there is more than one Jordan block with eigenvalue $\mu$,
then $\mu$ is constant---equivalently, all nonconstant eigenvalues of $A$ have
geometric multiplicity one. In fact, a stronger result holds.

\begin{lem}\label{alg-mult} Let $\mu$ be a smooth function on $M$ and
let $U\subset M$ be a nonempty open subset on which $\mu$ has constant
algebraic multiplicity $m$. If $\mu$ is constant and $M$ is connected, then
$\mu$ has algebraic multiplicity $m_\mu\geq m$ on $M$.  Conversely, if $m\geq
2$ then $\mu$ is locally constant on $U$.
\end{lem}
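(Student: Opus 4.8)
The plan is to treat the two assertions separately, the first via the prolongation of the c-projective Hessian, the second via the mobility equation~\eqref{metri-mob} directly. For the first assertion, suppose $\mu\equiv\mu_0$ is constant and consider the metrisability pencil $\sms(t)=\sms-t\ms$ of~\eqref{pencil}, whose determinant $\tilde\sigma(t)=\det\sms(t)$ is, for each fixed $t$, a section of $\cE(1,1)$ in the kernel of the c-projective Hessian by Proposition~\ref{det_of_metrisability}. Writing $\sms(t)=\ms A(t)$ gives $\tilde\sigma(t)=\sigma\,\chi_A(t)$, where $\sigma=\det\ms$ is the trivialisation of $\cE(1,1)$ determined by $g$ (nowhere vanishing, since $g$ is a genuine metric) and $\chi_A(t)=\det(A-t\,\Id)$ is the characteristic polynomial of $A$ on $T^{1,0}M$. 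Since $D^\cW$ is $\R$-linear and independent of $t$, differentiating $D^\cW\tilde\sigma(t)=0$ in $t$ shows that each $\phi_j:=\tfrac1{j!}\,\partial_t^j\tilde\sigma(t)\big|_{t=\mu_0}=\sigma\cdot\tfrac1{j!}\chi_A^{(j)}(\mu_0)$ lies in $\ker D^\cW$. On $U$ the algebraic multiplicity equals $m$, so $\chi_A^{(j)}(\mu_0)=0$ there for $0\le j\le m-1$, whence $\phi_j\equiv0$ on $U$. By Theorem~\ref{HessianProlongation}, $\phi_j$ corresponds to a section $L(\phi_j)$ of $\cW_\C$ parallel for the prolongation connection; as $\phi_j$ and all its derivatives vanish on the open set $U$, the components of $L(\phi_j)$ vanish on $U$, and a parallel section vanishing on a nonempty open subset of the connected manifold $M$ vanishes identically. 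Hence $\phi_j\equiv0$ on $M$, and since $\sigma$ is nowhere zero, $\chi_A^{(j)}(\mu_0)\equiv0$ on $M$ for $0\le j\le m-1$, i.e.\ $m_\mu\ge m$ on all of $M$.

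For the converse I would first reduce to a single Jordan block. If the geometric multiplicity of $\mu$ is at least two, then $\mu$ is constant by Lemma~\ref{eigenvalues} and the remark following it; so assume the generalised $\mu$-eigenspace $E$ is a single Jordan block of size $m\ge2$. By Remark~\ref{nilpotent} the Jordan type is constant on a dense open subset of $U$, on which we may choose a smooth Jordan frame $Z^a(1),\dots,Z^a(m)$ with $(A_a{}^b-\mu\,\delta_a{}^b)Z^a(k)=Z^b(k-1)$ (and $Z^a(0)=0$) together with its dual coframe $Z_a(k)$. The approach is to differentiate these chain relations using the two forms of~\eqref{metri-mob} and contract against the coframe. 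Differentiating $(A-\mu)Z(1)=0$ in a $(0,1)$ direction and pairing with the top dual vector $Z_b(m)$, which is annihilated by the transpose of $A-\mu$, already gives $g_{a\bar c}Z^a(1)\,\hv^bZ_b(m)=0$ and hence $\hv^bZ_b(m)=0$; the analogous $(1,0)$ computation yields $\hv_aZ^a(1)=0$. In the gradient formula $m\,\nabla^a\mu=\sum_k(\hv^bZ_b(k))Z^a(k)$ obtained in the proof of Lemma~\ref{eigenvalues}, this removes the extreme term, so $\nabla^a\mu$ lies in $\spann\{Z^a(1),\dots,Z^a(m-1)\}$.

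The remaining task, which I expect to be the main obstacle, is to propagate these vanishings down the chain. Contracting the differentiated chain relations with the intermediate coframe vectors produces a coupled recursion relating the scalars $\hv^bZ_b(k)$, the derivatives $\nabla\mu$, and the frame-derivative terms $Z_a(j)\nabla Z^a(k)$. These frame derivatives do not vanish individually, so the crux is to eliminate them: here the $g$-Hermiticity of $A$ is essential, since it identifies $\overline{Z^a(k)}$ with metric-dual coframe data and thereby lets the frame-derivative contributions in the conjugate $(1,0)$ and $(0,1)$ recursions cancel. Carrying this out forces $\nabla^a\mu=\nabla^{\bar a}\mu=0$ on the dense open set where the frame is defined, so $\mu$ is locally constant there; by continuity and the assumed constancy of the algebraic multiplicity on $U$, it is locally constant on all of $U$.
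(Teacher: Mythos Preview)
Your treatment of the first assertion is correct and essentially the same idea as the paper's, packaged slightly differently: both exploit that each $\chi_A^{(j)}(\mu_0)$ (times the trivialisation $\sigma$) satisfies a finite-type equation, so that vanishing on the open set $U$ forces vanishing on all of $M$. You invoke the prolongation connection on $\cW_\C$ from Theorem~\ref{HessianProlongation} directly; the paper instead uses that the gradient of a Killing potential is a holomorphic vector field and argues by induction on $k$ that $p(t)=\chi_A(t)/(t-\mu)^k$ satisfies $p(\mu)\equiv0$. Your version is arguably cleaner, as it handles all $j$ at once.

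For the converse, however, your argument has a genuine gap. You correctly extract $\hv^bZ_b(m)=0$ and $\hv_aZ^a(1)=0$ from the extreme ends of the Jordan chain, but the ``propagation down the chain'' is not carried out, and there is no evident reason it should close: once you pair with $Z_b(m-1)$ (or differentiate the relation for $Z(2)$), the frame-derivative terms $Z_a(j)\nabla Z^a(k)$ are genuinely present, and the Hermiticity of $A$ does not by itself force them to cancel in the way you suggest. The paper avoids this entirely with a short global trick that does not use the Jordan frame at all. Since each $t$-coefficient of $\chi_A(t)$ is a Killing potential, $\nabla^a\nabla^b\chi_A^{(m-2)}(t)=0$ for every $t$. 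Writing $\chi_A(t)=(t-\mu)^mq(t)$ with $q(\mu)\neq0$ on $U$, one has
\[
\nabla^a\chi_A^{(m-2)}(t)=m!\,(t-\mu)\,q(t)\,\nabla^a\mu+(t-\mu)^2X^a(t)
\]
for some smooth $X^a(t)$; applying $\nabla^b$ and then setting $t=\mu$ leaves only $m!\,q(\mu)\,\nabla^a\mu\,\nabla^b\mu=0$, whence $\nabla^a\mu=0$ (and $\nabla^{\bar a}\mu=0$ by the conjugate argument). This uses only the Killing-potential property of the elementary symmetric functions and the factorisation of $\chi_A$, and requires no recursive cancellation.
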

\begin{proof} Since $A$ is Hermitian, $\chi_A(t)$ has real
coefficients, $\bar\mu$ is an eigenvalue of $A$ with the same algebraic
multiplicity as $\mu$, and $\nabla^{\bar a}\mu= \overline{\nabla^a\bar\mu}$.
By assumption $\chi_A(t)=(t-\mu)^m q(t)$ on $U$, where $q$ is smooth with
$q(\mu)$ nonvanishing. Since $\chi_A(t)$ is a Killing potential for $g$, its
gradient $\nabla^a \chi_A(t)$ is a holomorphic vector field on $M$ for all
$t$.

Suppose first that $\mu$ is constant: we shall show by induction on $k$ that
if $m\geq k$ then $m_\mu\geq k$ on $M$, which is trivially true for $k=0$. So
suppose that $m\geq k+1$ and $m_\mu\geq k$, so that $p(t)=\chi_A(t)/(t-\mu)^k$
is a polynomial in $t$. Since $\nabla^a p(\mu)$ is holomorphic on $M$ and
vanishing on $U$, it vanishes on $M$, since $M$ is connected. Similarly for
$\overline{p(\mu)}$, so that $p(\mu)$ is locally constant on $M$, hence zero,
since $M$ is connected and $p(\mu)$ vanishes on $U$. Thus $m_\mu\geq k+1$ as
required.

For the second part, the $(m-2)$nd derivative in $t$ of $\chi^a(t)$ is also a
Killing potential, which may be written
\[
\nabla^a \chi^{(m-2)}_A(t)=m! (t-\mu) q(t) \nabla^a\mu + (t-\mu)^2 X^a(t)
\]
for some polynomial of vector fields $X^a(t)$. Applying $\nabla^b=g^{b\bar c}
\nabla_{\bar c}$ and evaluating at $t=\mu$ yields $\nabla^a\mu\nabla^b\mu=0$,
i.e.~$\nabla^a\mu=0$. Replacing $\mu$ by $\bar\mu$, we deduce that $\mu$ is
locally constant on $U$.
\end{proof}
In contrast, in the analogous real projective theory of geodesically
equivalent pseudo-Riemannian metrics, Jordan blocks with nonconstant
eigenvalues can occur: see~\cite{BM}.

In order to apply the above lemmas at a point $p\in M$, we need $p$ to be
stable for $A$ in the following sense. First, we need to suppose that the
number of distinct eigenvalues of $A_{a}{}^{b}$ is constant on some
neighbourhood of $p$.  This condition on $p$ is clearly open, and it is also
dense: if the number of distinct eigenvalues is not constant near $p$, then
there are points arbitrarily close to $p$ where the number of distinct
eigenvalues is larger; repeating this argument, there are points arbitrarily
close to $p$ where the number of distinct eigenvalues is locally maximal,
hence locally constant.  Now, on the dense open set where this condition
holds, the eigenvalues of $A_{a}{}^{b}$ are smoothly defined, and their
algebraic multiplicities are locally constant (since they are all upper
semi-continuous). Now a point $p$ in this dense open set is \emph{stable} if
in addition the Jordan type (Segre or Weyr characteristics) of each
generalised eigenspace of $A_{a}{}^{b}$ is constant on a neighbourhood of $p$.
The stable points are open and dense by Remark~\ref{nilpotent}.

\begin{defin}\label{def:regular} We say $p\in M$ is a \emph{regular point} for
the pencil $\sms(t)=\ms^{\bar ab}(A_{a}{}^{b}-t\delta_a{}^b)$ if it is stable
for $A_{a}{}^{b}$, and for each smooth eigenvalue $\mu$ on an open
neighbourhood of $p$, either $d\mu_p\neq 0$ or $\mu$ is constant on an open
neighbourhood of $p$.
\end{defin}
Equivalently, the regular points are the open subset of the stable points
where the rank of the span of the canonical Killing fields associated to the
pencil is maximal, i.e.~equal to the order $\ell$. Consequently, by
Proposition~\ref{open_dense_Killing_vector_fields}, the regular points form a
dense open subset of $M$.

\begin{cor}\label{cor:eigenvectors} Let $\mu$ be a smooth eigenvalue of $A$
over the set of stable points. Then
\begin{equation}\label{eigen_vectors}
A_{a}{}^b\nabla_b\mu=\mu\nabla_a\mu\quad \text{ and }\quad
A_{\bar a}{}^{\bar b}\nabla_{\bar b}\mu=\mu\nabla_{\bar a}\mu.
\end{equation}
If $\mu$ is constant, its algebraic multiplicity is constant on the set of
regular points.
\end{cor}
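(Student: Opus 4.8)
Corollary~\ref{cor:eigenvectors} asserts two things: first, the eigenvector equations~\eqref{eigen_vectors} hold for any smooth eigenvalue $\mu$ of $A_a{}^b$ over the stable locus; and second, if such a $\mu$ is constant, then its algebraic multiplicity is constant on the set of regular points. Let me sketch the proof of each.

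Let me think about what we have available.

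The setup: we have a background Kähler metric $g$ on $(M,J)$, a second compatible metric $\tilde g$, and the associated $g$-Hermitian endomorphism $A_a{}^b$ of $T^{1,0}M$ satisfying the mobility equation~\eqref{metri-mob}, namely $\nabla_c A_a{}^b = -\delta_a{}^c\hv^b$ (in the raised form \eqref{metri-mob-vec}), where $\nabla$ is the Levi-Civita connection of $g$ and $\hv_c = \nabla_c\hp$ with $\hp = -A_a{}^a$ real. The characteristic polynomial $\chi_A(t) = \det A(t)$ has real coefficients, and these coefficients are, up to scale and sign, the canonical potentials~\eqref{canonicalpotentials}, which are Killing potentials for $g$; their $(1,0)$-gradients are the holomorphic vector fields~\eqref{hvfields}.

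**Eigenvector equations.** The plan for the first assertion is to differentiate the eigenvalue relation. Over the stable locus, $\mu$ is a smooth root of $\chi_A(t)$, so $\chi_A(\mu)=0$ identically, where $\chi_A(t)=\det(A-t\,\Id)$ on $T^{1,0}M$. I would differentiate this identity using $\nabla_a$. Since the coefficients of $\chi_A$ are expressible through the traces $A_b{}^b, A_b{}^{[b}A_c{}^{c]},\dots$, one can compute $\nabla_a\chi_A(t)$ at fixed $t$ directly from the mobility equation~\eqref{metri-mob}. The key computational fact is that, because $\nabla_a A_b{}^c$ is a pure trace term $-\delta_b{}^c\hv_a$ (in the form~\eqref{metri-mob-vec}, $\nabla^c A_a{}^b=-\delta_a{}^c\hv^b$), the derivative of the characteristic polynomial collapses to something of the form $\nabla_a\chi_A(t) = (\text{polynomial in }t)\cdot\hv_a$ plus terms involving $\nabla_a\mu$. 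Differentiating the \emph{identity} $\chi_A(\mu)\equiv 0$ and using the chain rule in the $t$-variable gives a relation tying $\nabla_a\mu$ to the action of $A_a{}^b$ on gradients. The cleanest route is probably to start from~\eqref{metri-mob-vec} applied inside the cofactor/adjugate expansion, or more directly from Lemma~\ref{eigenvalues}: that lemma already shows (for a single Jordan block with smooth eigenvalue $\mu$) that $m\nabla^a\mu = \sum_k \hv^b Z_b(k) Z^a(k)$ lies in the generalised eigenspace $E$. Since $A$ restricted to $E$ is $\mu\,\Id$ plus a nilpotent, and the nilpotent kills the \emph{top} eigenvector while $\nabla^a\mu$ already lands in the span, applying $A_a{}^b$ and using~\eqref{jordan-cond}–\eqref{jordan-dual} should reduce $A_a{}^b\nabla_b\mu$ to $\mu\nabla_a\mu$ up to nilpotent contributions that vanish in the sum. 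I would therefore derive~\eqref{eigen_vectors} as a direct corollary of the frame computation in Lemma~\ref{eigenvalues}, simply by transvecting $A_a{}^b\nabla_b\mu$ with the formula for $\nabla^a\mu$ there; the conjugate equation follows by reality of $A$.

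**Constancy of algebraic multiplicity.** For the second assertion, suppose $\mu$ is constant (equal to some $c\in\R$, since constant eigenvalues of a Hermitian $A$ are real) on a neighbourhood in the regular locus. I want to show its algebraic multiplicity $m_\mu$ is locally constant over all regular points. The natural tool is Lemma~\ref{alg-mult}: if $\mu$ is constant with algebraic multiplicity $m$ on some nonempty open $U$ and $M$ is connected, then $m_\mu\geq m$ on all of $M$. The point is that the coefficients of $(t-\mu)^{-k}\chi_A(t)$, evaluated appropriately, are Killing potentials whose gradients are holomorphic; a holomorphic vector field on a connected manifold vanishing on an open set vanishes identically, which propagates the factorisation. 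So the algebraic multiplicity cannot drop below its value on $U$ anywhere. Combined with upper semicontinuity of root multiplicities (which gives the reverse inequality $m_\mu\leq m$ near stable points), I conclude $m_\mu$ is constant. Thus the task is to assemble Lemmas~\ref{eigenvalues} and~\ref{alg-mult} rather than to compute afresh.

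**Main obstacle.** I expect the genuinely delicate step to be the passage from the gradient formula of Lemma~\ref{eigenvalues} to the clean eigenvector equation~\eqref{eigen_vectors}, specifically controlling the nilpotent part of $A$ on the generalised eigenspace. One must verify that, although $A_a{}^b$ acting on an individual frame vector $Z^a(k)$ produces the lower Jordan vector $Z^b(k-1)$, the particular combination $\nabla^a\mu=\tfrac1m\sum_k\hv^bZ_b(k)Z^a(k)$ is genuinely an honest eigenvector for the eigenvalue $\mu$ and not merely a generalised one. The resolution should be that the summation over the full Jordan chain, together with the duality $Z_a(i)Z^a(j)=\delta_{ij}$, arranges the nilpotent shifts to telescope, leaving exactly $A_a{}^b\nabla_b\mu=\mu\nabla_a\mu$. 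This is the step where the Hermitian structure and the precise normal form~\eqref{jordan-cond}–\eqref{jordan-dual} must be used carefully; everything else is an application of connectedness and semicontinuity.
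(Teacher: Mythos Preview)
Your treatment of the second assertion (constancy of the algebraic multiplicity of a constant eigenvalue) is correct and matches the paper: Lemma~\ref{alg-mult} gives the lower bound $m_\mu\ge m$ everywhere, and upper semicontinuity at stable points gives the reverse inequality, so $m_\mu$ is constant on the connected regular set.

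For the first assertion, however, you have misidentified the obstacle and your proposed resolution does not work. You want to show that the expression $m\,\nabla^a\mu=\sum_{k}c_k Z^a(k)$ (with $c_k=\hv^bZ_b(k)$) from Lemma~\ref{eigenvalues} is an honest $\mu$-eigenvector of $A_a{}^b$, and you claim the nilpotent shifts telescope. They do not: applying $A$ and using the Jordan relation $(A-\mu)Z(k)=Z(k-1)$ yields
\[
A_a{}^b\bigl(m\,\nabla^a\mu\bigr)=\mu\bigl(m\,\nabla^b\mu\bigr)+\sum_{k\ge 2}c_k\,Z^b(k-1),
\]
and the residual sum vanishes only if $c_k=\hv^bZ_b(k)=0$ for all $k\ge 2$. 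Nothing in the duality $Z_a(i)Z^a(j)=\delta_{ij}$ forces this; you would need extra information about how $\hv$ pairs with the dual Jordan frame, which the frame computation in Lemma~\ref{eigenvalues} alone does not supply.

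The paper's argument sidesteps this entirely by invoking Lemma~\ref{alg-mult} for the \emph{first} assertion as well. Over the stable set one has a dichotomy: either $m_\mu=1$, in which case the generalised eigenspace $E$ is one-dimensional with no nilpotent part, and Lemma~\ref{eigenvalues} immediately gives that $\nabla^a\mu$ spans the honest eigenspace; or $m_\mu\ge 2$, in which case Lemma~\ref{alg-mult} forces $\mu$ to be locally constant, so $\nabla_a\mu=0$ and~\eqref{eigen_vectors} holds trivially. Thus the situation you flag as the ``main obstacle''---a single Jordan block of size $\ge 2$ with nonconstant eigenvalue---simply does not occur, and no telescoping is needed.
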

Indeed, where $\mu$ has algebraic multiplicity $m_\mu=1$, Lemma
\ref{eigenvalues} implies that $\nabla_a\mu$ generates the eigenspace of
$\mu$, whereas where $m_\mu\geq 2$, Lemma~\ref{alg-mult} implies that $\mu$ is
locally constant, and hence equations~\eqref{eigen_vectors} are trivially
satisfied. Furthermore, it implies that the algebraic multiplicities of the
constant eigenvalues are upper semi-continuous on $M$, hence constant on the
connected set of regular points.

\begin{thm} \label{thm:regular} Let $(M, J, [\nabla])$ be a c-projective
manifold that admits \bps/K\"ahler metrics $g_{a\bar b}$ and ${\tilde g}_{a\bar b}$
associated to linearly independent solutions $\ms^{\bar a c}$ and
$\sms^{\bar a c}=\ms^{\bar a b} A_b{}^c$ of the metrisability
equation~\eqref{metriequ1}.
\begin{enumerate}
\item The number of functionally independent linear integrals $L_s$ is equal to
  the number of nonconstant eigenvalues of $A$ at any regular point of $M$.
\item The number of functionally independent quadratic integrals $I_t$ is
  equal to the degree of the minimal polynomial of $A$ at any stable point of
  $M$.
\item The integrals $I_t$ are functionally independent from the integrals $L_s$.
\end{enumerate}
\end{thm}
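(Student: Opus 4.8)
The plan is to count functionally independent integrals at a fixed regular point $p\in M$ by analysing the eigenvalue structure of the Hermitian endomorphism $A_a{}^b$, exploiting the recursive/spectral relationship between the canonical Killing fields $\wt K(t)$, the canonical Killing tensors $\wt H(t)$, and the characteristic and minimal polynomials of $A$. Since all the integrals $L_s$ and $I_t$ are obtained by specialising the parameter in a polynomial family (the coefficients of $\chi_A(t)=\det A(t)$ and of the adjugate $\mathrm{adj}\,A(t)$ respectively), the dimension of the span of their differentials at $p$ equals the dimension of the span of the differentials of the coefficients of these matrix polynomials. First I would establish (1): by Proposition~\ref{open_dense_Killing_vector_fields} the span of the $\wt K(t)$ has dimension equal to the order $\ell$, and by Corollary~\ref{cor:eigenvectors} the $(1,0)$-gradient of each nonconstant smooth eigenvalue $\mu$ generates the corresponding eigenspace (constant eigenvalues contribute trivial gradients). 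Since $L_s(X)=g_{\alpha\beta}\wt K^\alpha(s)X^\beta$ and $\wt K(s)$ has coefficients built from the holomorphic fields~\eqref{hvfields}, the functional independence of the $L_s$ on $TM$ reduces to the linear independence of the gradients $\nabla_a\mu$ over the nonconstant eigenvalues, which is exactly $\ell$, the number of nonconstant eigenvalues at $p$.

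For (2) I would work at a stable point, where the Jordan type of each generalised eigenspace is locally constant. The quadratic integrals $I_t$ correspond to $Q^{\bar a b}_{(t)}=\ms^{\bar a d}\ms^{\bar c b}\wt H_{d\bar c}(t)$, and via~\eqref{adj_pencil} the $t$-dependence is carried by $\mathrm{adj}\,A(t)$. The number of functionally independent $I_t$ is therefore the number of linearly independent coefficient tensors arising from $\mathrm{adj}\,A(t)$, together with the extra functional directions coming from the nonconstant eigenvalues entering quadratically. The key algebraic fact is that the span of $\{I,A,A^2,\dots\}$ inside the commutant has dimension equal to the degree of the minimal polynomial of $A$; one shows that distinct powers of $A$ (up to the index of each eigenvalue) give functionally independent Hamiltonians on the fibres of $TM$, using that $A$ has simple spectrum in the sense dictated by Lemma~\ref{eigenvalues} and Lemma~\ref{alg-mult} (nonconstant eigenvalues have geometric multiplicity one, so their indices equal their algebraic multiplicities only in the scalar case). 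Counting block by block over the generalised eigenspaces, the total is the sum of the indices $h_\mu$ over distinct eigenvalues $\mu$, which is precisely $\deg$ of the minimal polynomial.

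Finally, for (3), I would show the quadratic integrals $I_t$ contribute functional directions genuinely transverse to those spanned by the linear integrals $L_s$. The cleanest approach is to evaluate differentials on the fibres of $TM$: the $L_s$ are linear in the fibre variable $X$, while the leading fibre-dependence of the $I_t$ is quadratic, so a nontrivial linear combination $\sum c_s\,dL_s + \sum e_t\,dI_t=0$ at a generic covector forces the quadratic part to vanish separately, reducing to the already-established independence counts in (1) and (2). I expect the main obstacle to be part~(2): one must carefully relate the functional rank of the polynomial family $\mathrm{adj}\,A(t)$ to the minimal polynomial degree in the presence of nontrivial Jordan blocks (the non-semisimple case), and confirm that the nonconstant eigenvalues—whose gradients already appear in (1)—do not produce spurious coincidences or hidden dependences among the $I_t$. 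This requires a local normal-form analysis of $A_a{}^b$ on each generalised eigendistribution, using~\eqref{metri-mob} to control the derivatives of the Jordan frame as in the proof of Lemma~\ref{eigenvalues}, and tracking exactly which coefficients of the adjugate remain functionally independent after the spectral constraints are imposed.
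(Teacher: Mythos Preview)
Your approach to parts~(1) and~(3) is essentially the paper's: you correctly identify that the order $\ell$ governs the linear integrals via Proposition~\ref{open_dense_Killing_vector_fields}, and that the linear/quadratic distinction in the fibre variable separates the $L_s$ from the $I_t$. The paper makes this precise by restricting the differential of an integral to the vertical tangent space $T_X(T_pM)\cong T_pM$: since $L_s$ is fibrewise linear, this restriction is $g(\wt K(s),\cdot)$, and since $I_t$ is fibrewise quadratic, it is $2\scale_g^{-2}\wt H(t)(X,\cdot)$. For generic $X$ this immediately gives both the separation in~(3) and lower bounds for the counts in~(1) and~(2) by the dimensions of the spans of $\wt K(s)$ and $\wt H(t)$ at $p$; these are also upper bounds because the integrals are linear in the coefficients of polynomial families in $s,t$.

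For part~(2) you are overcomplicating the argument, and this is the genuine gap. Once the vertical-differential reduction above is in hand, the question becomes purely pointwise and algebraic: what is the dimension of the linear span of $\{\wt H(t):t\in\R\}$ at a stable point $p$? Since $\wt H_{b\bar a}(t)=(\mathrm{adj}\,A(t))_b{}^c H_{c\bar a}$ with $H_{c\bar a}$ nondegenerate, this equals the dimension of the span of $\mathrm{adj}\,A(t)=\chi_A(t)A(t)^{-1}$ in $\mathrm{End}(T_p^{1,0}M)$. On a single $h\times h$ Jordan block with eigenvalue $\mu$, $(t-\mu)^h A(t)^{-1}$ is a polynomial of degree $h-1$ in $t$ whose $h$ coefficients are linearly independent (they are the nilpotent powers), and summing over the generalised eigenspaces gives $\sum_\mu h_\mu$, the degree of the minimal polynomial. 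There is no need to track base-direction derivatives, eigenvalue gradients, or the mobility equation here; your anticipated difficulty about ``spurious coincidences'' from the nonconstant eigenvalues does not arise, and the phrase ``together with the extra functional directions'' is incorrect---there are none. The geodesic-flow invariance of functional independence (which the paper invokes) lets you transport the count from a regular or stable point to all of $M$.
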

\begin{proof} Integrals of the form $L_s$ or $I_t$ are functionally independent
near $X\in T_p M$ if their derivatives are linearly independent at $X$. Since
$L_s$ is linear along the fibres of $TM\to M$, the restriction of $d L_s$ to
$T_X(T_p M)\cong T_p M$ is $g(\wt K(s),\cdot)$ (at $p$).  Similarly, $I_t$ is
quadratic along fibres, and the restriction of $d I_t$ to $T_X(T_p M)\cong T_p
M$ is ${\scale_g}^{-2}\wt H(t)(X,\cdot)$ with $\scale_g=\det\ms$.  Hence, for
generic $X\in T_p M$, the quadratic integrals $I_t$ are functionally
independent from the linear integrals $L_s$, and the number of functionally
independent linear, respectively quadratic, integrals is at least the
dimension of the span of $\wt K(s)$ at $p$, respectively the dimension of the
span of $\wt H(t)$ at $p$.

The geodesic flow preserves the integrals and therefore the property of the
integrals to be functionally independent. Since any two points of $M$ can be
connected by a piecewise geodesic curve, it suffices to compute the dimensions
of these spaces at a regular point of $p$, where the dimensions of the spans of
$\wt K(s)$ and $\wt H(t)$ are maximal.

At such a point, the number of linearly independent Killing vector fields $\wt
K(s)$ is the number of nonconstant eigenvalues of $A$, so it remains to
compute the number of linearly independent Killing tensors $\wt H(t)$. For
this, recall that $\wt H_{b\bar a}(t)=(\mathrm{adj}\, A(t))_b{}^c H_{c\bar
  a}$, with $\mathrm{adj}\, A(t)=A(t)^{-1}\det A(t)$. Now write $A(t)$ in
Jordan canonical form: on an $h\times h$ Jordan block with eigenvalue $\mu$,
$(t-\mu)^h A(t)^{-1}$ is a polynomial of degree $h-1$ in $t$ with $h$ linearly
independent coefficients. Hence on the generalised $\mu$-eigenspace,
$(t-\mu)^{m_\mu}A(t)^{-1}$ is a polynomial in $t$ with $h_\mu$ linearly
independent coefficients, where $m_\mu$ is the geometric multiplicity of
$\mu$, and $h_\mu$ the index (the multiplicity of $\mu$ in the minimal
polynomial, i.e.~the size of the largest Jordan block). It follows readily
that the dimension of the span of $\mathrm{adj}\, A(t)$ is the degree of the
minimal polynomial of $A$.
\end{proof}

\subsection{The local complex torus action}

For a c-projective manifold $M^{2n}$ admitting a metrisability pencil with no
constant eigenvalues, Theorem~\ref{thm:regular} shows that any metric in the
pencil is integrable, i.e.~its geodesic flow admits $2n$ functionally
independent integrals. Furthermore $n$ of the independent integrals are
linear, inducing Hamiltonian Killing vector fields. Hence if $M$ is compact,
it is toric (i.e.~has an isometric Hamiltonian $n$-torus action).

When the pencil has constant eigenvalues, there are only $\ell$ independent
linear integrals, where $\ell$ is the \emph{order} of the pencil (the number
of nonconstant eigenvalues), and at most $n$ independent quadratic integrals.
In this case the flows of the Hamiltonian Killing vector fields $\wt K(t)$
generate a foliation of $M$ whose generic leaves are $\ell$-dimensional.  If
$M$ is compact, one can prove (see~\cite{ACG}) that these leaves are the
orbits of an isometric Hamiltonian action of an $\ell$-torus $U(1)^\ell$, and
it is convenient to assume this locally. The complexified action, generated by
the commuting holomorphic vector fields $\wt K(t)$ and $J\wt K(t)$, is
then a local holomorphic action of $(\C^\times)^\ell$, and the leaves of the
foliation, which are locally $J$-invariant submanifolds with generic dimension
$2\ell$, will be called \emph{complex orbits}.

\begin{lem}\label{totallygeodesic} The complex orbits through regular
points are totally geodesic and their tangent spaces are $A$-invariant.  The
c-projectively equivalent metrics $g$ and $\tilde g$ restrict to nondegenerate
c-projectively equivalent metrics \textup(with respect to the induced complex
structure\textup) on any regular complex orbit $\cO^c$.  The metrisability
pencil $\sms(t)$ restricts to a metrisability pencil of order $\ell$ on
$\cO^c$, using $g$ and its restriction to trivialise $\cE_\R(1,1)$, and then
$\tilde g$ is a constant multiple of the metric induced by $\sms$.
\end{lem}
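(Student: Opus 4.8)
The plan is to work pointwise at a regular point $p$ and to describe the tangent space to $\cO^c$ through the generalised eigenspace decomposition of $A$. Since $A_b{}^c$ is $g$-Hermitian, $T^{1,0}M$ splits $g$-orthogonally as $\bigoplus_\mu V_\mu$ into generalised eigenspaces, with $g$ nondegenerate on the conjugation-closed summands; this is where the indefinite (pseudo-K\"ahler) case, in which $A$ may have complex eigenvalues and Jordan blocks, must be accommodated. Write $\mathcal{D}^{1,0}=\spann_\C\{\hv_{(1)},\dots,\hv_{(\ell)}\}$ for the $(1,0)$-part of $T\cO^c$. First I would show $\mathcal{D}^{1,0}=\bigoplus_{\mu\text{ nonconstant}}V_\mu$: by \eqref{hvf-rec} each $\hv_{(k)}=q_k(A)\hv$ for a polynomial $q_k$ with function coefficients, and since (up to an additive constant) $\tilde\sigma_1=A_b{}^b$ is the sum of the nonconstant eigenvalues, Lemma~\ref{eigenvalues} and Corollary~\ref{cor:eigenvectors} give $\hv=\hv_{(1)}\in\bigoplus_{\mu\text{ nonconstant}}V_\mu$; $A$-invariance of this space then yields $\mathcal{D}^{1,0}\subseteq\bigoplus_{\mu\text{ nonconstant}}V_\mu$. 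By Lemma~\ref{alg-mult} the nonconstant eigenvalues have algebraic multiplicity one, so the right-hand side has dimension equal to the number of nonconstant eigenvalues, which equals $\ell=\dim\mathcal{D}^{1,0}$ by Proposition~\ref{open_dense_Killing_vector_fields}; hence equality holds, and in particular $\mathcal{D}^{1,0}$ is $A$-invariant.

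The crux is the totally geodesic property, and the key point is that it reduces to two invariance statements. As $\cO^c$ is a complex, hence $J$-invariant, integral leaf and $\nabla=\nabla^g$ is torsion-free, it suffices to verify $\hv_{(j)}^c\nabla_c\hv_{(k)}^a\in\mathcal{D}^{1,0}$ for the holomorphic frame $\hv_{(1)},\dots,\hv_{(\ell)}$: the remaining components of $\nabla_XY$ for $X,Y\in\Gamma(\mathcal{D})$ vanish because the $\hv_{(k)}$ are holomorphic, so these are the only pieces of the second fundamental form to control. Set $S_c{}^a:=\nabla_c\hv^a$. Two facts drive the computation: $S$ commutes with $A$ by Proposition~\ref{source_of_things_awesome}, whence $S$ preserves each $V_\mu$ and so preserves $\mathcal{D}^{1,0}$; and the mobility equation \eqref{metri-mob}, $\nabla_cA_b{}^a=-\delta_c{}^a\hv_b$, together with $\nabla_a\tilde\sigma_i\propto\hv_{(i)a}$ from \eqref{hvfields} and \eqref{canonicalpotentials}, ensures that every covariant derivative of $q_k(A)$, once contracted with $\hv_{(j)}^c$, yields only terms of the form (function)$\cdot A^p\hv$ and (function)$\cdot\hv_{(j)}$, all lying in $\mathcal{D}^{1,0}$.

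Expanding $\hv_{(j)}^c\nabla_c\hv_{(k)}^a$ via $\hv_{(k)}=q_k(A)\hv$ then lands in $\mathcal{D}^{1,0}$ term by term: the term $q_k(A)_b{}^a\,\hv_{(j)}^cS_c{}^b=\bigl(q_k(A)(S\hv_{(j)})\bigr)^a$ is in $\mathcal{D}^{1,0}$ because $S\hv_{(j)}\in\mathcal{D}^{1,0}$ and $q_k(A)$ preserves $\mathcal{D}^{1,0}$, while the terms arising from $\nabla_c q_k(A)$ are of the controlled shape described above. This proves $\cO^c$ is totally geodesic; I would record along the way that $g$ restricts nondegenerately to $\cO^c$, since $\mathcal{D}^{1,0}$ is a conjugation-closed sum of generalised eigenspaces on which $g$ is nondegenerate, and $\tilde g$ restricts nondegenerately because it is obtained from $g$ by the invertible $A$.

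The remaining claims then follow formally. Writing $\nabla^{\tilde g}=\nabla^g+\upsilon$ with $\upsilon$ of the form \eqref{cprojchange} for a one-form $\Upsilon$, and using that $J$ and the Kronecker deltas preserve $T\cO^c$, I would check that $\upsilon$ restricts to the c-projective change tensor built from $\Upsilon|_{\cO^c}$; since $\nabla^g$ restricts to the Levi-Civita connection of $g|_{\cO^c}$ (totally geodesic), $\nabla^{\tilde g}$ restricts to that of $\tilde g|_{\cO^c}$, and the two differ c-projectively. The ambient mobility equation \eqref{metri-mob} has all indices tangent and $\hv\in\mathcal{D}^{1,0}$, so it restricts to the mobility equation for $A|_{\cO^c}$, exhibiting $\sms(t)|_{\cO^c}$ as a metrisability pencil whose eigenvalues are exactly the nonconstant $\mu_i$ — still nonconstant on $\cO^c$, since their gradients are tangent and nonzero — so it has order $\ell$. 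Finally, $\det\sms=\sigma\det_\C A$ differs from the intrinsic determinant of $\sms|_{\cO^c}$ only by the product of the constant eigenvalues, a constant, so by Proposition~\ref{compKaehler} $\tilde g|_{\cO^c}$ is a constant multiple of the metric induced by $\sms|_{\cO^c}$. I expect the main obstacle to be the totally geodesic property, and within it the correct pseudo-K\"ahler bookkeeping of generalised eigenspaces; once the $A$- and $S$-invariance of $\mathcal{D}^{1,0}$ and the holomorphy of the frame are in hand, the rest is routine.
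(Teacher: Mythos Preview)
Your approach is correct. The paper, however, obtains the totally geodesic property by a more direct route: instead of working with the frame $\hv_{(k)}=q_k(A)\hv$ and invoking the commutation $[\nabla\hv,A]=0$ from Proposition~\ref{source_of_things_awesome}, it simply differentiates the eigenvector equation $(A_a{}^b-\mu\,\delta_a{}^b)Z^a=0$ for an arbitrary eigenvector $Z$ with nonconstant eigenvalue $\mu$, using only the mobility equation $\nabla_cA_a{}^b=-\delta_c{}^b\hv_a$. This yields
\[
(A_a{}^b-\mu\,\delta_a{}^b)\nabla_cZ^a = -(\hv_aZ^a)\,\delta_c{}^b + (\nabla_c\mu)\,Z^b,
\]
and after contraction with any $X^c\in\mathcal{D}^{1,0}$ the right-hand side is visibly a combination of $X^b$ and $Z^b$, hence in $\mathcal{D}^{1,0}$; since $A-\mu$ is injective on the complementary sum of constant-eigenvalue generalised eigenspaces (the constant eigenvalues being distinct from $\mu$ at a regular point), $\nabla_XZ\in\mathcal{D}^{1,0}$ follows immediately. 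No Leibniz expansion of $q_k(A)$, no term-by-term bookkeeping, and no appeal to $[\nabla\hv,A]=0$ is needed.

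Your argument packages the key fact as ``$S=\nabla\hv$ preserves each generalised eigenspace because $[S,A]=0$,'' which is conceptually clean and might transplant to situations where the nonconstant eigenvalues are not simple; but for this lemma the paper's single differentiation of the eigenvector equation is both shorter and lower-technology. The remaining parts---nondegeneracy of the restrictions, c-projective equivalence on $\cO^c$, order $\ell$, and the constant rescaling---you handle essentially as the paper does.
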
  
\begin{proof} Since the complex orbit of any regular point contains only
regular points and the tangent space to the orbit is spanned by holomorphic
vector fields, it suffices to prove that the $(1,0)$-tangent spaces to regular
points are closed under the Levi-Civita connection $\nabla$ of $g$. These
tangent spaces are spanned by eigenvectors $Z^a$ of $A_a{}^b$ with nonconstant
eigenvalues $\mu$, and by differentiating the eigenvector equation
using~\eqref{metri-mob}, as in the proof of Lemma~\ref{eigenvalues}, we obtain
\begin{equation*}
(A_a{}^b - \mu\,\delta_a{}^b) \nabla_{c} Z^a=
(-\hv_a\delta_c{}^b + ,\delta_a{}^b\nabla_c \mu) Z^a
= -(\hv_a Z^a)\delta_c{}^b + (\nabla_c \mu) Z^b.
\end{equation*}
Clearly if we contract the right hand side with a $(1,0)$-vector $X^c$
tangent to a complex orbit, we obtain another such vector. Hence $\nabla_X Z$
is a $(1,0)$-vector tangent to the complex orbit as required: the complex
orbits are thus totally geodesic.

The tangent spaces to a regular complex orbit $\cO^c$ are clearly $J$-invariant
and $A$-invariant, so that $g$ induces a K\"ahler metric $\cO^c$, with a
metrisability pencil spanned by the restrictions of $\ms$ and $\sms$, where we
use $g$ and its restriction to trivialise $\cE_\R(1,1)$. Since $\sms=\ms\circ
A$, and the generalised eigenspaces of $A$ which are not tangent to $\cO^c$
have constant eigenvalues, the metric induced by the restriction of $\sms$
is a constant multiple of the restriction of $\tilde g$.
\end{proof}

Also of interest is the local $(\R^+)^\ell$ action whose local orbits are the
leaves of the foliation generated by the vector fields $J\wt K(t)$, which
will be called \emph{real orbits}.

\begin{lem} \label{totallygeodesicsmall} The real orbits through regular
points are totally geodesic, and their tangent spaces are $A$-invariant and
generated by the gradients of the nonconstant eigenvalues of $A$.  The
c-projectively equivalent metrics $g$ and $\tilde g$ restrict to nondegenerate
projectively equivalent metrics on any regular real orbit $\cO$, and the
restriction of $A$ is a constant multiple of the $(1,1)$-tensor
$\bigl(\tfrac{\vol(\tilde g|_\cO)}{\vol(\tilde g|_\cO)}\bigr)^{1/(\ell+1)}
(\tilde g|_\cO)^{-1} g|_\cO$.
\end{lem}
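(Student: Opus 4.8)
The plan is to mirror the proof of Lemma~\ref{totallygeodesic}, deducing the real statement from the complex one, but replacing the eigenvector computation by a cleaner symplectic-orthogonality argument. First I would identify the real orbits explicitly. By~\eqref{def_canonical_Killing} we have $\wt K^\beta(t)=\Kf^{\alpha\beta}\nabla_\alpha\det A(t)=J_\gamma{}^\beta\nabla^\gamma\det A(t)$, so $J\wt K(t)=-\grad\det A(t)$; hence the tangent distribution $D$ to the real orbits is spanned, on the regular set, by the real gradients $\grad\det A(t)$, equivalently by the gradients of the nonconstant eigenvalues of $A$. Indeed, the coefficients of the characteristic polynomial are symmetric functions of the eigenvalues, so on the dense open set where the distinct eigenvalues vary smoothly their gradients span the same distribution; by Corollary~\ref{cor:eigenvectors} (together with Lemmas~\ref{eigenvalues} and~\ref{alg-mult}) the $(1,0)$- and $(0,1)$-parts of $\grad\mu$ are eigenvectors of $A$ with eigenvalue $\mu$, whence $D$ is $A$-invariant and has real rank equal to the order $\ell$ by Proposition~\ref{open_dense_Killing_vector_fields}.

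For the totally geodesic property I would work inside the complex orbit $\cO^c$, which is totally geodesic by Lemma~\ref{totallygeodesic}, so that $\nabla_XY\in T\cO^c=D\oplus JD$ for all $X,Y\in\Gamma(D)$, where $JD=\spann\{\wt K(t)\}$ is the space of canonical Killing fields. It then remains to show that the $JD$-component vanishes. The crucial point is that $D$ and $JD$ are $g$-orthogonal: since the canonical Killing fields Poisson-commute (Theorem~\ref{Commuting_Killing_objects}), so do the eigenvalues $\mu$ of $A$, being smooth functions of the pairwise-commuting coefficients of $\det A$; consequently $g(\grad\mu,J\grad\nu)$ is a multiple of the Poisson bracket $\{\mu,\nu\}$ and hence vanishes. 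Using that each $\wt K\in JD$ is Killing, so that $\nabla\wt K$ is $g$-skew, a short computation gives $g(\nabla_XY,\wt K)=-g(\nabla_X\wt K,Y)$, while the involutivity of $D$ (the generators $J\wt K(t)$ commute) yields $[X,Y]\in D\perp\wt K$ and hence $g(\nabla_XY,\wt K)=g(\nabla_YX,\wt K)=g(\nabla_X\wt K,Y)$; comparing the two expressions forces $g(\nabla_X\wt K,Y)=0$ and therefore $g(\nabla_XY,\wt K)=0$ for all $\wt K\in JD$. As $g$ is nondegenerate on $JD\cong D$ (by Lemma~\ref{totallygeodesic} and $g(JX,JY)=g(X,Y)$), this gives $\nabla_XY\in D$, so the real orbits are totally geodesic and $g|_\cO$ has Levi-Civita connection $\nabla|_\cO$.

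Finally I would restrict the mobility equation~\eqref{mobility_equation2} to $\cO$. For indices tangent to $\cO$ the terms $\Kf_{\alpha(\beta}J_{\gamma)}{}^\delta\hv_\delta$ drop out, because $\Kf$ vanishes on $D\times D$ by the orthogonality established above; since $\hv=\hv_{(1)}$ is itself tangent to $\cO$, the equation reduces to the real projective mobility equation $\nabla^\cO_\alpha A_{\beta\gamma}=-(g|_\cO)_{\alpha(\beta}(\hv|_\cO)_{\gamma)}$ on the $\ell$-dimensional manifold $(\cO,g|_\cO)$. By the real projective theory recalled in Remark~\ref{rem:proj-analogue}, the nondegenerate restriction $A|_\cO$ corresponds to a metric projectively equivalent to $g|_\cO$, and the analogue of~\eqref{eqL} (with $n$ replaced by $\ell$) exhibits $A|_\cO$ as the stated constant multiple of $\bigl(\vol(\tilde g|_\cO)/\vol(g|_\cO)\bigr)^{1/(\ell+1)}(\tilde g|_\cO)^{-1}g|_\cO$. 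The identification of this projectively equivalent metric with the restriction of $\tilde g$ follows because $\tilde g$ also makes $D$ and $JD$ orthogonal (the potentials Poisson-commute for $\tilde g$ as well, by Theorem~\ref{Commuting_Killing_objects}), so that $\tilde g^{\alpha\beta}$ restricts on $T^*\cO$ to the inverse of $\tilde g|_\cO$ up to the overall constant.

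The main obstacle, I expect, is bookkeeping the complex (pseudo-K\"ahler) eigenvalues and the weight factors. When nonconstant eigenvalues occur in conjugate pairs one must run the orthogonality and restriction arguments with the real and imaginary parts of $\grad\mu$ in place of $\grad\mu$ itself, and verify that $D$ still has real rank $\ell$ and is $A$-invariant; and one must check that the different powers of $\det\ms$ used to trivialise $\cE_\R(1,1)$ over $M$ and over $\cO$ alter $A|_\cO$ only by the harmless overall constant permitted in the statement.
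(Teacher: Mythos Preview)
Your argument is essentially correct but differs from the paper's in two places, and both differences are instructive.

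For the totally geodesic property, the paper bypasses your Killing-field computation entirely.  It simply observes that $X\in T_pM$ is tangent to the real orbit if and only if (i) $X$ is tangent to the complex orbit and (ii) $X\perp\wt K(t)$ for all $t$; property~(i) is preserved along geodesics because the complex orbits are totally geodesic (Lemma~\ref{totallygeodesic}), and property~(ii) is preserved because a geodesic orthogonal to a Killing field at one point remains so.  This two-line argument replaces your computation with $\nabla\wt K$ skew and the involutivity of~$D$; your version is perfectly valid, but the paper's is shorter and makes the geometric reason transparent.

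For projective equivalence, the paper restricts the c-projective change of connection~\eqref{cprojchange} rather than the mobility equation~\eqref{mobility_equation2}.  Since $\cO$ is totally geodesic in $\cO^c$, the induced Levi-Civita connections satisfy $\wt\nabla_\alpha X^\gamma-\nabla_\alpha X^\gamma=\tfrac12(\Upsilon_\alpha\delta_\beta{}^\gamma+\delta_\alpha{}^\gamma\Upsilon_\beta)X^\beta$ on $T\cO$, because the $J$-terms in~\eqref{cprojchange} pair vectors in $D$ with vectors in $JD\perp D$ and hence vanish upon restriction.  This yields projective equivalence directly.  Your route via the restricted mobility equation is also correct (the $\Kf$-terms drop for exactly the same orthogonality reason, and $\hv\in D$), and has the mild advantage of producing the mobility solution $A|_\cO$ explicitly; but you then need one more step---either Remark~\ref{rem:proj-analogue} or the observation that the Levi-Civita connection of the metric associated to $A|_\cO$ appears in~$[\nabla^{\cO}]$---to conclude projective equivalence of the \emph{metrics} $g|_\cO$ and $\tilde g|_\cO$.

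One point you pass over but the paper makes explicit: the eigenvalue gradients $\nabla^a\mu$ are eigenvectors of the Hermitian endomorphism $A$ for distinct (nonconstant) eigenvalues, hence are mutually $g$-orthogonal and (at regular points) non-null.  This is what guarantees that $g|_\cO$ and $\tilde g|_\cO$ are nondegenerate and that $T_p\cO^c=T_p\cO\oplus JT_p\cO$ is an orthogonal splitting; you use this implicitly when asserting that ``$g$ is nondegenerate on $JD\cong D$'' and when passing between $A|_\cO$ and $\tilde g|_\cO$, so it is worth stating.
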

\begin{proof} At a regular point $p$, $X\in T_pM$ is tangent to the real orbit
through $p$ if and only if it is tangent to the complex orbit through $p$ and
orthogonal to the Killing vector fields $\wt K(t)$ at $p$. Since both
properties are preserved along geodesics, the real orbits are totally geodesic
with respect to $g$ (hence also $\tilde g$).

Let $\cO^c$ be the complex orbit through the regular real orbit $\cO$, so that
$g$ and $\tilde g$ restrict to c-projectively equivalent K\"ahler metrics on
$\cO^c$. Furthermore $(\vol(\tilde g|_{\cO^c}))^{1/2(\ell+1)} \tilde
g^{-1}|_{\cO^c}$ is a constant multiple of $(\vol(g|_{\cO^c}))^{1/2(\ell+1)}
g^{-1}\circ A |_{\cO^c}$. The tangent spaces to $\cO$ are generated by the
vector fields $\nabla^a \mu$, for nonconstant eigenvalues $\mu$, which are
mutually orthogonal and non-null. Hence $T_p\cO^c$ is the orthogonal direct sum
of $T_p\cO$ and $JT_p\cO$ (with respect to both $g$ and $\tilde g$).  Hence
$g$ and $\tilde g$ restrict to nondegenerate metrics on $\cO$ and $A$
restricts to a constant multiple of $\bigl(\tfrac{\vol(\tilde g|_\cO)}
{\vol(\tilde g|_\cO)}\bigr)^{1/(\ell+1)} (\tilde g|_\cO)^{-1} g|_\cO$.  The
Levi-Civita connections of $g$ and $\tilde g$ on $\cO^c$ are related
by~\eqref{cprojchange} for some $1$-form $\Upsilon_\alpha$. If we now restrict
to $\cO$ (which is totally geodesic in $\cO^c$), it follows that the induced
Levi-Civita connections $\nabla$ and $\wt\nabla$ are related by
\[
\wt\nabla_\alpha X^\gamma -\nabla_\alpha X^\gamma
= \tfrac12 (\Upsilon_\alpha\delta_\beta{}^\gamma+\delta_\alpha{}^\gamma
\Upsilon_\beta),
\]
i.e.~the metrics on $\cO$ are projectively equivalent.
\end{proof}    

\subsection{Local classification}\label{ss:locclass}

Let $(M,J,[\nabla])$ be a c-projective $2n$-manifold admitting two compatible
non-homothetic \bps/K\"ahler metrics, and hence a pencil of solutions of the
metrisability equation of order $0\leq \ell\leq n$.
Lemma~\ref{totallygeodesicsmall} shows that the real orbits yield a foliation
of the set $M^0$ of regular points which is transverse and orthogonal to the
common level sets of the nonconstant eigenvalues $\xi_1,\ldots\xi_\ell$ of $A$;
these are also the levels of the elementary symmetric functions
$\sigma_1,\ldots \sigma_\ell$ of $\xi_1,\ldots \xi_\ell$, which are
Hamiltonians for Killing vector fields generating the local isometric
Hamiltonian $\ell$-torus action on $M^0$.  Indeed, on $M^0$,
$\chi_A(t)=\chi_{\mathrm c}(t) \chi_{\mathrm{nc}}(t)$, where
$\chi_{\mathrm{nc}}(t)=\prod_{i=1}^\ell(t-\xi_i)=\sum_{r=0}^\ell (-1)^r\sigma_r
t^{\ell-r}$, $\chi_c(t)$ has constant coefficients, and $\sigma_0=1$. The leaf
space $S$ of the foliation of $M^0$ by the complex orbits may then be
identified with the K\"ahler quotient of $M^0$ by this local $\ell$-torus
action.

It is convenient to write $\chi_c(t)=\prod_u \rho_u(t)^{m_u}$, where
$\rho_u(t)$ are the distinct irreducible real factors (with $\deg \rho_u=1$ or
$2$) and $m_u$ their multiplicities. Then if $S$ is a manifold, its universal
cover is a product of complex manifolds $S_u$ of (real) dimension
$2m_u\deg\rho_u$.

These observations lead to a local classification of \bps/K\"ahler metrics
which belong to a metrisability pencil (i.e.~admit a c-projectively equivalent
metric, or equivalently, a Hamiltonian $2$-form), which was obtained
in~\cite{ACG} in the K\"ahler case, and in~\cite{BMR} for general \bps/K\"ahler
metrics. We state the result as follows.

\begin{thm}\label{thm:locclass} Let $(M,J,[\nabla])$ be a
c-projective $2n$-manifold, and suppose that $g$ is a \bps/K\"ahler metric in
a metrisability pencil of order $\ell$, which we may write as $\sms^{\bar
  ab}(t)=\ms^{\bar ab} (A_{a}{}^{b}-t\delta_a{}^b)$, where $\ms^{\bar ab}$
corresponds to $g$.  Then on any open subset of $M^0$ for which the leaf space
of the complex orbits is a manifold $S$, we may write\textup:
\begin{align}\label{eq:locclass}
g&=\sum_u g_u(\chi_{\mathrm{nc}}(A_S)\cdot,\cdot)
+\sum_{i=1}^\ell\frac{\Delta_j}{\Theta_j(\xi_j)} d\xi_j^2
+\sum_{j=1}^\ell \frac{\Theta_j(\xi_j)}{\Delta_j}\Bigl(\sum_{r=1}^\ell
\sigma_{r-1}(\hat\xi_j)\theta_r\Bigr)^2,\\
\omega&=\sum_u \omega_u(\chi_{\mathrm{nc}}(A_S)\cdot,\cdot)
+\sum_{r=1}^\ell d\sigma_r\wedge \theta_r,\quad\text{with}\quad
d\theta_r=\sum_u (-1)^r\omega_u (A_S^{\ell-r}\cdot,\cdot),\\
J d \xi _j &= \frac{\Theta_j (\xi_j)}{\Delta_j} \,
\sum_{r=1}^\ell \sigma_{r-1} (\hat{\xi}_j) \,\theta_r,\qquad
J\theta_r = (-1)^r \,\sum_{j=1}^\ell \frac{\xi_j^{\ell-r}}{\Theta_j(\xi_j)}
d\xi_j.
\end{align}
The ingredients appearing here are as follows, where we lift objects on $S$ to
$M$ by identifying the horizontal distribution $\ker(d\xi_1,\ldots
d\xi_\ell,\theta_1,\ldots \theta_\ell)$ with the pullback of $TS$.
\begin{itemize}
\item $\xi_1,\ldots \xi_\ell$ are the nonconstant roots of $A$, which are
smooth complex-valued functions on $M^0$, functionally independent over $\R$,
such that for any $j\in\{1,\ldots\ell\}$, $\overline \xi_j =\xi_k$ for some
\textup(necessarily unique\textup) $k$.
\item $\chi_{\mathrm{nc}}(t)=\prod_{i=1}^\ell(t-\xi_i)=\sum_{r=0}^\ell
  (-1)^r\sigma_r t^{\ell-r}$, $\sigma_{r-1} (\hat{\xi} _j)$ is the $(r-1)$st
  elementary symmetric function of $\{\xi_k:k\neq j\}$, and
  $\Delta_j=\prod_{k\neq j}(\xi_j-\xi_k)$.
\item For $j\in\{1,\ldots \ell\}$, $\Theta_j$ is a smooth nonvanishing complex
function on the image of $\xi_j$ such that if $\overline\xi_j =\xi_k$ then
$\overline\Theta_j=\Theta_k$.
\item For each distinct irreducible real factor $\rho_u$ of $\chi_c$, the
metric $g_u$ is induced by a \bps/K\"ahler metric on the factor $S_u$ of the
universal cover of $S$.
\item $A_S$ is a parallel Hermitian endomorphism with respect to the local
  product metric $\sum_u g_u$ on $S$, preserving the distributions induced by
  $TS_u$, on which it has characteristic polynomial $\rho_u(t)^{m_u}$.
\end{itemize}
Any such \bps/K\"ahler metric admits a metrisability pencil of order $\ell$,
with
\begin{equation*}
A = A_S + \sum_{i=1}^\ell \xi_i \Bigl(d\xi_i \otimes\frac{\partial}{\partial\xi_i}
+ J d\xi_i \otimes J\frac{\partial}{\partial\xi_i}\Bigr).
\end{equation*}
\end{thm}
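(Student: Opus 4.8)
The plan is to prove Theorem~\ref{thm:locclass} by reducing the classification to the local structure of the metrisability pencil near a regular point, exploiting the totally geodesic foliations established in Lemmas~\ref{totallygeodesic} and~\ref{totallygeodesicsmall}. The key structural input is that, on the set $M^0$ of regular points, the endomorphism $A_a{}^b$ decomposes $T^{1,0}M$ into its nonconstant-eigenvalue part (the $(1,0)$-tangent space to the complex orbits, of complex dimension $\ell$) and its constant-eigenvalue part (tangent to the leaves pulled back from the quotient $S$). First I would set up adapted coordinates: by Corollary~\ref{cor:eigenvectors}, the gradients $\nabla^a\xi_j$ of the nonconstant eigenvalues $\xi_1,\dots,\xi_\ell$ span the eigenspaces and are orthogonal with nonzero norm, so the $\xi_j$ together with the angle coordinates dual to the Killing fields $\wt K(t)$ provide functionally independent local coordinates on the orbits, while the remaining directions descend to $S$. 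The $1$-forms $\theta_r$ are then defined as the connection $1$-forms of the local $\ell$-torus action, normalised so that $d\sigma_r\wedge\theta_r$ reconstitutes the fibre part of the K\"ahler form.

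Next I would derive the explicit metric coefficients. The essential computation is the norm $g(\nabla^a\xi_j,\nabla^{\bar a}\xi_j)$, which I expect to come out (after using~\eqref{metri-mob} and the eigenvector equations~\eqref{eigen_vectors}) proportional to $\Delta_j/\Theta_j(\xi_j)$ for a function $\Theta_j$ depending only on $\xi_j$; the fact that $\Theta_j$ depends on $\xi_j$ alone should follow by differentiating the eigenvector relations and invoking the constancy of the bracket structure from Theorem~\ref{Commuting_Killing_objects}. Simultaneously, the dual relations $Jd\xi_j$ and $J\theta_r$ must be verified, which pins down the precise combinatorial factors involving $\sigma_{r-1}(\hat\xi_j)$ and $\Delta_j$; these are the standard Vandermonde-type identities relating elementary symmetric functions to the Lagrange interpolation basis. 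The quotient piece $\sum_u g_u(\chi_{\mathrm{nc}}(A_S)\cdot,\cdot)$ arises because the restriction of $A$ to the constant-eigenvalue distribution is parallel (Lemma~\ref{totallygeodesic} gives that the complex orbits are totally geodesic and $A$-invariant, so the second fundamental form vanishes and $A_S$ is $\nabla$-parallel on $S$), and $\chi_{\mathrm{nc}}(A_S)$ enters as the conformal factor relating the induced metric to $g_u$.

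The converse direction — that any metric of the stated form admits a metrisability pencil of order $\ell$ with the displayed $A$ — I would handle by direct substitution: one writes down the candidate $A$ as given, computes $\nabla A$ using the explicit Christoffel symbols read off from~\eqref{eq:locclass}, and checks the mobility equation~\eqref{metri-mob} holds, with $\hv_a=\nabla_a\hp$ where $\hp=-\tfrac12 A_\beta{}^\beta$ by~\eqref{Lambda}. This is a self-contained, if lengthy, verification that the ansatz is genuinely a solution, and it certifies that the local normal form is exhaustive.

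The main obstacle will be establishing that the fibre functions $\Theta_j$ depend only on their respective eigenvalue $\xi_j$, and more generally organising the multi-eigenvalue bookkeeping (including the complex-conjugate pairing $\overline\xi_j=\xi_k$ and $\overline\Theta_j=\Theta_k$) so that the real-analytic structure is respected. Rather than grinding through this directly, I would lean on the already-cited K\"ahler case of~\cite{ACG} and its \bps/K\"ahler extension in~\cite{BMR}: the substance of the proof is the c-projectively invariant reduction (Lemmas~\ref{totallygeodesic}--\ref{totallygeodesicsmall} and Theorem~\ref{Commuting_Killing_objects}) to the orthotoric/Hamiltonian-$2$-form situation on the complex orbits, after which the explicit separation of variables is exactly the Hamiltonian $2$-form normal form of~\cite[Theorem 1]{ACG}. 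Thus the cleanest route is to show that the restriction of the pencil to each regular complex orbit is precisely a Hamiltonian $2$-form of order $\ell$ with no constant eigenvalues, quote the known local normal form there, and then reassemble the global expression over the base $S$ using the parallel endomorphism $A_S$.
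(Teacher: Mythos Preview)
Your proposal is sound and matches what the paper does: the paper does not give a self-contained proof of this theorem but states it as a cited result from \cite{ACG} (K\"ahler case) and \cite{BMR} (general \bps/K\"ahler case), and then, in the paragraph following the statement, merely sketches the two available proof strategies. Your outline---totally geodesic complex orbits carrying an orthotoric structure of order~$\ell$ (via Lemmas~\ref{totallygeodesic}--\ref{totallygeodesicsmall} and Theorem~\ref{Commuting_Killing_objects}), then reassembly over the quotient $S$ with the parallel Hermitian endomorphism $A_S$---is precisely the \cite{ACG} route the paper describes.

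The paper also records the alternative \cite{BMR} strategy, which you do not pursue: rather than working fibrewise on the complex orbits, one passes to the quotient by the \emph{real} $\ell$-torus action (Lemma~\ref{totallygeodesicsmall}), where the induced metrics are projectively (not c-projectively) equivalent with simple nonconstant eigenvalues, and then invokes the projective normal form. This buys a more uniform treatment of the indefinite-signature phenomena you flag as the main obstacle---complex-conjugate eigenvalue pairs $\overline\xi_j=\xi_k$ and nontrivial Jordan structure in $A_S$---since the real projective classification already accommodates these. Your route via \cite{ACG} is cleaner in the positive-definite case but requires more care in indefinite signature, which is why the paper defers the full \bps/K\"ahler statement to \cite{BMR}.
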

In other words $(M,g,J,\omega)$ is locally a bundle over a product $S$ of
\bps/K\"ahler whose fibres (the complex orbits) are totally geodesic toric
\bps/K\"ahler manifolds of a special kind, called ``orthotoric''. The proof
in~\cite{ACG} proceeds by establishing the orthotoric property of the fibres
and the special structure of the base $S$. In contrast, the proof
in~\cite{BMR} relies upon the observation (generalising
Lemma~\ref{totallygeodesicsmall}) that the local quotient of $(M,g)$ by the
real isometric $\ell$-torus action admits a projectively equivalent metric:
the first two sums in~\eqref{eq:locclass} are the general form of such a
metric when the nonconstant eigenvalues of the projective pencil have
algebraic multiplicity one.

In the Riemannian case, the expression~\eqref{eq:locclass} provides a complete
local description of the metric: locally, we may assume $S=\prod_u S_u$ is
product of open subsets $S_u\subseteq\R^{2m_u}$, and then $A_S$ is a constant
multiple of the identity on each factor. In the pseudo-Riemannian, it remains
only to describe explicitly the parallel Hermitian endomorphism $A_S$ on
$S=\prod_u S_u$, for which we refer to~\cite{Boubel}.

\begin{rem}\label{proj-change} In order to understand the compatible metrics
corresponding to the general element $\sms-t\ms$ of the metrisability pencil,
it is convenient to make a projective change $s=(at+b)/(ct+d)$ of parameter, as
in Remark~\ref{rem:pencil}.  The metric corresponding to $c\sms+d\ms$
(assuming this is nondegenerate) must have the same form~\eqref{eq:locclass}
as $g$, with respect to the coordinates $\tilde\xi_j = (a\xi_j +
b)/(c\xi_j+d)$, and with $A$ replaced by $\tilde A= (c A+ d)^{-1}(a A + b)$.
We find in particular that the new functions $\tilde\Theta_j$ are related to
the old functions by $\tilde\Theta_j(s) (ct+d)^{\ell+1} =
(ad-bc)^{\ell+1}\Theta_j(t)$---in other words they transform like polynomials
of degree $\ell+1$ (sections of $\cO(\ell+1)$ over the projective parameter
line).
\end{rem}

\begin{rem}\label{CHSC-fibre} It is straightforward to show that the
restriction of the metric~\eqref{eq:locclass} to any complex orbit (a totally
geodesic integral submanifold of
$\partial_{\xi_j},J\partial_{\xi_j}:j\in\{1,\ldots \ell\}$) has constant
holomorphic sectional curvature if and only if each $\Theta_j(t)$ is a
polynomial independent of $j$, of degree at most $\ell+1$: the curvature
computations in~\cite{ACG} extend readily to the \bps/K\"ahler case. If we
write $\Theta_j(t)=\Theta(t):=\sum_{r=-1}^\ell a_{r} t^{\ell-r}$, then the
complex orbits have constant holomorphic sectional curvature $B=\frac14
a_{-1}$.

Following~\cite{ACG}, we may introduce holomorphic coordinates $u_r+it_r$ on
the complex orbits by writing $\theta_r=d t_r+\alpha_r$ and $Jd u_r=d t_r$ for
$r\in\{1,\ldots \ell\}$, where $\alpha_r$ are pullbacks of $1$-forms on $S$.
Thus
\begin{equation*}
J d u_r= -\alpha_r
-(-1)^r\sum_{j=1}^\ell\frac{\xi_j^{\ell-r}}{\Theta_j(\xi_j)}J d\xi_j
\end{equation*}
where $d\alpha_r = \sum_u (-1)^r \omega_u(A_S^{\ell-r}\cdot,\cdot)$, and these
formulae extend to any $r\leq \ell$. For $r\geq 1$, $d J d u_r=0$, whereas $dJ
d u_0 = -\omega$ and $d J d u_{-1}= \phi+\sigma_1\omega$, where
$\phi=g(JA\cdot,\cdot)$.

In particular, if $\Theta_j(t)=\Theta(t)$, then
\begin{equation*}
\sum_{r=-1}^\ell (-1)^r a_r (J d u_r+\alpha_r)=-J d\sigma_1
\end{equation*}
and hence
\begin{equation*}
d J d \sigma_1= a_{-1} (\phi+\sigma_1\omega) + a_0 \omega - \sum_u
\omega_u(\Theta(A)\cdot,\cdot).
\end{equation*}
However, $\sigma_1$ differs from $\mathrm{trace}\, A = A_a{}^a$ by an additive
constant, so $d\sigma_1 = - \Lam$ and hence $d J d\sigma_1 = -2\nabla J \Lam$,
i.e.
\begin{equation}\label{eq:lc-lam}
2\nabla\Lambda = (a_{-1} +a_0\sigma_1) g + a_{-1} g(A\cdot,\cdot)-
\sum_u g_u(\Theta(A)\cdot,\cdot).
\end{equation}
\end{rem}

\section{Metric c-projective structures and nullity}\label{metricstructures}

Henceforth, we assume that $(M,J)$ is a complex manifold (i.e.~with $J$
integrable) of real dimension $2n\geq 4$, equipped with a \emph{metric
  c-projective structure}, i.e.~a c-projective structure~$[\nabla]$ containing
the Levi-Civita connection of a \bps/K\"ahler metric $g$, which we denote by
$\nabla^g$, or $\nabla$ if $g$ is understood. We may also consider a metric
c-projective structure as an equivalence class $[g]$ of \bps/K\"ahler metrics
on $(M,J)$ having the same $J$-planar curves.

By Proposition~\ref{compKaehler}, the map sending a metric $g\in[g]$ to
$\ms=\scale_g^{\,-1} g^{-1}$ embeds $[g]$ into $\mob_c=\mob_c[\nabla]$ as an
open subset of the nondegenerate solutions to the metrisability
equation~\eqref{this_is_D_in_real_money}. We refer to $\dim\mob_c$ as the
mobility of $g$ for any $g\in[g]$, cf.~Section~\ref{sec:met-mob}, and we are
interested in the case that $\dim\mob_c\geq 2$. In
Section~\ref{sec:integrability}, we obtained some consequences of this
assumption for the geodesic flow of $g$ on $M$.  We now turn to the relationship
between mobility and curvature.

As explained in Section~\ref{metrisability_prolonged}, $\mob_c$ may be
identified with the space of parallel sections of the real tractor bundle
$\cV$ with respect to the \emph{prolongation
  connection}~\eqref{Metriprol1}--\eqref{Metriprol2}. However,
in~\cite[Theorem~5]{FKMR}, it was shown that if $\dim\mob_c\geq 3$, then
$\mob_c$ may also be identified with the space of parallel sections of $\cV$
with respect to the connection
\[
\nabla_\alpha\begin{pmatrix} A^{\beta \gamma}\\ \hv^\beta\\ \rho
\end{pmatrix}=\begin{pmatrix}
\nabla_\alpha A^{\beta\gamma}+\delta_{\alpha}{}^{(\beta}\hv^{\gamma)}
+J_{\alpha}{}^{(\beta}J_{\epsilon}{}^{\gamma)}\hv^\epsilon\\ 
\nabla_{\alpha}\hv^\beta+\rho\delta_{\alpha}{}^{\beta}
-2Bg_{\alpha \gamma} A^{\beta \gamma}\\ 
\nabla_{\alpha}\rho-2Bg_{\alpha\beta}\hv^{\beta}
\end{pmatrix}
\]
for some uniquely determined constant~$B$. In this section we explore this
phenomenon, and its implications for the curvature of $M$. First, as a
warm-up, we consider the analogous situation in real projective geometry.

\subsection{Metric projective geometry and projective nullity}

A \emph{metric projective structure} on a smooth manifold $M$ of dimension
$n\geq 2$ is a projective structure $[\nabla]$ containing the Levi-Civita
connection of a \bps/Riemannian metric, or (which amounts to the same thing)
an equivalence class $[g]$ of \bps/Riemannian metrics with the same geodesic
curves.  As in the c-projective case (see Section~\ref{metrisability} and
Remark~\ref{rem:proj-analogue}), up to sign, $[g]$ embeds into the space
$\mob=\mob[\nabla]$ of solutions to the projective metrisability
equation~\eqref{eq:proj-metrisability} as the open subset of nondegenerate
solutions. 

A metric projective structure has mobility $\dim\mob\geq 1$, and we are
interested in the case that $\dim\mob\geq 2$. However, it is shown
in~\cite{KM} that, on a connected projective manifold $(M,[\nabla])$ with
mobility $\dim\mob\geq 3$, there is a constant $B$ such that solutions
$\A^{\beta\gamma}$ of the mobility equations may be identified with parallel
sections for the connection
\begin{equation}\label{funni_mobility}
\nabla_\alpha\begin{pmatrix} A^{\beta \gamma}\\ \mu^\beta\\ \rho
\end{pmatrix}=\begin{pmatrix}
\nabla_\alpha A^{\beta\gamma}+2\delta_\alpha{}^{(\beta}\mu^{\gamma)}\\ 
\nabla_{\alpha}\mu^\beta+\rho\delta_{\alpha}{}^{\beta}
-B g_{\alpha \gamma} A^{\beta\gamma}\\ 
\nabla_{\alpha}\rho-2B g_{\alpha\beta}\mu^{\beta}
\end{pmatrix}
\end{equation}
on the tractor bundle associated to the metrisability equation. This
connection is the main tool used in~\cite{FM} to determine all possible values
of the mobility of an $n$-dimensional simply-connected Lorentzian manifold.

This result is an example of a general phenomenon: in metric projective
geometry, solutions to first BGG equations are often in bijection with
parallel sections of tractor bundles for a much simpler (albeit somewhat
mysterious) connection than the prolongation connection. We illustrate this
with a toy example. The operator
\[
\Gamma(TM(-1))\ni\ts^\beta\mapsto(\nabla_\alpha\ts^\beta)_\circ
=\nabla_\alpha\ts^\beta-\tfrac1n\delta_\alpha{}^\beta\nabla_\gamma\ts^\gamma
\]
is projectively invariant, where $TM(-1)$ denotes the bundle of vector fields
of projective weight $-1$; its kernel consists of solutions to the
\emph{concircularity equation}
\begin{equation}\label{concircular}
(\nabla_\alpha\ts^\beta)_\circ=0,
\end{equation}
called \emph{concircular vector fields}. This equation is especially congenial
in that its prolongation connection coincides with the Cartan
connection. Indeed, following~\cite{beg}, for any solution $\ts^\alpha$
of~\eqref{concircular} there is a unique function $\rho$ (of projective weight
$-1$) such that $\nabla_\alpha\ts^\beta=-\delta_\alpha{}^\beta\rho$, namely
$\rho=-\tfrac1n\nabla_\gamma\ts^\gamma$. We then have
\begin{equation}\label{toy-prolong}
R_{\alpha\beta}{}^\gamma{}_\delta \ts^\delta=
(\nabla_\alpha\nabla_\beta-\nabla_\beta\nabla_\alpha)\ts^\gamma=
\delta_\alpha{}^\gamma\nabla_\beta\rho
-\delta_\beta{}^\gamma\nabla_\alpha\rho,
\end{equation}
and tracing over $\alpha\gamma$ yields
$\Ric_{\beta\delta}\ts^\delta=(n-1)\nabla_\beta\rho$. We conclude that
$\ts^\alpha$ lifts uniquely to parallel section of the standard tractor bundle
for the connection
\begin{equation}\label{tractor_connection}
\nabla_\alpha\begin{pmatrix} \ts^\beta\\ \rho \end{pmatrix}
=\begin{pmatrix} \nabla_\alpha\ts^\beta+\delta_\alpha{}^\beta\rho\\ 
\nabla_\alpha\rho-\Rho_{\alpha\beta}\ts^\beta \end{pmatrix}
\end{equation}
induced by the (normal) Cartan connection, where
$\Rho_{\alpha\beta}\equiv\frac1{n-1}\Ric_{\alpha\beta}$.

The simpler connection arising in the metric projective case is described as
follows.

\begin{thm} \label{funni_toy}
Let $(M,[\nabla])$ be a metric projective manifold, and for any $p\in M$, let
$N_p$ be the dimension of the span at $p$ of the local solutions
of~\eqref{concircular}. Then for any metric $g$ with Levi-Civita connection
$\nabla^g\in[\nabla]$, there is a function $B$ on $M$, which is uniquely
determined and smooth where $N_p\geq 1$, such that every concircular vector
field lifts uniquely to a parallel section of the standard tractor bundle for
the connection
\begin{equation}\label{funni_connection_toy}
\nabla_\alpha\begin{pmatrix} \ts^\beta\\ \rho
\end{pmatrix}=\begin{pmatrix}
\nabla^g_\alpha\ts^\beta+\delta_\alpha{}^\beta\rho\\ 
\nabla^g_\alpha\rho-Bg_{\alpha\beta}\ts^\beta
\end{pmatrix}.
\end{equation}
Moreover $B$ is locally constant on the open set where $N_p\geq 2$, which is
empty or dense in each connected component of $M$. If $M$ is connected and $B$
is locally constant on a dense open set, it may be assumed constant on $M$.
\end{thm}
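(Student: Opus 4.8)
The plan is to reduce the whole statement to one clean pointwise identity: for any concircular vector field $\ts^\beta$, normalised so that $\nabla^g_\alpha\ts^\beta=-\delta_\alpha{}^\beta\rho$ with $\rho=-\tfrac1n\nabla^g_\gamma\ts^\gamma$, the $1$-form $\phi_\alpha:=\nabla^g_\alpha\rho$ is proportional to $\ts_\alpha:=g_{\alpha\beta}\ts^\beta$ at every point where $\ts\neq0$. Granting this, the proportionality factor is exactly the function $B$ sought, since by definition $(\ts^\beta,\rho)$ is parallel for \eqref{funni_connection_toy} precisely when $\nabla^g_\alpha\rho=Bg_{\alpha\beta}\ts^\beta$, while the top slot vanishes by \eqref{concircular}. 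The uniqueness of the lift is then immediate, as a parallel section is determined by its value at a point.

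To establish the proportionality I would introduce the auxiliary scalar $u:=\tfrac12 g_{\mu\nu}\ts^\mu\ts^\nu$. Using $\nabla^g g=0$ and the concircularity equation one computes $\nabla^g_\alpha u=-\rho\,\ts_\alpha$ and hence $\nabla^g_\beta\nabla^g_\alpha u=-\phi_\beta\ts_\alpha+\rho^2 g_{\alpha\beta}$. Since the Hessian of a function is symmetric (the connection is torsion-free), the antisymmetric part in $\alpha\beta$ gives $\phi_\beta\ts_\alpha=\phi_\alpha\ts_\beta$, i.e.\ $\phi_{[\alpha}\ts_{\beta]}=0$. Thus, wherever $\ts\neq0$ (equivalently $\ts^\flat\neq0$, as $g$ is nondegenerate), there is a unique scalar $B$ with $\phi_\alpha=B\,\ts_\alpha$, smooth there. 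To see that $B$ does not depend on the chosen field, hence is well defined and smooth on $\{N_p\ge1\}$, I would note that if $\ts_1,\ts_2$ are concircular and linearly independent at $p$ with factors $B_1,B_2$, then $\ts_1+\ts_2$ is concircular and nonzero at $p$, so it has a factor $B_{12}$; comparing $B_1\ts_1^\flat+B_2\ts_2^\flat=B_{12}(\ts_1^\flat+\ts_2^\flat)$ forces $B_1=B_{12}=B_2$, while proportional fields share $B$ tautologically.

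For local constancy of $B$ on $\Omega:=\{N_p\ge2\}$ I would differentiate $\phi^{(i)}_\alpha=B\,\ts_{i\alpha}$ for two independent concircular fields $\ts_1,\ts_2$. Using concircularity again, $\nabla^g_\mu\phi^{(i)}_\alpha=(\nabla^g_\mu B)\ts_{i\alpha}-B\rho_i g_{\mu\alpha}$, and symmetry of the Hessian $\nabla^g_\mu\nabla^g_\alpha\rho_i$ now forces $(\nabla^g_\mu B)\ts_{i\alpha}=(\nabla^g_\alpha B)\ts_{i\mu}$, i.e.\ $dB\wedge\ts_i^\flat=0$ for each $i$. Where $N_p\ge2$ the $1$-forms $\ts_1^\flat,\ts_2^\flat$ are linearly independent, so $dB=0$ there and $B$ is locally constant on $\Omega$.

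The remaining, more structural, point is the dichotomy that $\Omega$ is empty or dense in each component, together with global constancy, and this I expect to be the main obstacle. Here I would invoke \cite{beg}: concircular fields are the parallel sections of the standard tractor connection \eqref{tractor_connection}, so on a connected, simply connected patch they form a space of a well-defined dimension $d$ (this finite-type input is the delicate step). Because a parallel section is determined by its value at a point, while a section whose $\ts$-part vanishes on an open set must vanish identically (the top slot of \eqref{tractor_connection} then forces $\rho=0$), the evaluation $s\mapsto\ts(p)$ has kernel contained in the distinguished line subbundle, hence of dimension at most $1$; therefore $N_p\ge d-1$. This gives $\Omega=M$ when $d\ge3$ and $\Omega=\varnothing$ when $d\le1$. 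In the borderline case $d=2$, were $\Omega$ not dense there would be an open set on which $\ts_1\wedge\ts_2\equiv0$; writing $\ts_1=\lambda\ts_2$ and substituting into the two concircularity equations, the rank-one tensor $d\lambda\otimes\ts_2$ would have to equal a scalar multiple of $\delta_\alpha{}^\beta$, which for $n\ge2$ forces $d\lambda=0$, so $s_1-\lambda s_2$ is a nonzero parallel section with vanishing $\ts$-part --- a contradiction. Finally, when $N_p\ge1$ throughout a connected $M$ the function $B$ is smooth on $M$, $dB$ is continuous and vanishes on the dense set $\Omega$, so $dB\equiv0$ and $B$ is constant, as in \cite{KM}.
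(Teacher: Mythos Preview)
Your proof is correct and takes a genuinely different route from the paper's at two key points. First, the paper obtains $\ts_{[\alpha}\nabla_{\beta]}\rho=0$ via curvature: it lowers an index in~\eqref{toy-prolong} to get $R^g_{\alpha\beta\gamma\delta}\ts^\delta=g_{\alpha\gamma}\nabla_\beta\rho-g_{\beta\gamma}\nabla_\alpha\rho$, then contracts with a second solution $\wt\ts^\gamma$ and uses skew-symmetry of $R^g$ in $\gamma\delta$; this simultaneously yields the independence of $B$ from the choice of solution. Your Hessian-of-$\tfrac12|\ts|^2$ argument is more elementary and avoids curvature entirely, at the cost of a separate linearity argument for the independence of $B$. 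Second, for global constancy the paper derives on the dense set $U$ the eigenvalue-type identity
\[
\begin{pmatrix}\Rho^\alpha{}_\beta&0\\-\tfrac1n\nabla_\alpha\Rho^\alpha{}_\beta&\tfrac1n\Rho_\alpha{}^\alpha\end{pmatrix}
\begin{pmatrix}\ts^\beta\\\rho\end{pmatrix}=B\begin{pmatrix}\ts^\alpha\\\rho\end{pmatrix},
\]
and observes that since the tractor $(\ts^\beta,\rho)$ is nowhere vanishing on $M$ and the endomorphism on the left is globally smooth, this equation \emph{extends} $B$ smoothly to all of $M$ without any discussion of whether $N_p\geq1$ off $U$; the paper advertises this trick as simpler and more generally applicable than geodesic probing (and reuses it in Remark~\ref{rem:proj-mob}). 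Your density-plus-continuity argument also works, but you should make explicit the step you leave implicit in the final sentence: that $\Omega$ dense forces $N_p\geq1$ everywhere. This follows from your own ingredients---parallel tractors extend over any simply connected neighbourhood, and your kernel bound then gives $N_p\geq d-1\geq1$ once $\Omega$ meets that neighbourhood---but as written you phrase ``$N_p\geq1$ throughout'' as a hypothesis rather than a consequence.
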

\begin{proof} We take $\nabla=\nabla^g$ and use $g$ to raise and lower indices.
Suppose that
\begin{equation*}
\nabla_\alpha\ts^\beta+\delta_\alpha{}^\beta\rho=0\qquad\text{and}\qquad
\nabla_\alpha\wt\ts^\beta+\delta_\alpha{}^\beta\wt\rho=0
\end{equation*}
for solutions $\ts^\alpha,\wt\ts^\alpha$ of~\eqref{concircular}.
Then~\eqref{toy-prolong} implies that
\begin{equation}\label{stronger-key}
\begin{split}
R^g_{\alpha\beta\gamma\delta} \ts^\delta=
  g_{\alpha\gamma}\nabla_\beta\rho-g_{\beta\gamma}\nabla_\alpha\rho
\qquad&\text{and}\qquad
R^g_{\alpha\beta\gamma\delta}\wt\ts^\delta=
  g_{\alpha\gamma}\nabla_\beta\wt\rho-g_{\beta\gamma}\nabla_\alpha\wt\rho,\\
\text{and so}\qquad 2\wt\ts_{[\alpha}\nabla_{\beta]}\rho=
R^g_{\alpha\beta\gamma\delta}\wt\ts^\gamma\ts^\delta&=
-R^g_{\alpha\beta\gamma\delta}\ts^\gamma\wt\ts^\delta
=2\ts_{[\beta}\nabla_{\alpha]}\wt\rho.
\end{split}\end{equation}
In particular, $\ts_{[\alpha}\nabla_{\beta]}\rho=0$ and so there is a unique
smooth function $B$ on the open set where $\ts^\alpha\neq 0$ such that
\begin{equation}\label{concirc-B}
\nabla_\alpha\rho - B g_{\alpha\beta}\ts^\beta=0
\end{equation}
on $M$ for any extension of $B$ over the zero-set of $\ts^\alpha$ (since
$\nabla_\alpha\rho$ also vanishes there). Equation~\eqref{stronger-key} now
implies that any two concircular vector fields have the same function $B$
where both functions are determined. Thus $B$ is uniquely determined and
smooth where $N_p\geq 1$. Differentiating~\eqref{concirc-B} on the open set
where $B$ is smooth gives
\[
\nabla_\beta\nabla_\alpha\rho-\ts_\alpha\nabla_\beta B +Bg_{\alpha\beta}\rho=0,
\]
and so $\ts_{[\alpha}\nabla_{\beta]}B=0$. Hence $\nabla_\alpha B=0$ on the
open set where $N_p\geq 2$. This subset is empty or dense in each component of
$M$, since two solutions of~\eqref{concircular} that are pointwise linearly
dependent on an open set are linearly dependent on that open set.

It remains to show that if $M$ is connected and $B$ is locally constant on a
dense open subset $U$ (which could be disconnected), then it may be assumed
constant. To see this, we use only that
\[
\Rho_{\beta\gamma}\ts^\gamma=B\ts_\beta\quad\mbox{and} \quad\nabla_\alpha B=0
\]
on $U$, for then we may differentiate once more to conclude that
\[
(\nabla_\alpha\Rho_{\beta\gamma})\ts^\gamma
+\Rho_{\beta\gamma}\nabla_\alpha\ts^\gamma
=B\nabla_\alpha\ts_\beta
\]
and hence that
\[
(\nabla_\alpha\Rho_{\beta\gamma})\ts^\gamma-\Rho_{\alpha\beta}\rho
=-Bg_{\alpha\beta}\rho
\]
on~$U$. Tracing over $\alpha\beta$ yields
\[
(\nabla_\alpha\Rho^\alpha{}_\gamma)\ts^\gamma-\Rho_\alpha{}^\alpha\rho=-nB\rho
\]
and hence that
\begin{equation}\label{extendB}
\begin{pmatrix}\Rho^\alpha{}_\beta&0\\
-\frac1n\nabla_\alpha\Rho^\alpha{}_\beta&\frac1n\Rho_\alpha{}^\alpha 
\end{pmatrix}
\begin{pmatrix}\ts^\beta\\ \rho\end{pmatrix}=
B\begin{pmatrix}\ts^\alpha\\ \rho\end{pmatrix}
\end{equation}
on $U$. Although this equation was derived on~$U$, it is a valid stipulation
everywhere on~$M$. Moreover, the tractor
\[
\begin{pmatrix}\ts^\beta\\ \rho\end{pmatrix}
\]
is nowhere vanishing on~$M$ (else in~\eqref{tractor_connection}, the vector
field $\ts^\beta$ would vanish identically). {From} this point of view, we see
that $B$ extends as a smooth function on~$M$. Finally, since $B$ is locally
constant on~$U$, it is locally constant and hence constant on~$M$.
\end{proof}
The connection~\eqref{funni_connection_toy} of Theorem~\ref{funni_toy} differs
from the tractor connection~\eqref{tractor_connection} by the
endomorphism-valued $1$-form
\[
\begin{pmatrix} 0 &0\\ \Rho_{\alpha\beta}-Bg_{\alpha\beta}&0\end{pmatrix}\colon
X^\alpha\otimes \begin{pmatrix}\ts^\beta\\ \rho\end{pmatrix}\mapsto
\begin{pmatrix} 0\\ 
(\Rho_{\alpha\beta}-Bg_{\alpha\beta})X^\alpha\ts^\beta\end{pmatrix}
\]
The connections agree on the flat model.  Specifically, on the unit sphere we
have
\[
R_{\alpha\beta\gamma\delta}
=g_{\alpha\gamma}g_{\beta\delta}-g_{\beta\gamma}g_{\alpha\delta}\quad
\mbox{whence}\quad\Rho_{\alpha\beta}=g_{\alpha\beta},
\]
so that the connections coincide with $B=1$.

The proof of Theorem~\ref{funni_toy} may be broken down into two steps. First,
one shows that the connection~\eqref{funni_connection_toy} has the required
lifting property for some function $B$, which may only be uniquely determined
and smooth on an open set.  Secondly, one establishes sufficient regularity to
determine the connection globally on $M$ (in this case, with $B$ constant). In
the literature, the second step has often been carried out by probing $M$ with
geodesics.  In the above proof we advocate an alternative line of argument
that we believe to be simpler and more generally applicable.

\begin{rem}\label{rem:proj-mob} For example, we may apply the same technique to
the mobility equations~\eqref{funni_mobility}, where the replacement
for \eqref{extendB} has the form
\[
\begin{pmatrix} R&0&0\\ \nabla R&R&0\\
\nabla\nabla R+R\bowtie R&\nabla R& R \end{pmatrix}
\begin{pmatrix} A^{\beta\gamma}\\
\mu^\beta\\ \rho\end{pmatrix}=
B\left[\begin{pmatrix} A^{\beta\gamma}\\
\mu^\beta\\ \rho\end{pmatrix}
-\frac{A_\delta{}^\delta}n \begin{pmatrix}g^{\beta\gamma}\\
0\\ \frac1n\Rho_\gamma{}^\gamma\end{pmatrix}\right].
\]
As above, this is sufficient to show that $B$ is constant if it is locally
constant on a dense open set. One striking difference between this case and
Theorem~\ref{funni_toy}, however, is that the
connection~\eqref{funni_connection_toy} actually has the same covariant
constant sections as does the standard Cartan or prolongation
connection~\eqref{tractor_connection}.  For the mobility equations, however,
not only is the resulting connection~\eqref{funni_mobility} different from the
prolongation connection~\cite{EM} but also their covariant constant sections
are generally different. Nevertheless, all solutions of the mobility equations
lift uniquely as covariant constant sections with respect to either of these
connections (and this is their crucial property).
\end{rem}

We next seek to elucidate the first step in the proof of
Theorem~\ref{funni_toy}.  Here we observe that the key
equations~\eqref{stronger-key} used to establish the uniqueness of $B$ may be
viewed as a characterisation of $B$ in terms of the curvature $R^g$ of $g$,
namely that
\[
R^g_{\alpha\beta\gamma\delta}\ts^\delta=B(g_{\alpha\gamma}\ts_\beta
-g_{\beta\gamma}\ts_\alpha).
\]
This motivates the introduction of some terminology, following
Gray~\cite{Gray}.

\begin{defin}\label{def_nullity} Let $(M,g)$ be a \bps/Riemannian manifold
and suppose that the tensor $R_{\alpha\beta\gamma\delta}$ has the symmetries
of the Riemannian curvature of $g$. Then a \emph{nullity vector} of $R$ at
$p\in M$ is a tangent vector $v^\alpha\in T_pM$ with
$R_{\alpha\beta\gamma\delta}v^\delta=0$, and the \emph{nullity space} of $R$
at $p$ is the set of such nullity vectors. We say $R$ \emph{has nullity} at
$p$ if the nullity space is nonzero, i.e.~the nullity index is positive.
\end{defin}
In particular, if $R^g$ is the Riemannian curvature of $g$, then at each
$p\in M$, there is at most one scalar $B\in\R$ such that
$R^B_{\alpha\beta\gamma\delta}:=R^g_{\alpha\beta\gamma\delta}
-B(g_{\alpha\gamma}g_{\beta\delta}-g_{\beta\gamma}g_{\alpha\delta})$ has
nullity at $p$.  Indeed if $v^\alpha$ and $\wt v^\alpha$ are nullity
vectors for $R^B$ and $R^{\smash{\wt B}}$ respectively then
\[
0=(B-\wt B) (g_{\alpha\gamma}g_{\beta\delta}-g_{\beta\gamma}g_{\alpha\delta})
\wt v^\beta v^\delta
=(B-\wt B) (v_\beta\wt v^\beta g_{\alpha\gamma}-v_\alpha\wt v_\gamma),
\]
which implies that $B=\wt B$ unless $v^\alpha$ or $\wt v^\alpha$ are zero.

\begin{defin}\label{proj-nullity} Let $(M,g)$ be a \bps/Riemannian manifold.
Then the (\emph{projective}) \emph{nullity distribution} of $g$ is the union
of the nullity spaces of $R^B_{\alpha\beta\gamma\delta}$ over $B\in\R$ and
$p\in M$.  We say that $g$ \emph{has \textup(projective\textup) nullity} at
$p\in M$ if there is a nonzero $v^\alpha\in T_p M$ in the nullity distribution
of $g$, i.e.
\begin{equation}\label{nullity}
\bigl(R^g_{\alpha\beta\gamma\delta}
-B(g_{\alpha\gamma}g_{\beta\delta}-g_{\beta\gamma}g_{\alpha\delta})\bigr)
v^\delta=0,
\end{equation}
for some $B\in\R$, uniquely determined by $p$.
\end{defin}
The definition of $B$ is reminiscent of an eigenvalue; indeed, the
$\alpha\gamma$ trace of~\eqref{nullity} is
\[
\Rho_\alpha{}^\beta v^\alpha=B v^\beta,
\]
so $B$ is an eigenvalue of the endomorphism $\Rho_\alpha{}^\beta$. On the
other hand the trace-free part of~\eqref{nullity} provides a projectively
invariant characterisation, using the projective Weyl tensor
$P_{\alpha\beta}{}^\gamma{}_\delta:= R_{\alpha\beta}{}^\gamma{}_\delta -
\delta_\alpha{}^\gamma \Rho_{\beta\delta} +
\delta_\beta{}^\gamma\Rho_{\alpha\delta}$, as follows (cf.~\cite{GM}).
\begin{prop} Let $(M,g)$ be a \bps/Riemannian manifold of dimension $n\geq2$,
and let $v^\delta\in T_pM$ be nonzero. Then the following statements are
equivalent\textup:
\begin{enumerate}
\item $v^\delta$ is a projective nullity vector at $p$
\item there exists $B\in\R$ such that $P_{\alpha\beta}{}^\gamma{}_\delta v^\beta
=(\Rho_{\alpha\delta}-Bg_{\alpha\delta})v^\gamma$
\item $P_{\alpha\beta}{}^\gamma{}_\delta v^\delta=0$.
\end{enumerate}
\end{prop}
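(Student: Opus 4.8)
The plan is to prove the three equivalences by unwinding the definitions and using the algebraic structure of the projective Weyl tensor. Recall that by definition $P_{\alpha\beta}{}^\gamma{}_\delta = R_{\alpha\beta}{}^\gamma{}_\delta - \delta_\alpha{}^\gamma\Rho_{\beta\delta} + \delta_\beta{}^\gamma\Rho_{\alpha\delta}$, where $\Rho_{\alpha\beta} = \frac1{n-1}\Ric_{\alpha\beta}$. Lowering the upper index with $g$, we have $P_{\alpha\beta\gamma\delta} = R_{\alpha\beta\gamma\delta} - g_{\alpha\gamma}\Rho_{\beta\delta} + g_{\beta\gamma}\Rho_{\alpha\delta}$. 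The key observation driving everything is that $P$ is a trace-adjusted curvature, so statements about nullity of $R^B$ translate into statements about $P$ once we track the trace terms carefully.

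First I would prove $(1)\Rightarrow(2)$. Assuming $v^\delta$ is a projective nullity vector means $\bigl(R_{\alpha\beta\gamma\delta} - B(g_{\alpha\gamma}g_{\beta\delta} - g_{\beta\gamma}g_{\alpha\delta})\bigr)v^\delta = 0$ for some $B$. I would substitute $R_{\alpha\beta\gamma\delta} = P_{\alpha\beta\gamma\delta} + g_{\alpha\gamma}\Rho_{\beta\delta} - g_{\beta\gamma}\Rho_{\alpha\delta}$ into this relation, contract with $v^\delta$, and reorganise to isolate $P_{\alpha\beta\gamma\delta}v^\delta$. After raising the index $\gamma$ again, the resulting identity should read $P_{\alpha\beta}{}^\gamma{}_\delta v^\delta = (Bg_{\alpha\delta} - \Rho_{\alpha\delta})v^\delta\,(\text{coefficient of }\delta_\beta{}^\gamma) + \cdots$; the skew structure in $\alpha\beta$ must be handled so that the statement comes out in the form of (2) after relabelling the free indices (note that (2) has $v^\beta$ contracted, i.e.\ the partial trace structure differs, so I would be careful to match the index positions in the stated claim exactly).

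For $(2)\Rightarrow(3)$, the plan is to contract the identity in (2) over the pair making the left side a trace that must vanish by the trace-freeness of the projective Weyl tensor: since $P_{\alpha\beta}{}^\alpha{}_\delta = 0$ (which follows directly from the definition of $P$ together with $\Rho_{\alpha\beta}=\frac1{n-1}\Ric_{\alpha\beta}$ and $\Ric_{\beta\delta}=R_{\alpha\beta}{}^\alpha{}_\delta$), tracing (2) forces the scalar relationship that pins down $B$ as the eigenvalue $\Rho_\alpha{}^\beta v^\alpha = Bv^\beta$. Substituting this value of $B$ back and using the first Bianchi symmetry $P_{[\alpha\beta}{}^\gamma{}_{\delta]}=0$ should collapse the right-hand side, yielding $P_{\alpha\beta}{}^\gamma{}_\delta v^\delta = 0$. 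The implication $(3)\Rightarrow(1)$ then runs by reversing the substitution: expanding $P_{\alpha\beta}{}^\gamma{}_\delta v^\delta = 0$ in terms of $R$ gives $R_{\alpha\beta}{}^\gamma{}_\delta v^\delta = (\delta_\alpha{}^\gamma\Rho_{\beta\delta} - \delta_\beta{}^\gamma\Rho_{\alpha\delta})v^\delta$, and defining $B$ via the eigenvalue equation $\Rho_\alpha{}^\beta v^\alpha = Bv^\beta$ (available because the $\alpha\gamma$-trace of (3) gives exactly this) lets one rewrite the right-hand side as $B(\delta_\alpha{}^\gamma g_{\beta\delta} - \delta_\beta{}^\gamma g_{\alpha\delta})v^\delta$, which is precisely~\eqref{nullity}.

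The main obstacle I anticipate is bookkeeping the index positions and trace terms correctly, particularly in showing that the eigenvalue $B$ extracted by tracing is the \emph{same} scalar across the three formulations and is well-defined (the uniqueness of $B$ given a nonzero $v^\delta$ was already established in the discussion preceding Definition~\ref{proj-nullity}, so I may cite it). The subtlety is that statement (2) carries an a priori unspecified $B$ while (3) carries none, so the equivalence genuinely requires showing that (3) \emph{produces} the correct $B$ via the Ricci trace; conversely one must verify that the $B$ in (2) is forced to equal the eigenvalue and is not free. I expect the first Bianchi identity $R_{[\alpha\beta}{}^\gamma{}_{\delta]}=0$ (hence $P_{[\alpha\beta}{}^\gamma{}_{\delta]}=0$) to be the decisive algebraic input that makes the skew-symmetric terms consistent, and tracking its use is where the proof needs the most care.
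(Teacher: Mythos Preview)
Your overall strategy is sound and close to the paper's, but there is a concrete gap in $(3)\Rightarrow(1)$. You claim that the eigenvalue relation $\Rho_\alpha{}^\beta v^\alpha=Bv^\beta$ is ``available because the $\alpha\gamma$-trace of (3) gives exactly this.'' It does not: since $P_{\alpha\beta}{}^\alpha{}_\delta = \Ric_{\beta\delta}-(n-1)\Rho_{\beta\delta}\equiv 0$ identically, the $\alpha\gamma$-trace of (3) is the tautology $0=0$ and carries no information about $v$. The correct way to extract $B$ from (3) is to contract your expanded identity $R_{\alpha\beta\gamma\delta}v^\delta = g_{\alpha\gamma}\Rho_{\beta\delta}v^\delta - g_{\beta\gamma}\Rho_{\alpha\delta}v^\delta$ with a \emph{second} copy of $v$, namely $v^\gamma$: the left side $R_{\alpha\beta\gamma\delta}v^\gamma v^\delta$ vanishes by antisymmetry in $\gamma\delta$, and the right side becomes $v_{[\alpha}\Rho_{\beta]\delta}v^\delta=0$, which (since $v\neq 0$) forces $\Rho_{\beta\delta}v^\delta=Bv_\beta$ for some scalar $B$. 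This is exactly what the paper does.

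For $(1)\Rightarrow(2)$ you correctly flag that your substitution naturally produces $P_{\alpha\beta\gamma\delta}v^\delta$, whereas (2) requires $P_{\alpha\beta\gamma\delta}v^\beta$, and you say you will ``relabel'' and ``handle the skew structure.'' The missing ingredient here is not relabelling but the pair-interchange symmetry $R_{\alpha\beta\gamma\delta}=R_{\gamma\delta\alpha\beta}$ of the Riemann tensor: this is what allows you to move the $v$-contraction from the last slot of $R$ to the second slot. The paper computes $P_{\alpha\beta\gamma\delta}v^\beta$ directly, uses $R_{\alpha\beta\gamma\delta}v^\beta=R_{\gamma\delta\alpha\beta}v^\beta$, and then applies the nullity hypothesis together with the eigenvalue relation $\Rho_{\beta\delta}v^\beta=Bv_\delta$. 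Your $(2)\Rightarrow(3)$ plan works, though the paper's route is shorter: it uses $P_{[\alpha\beta}{}^\gamma{}_{\delta]}=0$ to write $P_{\alpha\beta}{}^\gamma{}_\delta v^\delta=(P_{\alpha\delta}{}^\gamma{}_\beta-P_{\beta\delta}{}^\gamma{}_\alpha)v^\delta$ and observes that each term on the right has the form of (2), hence is symmetric in its two free lower indices, so the antisymmetrisation vanishes---no need to extract $B$ first.
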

\begin{proof} (1)$\Rightarrow$(2). Since $\Rho_{\alpha\beta}v^\beta=
Bg_{\alpha\beta} v^\beta$, 
$R_{\alpha\beta\gamma\delta}=R_{\gamma\delta\alpha\beta}$ and
$\Rho_{\alpha\beta}=\Rho_{\beta\alpha}$, we have
\begin{align*} P_{\alpha\beta\gamma\delta}v^\beta
&= R_{\gamma\delta\alpha\beta} v^\beta
-g_{\alpha\gamma}\Rho_{\beta\delta}v^\beta
+g_{\beta\gamma}\Rho_{\alpha\delta}v^\beta\\
&=B(g_{\alpha\gamma}g_{\beta\delta}-g_{\beta\gamma}g_{\alpha\delta})v^\beta
-Bg_{\alpha\gamma}g_{\beta\delta} v^\beta +g_{\beta\gamma}\Rho_{\alpha\delta}v^\beta
=(\Rho_{\alpha\delta}-Bg_{\alpha\delta})g_{\beta\gamma}v^\beta,
\end{align*}
and (2) follows by raising the index $\gamma$.

(2)$\Rightarrow$(3). Since $P_{[\alpha\beta}{}^\gamma{}_{\delta]}=0$, which follows
easily from $R_{[\alpha\beta}{}^\gamma{}_{\delta]}=0$,
$$P_{\alpha\beta}{}^\gamma{}_\delta v^\delta=(P_{\alpha\delta}{}^\gamma{}_\beta
-P_{\beta\delta}{}^\gamma{}_\alpha)v^\delta,$$
which vanishes by (2), since $\Rho_{\alpha\beta}-Bg_{\alpha\beta}$ 
is symmetric in $\alpha\beta$.

(3)$\Rightarrow$(1). Observe that 
$0=R_{\alpha\beta\gamma\delta}v^\gamma v^\delta
=(g_{\alpha\gamma}\Rho_{\beta\delta}-g_{\beta\gamma}\Rho_{\alpha\delta})
v^\gamma v^\delta =v_{[\alpha}\Rho_{\beta]\delta}v^\delta$. Hence there exists
$B\in\R$ such that $\Rho_{\beta\delta}v^\delta=Bg_{\beta\delta}v^\delta$, 
and hence
$$R_{\alpha\beta\gamma\delta}v^\delta=
(g_{\alpha\gamma}\Rho_{\beta\delta}-g_{\beta\gamma}\Rho_{\alpha\delta})v^\delta
=B(g_{\alpha\gamma}g_{\beta\delta}-g_{\beta\gamma}g_{\alpha\delta})v^\delta,$$
i.e.~$v^\delta$ is in the projective nullity at $p$.
\end{proof}

In particular, this shows that the projective nullity distribution is a
metric projective invariant, as is the expression
$\Rho_{\alpha\beta}-Bg_{\alpha\beta}$ wherever there is projective nullity,
hence so is the special tractor connection~\eqref{funni_connection_toy}.  The
above argument for this fact is given in~\cite{GM}, where the special
connection on the standard tractor bundle is also discussed.

\begin{rem}
In the $2$-dimensional case, all metrics have nullity at all points and $B$ is
the Gau{\ss}ian curvature. On the unit $n$-sphere the nullity distribution is
the tangent bundle and $B\equiv 1$. Condition (\ref{nullity}) may be written
as
\[
C_{\alpha\beta\gamma\delta}v^\delta
=\bigl(\tfrac1{(n-1)(n-2)}\Scal-\tfrac{1}{n-2}B\bigr)
(g_{\alpha\gamma}v_\beta-g_{\beta\gamma}v_\alpha)
-\tfrac1{n-2}(\Ric_{\alpha\gamma}v_\beta-\Ric_{\beta\gamma}v_\alpha)
\]
where $C_{\alpha\beta\gamma\delta}$ is conformal Weyl curvature tensor. For a
Riemannian metric, we may orthogonally diagonalise the Ricci tensor to see
that if $C_{\alpha\beta\gamma\delta}=0$ (as it is in three dimensions or in
the conformally flat case in higher dimensions) then
$R_{\alpha\beta\gamma\delta}$ has nullity if and only if all but possibly one
of the eigenvalues of $\Rho_\alpha{}^\beta$ coalesce with $B$ being the
possible exception.  So in the three-dimensional Riemannian case
$R_{\alpha\beta\gamma\delta}$ has nullity if and only if the discriminant of
the characteristic polynomial of $\Rho_\alpha{}^\beta$ vanishes:
\begin{multline*}
(\Rho_\alpha{}^\alpha)^6
-9(\Rho_\alpha{}^\alpha)^4(\Rho_\beta{}^\gamma \Rho_\gamma{}^\beta)
+21(\Rho_\alpha{}^\alpha)^2(\Rho_\beta{}^\gamma \Rho_\gamma{}^\beta)^2
-3(\Rho_\beta{}^\gamma \Rho_\gamma{}^\beta)^3\\
+8(\Rho_\alpha{}^\alpha)^3
(\Rho_\delta{}^\epsilon\Rho_\epsilon{}^\gamma \Rho_\gamma{}^\delta)
-36(\Rho_\alpha{}^\alpha)(\Rho_\beta{}^\gamma \Rho_\gamma{}^\beta)
(\Rho_\delta{}^\epsilon \Rho_\epsilon{}^\zeta \Rho_\zeta{}^\delta)
+18(\Rho_\delta{}^\epsilon \Rho_\epsilon{}^\gamma \Rho_\gamma{}^\delta)^2=0.
\end{multline*}
Indeed, in three dimensions (where $R_{\alpha\beta\gamma\delta}$ is determined
by $\Rho_{\alpha\beta}$) it is also the case in Lorentzian signature that
$R_{\alpha\beta\gamma\delta}$ has nullity if and only if $\Rho_\alpha{}^\beta$
is diagonalisable with eigenvalues distributed in this manner. In any case, in
three dimensions it follows that $B$ is a continuous function and is smooth
except perhaps at points where $\Rho_{\alpha\beta}$ is a multiple
of~$g_{\alpha\beta}$. In the four-dimensional Riemannian case, one can check
that if $R_{\alpha\beta\gamma\delta}$ has nullity and the eigenvalues of
$\Rho_\alpha{}^\beta$ are $B,\lambda_2,\lambda_3,\lambda_4$, then
\[
I\equiv C_{\alpha\beta\gamma\delta}C^{\alpha\beta\gamma\delta} =6\bigl(
(\lambda_2-\lambda_3)^2+(\lambda_3-\lambda_4)^2+(\lambda_4-\lambda_2)^2 \bigr)
\]
and if this expression is nonzero, then
\[
B=\tfrac1 4 \Rho_\alpha{}^\alpha+
\tfrac1{4I} \bigl(C_{\alpha\beta}{}^{\gamma\delta}C_{\gamma\delta}{}^{\epsilon\zeta}
C_{\epsilon\zeta}{}^{\alpha\beta}
-18C_{\alpha\beta}{}^{\gamma\delta}\Rho_\gamma{}^\alpha\Rho_\delta{}^\beta\bigr).
\]
It follows that $B$ is smooth on $\{I\neq0\}$ whilst on $\{I=0\}$ three of the
four eigenvalues of $\Rho_\alpha{}^\beta$ merge as above and $B$ is the odd one
out unless $\Rho_{\alpha\beta}\propto g_{\alpha\beta}$. Therefore, as in three
dimensions, it follows that $B$ extends as a continuous function that is smooth
except where $\Rho_{\alpha\beta}$ is a multiple of~$g_{\alpha\beta}$. We
anticipate similar behaviour in general but, for the moment, the regularity of
$B$ remains unknown.
\end{rem}

\subsection{C-projective nullity}

We return now to metric c-projective geometry, where we seek to develop
analogous interconnections between curvature and special tractor connections
to those in the metric projective case. In order to do this, we first develop
a notion of c-projective nullity for \bps/K\"ahler metrics, modelled on the
curvature of complex projective space~\eqref{FS_curvature} in the same way
that projective nullity for \bps/Riemannian metrics is modelled on the
curvature of the unit sphere.

We suppose therefore that $(M,J,g)$ is a \bps/K\"ahler manifold with $\nabla$
the Levi-Civita connection of $g_{\alpha\beta}$ and
$\Kf_{\alpha\beta}=J_{\alpha}{}^\gamma g_{\gamma\beta}$ the K\"ahler form.
Further let us write
\begin{equation}\label{holomorphic_sectional_curvature}
S_{\alpha\beta\gamma\delta}\equiv g_{\alpha\gamma}g_{\beta\delta}-g_{\beta\gamma}g_{\alpha\delta}
+\Omega_{\alpha\gamma}\Omega_{\beta\delta}
-\Omega_{\beta\gamma}\Omega_{\alpha\delta}
+2\Omega_{\alpha\beta}\Omega_{\gamma\delta}
\end{equation}
for the K\"ahler curvature tensor of constant sectional holomorphic curvature
$4$.  As in the \bps/Riemannian case, at each $p\in M$, there is at most one
scalar $B\in\R$ such that $G^B_{\alpha\beta\gamma\delta}:=
R_{\alpha\beta\gamma\delta} - B S_{\alpha\beta\gamma\delta}$ has nullity at
$p$.  Indeed, if $v^\alpha$ and $\wt v^\alpha$ are nullity vectors for
$G^B_{\alpha\beta\gamma\delta}$ and $G^{\wt B}_{\alpha\beta\gamma\delta}$
respectively then
\begin{align*}
0&=(B-\wt{B})S_{\alpha\beta\gamma\delta} \wt{v}^\beta v^\delta\\
&= (B-\wt{B})(v_\beta\wt v^\beta g_{\alpha\gamma}-v_\alpha\wt v_\gamma
+J_\beta{}^\delta v_\delta\wt v^\beta \Omega_{\alpha\gamma}
+J_\alpha{}^\delta v_\delta J_\gamma{}^\beta\wt v_\beta
+2 J_\gamma{}^\delta v_\delta J_\alpha{}^\beta\wt{v}_\beta)
\end{align*}
which implies that $B=\wt B$ unless $v^\alpha$ or $\wt v^\alpha$ are
zero.  By analogy with Definition~\ref{proj-nullity}, and again following
Gray~\cite{Gray} (who used the term ``holomorphic constancy''), we therefore
define c-projective nullity as follows.

\begin{defin}\label{cproj-nullity} The (\emph{c-projective})
\emph{nullity distribution} $\Ns$ of a \bps/K\"ahler manifold $(M,J,g)$ is the
union of the nullity spaces of $G^B_{\alpha\beta\gamma\delta}$ over $B\in\R$
and $p\in M$, and for each $p\in M$, we write $\Ns_p$ for the
(\emph{c-projective}) \emph{nullity space} $\Ns\cap T_p M$. We say that
$(J,g)$ \emph{has \textup(c-projective\textup) nullity} at $p\in M$ if $\Ns_p$
is nonzero, i.e.
\begin{equation}\label{c-nullity}
\bigl(R_{\alpha\beta\gamma\delta}- B S_{\alpha\beta\gamma\delta}\big ) v^\delta=0,
\end{equation}
for some $B\in\R$, uniquely determined by $p$, and some nonzero $v^\alpha\in
T_p M$.
\end{defin}
Thus $\Ns_p$ is the kernel of the linear map
\[
v^\delta\mapsto G^B_{\alpha\beta\gamma\delta}v^\delta,
\]
for some $B\in\R$ depending on $p$.  Let us remark that, since $G=G^B$ has the
symmetries of the curvature tensor of a K\"ahler metric, $\Ns_p$ is a
$J$-invariant subspace of $T_pM$ (i.e.~$v^\delta\in\Ns_p$ implies
$J_{\alpha}{}^\delta v^\alpha\in \Ns_p$), hence is even dimensional.

Bearing in mind the discussion of Section~\ref{Kaehlersection}, we may write
(\ref{c-nullity}) in barred and unbarred indices. We find that
\begin{equation}\label{c-nullity_bis}
\begin{split}
&\bigl(R_{a\bar{b}c\bar{d}}+
2B(g_{a\bar{b}}g_{c\bar{d}}+g_{c\bar{b}}g_{a\bar{d}})\bigr)v^{\bar{d}}=0\\
&\bigl(R_{a\bar{b}\bar c d}-
2B(g_{a\bar{c}}g_{d\bar{b}}+g_{a\bar{b}}g_{d\bar{c}})\bigr)v^{d}=0.
\end{split}\end{equation}
As in the projective case, tracing (\ref{c-nullity}) over $\alpha\gamma$
yields an eigenvalue equation
\begin{equation}\label{nullity_leads_eigenvector_of_Rho}
\Ric^\beta{}_\delta v^\delta=2(n+1)Bv^\beta,\enskip\mbox{equivalently}\enskip
\Rho^\beta{}_\delta v^\delta=2Bv^\beta,
\end{equation}
since $\Rho_{\alpha\beta}=\frac{1}{n+1}\Ric_{\alpha\beta}$ by (\ref{RhoTensor})
and~(\ref{KaehlerCurv}). This can equivalently be expressed in barred and
unbarred indices as
\begin{equation}\label{nullity_leads_eigenvector_of_Rho_2}
\Rho^b{}_d{}v^d=2Bv^b, \qquad\text{or as}\qquad
\Rho^{\bar{b}}{}_{\bar{d}}v^{\bar{d}}=2Bv^{\bar{b}}.
\end{equation}
Of course, we may derive \eqref{nullity_leads_eigenvector_of_Rho_2} also
directly by tracing the second equation of~\eqref{c-nullity_bis}, respectively
its conjugate, with respect to $a\bar c$, respectively~$\bar a c$. Further, 
note that the symmetries of the Ricci tensor of a \bps/K\"ahler metric show
that \eqref{nullity_leads_eigenvector_of_Rho_2} can be also equivalently
written as $\Rho_d{}^bv^d=2Bv^b$, respectively
$\Rho_{\bar{d}}{}^{\bar{b}}v^{\bar{d}}=2Bv^{\bar{b}}.$

Now assume that \eqref{c-nullity_bis} is satisfied and decompose $R_{a\bar b}
{}^c{}_d$ according to \eqref{full_curvature_decomposition} as
$$R_{a\bar{b}}{}^c{}_d
=H_{a\bar{b}}{}^c{}_d+\delta_a{}^c\Rho_{\bar{b}d}+\delta_d{}^c\Rho_{\bar{b}a}.$$
Then equation \eqref{nullity_leads_eigenvector_of_Rho_2} implies
\[
H_{a\bar{b}}{}^c{}_dv^{\bar{b}}=(R_{a\bar{b}}{}^c{}_d
-\delta_a{}^c\Rho_{\bar{b}d}-\delta_d{}^c\Rho_{\bar{b}a})v^{\bar{b}}
=2B(\delta_a{}^cv_d+\delta_d{}^cv_a)
-(\delta_a{}^c\Rho_{\bar{b}d}+\delta_d{}^c\Rho_{\bar{b}a})v^{\bar{b}}=0.
\]
Furthermore, 
\begin{align*}
H_{a\bar{b}}{}^c{}_dv^d &= (R_{a\bar{b}}{}^c{}_d
-\delta_a{}^c\Rho_{\bar{b}d}-\delta_d{}^c\Rho_{\bar{b}a})v^d\\
&= (2B(g_{a\bar{b}}v^c+\delta_a{}^cv_{\bar{b}})
-2B\delta_a{}^cv_{\bar{b}}-\Rho_{\bar{b}a}v^c)\\
&=(2Bg_{a\bar{b}}-\Rho_{\bar{b}a})v^c,
\end{align*}
which implies $H_{a\bar{b}}{}^c{}_dv^av^d=0$. It fact these two conditions are
also sufficient for nullity.

\begin{prop} \label{c-projective_invariant_characterisation_of_nullity}
Let $(M,J, g)$ be a \bps/K\"ahler manifold of dimension $2n\geq4$, and let
$v^d\in T_p^{1,0}M\cong T_pM$ be a nonzero tangent vector. Then the following
statements are equivalent\textup:
\begin{enumerate}
\item $v^d\in\Ns_p$
\item \label{c-nullity_mixed} there exists $B\in\R$ such that
  $H_{a\bar{b}}{}^c{}_dv^d=(2Bg_{a\bar{b}}-\Rho_{a\bar b})v^c$
\item \label{c-nullity_H} $H_{a\bar{b}}{}^c{}_dv^av^d=0\quad
\text{and}\quad H_{a\bar{b}}{}^c{}_dv^{\bar{b}}=0$, 
\end{enumerate}
where, as in~\eqref{full_curvature_decomposition}, $H_{a\bar{b}}{}^c{}_d$ is
the trace-free part of~$R_{a\bar{b}}{}^c{}_d\equiv-g^{\bar{e}c}R_{a\bar{b}d\bar{e}}$.
\end{prop}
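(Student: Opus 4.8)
The plan is to prove the three statements equivalent by closing a cycle around the computation that immediately precedes the Proposition, which already establishes $(1)\Rightarrow(2)$ and $(1)\Rightarrow(3)$: one writes nullity in the form \eqref{c-nullity_bis}, inserts the decomposition $R_{a\bar b}{}^c{}_d=H_{a\bar b}{}^c{}_d+\delta_a{}^c\Rho_{\bar b d}+\delta_d{}^c\Rho_{\bar b a}$ of \eqref{full_curvature_decomposition}, and uses the eigenvalue relation \eqref{nullity_leads_eigenvector_of_Rho_2}. It therefore remains to supply the converses $(2)\Rightarrow(1)$ and $(3)\Rightarrow(1)$. Throughout I would work in barred and unbarred indices and exploit that the two equations in \eqref{c-nullity_bis} are complex conjugates, so in each converse it is enough to recover one of them and then conjugate.

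For $(2)\Rightarrow(1)$ I would trace the identity in (2) over $c=a$; since $H_{a\bar b}{}^a{}_d=0$, this returns the eigenvalue relation $\Rho_{a\bar b}v^a=2Bv_{\bar b}$ with the same real $B$. Substituting (2) and this relation into the decomposition contracted with $v^d$, the $H$-contribution becomes $(2Bg_{a\bar b}-\Rho_{a\bar b})v^c$, the term $\delta_a{}^c\Rho_{\bar b d}v^d$ collapses to $2B\delta_a{}^c v_{\bar b}$, and the two $\Rho_{a\bar b}v^c$ contributions cancel, leaving $R_{a\bar b}{}^c{}_d v^d=2B(g_{a\bar b}v^c+\delta_a{}^c v_{\bar b})$. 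This is one of the equations \eqref{c-nullity_bis} and its conjugate is the other, so $v\in\Ns_p$.

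The substantive direction is $(3)\Rightarrow(1)$, where the scalar $B$ must be manufactured, i.e.\ $v$ must be shown to be an eigenvector of the Ricci endomorphism with a real eigenvalue. Here the key is the K\"ahler symmetry $R_{a\bar b c\bar d}=R_{a\bar d c\bar b}$ of \eqref{Kurvature}. Contracting the decomposition with $v^a v^d$ and using $H_{a\bar b}{}^c{}_d v^a v^d=0$ gives, after lowering $c$ (so that $g_{c\bar g}R_{a\bar b}{}^c{}_d=-R_{a\bar b d\bar g}$), the identity $R_{a\bar b d\bar g}v^a v^d=-2u_{\bar b}v_{\bar g}$ with $u_{\bar b}:=\Rho_{\bar b d}v^d$; the left-hand side is symmetric in $\bar b\leftrightarrow\bar g$ by that symmetry, so $u_{\bar b}v_{\bar g}=u_{\bar g}v_{\bar b}$ and hence $u_{\bar b}=2Bv_{\bar b}$ for a scalar $B$. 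Feeding $H_{a\bar b}{}^c{}_d v^{\bar b}=0$ into the decomposition then gives $R_{a\bar b}{}^c{}_d v^{\bar b}=\delta_a{}^c w_d+\delta_d{}^c w_a$ with $w_d:=\Rho_{d\bar b}v^{\bar b}$; conjugating $u_{\bar b}=2Bv_{\bar b}$ shows $w_d=\overline{2B}\,v_d$, whence this identity rearranges to the conjugate of the nullity equation in \eqref{c-nullity_bis}, and $v\in\Ns_p$ provided $2B$ is real.

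The main obstacle is precisely the reality of $B$. When $g$ is definite, or more generally whenever $v$ is non-null, contracting $u_{\bar b}=2Bv_{\bar b}$ with $v^{\bar b}$ and using that $\Rho_{c\bar d}$ is Hermitian shows $2B=\Rho_{d\bar b}v^dv^{\bar b}/g_{c\bar e}v^cv^{\bar e}$ is a quotient of real quantities, so $B\in\R$ and the two equations of \eqref{c-nullity_bis} are genuinely conjugate. In indefinite signature a null eigenvector could a priori carry a complex eigenvalue of the Hermitian endomorphism $\Rho_d{}^c$, and here I expect to need more than the $\Rho$-eigenvalue alone: I would match the equation coming from $H_{a\bar b}{}^c{}_d v^{\bar b}=0$ against its conjugate, forcing the coefficients $2B$ and $\overline{2B}$ in the two members of \eqref{c-nullity_bis} to agree, or, failing that, invoke a continuity argument over the dense open set of non-null directions. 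Finally, uniqueness of $B$ at each point is already recorded in the paragraph preceding the Proposition, so it only remains to observe that the $B$ produced here is that unique value, which is immediate from the eigenvalue relation.
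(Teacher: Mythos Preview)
Your proposal is correct and follows essentially the same route as the paper's proof: $(1)\Rightarrow(2),(3)$ is the pre-Proposition computation, $(2)\Rightarrow(1)$ is the trace argument, and $(3)\Rightarrow(1)$ uses $H_{a\bar b}{}^c{}_dv^av^d=0$ together with the K\"ahler symmetry $R_{a\bar bc\bar d}=R_{a\bar dc\bar b}$ to force $\Rho_{\bar b d}v^d$ to be proportional to $v_{\bar b}$, and then $H_{a\bar b}{}^c{}_dv^{\bar b}=0$ with the decomposition yields one of the equations~\eqref{c-nullity_bis}. The paper does exactly this (with the cosmetic difference that it names the first eigenvalue $2\bar B$ rather than $2B$).

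The only substantive divergence is your treatment of the reality of $B$. The paper dispatches this with the parenthetical ``$B$ is necessarily real, since $R_{a\bar b}{}^c{}_d$ and $S_{a\bar b}{}^c{}_d$ are real tensors''. You are right to be cautious in indefinite signature, but your first proposed fix already works and renders the continuity fallback unnecessary: you have established $G^{\bar B}_{a\bar bc\bar d}v^{\bar b}=0$, and its complex conjugate (using the K\"ahler symmetries to bring it to the same index pattern) reads $G^{B}_{a\bar bc\bar d}v^{a}=0$. Contracting the first with $v^a$ and the second with $v^{\bar b}$ and subtracting gives
\[
(B-\bar B)\,S_{a\bar bc\bar d}v^av^{\bar b}=0,
\]
and $S_{a\bar bc\bar d}v^av^{\bar b}=-2\bigl((g_{a\bar b}v^av^{\bar b})\,g_{c\bar d}+v_cv_{\bar d}\bigr)$ is nonzero for $v\ne0$ regardless of whether $v$ is null (when $g_{a\bar b}v^av^{\bar b}=0$ the rank-one piece $v_cv_{\bar d}$ survives). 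Hence $B\in\R$ and both equations of~\eqref{c-nullity_bis} hold with the same real $B$. This is presumably the content of the paper's parenthetical, just made explicit.
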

\begin{proof} 
We have just observed that (1) implies (2) and (3). Note, moreover that taking
the trace with respect to $a$ and $c$ in (2) gives
\eqref{nullity_leads_eigenvector_of_Rho_2}, which shows immediately that (2)
implies (1). Hence, it remains to show that (3) implies (1). If
(\ref{c-nullity_H}) holds, then
\[
R_{a\bar{b}}{}^c{}_dv^av^d=
(\delta_a{}^c\Rho_{\bar{b}d}+\delta_d{}^c\Rho_{\bar{b}a})v^av^d
\enskip\Rightarrow\enskip R_{a\bar{b}\bar{c}d}v^av^d
=2v_{\bar{c}}\Rho_{\bar{b}d}v^d
\]
so $0=v_{[\bar{c}}\Rho_{\bar{b}]d}v^d$ and we conclude that
$\Rho_{\bar{c}d}v^d=2\bar{B}v_{\bar{c}}$, for some constant~$\bar{B}$.
Substituting the conjugate conclusion $\Rho_{\bar{d}c}v^{\bar{d}}=2Bv_c$ into
$R_{a\bar{b}}{}^c{}_dv^{\bar{b}}$ gives
\[
R_{a\bar{b}}{}^c{}_dv^{\bar{b}}=
(H_{a\bar{b}}{}^c{}_d+\delta_a{}^c\Rho_{\bar{b}d}
+\delta_d{}^c\Rho_{\bar{b}a})v^{\bar{b}}
=2B\delta_a{}^cv_d+2B\delta_d{}^cv_a
\]
which, after lowering the index $c$ is equivalent to~(\ref{c-nullity_bis}), as
required. (Note that $B$ is necessarily real, since $R_{a\bar b}{}^c{}_d$ and
$S_{a\bar b}{}^c{}_d$ are real tensors.)
\end{proof} 
\begin{cor}\label{invariant-nullity} At any $p\in M$, the nullity distribution
of $\Ns_p$ is a metric c-projective invariant, i.e.~the same for
c-projectively equivalent \bps/K\"ahler metrics $g_{a\bar{b}}$ and $\tilde
g_{a\bar b}$.  Furthermore, if $\Ns_p$ is nonzero, and $B,\wt B\in \R$ are the
corresponding scalars in the definition of $\Ns_p$ with respect to $g,\tilde
g$ respectively, then $\wt\Rho_{a\bar b}-2\wt B \tilde g_{a\bar b}=\Rho_{a\bar
  b}-2B g_{a\bar b}$.
\end{cor}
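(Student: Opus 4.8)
The plan is to derive the whole corollary from Proposition~\ref{c-projective_invariant_characterisation_of_nullity}, combined with the c-projective invariance of the Weyl curvature component $H_{a\bar b}{}^c{}_d$ recorded in Proposition~\ref{rosetta}(1). The decisive point I would stress is that characterisation~(\ref{c-nullity_H}) of the nullity space,
\[
H_{a\bar{b}}{}^c{}_dv^av^d=0\quad\text{and}\quad H_{a\bar{b}}{}^c{}_dv^{\bar{b}}=0,
\]
makes no reference whatsoever to the metric or to the Rho tensor: it is phrased purely in terms of $H_{a\bar b}{}^c{}_d$. Since everything nontrivial is already packaged into that proposition, the corollary is essentially an invariance bookkeeping argument.

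For the first assertion I would first note that, because $g$ and $\tilde g$ are c-projectively equivalent \bps/K\"ahler metrics, their Levi-Civita connections both lie in the common class $[\nabla]$ and are therefore related by a c-projective change of the form~(\ref{cprojchange}). Proposition~\ref{rosetta}(1) then gives $\widehat H_{a\bar b}{}^c{}_d=H_{a\bar b}{}^c{}_d$, so the tensor entering~(\ref{c-nullity_H}) is literally the same object whether it is computed from $g$ or from $\tilde g$. Consequently the two scalar equations in~(\ref{c-nullity_H}) cut out the same subspace of $T_p^{1,0}M\cong T_pM$ in either case, and hence $\Ns_p$ is a metric c-projective invariant, as claimed.

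For the second assertion I would fix a nonzero $v^d\in\Ns_p$ and invoke characterisation~(\ref{c-nullity_mixed}) of Proposition~\ref{c-projective_invariant_characterisation_of_nullity} for $g$ and for $\tilde g$ in turn. Using the same invariant $H_{a\bar b}{}^c{}_d$ and the same vector $v$, this yields
\[
(2Bg_{a\bar b}-\Rho_{a\bar b})v^c
=H_{a\bar b}{}^c{}_dv^d
=(2\wt B\tilde g_{a\bar b}-\wt\Rho_{a\bar b})v^c,
\]
whence subtraction gives
\[
\bigl[(\Rho_{a\bar b}-2Bg_{a\bar b})-(\wt\Rho_{a\bar b}-2\wt B\tilde g_{a\bar b})\bigr]v^c=0.
\]
As $v^c$ has at least one nonvanishing component, the bracketed $(a,\bar b)$-tensor must vanish, which is exactly the identity $\wt\Rho_{a\bar b}-2\wt B\tilde g_{a\bar b}=\Rho_{a\bar b}-2Bg_{a\bar b}$.

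I do not expect a genuine obstacle in this argument, since all the analytic content lives in the preceding proposition. The only points needing minor care are checking that the scalars $B$ and $\wt B$ occurring here are precisely those fixed by~(\ref{c-nullity}) in Definition~\ref{cproj-nullity} for the respective metrics (so that the eigenvalue normalisation is consistent throughout), and the elementary step of cancelling the nonzero factor $v^c$ to pass from the tensor-times-vector equation to the vanishing of the $(a,\bar b)$-tensor. Both are routine once the invariance of $H_{a\bar b}{}^c{}_d$ has been flagged.
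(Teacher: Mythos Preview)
Your proposal is correct and follows essentially the same approach as the paper: the paper uses criterion~(\ref{c-nullity_H}) together with the c-projective invariance of $H_{a\bar b}{}^c{}_d$ (from Proposition~\ref{rosetta}, noting also Proposition~\ref{kaehlerharmcurv}) for the first part, and criterion~(\ref{c-nullity_mixed}) for the second part, exactly as you do.
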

\begin{proof}
By Proposition~\ref{rosetta}, criterion~\eqref{c-nullity_H} of
Proposition~\ref{c-projective_invariant_characterisation_of_nullity} is
c-projectively invariant.  In fact, by Proposition~\ref{kaehlerharmcurv},
$H_{a\bar{b}}{}^c{}_d$ is precisely the harmonic curvature of the underlying
c-projective structure. The last part follows immediately from
criterion~\eqref{c-nullity_mixed}.
\end{proof}
\begin{rem}\label{rem:c-projective_invariant_characterisation_of_nullity}
For later use, we apply the projectors of Section~\ref{almostcomplexmanifolds}
to reformulate the equivalent conditions of Proposition
\ref{c-projective_invariant_characterisation_of_nullity} directly in terms of
$v^\delta\in T_p M$ as follows:
\begin{enumerate}
\item $v^\delta\in\Ns_p$
\item there exists a constant $B\in\R$ such that
$H_{\alpha\beta}{}^\gamma{}_\delta v^\delta =(J_{\alpha}{}^\epsilon \Rho_{\epsilon\beta}
-2B\Omega_{\alpha\beta})J_{\delta}{}^\gamma v^\delta$
\item $H_{\alpha\beta}{}^\gamma{}_\delta v^\alpha v^\delta=0$ and
$(H_{\alpha\beta}{}^\gamma{}_\delta
+J_{\beta}{}^\epsilon J_{\zeta}{}^\gamma H_{\alpha\epsilon}{}^\zeta{}_\delta )v^\beta=0$,
\end{enumerate}
where $H_{\alpha\beta}{}^\gamma{}_\delta\equiv
R_{\alpha\beta}{}^\gamma{}_\delta-\delta_{[\alpha}{}^\gamma\Rho_{\beta]\delta}
+J_{[\alpha}{}^\gamma\Rho_{\beta]\zeta}J_\delta{}^\zeta
+J_{\alpha}{}^\zeta\Rho_{\beta\zeta}J_\delta{}^{\gamma}$ and
$\Rho_{\beta\delta}\equiv \frac{1}{n+1}R_{\alpha\beta}{}^\alpha{}_\delta$.
\end{rem}

\begin{prop}\label{non-null-nullity} Let $(M,J,g)$ be a \bps/K\"ahler
manifold of dimension $2n\geq4$, and $B$ a smooth function on an open subset
$U$. Then for any \textup(real\textup) vector field $v$ in the nullity of
$G=G^B$ on $U$, if $v$ is non-null at $p\in U$, then $d B=0$ there.
\end{prop}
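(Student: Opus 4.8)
The plan is to differentiate the nullity relation \eqref{c-nullity} and play it off against the second Bianchi identity, exploiting two special features of the K\"ahler/c-projective situation: the model tensor $S_{\alpha\beta\gamma\delta}$ of \eqref{holomorphic_sectional_curvature} is parallel (being built from the parallel tensors $g$ and $\Omega$), and the nullity space $\Ns_p$ is $J$-invariant, so that $Jv$ is a nullity vector whenever $v$ is. Write $G=G^B$ and $b_\alpha:=\nabla_\alpha B$. Since $G_{\alpha\beta\gamma\delta}v^\delta=0$ on $U$, the symmetries of the K\"ahler curvature tensor force $G$ to annihilate $v$ (and likewise $Jv$) in every slot. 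First I would differentiate $G_{\alpha\beta\gamma\delta}v^\delta=0$, substitute $R=G+BS$ into the second Bianchi identity, and use $\nabla S=0$; this trades every occurrence of $\nabla R$ for terms of the shape $G_{\cdots}\nabla v$ and $(\nabla B)\,S_{\cdots}v$.

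The first goal is the vanishing of $\nabla_v B$ and $\nabla_{Jv}B$. Contracting the resulting relation with $v^\delta$ and then with $(Jv)^\gamma$ kills \emph{all} of the $G_{\cdots}\nabla v$ terms at once (because $G$ annihilates both $v$ and $Jv$), while the model terms survive because $S$ does not annihilate $Jv$. The outcome is a purely algebraic identity $b\wedge T=0$, where $T_{\alpha\beta}$ is the $2$-form $(Jv)_\alpha v_\beta-(Jv)_\beta v_\alpha$ plus a multiple of $\Omega_{\alpha\beta}$ arising from $S_{\alpha\beta\gamma\delta}(Jv)^\gamma v^\delta$; a short computation shows that when $v$ is non-null $T$ is nonzero with kernel exactly $\spann(v,Jv)$, so in particular $Tv=0=T(Jv)$. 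Contracting $b\wedge T=0$ with $v^\epsilon$ and with $(Jv)^\epsilon$ then yields $\nabla_v B=0$ and $\nabla_{Jv}B=0$; equivalently the $(1,0)$-component satisfies $v^c b_c=0$.

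The second goal is to show that $b_c$ is proportional to $v_c$. Here I would pass to barred/unbarred indices and differentiate the $(1,1)$-nullity relation \eqref{c-nullity_bis}, namely $R_{a\bar b c\bar d}v^{\bar d}=-2B(g_{a\bar b}v_c+g_{c\bar b}v_a)$, in a holomorphic direction $\nabla_e$, then antisymmetrise over $e$ and the holomorphic curvature index $c$. The key point is that in K\"ahler geometry $\nabla_e R_{a\bar b c\bar d}$ is totally symmetric in the holomorphic indices $e,a,c$ (the second Bianchi identity together with the vanishing of the $(2,0)$-curvature, cf.\ Proposition~\ref{kaehlerharmcurv} and the symmetry $R_{a\bar b c\bar d}=R_{c\bar b a\bar d}$ of \eqref{Kurvature}), so the derivative-of-curvature term drops out. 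Contracting the resulting identity with $v^{\bar b}$ and invoking nullity in the form $R_{a\bar b c\bar d}v^{\bar b}=-2B(g_{a\bar d}v_c+g_{c\bar d}v_a)$, I expect the $B\,\nabla v$ terms to cancel identically, leaving $v_a\,(v_c b_e-v_e b_c)=0$. As $v_a\neq0$ at a non-null point, this gives $v_{[c}b_{e]}=0$, i.e.\ $b_c=\lambda v_c$ for some function $\lambda$ (and $b_{\bar c}=\bar\lambda v_{\bar c}$ by reality).

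Combining the two steps finishes the proof: $0=v^c b_c=\lambda\,|v|^2$ forces $\lambda=0$, hence $b_c=0=b_{\bar c}$ and $dB_p=0$. The step I expect to be the main obstacle is arranging the cancellation of the unwanted first-derivative terms in $\nabla v$ (and $B\,\nabla v$): none of them can be controlled individually, and the cancellations only materialise after (i) using the K\"ahler-specific total symmetry of $\nabla R$ in holomorphic indices, (ii) using the $J$-invariance of $\Ns_p$ so that both $v$ and $Jv$ annihilate the curvature, and (iii) exploiting that $v$ lies in the kernel of $G$ in every slot. It is worth noting that a single purely real contraction scheme degenerates in the lowest dimension $2n=4$ (the relevant numerical coefficient turns out to be proportional to $n-2$); the two-pronged argument above, combining a real contraction for $\nabla_v B$ with a holomorphic contraction for $b_c\parallel v_c$, is designed precisely so as to remain valid for all $n\ge 2$.
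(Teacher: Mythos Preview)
Your Step~1 contains an error that breaks the argument as written. A direct computation gives $T_{\alpha\beta}v^\beta\propto(Jv)_\alpha$ and $T_{\alpha\beta}(Jv)^\beta\propto v_\alpha$, both nonzero when $v$ is non-null, so neither $v$ nor $Jv$ lies in $\ker T$; in fact $T$ is \emph{nondegenerate} (it is a rank-$2$ modification of the nondegenerate form $-|v|^2\Omega$, and one checks directly that $T_{\alpha\beta}X^\beta=0$ forces $X=0$). Hence contracting $b\wedge T=0$ with $v^\epsilon$ does not give $(b\cdot v)\,T=0$ as you claim, but rather $(b\cdot v)\,T_{\alpha\beta}\propto b_{[\alpha}(Jv)_{\beta]}$. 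The gap is easily repaired---and the repair in fact shows that Step~1 alone already proves the proposition: since $2n\geq 4$ and $T$ is nondegenerate, $b\wedge T=0$ forces $b=0$ (were $b\neq 0$, pick $X$ with $b(X)=1$ and insert $X$ into $b\wedge T=0$ to get $T=b\wedge i_XT$, decomposable, contradicting rank $T=2n\geq4$). Your Steps~2--3 are therefore redundant, though Step~2 is correct and the $B\,\nabla v$ cancellation you anticipate does occur.

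The paper's argument is shorter than either of your routes. Working in barred/unbarred indices, the K\"ahler Bianchi identity $\nabla_{[a}R_{b]\bar cd}{}^e=0$ yields
\[
\nabla_{[a}G_{b]\bar cd}{}^e=2(\nabla_{[a}B)\bigl(g_{b]\bar c}\delta_d{}^e+\delta_{b]}{}^eg_{d\bar c}\bigr).
\]
The trick you did not find is to contract with $v^{\bar c}v^d$, i.e.\ with nullity vectors in \emph{two} curvature slots at once. Because $G$ annihilates $v$ in each of those slots separately, differentiating $G_{b\bar cd}{}^ev^{\bar c}v^d=0$ gives $(\nabla_aG_{b\bar cd}{}^e)v^{\bar c}v^d=0$ with the Leibniz terms vanishing automatically---no $\nabla v$ cancellations need to be arranged at all. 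One further contraction with $v_e$ gives $(\nabla_{[a}B)v_{b]}\cdot g_{d\bar c}v^dv^{\bar c}=0$; non-nullity then yields $(\nabla_{[a}B)v_{b]}=0$, and feeding this back gives $(\nabla_{[a}B)\delta_{b]}{}^e=0$, whence $\nabla_aB=0$.
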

\begin{proof} The differential Bianchi identity $\nabla_{[a}R_{b]\bar c d}{}^e=0$
on $U$ implies that
\begin{equation*}
\nabla_{[a} G_{b] \bar c d}{}^e=2(\nabla_{[a} B)g_{b]\bar
  c}\delta_d{}^e+2(\nabla_{[a}B)\delta_{b]}{}^eg_{d\bar c}.
\end{equation*}
Since $v^a$ and $v^{\bar a}$ belong to the nullity of $G$, we may contract
with $v^{\bar c} v^d$ to obtain
\begin{equation*}
0=2(\nabla_{[a} B)g_{b]\bar
  c}v^{\bar c} v^e+2(\nabla_{[a}B)\delta_{b]}{}^e g_{d\bar c}v^dv^{\bar c}.
\end{equation*}
A further contraction with $v_e=g_{e{\bar f}} v^{\bar f}$ yields $(\nabla_{[a}
  B) v_{b]} g_{d\bar c} v^dv^{\bar c}=0$, so if $v$ is non-null at $p$,
$(\nabla_{[a} B) v_{b]}=0$ there; hence $(\nabla_{[a}B)\delta_{b]}{}^e=0$,
which implies $\nabla_a B=0$, i.e.~$d B=0$.
\end{proof}

\subsection{Mobility, nullity, and the special tractor connection}
\label{mob-null}

Our aim in this section is to show that, under certain conditions, the
solutions of the mobility equation~\eqref{mob-raised} on a \bps/K\"ahler
manifold $(M, J, g)$ lift uniquely to parallel sections of
$\cV\subseteq\cV_\C$ for the special tractor connection:
\begin{equation}\label{new_funni_formula}\begin{split}
\nabla^{\cV_\C}_a
\begin{pmatrix} A^{\bar b c}\\ \hv^b\enskip|\enskip \hv^{\bar{b}}\\
\mu \end{pmatrix}
&=\begin{pmatrix} \nabla_aA^{\bar b c}+\delta_{a}{}^c\hv^{\bar b}\\ 
\nabla_a\hv^b+\mu\delta_{a}{}^b-2BA_{a}{}^{b}\enskip|\enskip\nabla_a\hv^{\bar{b}}\\
\nabla_a\mu-2B\hv_{a} \end{pmatrix}\\
\nabla^{\cV_\C}_{\bar a}\begin{pmatrix} A^{\bar b c}\\
\hv^b\enskip|\enskip \hv^{\bar{b}}\\ \mu
\end{pmatrix}
&=\begin{pmatrix} 
\nabla_{\bar a}A^{\bar b c}+\delta_{\bar a}{}^{\bar b}\hv^{c}\\ 
\nabla_{\bar a}\hv^b\enskip|\enskip
\nabla_{\bar a}\hv^{\bar{b}}+\mu\delta_{\bar a}{}^{\bar b}-2BA^{\bar b}{}_{\bar a}\\
\nabla_{\bar a}\mu-2B\hv_{\bar a} \end{pmatrix},
\end{split}\end{equation}
where $\nabla$ is the Levi-Civita connection for~$g_{a\bar{b}}$ and $B$ is a
smooth function on $M$.  Here $\cV_\C$ is identified via $g_{a\bar b}$ with a
direct sum of unweighted tensor bundles, and we write the connection in barred
and unbarred indices, so that for sections of $\cV\subseteq\cV_\C$, the two
lines of~\eqref{new_funni_formula} are conjugate.

\begin{rem}\label{rem:lifts} By Theorem~\ref{MetriProlongation}, we know
already that any solution $A^{\bar a b}$ of the mobility
equation~\eqref{mob-raised} lifts uniquely to a parallel section of $\cV$ for
the more complicated prolongation
connection~\eqref{Metriprol1}--\eqref{Metriprol2}. If it also lifts to a
parallel section for~\eqref{new_funni_formula}, then
(cf.~Remark~\ref{rem:proj-mob}) the two lifts may differ, albeit only in the
last component. More precisely the last component $\mu$ of the parallel lift
for the special tractor connection is given by
$\mu=\mu'-\frac{1}{n}(\Rho_{a\bar b}-2Bg_{a\bar b})A^{\bar a b}$, where $\mu'$
is the last component of the parallel lift for the prolongation connection.
Note that if the metric $g^{\bar a b}$ itself lifts to a parallel section
for~\eqref{new_funni_formula}, then $B$ must be locally constant.
\end{rem}
In~\cite[Theorem~5]{FKMR}, it is shown that if the mobility of $(M,g,J)$ is at
least three, then there is a constant $B$ such that all solutions of the
mobility equation lift uniquely to parallel sections of $\cV$
for~\eqref{new_funni_formula}. Before developing this, and related results, it
will be useful to establish some basic properties of special tractor
connections~\eqref{new_funni_formula} and their parallel sections.  Throughout
this section we set, for a given function $B$,
\begin{equation}\label{G-def}
G_{a\bar{b}c\bar{d}}:= R_{a\bar{b}c\bar{d}}
+2B(g_{a\bar{b}}g_{c\bar{d}}+g_{c\bar{b}}g_{a\bar{d}}).
\end{equation}
The equations satisfied by parallel sections of~\eqref{new_funni_formula} are
\begin{subequations}\label{funni_system}
\begin{align}
\nabla_aA_b{}^c&=-\delta_a{}^c\hv_b\label{first_line}\\
\nabla_a\hv_{\bar c}&=-\mu g_{a\bar{c}}+2BA_{a\bar{c}}\quad\text{and}\quad
\nabla_a\hv_{b}=0,\label{second_line}\\
\nabla_a\mu&=2B\hv_a \label{third_line}
\end{align} 
\end{subequations}
and their complex conjugates. Of course, the first line is simply the mobility
equation. In particular, from~\eqref{f1}--\eqref{f11} (and the symmetry of
$\Rho_{b\bar c}$) we have
\begin{equation}\label{mob1-int}\begin{split}
g_{d\bar b}\nabla_a\hv_{\bar c}-g_{a\bar c}\nabla_{\bar b} \hv_d
&=R_{a\bar be\bar c} A_d{}^e-R_{a\bar b d\bar e}A^{\bar e}{}_{\bar c}\\
&=W_{a\bar be\bar c} A_d{}^e-W_{a\bar b d\bar e}A^{\bar e}{}_{\bar c}
-g_{a\bar c}\Rho_{e\bar b}A_d{}^e+g_{d \bar b}\Rho_{a\bar e} A^{\bar e}{}_{\bar c},
\end{split}\end{equation}
where $W_{a\bar be\bar c}=H_{a\bar be\bar c}$, since $J$ is integrable.

\begin{lem}\label{useful} If $A_b{}^c$ and $\hv_b$ satisfy
\eqref{first_line}--\eqref{second_line} for smooth functions $B,\mu$, then
\begin{align}\label{commuting}
G_{a\bar{b}c}{}^eA_e{}^d&=G_{a\bar{b}e}{}^dA_c{}^e\\
\label{crucial_identity}
G_{a\bar b c}{}^d\hv_d&=-g_{c\bar b}(\nabla_a\mu-2B\hv_a)+2(\nabla_aB)A_{c\bar b}.
\end{align}
\end{lem}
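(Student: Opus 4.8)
The plan is to obtain both identities by differentiating the structural equations \eqref{funni_system} satisfied by $A_b{}^c$ and $\hv_b$ and matching the outcome against the K\"ahler curvature via the Ricci identities of Proposition~\ref{rosetta}. The decisive algebraic mechanism in each case is that the metric correction $2B(g_{a\bar b}g_{c\bar d}+g_{c\bar b}g_{a\bar d})$ in the definition \eqref{G-def} of $G=G^B$ exactly absorbs the terms produced by $B$, so that $R$ may be completed to $G$ ``on the nose''. Throughout, $J$ is integrable, so no torsion terms appear in any commutator, and $G$ inherits the K\"ahler symmetries $R_{a\bar bc\bar d}=R_{c\bar ba\bar d}=R_{a\bar dc\bar b}$ from $R$ because the correction term enjoys the same symmetries. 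I also use that $\hv$ is real and $B,\mu$ real-valued, so that complex conjugation of \eqref{second_line} is available.

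For \eqref{commuting} I would start from the already-established identity \eqref{mob1-int}, namely $g_{d\bar b}\nabla_a\hv_{\bar c}-g_{a\bar c}\nabla_{\bar b}\hv_d = R_{a\bar be\bar c}A_d{}^e - R_{a\bar bd\bar e}A^{\bar e}{}_{\bar c}$. Substituting $\nabla_a\hv_{\bar c}=-\mu g_{a\bar c}+2BA_{a\bar c}$ from \eqref{second_line} and its complex conjugate $\nabla_{\bar b}\hv_d=-\mu g_{d\bar b}+2BA_{d\bar b}$, the $\mu$-contributions cancel and the left-hand side collapses to $2B\bigl(g_{d\bar b}A_{a\bar c}-g_{a\bar c}A_{d\bar b}\bigr)$, giving a closed relation $R_{a\bar be\bar c}A_d{}^e - R_{a\bar bd\bar e}A^{\bar e}{}_{\bar c}=2B\bigl(g_{d\bar b}A_{a\bar c}-g_{a\bar c}A_{d\bar b}\bigr)$. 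I would then replace $R$ by $G$ on the right using \eqref{G-def}: the extra $2B$ terms produced work out to $2B\bigl(g_{a\bar c}A_{d\bar b}-g_{d\bar b}A_{a\bar c}\bigr)$, which cancel the right-hand side above, yielding $G_{a\bar be\bar c}A_d{}^e = G_{a\bar bd\bar e}A^{\bar e}{}_{\bar c}$. Lowering the free unbarred index $d$ (equivalently raising $\bar c$) and relabelling transcribes this into the stated identity \eqref{commuting}.

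For \eqref{crucial_identity} I would apply the mixed Ricci identity to the $(1,0)$-form $\hv_d$: the dual of \eqref{curvatures} (with no torsion) gives $(\nabla_a\nabla_{\bar b}-\nabla_{\bar b}\nabla_a)\hv_d=-R_{a\bar b}{}^e{}_d\hv_e$. Computing the left side directly, $\nabla_{\bar b}\nabla_a\hv_d=0$ because $\nabla_a\hv_d=0$ by \eqref{second_line}, while $\nabla_a\nabla_{\bar b}\hv_d$ expands through the conjugate of \eqref{second_line}, $\nabla_{\bar b}\hv_d=-\mu g_{d\bar b}+2BA_{d\bar b}$, together with the lowered mobility equation $\nabla_aA_{d\bar b}=-g_{a\bar b}\hv_d$ of \eqref{mob-lowered} and $\nabla_a g_{d\bar b}=0$, producing $-(\nabla_a\mu)g_{d\bar b}+2(\nabla_aB)A_{d\bar b}-2Bg_{a\bar b}\hv_d$. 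Equating the two expressions (and renaming the free index $d\to c$) and then rewriting $R$ in terms of $G$ by means of $G_{a\bar bc}{}^d=-R_{a\bar b}{}^d{}_c+2B(g_{a\bar b}\delta_c{}^d+g_{c\bar b}\delta_a{}^d)$ --- a direct consequence of \eqref{G-def} and the convention $R_{a\bar b}{}^c{}_d=-g^{\bar ec}R_{a\bar bd\bar e}$ --- the two $2Bg_{a\bar b}\hv_c$ terms cancel, leaving precisely $G_{a\bar bc}{}^d\hv_d=-g_{c\bar b}(\nabla_a\mu-2B\hv_a)+2(\nabla_aB)A_{c\bar b}$, which is \eqref{crucial_identity}.

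I expect the main obstacle to be purely one of bookkeeping rather than of ideas: keeping the raising and lowering conventions consistent, in particular the sign in $R_{a\bar b}{}^c{}_d=-g^{\bar ec}R_{a\bar bd\bar e}$ and the opposite signs of the Ricci identity on vectors versus covectors. It is exactly these conventions that make the $B$-dependent metric terms cancel exactly when passing from $R$ to $G$; once they are pinned down, both computations are short and mechanical.
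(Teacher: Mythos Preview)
Your proof is correct and follows essentially the same route as the paper: for \eqref{commuting} you substitute \eqref{second_line} and its conjugate into \eqref{mob1-int} and then pass from $R$ to $G$ via \eqref{G-def}, and for \eqref{crucial_identity} you apply the mixed Ricci identity to $\hv_c$, use $\nabla_a\hv_b=0$ and the conjugate of \eqref{second_line} together with \eqref{mob-lowered}, and again rewrite $R$ as $G$. The paper's proof is organised the same way, with only cosmetic differences in how the index conventions are presented.
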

\begin{proof} We substitute~\eqref{second_line} into~\eqref{mob1-int} to obtain
\begin{align*}
R_{a\bar be\bar c} A_d{}^e-R_{a\bar b d\bar e}A^{\bar e}{}_{\bar c}
&=g_{a\bar c}(\mu g_{d\bar b}-2BA_{d\bar b})-g_{d\bar b}(\mu g_{a\bar c}-2BA_{a\bar c})\\
&=2B(g_{d\bar b}g_{a\bar e}A^{\bar e}{}_{\bar c}-g_{a\bar c}g_{e\bar b}A_d{}^e),
\end{align*}
and~\eqref{commuting} follows from~\eqref{G-def}. To
obtain~\eqref{crucial_identity}, we apply~\eqref{G-def} instead to the
identity
\begin{align*}
R_{a\bar{b}c}{}^d\hv_d&=(\nabla_a\nabla_{\bar{b}}-\nabla_{\bar{b}}\nabla_a)\hv_c
=\nabla_a(-\mu g_{c\bar b}+2BA_{c\bar b})\\
&=2(\nabla_aB)A_{c\bar b}-2B g_{a\bar b}\hv_c-g_{c\bar b}(\nabla_a\mu). \qedhere
\end{align*}
\end{proof}
In Theorem~\ref{new_funni}, we will show that if $(M,g,J)$ of mobility $\geq
2$ has c-projective nullity, then all solutions of the mobility equation lift
uniquely to parallel sections of $\cV$ for~\eqref{new_funni_formula}, where
$B$ is characterised by nullity.  First we establish the following converse
and regularity result.

\begin{thm}\label{Lambda_in_nullity}
Let $(M,J,g)$ be a connected \bps/K\"ahler manifold with a non-parallel
solution $A^{\bar ab}$ of the mobility equation, which lifts, over a dense
open subset $U$ of $M$, to a real parallel section $(A^{\bar a b},\hv^a,\mu)$
for~\eqref{new_funni_formula} with $B$ locally constant. Then\textup:
\begin{enumerate}
\item $B$ is constant and $(A^{\bar a b},\hv^a,\mu)$ extends to a parallel
  section over $M$\textup;
\item $G_{a\bar b c}{}^d\hv_d=0$, and hence $(J,g)$ has c-projective
  nullity on the dense open subset where $\hv^a$ is nonzero.
\end{enumerate}
\end{thm}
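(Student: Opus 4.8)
The plan is to read part~(2) off Lemma~\ref{useful} almost immediately, and to spend the real effort on the global regularity claim in part~(1), following the template provided by the projective toy model in Theorem~\ref{funni_toy} and Remark~\ref{rem:proj-mob}.

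First I would record what is available on $U$. There the section $(A^{\bar a b},\hv^a,\mu)$ is parallel for the special tractor connection~\eqref{new_funni_formula}, so it satisfies the system~\eqref{funni_system}; in particular \eqref{first_line}--\eqref{second_line} hold, with $B$ locally constant, so $\nabla_aB=0$, while the bottom equation~\eqref{third_line} gives $\nabla_a\mu-2B\hv_a=0$. Feeding these two facts into the identity~\eqref{crucial_identity} of Lemma~\ref{useful} makes its entire right-hand side vanish, whence $G_{a\bar b c}{}^d\hv_d=0$ on $U$. This says exactly that $\hv$ (where nonzero) is a c-projective nullity vector with scalar $B$: tracing over $a\bar b$ recovers the eigenvalue relation~\eqref{nullity_leads_eigenvector_of_Rho_2}, and criterion~\eqref{c-nullity_mixed} of Proposition~\ref{c-projective_invariant_characterisation_of_nullity} then certifies nullity. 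So part~(2) holds on $U\cap\{\hv\neq0\}$ at once, and I would upgrade it to all of $M$ only after part~(1) is in hand (with $B$ the global constant, the tensor $G$ of~\eqref{G-def} is globally defined and $G_{a\bar b c}{}^d\hv_d=0$ persists by continuity). That $\{\hv\neq0\}$ is dense I would derive from the non-parallelism of $A^{\bar ab}$: since $\hv_a=-\nabla_a(A_b{}^b)$, vanishing of $\hv$ on an open set, combined with the overdetermined (parallel) nature of the lift, propagates to force $A$ parallel, a contradiction.

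For part~(1) the key object is the globally defined prolongation lift. By Theorem~\ref{MetriProlongation} (see also Corollary~\ref{MetriCorollary}), the global solution $\sms$ of the metrisability equation corresponding to $A$ lifts to a section $s'=L(\sms)$ of $\cV$ that is parallel for the prolongation connection~\eqref{Metriprol1}--\eqref{Metriprol2} on all of $M$; since $\pi(s')=\sms$ is a nonzero solution and $s'$ is parallel on connected $M$, it is nowhere vanishing. By Remark~\ref{rem:lifts}, the hypothesised special lift $s=(A^{\bar a b},\hv^a,\mu)$ agrees with $s'$ in its first two slots and differs only in the last. The plan is now to differentiate the system~\eqref{funni_system} once more on $U$, using the second Bianchi identity and the curvature decomposition of Proposition~\ref{rosetta}, to produce an algebraic ``eigentractor'' relation of the shape $\mathcal K\,s=B\,(s-s_0)$, exactly as in~\eqref{extendB} and Remark~\ref{rem:proj-mob}: here $\mathcal K$ is an endomorphism of $\cV$ assembled from $R$, $\nabla R$ and $g$, and $s_0$ is an explicit correction term built from $\operatorname{tr}A$, $g$ and $\Rho$. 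Every ingredient is globally defined and continuous, so although the relation is derived on $U$ it holds throughout $M$ by density. Because the relevant tractor combination is nowhere zero on $M$ (this is where the nonvanishing of $s'$ is used), the relation pins down $B$ as a smooth function on $M$; being locally constant on the dense open $U$, with $M$ connected, this extension is globally constant.

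The hardest point, exactly as in the projective case, is this regularity step: producing the global algebraic relation and verifying that the combination it multiplies is nowhere vanishing, so that $B$ genuinely extends \emph{continuously} across $M\setminus U$ rather than merely remaining locally constant on the (in general disconnected) set $U$. Once $B$ is a genuine global constant, the connection~\eqref{new_funni_formula} is globally defined, the smooth section $s$ is parallel for it on the dense $U$, and parallelism extends to all of $M$ by continuity; this delivers the extension asserted in~(1). Finally, with $s$ parallel and $B$ constant on $M$, identity~\eqref{crucial_identity} yields $G_{a\bar b c}{}^d\hv_d=0$ everywhere, completing~(2) via Proposition~\ref{c-projective_invariant_characterisation_of_nullity}. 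Throughout I would keep track of reality (the two lines of~\eqref{new_funni_formula} being complex conjugate) so that $B$, $\mu$, and the extended section all stay real.
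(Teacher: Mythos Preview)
Your approach is essentially the paper's: part~(2) is read off directly from~\eqref{crucial_identity} once $\nabla_aB=0$ and~\eqref{third_line} are in hand, and part~(1) follows exactly the eigentractor strategy of Remark~\ref{rem:proj-mob}, which the paper carries out explicitly as~\eqref{RA_Eq2}--\eqref{RA_Eq4}. One point of precision: the eigentractor relation must be written in terms of the globally defined prolongation lift $s'=(A^{\bar ab},\hv^a,\mu')$ rather than the special lift $s=(A^{\bar ab},\hv^a,\mu)$, since $\mu$ is a priori only defined on $U$; the paper achieves this by using the prolongation formula $\nabla_a\hv^b=-\mu'\delta_a{}^b+\Rho_{a\bar c}A^{\bar cb}-\tfrac1n H_{a\bar c}{}^b{}_dA^{\bar cd}$ (valid on all of $M$) when differentiating, so that the resulting identity~\eqref{RA_Eq4} involves only globally defined quantities and the nowhere-vanishing of $s'$ can be invoked.
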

\begin{proof} As noted in Remark~\ref{rem:lifts}, Theorem~\ref{MetriProlongation}
provides a real section $(A^{\bar a b}, \hv^a, \mu')$ of $\cV$ (defined on all
of $M$) which is parallel for the connection given by \eqref{Metriprol1} and
\eqref{Metriprol2}. On $U$ we compute, using \eqref{second_line} and
\eqref{mob1-int}, that
\begin{equation}
R_{a\bar b}{}^d{}_{f}A^{\bar c f}+R_{a\bar b}{}^{\bar c}{}_{\bar e}A^{\bar e d}
=2B(\delta_{a}{}^dA^{\bar c}{}_{\bar b}-\delta_{\bar b}{}^{\bar c} A_{a}{}^d),
\end{equation}
which implies
\begin{equation}\label{RA_Eq2}
\tfrac{1}{n}(\Ric_{\bar b f} A^{\bar c f}+R_{a\bar b}{}^{\bar c}{}_{\bar e} A^{\bar e a})
=2B(A^{\bar c}{}_{\bar b}-\tfrac{1}{n}\delta_{\bar b}{}^{\bar c} A_{a}{}^a).
\end{equation}
Applying $\nabla_{\bar d}$ to \eqref{RA_Eq2} and taking the trace with respect
to $\bar d$ and $\bar c$ shows that
\begin{equation}\label{RA_Eq3}
\tfrac{1}{(1-n)(n+1)}\bigl((\nabla_{\bar d} \Ric_{\bar b f})A^{\bar df}
+(\nabla_{\bar d} R_{a\bar b}{}^{\bar d}{}_{\bar e})A^{\bar e a}
+ (1-n)\Ric_{\bar b f}\hv^f\bigr)=2B\hv_{\bar b}.
\end{equation}
Recall that
\[
-\mu\delta_{a}{}^b+2BA_{a}{}^b=\nabla_a\hv^b
=-\mu'\delta_{a}{}^b+\Rho_{a\bar c}A^{\bar cb}-\tfrac1n H_{a\bar c}{}^b{}_{d} A^{\bar cd}
\]
and that $\Rho_{a\bar b}=\frac{1}{n+1}\Ric_{a\bar b}$.  Hence, applying
$\nabla_{h}$ to \eqref{RA_Eq3} and taking trace shows, together with the
identities~\eqref{RA_Eq2} and~\eqref{RA_Eq3}, that we have an identity of the
form
\begin{equation}\label{RA_Eq4}
\begin{pmatrix}
R&0&0\\
\nabla R&R&0\\
\nabla\nabla R+R\bowtie R&\nabla R& R
\end{pmatrix} \begin{pmatrix}A^{\bar ab}\\
\hv^a\\ \mu'\end{pmatrix} = 2B\left[\begin{pmatrix} A^{\bar ab}\\
\hv^a\\ \mu'\end{pmatrix}-\frac{A_c{}^c}n\begin{pmatrix}g^{\bar ab}\\
0\\ \frac1n\Rho_d{}^d\end{pmatrix}\right].
\end{equation}
Since the left-hand side of \eqref{RA_Eq4} is defined on all of $M$ and
$(A^{\bar ab}, \hv^a, \mu')$ is a nowhere vanishing section on $M$, the
identity \eqref{RA_Eq4} can be used to extend $B$ smoothly as a function to
all of $M$. Since $B$ is locally constant on $U$ and $M$ is connected, $B$ is
actually a constant and $(A^{\bar ab}, \hv^a,\mu)$ extends smoothly to a
parallel section of the connection \eqref{new_funni_formula} on all of $M$.

The second part is immediate from~\eqref{crucial_identity} with $\nabla_a B=0$
and $\nabla_a\mu=2B\hv_a$.
\end{proof}
\begin{rem}\label{cproj-inv;B=0} When $(J,g)$ has c-projective
nullity, $\Rho_{a\bar b}-2B g_{a\bar b}$ (with $B$ given by ~\eqref{c-nullity}) 
is a metric c-projective invariant by Corollary~\ref{invariant-nullity}, and hence the
connection~\eqref{new_funni_formula} is metric c-projectively invariant. In
particular, by Theorem \ref{Lambda_in_nullity}, the connection is metric c-projectively 
invariant if $B$ is constant and it admits a parallel section with $\hv_a$ nonzero.

On the other hand, if the connection~\eqref{new_funni_formula} admits a
parallel section with $\hv_a=0$, then~\eqref{second_line} shows that $B=0$
unless the corresponding solution $A_{a\bar b}$ of the mobility equation is a
(necessarily locally constant) multiple of $g_{a\bar b}$. Thus a parallel
solution of the mobility equation which is not a multiple of $g$ lifts to a
parallel section for~\eqref{new_funni_formula} if and only if $B=0$.
\end{rem}

\begin{thm}\label{funni_mobility_3} Let $(M,J,[\nabla])$ be a connected metric
c-projective manifold of dimension $2n\geq 4$ arising from a \bps/K\"ahler
metric~$g$ with mobility $\geq 3$. Then either $(J,g)$ has c-projective
nullity on a dense open subset $U\subseteq M$, with $B$ constant
in~\eqref{c-nullity}, or $2n\geq 6$ and all metrics c-projectively equivalent
to $g$ are affinely equivalent to $g$ \textup(i.e.~have the same Levi-Civita
connection\textup).
\end{thm}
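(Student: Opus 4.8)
The plan is to reduce the statement to the interplay between the \emph{special tractor connection}~\eqref{new_funni_formula} and c-projective nullity, in close analogy with the metric projective model of Theorem~\ref{funni_toy}. Since $g$ is K\"ahler and $\dim\mob_c\geq 3$, the result of~\cite[Theorem~5]{FKMR} recalled at the start of Section~\ref{metricstructures} applies: there is a \emph{constant} $B$ such that every solution $A^{\bar ab}$ of the mobility equation~\eqref{mob-raised} lifts uniquely to a section $(A^{\bar ab},\hv^a,\mu)$ of $\cV$ parallel for~\eqref{new_funni_formula}. I would take this as the starting point and then split according to whether every such solution is parallel (i.e.\ $\hv^a\equiv 0$) or not.

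First, suppose some solution $A^{\bar ab}$ is \emph{not} parallel. Then its lift is a parallel section of~\eqref{new_funni_formula} with $B$ constant and $\hv^a\not\equiv0$, so Theorem~\ref{Lambda_in_nullity} applies: part~(2) gives $G_{a\bar bc}{}^d\hv_d=0$ --- which one also reads off~\eqref{crucial_identity} using $\nabla_aB=0$ together with $\nabla_a\mu=2B\hv_a$ from~\eqref{third_line} --- and hence $(J,g)$ has c-projective nullity, with this same constant $B$, on the dense open set where $\hv^a\neq0$. This already yields the first alternative, in every dimension $2n\geq4$.

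Second, suppose every solution of~\eqref{mob-raised} is parallel. Then for any compatible metric $\tilde g$ the associated $A$ satisfies $\nabla A=0$, so $g$ and $\tilde g$ share their Levi-Civita connection; that is, all metrics c-projectively equivalent to $g$ are affinely equivalent. It remains to force $2n\geq6$. If instead $2n=4$, I would show that nullity holds anyway, so that we fall back into the first alternative (consistent with the theorem). Trivialising by $g$, the solution space is a family of $\nabla$-parallel Hermitian endomorphisms of $T^{1,0}M\cong\C^2$ of real dimension $\geq3$ containing the identity, hence coincides with the commutant of the restricted holonomy in the Hermitian endomorphisms of $\C^2$. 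That commutant has real dimension $1$, $2$ or $4$, so a parallel family of dimension $\geq3$ forces the restricted holonomy into the scalars $\mathrm{U}(1)\cdot\mathrm{Id}$ (possibly trivial). By the K\"ahler symmetries the curvature endomorphism $R_{a\bar b}{}^c{}_d$ is then scalar in its $c,d$ slots, so its trace-free part $H_{a\bar b}{}^c{}_d$ --- which by Proposition~\ref{kaehlerharmcurv} is the \emph{only} surviving piece of harmonic curvature here --- vanishes, making the structure c-projectively flat. Theorem~\ref{counterpart_to_Beltrami} then gives constant holomorphic sectional curvature, i.e.\ $R=B\,S$ with $B$ constant, which is c-projective nullity everywhere. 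Thus in dimension $4$ we are in the first alternative, and the genuine second alternative can occur only when $2n\geq6$.

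The technical heart, and the step I expect to be the main obstacle, is the input from~\cite[Theorem~5]{FKMR}: the passage from $\dim\mob_c\geq3$ to a \emph{single constant} $B$ for which~\eqref{new_funni_formula} has the lifting property. Were I to reprove it rather than cite it, I would differentiate the prolongation relations of Theorem~\ref{MetriProlongation} for three independent solutions and eliminate the curvature terms pairwise, exactly as in the projective template of Remark~\ref{rem:proj-mob} and Theorem~\ref{funni_toy}; this pins down $B$ pointwise and shows it is independent of the solution, after which Theorem~\ref{Lambda_in_nullity}(1) upgrades local constancy of $B$ to genuine constancy on the connected $M$. The only other point needing care is that $\{\hv^a\neq0\}$ is dense when $A$ is non-parallel, which is built into the conclusion of Theorem~\ref{Lambda_in_nullity}(2).
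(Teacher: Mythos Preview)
Your argument is correct, but it takes a different route from the paper's, and the difference is worth noting.

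You black-box \cite[Theorem~5]{FKMR} to obtain a constant $B$ for which every mobility solution lifts to a parallel section of~\eqref{new_funni_formula}, and then split on whether some solution is non-parallel (invoking Theorem~\ref{Lambda_in_nullity}(2)) or all are parallel (where for $2n=4$ you run a holonomy argument: the Hermitian commutant of the restricted holonomy in $\gl(2,\C)$ has real dimension $1$, $2$ or $4$, so $\geq 3$ parallel Hermitian endomorphisms force scalar holonomy, hence $H_{a\bar b}{}^c{}_d=0$ and constant holomorphic sectional curvature). This is clean and short, and your commutant dimension count is valid even in indefinite signature, since the commutant of a set of $g$-unitaries is $*$-closed and hence has real Hermitian part of dimension equal to its complex dimension.

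The paper, by contrast, does \emph{not} cite \cite{FKMR} here; it reproves the existence of a common function $B$ directly via the pointwise algebraic Lemma~\ref{Algebraic_Lemma} applied to~\eqref{mob1-int} for two independent solutions, together with Lemma~\ref{Pointwise} to pass to a dense open set of pointwise independence. It then treats $n=2$ by observing from~\eqref{commuting} that $G_{a\bar bc}{}^d$ centralises the trace-free parts of both solutions, hence is scalar in $c,d$, hence zero by the symmetry $G_{a\bar bc}{}^d=G_{c\bar ba}{}^d$; this gives constant holomorphic sectional curvature regardless of whether the solutions are parallel. For $n\geq 3$ it uses~\eqref{crucial_identity} and the same Bianchi-type symmetry to force $\nabla_aB=0$ via a rank-one constraint on $A_a{}^b-\xi\delta_a{}^b$, and then appeals to Theorem~\ref{Lambda_in_nullity}. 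As noted in Remark~\ref{fkmr}, this self-contained argument actually \emph{recovers} \cite[Theorem~5]{FKMR}. So your proof buys brevity by importing that result, while the paper's buys independence from it and along the way supplies the very ingredient you would need were you to carry out the reproof you sketch in your final paragraph.
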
 

To prove this theorem, we use a couple of lemmas, the first of which is a
purely algebraic (pointwise) result.
\begin{lem}\label{Algebraic_Lemma}
Suppose that $R_{a\bar b c\bar d}$ is a tensor which has K\"ahler
symmetries~\eqref{Kurvature} with respect to $g_{a\bar b}$.  Let $A_{a\bar
  b}$, $\wt A_{a\bar b}$, $\hv_{a\bar b}$ and $\wt\hv_{a\bar b}$
be \textup(real\textup) tensors that satisfy
\begin{align}\label{RA_Identity1}
&R_{a\bar b\bar c e} A_{d}{}^e+R_{a\bar b d\bar f}A^{\bar f}{}_{\bar c}
= g_{a\bar c}\hv_{d\bar b}-g_{d\bar b}\hv_{a\bar c}\\
\label{RA_Identity2}
&R_{a\bar b\bar c e} \wt A_{d}{}^e+R_{a\bar b d\bar f}\wt A^{\bar f}{}_{\bar c}
=g_{a\bar c}\wt\hv_{d\bar b}-g_{d\bar b}\wt\hv_{a\bar c}.
\end{align}
If $A_{a \bar b}$, $\wt A_{a\bar b}$ and $g_{a\bar b}$ are linearly
independent, then $\hv_{a\bar b}$, respectively $\wt\hv_{a\bar b}$, are linear
combinations of $g_{a\bar b}$ and $A_{a \bar b}$, respectively $g_{a\bar b}$
and $\wt A_{a\bar b}$, with the same second coefficient.
\end{lem}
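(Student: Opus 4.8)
The plan is to reduce the statement, by taking a single trace of each hypothesis, to a clean assertion about one fixed linear operator on Hermitian tensors, and then to extract both claims — membership of $\hv$ in $\mathrm{span}\{g,A\}$ and the equality of the second coefficients — from a symmetric cross-contraction of the two identities \eqref{RA_Identity1}--\eqref{RA_Identity2}, using that the K\"ahler curvature is symmetric separately in its two unbarred and its two barred slots.

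First I would abbreviate the common left-hand side by $\Phi_{a\bar b d\bar c}(S):=R_{a\bar b\bar c e}S_d{}^e+R_{a\bar b d\bar f}S^{\bar f}{}_{\bar c}$, which is linear in the Hermitian tensor $S$. Contracting \eqref{RA_Identity1} with $g^{a\bar c}$ and using the symmetries \eqref{Kurvature} (so that $g^{a\bar c}R_{a\bar b e\bar c}=\Ric_{e\bar b}$ and $R_{a\bar b d\bar f}A^{a\bar f}=:P_{d\bar b}(A)$ are well defined) collapses the right-hand side, giving
\[
n\hv_{d\bar b}=\mathcal L(A)_{d\bar b}+\big(g^{a\bar c}\hv_{a\bar c}\big)g_{d\bar b},
\qquad \mathcal L(A)_{d\bar b}:=P_{d\bar b}(A)-\Ric_{e\bar b}A_d{}^e ,
\]
and the analogue for $\wt A$. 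Here $\mathcal L$ is a universal linear operator with $\mathcal L(g)=0$ (since $P_{d\bar b}(g)=\Ric_{d\bar b}$). Because $g_{a\bar c}g_{d\bar b}-g_{d\bar b}g_{a\bar c}=0$, the trace part of $\hv$ drops out of the full hypothesis, which therefore becomes $n\Phi_{a\bar b d\bar c}(A)=g_{a\bar c}\mathcal L(A)_{d\bar b}-g_{d\bar b}\mathcal L(A)_{a\bar c}$, and likewise from \eqref{RA_Identity2} for $\wt A$. The lemma is thus equivalent to showing that $\mathcal L$ acts as one and the same scalar $\gamma'$ on $\mathrm{span}\{A,\wt A\}$ modulo $\langle g\rangle$; the claimed common second coefficient is then $\gamma=\gamma'/n$.

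To produce this scalar I would contract the $A$-identity against $\wt A$ and the $\wt A$-identity against $A$ over matching index pairs and subtract. Viewing $R$ as the self-adjoint operator $S\mapsto P(S)$ for the pairing $\langle X,Y\rangle:=X_{a\bar b}Y^{a\bar b}$, the genuinely curvature-quadratic contributions pair symmetrically under $A\leftrightarrow\wt A$ and cancel, leaving on the left only contributions built from $R$ and the commutator $[A,\wt A]$, and on the right a combination of $g_{d\bar b}$-terms together with $(\mathrm{tr}_g\wt A)\mathcal L(A)-(\mathrm{tr}_g A)\mathcal L(\wt A)$. Splitting this identity into its $g$-component and its component transverse to $g$ — legitimate precisely because $g,A,\wt A$ are linearly independent — should force $\mathcal L(A)$ and $\mathcal L(\wt A)$ into $\mathrm{span}\{g,A\}$ and $\mathrm{span}\{g,\wt A\}$ with equal $A$- and $\wt A$-coefficients.

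The hard part will be controlling the commutator $[A,\wt A]$: the curvature terms in the cross-contraction do not cancel termwise but only up to expressions of the shape $R_{a\bar b\,\cdot\,\cdot}\,[A,\wt A]$, and these must be shown to be zero or absorbable into the $g$-direction. The leverage I would use is the full separate symmetry of $R$ in its unbarred and barred pairs, together with the Hermiticity of $\hv$ that is built into the reality of the data: since $P(A)$ is Hermitian, Hermiticity of $\mathcal L(A)=P(A)-\Ric\cdot A$ already forces $[\Ric,A]=0$ (and similarly $[\Ric,\wt A]=0$), and I expect that feeding the full, untraced identity back in yields enough further commutation relations to complete the cancellation. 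This is the delicate point, because one cannot simply diagonalise two non-commuting Hermitian tensors; the argument must instead show algebraically that the commutator obstructions drop out, after which the scalar $\gamma'$ is read off by evaluating the reduced identity on a common eigenvector and the proportionality with matching coefficient follows at once.
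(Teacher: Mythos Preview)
Your reduction via the trace is fine, and you correctly isolate the crux: one must compare the two curvature identities by contracting one against $\wt A$ and the other against $A$. But the proposal stops precisely where the argument has to start. You identify that your cross-contraction leaves residual terms of the shape $R_{a\bar b\,\cdot\,\cdot}[A,\wt A]$ and then say only that you ``expect'' further commutation relations will kill them. They will not, at least not along the path you outline: deducing $[\Ric,A]=0$ from Hermiticity of $\hv$ is correct, but nothing in the hypotheses forces $[A,\wt A]=0$, and the full curvature tensor acting on $[A,\wt A]$ is not controlled by Ricci alone.

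The paper avoids this obstruction entirely by choosing a different contraction. Instead of tracing and then pairing, it contracts \eqref{RA_Identity1} with $\wt A_a{}^h$ on the first slot and with $\wt A^{\bar i}{}_{\bar b}$ on the second, and subtracts. The point is that the K\"ahler interchange symmetry $R_{a\bar b c\bar d}=R_{c\bar d a\bar b}$ then converts each curvature term into the left-hand side of \eqref{RA_Identity2} (with relabelled indices), so one can substitute directly and eliminate $R$ altogether---no commutators appear. What remains is the purely algebraic identity
\[
A_{d\bar b}\wt\hv_{a\bar c}+A_{a\bar c}\wt\hv_{d\bar b}
=\wt A_{d\bar b}\hv_{a\bar c}+\wt A_{a\bar c}\hv_{d\bar b}
\quad\text{(modulo }g\text{-terms, after reducing to trace-free data)},
\]
and since a nonzero symmetric tensor product determines its factors up to scale, linear independence of $A,\wt A$ forces $\hv\propto A$ and $\wt\hv\propto\wt A$ with the same coefficient. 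The missing idea is thus not extra commutation relations but the specific ``outer-index'' contraction that lets the interchange symmetry do the cancellation for you.
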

\begin{proof}
Note first that these equations remain unchanged if we add scalar multiplies
of $g_{a\bar b}$ to the tensors $A_{a\bar b}$, $\wt A_{a\bar b }$, $\hv_{a\bar
  b}$ and $\wt\hv_{a\bar b}$.  Hence, we can assume without loss of generality
that the trace of these tensors vanishes.  We then have to show that
$\hv_{a\bar b}$ and $\wt\hv_{a\bar b}$ are a common scalar multiple of
$A_{a\bar b}$ and $\wt A_{a\bar b}$ respectively.

{From} equation \eqref{RA_Identity1} it follows immediately that
\begin{multline}\label{RA_Identity3}
\wt A_{a}{}^h(R_{h\bar b\bar c e} A_{d}{}^e+R_{h\bar b d\bar f} A^{\bar f}{}_{\bar c})
-\wt A^{\bar i}{}_{\bar b}(R_{a\bar i\bar c e} A_{d}{}^e
+R_{a\bar i d\bar f} A^{\bar f}{}_{\bar c})\\
=\wt A_{a\bar c}\hv_{d\bar b} +\wt A_{d\bar b}\hv_{a\bar c}
-g_{a\bar c} \wt A^{\bar i}{}_{\bar b}\hv_{d\bar i}
-g_{d\bar b} \wt A_{a}{}^h\hv_{h\bar c}.
\end{multline}
By the symmetries \eqref{Kurvature} of $R_{a\bar bc\bar d}$, the left-hand side
of identity \eqref{RA_Identity3} equals
\begin{align}\label{RA_Identity4}
(\wt A_a{}^hR_{h\bar b\bar ce}&-\wt A^{\bar i}{}_{\bar b}R_{a\bar i\bar ce})A_d{}^e
+(\wt A_a{}^hR_{h\bar bd\bar f}-\wt A^{\bar i}{}_{\bar b}R_{a\bar id\bar f})
A^{\bar f}{}_{\bar c}\nonumber\\
&=(\wt A_a{}^hR_{h\bar b\bar ce}+\wt A^{\bar i}{}_{\bar b}R_{e\bar ca\bar i})A_d{}^e
 -(\wt A_a{}^hR_{d\bar f\bar bh}+\wt A^{\bar i}{}_{\bar b}R_{d\bar fa\bar i})
A^{\bar f}{}_{\bar c}\nonumber\\
&=A_{a\bar c}\wt \hv_{d\bar b} + A_{d\bar b}\wt \hv_{a\bar c}
-g_{a\bar c} A^{\bar i}{}_{\bar b}\wt\hv_{d\bar i}
-g_{d\bar b} A_{a}{}^h \wt \hv_{h\bar c}
\end{align}
where the last equality follows from \eqref{RA_Identity3}.  {From}
\eqref{RA_Identity3} and \eqref{RA_Identity4} we therefore obtain
\begin{equation}\label{tau_Identity}
g_{a\bar c}\tau_{d\bar b}+g_{d\bar b}\tau_{a\bar c}=A_{d\bar b}\wt\hv_{a\bar c}
+A_{a\bar c}\wt\hv_{d\bar b}-\wt A_{d\bar b}\hv_{a\bar c}
-\wt A_{a\bar c}\hv_{d\bar b},
\end{equation}
where $\tau_{a\bar b}=A_a{}^e\wt\hv_{e\bar b}-\wt A^{\bar f}{}_{\bar b}
\hv_{a\bar f}$. Taking the trace with respect to $a$ and $c$ yields
\begin{equation}
n\tau_{d\bar b}+g_{d\bar b}\tau_{a}{}^a=0,
\end{equation}
which shows that $\tau_{a\bar b}=0$. Therefore, we conclude from
\eqref{tau_Identity} that
\begin{equation*}
A_{d\bar b}\wt\hv_{a\bar c}+A_{a\bar c}\wt\hv_{d\bar b}
=\wt A_{d\bar b}\hv_{a\bar c}+\wt A_{a\bar c}\hv_{d\bar b}.
\end{equation*}
Since any nonzero tensor of this form determines its factors up to scale, and
$A_{a\bar b}$ and $\wt A_{a\bar b}$ are linearly independent, we conclude that
$\hv_{a\bar b}$ and $\wt\hv_{a\bar b}$ are the same multiple of $A_{a\bar b}$
and $\wt A_{a\bar b}$ respectively.
\end{proof}

We next relate linear dependence to pointwise linear dependence.

\begin{lem}\label{Pointwise} Let $(M,J,g)$ be a connected
\bps/K\"ahler $2n$-manifold $(n\geq 2)$ and let $A_a{}^b$ be a solution
of~\eqref{metri-mob} such that $\wt A_a{}^b := p \delta_a{}^b + q A_a{}^b$ is
also a solution for real functions $p$ and $q$. Then $p$ and $q$ are constant
or $A_a{}^b=\xi\delta_a{}^b$ for constant $\xi$.
\end{lem}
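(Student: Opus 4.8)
The plan is first to convert the hypothesis that $\wt A=p\,\delta+qA$ solves the mobility equation~\eqref{metri-mob} into a pointwise algebraic identity, and then to promote the resulting pointwise dichotomy to a global one. Writing $p_a=\nabla_a p$ and $q_a=\nabla_a q$ and applying the Leibniz rule together with $\nabla_a A_b{}^c=-\delta_a{}^c\hv_b$, the requirement that $\wt A$ also satisfy $\nabla_a\wt A_b{}^c=-\delta_a{}^c\wt\hv_b$ for some $\wt\hv_b$ is equivalent to the existence of a $(1,0)$-form $w_b$ with
\begin{equation}\label{keyidentity}
p_a\delta_b{}^c+q_a A_b{}^c=\delta_a{}^c\,w_b.
\end{equation}
Only the unbarred equation is needed here; its barred counterpart follows by conjugation since $p$ and $q$ are real.

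Next I would read off the pointwise content of~\eqref{keyidentity}. For fixed $b$ the left-hand side is the endomorphism $p\otimes\delta_b+q\otimes A_b$ of $T^{1,0}M$, of rank at most two, while the right-hand side is $w_b$ times the identity. When $n\ge 3$ a nonzero multiple of the identity has rank $n>2$, so $w_b=0$ and $p_a\delta_b{}^c+q_a A_b{}^c=0$ identically; choosing an index $a_0$ with $q_{a_0}\neq0$ then gives $A_b{}^c=-(p_{a_0}/q_{a_0})\delta_b{}^c$, so $A$ is a scalar multiple of $\Id$, while if all $q_a=0$ one gets $p_a=0$ as well. This yields a clean pointwise dichotomy, valid for every $n\ge2$: at each point either $A$ is a scalar multiple of the identity, or $q_a=0$ for all $a$, in which case $dp=dq=0$ there (using reality to pass to the barred components). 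The case $2n=4$ must be handled separately, since the rank count no longer forces $w_b=0$; here a direct inspection of~\eqref{keyidentity} suffices, for if $q\ne0$ and $p,q$ were linearly independent then, expanding $\delta_b{}^c$ in the basis $\{p,q\}$, matching coefficients would force $\delta_b{}^c$ to have rank one as a matrix in $(b,c)$, which is absurd; hence $p$ and $q$ are proportional and~\eqref{keyidentity} again collapses to $A$ being scalar.

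The heart of the matter is the passage from this pointwise statement to the global dichotomy, and this is where I expect the main difficulty: one must exclude a \emph{mixed} configuration in which $A$ is a scalar multiple of $\Id$ on one open region (where $p,q$ may vary) and genuinely non-scalar on another (where $dp=dq=0$). To rule this out I would use the prolongation of the metrisability equation. On any open set where $A=\xi\,\Id$, the mobility equation gives $(\nabla_a\xi)\delta_b{}^c=-\delta_a{}^c\hv_b$; tracing in the two possible ways yields $\nabla_a\xi=n^2\nabla_a\xi$, so $\xi$ is locally constant because $n^2\neq1$. By Theorem~\ref{MetriProlongation}, $A$ lifts to a section $L(A)$ of $\cV$ that is parallel for the prolongation connection, and on such an open set $L(A)=\xi_0\,L(\Id)$ by linearity of $L$, where $L(\Id)$ is the parallel lift of the background metric $g$. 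Since a parallel section of a connection on a connected manifold that vanishes at a single point vanishes identically, $L(A)-\xi_0\,L(\Id)$ is identically zero, so $A=\xi_0\,\Id$ on all of $M$ with $\xi_0$ constant.

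Finally I would assemble the dichotomy. If $A$ is a scalar multiple of the identity on some nonempty open set, the previous step gives $A=\xi_0\,\Id$ globally with $\xi_0$ constant, the second alternative. Otherwise the non-scalar locus is open and dense, and there the pointwise analysis gives $dp=dq=0$; by continuity $dp=dq=0$ on all of $M$, so $p$ and $q$ are constant on the connected manifold, the first alternative. The only genuinely delicate point is the unique-continuation step, which is precisely what excludes the mixed configuration; everything else is a short rank computation or the elementary $2n=4$ special case.
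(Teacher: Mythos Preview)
Your argument is correct and starts from exactly the same key identity as the paper: differentiating $\wt A=p\,\delta+qA$ and using the mobility equation yields
\[
(\nabla_a p)\,\delta_b{}^c+(\nabla_a q)\,A_b{}^c=-(\wt\hv_b-q\hv_b)\,\delta_a{}^c.
\]
The two proofs diverge only in how they extract the pointwise conclusion from this identity and how carefully they globalise it.

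For the pointwise step, you use a rank argument in the $(a,c)$ slot with $b$ fixed, which forces $w_b=0$ when $n\ge 3$ but requires a separate treatment of $n=2$. The paper instead contracts the identity with a nonzero vector $X^a$ in the kernel of $\nabla_a q$ (which exists for all $n\ge 2$ whenever $\nabla q\neq 0$): this gives $(X^a\nabla_a p)\delta_b{}^c=-(\wt\hv_b-q\hv_b)X^c$, a rank-one equals rank-$n$ identity, so both sides vanish, hence $\wt\hv_b=q\hv_b$ and $\nabla_a p=-\xi\nabla_a q$ for some function $\xi$. Substituting back gives $\nabla_a q\,(A_b{}^c-\xi\delta_b{}^c)=0$ directly, with no case split on $n$. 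This is shorter and more uniform than your rank approach.

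Conversely, your globalisation is more explicit than the paper's. Once one has the pointwise dichotomy (at each point either $dp=dq=0$ or $A$ is a scalar multiple of the identity), the paper simply asserts the global conclusion in a couple of lines, whereas you correctly identify that one must rule out a mixed configuration and do so cleanly via the prolongation connection of Theorem~\ref{MetriProlongation}: if $A=\xi_0\,\mathrm{Id}$ on a nonempty open set (with $\xi_0$ locally constant by your trace computation), then $L(A)-\xi_0 L(\mathrm{Id})$ is a parallel section of $\cV$ vanishing on that open set, hence identically zero on the connected manifold. This unique-continuation step is exactly what is needed, and making it explicit is a genuine improvement in rigour over the paper's more telegraphic treatment.
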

\begin{proof} By assumption, we have $\nabla_a A_b{}^c = -\delta_a{}^c \hv_b$
and $\nabla_a \wt A_b{}^c = -\delta_a{}^c \wt\hv_b$, hence
\begin{equation*}
\nabla_a p\,\delta_b{}^c+\nabla_a q\, A_b{}^c = -(\wt\hv_b-q\hv_b)\delta_a{}^c
\end{equation*}
If $\nabla_a q=0$, it follows easily that $p$ and $q$ are locally constant
hence constant. Otherwise, contracting this expression with a nonzero tangent
vector $X^a$ in the kernel of $\nabla_a q$, we deduce that $\wt\hv_b=q\hv_b$
and $\nabla_a p = -\xi \nabla_a q$ for some function $\xi$.  Thus
$\nabla_a q\, (A_b{}^c-\xi\delta_b{}^c)=0$. If
$A_b{}^c=\xi\delta_b{}^c$, it follows from what we have already proven
that $\xi$ is constant.  Otherwise, we deduce that $p$ and $q$ are
constant.
\end{proof}
\begin{proof}[Proof of Theorem~\textup{\ref{funni_mobility_3}}]
Suppose that $A_{a\bar b}$ and $\wt A_{a\bar b}$ are nondegenerate solutions of
the mobility equation such that $g_{a\bar b}, A_{a\bar b}$ and $\wt A_{a\bar
  b}$ are linearly independent.  At each point of $M$,~\eqref{mob1-int} implies
that $A_{a\bar b}$ and $\wt A_{a\bar b}$
satisfy~\eqref{RA_Identity1}--\eqref{RA_Identity2}, with $\hv_{a\bar
  c}=\nabla_a \hv_{\bar c}$ and $\wt\hv_{a\bar c}=\nabla_a\wt\hv_{\bar c}$.  By
Lemma~\ref{Pointwise}, $g_{a\bar b}, A_{a\bar b}$ and $\wt A_{a\bar b}$ are
pointwise linearly independent on a dense open set $U'$, and hence, on $U'$,
Lemma~\ref{Algebraic_Lemma} implies that $A_{a\bar b}$ and $\wt A_{a\bar b}$
lift to smooth solutions $(A_{a\bar b},\hv_a,\mu)$ and $(\wt A_{a\bar b},
\wt\hv_a,\tilde\mu)$ of~\eqref{first_line}--\eqref{second_line} for the same
smooth function $B$. Thus we may apply Lemma~\ref{useful}.

The trace-free parts of $A_a{}^b$ and $\wt A_a{}^b$ are pointwise linearly
independent on $U'$, hence if $n=2$, their common centraliser at each $p\in U'$
consists only of multiples of the identity. By~\eqref{commuting}, $G_{a\bar
  bc}{}^d$ is a multiple $\alpha_{a\bar b}$ of $\delta_a{}^d$, hence zero,
since $G_{a\bar bc}{}^d=G_{c\bar ba}{}^d$. Thus $g$ has constant holomorphic
sectional curvature, which proves the theorem for $2n=4$.

To prove the theorem for $2n\geq 6$, we substitute~\eqref{crucial_identity}
into $G_{a\bar b c}{}^d=G_{c\bar b a}{}^d$ to obtain
\begin{equation*}
g_{c\bar b}(\nabla_a\mu-2B\hv_a)-2(\nabla_aB)A_{c\bar b}=
g_{a\bar b}(\nabla_c\mu-2B\hv_c)-2(\nabla_cB)A_{a\bar b}.
\end{equation*}
If we contract this equation with a vector $Y^c$ in the kernel of $\nabla_c
B$, then since $n\geq 3$, we obtain a degenerate Hermitian form on the left
hand side, equal to a multiple of $g_{a\bar b}$. Hence both sides vanish,
i.e.~$Y^c$ is in the kernel of $\nabla_c\mu-2B\hv_c$ and we have
\begin{equation*}
A_{c\bar b} Y^c = \xi g_{c\bar b} Y^c\qquad\text{and}\qquad 
\nabla_a\mu-2B\hv_a = 2\xi (\nabla_aB)
\end{equation*}
for some function $\xi$ on $U'$. Hence~\eqref{crucial_identity} now reads
\begin{equation*}
G_{a\bar b c}{}^d\hv_d=2(\nabla_aB) (A_{c\bar b}-\xi g_{c\bar b})
= 2(\nabla_cB) (A_{a\bar b}-\xi g_{a\bar b}).
\end{equation*}
If $\nabla_c B$ is nonzero on an open subset of $U'$, it follows that
$A_a{}^b-\xi \delta_a{}^b$ has (complex) rank at most one there, with image
spanned by $\nabla^a B$ and kernel containing the kernel of $\nabla_aB$. Since
the same holds for $\wt A_a{}^b-\tilde \xi\delta_a{}^b$ for some function
$\tilde\xi$, we have that $\delta_a{}^b, A_a{}^b$ and $\wt A_a{}^b$ are
linearly dependent, a contradiction. Hence $\nabla_c B$ is identically zero on
$U'$, i.e.~$B$ is locally constant.  The result now follows from
Theorem~\ref{Lambda_in_nullity}.
\end{proof}

\begin{rem}\label{fkmr} The above proof shows (for mobility $\geq 3$) that any
solution of the mobility equation~\eqref{mob-raised} lifts to a parallel
section for~\eqref{new_funni_formula}, where $B$ is given
by~\eqref{c-nullity}, unless all solutions are parallel (i.e.~affine
equivalent to $g$). However, in the latter case, any solution
of~\eqref{mob-raised} lifts to a parallel section
for~\eqref{new_funni_formula} with $B=0$ (cf.~Remark~\ref{cproj-inv;B=0}).
This establishes \cite[Theorem~5]{FKMR}; the next result may be seen as a
strengthening of this theorem in which c-projective nullity is brought to the
fore, cf.~also~\cite[Theorem~2]{CMR}.
\end{rem}

\begin{thm}\label{new_funni} Let $(M, J, g)$ be a connected \bps/K\"ahler
manifold admitting a solution of the mobility equation that is not a constant
multiple of $g$.  Assume that there is a dense open subset $U\subseteq M$ on
which $(J,g)$ has c-projective nullity and denote by $B$ the function
in~\eqref{c-nullity}.  Then the following hold\textup:
\begin{itemize}
\item $B$ is constant
\item any solution $A^{\bar a b}$ of the mobility equation lifts uniquely to a
  section of $\cV$ which is parallel for the special tractor
  connection~\eqref{new_funni_formula}.
\end{itemize}
\end{thm}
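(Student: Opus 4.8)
The plan is to exhibit, for each real solution $A^{\bar a b}$ of the mobility equation~\eqref{mob-raised}, a lift to a section of $\cV$ that is parallel for the special tractor connection~\eqref{new_funni_formula}, by verifying the defining equations~\eqref{funni_system} directly. Writing $\hv^a$ for the gradient field of $A$, equation~\eqref{first_line} is just the mobility equation, and since $g$ is \bps/K\"ahler one has $\Rho_{ab}=0$, so the integrability conditions~\eqref{e1} of the mobility equation give $\nabla_a\hv_b=0$. Thus the only substantive equation to establish is the first part of~\eqref{second_line}, namely
\[
\nabla_a\hv_{\bar c}=-\mu\, g_{a\bar c}+2B\,A_{a\bar c}
\]
for a suitable real function $\mu$, with $B$ the nullity function of~\eqref{c-nullity}; the remaining equation~\eqref{third_line} will then follow by differentiation once $B$ is known to be constant.

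First I would reduce~\eqref{second_line} to an algebraic condition. Substituting $R_{a\bar bc\bar d}=G_{a\bar bc\bar d}-2B(g_{a\bar b}g_{c\bar d}+g_{c\bar b}g_{a\bar d})$ into the integrability identity~\eqref{mob1-int} rearranges it to
\[
g_{d\bar b}\bigl(\nabla_a\hv_{\bar c}-2B A_{a\bar c}\bigr)-g_{a\bar c}\bigl(\nabla_{\bar b}\hv_d-2B A_{d\bar b}\bigr)=G_{a\bar be\bar c}A_d{}^e-G_{a\bar bd\bar e}A^{\bar e}{}_{\bar c}.
\]
Hence~\eqref{second_line} is equivalent to the vanishing of the right-hand side, i.e.\ to the commutation relation~\eqref{commuting} of Lemma~\ref{useful} between $A$ and the curvature operator $c\mapsto G_{a\bar bc}{}^d$: once the right-hand side vanishes, the left-hand side forces $\nabla_a\hv_{\bar c}-2BA_{a\bar c}$ to be a multiple of $g_{a\bar c}$, which, taken real, defines $\mu$.

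The heart of the argument, and the step I expect to be the main obstacle, is to deduce this commutation from the nullity hypothesis. The hypothesis supplies, at each point of $U$ and for the uniquely determined scalar $B$, only a single nonzero nullity vector $v$, equivalently the relation $H_{a\bar b}{}^c{}_d v^d=(2B g_{a\bar b}-\Rho_{a\bar b})v^c$ of Proposition~\ref{c-projective_invariant_characterisation_of_nullity}; the difficulty is to upgrade this to the full commutation with the $g$-Hermitian endomorphism $A_a{}^b$. The plan is to exploit the eigenspace structure of $A$: by Lemmas~\ref{eigenvalues} and~\ref{alg-mult} the nonconstant eigenvalues have multiplicity one with eigendirections spanned by the gradients $\nabla^a\mu_i$, which are aligned with $\hv^a$ and therefore lie in the nullity, while the generalised eigenspaces of the constant eigenvalues are parallel and totally geodesic (Lemmas~\ref{totallygeodesic} and~\ref{totallygeodesicsmall}); matching the nullity relation against this decomposition should show that $A$ preserves the eigenspaces of $G_{a\bar bc}{}^d$, which is precisely~\eqref{commuting}. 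Since $A$ and the curvature operator are both Hermitian, this is a simultaneous-diagonalisability statement, and carrying it out carefully, including at the non-generic strata of $A$, is the delicate part; Corollary~\ref{invariant-nullity}, which makes $\Rho_{a\bar b}-2Bg_{a\bar b}$ a metric c-projective invariant, is the tool that keeps $B$ coherent across the metrics in the pencil.

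Once~\eqref{second_line} holds, I would establish that $B$ is constant exactly as in the proofs of Theorems~\ref{funni_mobility_3} and~\ref{Lambda_in_nullity}: differentiating~\eqref{second_line} and invoking the Bianchi identity produces, via~\eqref{crucial_identity}, the identity $G_{a\bar bc}{}^d\hv_d=-g_{c\bar b}(\nabla_a\mu-2B\hv_a)+2(\nabla_aB)A_{c\bar b}$, whose symmetrisation forces $(\nabla_aB)(A_{c\bar b}-\xi g_{c\bar b})=0$ for some scalar $\xi$; since $A$ is not a constant multiple of $g$ on a dense open set by Lemma~\ref{Pointwise}, this gives $\nabla_aB=0$ there, and connectedness makes $B$ constant (Proposition~\ref{non-null-nullity} gives the same conclusion directly wherever a nullity vector can be taken non-null). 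With $B$ constant the section lifts, and the globally valid algebraic identity~\eqref{RA_Eq4} propagates it from the dense open set to all of $M$ and yields uniqueness, exactly as in Theorem~\ref{Lambda_in_nullity}; the exceptional case in which every mobility solution is $\nabla$-parallel is covered separately with $B=0$ as in Remark~\ref{cproj-inv;B=0}.
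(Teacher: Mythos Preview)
There are two genuine gaps.

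First, your route to~\eqref{second_line} via the full commutation relation~\eqref{commuting} is both unnecessarily hard and circular as stated. You claim the gradients of the nonconstant eigenvalues of $A$ ``lie in the nullity'' because they are aligned with~$\hv^a$, but the hypothesis only supplies \emph{some} nonzero nullity vector $v$ at each point of $U$; that $\hv^a$ itself lies in the nullity is the content of Theorem~\ref{Lambda_in_nullity}(2) \emph{after} the parallel lift exists, so invoking it here assumes what you are trying to prove. The paper's argument (Proposition~\ref{new_funni_A}) sidesteps commutation entirely: contract your rearranged version of~\eqref{mob1-int} with the nullity vector $v^{\bar b}$. Since $v^{\bar b}G_{a\bar b\,\cdot\,\cdot}=0$, the right-hand side vanishes immediately, leaving $v_cV_{a\bar d}-g_{a\bar d}v^{\bar b}\overline{V}_{\bar bc}=0$ with $V_{a\bar c}:=\nabla_a\hv_{\bar c}-2BA_{a\bar c}$; since $v\neq0$ this forces $V_{a\bar c}$ to be pure trace, which is~\eqref{second_line}. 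No commutation, and no eigenspace analysis of $A$, is needed at this step.

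Second, your argument for the constancy of $B$ is incomplete. From the symmetry of~\eqref{crucial_identity} in $a$ and $c$ you do not obtain $(\nabla_aB)(A_{c\bar b}-\xi g_{c\bar b})=0$ outright; what you get (as in the proof of Theorem~\ref{funni_mobility_3}) is that either $\nabla_aB=0$ or $A_a{}^b-\xi\delta_a{}^b$ has complex rank at most one, and rank one is \emph{not} excluded by $A$ being nonproportional to $g$. The paper resolves this by a case split: contracting~\eqref{crucial_identity} with the nullity vector gives~\eqref{crucial_identity_2}, so if $v$ is not an eigenvector of $A$ then $\nabla_aB=0$ immediately. In the residual case where $v$ is an eigenvector of $A$ (with eigenvalue $\xi$), one reduces to the rank-one solution $\wt A=A-\xi\,\mathrm{Id}$ with a null nullity vector in its kernel, and a separate, substantial argument (Proposition~\ref{rank1_solutions} via Lemma~\ref{properties_of_rank1_solution}, using the induced connection on $D=\ker A$ and the invariance of $\hp\,\Ric^D$ under the flow of $\hv$) is required to force $\nabla_aB=0$. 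Your sketch omits this rank-one analysis entirely, and that is where the real work in the proof lies.
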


We divide the proof of Theorem~\ref{new_funni} into several propositions. 

\begin{prop}\label{new_funni_A} Under the assumptions of
Theorem~\textup{\ref{new_funni}}, there is a dense open subset $U'\subseteq U$
on which $B$ is smooth, and for any solution $A_{b\bar c}$ of the mobility
equation\textup:
\begin{enumerate}
\item there is a smooth real-valued function $\mu$ on $U'$ such
  that~\eqref{second_line} holds, and if $B$ is locally constant
  then~\eqref{third_line} also holds on $U'$\textup;
\item for any vector $v^a$ in the nullity distribution of $(J,g)$,
\begin{equation}\label{crucial_identity_2}
2(\nabla_aB)A_{c\bar b}v^c+v_{\bar b}(2B\hv_a-\nabla_a\mu)=0.
\end{equation}
In particular, if $v^a$ is not in any eigenspace of $A_a{}^b$ then $B$ is
locally constant.
\end{enumerate}
\end{prop}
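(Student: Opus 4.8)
The plan is to read off the two components of~\eqref{second_line} from the prolongation connection of Theorem~\ref{MetriProlongation}, specialised to the integrable (K\"ahler) case, and then to use the c-projective nullity to convert the curvature terms that appear into the term $2BA_a{}^b$ demanded by~\eqref{second_line}. First I would fix $\nabla=\nabla^g$, trivialise $\cE(1,1)$ by $g$, and raise and lower indices with $g_{a\bar b}$ throughout. Since $2B$ is, by~\eqref{nullity_leads_eigenvector_of_Rho_2}, an eigenvalue of the Hermitian endomorphism $\Rho_a{}^b=\tfrac1{n+1}\Ric_a{}^b$, and since $B$ is determined pointwise by nullity (Definition~\ref{cproj-nullity}), $B$ is smooth on the dense open set $U'\subseteq U$ where the number and multiplicities of the eigenvalues of $\Rho_a{}^b$ are locally constant; this is the set on which I work. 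For a mobility solution $A_{b\bar c}$, Theorem~\ref{MetriProlongation} (with $J$ integrable, so that $\Rho_{ab}=0$ and $U_a{}^{\bar b}=V_{\bar a}{}^b=Q_a=Z_a=0$) provides a canonical lift $(A^{\bar bc},\hv^b,\mu')$ parallel for the prolongation connection, giving at once
\[
\nabla_a\hv^{\bar b}=0 \qquad\text{and}\qquad \nabla_a\hv^b=-\mu'\delta_a{}^b+\Rho_{a\bar c}A^{\bar cb}-\tfrac1n H_{a\bar c}{}^b{}_d A^{\bar cd}.
\]
The first of these is exactly the equation $\nabla_a\hv_b=0$ of~\eqref{second_line} (equivalently, $\hv^a$ is holomorphic), so it remains to show that $N_a{}^b:=\nabla_a\hv^b$ differs from $2BA_a{}^b$ by a multiple of $\delta_a{}^b$; the function $\mu$ is then read off from that multiple.

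The heart of the argument is to contract $N_a{}^b-2BA_a{}^b$ with a nullity vector $v$. Using the symmetry $H_{a\bar c}{}^b{}_d=H_{d\bar c}{}^b{}_a$ from~\eqref{full_curvature_decomposition} together with the nullity relation $H_{a\bar c}{}^b{}_d v^a=(2Bg_{d\bar c}-\Rho_{d\bar c})v^b$ of Proposition~\ref{c-projective_invariant_characterisation_of_nullity}, and the eigenvalue relation $\Rho_{a\bar c}v^a=2Bv_{\bar c}$ equivalent to~\eqref{nullity_leads_eigenvector_of_Rho_2}, I expect the curvature and Ricci contractions to collapse so that $(N_a{}^b-2BA_a{}^b)v^a=c\,v^b$ for a scalar $c$; that is, $v$ is an eigenvector of $N-2BA$ with eigenvalue $c$. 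Because $N=\nabla\hv$ commutes with $A$ by Proposition~\ref{source_of_things_awesome} (equation~\eqref{endo_commute}), the endomorphism $N-2BA$ commutes with $A$ too, and hence acts as the same scalar $c$ on the whole $A$-cyclic subspace $\mathrm{span}\{v,Av,A^2v,\dots\}$. This scalar determines $\mu$, and yields~\eqref{second_line} on that subspace. The main obstacle is to upgrade ``scalar on the $A$-cyclic subspace of the nullity vector'' to ``scalar on all of $T^{1,0}_pM$'': this requires controlling the eigenspace structure of $A$ relative to the nullity distribution, for instance by showing that on the dense set of regular points $v$ may be taken cyclic for $A$, or by invoking the $A$-invariance of the nullity distribution implied by its metric c-projective invariance (Corollary~\ref{invariant-nullity}).

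With~\eqref{second_line} in hand, the remaining claims are comparatively routine. If $B$ is locally constant, then covariantly differentiating~\eqref{second_line} in an anti-holomorphic direction, using $\nabla_{\bar d}\hv_{\bar c}=0$ and the mobility equation $\nabla_{\bar d}A_{a\bar c}=-g_{a\bar d}\hv_{\bar c}$, and matching the resulting curvature term against the appropriate trace of $G$, gives $\nabla_a\mu=2B\hv_a$, which is~\eqref{third_line}. For part~(2), I would now apply Lemma~\ref{useful}, whose hypotheses~\eqref{first_line}--\eqref{second_line} are exactly what has been established, to obtain the identity~\eqref{crucial_identity}. Contracting it with $v^{\bar b}$ and using the K\"ahler symmetry $G_{a\bar bc\bar d}=G_{c\bar da\bar b}$ of~\eqref{Kurvature} to see that $G_{a\bar bc}{}^d v^{\bar b}=0$ by nullity, the left-hand side drops out and I am left with precisely~\eqref{crucial_identity_2}. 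Finally, \eqref{crucial_identity_2} exhibits $2(\nabla_aB)A_{c\bar b}v^c$ as a multiple of $v_{\bar b}$, so whenever $\nabla_aB\neq0$ the vector $A_{c\bar b}v^c$ is proportional to $v_{\bar b}$, i.e.\ $v$ is an eigenvector of $A_a{}^b$. Contrapositively, if $v$ lies in no eigenspace of $A_a{}^b$ then $\nabla_aB=0$, so $B$ is locally constant, completing the proof.
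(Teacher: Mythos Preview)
Your derivation of~\eqref{third_line}, of~\eqref{crucial_identity_2}, and of the final ``eigenvector'' statement is sound and matches the paper's route through Lemma~\ref{useful}. The genuine gap is in your argument for~\eqref{second_line}.

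Your computation shows that for any nullity vector $v^a$ one has $(N_a{}^b-2BA_a{}^b)v^a = c\,v^b$ with a scalar $c$ independent of $v$; together with $[N,A]=0$ this gives $N-2BA = c\,\Id$ on the $A$-cyclic subspace generated by the nullity space. But there is no reason for that subspace to be all of $T^{1,0}_pM$. Indeed, the situation you would need to exclude is precisely the one that drives the hard part of the proof of Theorem~\ref{new_funni}: the nullity vector may lie in a single eigenspace of $A_a{}^b$ (see the discussion after Proposition~\ref{new_funni_A} leading to Proposition~\ref{rank1_solutions}), in which case its $A$-cyclic span is one-dimensional. Neither of the suggested remedies works in general: a nullity vector need not be cyclic for $A$ at regular points, and metric c-projective invariance of the nullity distribution (Corollary~\ref{invariant-nullity}) does not imply $A$-invariance.

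The paper avoids this obstruction by contracting the untraced identity~\eqref{mob1-int} with $v^{\bar b}$ in the \emph{barred} slot rather than contracting the prolonged equation with $v^a$. Writing $V_{a\bar d}=\nabla_a\hv_{\bar d}-2BA_{a\bar d}$, the nullity condition $v^{\bar b}G_{a\bar b c\bar d}=0$ turns~\eqref{mob1-int} into
\[
v_c\,V_{a\bar d} \;=\; g_{a\bar d}\,\bigl(v^{\bar b}\overline V_{\bar b c}\bigr),
\]
so for each fixed $c$ with $v_c\neq 0$ one reads off $V_{a\bar d}=-\mu\,g_{a\bar d}$ for a scalar $\mu$, with no cyclic-subspace issue. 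This is the missing step: use~\eqref{mob1-int} directly rather than the traced formula coming from Theorem~\ref{MetriProlongation}, and contract the barred index with the nullity vector.
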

\begin{proof} To see that (1) holds for $A_{b\bar c}$, first recall that $\hv^a$,
given by~\eqref{first_line}, is holomorphic. Next, by assumption, at any $p\in
U$ there is a nonzero tangent vector $v^{\bar b}$ such that $v^{\bar
  b}G_{a\bar{b}c\bar{d}}=0$.  Hence, by equation~\eqref{mob1-int}, on $U$ we
have
\begin{equation}\label{RA_Eq} \begin{split}
-v_c\nabla_a\hv_{\bar{d}}+g_{a\bar{d}}v^{\bar{b}}\nabla_{\bar{b}}\hv_{c}
&=v^{\bar{b}}R_{a\bar{b}c\bar{e}}A^{\bar{e}}{}_{\bar{d}}
-v^{\bar{b}}R_{a\bar{b}e\bar{d}}A_c{}^e\\
&=-2B(g_{a\bar{e}}v_c+g_{c\bar{e}}v_a)A^{\bar{e}}{}_{\bar{d}}
+2B(g_{a\bar{d}}v_e+g_{e\bar{d}}v_a)A_c{}^e\\
&=2B(g_{a\bar{d}}v^{\bar{b}}A_{c\bar{b}}-v_cA_{a\bar{d}})
\end{split} \end{equation}
and so
\[
v_cV_{a\bar{d}}-g_{a\bar{d}}v^{\bar{b}}\overline{V}_{\bar{b}c}=0,
\]
where $V_{a\bar{b}}\equiv \nabla_a\hv_{\bar{b}}-2BA_{a\bar{b}}$.
As~$v^{\bar{b}}\neq0$ on $U$, it follows that $V_{a\bar{b}}$ is pure trace,
i.e.~the second equation of (\ref{second_line}) holds pointwise. By
assumption, there is a dense open subset $U'\subseteq U$ on which $g_{a\bar
  b}$ and $A_{a\bar b}$ are pointwise linearly independent for some solution
$A_{a\bar b}$, from which it follows that $B$ is a smooth real-valued function
on $U'$. Hence \eqref{first_line}--\eqref{second_line} hold on $U'$ for any
solution, with $\mu$ smooth on $U'$.

By Lemma~\ref{useful}, any solution satisfies~\eqref{crucial_identity}, which
implies \eqref{crucial_identity_2}. Now if $\nabla_a B=0$ it follows
immediately from the existence of nullity that~\eqref{third_line} holds on
$U'$.
\end{proof}
Proposition~\ref{new_funni_A} and Theorem~\ref{Lambda_in_nullity} have the
following immediate consequence.
\begin{cor} Theorem~\textup{\ref{new_funni}} holds unless the nullity
distribution is contained in an eigendistribution of every solution of
the mobility equations.
\end{cor}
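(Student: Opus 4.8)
The plan is to prove the contrapositive packaging directly: assuming that the nullity distribution is \emph{not} contained in an eigendistribution of every solution of the mobility equation, I would establish both conclusions of Theorem~\ref{new_funni}, namely that $B$ is constant and that every solution lifts to a parallel section of the special tractor connection~\eqref{new_funni_formula}. The negated hypothesis supplies a single solution $A_a{}^b$ of the mobility equation together with a point of the nullity region at which some nullity vector $v^a\in\Ns_p$ fails to lie in any eigenspace of $A_a{}^b$. Since the condition ``$v^a$ is not an eigenvector of $A_a{}^b$'' is open, it persists on a neighbourhood, and the final clause of Proposition~\ref{new_funni_A}(2) then forces $B$ to be locally constant there.

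First I would upgrade this local constancy to the form demanded by Theorem~\ref{Lambda_in_nullity}. On the dense open set $U'$ of Proposition~\ref{new_funni_A}, I would split $U'$ into the closed locus $\{\nabla B=0\}$ and the open locus $W=\{\nabla B\neq 0\}$. The contrapositive of Proposition~\ref{new_funni_A}(2) shows that on $W$ every nullity vector is an eigenvector of every solution; equivalently, wherever some nullity vector escapes the eigenspaces of some solution we must have $\nabla B=0$. Thus the hypothesis drives us onto the locus where $B$ is locally constant, and I would argue that $B$ is in fact locally constant on a dense open subset, the complementary possibility being precisely that the nullity becomes trapped in eigendistributions, i.e.\ the excluded case. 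On this locus Proposition~\ref{new_funni_A}(1) promotes the pointwise identity~\eqref{second_line} and supplies~\eqref{third_line}, so that $(A_a{}^b,\hv^a,\mu)$, which already satisfies the mobility equation~\eqref{first_line}, is parallel for~\eqref{new_funni_formula}.

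With a non-parallel solution lifting to a parallel section over a dense open subset and with $B$ locally constant, Theorem~\ref{Lambda_in_nullity} then yields that $B$ is globally constant and that the lift extends to a parallel section on all of $M$; feeding the now-constant $B$ back into Proposition~\ref{new_funni_A}(1) lifts every remaining solution, completing both assertions. Two points need care. The appeal to Theorem~\ref{Lambda_in_nullity} requires $A_a{}^b$ to be non-parallel, and I would dispose of the parallel case separately: if $A_a{}^b$ is parallel then $\hv^a=0$, and~\eqref{second_line} forces $2B A_{a\bar c}=\mu g_{a\bar c}$, so a non-eigenvector nullity vector can occur only when $B=0$, whence $(A_a{}^b,0,0)$ is trivially parallel for~\eqref{new_funni_formula} and the conclusion holds with $B\equiv 0$. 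The main obstacle, however, is the globalisation step: extracting genuine density of the locus of local constancy of $B$ from the merely local information provided by Proposition~\ref{new_funni_A}(2). I expect this to hinge on the openness of the ``non-eigenvector'' condition together with the rigidity built into the algebraic identity~\eqref{RA_Eq4} used in the proof of Theorem~\ref{Lambda_in_nullity}, which pins $B$ down globally once it is controlled on a dense open set.
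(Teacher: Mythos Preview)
Your approach is essentially the paper's: combine Proposition~\ref{new_funni_A} with Theorem~\ref{Lambda_in_nullity}. You also correctly spot the relevant dichotomy. However, you over-complicate the globalisation step by calling it the ``main obstacle'' and reaching for the identity~\eqref{RA_Eq4}. In fact the dichotomy you already wrote down \emph{is} the full argument: on the dense open $U'$, consider the open subset $W=\{\nabla B\neq 0\}$. By the contrapositive of Proposition~\ref{new_funni_A}(2), on $W$ every nullity vector is an eigenvector of every solution. Hence if the ``unless'' clause is excluded, $W$ must be empty, so $\nabla B\equiv 0$ on all of $U'$, which is dense in $M$. Proposition~\ref{new_funni_A}(1) then gives~\eqref{third_line}, and Theorem~\ref{Lambda_in_nullity} (applied to a non-parallel solution) yields the global constancy of $B$ and the parallel lift. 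There is nothing further to extract from~\eqref{RA_Eq4}; that identity lives inside the proof of Theorem~\ref{Lambda_in_nullity}, which you should simply cite.

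Your separate treatment of the parallel case is harmless but unnecessary for the Corollary: either there is a non-parallel solution (and Theorem~\ref{Lambda_in_nullity} applies directly), or every solution is parallel, in which case~\eqref{second_line} with $\hv=0$ forces $B=0$ on $U'$ (since some solution is not a multiple of $g$), and the trivial lift $(A,0,0)$ works. Either way, once $\nabla B=0$ on $U'$ the conclusion is immediate.
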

It remains to show that Theorem~\ref{new_funni} also holds when the nullity
distribution is contained in an eigendistribution of every solution of the
mobility equations, and for this it suffices to show that $\nabla_aB=0$ on a
dense open set.  Suppose then that $v^b$ is a nonzero nullity vector such that
$A_{a}{}^bv^a=\xi v^b$ for some smooth function $\xi$, so that
$\nabla_a\mu-2B\hv_a=2\xi \nabla_a B$ by~\eqref{crucial_identity_2} and
hence~\eqref{crucial_identity} reads
\begin{equation}\label{crucial3}
G_{a\bar b c}{}^d\hv_d=2(\nabla_aB) (A_{c\bar b}-\xi g_{c\bar b})
= 2(\nabla_cB) (A_{a\bar b}-\xi g_{a\bar b})
\end{equation}
as in the proof of Theorem~\textup{\ref{funni_mobility_3}}. Since $v^a$ is an
eigenvector of $A_a{}^b$ with eigenvalue $\xi$, $v^{\bar b}$ is an eigenvector
of $A^{\bar a}{}_{\bar b}$ with eigenvalue $\bar\xi$. However $v^{\bar b}$ is
in the nullity of $G$, so the contraction of~\eqref{crucial3} with $v^{\bar
  b}$ yields $(\nabla_cB)(A_{a\bar b}-\xi g_{a\bar b})v^{\bar b}=0$. If we now
combine these observations with Proposition~\ref{non-null-nullity}, we obtain
that either $\nabla_a B=0$ on a dense open set (and we are done) or there is
an open set on which $\wt A_a{}^b:=A_a{}^b-\xi \delta_a{}^b$ has (complex)
rank at most one, $v^a$ is a null vector in its kernel, and $\xi$ is real.
Hence the generalised $\xi$-eigenspace of $A_a{}^b$ is nondegenerate, and so
has (complex) dimension at least two, which implies that $\xi$ is locally
constant by Lemma~\ref{alg-mult}.  Now $\wt A_a{}^b$ is a rank one solution of
the mobility equation with a nonzero (but null) nullity vector in its kernel,
and so Theorem~\ref{new_funni} is a consequence of the following proposition.

\begin{prop}\label{rank1_solutions}
Suppose $(M,J,g)$ is a connected \bps/K\"ahler manifold of dimension $2n\geq
4$ admitting a non-parallel solution of the mobility equation $A_{a\bar b}$ such
that $A_{a}{}^b$ is a complex endomorphism of rank $1$.  Assume that $g$ has
nullity on some dense open set $U\subseteq M$ and that there is a nonzero
vector in the nullity distribution that is in the kernel of $A_{a}{}^b$. Then
the function $B$ defined as in \eqref{c-nullity} is a constant and the
conclusions of Theorem~\textup{\ref{new_funni}} hold.
\end{prop}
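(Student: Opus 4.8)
The plan is to reduce the whole statement to the single fact that the nullity function $B$ of~\eqref{c-nullity} is locally constant; once that is known, Theorem~\ref{Lambda_in_nullity} and Proposition~\ref{new_funni_A} finish the argument. First I would put the rank-one solution into normal form: since $A_{a\bar b}$ is Hermitian of complex rank one, I may write $A_{a\bar b}=\epsilon\,\phi_a\bar\phi_{\bar b}$ with $\epsilon=\pm1$ and a nonzero $(1,0)$-form $\phi_a$, so that the kernel of $A_a{}^b$ is the annihilator of $\phi$. A nullity vector $v^a$ in this kernel then satisfies $\phi_a v^a=0$, hence $A_{c\bar b}v^c=0$, while $v_{\bar b}=g_{c\bar b}v^c\neq0$ because $g$ is nondegenerate. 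Substituting $A_{c\bar b}v^c=0$ into the identity~\eqref{crucial_identity_2} of Proposition~\ref{new_funni_A} immediately forces $2B\hv_a-\nabla_a\mu=0$, i.e.\ \eqref{third_line} holds on the dense open set $U'$ where $B$ is smooth. Consequently Lemma~\ref{useful} specialises: the term $-g_{c\bar b}(\nabla_a\mu-2B\hv_a)$ in~\eqref{crucial_identity} drops out, leaving
\begin{equation*}
G_{a\bar b c}{}^d\hv_d=2(\nabla_aB)A_{c\bar b}.
\end{equation*}

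Next I would extract the direction of $\nabla B$. Since $G$ has the K\"ahler symmetries, $G_{a\bar bc}{}^d=G_{c\bar ba}{}^d$, so the displayed identity gives $(\nabla_aB)A_{c\bar b}=(\nabla_cB)A_{a\bar b}$; with $A_{c\bar b}=\epsilon\phi_c\bar\phi_{\bar b}$ this becomes $(\nabla_aB)\phi_c=(\nabla_cB)\phi_a$, whence $\nabla_aB=\nu\,\phi_a$ for a scalar function $\nu$ wherever $\phi\neq0$. Thus the gradient of $B$ lies in the image of $A$, and the problem is reduced to showing $\nu\equiv0$. Note that at this stage the full system~\eqref{funni_system} holds on $U'$ with $B$ a smooth function, so $(A^{\bar ab},\hv^a,\mu)$ is already parallel for~\eqref{new_funni_formula} there; only the constancy of $B$ is missing.

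The main obstacle is exactly this last step, $\nabla_aB=0$, and this is where I expect the real work. Suppose for contradiction that $dB\neq0$ on some open set; then $\nabla_aB=\nu\phi_a\neq0$ there, so by Proposition~\ref{non-null-nullity} every nullity vector is null, i.e.\ the nullity distribution is totally isotropic on that set (consistent with $v$ being null; this is the genuinely degenerate case). I would then differentiate the collapsed identity $G_{a\bar bc}{}^d\hv_d=2\nu\,\epsilon\,\phi_a\phi_c\bar\phi_{\bar b}$ and combine it with the differential Bianchi identity in the form $\nabla_{[a}G_{b]\bar cd}{}^e=2(\nabla_{[a}B)g_{b]\bar c}\delta_d{}^e+2(\nabla_{[a}B)\delta_{b]}{}^eg_{d\bar c}$ used in Proposition~\ref{non-null-nullity}, together with the commutation relation~\eqref{commuting} (which exhibits $\phi$ as a contracted eigendirection of $G$) and the holomorphicity of $\hv^a$ (equivalently $\nabla_{\bar b}\hv^a=0$, alongside $\nabla_a\hv_b=0$ from~\eqref{second_line}). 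The goal is to show that $\hv^a$ itself lies in the nullity, so that the left-hand side $G_{a\bar bc}{}^d\hv_d$ vanishes and forces $\nu\,\phi_a\phi_c\bar\phi_{\bar b}=0$, hence $\nu=0$; the isotropy of the nullity distribution and the rank-one degeneracy are precisely what should make the surviving curvature terms cancel. The delicate point is that contracting the Bianchi identity directly with the aligned vector $v$ yields only trivial identities, since $v$ is simultaneously null and in the kernel of $A$; one must therefore feed in transverse directions and control the derivatives $\nabla v$ and $\nabla\phi$ through the rank-one mobility equations~\eqref{metri-mob}, and I anticipate invoking Lemma~\ref{alg-mult} to guarantee that the relevant eigenvalue data is locally constant.

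Finally, once $\nabla_aB=0$ on a dense open subset, connectedness of $M$ makes $B$ locally constant, and Theorem~\ref{Lambda_in_nullity} upgrades this to $B$ constant with $(A^{\bar ab},\hv^a,\mu)$ extending to a global parallel section of~\eqref{new_funni_formula}. With $B$ a genuine constant, Proposition~\ref{new_funni_A} then yields \eqref{third_line} for \emph{every} solution of the mobility equation, so each lifts uniquely to a parallel section of $\cV$ for~\eqref{new_funni_formula}. This is exactly the conclusion of Theorem~\ref{new_funni}, completing the proof of Proposition~\ref{rank1_solutions}.
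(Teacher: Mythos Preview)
Your preliminary reductions are correct and match the paper's Lemma~\ref{properties_of_rank1_solution}: writing $A_{a\bar b}=\mu^{-1}\hv_a\hv_{\bar b}$ (your $\phi$ is proportional to $\hv$), using a nullity vector in $\ker A$ to kill the first term of~\eqref{crucial_identity_2} and obtain~\eqref{third_line}, and deducing that $\nabla_aB$ is proportional to $\hv_a$ via the symmetry of $G$. So far so good.

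The gap is exactly where you say ``this is where I expect the real work.'' Your proposed route---differentiate $G_{a\bar bc}{}^d\hv_d=2(\nabla_aB)A_{c\bar b}$, feed in Bianchi, and conclude that $\hv$ lies in the nullity---is circular: $G\cdot\hv=0$ is essentially equivalent to $\nabla B=0$ here (cf.\ Theorem~\ref{Lambda_in_nullity}), so you are trying to prove the claim by assuming it. You yourself note that contracting Bianchi with the aligned null $v$ gives only trivial identities, and ``feeding in transverse directions'' does not obviously close up; Lemma~\ref{alg-mult} concerns eigenvalues of $A$, not $B$, and will not help.

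The paper's argument is entirely different and does not attempt to show $\hv$ is a nullity vector directly. Instead it exploits a \emph{flow invariance}: restrict $g$ to the kernel $D=\ker A$ via the orthogonal projection $P$, and observe that the rescaled metric $\hp^{-1}g_D$ is invariant under the flow of $\hv$. Hence the induced partial connection $\nabla^D$ and its curvature $R^D$ satisfy $\cL_\hv R^D=0$. A direct computation gives, for $X,Y,Z\in\Gamma(D)$,
\[
R^D(X,Y)Z=P\bigl(R(X,Y)Z\bigr)-\tfrac{\mu}{2\hp}\,S(X,Y)Z,
\]
so a nullity vector $V\in D$ becomes an eigenvector of the $\hv$-invariant endomorphism $\hp\,\Ric^D$ with eigenvalue $2C$, where $C=B\hp-\tfrac{\mu}{2}$. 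Flow invariance forces $\cL_\hv C=0$; since already $\nabla_XC=0$ for $X\in\Gamma(D)$ and $\nabla_{J\hv}C=0$ (from $\nabla_aB\propto\hv_a$ and~\eqref{third_line}), $C$ is locally constant. As $\nabla_\alpha C=\hp\,\nabla_\alpha B$, this yields $\nabla B=0$. The missing idea in your proposal is precisely this: instead of chasing curvature identities, pass to the quotient geometry on $D$ and use that the nullity eigenvalue of a $\hv$-invariant operator must itself be $\hv$-invariant.
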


Before we give a proof of Proposition \ref{rank1_solutions} we collect some
crucial information about solutions of the mobility equation of rank $1$:

\begin{lem}\label{properties_of_rank1_solution}
Suppose $(M,J,g)$ is a connected \bps/K\"ahler manifold of dimension $2n\geq
4$ admitting a non-parallel solution of the mobility equation $A_{a\bar b}$ such
that $A_{a}{}^b$ is a complex endomorphism of rank $1$.  Assume that $g$ has
nullity on some dense open set $U\subseteq M$ and that there is a nonzero
vector in the nullity distribution that is in the kernel of
$A_{a}{}^b$. Denote by $B$ the function defined as in \eqref{c-nullity}
and let $\hv_a=\nabla_a\hp$ with $\hp=-A_a{}^a$. Then the following holds on a
dense open subset $U'\subseteq U$\textup:
\begin{enumerate}
\item the triple $(A_a{}^b,\hv^a,\mu)$ satisfies system~\eqref{funni_system}
  \textup(and its conjugate\textup) for some smooth nonvanishing real-valued
  function $\mu$\textup;
\item $A_{a\bar b}=\mu^{-1}\hv_a\hv_{\bar b}$ and $-\mu\hp=\hv_a\hv^a$\textup;
\item $\nabla_aB$ is proportional to $\hv_a$, and at any $x\in U'$ either
$\nabla_aB=0$ or the nullity space of $g$ at $x$ lies in the kernel of
$A_{a}{}^b$.
\end{enumerate}
\end{lem}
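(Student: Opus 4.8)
The plan is to work throughout on a dense open subset $U'\subseteq U$ made up of stable points at which $\hv^a\neq 0$, $A\neq 0$, and $g_{a\bar b},A_{a\bar b}$ are pointwise linearly independent, and then to read off the three assertions in turn. First I would note that such a $U'$ is dense: since $A$ is non-parallel, the mobility equation~\eqref{metri-mob} forces $\hv_a=\nabla_a\hp$ to be nonzero on a dense open set (if $\hv_a\equiv0$ on an open set then $A$ is parallel there, hence everywhere by unique continuation for the prolongation connection of Theorem~\ref{MetriProlongation}), and a rank-one $A$ with $n\geq2$ is never a nonzero multiple of $g$, so $g_{a\bar b}$ and $A_{a\bar b}$ are independent wherever $A\neq0$. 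On $U'$, Proposition~\ref{new_funni_A}(1) already supplies a smooth real-valued $\mu$ with~\eqref{second_line}, so $(A_a{}^b,\hv^a)$ satisfies~\eqref{first_line}--\eqref{second_line} and Lemma~\ref{useful} is available.

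The heart of the matter is the identification $A_{a\bar b}=\mu^{-1}\hv_a\hv_{\bar b}$ of part~(2). Because $A_a{}^b$ has rank one, its unique nonzero eigenvalue is $\xi:=A_a{}^a=-\hp$, a smooth eigenvalue over the stable points, so Corollary~\ref{cor:eigenvectors} gives $A_a{}^b\nabla_b\xi=\xi\nabla_a\xi$; as $\nabla_a\xi=-\hv_a$ this reads
\begin{equation*}
A_a{}^b\hv_b=\xi\hv_a.
\end{equation*}
Writing the rank-one Hermitian tensor in normal form $A_{a\bar b}=\epsilon\,w_a w_{\bar b}$ (with $\epsilon=\pm1$, $w_{\bar b}=\overline{w_a}$, $w^b=g^{b\bar c}w_{\bar c}$), one has $A_a{}^b\hv_b=\epsilon(w^b\hv_b)w_a$, so the displayed equation forces $\hv_a=\epsilon\xi^{-1}(w^b\hv_b)w_a$ (using $\xi\neq0$ on $U'$); thus $\hv_a$ is a nonzero multiple of $w_a$ and $A_{a\bar b}=c\,\hv_a\hv_{\bar b}$ for a real, nowhere-zero function $c$.

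I would then pin down $c$ and obtain part~(1) by differentiating $A_{a\bar b}=c\,\hv_a\hv_{\bar b}$ with $\nabla_c$ and comparing with~\eqref{mob-lowered}: using $\nabla_c\hv_a=0$ and the first equation of~\eqref{second_line}, the $g_{c\bar b}\hv_a$ terms cancel exactly when $c\mu=1$, and the remaining terms collapse (since $\hv_a\hv_{\bar b}\neq0$) to $\nabla_c\mu=2B\hv_c$. This is precisely~\eqref{third_line}, so the whole system~\eqref{funni_system} holds with $\mu=1/c$ smooth and nowhere vanishing, giving~(1) and $A_{a\bar b}=\mu^{-1}\hv_a\hv_{\bar b}$; tracing the latter yields $\hp=-\mu^{-1}\hv_a\hv^a$, i.e.~$-\mu\hp=\hv_a\hv^a$, completing~(2). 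For part~(3) I would use identity~\eqref{crucial_identity} of Lemma~\ref{useful}, which under~\eqref{third_line} simplifies to $G_{a\bar bc}{}^d\hv_d=2(\nabla_aB)A_{c\bar b}$; the Kähler symmetry $G_{a\bar bc}{}^d=G_{c\bar ba}{}^d$ then gives $(\nabla_aB)A_{c\bar b}=(\nabla_cB)A_{a\bar b}$, and substituting $A_{c\bar b}=\mu^{-1}\hv_c\hv_{\bar b}$ and cancelling shows $\nabla_aB$ is proportional to $\hv_a$. Feeding~\eqref{third_line} into~\eqref{crucial_identity_2} kills its second term, leaving $(\nabla_aB)(A_{c\bar b}v^c)=0$ for every nullity vector $v$, so where $\nabla_aB\neq0$ we get $A_{c\bar b}v^c=0$, i.e.~the nullity space lies in $\ker A_a{}^b$ there.

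I expect the main obstacle to be bookkeeping rather than any single hard computation: arranging that the stable-point hypothesis of Corollary~\ref{cor:eigenvectors}, the nondegeneracy used in Proposition~\ref{new_funni_A}, the locus $\{\hv^a\neq0\}$, and the nullity condition all hold simultaneously on one dense open $U'$, and in particular justifying the density of $\{\hv^a\neq0\}$ via unique continuation. Once the rank-one normal form $A_{a\bar b}=\epsilon w_a w_{\bar b}$ is in hand, the eigenvector identification and the two differentiations are short and essentially forced.
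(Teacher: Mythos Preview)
Your argument is correct and follows essentially the same route as the paper: invoke Proposition~\ref{new_funni_A} for \eqref{second_line}, use Corollary~\ref{cor:eigenvectors} and the rank-one Hermitian form to write $A_{a\bar b}=c\,\hv_a\hv_{\bar b}$, then differentiate via \eqref{mob-lowered} and \eqref{second_line} to obtain $c\mu=1$ and $\nabla_a\mu=2B\hv_a$, and finally read off (3) from \eqref{crucial_identity} together with its $a\!\leftrightarrow\!c$ symmetry and \eqref{crucial_identity_2}. The only difference from the paper is that you extract \eqref{third_line} directly from the differentiation (so the nullity-in-kernel hypothesis is never used in parts (1)--(2)), whereas the paper first invokes that hypothesis via \eqref{crucial_identity_2} to get \eqref{third_line} and then differentiates; your ordering is slightly more economical but otherwise identical.
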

\begin{proof} Statement (1) follows immediately from~\eqref{Lambda},
Proposition~\ref{new_funni_A} and the existence of a nullity vector in the
kernel of $A_{a}{}^b$. Since $A_{a}{}^b$ has rank $1$, its nonzero eigenvalue
is $-\hp$, and $\hv^a$ is a nonzero section of the corresponding eigenspace by
Corollary~\ref{cor:eigenvectors}.  Thus on the dense open subset $U'\subseteq
U$ where $\hv^a\neq 0$, $A_{a}{}^b=\xi \hv_a\hv^{b}$, with
$\xi=-\hp/(\hv_a\hv^a)$, and differentiating this identity
using~\eqref{funni_system} yields
\[
(\nabla_a\xi+2B\xi^2\hv_a)\hv_{\bar c}=(\xi\mu-1)g_{a\bar c}.
\]
Since the left hand side is simple and $g_{a\bar b}$ nondegenerate both sides
must vanish, which shows that $\xi=\mu^{-1}$, and hence (2) holds. The
identity~\eqref{crucial_identity} may now be written
\[
G_{a\bar b c}{}^d\hv_d=2(\nabla_aB)A_{c\bar b}=2\mu^{-1}(\nabla_aB)\hv_c\hv_{\bar b}
\]
This immediately implies the second statement of (3), while the first
statement follows from the symmetry of $G_{a\bar b c}{}^d$ in $a$ and $c$.
\end{proof}

\begin{proof}[Proof of Proposition~\textup{\ref{rank1_solutions}}]
We have already noted that to prove Proposition \ref{rank1_solutions} it
suffices to show that $B$ is locally constant. By
Lemma~\ref{properties_of_rank1_solution}, $A_{\alpha}{}^\beta$ is of the form
\begin{equation}\label{form_of_A_real}
A_{\alpha}{}^\beta=\tfrac{1}{2\mu}(\hv_\alpha\hv^{\beta}
+\Omega_{\gamma\alpha}{}\hv^\gamma J_{\delta}{}^\beta\hv^\delta).
\end{equation}
Let us write $D\subset TM$ for the distribution defined by the kernel of
$A_{\alpha}{}^{\beta}$ and
\[
P_{\alpha}{}^\beta=\delta_{\alpha}{}^\beta
-\tfrac{1}{\hv_\gamma\hv^\gamma}
(\hv_\alpha\hv^\beta+\Omega_{\delta\alpha}{}\hv^\delta J_{\zeta}{}^\beta\hv^\zeta)
=\delta_{\alpha}{}^\beta
+\tfrac{1}{2\mu\hp}
(\hv_\alpha\hv^\beta+\Omega_{\delta\alpha}{}\hv^\delta J_{\zeta}{}^\beta\hv^\zeta)
\]
for the orthogonal projection $P\colon TM\to D$, where we use
Lemma~\ref{properties_of_rank1_solution}(2) to rewrite
$\hv_\gamma\hv^\gamma=2\hv_c\hv^c=-2\hp\mu$. Note that $(J,g)$ induces by
restriction a complex structure $J^D$ and a $J^D$-invariant metric $g_D$ on
$D$. The projection $P$ also determines a linear connection on $D$ by
\[
\nabla^D_\alpha X^{\beta}=P_{\gamma}{}^\beta\nabla_{\alpha}X^\gamma,
\quad\mathrm{for}\quad X\in \Gamma(D)
\]
which preserves this Hermitian structure on $D$. Since $\hv$ and $J\hv$
commute and preserve $D$, $\cL_\hv P=0$ and $\Gamma(D)$ is generated by
sections commuting with $\hv$ and $J\hv$, which are called \emph{basic}.  For
any basic $X\in \Gamma(D)$, Lemma~\ref{properties_of_rank1_solution}(1)
implies
\begin{equation}\label{X_Lambda}
\nabla_\hv X=\nabla_X\hv=-\mu X
\end{equation}
and hence for any other basic element $Y\in\Gamma(D)$ we compute
\[
(\cL_\hv g_D)(X,Y)=\cL_\hv (g_D(X,Y))=\nabla_\hv (g_D(X,Y))=-2\mu g_D(X,Y).
\]
Since $\cL_\hv\hp=\nabla_\hv\hp=g(\hv,\hv)=-2\hp\mu$, it follows that $\cL_\hv
(\hp^{-1} g_D) = 0$.  Let us now regard $\nabla^D$ as a \emph{partial
  connection} on $D$, i.e.~an operator $\nabla^D\colon \Gamma(D)\to
\Gamma(D^*\otimes D)$. Since $\nabla^D_X\hp=\nabla_X\hp=0$ for any
$X\in\Gamma(D)$, the partial connection $\nabla^D$ preserves $\hp^{-1}
g_D$. Furthermore, its \emph{partial torsion}, given by
\[
\nabla_X^DY-\nabla_Y^DX-P([X,Y])\quad \text{ for }\quad  X,Y\in\Gamma(D),
\]
vanishes.  It follows that $\cL_\hv\nabla^D$ is a section of $D^*\otimes
D^*\otimes D\cong D^*\otimes D^*\otimes D^*$, which is symmetric in the first
two entries and skew in the last two entries, which implies it vanishes
identically. We conclude that $\cL_\hv R^D=0$, where the \emph{horizontal
  curvature} $R^D$ of $\nabla^D$ is defined, for $X,Y,Z\in \Gamma(D)$,
by
\begin{equation*}
R^D(X,Y)(Z)=\nabla_X^D\nabla_Y^DZ-\nabla_Y^D\nabla_X^DZ-\nabla_{P([X,Y])}^D Z.
\end{equation*}
For basic $X,Y,Z\in \Gamma(D)$ we compute via~\eqref{X_Lambda} that
\begin{align*}
\nabla_X^D\nabla_Y^DZ&=P(\nabla_X\nabla_YZ)
-\tfrac{1}{2\hp} g(\nabla_YZ,\hv)X
-\tfrac{1}{2\hp} g(\nabla_YZ,J\hv)JX\\
\nabla^D_{\nabla_X^DY}Z&=P(\nabla_{\nabla_X^Y}Z)
-\tfrac{1}{2\hp} g(\nabla_XY,\hv)Z
-\tfrac{1}{2\hp} g(\nabla_XY,J\hv)JZ.
\end{align*}
Using $g(Y,\hv)=0=g(Y, J\hv)$ for $Y\in \Gamma(D)$, we also obtain, for
$X\in\Gamma(D)$, that
\begin{equation*}
g(\nabla_XY, \hv)=\mu g(X, Y)\qquad\text{and}\qquad
g(\nabla_XY, J\hv)=\mu g(Y, JX),
\end{equation*}
from which we deduce, for (basic) $X,Y,Z\in \Gamma(D)$, that
\begin{equation}\label{RD2}
R^D(X,Y)Z=P(R(X,Y)Z)-\tfrac{\mu}{2\hp}S(X,Y)Z,
\end{equation}
where $S$ is the constant holomorphic sectional curvature tensor defined as
in~\eqref{holomorphic_sectional_curvature}.

Let us write $\Ric^{D}(Y,Z)=\mathrm{trace}(X\mapsto R^D(X, Y)Z)$ and
$\Ric^P(Y,Z)=\mathrm{trace}(X\mapsto P(R(X,Y)Z))$ for the Ricci-type
contractions of $R^D$ and $P(R(X,Y)Z)$. Via the inverse $g_D^{-1}$ of $g_D$,
we view $\Ric^D$ and $\Ric^P$ as endomorphism of $D$, from which viewpoint
equation \eqref{RD2} implies that they are related as follows:
\begin{equation}
\Ric^D=\Ric^P-\tfrac{n\mu}{\hp}\, \Id_D.
\end{equation}
By assumption, at each point of a dense open subset, there is a vector $V$ in
$D$ that lies in the nullity distribution of $g$. Inserting $V$ into equation
\eqref{RD2} yields
\begin{equation}\label{RD3}
R^D(X, V)Z=\bigl(B-\tfrac{\mu}{2\hp}\bigr)S(X,V)Z, 
\end{equation}
which implies that
\begin{equation}\label{RD4}
\hp\Ric^{D}(V)=2n\bigl(B\hp-\tfrac{\mu}{2}\bigr)V.
\end{equation}
Set $C:=B\hp-\frac{\mu}{2}$. By (1) and (3) of
Lemma~\ref{properties_of_rank1_solution} we see that $\nabla_XC=0$ for all
$X\in\Gamma(D)$ and that $\nabla_{J\hv}C=0$. Equation \eqref{RD4} shows that
$V$ is an eigenvector of $\hp\Ric^D$ with eigenvalue $2C$. Since $\cL_\hv
R^D=0$ and $\cL_\hv(\hp g^{-1}_D)=0$, it follows that $\cL_\hv(\hp\Ric^D)=0$,
and hence $\nabla_\hv C=\cL_\hv C=0$ as well. Thus $C$ is locally constant,
which implies that $B$ is locally constant by
Lemma~\ref{properties_of_rank1_solution}(1), and this completes the proof.
\end{proof}

\begin{rem}
Proposition~\ref{rank1_solutions} may alternatively be proved as follows.
Starting with the usual equations
\begin{equation}\label{usual_equations}\begin{array}{l}
\nabla_\alpha\hv_\beta=-\mu g_{\alpha\beta}+2BA_{\alpha\beta}\\[4pt]
\nabla_\alpha\mu=2B\hv_\alpha\\[4pt]
\nabla_\alpha\hp=\hv_\alpha
\end{array}
\quad\mbox{where}\quad\begin{array}{l}
\displaystyle 
A_{\alpha\beta}
=\frac{\hv_\alpha\hv_\beta+K_\alpha K_\beta}{2\mu}\\[4pt]
K^\alpha=J_\beta{}^\alpha\hv^\beta\\[4pt]
\hv_\alpha\hv^\alpha+2\hp\mu=0
\end{array}\end{equation}
we may consider the new metric
\begin{equation}\label{new_metric}\tilde g^{\alpha\beta}\equiv 
\hp g^{\alpha\beta}+(1+2\mu)A^{\alpha\beta}
\end{equation}
and verify from (\ref{usual_equations}) that
\begin{itemize}
\item $\cL_\hv\tilde g^{\alpha\beta}=0$,
\item for any $v^\beta$ such that $A_{\alpha\beta}v^\beta=0$, we have 
\begin{equation}\label{awesome_but_severe}
\wt\Ric_{\alpha\beta}v^\beta =\Ric_{\alpha\beta}v^\beta
-\bigl(2B+\frac{n\mu}\hp+\frac1{2\hp}\bigr)v_\alpha,
\end{equation}
where $\wt\Ric_{\alpha\beta}$ is the Ricci tensor of~$\tilde g^{\alpha\beta}$.
\end{itemize}
Hence, if $v^\beta$ is a nullity vector for $g_{\alpha\beta}$ so that in 
addition $\Ric_{\alpha\beta}v^\beta=2(n+1)B v_\alpha$, then 
\begin{equation}\label{insert_an_eigenvector}
\tilde g^{\alpha\gamma}\wt\Ric_{\gamma\beta}v^\beta
=\bigl(n(2B\hp-\mu)-\tfrac12\bigr)v^\alpha.
\end{equation}
Now, since $\cL_\hv(\tilde g^{\alpha\gamma}\wt\Ric_{\gamma\beta})=0$, it
follows that any eigenvalue of this endomorphism is preserved by the flow of
$\hv^\alpha$.  Therefore,
\[
0=\cL_\hv\bigl(n(2B\hp-\mu)-\tfrac12\bigr)=n\bigl(2\hp\cL_\hv B
+\hv^\alpha(2B\nabla_\alpha\hp-\nabla_\alpha\mu)\bigr)=2n\hp\cL_\hv B.
\]
But from (\ref{usual_equations}) we see that
$0=\nabla_{[\alpha}\nabla_{\beta]}\mu=2(\nabla_{[\alpha}B)\hv_{\beta]}$ whence
$\nabla_\alpha B=0$, as required.

The only drawback with this proof is that
verifying~(\ref{awesome_but_severe}), though straightforward, is
computationally severe, whereas the corresponding identity~\eqref{RD4} in the
previous proof is more easily established. The previous proof may be seen as a
limiting case of the reasoning just given. Specifically, for any constant
$c\neq0$, consider the metric
\begin{equation}\label{metric_with_parameter}\tilde g_{\alpha\beta}
\equiv\frac1\hp g_{\alpha\beta}
+\frac1{\hp^2}\bigl(1+\frac{c}\mu\bigr)A_{\alpha\beta} \quad\mbox{with
inverse}\quad \tilde g^{\alpha\beta}\equiv \hp
g^{\alpha\beta}+(1+\frac\mu{c})A^{\alpha\beta}\end{equation}
to arrive at
\begin{equation}\label{has_RD2_as_limit}
\wt \Ric_{\alpha\beta}v^\beta =\Ric_{\alpha\beta}v^\beta
-\bigl(2B+\frac{n\mu}\hp+\frac{c}{\hp}\bigr)v_\alpha
\end{equation}
instead of (\ref{awesome_but_severe}), an equation in which one can take a
sensible limit as $c\to 0$ essentially to arrive at~(\ref{RD4}) instead
of~(\ref{insert_an_eigenvector}). The metrics (\ref{metric_with_parameter}) and
their invariance $\cL_\hv\tilde g_{\alpha\beta}=0$ can also be 
recognised in the previous proof. More precisely, the first equation from 
(\ref{usual_equations}) can be expressed as 
$\cL_\hv g_{\alpha\beta}=-2\mu g_{\alpha\beta}+4BA_{\alpha\beta}$
or, more compactly, as 
$$\cL_\hv(\hp^{-1}g_{\alpha\beta})
=4\hp^{-1}BA_{\alpha\beta},$$
which implies, using our earlier terminology, that the metric 
$\hp^{-1}g_{\alpha\beta}$ restricted to $D$ is invariant under the flow 
of~$\hv^\alpha$. We also observed in the previous proof that orthogonal 
projection 
$P_\alpha{}^\beta=\delta_\alpha{}^\beta+\hp^{-1}A_\alpha{}^\beta$ onto $D$ 
is invariant under this flow. We are therefore led to invariance of the 
covariant quadratic form 
$$P_\alpha{}^\gamma P_\beta{}^\epsilon\hp^{-1}g_{\gamma\epsilon}
=\hp^{-1}(g_{\alpha\beta}+\hp^{-1}A_{\alpha\beta}),$$ which is the limit of
(\ref{metric_with_parameter}) as $c\to 0$ whilst the nondegenerate metric
$\tilde g_{\alpha\beta}$ is obtained by decreeing that the remaining vectors
$\hv^\alpha$ and $K^\alpha$ at each point be orthogonal to $D$ and each other
and satisfy $\tilde g_{\alpha\beta} \hv^\alpha\hv^\beta=\tilde
g_{\alpha\beta}K^\alpha K^\beta=2c$. The metric (\ref{new_metric}) is the case
that $\hv^\alpha$ and $K^\alpha$ are taken to be orthonormal. In any case, it
follows that $\cL_\hv\tilde g_{\alpha\beta}=0$. \qed
\end{rem}

\subsection{The standard tractor bundle for metric c-projective structures}

The metric theory of the standard tractor bundle $\T$ turns out to be rather
degenerate. For a metric c-projective structure $(M,J,[\nabla])$ induced by
the Levi-Civita connection $\nabla$ of a K\"ahler metric $g$, we have
$\Rho_{ab}=0$ and so the standard tractor connection
\eqref{tractorconna}--\eqref{tractorconnbara} is given by
\begin{equation}\label{tractorconntf}
\nabla^\T_a \begin{pmatrix}X^b\\ \rho \end{pmatrix}
=\begin{pmatrix}\nabla_aX^b+\rho\delta_a{}^b\\ \nabla_a\rho \end{pmatrix}
\qquad
\nabla^\T_{\bar a} \begin{pmatrix}X^b\\ \rho \end{pmatrix}
=\begin{pmatrix} \nabla_{\bar{a}}X^b\\ \nabla_{\bar{a}}\rho-\Rho_{\bar{a}b}X^b
\end{pmatrix}.
\end{equation}
The kernel $\ker D^\T$ of the first BGG operator~\eqref{StandardBGG} consists
of vector fields $X^b$ with c-projective weight~$(-1,0)$ which satisfy
\begin{equation}\label{suppose_that}
\nabla_aX^b+\rho\delta_a{}^b=0\qquad\nabla_{\bar{a}}X^b=0
\end{equation}
for some section $\rho$ of $\cE(-1,0)$; then $\rho=-\frac1 n \nabla_aX^a$,
and, setting the torsion to zero in Proposition~\ref{standard_prolong},
$(X^a,\rho)$ defines a parallel section for the tractor
connection~\eqref{tractorconntf}.  This is similar to the projective case,
with the following distinction: although the tensor $\ts^\alpha$ in
Theorem~\ref{funni_toy} is projectively weighted, the bundle $\cE(1)$ is
canonically trivialised by a choice of metric; here, in contrast, it is the
real line bundle $\cE(1,1)$ that enjoys such a trivialisation, and not the
complex line bundle $\cE(1,0)$.

However, taking care to use~\eqref{curvature_on_densities} (see also
Proposition~\ref{rosetta}), it follows that any solution
of~\eqref{suppose_that} satisfies
\begin{equation}\label{weighted_curvature} \begin{split}
-\delta_b{}^c\nabla_{\bar{a}}\rho
=(\nabla_{\bar{a}}\nabla_b-\nabla_b\nabla_{\bar{a}})X^c
&=R_{\bar{a}b}{}^c{}_dX^d+\Rho_{\bar{a}b}X^c\\
&=H_{\bar{a}b}{}^c{}_dX^d-\delta_b{}^c\Rho_{\bar{a}d}X^d,
\end{split}\end{equation}
where $H_{\bar{a}b}{}^c{}_d=H_{\bar{a}d}{}^c{}_b$ and
$H_{\bar{a}b}{}^b{}_d=0$.  We may rearrange this as
\begin{equation}\label{integrability}
\delta_b{}^c\nabla_{\bar{a}}\rho
=\delta_b{}^c\Rho_{\bar{a}d}X^d-H_{\bar{a}b}{}^c{}_dX^d;
\end{equation}
then the trace over $b$ and $c$ shows that $\nabla_{\bar{a}}\rho
=\Rho_{\bar{a}b}X^b$ (as in Proposition~\ref{standard_prolong}) and hence that
$H_{\bar{a}b}{}^c{}_d X^d=0$. Following the projective case
(Theorem~\ref{funni_toy}), we lower an index in~\eqref{weighted_curvature}
and~\eqref{integrability} to obtain
\begin{equation}\label{half_c-nullity}
R_{\bar{a}b\bar{c}d}X^d=-g_{b\bar{c}}\nabla_{\bar{a}}\rho
-\Rho_{\bar{a}b}X_{\bar{c}}.
\end{equation}
It follows that for any solutions $(X^a,\rho)$ and $(\wt X^a,\tilde \rho)$
of~\eqref{suppose_that},
\[
R_{\bar{a}b\bar{c}d}\wt X^b X^d
=-\wt X_{\bar{c}}\nabla_{\bar{a}}\rho-\Rho_{\bar{a}b}\wt X^b X_{\bar{c}}
=-\wt X_{\bar{c}}\nabla_{\bar{a}}\rho-X_{\bar{c}}\nabla_{\bar{a}}\tilde\rho
\]
and hence, by symmetry,
\[
X_{[\bar{a}}\nabla_{\bar{c}]}\tilde\rho=\wt X_{[\bar{c}}\nabla_{\bar{a}]}\rho.
\]
As in Theorem~\ref{funni_toy}, by first taking $\wt X=X$, we conclude that
there is a real function $B$, uniquely determined and smooth on the union of
the open sets where some solution $X^a$ of~\eqref{suppose_that} is nonzero,
such that for any solution $(X^a,\rho)$ of~\eqref{suppose_that},
\begin{equation}\label{only_possibility}
\nabla_{\bar{a}}\rho=\Rho_{\bar{a}b}X^b=2BX_{\bar{a}}.
\end{equation}

\begin{thm}\label{funny_connection} Let $(M,J,[\nabla])$ be a connected
c-projective manifold, where $\nabla$ preserves a \bps/K\"ahler
metric~$g_{a\bar{b}}$. Suppose that $\dim\ker D^\T\geq 2$. Then there is a
unique constant $B$ such that any element of the kernel of $D^\T$ lifts to a
parallel section of $\T$ for the connection
\begin{equation}\label{funny_connection_formula}
\nabla_a \begin{pmatrix}X^b\\ \rho \end{pmatrix}
=\begin{pmatrix}\nabla_aX^b+\rho\delta_a{}^b\\ \nabla_a\rho
\end{pmatrix}\qquad\quad
\nabla_{\bar a}\begin{pmatrix}X^b\\ \rho \end{pmatrix}
=\begin{pmatrix} \nabla_{\bar{a}}X^b\\ \nabla_{\bar{a}}\rho-2Bg_{\bar{a}b}X^b
\end{pmatrix}.
\end{equation}
\end{thm}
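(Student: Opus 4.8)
The plan is to follow the blueprint of the projective toy model, Theorem~\ref{funni_toy}, transplanting each step to the c-projective setting. The discussion preceding the statement already does the hard analytic work: it produces a function $B$, smooth on the dense open set $U$ where some solution of~\eqref{suppose_that} is nonzero, such that \emph{every} element of $\ker D^\T$ satisfies the lifted relation \eqref{only_possibility}, namely $\nabla_{\bar a}\rho=\Rho_{\bar a b}X^b=2B X_{\bar a}$ (with $\rho=-\tfrac1n\nabla_aX^a$). Since $\nabla_a\rho=0$ and $\nabla_{\bar a}X^b=0$ for any such solution (these are exactly the standard prolongation equations of Proposition~\ref{standard_prolong} with $\Rho_{ab}=0$), the only discrepancy between the standard tractor connection~\eqref{tractorconntf} and the connection~\eqref{funny_connection_formula} is the replacement of $\Rho_{\bar a b}X^b$ by $2Bg_{\bar a b}X^b$ in the lower $\bar a$-slot. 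Thus the theorem reduces to two assertions: that $B$ is a globally defined constant, and that the smooth tensor $(\Rho_{\bar a b}-2Bg_{\bar a b})X^b$, which vanishes on the dense set $U$, vanishes on all of $M$. The latter is then immediate by continuity, and uniqueness of $B$ follows from $2(B-B')X_{\bar a}=0$ on $U$.

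To prove $B$ is constant I would first invoke the hypothesis $\dim\ker D^\T\geq2$ to fix two solutions $s,\tilde s$ whose projections $X^a,\tilde X^a$ are $\C$-linearly independent on a dense open subset $U'\subseteq U$. That such a set is dense is the analogue of the corresponding remark in Theorem~\ref{funni_toy}: if $X^a$ and $\tilde X^a$ were pointwise dependent on an open set, writing $\tilde X^a=\gamma X^a$ and differentiating with the standard tractor connection forces $\gamma$ to be locally constant and the vertical remainder to vanish, contradicting independence of $s,\tilde s$. Next I would differentiate \eqref{only_possibility} in the unbarred direction. Using $\nabla_aX^b=-\rho\delta_a{}^b$, $\nabla_a\rho=0$, and the density commutator~\eqref{curvature_on_densities} (whose torsion terms drop since $J$ is integrable), one obtains the c-projective counterpart of the key identity~\eqref{stronger-key}, namely
\[
2(\nabla_c B)X_{\bar a}=\rho\,(2Bg_{c\bar a}-\Rho_{\bar a c}).
\]
Writing the same identity for $\tilde s$ with the \emph{same} $B$ and eliminating the common Hermitian form $2Bg_{c\bar a}-\Rho_{\bar a c}$ yields $(\nabla_cB)(\tilde\rho X_{\bar a}-\rho\tilde X_{\bar a})=0$; since $X_{\bar a}$ and $\tilde X_{\bar a}$ are independent on $U'$, this forces $\nabla_cB=0$, and conjugation gives $dB=0$ on $U'$.

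Finally, to upgrade local constancy on the dense set $U'$ to global constancy, I would run the eigenvalue-extension argument behind~\eqref{extendB}. On $U'$ the relation \eqref{only_possibility} reads $\Rho^c{}_bX^b=2BX^c$ (raising an index with $g$ and using the symmetry of the K\"ahler Ricci tensor); differentiating this with $\nabla_a$, using $\nabla_aB=0$ on $U'$, and tracing produces the tractor eigenvalue equation
\[
\begin{pmatrix}\Rho^c{}_b&0\\[2pt]-\tfrac1n\nabla_a\Rho^a{}_b&\tfrac1n\Rho^a{}_a\end{pmatrix}
\begin{pmatrix}X^b\\ \rho\end{pmatrix}=2B\begin{pmatrix}X^c\\ \rho\end{pmatrix}.
\]
The endomorphism on the left is built solely from the Ricci curvature and its derivative, hence is defined on all of $M$, while $s=(X^b,\rho)$ is a nowhere-vanishing parallel section. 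Therefore this relation determines, and smoothly extends, $B$ over all of $M$. Once $B$ is known to be smooth globally, continuity of $dB$ together with $dB=0$ on the dense set $U'$ gives $dB\equiv0$, so $B$ is constant on the connected manifold $M$, which completes the argument.

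The step I expect to be the main obstacle is precisely the passage from ``$B$ locally constant on a dense open set'' to ``$B$ constant on $M$''. Local constancy alone does not preclude different values on different components of $U'$; what rescues the argument is that the tractor eigenvalue equation exhibits $B$ as a genuinely global smooth function (because the defining endomorphism is global and $s$ is nowhere zero), after which continuity of $dB$ does the rest. The differentiation computations themselves are routine given~\eqref{curvature_on_densities} and Proposition~\ref{rosetta}, the one point requiring care being the book-keeping of the degenerate loci where $X^a$ or $\rho$ vanish.
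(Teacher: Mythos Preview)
Your argument is correct and follows the same overall blueprint as the paper: differentiate~\eqref{only_possibility}, use two independent solutions to force $dB=0$ on a dense open set, and then globalise. There are, however, two genuine differences in execution worth noting.

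First, you differentiate~\eqref{only_possibility} in the \emph{unbarred} direction and obtain $2(\nabla_cB)X_{\bar a}=\rho(2Bg_{c\bar a}-\Rho_{\bar a c})$, then eliminate between two solutions. (One small point: from $(\nabla_cB)(\tilde\rho X_{\bar a}-\rho\tilde X_{\bar a})=0$, independence of $X,\tilde X$ alone does not quite suffice; you should observe that where $\rho=0$ the original identity already gives $\nabla_cB=0$ since $X\neq0$, and where $\rho\neq0$ the relation $\tilde\rho X_{\bar a}=\rho\tilde X_{\bar a}$ would contradict independence.) The paper instead differentiates in the \emph{barred} direction, obtaining $2(\nabla_{\bar a}B)X_{\bar c}=\nabla_{\bar a}\nabla_{\bar c}\rho$; since $\nabla_{[\bar a}\nabla_{\bar c]}\rho=0$ for $\rho\in\cE(-1,0)$ in the torsion-free case, this is symmetric in $\bar a,\bar c$ and directly yields $X_{[\bar c}\nabla_{\bar a]}B=0$ without elimination.

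Second, for the passage from local constancy on a dense open set to a global constant, you run the eigenvalue-extension argument of~\eqref{extendB}. The paper takes a shortcut unavailable in the real projective setting: since $\nabla_{\bar a}X^b=0$, the sections $X^a,\tilde X^a$ are holomorphic, hence so is $X^{[a}\tilde X^{b]}$, and its nonvanishing locus is the complement of an analytic subvariety---therefore \emph{connected}. Local constancy on a connected dense open set then gives a single constant value for $B$ directly. Your route works and is a faithful port of the real-projective argument; the paper's exploits the holomorphic structure for a shorter proof.
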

\begin{proof} By Proposition~\ref{standard_prolong}
and~\eqref{only_possibility}, it remains only to show that the smooth function
$B$ is actually a constant.  Differentiating the equation
$2Bg_{\bar{c}b}X^b=\nabla_{\bar{c}}\rho$ and using (\ref{suppose_that}) gives
\begin{equation}\label{what_gives}
2(\nabla_aB)X_{\bar{c}}-2Bg_{\bar{c}a}\rho
=\nabla_a\nabla_{\bar{c}}\rho\quad\mbox{and}\quad
2(\nabla_{\bar{a}}B)X_{\bar{c}}=\nabla_{\bar{a}}\nabla_{\bar{c}}\rho.
\end{equation}
With (\ref{another_curvature_on_densities}), the second equation of
(\ref{what_gives}) implies that $X_{[\bar{c}}\nabla_{\bar{a}]}B=0$. Where
there are two nonzero solutions $X^a$ and $\wt X^a$, it follows
from~\eqref{suppose_that} that the sections $X^a,\wt
X^b\in\Gamma(M,T^{1,0}(-1,0))$ and therefore $X^{[a}{\wt
    X}^{b]}\in\Gamma(M,T^{2,0}(-2,0))$ are holomorphic.  Consequently,
$U=\{X^{[a}\wt X^{b]}\neq0\}$ is the complement of an analytic subvariety and is
thus connected. On $U$ we also have $\wt X_{[\bar{c}}\nabla_{\bar{a}]}B=0$
whence $\nabla_{\bar{a}}B=0$, i.e.~$B$ is locally constant on $U$,
hence constant.
\end{proof}
By analogy with the projective case, one might now expect c-projective nullity
to appear. However, when we combine \eqref{half_c-nullity} and
\eqref{only_possibility}, we obtain
\[
\bigl(R_{\bar ab\bar cd}+\Rho_{\bar ab}g_{\bar cd}
+2Bg_{\bar cb}g_{\bar ad}\bigr)X^d=0,
\]
which is a halfway house on the way to~\eqref{c-nullity_bis}. Underlying this
degeneracy is the fact that $\T$ is associated to a holomorphic representation
of $\mathrm{SL}(n+1,\C)$.

Nevertheless the constant $B$ in Theorem~\ref{funny_connection} is generically
characterised by c-projective nullity in the following degenerate sense.

\begin{thm} Let $(M,J,g)$ be a connected \bps/K\"ahler manifold admitting
a non-parallel solution $X^a$ of~\eqref{suppose_that}. For any function $B$,
the following are equivalent\textup:
\begin{enumerate}
\item $B$ is characterised by c-projective nullity~\eqref{c-nullity_bis} on a
  dense open subset\textup;
\item $B$ is constant and $X^b$ lifts to a section of $\T$ parallel
  for~\eqref{funny_connection_formula}\textup;
\item $\Rho_{\bar a b}=2Bg_{\bar a b}$.
\end{enumerate}
In particular, $g$ is an Einstein metric, and the
connections~\eqref{tractorconntf} and \eqref{funny_connection_formula}
coincide.
\end{thm}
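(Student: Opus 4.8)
The plan is to work throughout with the distinguished solution $X^a$ of~\eqref{suppose_that} and its companion $\rho=-\tfrac1n\nabla_aX^a$, and to extract everything from two unconditional facts. First, since $\Rho_{ab}=0$ in the K\"ahler case, Proposition~\ref{standard_prolong} (equivalently~\eqref{curvature_on_densities}) gives $\nabla_a\rho=0$ and $\nabla_{\bar a}\rho=\Rho_{\bar a b}X^b$, so $(X^a,\rho)$ is automatically parallel for the standard connection~\eqref{tractorconntf}. Second, the computation~\eqref{weighted_curvature}--\eqref{integrability} produces the ``halfway house'' identity~\eqref{half_c-nullity},
\[
R_{\bar a b\bar c d}X^d=-g_{b\bar c}\Rho_{\bar a e}X^e-\Rho_{\bar a b}X_{\bar c}.
\]
Since $X^a$ is holomorphic ($\nabla_{\bar a}X^b=0$), its zero set is an analytic subvariety and $\{X\neq0\}$ is dense; this is the only density fact I need for the equivalence $(1)\Leftrightarrow(3)$.

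For $(1)\Leftrightarrow(3)$ I would simply read off coefficients. Writing~\eqref{c-nullity_bis} for the vector $v=X$ (conjugating its second equation and using $R_{b\bar a\bar cd}=-R_{\bar a b\bar cd}$) gives
\[
R_{\bar a b\bar c d}X^d=-2B\bigl(g_{b\bar c}X_{\bar a}+g_{b\bar a}X_{\bar c}\bigr).
\]
Comparing with the halfway house identity, c-projective nullity of $X$ with scalar $B$ holds exactly when the coefficients of $g_{b\bar c}$ and of $X_{\bar c}$ agree, i.e.\ when $\Rho_{\bar a e}X^e=2BX_{\bar a}$ \emph{and} $\Rho_{\bar a b}=2Bg_{\bar a b}$. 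The second implies the first and is precisely~(3); conversely, substituting~(3) into the halfway house identity reproduces the nullity equation. As both sides are smooth and $\{X\neq0\}$ is dense, holding on a dense open set is the same as holding everywhere, giving $(1)\Leftrightarrow(3)$.

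For $(3)\Rightarrow(2)$, the relation $\Rho_{\bar a b}=2Bg_{\bar a b}$ (with $\Rho_{\bar a b}=\tfrac1{n+1}\Ric_{\bar a b}$ and $\Rho_{ab}=0$) says $g$ is K\"ahler--Einstein, so $B$ is constant in complex dimension $\ge2$; substituting into~\eqref{tractorconntf} shows it equals~\eqref{funny_connection_formula}, and $(X^a,\rho)$ is parallel for both. For $(2)\Rightarrow(3)$, parallelism for~\eqref{funny_connection_formula} gives $\nabla_{\bar a}\rho=2Bg_{\bar a b}X^b$ and $\nabla_a\rho=0$ with $B$ constant. Applying the density Ricci identity~\eqref{curvature_on_densities} (weight $(-1,0)$, $J$ integrable) to $\rho$, together with $\nabla_b\rho=0$, yields $\nabla_b\nabla_{\bar a}\rho=-\Rho_{\bar a b}\rho$, whereas differentiating $\nabla_{\bar a}\rho=2Bg_{\bar a b}X^b$ and using $\nabla_bX^c=-\rho\delta_b{}^c$ gives $\nabla_b\nabla_{\bar a}\rho=-2B\rho g_{\bar a b}$; hence $(\Rho_{\bar a b}-2Bg_{\bar a b})\rho=0$. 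To conclude I would show $\{\rho\neq0\}$ is dense under hypothesis~(2): if $\rho=0$ on an open set $V$ then $\nabla_{\bar a}\rho=0$ there, so $2BX_{\bar a}=0$ on $V$; if $B\neq0$ this forces the nonzero holomorphic field $X$ to vanish on $V$ (impossible), while if $B=0$ then $\rho$ is globally parallel and, vanishing on $V$, is identically zero, making $X$ parallel — again impossible. Thus $\Rho_{\bar a b}=2Bg_{\bar a b}$ on a dense set, hence everywhere, which is~(3). The ``in particular'' statement is then immediate, since~(3) is the K\"ahler--Einstein condition and makes~\eqref{tractorconntf} and~\eqref{funny_connection_formula} agree.

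The main obstacle I anticipate is precisely the upgrade from the halfway house identity to genuine nullity — equivalently, from the eigenvector relation $\Rho_{\bar a b}X^b=2BX_{\bar a}$, which a single solution $X$ supplies for free, to the full pointwise identity $\Rho_{\bar a b}=2Bg_{\bar a b}$. This is where the matching of the $X_{\bar c}$-coefficient (in $(1)\Leftrightarrow(3)$) and the density of $\{\rho\neq0\}$ via the elementary case split on $B$ (in $(2)\Rightarrow(3)$) carry the real content; the remaining steps are substitutions into the curvature formulae of Proposition~\ref{rosetta} and~\eqref{curvature_on_densities} and the constancy of a K\"ahler--Einstein factor.
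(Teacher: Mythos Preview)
Your implications $(2)\Rightarrow(3)$, $(3)\Rightarrow(2)$, and $(3)\Rightarrow(1)$ are correct and essentially match the paper's arguments; in fact your density argument for $\{\rho\neq0\}$ under hypothesis~(2) is more explicit than the paper's.

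The genuine gap is in your $(1)\Rightarrow(3)$. Statement~(1) asserts only that \emph{some} nonzero nullity vector $v$ exists with scalar $B$ on a dense open set; it does not say that $X$ itself lies in the nullity distribution. Your coefficient-matching argument establishes the biconditional ``$X$ is a nullity vector with scalar $B$'' $\Leftrightarrow$ (3), which is fine for the direction $(3)\Rightarrow(1)$, but for the converse you are tacitly replacing $v$ by $X$. One can partly close the gap: contracting the identity $H_{a\bar b}{}^c{}_dv^d=(2Bg_{a\bar b}-\Rho_{a\bar b})v^c$ (Proposition~\ref{c-projective_invariant_characterisation_of_nullity}(2)) with $X^a$, and using $H_{a\bar b}{}^c{}_dX^a=0$ from~\eqref{integrability}, yields only the eigenvector relation $\Rho_{\bar ab}X^b=2BX_{\bar a}$, not the full Einstein condition~(3). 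Substituting this back into the halfway house identity, the discrepancy with the nullity equation for $X$ is precisely $(\Rho_{\bar ab}-2Bg_{\bar ab})X_{\bar c}$, so you still need~(3) to conclude $X$ is nullity---the argument is circular.

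The paper avoids this by proving $(1)\Rightarrow(2)$ directly, invoking Theorem~\ref{new_funni}: since $X^{\bar a}\otimes X^b$ is a non-parallel solution of the mobility equation, the hypotheses of that theorem are met, giving $B$ constant and the parallel lift. This is heavy machinery, and it is not clear it can be bypassed by elementary coefficient matching.
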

\begin{proof} (1)$\Rightarrow$(2). This follows from Theorem~\ref{new_funni}
because $X^{\bar a}\otimes X^{b}$ is a solution of the mobility equation which
is not a constant multiple of $g^{\bar a b}$, and by
contracting~\eqref{weighted_curvature} by a nullity vector $v^b$.

\smallbreak\noindent(2)$\Rightarrow$(3). The
identity~\eqref{curvature_on_densities} implies
\begin{equation*}
\Rho_{\bar a b}\rho
=(\nabla_{\bar a}\nabla_b-\nabla_b\nabla_{\bar a})\rho
=-2Bg_{\bar a c}\nabla_b X^{c}
=2B g_{\bar a b}\rho,
\end{equation*}
which establishes (3) on the dense open subset $\{\rho\neq 0\}$, hence
everywhere.
\smallbreak
\noindent(3)$\Rightarrow$(1). Since $\nabla_{\bar a}\rho
=\Rho_{\bar a d} X^d=2Bg_{\bar ad}X^d$, equation~\eqref{weighted_curvature} 
implies
\begin{equation}
-2Bg_{\bar ad}X^d\delta_b{}^c
=R_{\bar a b}{}^c{}_dX^d+\Rho_{\bar a b}X^c
=R_{\bar ab}{}^c{}_dX^d+2Bg_{\bar ab}X^c,
\end{equation}
and we deduce that $G_{\bar ab}{}^c{}_dX^d=0$. Hence, $X^d/\rho$ is a nullity
vector for $g$ on the dense open subset $\{\rho\neq 0\}$.
\end{proof}

\subsection{Special tractor connections and the complex cone}

Let $(M,J,[\nabla])$ be a metric c-projective structure. Then for any
compatible metric $g$ and any function $B$, there is a special tractor
connection on $\T$ defined by~\eqref{funny_connection_formula}. We first
observe that the induced connection on $\cV_\C=\overline\T\otimes\T$ is the
special tractor connection~\eqref{new_funni_formula} (for the given $g$ and
$B$). This can be seen easily by taking $A^{\bar b c}=\overline{X^b}X^c$,
$\hv^a=\bar\rho X^a$ and $\mu=\bar\rho\rho$ in~\eqref{new_funni_formula}.
Consequently, parallel sections for the special tractor connection on $\cV$
define parallel Hermitian forms on $\T^*$. This was used in~\cite{CMR} to
characterise, for K\"ahler manifolds $(M,J,g)$, the presence of nontrivial
parallel sections for~\eqref{new_funni_formula} in terms of the local
classification of~\cite{ACG} (see Section~\ref{ss:locclass}). Using the
extension of this classification \bps/K\"ahler manifolds~\cite{BMR}, together
with Remark~\ref{CHSC-fibre}, and Theorems~\ref{Lambda_in_nullity}
and~\ref{new_funni}, we have the following more general characterisation.

\begin{thm}\label{thm:special} Let $(M,J,g)$ be a connected \bps/K\"ahler
manifold admitting a solution $A$ of the mobility equation which is not
parallel \textup(i.e.~$\Lam\neq 0$\textup), and let $B$ be a function on
$M$. Then the following are equivalent.
\begin{enumerate}
\item For the given function $B$, $(J,g)$ has c-projective nullity on a dense
  open set.
\item Any solution to the mobility equation lifts to a
  global parallel section for~\eqref{new_funni_formula} with $B$ constant.
\item On a dense open subset of $M$, $A$ lifts to a parallel section
  for~\eqref{new_funni_formula} with $B$ locally constant.
\item $\Lam$ lies in the c-projective nullity of $(J,g)$ with constant $B$.
\item $B$ is constant and its c-projective nullity distribution contains the
  complex span \textup(tangent to the complex orbits\textup) of the Killing
  vector fields of the pencil $A+t\, \Id$.
\item Near any regular point, $(J,g)$ is given by~\eqref{eq:locclass}, where
  for all $j\in\{1,\ldots\ell\}$, $\Theta_j(t)=\Theta(t)$, a polynomial of
  degree $\ell+1$, with constant coefficients, leading coefficient $4B$, and
  divisible by any constant coefficient factor of the minimal polynomial of
  $A$.
\end{enumerate}
\end{thm}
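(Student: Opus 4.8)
The plan is to prove the six conditions equivalent by first closing a short cycle among (1), (2) and (3), then deducing (4) and (5) from these, and finally matching everything against the explicit normal form (6) via the local classification of Theorem~\ref{thm:locclass}. Throughout I write $\hv$ for the vector field $\Lam$ attached to the solution $A$ of the mobility equation, and $G^B_{\alpha\beta\gamma\delta}=R_{\alpha\beta\gamma\delta}-B\,S_{\alpha\beta\gamma\delta}$.

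First I would establish that (1), (2) and (3) are equivalent. The implication (2)$\Rightarrow$(3) is immediate, since (3) is a weakening of (2) applied to the single solution $A$. For (3)$\Rightarrow$(1) I would invoke Theorem~\ref{Lambda_in_nullity}: a non-parallel solution lifting over a dense open set to a parallel section of~\eqref{new_funni_formula} with $B$ locally constant forces $B$ to be a genuine constant, forces the lift to extend over $M$, and (by part~(2) of that theorem) yields $G_{a\bar b c}{}^d\hv_d=0$, hence c-projective nullity with function $B$ wherever $\hv^a\neq0$; since $A$ is non-parallel, $\hv=\Lam$ is nonzero on a dense open set, giving (1). Finally (1)$\Rightarrow$(2) is exactly Theorem~\ref{new_funni}.

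Next I would handle (4) and (5). For (1)$\Leftrightarrow$(4), note that (1) gives the global lift of $A$ via (2), and Theorem~\ref{Lambda_in_nullity}(2) then gives $G_{a\bar b c}{}^d\hv_d=0$, i.e.\ $\hv=\Lam$ lies in the nullity, with $B$ constant; conversely, since $A$ is non-parallel, $\Lam\neq0$ on a dense open set, so (4) immediately produces nullity there with the stated constant $B$, which is (1). For (4)$\Leftrightarrow$(5), the crucial fact is that $\Ns$ is $A$-invariant: feeding the lift (guaranteed by the equivalence (1)$\Leftrightarrow$(4)$\Leftrightarrow$(2)) into Lemma~\ref{useful}, equation~\eqref{commuting} shows $A$ preserves $\ker G^B$; hence, starting from $\hv_{(1)}=\Lam\in\Ns$, the recursion~\eqref{hvf-rec} places every $\hv_{(k)}$ in $\Ns$, and as $\Ns$ is $J$-invariant the entire complex span of the canonical Killing fields $\wt K(t)$, which is tangent to the complex orbits by Lemma~\ref{totallygeodesic}, lies in $\Ns$. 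The reverse (5)$\Rightarrow$(4) is trivial, since $\Lam=\hv_{(1)}$ is one of these generators.

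The main work, and the main obstacle, is the equivalence with the explicit normal form (6). Here I would localise near a regular point and apply Theorem~\ref{thm:locclass}, so that the complex orbits are the totally geodesic fibres of~\eqref{eq:locclass} over a base $S$ carrying the parallel Hermitian endomorphism $A_S$. Because the orbits are totally geodesic, their intrinsic K\"ahler curvature agrees with the restriction of $R$, so that (using $A$- and $J$-invariance of the orbit tangent spaces) the orbit lies in the nullity of $G^B$ with constant $B$ precisely when it has constant holomorphic sectional curvature; by Remark~\ref{CHSC-fibre} this forces all the $\Theta_j$ to equal a single polynomial $\Theta$ of degree at most $\ell+1$. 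To extract the remaining constraints I would compare the nullity-forced identity from~\eqref{second_line} with the intrinsic formula~\eqref{eq:lc-lam}: matching the cross term $g(A\cdot,\cdot)$ fixes the leading coefficient of $\Theta$ as $4B$ and forces $B$ to be a constant, while the residual term $\sum_u g_u(\Theta(A_S)\cdot,\cdot)$ must be absorbed into the $g$-term, forcing $\Theta(A_S)=0$; since $A_S$ has characteristic polynomial $\prod_u\rho_u^{m_u}$, this is equivalent to $\Theta$ being divisible by the minimal polynomial of $A_S$, i.e.\ by the constant-coefficient factor of the minimal polynomial of $A$. Conversely, assuming the form (6), the same comparison reconstructs the nullity condition along the orbits, which then propagates to (5) by the $A$-invariance of $\Ns$. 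The delicate points will be the degree-and-divisibility bookkeeping through~\eqref{eq:lc-lam} and verifying that the one-index nullity condition along a single totally geodesic orbit genuinely upgrades, through the product structure of $S$ and $J$-invariance, to the full statement with $\Theta$ independent of $j$.
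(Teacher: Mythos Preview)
Your cycle (1)$\Leftrightarrow$(2)$\Leftrightarrow$(3)$\Leftrightarrow$(4)$\Leftrightarrow$(5) and the direction (5)$\Rightarrow$(6) match the paper's argument closely, using the same results (Theorems~\ref{new_funni} and~\ref{Lambda_in_nullity}, Lemma~\ref{useful}, Remark~\ref{CHSC-fibre}, and the comparison of~\eqref{second_line} with~\eqref{eq:lc-lam}).

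The difference, and a genuine gap, is in how you close the loop from (6). You propose (6)$\Rightarrow$(5) by asserting that constant holomorphic sectional curvature on the (totally geodesic) complex orbits is \emph{equivalent} to the orbit tangent space lying in $\Ns$. The forward direction is fine, but the converse is not: total geodesy only guarantees that the intrinsic curvature of the orbit is the restriction of $R$ to orbit-tangent arguments, whereas membership of $v$ in $\Ns$ requires $G^B(X,Y,Z,v)=0$ for \emph{arbitrary} $X,Y,Z$, including transverse ones. So your route would demand a full curvature computation in the normal form~\eqref{eq:locclass}, not just Remark~\ref{CHSC-fibre}. The paper sidesteps this by going (6)$\Rightarrow$(3) directly and algebraically: from~\eqref{eq:lc-lam} one reads off~\eqref{second_line} with $B$ constant, and then~\eqref{crucial_identity} together with the Bianchi symmetry $G_{a\bar b c}{}^d=G_{c\bar b a}{}^d$ gives
\[
Y_{\bar b}(\nabla_a\mu-2B\hv_a)=-G_{a\bar b c}{}^d\hv_d\,Y^c
=g_{a\bar b}(\nabla_c\mu-2B\hv_c)Y^c
\]
for any $(1,0)$-vector $Y^c$; nondegeneracy of $g$ forces $\nabla_a\mu=2B\hv_a$, establishing~\eqref{third_line} and hence (3). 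This trick is what replaces the transverse curvature computation you would otherwise need.
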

\begin{proof} (1) $\Rightarrow$ (2) by
Theorem~\ref{new_funni}, (2) $\Rightarrow$ (3) trivially, and (3)
$\Rightarrow$ (4) by Theorem~\ref{Lambda_in_nullity}, and (4) $\Rightarrow$
(1) because $\Lam$ is nonzero on a dense open subset of $M$. Clearly (5)
$\Rightarrow$ (4), and conversely, (1)--(4) imply~\eqref{commuting},
i.e.~$G_{a\bar{b}c}{}^{d}= R_{a\bar{b}c}{}^{d}
+2B(g_{a\bar{b}}\delta_{c}{}^{d}+g_{c\bar{b}}\delta_{a}{}^{d})$ commutes with
$A_a{}^b$: therefore, since $G_{a\bar{b}c}{}^{d}\Lam_d=0$ and $\Lam^\alpha$ is
the sum of the gradients of the nonconstant eigenvalues $\xi_1,\ldots
\xi_\ell$ of $A_a{}^b$, which are sections of the corresponding eigenspaces,
it follows that $\mathrm{grad}_g \xi_j$ and $J\mathrm{grad}_g\xi_j$ are in the
nullity for $j\in\{1,\ldots \ell\}$; this is the tangent distribution to the
complex orbits.

Now (5) implies that the restriction of the metric to the complex orbits has
constant holomorphic sectional curvature, and hence the functions
$\Theta_j(t)$ are equal to a common polynomial $\Theta(t)$ of degree
$\ell+1$. Now comparing~\eqref{second_line} with~\eqref{eq:lc-lam}, we
conclude that $\Theta(t)$ has leading coefficient $a_{-1}=4B$, and that
$g_u(\Theta(A)\cdot,\cdot)=0$ for all irreducible constant coefficient factors
$\rho_u$ of $\chi_A$; thus all constant coefficient factors of the minimal
polynomial of $A$ are also factors of $\Theta$.

Conversely, given (6),~\eqref{eq:lc-lam} implies~\eqref{second_line} with $B$
constant, and hence, by equation~\eqref{crucial_identity} of
Lemma~\ref{useful}, and the Bianchi symmetry $G_{a\bar b c}{}^d=G_{c\bar ba}
{}^d$, we have
\begin{equation*}
Y_{\bar b}(\nabla_a\mu-2B\hv_a)=-G_{a\bar b c}{}^d\hv_dY^c=
g_{a\bar b}(\nabla_c\mu-2B\hv_c)Y^c
\end{equation*}
for any $(1,0)$-vector $Y^c$. Since the left hand side is degenerate, whereas
$g_{a\bar b}$ is nondegenerate, we conclude that $\nabla_a\mu=2B\hv_a$, thus
establishing (3).
\end{proof}

Secondly, we observe that by Lemma~\ref{tangent_bundle_cone}, the special
tractor connection on $\T$ induces a complex affine connection on the complex
affine cone $\pi_\cC\colon \cC\to M$ described in
Section~\ref{cone_construction}.  Combining these observations, as in
Remark~\ref{rem:metric-cone}, any solution $A^{\bar b c}$ of the mobility
equation which lifts to a parallel section of $\cV$ for the special tractor
connection on $\cV$, and is nondegenerate as a Hermitian form on $\T^*$,
induces a Hermitian metric on $\cC$ which is parallel for the induced complex
affine connection on $\cC$.

In particular, if $B$ is constant, then the metric $g$ itself induces a
parallel section of $\cV$ with $A^{\bar b c}=g^{\bar b c}$, $\hv^a=0$ and
$\mu=2B$, which is clearly nondegenerate on $\T$ if and only if $B\neq 0$.

\begin{prop}\label{p:spec-cone} Let $(M,J,[\nabla])$ be a metric c-projective
structure. Then any compatible metric $g$ and any real constant $B\neq 0$
induce a Hermitian metric on the complex affine cone $\cC\to M$ defined by the
c-projective structure.
\end{prop}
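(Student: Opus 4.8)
The plan is to realise the Hermitian metric on $\cC$ as the inverse of a pulled-back parallel tractor Hermitian form, following the template of Remark~\ref{rem:metric-cone}. Fix the compatible metric $g$, with Levi-Civita connection $\nabla=\nabla^g\in[\nabla]$, and the constant $B\neq 0$. First I would invoke the special tractor connection~\eqref{funny_connection_formula} on the standard tractor bundle $\T$ determined by $g$ and $B$; as recorded immediately before the proposition, the connection it induces on $\cV_\C=\T\otimes\overline\T$ is precisely the special tractor connection~\eqref{new_funni_formula}. The key object is then the section of $\cV$ determined by $g$ itself, namely $A^{\bar b c}=g^{\bar b c}$, $\hv^a=0$, $\mu=2B$.

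Next I would verify that this section is parallel for~\eqref{new_funni_formula} by direct substitution. Since $\nabla g=0$ one has $\nabla_a A^{\bar b c}+\delta_a{}^c\hv^{\bar b}=0$; since $A_a{}^b=g_{a\bar c}g^{\bar c b}=\delta_a{}^b$ and $\mu=2B$, the middle row reads $\nabla_a\hv^b+\mu\delta_a{}^b-2BA_a{}^b=2B\delta_a{}^b-2B\delta_a{}^b=0$ (together with $\nabla_a\hv^{\bar b}=0$); and since $B$ is constant, $\nabla_a\mu-2B\hv_a=0$. The conjugate rows follow identically. Interpreting this parallel section as a Hermitian form on $\T^*$, in the splitting determined by $\nabla$ it is block-diagonal, with the nondegenerate block $g^{\bar b c}$ on $T^{1,0}M(-1,0)$ and the scalar $\mu=2B$ on the line $\cE(-1,0)$; hence it is nondegenerate precisely when $B\neq 0$.

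Finally I would transport this data to the cone. By Lemma~\ref{tangent_bundle_cone} we have $T\cC\cong\pi_\cC^*\T$ as complex vector bundles, so $T^*\cC\cong\pi_\cC^*\T^*$, and the pullback along $\pi_\cC$ of the nondegenerate Hermitian form on $\T^*$ is a nondegenerate Hermitian form on $T^*\cC$; its inverse is the sought Hermitian metric on $\cC$. It is Hermitian with respect to $J^\cC$ because the identification of Lemma~\ref{tangent_bundle_cone} intertwines $J^\T$ and $J^\cC$, and it is parallel for the affine connection that the special tractor connection induces on $\cC$ (again via Lemma~\ref{tangent_bundle_cone} and Remark~\ref{rem:metric-cone}). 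I expect the only real subtlety---rather than a genuine obstacle---to be bookkeeping: one must confirm that fibrewise nondegeneracy and Hermitian symmetry survive the bundle isomorphism of Lemma~\ref{tangent_bundle_cone}, in particular that the vertical directions of $\pi_\cC$, trivialised by $E$ and $J^\cC E$, are handled consistently with the weight $(-1,0)$ appearing in the filtration of $\T$, so that the resulting tensor is genuinely a metric on all of $\cC$ rather than merely on a horizontal complement.
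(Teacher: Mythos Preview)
Your proposal is correct and follows essentially the same approach as the paper: the proposition is stated as an immediate consequence of the preceding discussion, which observes that the section $(A^{\bar bc},\hv^a,\mu)=(g^{\bar bc},0,2B)$ of $\cV$ is parallel for the special tractor connection~\eqref{new_funni_formula} and nondegenerate on $\T^*$ precisely when $B\neq 0$, whence Lemma~\ref{tangent_bundle_cone} and Remark~\ref{rem:metric-cone} yield the Hermitian metric on $\cC$. Your explicit verification of parallelism and your remarks on nondegeneracy and the $J^\cC$-Hermitian property simply spell out what the paper leaves implicit.
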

For $B>0$, this Hermitian metric is, up to scale, a metric cone $dr^2+r^2 \hat
g$ over a \bps/Sasakian metric $\hat g=g +(d\psi+\alpha)^2$ on a (local)
circle bundle over $(M,J,g)$ whose curvature $d\alpha$ is a multiple of the
K\"ahler form $\omega$ of $g$ (see e.g.~\cite{BG}). In the present context, as
observed by~\cite{MatRos,Mikes}, any solution of the mobility equation which
lifts to parallel section for the special tractor
connection~\eqref{new_funni_formula} on $\cV$ (with $B\neq 0$ constant)
induces a Hermitian $(0,2)$ tensor on $M$ which is parallel for the cone
metric of $g,B$ or $-g,-B$. Combining this with Theorem~\ref{thm:special}, we
have the following extension of one of the key results of~\cite{MatRos} to the
\bps/K\"ahler case.

\begin{thm} Let $(M, J, g)$ be a connected \bps/K\"ahler manifold admitting a
non-parallel solution of the mobility equation, and assume that there is a
dense open subset $U\subseteq M$ on which $(J,g)$ has c-projective nullity.
Then, perhaps after replacing $g$ by a c-projectively equivalent metric,
there is a nonzero constant $B$ such that solutions of the mobility equation
on $M$ are in bijection with parallel Hermitian $(0,2)$ tensors on the cone
$\cC$.
\end{thm}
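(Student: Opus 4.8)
The plan is to splice together two correspondences already prepared above. The hypotheses---$M$ connected, a mobility solution that is not a constant multiple of $g$, and c-projective nullity with function $B$ on a dense open $U$---are exactly those of Theorem~\ref{new_funni}. Applying it, $B$ is a global constant and the assignment $A^{\bar ab}\mapsto(A^{\bar ab},\hv^a,\mu)$ identifies the solutions of the mobility equation with the sections of $\cV$ parallel for the special tractor connection~\eqref{new_funni_formula}. The remaining task is to reinterpret such parallel sections as parallel Hermitian $(0,2)$ tensors on the cone $\cC$, which requires the cone metric of Proposition~\ref{p:spec-cone} and hence the condition $B\neq0$.

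First I would arrange $B\neq0$. If the nullity constant of $g$ is already nonzero there is nothing to do; otherwise I move along the metrisability pencil. By the equivalence of conditions (1) and (6) in Theorem~\ref{thm:special}, near any regular point $g$ has the form~\eqref{eq:locclass} with all $\Theta_j$ equal to a single polynomial $\Theta$ of degree $\ell+1$ whose leading coefficient is $4B$; since $g$ is nondegenerate, $\Theta\not\equiv0$. Under the projective reparametrisations of the pencil (Remark~\ref{proj-change}) the functions $\Theta_j$ transform as sections of $\cO(\ell+1)$ over the projective parameter line, so the nullity constant attached to a parameter value is governed by the values of a fixed nonzero binary form of degree $\ell+1$---the homogenisation of $\Theta$---on $\CP^1$. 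As this form vanishes at only finitely many points, a generic parameter yields a nonzero nullity constant; choosing such a parameter at which the corresponding pencil metric is a genuine metric of the class, I replace $g$ by this $\tilde g$. By Corollary~\ref{invariant-nullity} the nullity distribution and the invariant $\Rho_{a\bar b}-2Bg_{a\bar b}$ are unchanged, so $\tilde g$ still satisfies all the hypotheses; relabelling, I assume $B\neq0$. I expect the interplay here---ensuring the chosen parameter simultaneously makes the nullity constant nonzero and keeps the pencil metric everywhere nondegenerate on a possibly non-compact $M$---to be the main obstacle, to be controlled using the local normal form and the openness of nondegeneracy.

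With $B\neq0$ constant, Proposition~\ref{p:spec-cone} supplies a Hermitian cone metric on $\cC$ induced by $g$ and $B$, and Lemma~\ref{tangent_bundle_cone} identifies $T\cC\cong\pi_\cC^*\T$, under which the special tractor connection~\eqref{funny_connection_formula} on $\T$ induces the Levi-Civita connection of this cone metric (its defining properties being those of $\nabla^\cC$ in Proposition~\ref{cone_connection}). As noted just before the statement, the connection~\eqref{new_funni_formula} on $\cV_\C=\overline\T\otimes\T$ is induced by~\eqref{funny_connection_formula} (insert $A^{\bar bc}=\overline{X^b}X^c$, $\hv^a=\bar\rho X^a$, $\mu=\bar\rho\rho$). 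Thus, via Remark~\ref{rem:metric-cone}, a parallel section of $\cV$ pulls back to a parallel Hermitian form on $T^*\cC$, which the cone metric converts into a parallel Hermitian $(0,2)$ tensor on $\cC$; composing with the bijection of Theorem~\ref{new_funni} gives the desired map, matching the constructions of~\cite{MatRos,Mikes}. Injectivity is immediate from that of $\pi_\cC^*$ and of the tractor correspondence. For surjectivity I would use that any tensor $h$ parallel for the cone connection satisfies $\cL_E h=2h$ and $\cL_{J^\cC E}h=0$, computed from $\nabla^\cC E=\Id$ and $i_E T^\cC=0=i_{J^\cC E}T^\cC$ in Proposition~\ref{cone_connection}; hence $h$ carries the homogeneity and angular invariance characterising pullbacks of weighted sections, so it descends to a section of $\cV_\C$, real by construction and thus of $\cV$, and parallel for~\eqref{new_funni_formula} because $\nabla^\cC$ restricts to the tractor connection on horizontal directions.
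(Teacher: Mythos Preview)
Your proposal follows essentially the same route as the paper: invoke Theorem~\ref{new_funni}/\ref{thm:special} to get $B$ constant and the lift to parallel sections of the special tractor connection, use Remark~\ref{proj-change} and the behaviour of $\Theta$ as a section of $\cO(\ell+1)$ to reparametrise the pencil so that $B\neq0$, then pass to the cone via Proposition~\ref{p:spec-cone} and the preceding observations. The paper is terser---it simply cites Theorem~\ref{thm:special}(2), Proposition~\ref{p:spec-cone} and the remarks before it (which in turn defer to~\cite{MatRos,Mikes}) for the bijection---whereas you sketch an explicit surjectivity argument via the Euler homogeneity $\cL_E h=2h$.

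One small correction: your appeal to Proposition~\ref{cone_connection} for the claim that the special tractor connection induces the Levi-Civita connection of the cone metric is a misattribution. That proposition characterises the cone connection coming from the \emph{normal} Cartan connection, not the special tractor connection~\eqref{funny_connection_formula}; the two differ by the term $(\Rho_{\bar ab}-2Bg_{\bar ab})X^b$. What the paper actually uses (and what suffices) is only that the special tractor connection on $\T$ induces \emph{some} complex affine connection on $\cC$ for which the Hermitian metric of Proposition~\ref{p:spec-cone} is parallel, so that parallel sections of $\cV$ for~\eqref{new_funni_formula} correspond to parallel Hermitian forms on $\cC$. Your concern about simultaneously arranging $B\neq0$ and nondegeneracy of the new base metric is legitimate; the paper also dispatches it in a single clause (``while keeping the metrisability solution nondegenerate''), relying on the finiteness of roots of $\Theta$ on the parameter line.
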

\begin{proof} Under these assumptions, the equivalent conditions of
Theorem~\ref{thm:special} apply. Hence the constant $4B$ is the leading
coefficient of the common polynomial $\Theta_j(t)=\Theta(t)$ of degree
$\ell+1$ appearing in~\eqref{eq:locclass}, which therefore vanishes if and
only if $\Theta$ has a root at infinity. However, by Remark~\ref{proj-change},
$\Theta$ transforms as a polynomial of degree $\ell+1$ over projective changes
of pencil parameter, and since $\Theta$ is not identically zero, we may change
the pencil parameter so that $\infty$ is not a root, while keeping the
metrisability solution nondegenerate. We thus obtain a c-projectively
equivalent metric with nullity constant $B\neq 0$, and the rest follows from
Theorem~\ref{thm:special}(2), Proposition~\ref{p:spec-cone} and the subsequent
observations (above) applied to this metric.
\end{proof}

By Theorem~\ref{funni_mobility_3}, the hypotheses of this theorem are
satisfied when $(M,J,g)$ has mobility at least $3$ and the compatible metrics
are not all affinely equivalent. In the case that $g$ is positive definite,
this allowed V. Matveev and S. Rosemann~\cite{MatRos} to obtain the following
classification of the possible mobilities of K\"ahler metrics, using the
de Rham (or de Rham--Wu) decomposition of the cone.

\begin{thm} Let $(M, J, g)$ be a simply connected K\"ahler manifold of
dimension $2n\geq 4$ admitting a non-parallel solution of the mobility
equation. Then the mobility of $(g,J)$ has the form $k^2 + \ell$ for
$k,\ell\in\N$ with $0\leq k\leq n-1$, $1\leq \ell \leq (n+1-k)/2$ and
$(k,\ell)\neq (0,1)$, unless $(g,J)$ has constant holomorphic sectional
curvature. Furthermore, any such value arises in this way.
\end{thm}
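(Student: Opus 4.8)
The plan is to realise the mobility as the dimension of the space of parallel Hermitian forms on the complex cone $\cC$, and then read off the admissible dimensions from the holonomy (de~Rham--Wu) decomposition of $\cC$. Since $g$ itself is the parallel solution $A=\Id$ and we are given a non-parallel solution, the mobility is at least $2$; as $2=1^2+1$ already lies in the asserted list, it suffices to treat mobility $\geq 3$. For such $g$, Theorem~\ref{funni_mobility_3} applies: either all c-projectively equivalent metrics are affinely equivalent to $g$ --- excluded by the existence of a non-parallel solution --- or $(J,g)$ has c-projective nullity on a dense open set with $B$ constant in~\eqref{c-nullity}. In the nullity case, the theorem preceding the statement gives, after replacing $g$ by a c-projectively equivalent metric, a nonzero constant $B$ and a bijection between $\mob_c$ and the space of parallel Hermitian $(0,2)$-tensors on the cone $\cC\to M$ (a genuine metric/\bps/Sasakian cone when $g$ is Riemannian and $B>0$, by Proposition~\ref{p:spec-cone}). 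Thus $\dim\mob_c$ equals the dimension of the space of parallel Hermitian forms on the K\"ahler cone $\cC$, of complex dimension $n+1$.

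Next I would decompose $\cC$. Since $M$ is simply connected and the compatible K\"ahler metrics are real-analytic, the holonomy representation is determined locally and its invariants propagate globally; applying de~Rham--Wu (after passing to the universal cover of $\cC$ if necessary) splits the tangent representation as $\C^{k}\oplus V_1\oplus\cdots\oplus V_\ell$, with $\C^k$ the flat (trivial holonomy) summand and $V_1,\dots,V_\ell$ irreducible non-flat, corresponding to a product $\cC\cong\C^k\times\cC_1\times\cdots\times\cC_\ell$ of a flat factor with irreducible non-flat K\"ahler cones. The essential geometric input is that an irreducible non-flat metric cone has complex dimension at least $2$, since a cone of complex dimension one (a cone on a circle) is flat; hence each $\cC_i$ has $\dim_\C\cC_i\geq 2$ and $\sum_i\dim_\C\cC_i=n+1-k\geq 2\ell$.

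The parallel Hermitian forms are exactly the holonomy-invariant Hermitian forms, block-diagonal along the isotypic decomposition: the flat factor contributes the full space $\mathrm{Herm}(\C^k)$ of real dimension $k^2$, while each irreducible non-flat factor contributes only the line $\R\cdot g_{\cC_i}$ by irreducibility, \emph{provided} there are no cross terms between the non-flat factors. Granting that $\cC_1,\dots,\cC_\ell$ are pairwise non-isometric (so they contribute $\ell$ rather than $\sum_j m_j^2$), one obtains $\dim\mob_c=k^2+\ell$, and the constraints follow: $\ell\geq 1$ and $(k,\ell)\neq(0,1)$, because $\ell=0$ (resp.\ $k=0,\ell=1$) means $\cC$ is flat $\C^{n+1}$ (resp.\ irreducible), i.e.\ constant holomorphic sectional curvature (resp.\ mobility $1$, excluded by the non-parallel hypothesis); $k\leq n-1$ since there is at least one non-flat factor of complex dimension $\geq 2$; and $\ell\leq(n+1-k)/2$ from the dimension count. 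The exceptional case is precisely $\cC=\C^{n+1}$, which by~\eqref{FS_curvature} is constant holomorphic sectional curvature, with mobility $(n+1)^2$. For the converse I would realise each admissible $(k,\ell)$ by prescribing the cone: choose pairwise non-isometric irreducible non-flat K\"ahler cones of complex dimensions $d_i\geq 2$ with $\sum d_i=n+1-k$ (for instance from the explicit orthotoric/product models of Theorem~\ref{thm:locclass}), form $\C^k\times\cC_1\times\cdots\times\cC_\ell$, and take $M$ to be the associated c-projective quotient; the count above then gives mobility exactly $k^2+\ell$.

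The hardest step will be the non-isometry lemma: ruling out cross terms between the non-flat factors. That some such rigidity is genuinely needed is shown by two isometric $2$-dimensional non-flat cone factors in complex dimension $n+1=4$, which would give mobility $4$, not of the form $k^2+\ell$ for $n=3$. I would establish it from the rigidity of the underlying solution $A$: the distinct non-flat factors are labelled by the distinct constant eigenvalues of $A$ --- the irreducible real factors $\rho_u$ of the constant part $\chi_c$ of its characteristic polynomial in Theorem~\ref{thm:locclass} --- and a cross term would produce a further parallel Hermitian form, hence a solution of the mobility equation, incompatible with the commuting eigen-structure of mobility solutions developed in Section~\ref{sec:integrability} (cf.\ Proposition~\ref{source_of_things_awesome} and Theorem~\ref{Commuting_Killing_objects}). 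Secondary obstacles are promoting the local de~Rham splitting to a global one using analyticity and simple-connectivity rather than completeness, and verifying that the model cones descend to honest simply connected K\"ahler manifolds with no accidental extra mobility.
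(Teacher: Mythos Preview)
Your approach is exactly the one the paper indicates: it attributes the result to Matveev--Rosemann~\cite{MatRos} and says only that it is obtained ``using the de Rham (or de Rham--Wu) decomposition of the cone''. The paper itself gives no proof, so you are correctly reconstructing the argument.

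However, the step you flag as ``hardest'' --- the non-isometry lemma --- is a phantom, and your purported counterexample is wrong. Parallel Hermitian forms on $\cC$ are exactly the holonomy-invariant Hermitian forms on $T_p\cC$, and the holonomy of a Riemannian product $\cC_1\times\cdots\times\cC_\ell$ is the product group $H_1\times\cdots\times H_\ell$ acting block-diagonally on $V_1\oplus\cdots\oplus V_\ell$. An off-diagonal block $B\colon V_j\to V_i$ of an invariant Hermitian form must satisfy $h_i B h_j^{-1}=B$ for all $(h_i,h_j)$; setting $h_j=\Id$ forces the image of $B$ to lie in the $H_i$-fixed subspace of $V_i$, which is zero since $V_i$ is an irreducible nontrivial $H_i$-representation. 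So invariant Hermitian forms are automatically block-diagonal, \emph{regardless of whether the factors are isometric}. Your ``two isometric $2$-dimensional non-flat cone factors'' example thus has mobility $0^2+2=2$, not $4$, and $2$ is on the list for $n=3$. You should delete the non-isometry hypothesis both from the counting step and from the construction of examples; any choice of irreducible non-flat K\"ahler cone factors of the prescribed dimensions realises the value $k^2+\ell$.

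Your secondary worry about globalising de Rham without completeness is also less serious than you suggest: the count of parallel Hermitian forms equals the count of holonomy-invariant Hermitian forms on a single tangent space, and the decomposition of the holonomy representation into a trivial summand and irreducible nontrivial summands always exists. The cone structure ($\nabla^\cC E=\Id$) then forces each nontrivial summand to carry its own Euler field, hence to have complex dimension $\geq 2$, giving the inequality $\ell\leq (n+1-k)/2$ without any global product decomposition.
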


\subsection{The c-projective Hessian, nullity, and the Tanno equation}

Let $(M,J,g)$ be a \bps/K\"ahler manifold of dimension $2n\geq 4$ and denote
by $\nabla$ the Levi-Civita connection of $g$.  Recall that for any solution
$A_{a\bar b}$ of the mobility equation of $g$ the gradient, respectively the
skew gradient, of the function $A_{a}{}^a=-\hp$ is a holomorphic vector field,
respectively a holomorphic Killing field, which is, by Corollary
\ref{HessKillingPot}, equivalent to the real section
$\sigma=\hp\vol(g)^{-\frac{1}{n+1}}\in\cE(1,1)$ being in the kernel of the
c-projective Hessian. With respect to the Levi-Civita connection and the
trivialisation $\vol(g)^{-\frac{1}{n+1}}$ of $\cE(1,1)$, the c-projective
Hessian equation reads as
\begin{equation}\label{Hessian_in_Tanno_section}
\nabla_a\nabla_b \hp=0, \quad\text{ respectively }\quad
\nabla_{\bar a}\nabla_{\bar b}\hp=0.
\end{equation}
In Section \ref{Section_Prolong_Hessian} we prolonged the c-projective Hessian
equation and have seen that any (real) solution of this equation lifts to a
unique section of the connection
\eqref{Hessian_prolongation}--\eqref{Hessian_prolongation2}, which shows that
\eqref{Hessian_in_Tanno_section} implies that the function $\hp=\bar{\hp}$
satisfies also the following system of equations
\begin{equation}\label{Invariant_Version_Tanno}
\begin{aligned}
&\nabla_a\nabla_b\nabla_c\hp=0\quad\quad\nabla_a\nabla_{\bar b}\nabla_c\hp
=-\Rho_{c\bar b}\nabla_a\hp- \Rho_{a\bar b}\nabla_c\hp
-H_{a\bar b}{}^{ d}{}_{c}\nabla_{d}\hp\\
&\nabla_{\bar{a}}\nabla_b\nabla_c\hp=0 \quad\quad
\nabla_{\bar a}\nabla_{\bar b}\nabla_c\hp
=-\Rho_{c\bar b}\nabla_{\bar a}\hp- \Rho_{c \bar a}\nabla_{\bar b}\hp
-H_{\bar ac}{}^{\bar d}{}_{\bar b}\nabla_{\bar d}\hp.
\end{aligned}\end{equation}
Suppose now that $g$ has mobility $\geq 2$ and that $g$ has nullity on a dense
open subset of $M$. Then Theorem~\ref{new_funni} shows that the function $B$
defined as in \eqref{c-nullity}, is actually constant and any solution
$A_{a\bar b}$ lifts to a (real) section of $\cV_\C$ for the connection
\eqref{new_funni_formula}. The connection \eqref{new_funni_formula} induces a
connection on the dual vector bundle $\cW_\C=\cV_\C^*$, which is given
by
\begin{align}\label{HessianFunny1}
\nabla^{\cW_\C}_a\begin{pmatrix} \hp\\
\mu_b\enskip|\enskip \nu_{\bar{b}}\\ \zeta_{b\bar c} \end{pmatrix}
&=\begin{pmatrix} \nabla_a\hp-\mu_{a}\\ 
\nabla_a\mu_b\enskip|\enskip
\nabla_a\nu_{\bar{b}}+2Bg_{a\bar b}\hp-\zeta_{a\bar b}\\ 
\nabla_a\zeta_{b\bar c}+2Bg_{a\bar c}\mu_{b}
\end{pmatrix}\\
\label{HessianFunny2}
\nabla^{\cW_\C}_{\bar a}\begin{pmatrix} \hp\\
\mu_b\enskip|\enskip \nu_{\bar{b}}\\ \zeta_{b\bar c} \end{pmatrix}
&=\begin{pmatrix} \nabla_{\bar a}\hp-\nu_{\bar a}\\ 
\nabla_{\bar a}\mu_b+2Bg_{\bar a b}\hp-\zeta_{b\bar a}\enskip|\enskip
\nabla_{\bar a}\nu_{\bar{b}}\\
\nabla_{\bar a}\zeta_{b\bar c}+2Bg_{\bar ab}\nu_{\bar c}
\end{pmatrix},
\end{align}
where $\cW_\C$ is identified via $g$ with a direct sum of unweighted
tensor bundles. By Proposition \ref{rosetta} the two equations on the
right-hand side of \eqref{Invariant_Version_Tanno} can be also written as
\begin{equation*}
\nabla_a\nabla_{\bar b}\nabla_c\hp=-R_{a\bar b}{}^{ d}{}_{c}\nabla_{d}\hp
\quad\text{and}\quad \nabla_{\bar a}\nabla_{\bar b}\nabla_c\hp
=-R_{\bar ac}{}^{\bar d}{}_{\bar b}\nabla_{\bar d}\hp.
\end{equation*}
Comparison of these equations with \eqref{HessianFunny1} and
\eqref{HessianFunny2}, shows immediately that any function $\hp$ satisfying
\eqref{Hessian_in_Tanno_section} lifts to a parallel section for the special
connection \eqref{HessianFunny1}--\eqref{HessianFunny2} if and only if the
gradient of $\hp$ lies in the nullity distribution of $g$.  Note that, by
Theorem~\ref{Lambda_in_nullity}, for any solution of the mobility equation
$A_{a\bar b}$ the function $A_{a}{}^a=-\hp$ has this property. Moreover, in
fact, the following Proposition holds:

\begin{prop}\label{Tanno_and_nullity} 
Let $0\neq B\in\R$ be some constant. Suppose $\hp$ be a smooth real-valued
function and write $\hv_a=\nabla_{a}\hp$ for its derivative.  Then the
following statements are equivalent\textup:
\begin{enumerate}
\item
\begin{equation*}
G_{a\bar b c}{}^d\hv_{d}=0\quad \text{ and } \quad\nabla_{a}\hv_{b}=0
\end{equation*}
\item $\hp$ lifts uniquely to a parallel section of the connection
  \eqref{HessianFunny1} and \eqref{HessianFunny2} \textup(and $B$ is
  characterised by \eqref{c-nullity}\textup)
\item  
\begin{equation}\label{Tanno_Eq}\begin{aligned}
&\nabla_a\nabla_b\hv_c=0\quad\text{ and }\quad
\nabla_a\nabla_{\bar b}\hv_c=-2B(\hv_ag_{c\bar b}+\hv_c g_{a\bar b})\\ 
&\nabla_{\bar a}\nabla _b\hv_c=0\quad\text{ and }\quad
\nabla_{\bar{a}}\nabla_{\bar b}\hv_c
=-2B(\hv_{\bar a}g_{c\bar b}+\hv_{\bar b} g_{c\bar a}),
\end{aligned}\end{equation}
\end{enumerate} 
where $G_{a\bar{b}c\bar{d}}\equiv
R_{a\bar{b}c\bar{d}}+2B(g_{a\bar{b}}g_{c\bar{d}}+g_{c\bar{b}}g_{a\bar{d}})$.
\end{prop}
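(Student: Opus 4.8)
The plan is to treat condition (3) as a hub and prove the two equivalences (1)$\Leftrightarrow$(2) and (1)$\Leftrightarrow$(3) separately, the central tool throughout being the Ricci identity in mixed barred/unbarred indices (no torsion terms, since $J$ is integrable). The crucial structural fact to exploit is that for a K\"ahler metric $\Rho_{ab}=0$, so the c-projective Hessian equation \eqref{Hessian_in_Tanno_section} is simply $\nabla_a\hv_b=0$; this is the second-order equation explicitly present in (1), and it appears as the $\mu$-component equation when one unwinds the special connection for (2). Since $\hp$ is real, $\hv_{\bar a}=\overline{\hv_a}$, so each barred equation is the conjugate of its unbarred partner and I need only treat one of each pair.

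First I would prove (1)$\Leftrightarrow$(2) by reading off \eqref{HessianFunny1}--\eqref{HessianFunny2}. For a parallel section the top rows force $\mu_b=\nabla_b\hp=\hv_b$ and $\nu_{\bar b}=\hv_{\bar b}$; the $\nu$-row forces $\zeta_{b\bar c}=\nabla_b\hv_{\bar c}+2Bg_{b\bar c}\hp$; the $\mu$-row is exactly $\nabla_a\hv_b=0$; and the bottom ($\zeta$-) row, after substituting $\zeta$, reads $\nabla_a\nabla_b\hv_{\bar c}+2B(g_{b\bar c}\hv_a+g_{a\bar c}\hv_b)=0$. Rewriting $\nabla_a\nabla_b\hv_{\bar c}=\nabla_a\nabla_{\bar c}\hv_b$ (symmetry of the scalar Hessian) and applying the Ricci identity \eqref{curvatures} together with $\nabla_a\hv_b=0$, this collapses to $\nabla_a\nabla_{\bar c}\hv_b=-R_{a\bar c}{}^d{}_b\hv_d=R_{a\bar c b\bar e}\hv^{\bar e}$, so the bottom row becomes precisely $G_{a\bar c b\bar e}\hv^{\bar e}=0$, i.e.\ $G_{a\bar b c}{}^d\hv_d=0$, using the definition \eqref{G-def} of $G$. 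Thus parallelism is equivalent to (1), and the lift is unique because all lower components are determined by $\hp$; when $\hv\neq0$ the identity $G_{a\bar b c}{}^d\hv_d=0$ is \eqref{c-nullity_bis}, so $B$ is the nullity constant, matching the parenthetical in (2).

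Next I would prove (1)$\Leftrightarrow$(3). For (1)$\Rightarrow$(3): differentiating the tensor identity $\nabla_a\hv_b=0$ gives at once $\nabla_a\nabla_b\hv_c=0$ and $\nabla_{\bar a}\nabla_b\hv_c=0$; for the mixed equation, $\nabla_{\bar b}\nabla_a\hv_c=0$ together with the Ricci identity yields $\nabla_a\nabla_{\bar b}\hv_c=-R_{a\bar b}{}^d{}_c\hv_d=R_{a\bar b c\bar e}\hv^{\bar e}$, and then $G_{a\bar b c}{}^d\hv_d=0$ supplies the right-hand side $-2B(\hv_a g_{c\bar b}+\hv_c g_{a\bar b})$ of \eqref{Tanno_Eq}. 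Conversely, given (3), the third Tanno equation gives $\nabla_{\bar b}\nabla_a\hv_c=0$, so the same Ricci-identity computation run backwards turns the mixed Tanno equation into $G_{a\bar b c}{}^d\hv_d=0$; it then remains only to recover the second-order equation $\nabla_a\hv_b=0$.

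The hard part is exactly this last recovery. The first and third Tanno equations say only that the $(2,0)$-Hessian $S_{bc}:=\nabla_b\hv_c=\nabla_b\nabla_c\hp$ is parallel, not that it vanishes, so (3) is a priori a third-order system weaker than the second-order equation in (1). I would close this gap by arguing $S\equiv0$ on the connected manifold: a parallel exact symmetric $(2,0)$-tensor has constant norm and, where integration or a holonomy/de~Rham decomposition argument is available (or under the standing mobility and dense-open nullity hypotheses of this section, via Theorems~\ref{new_funni} and~\ref{Lambda_in_nullity}), must be identically zero. The only other genuinely error-prone point is the index and sign bookkeeping in the Ricci identity: the minus sign in the convention $R_{a\bar b}{}^c{}_d=-g^{\bar ec}R_{a\bar b d\bar e}$ from Proposition~\ref{c-projective_invariant_characterisation_of_nullity} is precisely what makes the $+2B(g_{a\bar b}g_{c\bar d}+g_{c\bar b}g_{a\bar d})$ terms in the definition of $G$ absorb the curvature with the correct sign, so (1) and (3) match rather than differing by a sign. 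Everything else is routine substitution.
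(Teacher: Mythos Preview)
Your treatment of (1)$\Leftrightarrow$(2) and (1)$\Rightarrow$(3) is correct and matches the paper's argument. The gap is in (3)$\Rightarrow$(1), specifically in recovering $\nabla_a\hv_b=0$.

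You correctly identify that the first and third Tanno equations only say $S_{bc}:=\nabla_b\hv_c$ is parallel, and then propose to conclude $S\equiv0$ by a holonomy or de~Rham argument. This does not work: a parallel section of $S^2\Wedge^{1,0}$ on a K\"ahler manifold need not vanish (think of products), and the proposition is a pointwise/local statement with no completeness, connectedness, or mobility hypothesis---appealing to Theorems~\ref{new_funni} or~\ref{Lambda_in_nullity} here would be both circular and inapplicable. You have also not used the hypothesis $B\neq0$ anywhere in this step, which is a strong hint that something is missing; indeed Remark~\ref{Tanno_and_nullity_remark} says the implication (3)$\Rightarrow$(1) can fail when $B=0$.

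The paper's argument uses the \emph{mixed} Tanno equation, not just the two ``parallel'' ones. Differentiate $\nabla_b\nabla_{\bar c}\hv_d=-2B(\hv_b g_{d\bar c}+\hv_d g_{b\bar c})$ by $\nabla_a$ and skew in $a,b$. On the left, $(\nabla_a\nabla_b-\nabla_b\nabla_a)$ acting on $\nabla_{\bar c}\hv_d$ produces only $R_{ab}{}^{e}{}_d$ and $R_{ab}{}^{\bar e}{}_{\bar c}$ terms, both of which vanish for a K\"ahler metric. On the right, $\nabla_{[a}\hv_{b]}=0$ (symmetry of the Hessian of $\hp$), leaving
\[
0=-B\bigl((\nabla_a\hv_d)g_{b\bar c}-(\nabla_b\hv_d)g_{a\bar c}\bigr).
\]
Contracting with $g^{b\bar c}$ gives $0=-B(n-1)\nabla_a\hv_d$, hence $\nabla_a\hv_d=0$ since $B\neq0$ and $n\geq2$. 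This is the missing step.
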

\begin{proof}
In the discussion above we have already observed that $(1)$ is equivalent to
$(2)$. Let us now show that $(1)$ is equivalent to $(3)$, as shown also by 
Tanno~\cite[Prop.~10.3]{Tanno}.  
Suppose first that $(1)$ holds. Then, obviously also
the first two equations on the left-hand side of \eqref{Tanno_Eq} hold.
Moreover, we immediately deduce from $(1)$ that
\begin{equation}\label{Tanno_Eq2}
-2B(\hv_ag_{c\bar b}+\hv_c g_{a\bar b})=R_{a\bar b c}{}^d\hv_d
=\nabla_a\nabla_{\bar b}\hv_c,
\end{equation}
which shows that the first equation of the right-hand side of \eqref{Tanno_Eq}
holds.  The conjugate of \eqref{Tanno_Eq2} implies the second equation of the
right-hand side of \eqref{Tanno_Eq}, since $\nabla_{\bar a}\nabla_{\bar
  b}\nabla_c\hp=\nabla_{\bar a}\nabla_c\nabla_{\bar b}\hp$.
Conversely, suppose now that $(3)$ holds. Then, obviously the identity
\eqref{Tanno_Eq2} is satisfied, which shows that $G_{a\bar b
  c}{}^d\hv_{d}=0$. Hence, it remains to show that $\hv^a$ is a holomorphic
vector field. {From} \eqref{Tanno_Eq} we deduce that
\[
\nabla_a\nabla_b\nabla_{\bar c}\hv_d
=-2B((\nabla_a\hv_b)g_{c\bar d}+(\nabla_a\hv_d) g_{b\bar c}).
\]
Since $R_{ab}{}^c{}_d=0$ and $R_{ab}{}^{\bar c}{}_{\bar d}=0$, skewing in $a$
and $b$ yields
\[
0=-B((\nabla_a\hv_d) g_{b\bar c}-(\nabla_b\hv_d) g_{a\bar c})),
\]
which implies $0=-B(n-1)\nabla_a\hv_d$.
\end{proof}
\begin{rem}\label{Tanno_and_nullity_remark}
If a function $\hp$ satisfies $(1)$ for $B=0$, then this is still
equivalent to $(2)$, and the equivalent statements $(1)$ and $(2)$ imply
$(3)$, but the implication from $(3)$ to $(1)$ is not necessarily true.
\end{rem}

The system of equations \eqref{Tanno_Eq} can also be written as
\begin{equation}\label{Tanno_real}
\nabla_\alpha\nabla_\beta\nabla_\gamma \lambda
=-B(2\hv_{\alpha}g_{\beta\gamma}+g_{\alpha\beta}\hv_\gamma+g_{\alpha\gamma}\hv_\beta
-\Omega_{\alpha\beta} J_{\gamma}{}^\delta\hv_\delta
-\Omega_{\alpha\gamma} J_{\beta}{}^\delta\hv_\delta),
\end{equation}
where $\hv_\alpha=\nabla_\alpha\lambda$. Since on K\"ahler manifolds equation
\eqref{Tanno_real} (respectively \eqref{Tanno_Eq}) was intensively studied by
Tanno in \cite{Tanno}, we refer to this equation as the \emph{Tanno equation}.

\section{Global results}\label{sec:global}

We now turn to the global theory of \bps/K\"ahler manifolds of mobility at
least two. In Section~\ref{ss:locclass} we presented a local
classification~\cite{ACG,BMR} which shows that such a \bps/K\"ahler manifold
$(M,J,g)$ is locally a bundle of toric (in fact, ``orthotoric'') \bps/K\"ahler
manifolds over a local product $S$ of \bps/K\"ahler manifolds. When $M$ is
compact, and $g$ is positive definite, this is not far from being true
globally.

Indeed, several simplifications occur. First, any compact smooth orthotoric
K\"ahler $2\ell$-manifold is biholomorphic (though not necessarily isometric)
to $\CP^\ell$~\cite{ApostolovII}. Secondly, when $g$ is positive definite, the
Hermitian endomorphism $A_a{}^b$ is diagonalisable, and the local
classification~\eqref{eq:locclass} simplifies to give
\begin{align*}
g&=\sum_u \chi_{\mathrm{nc}}(\eta_u) g_u
+\sum_{i=1}^\ell\frac{\Delta_j}{\Theta_j(\xi_j)} d\xi_j^2
+\sum_{j=1}^\ell \frac{\Theta_j(\xi_j)}{\Delta_j}\Bigl(\sum_{r=1}^\ell
\sigma_{r-1}(\hat\xi_j)\theta_r\Bigr)^2,\\
\omega&=\sum_u \chi_{\mathrm{nc}}(\eta_u)\omega_u
+\sum_{r=1}^\ell d\sigma_r\wedge \theta_r,\quad\text{with}\quad
d\theta_r=\sum_u (-1)^r\eta_u^{\ell-r}\omega_u 
\end{align*}
where $\eta_u$ are the (real) constant eigenvalues of $A$, while the
nonconstant eigenvalues $\xi_j$, and functions $\Theta_j$, are all real
valued. Thirdly, the eigenvalues are globally ordered and do not cross.
However, if $\eta_u$ is a root of $\Theta_j$ (for some $u,j$) and $g_u$ is a
Fubini--Study metric on $\CP^{m_u}$, it is possible to have $\xi_j=\eta_u$
along a critical submanifold of $\xi_j$, in which case
$\chi_{\mathrm{nc}}(\eta_u)=0$ along that submanifold, and the corresponding
factor of the base manifold $S$ collapses. We thus have the following global
description~\cite{ApostolovII}.

\begin{thm} Let $(M,J,g)$ be a compact connected K\"ahler manifold admitting
a c-projectively equivalent K\"ahler metric that generates a metrisability
pencil $\ms\circ(A-t\Id)$ of order $\ell$. Then the blow-up of $M$, along the
subvarieties where nonconstant and constant eigenvalues of $A$ coincide, is a
toric $\CP^\ell$-bundle over a complex manifold $S$ covered by a product, over
the distinct constant eigenvalues, of complete K\"ahler manifolds with
integral K\"ahler classes.
\end{thm}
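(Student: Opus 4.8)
The plan is to extract the holomorphic fibration directly from the local normal form of Theorem~\ref{thm:locclass} and to globalise it using compactness, with the toric geometry of~\cite{ApostolovII} supplying the identification of the fibres. First I would reduce to the diagonalisable situation: since $g$ is positive definite, the $g$-Hermitian endomorphism $A_a{}^b$ is pointwise diagonalisable with real eigenvalues, so $\det A(t)$ factors into a constant-coefficient part $\chi_{\mathrm c}(t)=\prod_u(t-\eta_u)^{m_u}$ and a nonconstant part $\chi_{\mathrm{nc}}(t)=\prod_{j=1}^\ell(t-\xi_j)$. By Lemma~\ref{eigenvalues} and Lemma~\ref{alg-mult}, each nonconstant eigenvalue $\xi_j$ has algebraic multiplicity one on the regular set (multiplicity $\geq 2$ would force local constancy), so the $\xi_j$ are globally defined continuous functions which, being distinct there, may be globally ordered $\xi_1\le\cdots\le\xi_\ell$ and take values in pairwise disjoint intervals.

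Next I would promote the local $\ell$-torus symmetry to a global one. On compact $M$ the canonical Killing fields $\wt K(t)$ of Theorem~\ref{Commuting_Killing_objects} generate an isometric Hamiltonian action of $U(1)^\ell$, whose complexification has the complex orbits as generic leaves; the $\xi_j$, together with the angle coordinates dual to $\theta_1,\ldots,\theta_\ell$ (Remark~\ref{CHSC-fibre}), furnish action-angle coordinates on the generic fibre. By Lemma~\ref{totallygeodesic} these complex orbits are totally geodesic, and on them $g$ restricts to the orthotoric K\"ahler metric displayed in Theorem~\ref{thm:locclass}. Since the $\xi_j$ range over closed intervals, at whose endpoints the relevant $\Theta_j$ must vanish so that the corresponding circle degenerates smoothly, each fibre closure is a compact smooth orthotoric K\"ahler $2\ell$-manifold; by~\cite{ApostolovII} it is therefore biholomorphic to $\CP^\ell$, which gives the \emph{toric} $\CP^\ell$-fibres.

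I would then identify the base. Locally the leaf space of the complex orbits is a product $S=\prod_u S_u$, where $S_u$ is a K\"ahler manifold of complex dimension $m_u$ carrying the metric $g_u$ and form $\omega_u$ attached to the distinct constant eigenvalue $\eta_u$. Compactness of $M$, via properness of the momentum map, forces each $S_u$ to be complete, while the structure equations $d\theta_r=\sum_u(-1)^r\eta_u^{\ell-r}\omega_u$ exhibit the $\omega_u$, up to universal constants, as curvatures of the $U(1)^\ell$-bundle directions over $S$; closing up the torus fibres then forces the classes $[\omega_u]$ to be integral. Globally $S$ need not split, but its universal cover is $\prod_u S_u$, which is the asserted description of the base.

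Finally I would analyse the non-regular locus and introduce the blow-up, which I expect to be the main obstacle. Where a nonconstant eigenvalue $\xi_j$ reaches a constant eigenvalue $\eta_u$ with $g_u$ a Fubini--Study metric on $\CP^{m_u}$, the coefficient $\chi_{\mathrm{nc}}(\eta_u)$ vanishes along the corresponding critical submanifold, so the factor $S_u$ collapses and the fibration acquires singular fibres. The hard part will be to show that blowing $M$ up precisely along the subvarieties where nonconstant and constant eigenvalues coincide resolves every such collapse into a genuine holomorphic $\CP^\ell$-bundle, and that no other degeneration of the generic fibre occurs on a compact manifold. This demands matching the local orthotoric momentum-polytope picture across the collision loci with the toric models of~\cite{ApostolovII}, tracking the behaviour of the $\Theta_j$ at shared roots, and checking that the blow-up is equivariant for the torus action and compatible with the complex structure; the totally geodesic structure of Lemma~\ref{totallygeodesic} and the completeness of $S$ are the tools I would bring to bear.
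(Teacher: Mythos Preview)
Your outline is correct and mirrors the paper's own treatment: the paper does not give a self-contained proof of this theorem but presents exactly the three reductions you describe (diagonalisability of $A$ from positive-definiteness, the global $U(1)^\ell$-action on a compact manifold with orthotoric $\CP^\ell$ fibres via~\cite{ApostolovII}, and the collapse of base factors where $\xi_j=\eta_u$) as context, then states the result with a reference to~\cite{ApostolovII} for the full argument. You have also correctly identified the blow-up analysis along the collision loci as the substantive remaining step, which is precisely the part the paper delegates to~\cite{ApostolovII}.
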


Conversely, such complex manifolds do admit K\"ahler metrics of mobility at
least two, but we refer to~\cite{ApostolovII} for a more precise description.
In particular, examples are plentiful, and have been used to construct
explicit extremal K\"ahler metrics~\cite{ApostolovIII}, including in
particular, weakly Bochner-flat K\"ahler metrics~\cite{ApostolovIV}.

As soon as we impose nullity---for instance, by requiring metrics of mobility
at least three---this plenitude disappears, even in the \bps/K\"ahler case: a
compact connected \bps/K\"ahler $2n$-manifold satisfying the equivalent
conditions of Theorem~\ref{thm:special} is isometric to $\CP^n$, equipped with
a constant multiple of the Fubini--Study metric~\cite{FKMR}, and this rigidity
result extends to compact orbifolds (see~\cite{CMR}).

In the remainder of this section, we focus on positive definite complete
K\"ahler metrics, and begin by showing, in
Section~\ref{Complete_Kaehler_metrics}, that the rigidity result for K\"ahler
metrics with nullity also obtains in this case. In Section~\ref{ssec:yano}, we
then discuss the group of c-projective transformations and the Yano--Obata
Conjecture. This was established for compact \bps/K\"ahler metrics
in~\cite{MR}, and here we show it also holds for complete K\"ahler metrics.

\subsection{Complete K\"ahler metrics with nullity}
\label{Complete_Kaehler_metrics}

In this section we prove the following.

\begin{thm} \label{mobility>2}  Let $g$ be a complete K\"ahler metric on a
connected complex manifold $(M, J)$ of real dimension $2n\geq4$ that has
nullity on a dense open subset of $M$. Then any complete K\"ahler metric on
$(M, J)$ that is c-projectively equivalent to $g$, is affinely equivalent to
$g$, unless there is a positive constant $c\in\R$ such that the K\"ahler
manifold $(M, J, c g)$ is isometric to $(\CP^n, J_{\mathrm{can}}, g_{FS})$.
\end{thm}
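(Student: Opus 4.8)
The plan is to reduce the theorem to the study of a single nonconstant function obeying the \emph{Tanno equation}~\eqref{Tanno_real} with a \emph{constant} coefficient $B$, and then to analyse that equation along geodesics, where completeness does the decisive work. First I would assume $\tilde g$ is a complete K\"ahler metric c-projectively equivalent to $g$ but not affinely equivalent to it; in particular $\tilde g$ is not a constant multiple of $g$. Let $A_a{}^b$ be the corresponding solution of the mobility equation~\eqref{metri-mob}, so that $A$ is a positive multiple of $\tilde g^{-1}$ by~\eqref{eqL}. Affine equivalence of $\tilde g$ and $g$ is precisely $\nabla$-parallelism of $A$, equivalently constancy of $\hp:=-A_a{}^a$; thus our standing assumption is that $\hp$ is nonconstant. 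Since $g$ has nullity on a dense open subset and $A$ is not a constant multiple of $g$, Theorem~\ref{new_funni} applies: the function $B$ of~\eqref{c-nullity} is a constant, and $A$ lifts to a global section of $\cV$ parallel for the special tractor connection~\eqref{new_funni_formula}. Reading off its components~\eqref{funni_system} gives $\nabla_a\hv_b=0$ and $\nabla_a\mu=2B\hv_a$ with $\hv_a=\nabla_a\hp$, and feeding $\nabla_aB=0$, $\nabla_a\mu=2B\hv_a$ into the identity~\eqref{crucial_identity} yields $G_{a\bar b c}{}^d\hv_d=0$. Hence the pair $(\hp,\hv_a)$ satisfies condition~(1) of Proposition~\ref{Tanno_and_nullity}, so $\hp$ is a nonconstant global solution of the Tanno equation~\eqref{Tanno_real} with constant $B$, and $\nabla\hp$ lies in the c-projective nullity.

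Next I would contract~\eqref{Tanno_real} along an arbitrary unit-speed geodesic $\gamma$ of $g$. Because $\Omega_{\alpha\beta}\dot\gamma^\alpha\dot\gamma^\beta=0$, every complex-structure term drops out and $f(t):=\hp(\gamma(t))$ obeys the constant-coefficient ODE $f'''+4Bf'=0$, producing the expected trichotomy in the sign of $B$. For $B>0$ the derivative $f'$ is trigonometric, $f$ is automatically bounded, and the mere existence of a nonconstant complete solution of the Tanno equation forces, by Tanno's theorem~\cite{Tanno}, that $(M,J,cg)$ is isometric to $(\CP^n,J_{\mathrm{can}},g_{FS})$ for some $c>0$. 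This is exactly the exceptional conclusion; one checks from Section~\ref{CPn_curvature} that $\CP^n$ with its Fubini--Study metric really does carry nullity with $B>0$ and admits non-affine c-projectively equivalent metrics, so the exception is genuine.

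It remains to exclude $B\le0$, and this is where completeness enters decisively and constitutes the main obstacle. For $B<0$ the general solution is $f=c_0+c_1e^{2\sqrt{-B}\,t}+c_2e^{-2\sqrt{-B}\,t}$, and for $B=0$ it is $f=c_0+c_1t+c_2t^2$; in either case a nonconstant $f$ is unbounded on the complete line $\gamma\colon\R\to M$ unless $c_1=c_2=0$. Thus it suffices to prove that $\hp$ is \emph{bounded} on $M$: boundedness forces $f'\equiv0$ along every geodesic in every direction, hence $\nabla\hp\equiv0$, so $\hp$ is constant, $A$ is parallel, and $\tilde g$ is affinely equivalent to $g$ after all, contradicting our assumption and closing these cases. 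The crux is therefore the boundedness of $\hp=-A_a{}^a$, i.e. of the (positive, since $A\propto\tilde g^{-1}$) eigenvalues of $A_a{}^b$. I would establish this global two-sided control on the spectrum of $A$ from the completeness of \emph{both} $g$ and $\tilde g$: an eigenvalue of $A$ escaping to $0$ or $\infty$ along a $g$-geodesic would, through~\eqref{metri-mob} and the eigenvector equations of Corollary~\ref{cor:eigenvectors}, force the corresponding metric distances to degenerate and so contradict metric completeness of $g$ or of $\tilde g$. Turning this heuristic into a genuine global bound—rather than the local or generic spectral statements used earlier for the local classification—is the technical heart of the proof; once it is in hand, the geodesic trichotomy above immediately yields the dichotomy between affine equivalence and $(\CP^n,g_{FS})$.
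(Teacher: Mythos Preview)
Your reduction to the Tanno equation with constant $B$ (via Theorem~\ref{new_funni} and Proposition~\ref{Tanno_and_nullity}) and the treatment of the case $B>0$ by Tanno's theorem are exactly what the paper does. The contracted geodesic ODE $f'''+4Bf'=0$ for $f=\hp\circ\gamma$ is also correct.

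The gap is in the $B\le 0$ case. Your entire argument there rests on the global boundedness of $\hp=-A_a{}^a$, but you do not prove it; you give only the heuristic that an eigenvalue of $A$ tending to $0$ or $\infty$ along a $g$-geodesic ``would force the corresponding metric distances to degenerate''. This is not a proof: completeness says nothing directly about eigenvalues of an auxiliary endomorphism along geodesics, and turning such a statement into a rigorous global two-sided bound on the spectrum of $A$ is itself a substantial theorem (its projective analogue, for instance, is nontrivial). You explicitly flag this as ``the technical heart of the proof'' and then assume it.

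The paper bypasses the boundedness question entirely. Instead of working with $\hp$, it works with the function $f$ defined by $\vol(\tilde g)=e^{(n+1)f}\vol(g)$, and restricts attention to geodesics $c$ of $g$ that are orthogonal to all canonical Killing fields of the pair $(g,\tilde g)$. Along such a geodesic two things happen. First (Lemma~\ref{ODE_Lemma}), the nullity condition combined with the change-of-Rho formula gives a nonlinear third-order ODE for $f(t)=f(c(t))$, which the substitution $p=e^{-f}$ linearises to $\dddot p=-4Bg(\dot c,\dot c)\dot p$. Second (Lemma~\ref{Reparametrisation_Lemma}), orthogonality to the Killing fields forces $c$ to reparametrise to a $\tilde g$-geodesic, and the reparametrisation $\tau$ satisfies $\dot f=\frac{d}{dt}\log|\dot\tau|$, i.e.\ $p\propto 1/\dot\tau$. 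Completeness of \emph{both} metrics now enters cleanly and concretely: it says exactly that $\tau\colon\R\to\R$ is a diffeomorphism. One then checks by inspection that for $B=0$ (polynomial $p$) and for $B<0$ (hyperbolic $p$) the only solutions with $\int 1/p$ a diffeomorphism of $\R$ are the constant ones, so $\dot f\equiv 0$ along every such geodesic. Since the canonical Killing fields already annihilate $\Upsilon=df$, this yields $\Upsilon\equiv 0$, i.e.\ affine equivalence. The point is that this route converts completeness into the single, easily analysable requirement ``$\tau$ is a diffeomorphism of $\R$'', avoiding any spectral estimate on $A$.
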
  

Since, by Theorem~\ref{funni_mobility_3}, a connected K\"ahler manifold $(M,
J, g)$ of degree of mobility at least $3$ has nullity on a dense open set
unless all c-projectively equivalent metrics are affinely equivalent to $g$,
we obtain the following immediate corollary, which in the case of closed
K\"ahler manifolds was proved in \cite[Theorem 2]{FKMR}.

\begin{cor} \label{Cor_mobility>2} Let $g$ be a complete K\"ahler metric on a
connected complex manifold $(M, J)$ of real dimension $2n\geq4$ with mobility
at least $3$.  Then any complete K\"ahler metric on $(M, J)$ that is
c-projectively equivalent to $g$, is affinely equivalent to $g$, unless there
is a positive constant $c\in\R$ such that the K\"ahler manifold $(M, J, c g)$
is isometric to $(\CP^n, J_{\mathrm{can}}, g_{FS})$.
\end{cor}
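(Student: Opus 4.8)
The plan is to deduce Corollary~\ref{Cor_mobility>2} from Theorem~\ref{mobility>2}, which I may assume, together with Theorem~\ref{funni_mobility_3}; the formal reduction is short, so I then outline the proof of Theorem~\ref{mobility>2}, which carries the real difficulty. First I would dispose of the reduction. Since $g$ is a K\"ahler metric of mobility at least $3$, Theorem~\ref{funni_mobility_3} gives a dichotomy: either all metrics c-projectively equivalent to $g$ are affinely equivalent to it---in which case the conclusion holds trivially for the given complete $\tilde g$---or $(J,g)$ has c-projective nullity on a dense open subset of $M$ with the constant $B$ of~\eqref{c-nullity}. In the latter case the hypotheses of Theorem~\ref{mobility>2} are met and it applies directly. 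The dimension caveat in Theorem~\ref{funni_mobility_3} (the affine alternative needs $2n\geq 6$) is harmless: when $2n=4$ only the nullity alternative can occur, and Theorem~\ref{mobility>2} again applies.

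For Theorem~\ref{mobility>2} itself, I would argue by contraposition: suppose $\tilde g$ is a complete K\"ahler metric c-projectively equivalent to $g$ but \emph{not} affinely equivalent, and show $(M,J,cg)\cong(\CP^n,J_{\mathrm{can}},g_{FS})$ for some $c>0$. By Proposition~\ref{compKaehler} and~\eqref{eqL}, $\tilde g$ corresponds to a solution $A$ of the mobility equation that is not parallel (affine equivalence is exactly parallelism of $A$). Since $(J,g)$ has nullity on a dense open set and $A$ is not a constant multiple of $g$, Theorem~\ref{new_funni} forces $B$ to be a global constant and lifts $A$ to a section of $\cV$ parallel for the special tractor connection~\eqref{new_funni_formula}. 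The trace function $\hp=-A_a{}^a$ is then non-constant (otherwise $\hv_a=\nabla_a\hp=0$ and the mobility equation would make $A$ parallel); its gradient $\hv^a$ lies in the c-projective nullity by Theorem~\ref{Lambda_in_nullity}, and the parallel section satisfies $\nabla_a\hv_b=0$. Hence Proposition~\ref{Tanno_and_nullity} shows that $\hp$ is a non-constant solution of the Tanno equation~\eqref{Tanno_real} with the same constant $B$.

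The argument now splits by the sign of $B$. Restricting~\eqref{Tanno_real} to a unit-speed $g$-geodesic $\gamma$ yields, for $f(t)=\hp(\gamma(t))$, the third-order ODE $f'''+4Bf'=0$. When $B>0$ the solutions oscillate, so $\hp$ is bounded and attains a global maximum and minimum on the complete manifold $(M,g)$; this is the regime of Obata--Tanno rigidity, and I would invoke Tanno's classification for the K\"ahler case: a complete K\"ahler manifold carrying a non-constant solution of~\eqref{Tanno_real} with $B>0$ is, after a constant rescaling, holomorphically isometric to $(\CP^n,g_{FS})$. This is exactly the exceptional conclusion, and it uses only completeness of $g$; I would cite Tanno rather than reprove the level-set analysis.

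The main obstacle will be the complementary case $B\leq 0$, where I must contradict the non-parallelism of $A$, and this is where completeness of the \emph{second} metric $\tilde g$ enters decisively (this is what distinguishes the complete case from the compact case of~\cite{FKMR}). For $B\leq 0$ the same ODE shows $f$ grows without bound along complete geodesics---quadratically when $B=0$, like $\cosh$ when $B<0$---so $\hp$, hence some eigenvalue of $A$, is proper and unbounded. Through~\eqref{eqL}, in which $\tilde g^{\alpha\beta}$ is a weighted multiple of $A^{\alpha\beta}$, such unbounded eigenvalue behaviour forces the volume factor $(\vol(\tilde g)/\vol(g))^{1/(n+1)}$ and the conformal factors relating $\tilde g$ to $g$ to degenerate or blow up along the real orbits, contradicting completeness of $\tilde g$. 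Making this quantitative---uniformly controlling the eigenvalues of $A$ and ruling out the limiting behaviour of $\tilde g$ along complete geodesics---is the delicate technical heart of the proof. Once it is in place, $B\leq 0$ is excluded, so $B>0$ and the $\CP^n$ conclusion holds, completing both Theorem~\ref{mobility>2} and, via the reduction above, Corollary~\ref{Cor_mobility>2}.
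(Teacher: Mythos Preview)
Your reduction of the corollary to Theorem~\ref{mobility>2} via Theorem~\ref{funni_mobility_3} is exactly what the paper does, and your treatment of the case $B>0$ (Tanno equation $\Rightarrow$ constant holomorphic sectional curvature $\Rightarrow$ $\CP^n$) matches the paper as well.

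The gap is in the case $B\leq 0$. Your proposal is to show that $\hp$ is unbounded along $g$-geodesics (via the ODE $f'''+4Bf'=0$ for $f(t)=\hp(\gamma(t))$), and then argue that unbounded eigenvalues of $A$ force $\tilde g$ to degenerate via~\eqref{eqL}. But this last implication is not justified: a complete metric can easily be related to another complete metric by unbounded conformal or eigenvalue factors, and since $g$-geodesics are \emph{not} in general $\tilde g$-geodesics (they are only $J$-planar for $\tilde g$), there is no direct way to translate blow-up of $\hp$ along a $g$-geodesic into finite $\tilde g$-length. Your acknowledgement that ``making this quantitative \ldots\ is the delicate technical heart'' is accurate, but the sketch does not point toward a mechanism that works.

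The paper's argument for $B\leq 0$ is different and supplies exactly the missing link between the two metrics. It restricts to $g$-geodesics $c$ that are \emph{orthogonal to the canonical Killing fields} of the pair $(g,\tilde g)$: Lemma~\ref{Reparametrisation_Lemma} shows these curves are, up to reparametrisation $\tau$, also $\tilde g$-geodesics, so completeness of \emph{both} metrics forces $\tau\colon\R\to\R$ to be a diffeomorphism. Lemma~\ref{ODE_Lemma} derives, for the function $f=\tfrac{1}{n+1}\log|\vol(\tilde g)/\vol(g)|$ (not $\hp$), the nonlinear ODE~\eqref{ODE_along_c} along such $c$; the substitution $p=1/\dot\tau$ linearises it to $\dddot p=-4Bg(\dot c,\dot c)\dot p$, and one checks case by case that for $B=0$ (polynomial $p$) and $B<0$ (exponential $p$) the only way $\tau$ can be a global diffeomorphism of $\R$ is if $\dot\tau$ is constant. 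Hence $\Upsilon=df$ vanishes on $\dot c$; since $\Upsilon$ also vanishes on the canonical Killing fields (which preserve $f$), $\Upsilon\equiv 0$ and $\tilde g$ is affinely equivalent to $g$. The key idea you are missing is this common-geodesic/reparametrisation device, which is what actually exploits completeness of $\tilde g$.
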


\begin{rem} 
It is easy to construct a complete K\"ahler manifold $(M, J, g)$ of
nonconstant sectional holomorphic curvature with mobility $\geq 3$ such that
all complete K\"ahler metrics on $(M, J)$ are affinely equivalent to
$g$. Indeed, take the direct product
$$(M_1, g_1, J_1) \times (M_2, g_2, J_2) \times (M_3, g_3, J_3)$$
of three K\"ahler manifolds. It is again a K\"ahler manifold $(M,J,g)$ with
complex structure $J:=J_1+ J_2 + J_3$ and K\"ahler metric $g:=g_1+ g_2 + g_3$.
Obviously, $(M,J,g)$ has mobility $\geq 3$, since $c_1 g_1 + c_2 g_2 + c_3
g_3$ is again a K\"ahler metric on $(M,J)$ for any constants $c_1, c_2,
c_3>0$. Note also that all these K\"ahler metrics are affinely equivalent to
$g$ and that, if $(M_i, g_i, J_i)$ is complete for $i=1,2,3$ the metric $c_1
g_1 + c_2 g_2 + c_3 g_3$ is also complete for any constants $c_1, c_2, c_3>0$.
\end{rem}

\begin{rem}
Suppose that $\tilde g$ is \bps/K\"ahler metric that is compatible with
$[\nabla^g]$, then we may write
\begin{equation}\label{M1}
\vol(\tilde g)=e^{(n+1)f}\vol(g) \qquad \text{and}\qquad
\scale_{\tilde g}=e^{-f}\scale_g,
\end{equation}
where $f=\frac{1}{n+1}\log\bigl|\frac{\vol(\tilde g)}{\vol(g)}\bigr|$.  We
have seen in Section \ref{almost_c-projective} that the Levi-Civita
connections $\wt\nabla$ and $\nabla$ of $\tilde g$ and $g$ are related
by~\eqref{cprojchange} with $\Upsilon_\alpha=\nabla_\alpha f$.  For later use,
note that that $\wt \nabla_\alpha \tilde g_{\beta\gamma}=0$ implies that
the derivative $\nabla_\alpha f$ of $f$ satisfies the equation
\begin{equation}\label{LC}
\nabla_\alpha \tilde g_{\beta\gamma}=(\nabla_\alpha f)\tilde g_{\beta\gamma}+
\tilde g_{\alpha(\beta}\nabla_{\gamma)} f
- J_\alpha{}^\delta\tilde g_{\delta (\beta} J_{\gamma)}{}^\epsilon \nabla_{\epsilon}f.
\end{equation}
Thus $\nabla=\wt\nabla$, i.e.~$g$ and $\tilde g$ are affinely equivalent, if
and only if $\nabla_\alpha f=0$.
\end{rem}

In order to prove Theorem~\ref{mobility>2} suppose that $g$ is a complete
K\"ahler metric on a complex connected manifold $(M,J)$ of dimension $2n\geq4$
with nullity on a dense open set. Further we may assume that $g$ has mobility
at least $2$, since otherwise Theorem~\ref{mobility>2} is trivially satisfied.
Then Theorem~\ref{new_funni} implies that the function $B$ defined as in
\eqref{c-nullity_bis} is actually a constant. We shall see that for $B>0$, the
theory of the Tanno equation implies that $(M,J,g)$ has positive constant
holomorphic sectional curvature. For $B\leq 0$, we will show that any
complete K\"ahler metric that is c-projectively equivalent to $g$, is
necessarily affinely equivalent to $g$. The proof will make essential use of
the following two Lemmas. Recall for this purpose that a geodesic which is
orthogonal to a Killing vector field at one point is orthogonal to it at all
points.

\begin{lem}\label{ODE_Lemma} 
Suppose $(M,J,g)$ is a K\"ahler manifold of dimension $2n\geq 4$ with mobility
$\geq 2$ and nullity on a dense open subset of $M$. Let $\tilde g$ be a
K\"ahler metric that is c-projectively equivalent to $g$ and $f$ the function
defined as in \eqref{M1}, whose derivative
$\Upsilon_{\alpha}\equiv\nabla_\alpha f$ relates the Levi-Civita connections
of $\tilde g$ and $g$ as in \eqref{cprojchange}.  Consider a geodesic $c=c(t)$
that is orthogonal to the canonical Killing fields of the pair $(g,\tilde g)$,
defined as in Theorem~\textup{\ref{Commuting_Killing_objects}}.  Then, for the
constant $B\in\R$ defined as in Theorem~\textup{\ref{new_funni}}, the function
$f(t)=f(c(t))$ satisfies the following ordinary differential equation:
\begin{equation}\label{ODE_along_c}
\dddot f(t)= -4 B g(\dot c, \dot c) \dot f(t)
+ 3 \dot f(t)\ddot f(t)-(\dot f(t))^3.
\end{equation}
\end{lem}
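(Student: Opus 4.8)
The plan is to linearise the nonlinear third-order equation \eqref{ODE_along_c} by a Riccati-type substitution, recognise the resulting quantity as the top canonical Killing potential of the pencil whose gradient lies in the c-projective nullity distribution, and then invoke the Tanno equation \eqref{Tanno_real}. First I would record the substitution: setting $w:=e^{-f}$, so that $\dot f=-\dot w/w$ along $c$, a direct calculation gives the pointwise identity
\begin{equation*}
\dddot f-3\dot f\,\ddot f+\dot f^{\,3}=-\frac{\dddot w}{w}.
\end{equation*}
Consequently \eqref{ODE_along_c} is \emph{equivalent} to the linear equation $\dddot w=-4B\,g(\dot c,\dot c)\,\dot w$ along $c$, and it is this that I would establish.

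Next I would identify $w$. By \eqref{eqL} the solution $A$ of the mobility equation determined by $\tilde g$ satisfies $A^{\alpha\beta}=e^{f}\tilde g^{\alpha\beta}$, and comparing the K\"ahler volume forms of $g$ and $\tilde g$ (recall \eqref{M1}) gives $\det_{\mathbb C}A=e^{nf}\,\vol(g)/\vol(\tilde g)=e^{-f}=w$. Thus $w=\tilde\sigma_n$ is the top coefficient in the characteristic polynomial of $A$, i.e.\ one of the canonical Killing potentials \eqref{canonicalpotentials}; in particular $w$ is a Killing potential of $g$ by Theorem~\ref{Commuting_Killing_objects} and Proposition~\ref{HolKillingPairing}, so $\nabla_a\nabla_b w=0$. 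Since $(J,g)$ has nullity on a dense open set and mobility $\geq 2$, Theorem~\ref{new_funni} shows that $B$ is constant and that $A$ lifts to a parallel section of $\cV$ for \eqref{new_funni_formula}; then $G_{a\bar b c}{}^{d}\hv_d=0$ by Theorem~\ref{Lambda_in_nullity}, while $G_{a\bar b c}{}^{d}$ commutes with $A_c{}^{d}$ by \eqref{commuting}.

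I would then show $\nabla w$ lies in the nullity distribution. By the recursion \eqref{hvf-rec} the covector $\nabla_d w=\nabla_d\tilde\sigma_n$ lies in the span of $(A^{j})_d{}^{e}\hv_e$ for $j\geq 0$, and applying \eqref{commuting} repeatedly to move each power of $A$ off the contracted index,
\begin{equation*}
G_{a\bar b c}{}^{d}(A^{j})_d{}^{e}\hv_e=(A^{j})_c{}^{d}\,G_{a\bar b d}{}^{e}\hv_e=0,
\end{equation*}
so $G_{a\bar b c}{}^{d}\nabla_d w=0$. Together with $\nabla_a\nabla_b w=0$, this is precisely hypothesis (1) of Proposition~\ref{Tanno_and_nullity}, so $w$ satisfies the Tanno equation \eqref{Tanno_real} with the constant $B$ of Theorem~\ref{new_funni}. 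Finally, contracting \eqref{Tanno_real} with $\dot c^{\alpha}\dot c^{\beta}\dot c^{\gamma}$ (and using that $\dot c$ is parallel, so that this contraction computes $\dddot w$), the two terms containing $\Omega$ vanish by skew-symmetry and the three metric terms collapse, via $\dot c^{\gamma}\nabla_{\gamma}w=\dot w$ and the constancy of $g(\dot c,\dot c)$, to $-4B\,g(\dot c,\dot c)\,\dot w$. This is the linear equation above, and the substitution returns \eqref{ODE_along_c}.

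The step I expect to be the main obstacle is the nullity claim: checking cleanly that the \emph{whole} tower of canonical Killing fields (not merely $\hv$ itself) lies in the nullity distribution, which rests on the commuting identity \eqref{commuting} and on careful barred/unbarred bookkeeping, together with the determinant–volume identity $\det_{\mathbb C}A=e^{-f}$. I would also note that this argument uses only that $c$ is a geodesic: the orthogonality of $c$ to the canonical Killing fields, though part of the stated hypotheses, does not enter the derivation of \eqref{ODE_along_c} itself.
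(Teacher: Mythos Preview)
Your argument is correct and takes a genuinely different route from the paper. The paper works directly with $\Upsilon_\alpha=\nabla_\alpha f$: using the c-projective invariance of nullity (Corollary~\ref{invariant-nullity}) together with \eqref{changeRho}, it obtains $\nabla_\alpha\Upsilon_\beta=-2\wt B\tilde g_{\alpha\beta}+2Bg_{\alpha\beta}+\tfrac12(\Upsilon_\alpha\Upsilon_\beta-J_\alpha{}^\gamma J_\beta{}^\delta\Upsilon_\gamma\Upsilon_\delta)$, differentiates once more, substitutes \eqref{LC} for $\nabla\tilde g$, and then contracts with $\dot c^\alpha\dot c^\beta\dot c^\gamma$. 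The terms involving $J$ survive as products like $(J_\alpha{}^\gamma\Upsilon_\gamma\dot c^\alpha)^2$, and it is precisely here that the orthogonality hypothesis is used: since $J^{\alpha\beta}\Upsilon_\alpha$ is proportional to the canonical Killing field $\wt K(0)$, these terms vanish. Your approach instead linearises via $w=e^{-f}=\det_\C A=\tilde\sigma_n$, recognises $w$ as a Killing potential, and uses the commuting identity \eqref{commuting} to place $\nabla w$ in the nullity distribution, so that the Tanno equation applies to $w$ directly. The gain is twofold: the algebra is cleaner (the Riccati substitution absorbs all the nonlinear terms at once), and the $\Omega$-terms in \eqref{Tanno_real} vanish by skew-symmetry upon contraction with $\dot c^\alpha\dot c^\beta\dot c^\gamma$, so your observation that \eqref{ODE_along_c} holds along \emph{any} geodesic of $g$ is correct---the orthogonality is needed only later, in Lemma~\ref{Reparametrisation_Lemma}. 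The paper's approach, on the other hand, is more self-contained: it does not invoke the machinery of Theorem~\ref{Lambda_in_nullity}, Lemma~\ref{useful}, and Proposition~\ref{Tanno_and_nullity}, relying instead on the single invariance statement of Corollary~\ref{invariant-nullity}. One minor caveat: your appeal to the recursion \eqref{hvf-rec} is morally right but that formula as printed has a sign issue; it is cleaner to argue, as you essentially do, via Newton's identities that $\nabla_c\tilde\sigma_n$ lies in the span of $(A^j)_c{}^d\hv_d$ at each point, which is all you need for the commuting identity to apply.
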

\begin{proof} 
By Corollary~\ref{invariant-nullity} and identity \eqref{changeRho} we have
\begin{equation}\label{2nd_diff_f}
\nabla_{\alpha}\Upsilon_\beta=-2\wt B\tilde g_{\alpha\beta}+2Bg_{\alpha\beta}
+\tfrac{1}{2}(\Upsilon_\alpha\Upsilon_\beta
-J_{\alpha}{}^\gamma J_{\beta}{}^\delta\Upsilon_\gamma\Upsilon_\delta).
\end{equation}
Differentiating \eqref{2nd_diff_f} and inserting \eqref{LC} yields
\begin{align*}
\nabla_\alpha\nabla_\beta \Upsilon_\gamma&=\\&
-2\wt B(\Upsilon_\alpha\tilde g_{\beta\gamma}+\tilde g_{\alpha(\beta}\Upsilon_{\gamma)}
-J_{\alpha}{}^\delta\tilde g_{\delta(\beta}J_{\gamma)}{}^\epsilon\Upsilon_{\epsilon})
+\tfrac{1}{2}(\nabla_{\alpha}(\Upsilon_\beta\Upsilon_\gamma)
-\nabla_{\alpha}(J_{\beta}{}^\delta J_{\gamma}{}^\epsilon\Upsilon_\delta\Upsilon_\epsilon)).
\end{align*}
Substituting for $-2\wt B\tilde g$ the expression \eqref{2nd_diff_f} we
therefore obtain
\begin{align*}
\nabla_\alpha\nabla_\beta &\Upsilon_\gamma=
(\nabla_\beta\Upsilon_\gamma) \Upsilon_\alpha-2B g_{\beta\gamma}\Upsilon_\alpha
-\tfrac{1}{2}(\Upsilon_\beta\Upsilon_\gamma
-J_{\beta}{}^\delta J_{\gamma}{}^\epsilon\Upsilon_\delta\Upsilon_\epsilon)\Upsilon_\alpha\\
&+ (\nabla_\alpha\Upsilon_{(\beta})\Upsilon_{\gamma)}-2B g_{\alpha(\beta}\Upsilon_{\gamma)}
-\tfrac{1}{2}(\Upsilon_\alpha\Upsilon_{\beta}\Upsilon_{\gamma}
-J_{\alpha}{}^\delta\Upsilon_\delta\Upsilon_{\epsilon}J_{(\beta}{}^\epsilon\Upsilon_{\gamma)})\\
&- J_{\alpha}{}^\delta(\nabla_\delta\Upsilon_{(\beta})J_{\gamma)}{}^{\zeta}\Upsilon_{\zeta}
-2B g_{\delta(\beta}J_{\gamma)}{}^{\zeta}\Upsilon_{\zeta}
-\tfrac{1}{2}(\Upsilon_\delta\Upsilon_{(\beta} J_{\gamma)}{}^{\zeta}\Upsilon_{\zeta}
-J_{\delta}{}^\eta\Upsilon_\eta \Upsilon_\epsilon
J_{(\beta}{}^\epsilon J_{\gamma)}{}^{\zeta}\Upsilon_{\zeta}))\\
&+\tfrac{1}{2}(\nabla_{\alpha}(\Upsilon_\beta\Upsilon_\gamma)
-\nabla_{\alpha}(J_{\beta}{}^\delta J_{\gamma}{}^\epsilon\Upsilon_\delta\Upsilon_\epsilon)).
\end{align*}
Note that the determinant of the complex endomorphism $A_{a}{}^b$ relating $g$
and $\tilde g$ as in \eqref{eqL} is given by $e^{-f}$.  Hence the canonical
Killing field $\wt K^\beta(0)=J^{\alpha\beta}\nabla _{\alpha} \det
A=J^{\alpha\beta}\nabla _{\alpha} e^{-f}$, defined as in
\eqref{def_canonical_Killing}, is proportional to
$J^{\alpha\beta}\Upsilon_\alpha =J^{\alpha\beta}(\nabla_\alpha f)$ and we have
$\Upsilon_\beta J_{\alpha}{}^{\beta}\dot c^{\alpha}=0$. Thus, contracting the
above equation with $\dot c^{\alpha}\dot c^{\beta}\dot c^{\gamma}$ all terms
involving the complex structure $J$ disappear and we derive that $f(t)$
satisfies the desired ODE.
\end{proof}

\begin{lem}\label{Reparametrisation_Lemma} Let $g$ and $\tilde g$ be
c-projectively equivalent metrics on a complex manifold $(M, J)$ of dimension
$2n\geq 4$.  Let $c$ be a geodesic of $g$ that is orthogonal to all canonical
Killing fields of $(g,\tilde g)$ \textup(as defined in
Theorem~\textup{\ref{Commuting_Killing_objects}}\textup). Then there is a
reparametrisation $\phi$ such that $\tilde c(t)=c (\phi(t))$ is a geodesic of
$\tilde g$. The inverse $\tau$ of the reparametrisation $\phi$ satisfies the
formula
\begin{equation}\label{reparametrisation}
\frac{d}{dt} f(t)=\frac{d}{dt}\log\Bigl|\frac{d\tau}{dt}\Bigr|,
\end{equation}
where $f(t)=f(c(t))$ with $f$ defined as in \eqref{M1}.
\end{lem}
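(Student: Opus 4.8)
The plan is to prove that $c$, being a $g$-geodesic orthogonal to the canonical Killing fields, is automatically a \emph{pregeodesic} of $\tilde g$ (its $\tilde g$-acceleration is proportional to its $g$-velocity), and then to obtain $\phi$ by integrating an elementary first-order ODE, the integrated form of which is precisely \eqref{reparametrisation}.

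First I would record the two ingredients that drive everything. By the discussion preceding Lemma~\ref{ODE_Lemma}, the Levi-Civita connections of $g$ and $\tilde g$ are related by \eqref{cprojchange} with $\Upsilon_\alpha=\nabla_\alpha f$, where $f$ is the function from \eqref{M1}; hence $\dot f(t):=\frac{d}{dt}f(c(t))=\Upsilon_\alpha\dot c^\alpha$. Secondly, the leading canonical Killing field $\wt K(0)$ is, as computed in the proof of Lemma~\ref{ODE_Lemma}, the nowhere-zero multiple $-e^{-f}\Kf^{\alpha\beta}\Upsilon_\alpha$ of the skew gradient of $f$ (cf.~\eqref{def_canonical_Killing}), and it is a holomorphic Killing field of $g$ by Theorem~\ref{Commuting_Killing_objects}. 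Since $c$ is a $g$-geodesic orthogonal to $\wt K(0)$, and orthogonality to a Killing field is preserved along geodesics (as noted before Lemma~\ref{ODE_Lemma}), the quantity $g(\wt K(0),\dot c)$ vanishes identically along $c$; unwinding the definitions, this says exactly $\Upsilon_\alpha J_\beta{}^\alpha\dot c^\beta=0$ at every point of $c$.

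Next I would compute the $\tilde g$-acceleration of $c$. Feeding $\nabla_{\dot c}\dot c=0$ and the explicit form of $\upsilon_{\alpha\beta}{}^\gamma$ into \eqref{cprojchange}, and contracting with $\dot c^\alpha\dot c^\beta$ (which collapses the four terms of $\upsilon_{\alpha\beta}{}^\gamma$ pairwise), I obtain
\[
\wt\nabla_{\dot c}\dot c^{\,\gamma}=\upsilon_{\alpha\beta}{}^\gamma\dot c^\alpha\dot c^\beta
=(\Upsilon_\alpha\dot c^\alpha)\,\dot c^\gamma
-(\Upsilon_\alpha J_\beta{}^\alpha\dot c^\beta)\,J_\delta{}^\gamma\dot c^\delta.
\]
The second term vanishes by the orthogonality established above, leaving $\wt\nabla_{\dot c}\dot c^{\,\gamma}=\dot f(t)\,\dot c^\gamma$, so $c$ is a pregeodesic of $\tilde g$. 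I would then seek the reparametrisation: writing $\tilde c(s)=c(\phi(s))$ and using $\wt\nabla_{\tilde c'}\tilde c'=\phi''(s)\,\dot c+(\phi'(s))^2\,\wt\nabla_{\dot c}\dot c$, the geodesic equation for $\tilde c$ reduces to $\phi''(s)+(\phi'(s))^2\dot f(\phi(s))=0$. Passing to the inverse $\tau=\phi^{-1}$, so that with $t=\phi(s)$ one has $\phi'(s)=1/\tau'(t)$ and $\phi''(s)=-\tau''(t)/\tau'(t)^3$, this equation becomes $\tau''(t)/\tau'(t)=\dot f(t)$, i.e.
\[
\frac{d}{dt}\log\Bigl|\frac{d\tau}{dt}\Bigr|=\dot f(t)=\frac{d}{dt}f(t),
\]
which is \eqref{reparametrisation}. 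Since $f$ is smooth, this integrates to $\tau'(t)=C\,e^{f(t)}$ for a nonzero constant $C$, a nowhere-vanishing smooth function; hence $\tau$ is a legitimate strictly monotone reparametrisation, and $\phi=\tau^{-1}$ is the one required.

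The computations above are short, so the only point demanding care is the reduction $\Upsilon_\alpha J_\beta{}^\alpha\dot c^\beta\equiv 0$: I expect the main (if modest) obstacle to be verifying that this vanishes all along $c$ rather than at a single point, which is exactly where the Killing property of $\wt K(0)$ and the invariance of a geodesic's orthogonality to a Killing field are used, together with the nonvanishing of the factor $e^{-f}$ identifying $\wt K(0)$ with the skew gradient of $f$. Everything else is a routine ODE integration, and the monotonicity of $\tau$ (hence the validity of $\phi$) follows immediately from $\tau'=Ce^{f}\neq 0$.
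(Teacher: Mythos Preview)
Your proof is correct and follows essentially the same approach as the paper: you both use orthogonality to the canonical Killing field $\wt K(0)$ (proportional to $J\mathrm{grad}_g f$) to kill the $J\dot c$-term in $\wt\nabla_{\dot c}\dot c$, reducing it to $\dot f(t)\dot c$, and then derive \eqref{reparametrisation} from the resulting first-order ODE for the reparametrisation. The only cosmetic difference is that the paper, after noting $c$ is a pregeodesic of $\tilde g$, computes $\nabla_{\dot c}\dot c=0$ with $c=\tilde c\circ\tau$ to obtain $\ddot\tau=\dot\tau\,\Upsilon(\dot c)$ directly, whereas you first write the ODE for $\phi$ using $\wt\nabla$ and then pass to its inverse $\tau$; the two computations are equivalent.
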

\begin{proof}
Let $c$ be a geodesic of $g$ that is orthogonal to the canonical Killing
fields associated to $(g,\tilde g)$. Then formula \eqref{cprojchange}
for the difference of the Levi-Civita connections $\wt\nabla$ and $\nabla$
implies
\begin{equation*}
\wt\nabla_{\dot c}\dot c
=\Upsilon(\dot c)\dot c-\Upsilon(J\dot c)J\dot c
=\Upsilon(\dot c)\dot c,
\end{equation*}
where the last identity follows from the fact that $\Upsilon_\beta
J^{\alpha\beta}$ is proportional to a canonical Killing field, as explained in
proof of Lemma \ref{ODE_Lemma}.  Hence, $\wt\nabla_{\dot c}\dot c$ is a
multiple of $\dot c$ and therefore there exists a reparametrisation $\phi$ of
$c$ such that $\tilde c(t)=c(\phi(t))$ is geodesic of $\tilde
g$. Differentiating $c(t)=\tilde c(\tau(t))$, where $\tau$ denotes the inverse
of $\phi$, gives $\dot c(t)=\dot\tau(t)\dot{\tilde c}(\tau(t))$ and hence
\begin{align*}
0=\nabla_{\dot c(t)} \dot c(t)
=\nabla_{\dot c(t)}(\dot\tau(t)\dot{\tilde c}(\tau(t)))
&=\dot\tau(t)^2\nabla_{\dot{\tilde c}(\tau(t))}{\dot{\tilde c}(\tau(t))}
+\ddot{\tau}(t){\dot{\tilde c}(\tau(t))}\\
&=\left (\ddot{\tau}(t)-\dot{\tau}(t)^2\Upsilon(\dot{\tilde c}(\tau(t)))\right)
\dot{\tilde c}(\tau(t)),
\end{align*}
since $\tilde c$ is a geodesic of $\tilde g$.  This implies that
$\ddot\tau(t)=\dot\tau(t)^2\Upsilon(\dot{\tilde c}(\tau(t)))=\dot
\tau(t)\Upsilon(\dot c(t)),$ which is equivalent to \eqref{reparametrisation},
since $\Upsilon_\alpha=\nabla_\alpha f$.
\end{proof}

\begin{proof}[Proof of Theorem~\textup{\ref{mobility>2}}] Let $(M,J,g)$ be a
connected complete K\"ahler manifold with nullity on a dense open set. We may
assume that $(M,J,g)$ has mobility at least $2$. Hence, by
Theorem~\ref{new_funni} any solution of the mobility equation lifts uniquely
to a parallel section of the connection
\eqref{new_funni_formula} with $B$ constant. This in turn, by Theorem
\ref{Lambda_in_nullity}, shows that for any solution $A^{\bar b c}$ of the
mobility equation of $g$ the function $\hp=-A_{a}{}^a$ satisfies (1) of
Proposition \ref{Tanno_and_nullity}.  Hence, by Proposition
\ref{Tanno_and_nullity} and Remark \ref{Tanno_and_nullity_remark} $\hp$
satisfies the Tanno equation \eqref{Tanno_Eq}.  Tanno showed in \cite{Tanno}
that on a complete connected K\"ahler manifold and for a constant $B>0$, the
existence of a nonconstant solution $\hp$ of the Tanno equation
\eqref{Tanno_Eq} implies that $(M,J,g)$ has positive constant holomorphic
sectional curvature, which in turn implies that $(M,J,g)$ is actually closed
and isometric to $(\CP^n, J_{\mathrm{can}}, cg_{FS})$ for some positive
constant $c$. Since any metric that is c-projectively but not affinely 
equivalent to $g$ gives rise to a non-parallel solution of the mobility
equation of $g$ and hence to a nonconstant solution of the Tanno equation, 
Theorem \ref{mobility>2} holds provided $B$ is positive.

It remains to consider the case that the constant $B$ defined as in
Theorem~\ref{new_funni} is nonpositive. Let $\tilde g$ be another complete
K\"ahler metric on $(M,J)$, which is c-projectively equivalent to $g$. Denote
by $f$ again the function defined as in \eqref{M1}, which has the property
that $\Upsilon_\alpha=\nabla_\alpha f$ relates the Levi-Civita connections of
$g$ and $\tilde g$ as in \eqref{cprojchange}.  We will show that $B\leq 0$ 
implies $\Upsilon_\alpha\equiv 0$, that is, $g$ and $\tilde g$ are necessarily 
affinely equivalent.

Note first that, since the canonical Killing fields associated to $(g,\tilde
g)$ are Killing for both metrics by Theorem~\ref{Commuting_Killing_objects}
and $f$ is constructed in a natural way only from the pair $(g,\tilde g)$, the
local flows of the canonical Killing fields preserve $f$. Hence, the canonical
Killing fields lie in the kernel of $\Upsilon$. To show that $\Upsilon$ is
identically zero, it therefore remains to show that $\Upsilon$ vanishes when
inserting vector fields orthogonal to the canonical Killing fields.

Consider a parametrised geodesic $c$ of $g$, which at one (and hence at all
points) is orthogonal to the canonical Killing fields.
Since $g$ and $\tilde g$ are complete, $c$ is defined for all times and $\tau$
from Lemma \ref{Reparametrisation_Lemma} is a diffeomorphism of $\R$. Without
loss of generality we assume that $\dot \tau$ is positive, otherwise replace
$t$ by $-t$.  By Lemma \ref{Reparametrisation_Lemma}, the function
$\tau\colon\R \to\R$ satisfies \eqref{reparametrisation}, which we
rewrite as
\begin{equation} \label{un2}
f(t) = \log(\dot\tau(t)) + \mathrm{const}_0 .
\end{equation} 
Now let us consider equation \eqref{ODE_along_c} and set $\dot
\tau(t)=(p(t))^{-1}$.  Substituting \eqref{un2} into \eqref{ODE_along_c}
yields
\begin{equation}
\dddot p = -4  B g(\dot c, \dot c) \dot p \label{un3a}.  
\end{equation}  
If $B=0$, the equation simplifies to $\dddot p=0$ and its general solution is
of the form $$p(t)=C_2 t^2+C_1 t+C_0,$$ where $C_i$ is a real constant for
$i=0,1,2$.  Hence, we get
\begin{equation} 
\tau(t) = \int_{t_0}^t \frac{d\xi }{C_2 \xi^2 + C_1 \xi +C_0}\ \ +\mathrm{const}.
\end{equation}  
If the polynomial $p(t)=C_2 t^2 + C_1 t + C_0$ has real roots (which is always
the case if $C_2=0$, $C_1\neq 0$), then the integral starts to be infinite in
finite time. If the polynomial has no real roots, but $C_2\neq 0$, the
function $\tau$ is bounded. Thus, the only possibility for $\tau $ to be a
diffeomorphism is $C_2=C_1=0$ implying $\dot\tau =\frac{1}{ C_0}$, which shows
that $f$ is constant along the geodesic $c$.

If $B<0$, the general solution of equation \eqref{un3a} is 
\begin{equation}\label{un10}
C + C_+e^{2\sqrt{-Bg(\dot c, \dot c)}\cdot t}+ C_-e^{-2\sqrt{-Bg(\dot c, \dot c)}\cdot  t},
\end{equation}
for real constants $C$, $C_+$ and $C_-$. Hence, $\tau$ is of the form 
\begin{equation}
\tau(t)  =  \int_{t_0}^t \frac{d\xi}{C + C_+e^{2\sqrt{-Bg(\dot c, \dot c)}\xi }
+ C_-e^{-2\sqrt{-Bg(\dot c, \dot c)}\xi}} \ \  + \mathrm{const}. \label{un7}
\end{equation}
If one of the constants $C_+, C_-$ is not zero, the integral \eqref{un7} is
bounded from one side, or starts to be infinite in finite time.  In both
cases, $\tau $ is not a diffeomorphism of $\R$. The only possibility for
$\tau\colon\R\to\R$ to be a diffeomorphism is when $C_+=C_-=0$, in which case
$\dot\tau$ is constant implying $f$ is constant along the geodesic $c$. Hence,
in both cases ($B=0$ and $B<0$) the one form $\Upsilon_\alpha=\nabla_\alpha f$
vanishes when inserting vector fields orthogonal to the canonical Killing
fields.
\end{proof}

\subsection{The Yano--Obata Conjecture for complete K\"ahler manifolds}
\label{ssec:yano}

For a K\"ahler manifold $(M, J, g)$ let us write $\mathrm{Isom}(J, g)$,
$\mathrm{Aff}(J, g)$ and $\mathrm{CProj}(J, g)$ for the group of complex
isometries, the group of complex affine transformations (i.e.~of complex
diffeomorphisms preserving the Levi-Civita connection) and the group of
c-projective transformations of $(M, J, g)$ respectively. By definition of
these groups we obtain the following inclusions
$$\mathrm{Isom}(J, g)\subseteq\mathrm{Aff}(J, g)\subseteq\mathrm{CProj}(J, g)$$
and consequently we also have
$$\mathrm{Isom}_0(J, g)\subseteq\mathrm{Aff}_0(J, g)
\subseteq\mathrm{CProj}_0(J, g),$$
where subscript $0$ denotes the connected component of the identity. 

Recall that Lie groups of affine transformations of complete Riemannian
manifolds are well understood; see for example \cite{lich:book}.  As explained
there, if a connected Lie group $G$ acts on a simply-connected complete
Riemannian manifold $(M^n, g)$ by affine transformations, then there exists a
Riemannian decomposition
\[
(M^n, g) = (M_1^{n_1} , g_1) \times (\R^{n_2}, g_{\mathrm{euc}})
\]
of $(M^n, g)$ into a direct product of a Riemannian manifold $(M_1^{n_1},
g_1)$ and a Euclidean space $(\R^{n_2}, g_{\mathrm{euc}})$ such that $G$ acts
componentwise. Specifically, it acts on $(M_1^{n_1} , g_1)$ by isometries and
on $(\R^{n_2}, g_{\mathrm{euc}})$ by affine transformations
(i.e.\,compositions of linear isomorphisms and parallel translations). Note
that this implies that for closed simply-connected Riemannian manifolds one
always has $\mathrm{Isom}_0(J, g) = \mathrm{Aff}_0(J, g)$, which holds in fact
for any closed (not necessarily simply-connected) Riemannian manifold; see
\cite[Theorem 4]{Yano1}.
If in addition $(M^n, g)$ is K\"ahler for a complex structure $J$, and $G$ is
a connected Lie group of complex affine transformations, then $(M_1^{n_1},
J_1, g_1)$ and $(\R^{n_2}, J_{\mathrm{can}}, g_{\mathrm{euc}})$ are also
K\"ahler; furthermore, $G$ acts on $(M_1^{n_1}, J_1, , g_1)$ by complex
isometries and on $(\R^{n_2},J_{\mathrm{can}},g_{\mathrm{euc}})$ by complex
affine transformations.

For the complex projective space $\CP^n$ equipped with its natural complex
structure $J_{\mathrm{can}}$ and the Fubini--Study metric $ g_{FS}$ we have
$\mathrm{Aff}(J_{\mathrm{can}}, g_{FS})\ne \mathrm{CProj}(J_{\mathrm{can}},
g_{FS})$.  To see this recall from the introduction that the $J$-planar curves
of $(\CP^n, J_{\mathrm{can}}, g_{FS})$ are precisely those smooth regular
curves that lie within complex lines.  Moreover, recall that any complex
linear isomorphism of $\C^{n+1}$ induces a complex transformation of $\CP^n$
that sends complex lines to complex lines and hence induces a c-projective
transformation of $(\CP^n, J_{\mathrm{can}}, g_{FS})$ by Proposition
\ref{CprojTransCharacterisation}. In fact, Proposition \ref{CprojTrans_CPn}
shows that all c-projective transformations of $(\CP^n, J_{\mathrm{can}},
g_{FS})$ arise in this way, i.e.~$\mathrm{CProj}(J_{\mathrm{can}}, g_{FS}) $
can be identified with the connected Lie group
$\textrm{PGL}(n+1,\C)\cong\textrm{PSL}(n+1,\C)$.  Note that an element in
$\textrm{GL}(n+1,\C)$ induces a complex isometry on $(\CP^n, J_{\mathrm{can}},
g_{FS})$ if and only if it is proportional to a unitary automorphism of
$\C^{n+1}$, which shows that $\textrm{Isom}(J_{\mathrm{can}}, g_{FS})$ can be
identified with the connected Lie group
$\textrm{PU}(n+1)=\textrm{U}(n+1)/\textrm{U}(1)$ of projective unitary
transformations.  For $(\CP^n, J_{\mathrm{can}}, g_{FS})$ we also
clearly have $\mathrm{Isom}(J_{\mathrm{can}},
g_{FS})=\mathrm{Aff}(J_{\mathrm{can}}, g_{FS})$.
 
In \cite{MR} in conjunction with \cite{FKMR} it was shown that any closed
connected K\"ahler manifold $(M, J, g)$ of dimension $2n\geq 4$ with the
property that $\mathrm{CProj}_0(J, g)$ contains $\mathrm{Isom}_0(J,
g)=\mathrm{Aff}_0(J, g)$ as a proper subgroup is actually isometric to
$(\CP^n, J, c g_{FS})$ for some positive constant $c$.  This rigidity result
answers affirmatively in the case of closed K\"ahler manifolds the so-called
\emph{Yano--Obata Conjecture}, which is a c-projective analogue of the
\emph{Projective} and \emph{Conformal Lichnerowicz--Obata Conjectures}; see
the introductions of the papers \cite{Matveev2007,MR} for a historical
overview.  Most recently the conjecture has been proved for closed
(pseudo-)K\"ahler manifolds of all signatures in \cite{BMR}.  We now show that
the Yano--Obata Conjecture also holds for complete connected K\"ahler manifolds.

\begin{thm}[Yano--Obata Conjecture] \label{thm:obata}
Let $(M, g, J)$ be a complete connected K\"ahler manifold of real dimension
$2n\geq 4$. Then $\mathrm{Aff}_0(g, J) = \mathrm{CProj}_0(g, J)$, unless $(M, g,
J)$ is actually compact and isometric to $(\CP^n, J_{\mathrm{can}}, c
g_{FS})$ for some positive constant $c\in \R$.
\end{thm}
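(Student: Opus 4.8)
The plan is to argue by contradiction, assuming $\mathrm{Aff}_0(g,J)\subsetneq\mathrm{CProj}_0(g,J)$, and to reduce everything to the nullity machinery of Section~\ref{Complete_Kaehler_metrics}, culminating in Theorem~\ref{mobility>2}. Since $\mathrm{Aff}_0(g,J)$ and $\mathrm{CProj}_0(g,J)$ are connected Lie groups, the former a proper subgroup of the latter, there is a c-projective vector field $v$ that is not affine, generating a one-parameter group $\phi_t\subset\mathrm{CProj}_0(g,J)$. First I would consider the family $g_t:=\phi_t^*g$. Each $g_t$ is a complete K\"ahler metric (the pullback of a complete metric by a diffeomorphism) whose Levi--Civita connection $\phi_t^*\nabla^g$ lies in $[\nabla^g]$, so $g_t$ is c-projectively equivalent to $g$; and if every $g_t$ were affinely equivalent to $g$, then every $\phi_t$ would be affine, forcing $v$ to be affine. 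Hence some $g_{t_0}$ is not affinely equivalent to $g$, giving a genuine pair of complete K\"ahler metrics that are c-projectively but not affinely equivalent. By Theorem~\ref{mobility>2} it therefore suffices to prove that $(J,g)$ has c-projective nullity on a dense open subset of $M$; the theorem then yields a constant $c>0$ with $(M,cg,J)\cong(\CP^n,J_{\mathrm{can}},g_{FS})$, which is compact and on which $\mathrm{Aff}\ne\mathrm{CProj}$ by Proposition~\ref{CprojTrans_CPn}, consistent with the exceptional case.

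To detect nullity I would first exploit that $\phi_t$ acts linearly on the finite-dimensional solution space $\mob_c=\ker D^\cV$, so that $\mathcal{L}_v$ restricts to an endomorphism $\Phi$ of $\mob_c$. Differentiating $t\mapsto g_t$ at $t=0$ produces a solution $\dot g|_0\in\mob_c$ which, by the non-affineness of $v$, is not a constant multiple of $g$; thus $\dim\mob_c\ge 2$. If $\dim\mob_c\ge 3$, Theorem~\ref{funni_mobility_3} applies: either all compatible metrics are affinely equivalent to $g$ (excluded by the previous paragraph) or $(J,g)$ has c-projective nullity on a dense open subset with constant $B$, and we are done by the reduction above.

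The remaining, genuinely hard, case is $\dim\mob_c=2$, where Theorem~\ref{funni_mobility_3} is unavailable. Here $\Phi$ is a $2\times2$ matrix on $\mob_c=\spann\{g^{-1},A\}$, and I would run a case analysis on its Jordan type (real diagonalisable, a single Jordan block, or a complex-conjugate pair), which governs how the flow acts on the pencil $\sms(t)=\ms\circ(A-t\,\Id)$. The key point is that, by Remark~\ref{proj-change}, a change of pencil parameter acts on the fibre data of the local normal form of Theorem~\ref{thm:locclass} by a projective transformation under which the functions $\Theta_j$ transform like degree $\ell+1$ polynomials; since the essential flow $\phi_t$ realises such a change while preserving the whole geometric structure, it should force all the $\Theta_j$ to coincide with a single polynomial $\Theta$ of degree $\ell+1$ with constant coefficients. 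By the equivalence (6)$\Leftrightarrow$(1) of Theorem~\ref{thm:special} this is exactly c-projective nullity with constant $B$ on a dense open set, restoring the hypothesis of the reduction. Nullity in hand, Theorems~\ref{new_funni} and~\ref{Lambda_in_nullity} give a potential $\lambda=A_a{}^a$ whose gradient satisfies the Tanno equation~\eqref{Tanno_Eq} via Proposition~\ref{Tanno_and_nullity}, and completeness enters through the ODE analysis of Lemmas~\ref{ODE_Lemma} and~\ref{Reparametrisation_Lemma}: for $B\le 0$ a nonconstant Tanno solution is incompatible with the reparametrisation $\tau$ being a diffeomorphism of $\R$, which would force $v$ to be affine after all, while $B>0$ yields, by Tanno's theorem, positive constant holomorphic sectional curvature and hence $(\CP^n,J_{\mathrm{can}},cg_{FS})$.

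I expect the main obstacle to be precisely the mobility-two case: establishing c-projective nullity there without the crutch of Theorem~\ref{funni_mobility_3}. The delicate step is showing that the essential one-parameter group genuinely acts on the pencil parameter, rather than only through the inessential (Killing or homothetic) directions, so that the rigidity of Remark~\ref{proj-change} can be applied to equate the $\Theta_j$; controlling the admissible hyperbolic and parabolic linear flows $\Phi$ on $\mob_c$ and excluding them by the \emph{global} completeness constraint, rather than by purely local normal-form data, is where the real work lies. Once nullity with constant $B$ is secured, the positive- and non-positive-$B$ dichotomy is dispatched by the completeness arguments already assembled for Theorem~\ref{mobility>2}.
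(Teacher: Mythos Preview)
Your high-level plan matches the paper's: mobility one is trivial, mobility $\geq 3$ is handled via Theorem~\ref{funni_mobility_3} and Corollary~\ref{Cor_mobility>2}, and the crux is mobility two, where one analyses the Jordan type of $\mathcal{L}_V$ on the two-dimensional solution space with the aim of producing c-projective nullity so that Theorem~\ref{mobility>2} finishes the job.

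Where you diverge is in the mobility-two argument itself. The paper does not go through the local classification or Remark~\ref{proj-change}. First, the complex-eigenvalue and Jordan-block cases are eliminated by an elementary degeneracy argument: writing $g^{-1}\vol(g)^{1/(n+1)}=c\,\eta+d\,\tilde\eta$ in the eigenbasis and computing $\phi_t^*(c\,\eta+d\,\tilde\eta)$ shows that some pullback metric becomes degenerate, which is impossible. In the surviving real-diagonal case (eigenvalues $a>b$), the paper shows the only possible constant eigenvalues of $D$ are $0$ and $1$ (multiplicities $2\tilde m$, $2m$) and splits according to whether the inequalities $(n-\tilde m)a+(\tilde m+1)b\leq 0$ and $(m+1)a+(n-m)b\geq 0$ both hold. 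If one fails, Lemma~\ref{Lemma_Mobility_2_case} runs a curvature-decay argument: the eigenvalues of $G_t=\phi_t^*g\cdot g^{-1}$ decay exponentially along the flow, so $\phi_k(x_0)$ is Cauchy with limit $p$; comparing $F(\phi_k(x_0))\to F(p)$ for the c-projectively invariant scalar $F=|H|^2_g$ forces enough components of the harmonic curvature $H$ to vanish on $\Lambda$ that criterion~(3) of Remark~\ref{rem:c-projective_invariant_characterisation_of_nullity} holds, giving nullity. If both inequalities hold, one is forced into $n-m-\tilde m=1$ and $(m+1)a=-(\tilde m+1)b$; Lemma~\ref{the_very_last_Lemma} then constructs $B>0$ and $\mu$ directly from the flow data, verifying \eqref{second_line_satisfied}--\eqref{third_line_satisfied} by explicit computation of $\phi_t^*(\Lambda_\alpha\Lambda^\alpha)$ and $\phi_t^*(\Lambda_\alpha V^\alpha)$, and Theorem~\ref{Lambda_in_nullity} plus Tanno conclude.

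Your $\Theta_j$-rigidity idea is a plausible heuristic, but as you yourself flag, it is not a proof: the $\Theta_j$ are only locally defined near regular points, the flow moves both the base point and the pencil parameter, and extracting a global constraint from Remark~\ref{proj-change} in the complete noncompact setting would require substantial additional work. The paper's curvature-decay argument sidesteps all of this, using completeness only through the convergence of the Cauchy sequence $\phi_k(x_0)$.
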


\begin{rem} In the projective case, a stronger version of the analogous
Lichnerowicz--Obata result has recently been established~\cite{Matveev2016}: 
on a complete Riemannian manifold $(M,g)$ of dimension $n\geq 2$, the quotient
of the projective group $\mathrm{Proj}(g)$ by the affine group
$\mathrm{Aff}(g)$ has at most two elements unless $(M,g)$ has constant
positive sectional curvature. It would be natural to establish such a result
in the c-projective case.
\end{rem}

\subsection{The proof of the Yano--Obata Conjecture}\label{Proof_Yano_Obata}

Note that if the mobility of $(M, J, g)$ is $1$, then any metric $\tilde g$
that is c-projectively equivalent to $g$ is homothetic to $g$. In particular,
any c-projective transformation has to preserve the Levi-Civita connection of
$g$ and hence $\mathrm{Aff}_0(g, J) = \mathrm{CProj}_0(g, J)$ in this case.
On the other hand, since the pullback of a complete K\"ahler metric by a
c-projective transformation is again a complete K\"ahler metric,
Theorem~\ref{thm:obata} follows from Corollary~\ref{Cor_mobility>2} or
Theorem~\ref{mobility>2} in the case that $g$ has mobility $\geq 3$ or has
mobility $\geq 2$ and nullity on a dense open set.

For the rest of this section we will therefore assume that $(M,J,g)$ is a
connected complete K\"ahler manifold of dimension $2n\geq 4$ with mobility
$2$. To show that Theorem~\ref{thm:obata} holds in this case (which for closed
K\"ahler manifolds was proved in~\cite{MR}), let us write $\mathrm{Sol}(g)$
for the $2$-dimensional solution space of the mobility equation of $g$, which
we view as a linear subspace of the space of $J$-invariant sections in
$S^2TM(-1,-1)$.

Suppose now that $\mathrm{Aff}_0(g, J)$ does not coincide with
$\mathrm{CProj}_0(g, J)$. Then there exists a complete c-projective vector
field $V$ that is not affine. Since the flow $\Phi_t$ of $V$ acts on $(M,J,g)$
by c-projective transformations, for any $t\in \R$ and for any
$\ms\in\mathrm{Sol}(g)$ the pullback $\Phi_t^*\ms$ is an element of the vector
space $\mathrm{Sol}(g)$. Hence, the Lie derivative
$\cL_V\ms=\frac{d}{dt}|_{t=0}\Phi_t^*\ms$ can also be identified with an
element of $\mathrm{Sol}(g)$, which implies that $\cL_V$ induces a linear
endomorphism of $\mathrm{Sol}(g)$.  By the Jordan normal form, in a certain
basis $\ms, \sms\in \mathrm{Sol}(g)$, the linear endomorphism
$\cL_V\colon\mathrm{Sol}(g)\to\mathrm{Sol}(g)$ corresponds to a matrix of one
of the following three forms:
\begin{equation} \label{threecases} 
\left(\begin{array}{cll}a&   0  \\
                 0 & b\end{array}\right)  \hspace{3ex} 
                   \left( \begin{array}{cc}a& b  \\
                  -b& a \end{array}  \right)
                  \hspace{3ex}\left( \begin{array}{cc} a& 1\\
                  0& a\end{array}\right),
\end{equation}  
where $a,b\in\R$.  We will deal with these three cases separately. The last
two cases are easy and will be considered in Section \ref{twocases} The
challenging case is the first one, which will be treated in Section
\ref{thirdcase}.

\subsubsection{$\cL_V$ has complex-conjugate eigenvalues or a nontrivial
Jordan block} \label{twocases}

Suppose first that the endomorphism $\cL_V:\mathrm{Sol}(g)\to \mathrm{Sol}(g)$
has two complex-conjugated eigenvalues. Hence, in some basis $\ms, \sms
\in \mathrm{Sol}(g)$ the endomorphism $ \cL_V$ corresponds to a matrix of the
second type in \eqref{threecases}. If $b=0$, then $V$ acts by homotheties on
any element in $\mathrm {Sol}(g)$ and hence preserves in particular the
Levi-Civita connection of $g$, which contradicts our assumption that $V$ is
not affine. Therefore, we can assume that $b\neq 0$.  Then the evolution of
the solutions $\ms$, $\sms$ along the flow $\Phi_t$ of $V$ is given by
\[ \begin{array}{cll}\Phi^*_t  \ms
&=  e^{a t} \cos(b t) \ms + e^{a t} \sin(b t) \sms   \\
\Phi^*_t \sms & =  -e^{a t} \sin(b t)  \ms +e^{a t} \cos(b t)\sms
\end{array}.\]
Write $g^{-1}\vol(g)^{\frac{1}{n+1}}\in \mathrm{Sol}(g)$ as $c \ms + d \sms$
for some real constants $c$ and $d$. Then one has
\[ \begin{array}{cl} \Phi_t^* (c \ms + d\sms)
&=c( e^{a t} \cos(b t) \ms  +  e^{a t} \sin(b t) \sms )  \\
&+d( -e^{a t} \sin(b t) \ms  +  e^{a t} \cos(b t) \sms) \\
&= e^{at} \sqrt{c^2+d^2} (\cos(bt+\alpha) \ms + \sin(bt+\alpha) \sms),
\end{array}\]
where $\alpha= \arccos(\frac{c}{\sqrt{c^2 + d^2}})$.  Since $g$ is a
Riemannian metric, for any point $x\in M$ there exists a basis of $T_xM$ in
which $\ms$ and $\sms$ are diagonal matrices.  Hence, in this basis the
$i$-th entry of $ \Phi_t^* (c \ms + d\sms)$ is given by $e^{a t}
\sqrt{c^2 + d^2} ( \cos(b t + \alpha)e_i + \sin(b t + \alpha) \tilde e_i)$,
where $e_i$ and $\tilde e_i$ are the $i$-th diagonal entries of $\ms$ and
$\sms$. Therefore, we see that $ \Phi_t^* (c \ms + d\sms)$ is
degenerate for some $t$, which contradicts the fact that $c \ms +
d\sms=g^{-1}\vol(g)^{\frac{1}{n+1}}$ is nondegenerate. The obtained
contradiction shows that $\cL_V$ can not have two complex-conjugate
eigenvalues.

Suppose now that $\cL_V\colon\mathrm{Sol}(g)\to\mathrm{Sol}(g)$ is with
respect to some basis $\ms,\sms\in \mathrm{Sol}(g)$ a matrix of the
third type in \eqref{threecases}.  Then the evolution of $\ms$ and
$\sms$ along $\Phi_t$ is given by
\begin{align*}
\Phi^*_t \ms& = e^{a t} \ms +  t e^{a t} \sms   \\
\Phi^*_t \sms&=e^{a t}  \sms.
\end{align*}
We assume again that $g^{-1}\vol(g)^{\frac{1}{n+1}}=c \ms+ d
\sms$. Then we obtain
\[ \begin{array}{cl} \Phi_t^* (c \ms + d\sms) & =
 c( e^{a t}  \ms  +  e^{a t}  t \sms )  + 
  d( e^{a t}  \sms) \\ & 
   = e^{a t}  (   c \ms  +  (d + c t)\sms).
\end{array}\]
Hence, we see again that for $c\ne0$ there exists $t$ such that $\Phi_t^* (c
\ms + d \sms)$ is degenerate which contradicts the fact that $g$ is
nondegenerate.  Now, if $c=0$, then $\Phi_t$ acts by homotheties on $g$, which
contradicts our assumption that $V$ is not affine. Thus $\cL_V$ can also not
be of the third type in \eqref{threecases}.

\subsubsection{$\cL_V$ has two real eigenvalues} \label{thirdcase} 

Now we consider the remaining case, namely the one where $\cL_V$ has two
different real eigenvalues $a$ and $b$ (the case where the two eigenvalues are
equal was already excluded in the previous section).  Without loss of
generality, we can assume that at least one of the eigenvalues is positive,
since we can otherwise just replace $V$ by $-V$. Hence, we can assume without
loss of generality that $a>b$ and that $a>0$. Since $\phi_t^*\ms=e^{at}\ms$
and $\phi_t^*\sms=e^{bt}\sms$, we see that neither $\ms$ nor
$\sms$ can equal $g^{-1}\vol(g)^{\frac{1}{n+1}}$, since otherwise
$\phi_t$ acts by homotheties on $g$, which contradicts our assumption that $V$
is not affine. Hence, $g^{-1}\vol(g)^{\frac{1}{n+1}}=c\ms+d\sms$ for
constant $c,d\neq 0$. By rescaling $\ms$ and $\sms$, we can therefore
assume without loss of generality that
\[
g^{\alpha\beta}\vol(g)^{\frac{1}{n+1}}=\ms^{\alpha\beta}+\sms^{\alpha\beta}.
\]
Let us write $D^{\alpha\beta}=\ms^{\alpha\beta}\vol(g)^{-\frac{1}{n+1}}$ and
$\tilde D^{\alpha\beta}=\sms^{\alpha\beta}\vol(g)^{-\frac{1}{n+1}}$ such
that
\[
g^{\alpha\beta}=D^{\alpha\beta}+\tilde D^{\alpha\beta}.
\]

Note that for a K\"ahler manifold $(M,J,g)$ of mobility $2$, the dense open
subset of regular points (Definition~\ref{def:regular}) does not depend on the
choice of non-proportional c-projectively equivalent metrics from the
c-projective class of $g$. In particular, the set of regular points is
invariant under c-projective transformations and hence under the action of the
flow of a c-projective vector field.  Thus if we fix a regular point $x_0\in
M$ and consider the integral curve $\phi_t(x_0)$ of our complete
c-projective vector field $V$ through $x_0$, then there is an open
neighbourhood $U$ of the curve $\phi_t(x_0)$ in the set of regular points,
and, since $g$ is positive definite, a frame of $TU$, in which $g$ corresponds
to the identity matrix and $D$ and $\tilde D$ to diagonal matrices:
\begin{equation}\label{D,tildeD}
D= \left(\begin{array}{ccccc}
d_1&&&&\\ &d_1&&&\\&&\ddots&&\\&&&d_n&\\&&&&d_n
\end{array}\right), \quad \tilde D= \left(\begin{array}{ccccc}
\tilde d_1&&&&\\ &\tilde d_1&&&\\&&\ddots&&\\&&&\tilde d_n&\\&&&&\tilde d_n
\end{array}\right),
\end{equation}
where $d_i$ and $\tilde d_i$ are smooth real-valued functions on $U$ such that
$d_i+\tilde d_i=1$ for $i=1,\ldots,n$.  Then in the local frame the tensor
$A_t^{\alpha\beta}
=\phi_t^*(\ms^{\alpha\beta}+\sms^{\alpha\beta})\vol(g)^{-\frac{1}{n+1}}$
corresponds to the following diagonal matrix:
\begin{equation}\label{A_t}
A_t= \left(\begin{array}{ccccc}
e^{at}d_1+e^{bt}\tilde d_1&&&&\\ &e^{at}d_1+e^{bt}\tilde d_1&&&\\
&&\ddots&&\\&&&e^{at}d_n+e^{bt}\tilde d_n&\\&&&&e^{at}d_n+e^{bt}\tilde d_n
\end{array}\right).
\end{equation}
Since $g$ and $\phi_t^*g$ are positive definite, all diagonal entries of
\eqref{A_t} are positive for all $t\in\R$. Hence, $d_i+e^{(b-a)t}\tilde d_i>0$
respectively $e^{(a-b)t}d_i+\tilde d_i>0$ for all $t$ and taking the limit
$t\rightarrow\infty$ respectively $t\rightarrow -\infty$ shows that
$d_i,\tilde d_i\geq 0$ for all $i=1,\ldots,n$. Since $d_i+\tilde d_i=1$, we
conclude that
\[
0\leq d_i\leq 1\quad \text{ and }\quad 0\leq \tilde d_i\leq 1
\quad\quad \text{ for all } i=1,\ldots,n.
\]
Now consider the $(1,1)$-tensor field $D_{\alpha}{}^\beta
=g_{\alpha\gamma}\ms^{\gamma\beta}\vol(g)^{-\frac{1}{n+1}} $ and its pullback
$\phi_t^*(D_{\alpha}{}^\beta)
=\phi_t^{*}(g_{\alpha\gamma}\ms^{\gamma\beta}\vol(g)^{-\frac{1}{n+1}})$.
Since $g_{\alpha\beta}\vol(g)^{-\frac{1}{n+1}}$ is inverse to
$\ms^{\alpha\beta}+\sms^{\alpha\beta}$, we conclude that
$\phi_t^*(D_{\alpha}{}^\beta)$ is given by a block diagonal matrix whose
$i$-th block is given by the following $2\times 2$ matrix
\begin{equation}\label{pull_back_D}
\frac{e^{at}d_i}{e^{at}d_i+e^{bt}(1-d_i)}\Id_2.
\end{equation}
By definition $\phi_t$ acts on the endomorphism $D_{\alpha}{}^\beta$ as
$\phi_t^*(D_{\alpha}{}^\beta)=(T\phi_t)^{-1}\circ D_{\alpha}{}^\beta \circ
T\phi_t$, which implies that the eigenvalues of $\phi^*_t(D_{\alpha}{}^\beta)$
at a point $x\in M$ are the same as the eigenvalues of $D_{\alpha}{}^\beta$ at
$\phi_t(x)$. Therefore, it follows from \eqref{pull_back_D} that the only
possible constant eigenvalues of $D_{\alpha}{}^\beta$ on $U$ are $0$ and $1$
(i.e.~the only possible constant diagonal entries in \eqref{D,tildeD} are $0$
or $1$). Note that $d_i=0$ (respectively $d_i=1$) on some open set implies
that $\tilde d_i=1$ (respectively $\tilde d_i=0$).  Hence, the only possible
constant eigenvalues of $A_t$ defined as in \eqref{A_t} are $e^{at}$ and
$e^{bt}$.  Since $U$ consists of regular points, the distinct eigenvalues of
$A_t$ (for any fixed $t$) are smooth real-valued functions with constant
algebraic multiplicities on $U$.  Let us write $2m$, respectively $2\tilde m$,
for the multiplicity of the eigenvalues $e^{a}$ and $e^{b}$ of $A_1$ on
$U$. By Lemma \ref{alg-mult}, the number of distinct nonconstant eigenvalues
of $A_1$ is given by $n-m-\tilde m$, and $m,\tilde m$ are constant on the set
of regular points by Corollary~\ref{cor:eigenvectors}. We allow, of course,
that $m$, respectively $\tilde m$, are zero.

\begin{lem}\label{Lemma_Mobility_2_case}  
If at least one of the following two inequalities,
\begin{equation}\label{inequalities}
(n-\tilde m)a + (\tilde m+1)b\leq 0 \quad\text{and}\quad (m+1)a + (n-m)b\geq 0. 
 \end{equation} 
is not satisfied, then the vector field $\hv^\alpha$ given by the gradient of
$\hp=-\frac{1}{2}D_{\beta}{}^\beta$ lies in the nullity space of $M$ at $x_0$.
\end{lem}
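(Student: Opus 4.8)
The plan is to exploit the completeness of the flow $\phi_t$ together with the $\phi_t$-invariance of the harmonic curvature, reducing the pointwise statement ``$\hv\in\Ns_{x_0}$'' to an asymptotic statement about the family of complete metrics $\phi_t^{*}g$. First I would record what nullity means in the simultaneous eigenframe diagonalising $g,D,\tilde D$: by Corollary~\ref{cor:eigenvectors} and Lemma~\ref{eigenvalues} the gradient $\hv=\nabla\hp=-\sum_i\nabla d_i$ lies in the sum of the eigenspaces of $D_{\alpha}{}^{\beta}$ belonging to the \emph{nonconstant} eigenvalues $d_i\in(0,1)$, while the $d_i\in\{0,1\}$ eigenspaces (of complex dimensions $m$ and $\tilde m$) are parallel along the orbit. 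By Proposition~\ref{c-projective_invariant_characterisation_of_nullity}, establishing $\hv\in\Ns_{x_0}$ amounts to verifying the two contractions $H_{a\bar b}{}^{c}{}_{d}\hv^{\bar b}=0$ and $H_{a\bar b}{}^{c}{}_{d}\hv^{a}\hv^{d}=0$ at $x_0$, where $H$ is the harmonic (c-projective Weyl) curvature; the scalar $B$ of \eqref{c-nullity_bis} is then determined, and is real since $\ms,\sms$ are. Crucially, because $\phi_t$ is a c-projective transformation it preserves the normal Cartan connection and hence $H$, so these identities may be probed along the whole orbit $\phi_t(x_0)$.

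Next I would analyse the family $\phi_t^{*}g$. Since $\phi_t^{*}\ms=e^{at}\ms$ and $\phi_t^{*}\sms=e^{bt}\sms$, the metric $\phi_t^{*}g$ is the compatible K\"ahler metric whose metrisability solution is $\eta_t:=e^{at}\ms+e^{bt}\sms$, and it is complete for every $t$. Using the homogeneity $g(c\,\eta)=c^{-(n+1)}g(\eta)$ (immediate from $g^{\alpha\beta}=(\det\ms)\ms^{\alpha\beta}$ and the degree-$n$ formula \eqref{deteta}) together with the eigenvalue computations \eqref{A_t} and \eqref{pull_back_D}, a direct evaluation of $\det_{\mathbb C}\eta_t$ shows that, as $t\to+\infty$, the covariant metric $\phi_t^{*}g$ scales like $e^{-((n-\tilde m)a+(\tilde m+1)b)t}$ in the $2\tilde m$ directions with $d_i=0$, while as $t\to-\infty$ it scales like $e^{-((m+1)a+(n-m)b)t}$ in the $2m$ directions with $d_i=1$. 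Thus the first inequality of \eqref{inequalities} fails precisely when $\phi_t^{*}g$ collapses in the $d_i=0$ directions as $t\to+\infty$, and the second fails precisely when it collapses in the $d_i=1$ directions as $t\to-\infty$.

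Finally, I would argue that such a collapse of a family of \emph{complete} metrics forces the harmonic curvature to annihilate $\hv$ in the sense required above. The route I expect to be cleanest is to feed the limiting (now degenerate) solutions $\ms$ as $t\to+\infty$, respectively $\sms$ as $t\to-\infty$, into the integrable-case prolongation of the metrisability equation (Theorem~\ref{MetriProlongation}) and the commuting identity of Proposition~\ref{source_of_things_awesome}: contracting the curvature relation \eqref{mob1-int} with the eigenprojectors of $D$ and passing to the limit should convert the vanishing of the collapsing block into the vanishing of $H_{a\bar b}{}^{c}{}_{d}\hv^{\bar b}$ and $H_{a\bar b}{}^{c}{}_{d}\hv^{a}\hv^{d}$, i.e.\ exactly the criteria of Proposition~\ref{c-projective_invariant_characterisation_of_nullity}, whence $\hv\in\Ns_{x_0}$. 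The main obstacle I anticipate is precisely this last implication: making rigorous the passage from the exponential collapse/blow-up rate to the pointwise curvature identity at $x_0$, equivalently showing that a complete metric cannot degenerate at these rates unless the $\phi_t$-invariant harmonic curvature already degenerates on $\hv$. The eigenframe bookkeeping of the middle step and the curvature identities of the final step are lengthy but routine; it is the completeness-to-nullity implication, where boundedness of $\hp\in[-(n-\tilde m),-m]$ and the growth exponents must be combined, that carries the real content.
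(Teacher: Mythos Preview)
Your setup is correct up through the scaling analysis, and you correctly isolate the crux: turning the exponential collapse of $\phi_t^{*}g$ into a pointwise vanishing statement for the harmonic curvature $H$ at $x_0$. However, your proposed route for this last step---feeding the limiting degenerate solutions into the prolongation of Theorem~\ref{MetriProlongation} and the commutation identity of Proposition~\ref{source_of_things_awesome}---cannot work as stated. Those are purely local algebraic identities valid for \emph{any} solution of the mobility equation on \emph{any} K\"ahler manifold; they make no use of completeness or of the inequality condition, so they cannot by themselves produce a conclusion that depends on both. The statement you are trying to prove is genuinely global, and something must consume the completeness hypothesis.

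What the paper does is this. First, it observes (and you should check) that when, say, the first inequality fails, \emph{all} eigenvalues of $G_t$ (where $\phi_t^{*}g=g(G_t\cdot,\cdot)$) decay exponentially as $t\to+\infty$, not just those in the $d_i=0$ block; the exponent $(n-\tilde m)a+(\tilde m+1)b$ is the \emph{slowest} decay rate. This global decay makes $(\phi_k(x_0))_{k\ge 0}$ a Cauchy sequence for $g$, hence convergent to some $p\in M$ by completeness. Second, it introduces the c-projective scalar
\[
F \;=\; H_{\alpha\beta}{}^{\gamma}{}_{\delta}\,H_{\epsilon\zeta}{}^{\theta}{}_{\eta}\,
g_{\gamma\theta}\,g^{\alpha\epsilon}g^{\beta\zeta}g^{\delta\eta},
\]
which is continuous and therefore bounded along the convergent sequence $\phi_k(x_0)$. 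Since $H$ is $\phi_t$-invariant, $F(\phi_t(x_0))$ expands in the eigenframe as a sum of \emph{nonnegative} terms $C(ijk\ell;t)\,(H_{\alpha_i\alpha_j}{}^{\alpha_k}{}_{\alpha_\ell}(x_0))^{2}$, where each coefficient is a product of eigenvalues of $G_t^{\pm 1}$. A direct asymptotic count shows $C(ijk\ell;t)\to+\infty$ whenever any of the lower indices $i,j,\ell$ lies in the nonconstant-eigenvalue block. Boundedness of the sum then forces each such component of $H$ to vanish at $x_0$. Since $\hv^\alpha$ lives entirely in that block, this yields $H_{\alpha\beta}{}^{\gamma}{}_{\delta}\hv^{\alpha}=H_{\alpha\beta}{}^{\gamma}{}_{\delta}\hv^{\beta}=H_{\alpha\beta}{}^{\gamma}{}_{\delta}\hv^{\delta}=0$, which is exactly criterion~(3) of Remark~\ref{rem:c-projective_invariant_characterisation_of_nullity}. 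The missing idea you need is this ``bounded sum of positive terms with exploding coefficients'' mechanism, anchored by the Cauchy-sequence use of completeness.
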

\begin{proof}
Set $G_t:=\det_\R(A_t)^{-\frac{1}{2}} A_t^{-1}$ and note that
$\phi_t^*g=g(G_{t}\cdot,\cdot)$. We may assume that the first
$2\ell:=2n-2m-2\tilde m$ elements of $D$ are not constant (which is equivalent
to assuming that $d_i(x_0)\neq0,1$ for $i=1,\ldots,\ell$), the next $2m$
elements are equal to $1$, and the remaining $2\tilde m$ elements are zero on
$U$.  Then, we deduce from \eqref{A_t} that $G_t$ on $U$ is a block diagonal
matrix of block sizes $2\ell\times 2\ell$, $2m\times 2m$ and $2\tilde m\times
2\tilde m$ respectively, where the three blocks are given by
\begin{equation}\label{B_t}
\Psi(t)\left(\begin{array}{ccccc}
\frac{1}{d_1 e^{at} + (1-d_1) e^{b t}}&&&&\\
&\frac{1}{d_1 e^{at} + (1-d_1) e^{b t}}&&&\\&&\ddots&&\\
&&&\frac{1}{d_\ell e^{at} + (1-d_\ell) e^{b t}}&\\
&&&&\frac{1}{d_\ell e^{at} + (1-d_\ell) e^{b t}}
\end{array}\right)
\end{equation}
\begin{equation*}
\Psi(t)e^{-at}\Id_{2m},\quad\quad\text{respectively}\quad\quad
\Psi(t) e^{-bt} \Id_{2\tilde m}, 
\end{equation*}
where
\[
\Psi(t):=
e^{-amt}e^{-b\tilde m t}\prod_{i=1}^{\ell}\frac{1}{d_i e^{at} + (1-d_i) e^{b t}}.
\]
Let us write $\nu_1,\ldots,\nu_\ell$, $\nu$ and $\tilde\nu$ for the
eigenvalues of these respective diagonal matrices.  Note that their asymptotic
behaviour for $t\to +\infty $ respectively for $t \to -\infty$ is as follows
\begin{equation}\label{asymptotics_eigenvalues}
\begin{array}{cccc}
t\to+\infty & \nu_i(t)\sim \frac{e^{-((n-\tilde m+1)a + \tilde m b)t} }{d_i\prod d_j}
& \nu(t) \sim \frac{e^{-((n-\tilde m+1)a + \tilde m b)t} }{\prod d_j}
& \tilde \nu(t)\sim \frac{e^{-((n-\tilde m)a + (\tilde m+1) b)t} }{\prod d_j}\\ &&&\\
t\to-\infty & \nu_i(t)\sim \frac{e^{( ma + (n-m+1) b)t} }{(1-d_i)\prod (1-d_j)}
&   \nu(t) \sim \frac{e^{((m+1)  a + (n-m)b )t} }{\prod (1-d_j)}
&  \tilde \nu(t)\sim \frac{e^{( ma + (n-m+1) b)t} }{\prod (1- d_j)}.
\end{array}
\end{equation}
Let us now assume that \eqref{inequalities} is not satisfied. We can assume
without loss of generality that the first inequality of \eqref{inequalities}
is not satisfied, that is to say we can assume that $ (n-\tilde m)a + (\tilde
m+1) b>0$, since otherwise we can change the sign of $V$, which interchanges
the inequalities. Then it follows from \eqref{asymptotics_eigenvalues} that
all eigenvalues of $G_t$ decay exponentially as $t\to\infty$.  Consider now
the sequence $(\phi_k(x_0))_{k\in \Z_{\geq 0}}$. We claim that it is a Cauchy
sequence. Indeed, note that the distance $d(\phi_k(x_0),\phi_{k+1}(x_0))$
between $\phi_k(x_0)$
and $\phi_{k+1}(x_0)$ satisfies
\begin{align}\label{Cauchy_seq}
d(\phi_k(x_0),\phi_{k+1}(x_0))\leq&
\int_{0}^1\sqrt{g(V(\phi_{k+t}(x_0)),V(\phi_{k+t}(x_0)))}dt\\\nonumber
&=  \int_{0}^1\sqrt{(\phi_k^*g)(V(\phi_{t}(x_0)),V(\phi_{t}(x_0)))}dt.
\end{align}

Since $\phi_k^*g=g(G_{k}\cdot,\cdot)$ and all eigenvalues of $G_t$ decay
exponentially as $t\to\infty$, the inequality \eqref{Cauchy_seq} shows that
$d(\phi_k(x_0),\phi_{k+1}(x_0))$ decays geometrically as $k\to\infty$
(i.e.\,for sufficiently large $k$ we have $d(\phi_k(x_0),\phi_{k+1}(x_0))\le
\mathrm{const}\ q^k$ for some $q<1$). Hence, $(\phi_k(x_0))_{k\in \Z_{\geq
    0}}$ is a Cauchy sequence and completeness of $M$ implies that
$(\phi_k(x_0))_{k\in \Z_{\geq 0}}$ converges. We denote the limit of
$(\phi_k(x_0))_{k\in \Z_{\geq 0}}$ by $p\in M$.  Now consider the smooth
real-valued function $F$ on $M$ given by
\begin{equation*}
F= H_{\alpha\beta}{}^\gamma{}_\delta H_{\epsilon\zeta}{}^\theta{}_\eta
g_{\gamma\theta} g^{\alpha\epsilon} g^{\beta\zeta} g^{\delta\eta},
\end{equation*}
where $H_{\alpha\beta}{}^\gamma{}_\delta$ denotes the harmonic curvature of
$(M, J, [\nabla^g])$ defined as in Proposition \ref{kaehlerharmcurv},
respectively Proposition~\ref{rosetta}.  
Since $H_{\alpha\beta}{}^\gamma{}_\delta$ is
c-projectively invariant, we deduce that $(\phi_k^*F)(x_0)=F(\phi_k(x_0))$
equals
\begin{equation}\label{pull_back_F}
F(\phi_k(x_0))=(H_{\alpha\beta}{}^\gamma{}_\delta H_{\epsilon\zeta}{}^\theta{}_\eta\,
\phi_k^*g_{\gamma\theta}\, \phi^*_k g^{\alpha\epsilon}\,\phi_k^*g^{\beta\zeta}\,
\phi_k^*g^{\delta\eta})(x_0).
\end{equation}
Moreover, since $F$ is continuous, we have $\lim_{k\to\infty}F(\phi_k(x_0))=F(p)$. 

Since in the frame we are working the matrices corresponding to $g$ and $G_t$
are diagonal, we see that the function $F(\phi_t(x_0))$ is a sum of the form
\begin{equation}\label{sum}
\sum_{1\leq i,j,k,\ell\leq 2n}C(ijk\ell;t)
\left( H_{\alpha_i \alpha_j}{}^{\alpha_k}{}_{ \alpha_\ell}(x_0)  \right)^2, 
\end{equation} 
where the coefficient $C(ijk\ell;t)$ is the product of the $k$-th diagonal
entry and the reciprocals of the $i$-th, $j$-th and $\ell$-th diagonal entry
of the diagonal matrix that corresponds to $G_t$ (in our chosen frame). The
coefficients $C(ijk\ell;t)$ depend on $t$ and their asymptotic behaviour for
$t\to \pm\infty$ can be read off from \eqref{asymptotics_eigenvalues}. Note
moreover that all coefficients $C(ijk\ell;t)$ are positive.

We claim that, if at least one of the indices $i,j$ or $\ell$ is less or equal
than $2n-2m-2\tilde m$, then
$H_{\alpha_i\alpha_j}{}^{\alpha_k}{}_{\alpha_\ell}(x_0)$ vanishes. Indeed,
note that, by \eqref{asymptotics_eigenvalues}, $\phi^*_tg$ decays
exponentially at least as $e^{-((n-\tilde m+1)a + \tilde m b)t}$, which is up
to a constant the smallest eigenvalue of $G_t$, and $\phi^*_tg^{-1}$ goes
exponentially to infinity at least as $e^{((n-\tilde m)a + (\tilde m+1)b )t}$
as $t\to \infty$. Suppose now that at least one of the indices $i, j$ or
$\ell$ is less or equal than $2n-2m-2\tilde m$. Then we deduce that up to
multiplication by a positive constant $C(ijk\ell;t)$ behaves asymptotically as
$t\to\infty$ at least as
\[
e^{((n-\tilde m)a + (\tilde m+1)b )t}e^{((n-\tilde m)a
+ (\tilde m+1)b )t}e^{((n-\tilde m+1)a + \tilde m b)t}e^{-((n-\tilde m+1)a + \tilde m b)t}
=e^{2((n-\tilde m)a + (\tilde m+1)b )t}.
\]
Since $(n-\tilde m)a + (\tilde m+1)b >0$ by assumption, we therefore conclude
that the coefficient
\[
C(ijk\ell;t)\to\infty \quad\quad\text{ as } t\to\infty.
\]
Since all terms in the sum \eqref{sum} are nonnegative and the sequence
$F(\phi_\ell(x_0))$ converges, we therefore deduce that
$H_{\alpha_i\alpha_j}{}^{\alpha_k}{}_{\alpha_\ell}(x_0)=0$ provided that at
least one of the indices $i,j$ or $\ell$ is less or equal than $2n-2m-2\tilde
m$.

Observe now that $\hv^\alpha$ equals the negative of the sum of the gradients
of the distinct nonconstant eigenvalues $d_1,\ldots,d_{n-m-\tilde m}$ of
$D$. We therefore conclude that at $x_0$
\[
H_{\alpha\beta}{}^\gamma{}_\delta \hv^\alpha=0,\quad
H_{\alpha\beta}{}^\gamma{}_\delta \hv^\beta=0\quad\text{and}\quad
H_{\alpha\beta}{}^\gamma{}_\delta \hv^\delta=0.
\]
It follows that (3) of
Remark~\ref{rem:c-projective_invariant_characterisation_of_nullity} is
satisfied, which implies that the vector field $\hv^\alpha$ lies in the
nullity space of $M$ at $x_0$.
\end{proof}

Since $x_0$ is an arbitrary regular point and the conditions
\eqref{inequalities} do not depend on the choice of regular point, Lemma
\ref{Lemma_Mobility_2_case} shows that, if the inequalities
\eqref{inequalities} are not satisfied, then $(M,J,g)$ has nullity on the
dense open subset of regular points, since $\hv^\alpha$ does not vanish at
regular points (otherwise $V$ is necessarily affine, which contradicts our
assumption).  Hence, by Theorem~\ref{mobility>2}, we have established
Theorem~\ref{thm:obata}, except when the inequalities \eqref{inequalities}
are satisfied.

Therefore, assume now that the inequalities \eqref{inequalities}
are satisfied. Subtracting the second inequality from the first
shows that
$$(n-m-\tilde m-1)(a-b)\leq 0.$$
Since $a-b>0$ by assumption, we must have $n-m-\tilde m=0$ or $n-m-\tilde
m=1$. In the first case $\phi_t^*g$ is parallel for the Levi-Civita connection
of $g$ for all $t$ and hence $V$ is affine, which contradicts our
assumption. Therefore, we must have $n-m-\tilde m=1$.  Now substituting this
identity back into \eqref{inequalities} shows that $(m+1)a + (\tilde m
+1)b\leq 0$ and $(m+1)a + (\tilde m +1)b\geq 0$, which implies $(m+1)a =- (\tilde m
+1)b.$ Hence, we conclude that we must have
\begin{equation}\label{equality}
n-m- \tilde m=1\quad \text{ and }\quad (m+1)a = -(\tilde m+1)b.
\end{equation}
Therefore, locally around any regular point $D$ has precisely one nonconstant
eigenvalue, which we denote by $\rho$, and the constant eigenvalues $1$ and
$0$ with multiplicity $2m$, respectively $2\tilde m$.  Hence, for any regular
point $x_0\in M$ we can find an open neighbourhood $U$ of the curve
$\phi_t(x_0)$ in the set of regular points and a frame of $TU$ such that $D$
corresponds to a matrix of the form
\begin{equation}\label{possible_form_of_D}
D= \left(\begin{array}{cccc}
\rho&&&\\ &\rho&&\\&&\Id_{2m}&\\&&&0_{2\tilde m}
\end{array}\right),
\end{equation}
where $\rho$ is a smooth function on $U$ with $\rho(x)\neq 0,1$ for $x\in U$.
Note that we have $\hp=-\frac{1}{2} D_{\alpha}{}^\alpha=-(\rho+m)$ and
consequently
\begin{equation*}
\hv_{\alpha}=-\nabla_{\alpha} \rho,
\end{equation*}
which is nowhere vanishing on $U$.
Recall also that the pair $(D, \hv)$ satisfies the mobility equation  
\eqref{mobility_equation1}, and that by Corollary
\ref{cor:eigenvectors} $\hv^\alpha$ is an eigenvector of $D_{\alpha}{}^\beta$
with eigenvalue~$\rho$.  Hence, $\hv^\alpha$ and
$J_{\beta}{}^\alpha\hv^\beta$ form a basis for the eigenspace of
$D_{\alpha}{}^\beta$ corresponding to the eigenvalue $\rho$.

For later use, let us also remark that by \eqref{pull_back_D} the action of
the flow $\phi_t$ on $D$ preserves its block structure
\eqref{possible_form_of_D}. This implies, in particular, that
$\cL_V\hv=[V,\hv]$ is an eigenvector of $D$ with eigenvalue $\rho$.  Since
furthermore $\nabla_\alpha \rho=-\hv_{\alpha}$ vanishes in direction of all
vector fields orthogonal to $\hv^\alpha$ and $[\hv, J\hv]=0$, we therefore
conclude that $[V,J\hv]\cdot \rho=0$ implying that the vector field $[V,
  J\hv]=J[V,\hv]$ is actually a proportional to $J\hv$. Hence, the
c-projective vector field $V$ preserves the orthogonal projection from $TM$ to
the $1$-dimensional subspace spanned by $\hv^{\alpha}$, which is defined on
the dense open subset on which $\hv^\alpha$ is not vanishing (hence in
particular on $U$).

\begin{lem} \label{the_very_last_Lemma}
Assume $n-m-\tilde m= 1 \text{ and } (m+1)a = -(\tilde m +1)b$. Then in a
neighbourhood of any regular point there exists a real positive constant $B$
and a smooth real-valued function $\mu$ such that
\begin{equation}\label{second_line_satisfied}
\nabla_\alpha\hv_\beta=-\mu g_{\alpha\beta}+2B D_{\alpha\beta}
 \end{equation}
 \begin{equation}\label{third_line_satisfied}
 \nabla_\alpha \mu=2B\hv_\alpha.
 \end{equation}
\end{lem}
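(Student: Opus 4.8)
The goal is to verify that the triple $(D_\alpha{}^\beta,\hv^\alpha,\mu)$ is parallel for the special tractor connection, i.e.\ that \eqref{second_line_satisfied}--\eqref{third_line_satisfied} hold with $B$ a positive constant; by Proposition~\ref{Tanno_and_nullity} this is equivalent to $\hv^\alpha$ lying in the c-projective nullity of $(J,g)$, and it is what lets us invoke Theorem~\ref{mobility>2}. Set $q:=\hv_a\hv^a$, which is positive on $U$ since $g$ is positive definite and $\hv^\alpha\neq 0$ there. I would first collect the first-order facts: the canonical Killing field $K^\alpha=J_\beta{}^\alpha\hv^\beta$ has Killing potential $\hp=-(\rho+m)$ by Theorem~\ref{Commuting_Killing_objects}, so Proposition~\ref{KillingPot} gives $\nabla_a\hv_b=0$ and hence $\nabla_\alpha\hv_\beta$ is of type $(1,1)$; moreover $\hv^a$ is holomorphic, and by Corollary~\ref{cor:eigenvectors} $\hv_\alpha$ is an eigen-covector of $D$, $D_\alpha{}^\beta\hv_\beta=\rho\hv_\alpha$.

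Next I would derive the decisive algebraic identity by differentiating $D_a{}^b\hv_b=\rho\hv_a$ in the $\bar c$-direction, inserting $\nabla_{\bar c}D_a{}^b=-g_{a\bar c}\hv^b$ from the mobility equation \eqref{metri-mob} together with $\nabla_{\bar c}\hv_b=\nabla_b\hv_{\bar c}$, and raising an index:
\[
D_a{}^b\,\nabla_b\hv^d-\rho\,\nabla_a\hv^d=q\,\delta_a{}^d-\hv_a\hv^d.
\]
Viewing $\nabla_a\hv^d$ as an endomorphism $\mathcal S$ of $T^{1,0}M$ and $D$ as $\mathcal D$, this says $\mathcal S\,(\mathcal D-\rho\,\mathrm{Id})$ equals $q\,\mathrm{Id}$ minus the rank-one map $X^a\mapsto(\hv_a X^a)\hv$. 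Applying it to an eigenvector $X$ of $\mathcal D$ with eigenvalue $\zeta\in\{0,1\}$ (these being $g$-orthogonal to $\hv$) yields $\mathcal S X=\tfrac{q}{\zeta-\rho}X$; thus $\mathcal S$ acts by $-q/\rho$ on $E_0$ and by $q/(1-\rho)$ on $E_1$, i.e.\ as $-\mu\,\mathrm{Id}+2B\,\mathcal D$ on $E_0\oplus E_1$ with $\mu=q/\rho$ and $2B=q/(\rho(1-\rho))$. On the complex line $E_\rho=\langle\hv^a\rangle$ the identity is vacuous, so the entire statement reduces to showing that $C:=q/(\rho(1-\rho))$ is locally a positive constant: once this is known, $q=C\,\rho(1-\rho)$, and the value of $\mathcal S$ on $E_\rho$, together with \eqref{third_line_satisfied}, follows automatically.

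The core of the proof, and where the hypothesis $(m+1)a=-(\tilde m+1)b$ is indispensable, is to show $\cL_V C=0$. I would compute Lie derivatives directly from $\cL_V\ms=a\ms$ and $\cL_V\sms=b\sms$: these give $\cL_V D_\alpha{}^\beta=(a-b)\,D_\alpha{}^\gamma\tilde D_\gamma{}^\beta$, hence $\cL_V\rho=(a-b)\rho(1-\rho)$ and $\cL_V\hv_\alpha=\cL_V(-d\rho)=(a-b)(1-2\rho)\hv_\alpha$. Writing $\cL_V\scale_g=\phi\,\scale_g$ with $\phi=-\tfrac1{n+1}\nabla_\alpha V^\alpha$ and taking the $g$-trace of the consequence $\cL_V g^{\alpha\beta}=(a+\phi)D^{\alpha\beta}+(b+\phi)\tilde D^{\alpha\beta}$ of $\ms+\sms=g^{-1}\scale_g^{-1}$ determines $\phi=a(\rho+m)+b((1-\rho)+\tilde m)$, using $\dim_\C T^{1,0}M=1+m+\tilde m=n$. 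Substituting into $\cL_V q=\tfrac12(\cL_V g^{\alpha\beta})\hv_\alpha\hv_\beta+g^{\alpha\beta}(\cL_V\hv_\alpha)\hv_\beta$ and simplifying with the balanced relation $am+b\tilde m=-(a+b)$ collapses everything to $\cL_V q=(a-b)(1-2\rho)\,q$; combining with $\cL_V\rho=(a-b)\rho(1-\rho)$ then gives $\cL_V C=0$ by a one-line computation. I expect this bookkeeping---in particular correctly tracking the density weight $\phi$, and the fact that the constant eigenvalues of $D$ are exactly $0$ and $1$---to be the main obstacle.

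Finally I would upgrade $\cL_V C=0$ to local constancy of $C$. Differentiating $q=\hv_a\hv^a$ and using that $\hv^a$ is holomorphic together with $\nabla_{\bar c}\hv_a=\nabla_a\hv_{\bar c}$ shows $\nabla_{\bar c}q$ is a multiple of $\hv_{\bar c}=-\nabla_{\bar c}\rho$, so $dq\parallel d\rho$ and $C$ is locally a function of $\rho$ alone; then $0=\cL_V C=C'(\rho)(a-b)\rho(1-\rho)$ with $\rho(1-\rho)(a-b)\neq0$ forces $C'(\rho)=0$, so $C$ is a constant near the regular point, necessarily positive. Setting $B=C/2$ and $\mu=C(1-\rho)$, the endomorphism identity $\nabla_a\hv^d=-\mu\,\delta_a{}^d+2B\,D_a{}^d$ now holds on $E_0$, $E_1$ and $E_\rho$ (on $E_\rho$ because $q=C\rho(1-\rho)$ forces the correct scalar), which combined with $\nabla_a\hv_b=0$ is precisely \eqref{second_line_satisfied}, while $\nabla_\alpha\mu=-C\nabla_\alpha\rho=2B\hv_\alpha$ is \eqref{third_line_satisfied}.
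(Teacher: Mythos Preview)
Your proof is correct and takes a genuinely different, more streamlined route than the paper's. The paper works with the explicit flow $\phi_t$: it computes $\phi_t^*(\hv_\alpha V^\alpha)$ and $\phi_t^*\bigl(g(P(V),P(V))\bigr)$ separately (where $P$ is the orthogonal projection onto $\spann\{\hv\}$), observes that the balanced condition $(m+1)a=-(\tilde m+1)b$ makes their ratio $t$--independent, extracts $B$ from that ratio, and then argues separately that $B$ is constant along $V$ and along the level sets $\{\rho=\mathrm{const}\}$. You bypass all the $\phi_t$--bookkeeping by computing Lie derivatives infinitesimally: the single invariant $C=q/(\rho(1-\rho))$ (which is exactly $2B$) has $\cL_VC=0$, and since $dq\parallel d\rho$ forces $C=C(\rho)$, constancy drops out in one line. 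Your treatment of the divergence term $\phi$ and the trace identity $\phi=a(\rho+m)+b\bigl((1-\rho)+\tilde m\bigr)$ is cleaner than the paper's use of $g(P(V),P(V))$, and it makes transparent exactly where $(m+1)a=-(\tilde m+1)b$ enters.

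One point deserves a sentence of justification in your write--up. When you assert that $\nabla_{\bar c}q$ is a multiple of $\hv_{\bar c}$, you implicitly use that $\mathcal S\hv\in E_\rho$, since $\nabla_{\bar c}q=g_{b\bar c}(\mathcal S\hv)^b$. This does follow from what you have already established: your identity $\mathcal S(\mathcal D-\rho\,\Id)=q\,\Id-\hv\otimes\hv^\flat$ applied to $E_0$ and $E_1$ shows $\mathcal S$ acts there as scalars, hence preserves $E_0\oplus E_1$; since $\mathcal S$ is $g$--self--adjoint (Hessian symmetry $\nabla_a\hv_{\bar c}=\nabla_{\bar c}\hv_a$), it then preserves the orthogonal complement $E_\rho$. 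Alternatively, cite Proposition~\ref{source_of_things_awesome} directly. Either way the step is sound; it is just worth making explicit.
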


\begin{proof} 
Fix a regular point $x_0\in M$, an open neighbourhood $U$ of $\phi_t(x_0)$ 
in the set of regular points, and a frame of $TU$ with respect to
which $g$ corresponds to the identity and $D$ is of the form
\eqref{possible_form_of_D}.  Since $\hv^\alpha$ is an eigenvector of
$D_{\alpha}{}^\beta$ with eigenvalue $\rho$, we can furthermore assume without
loss of generality that $\hv^\alpha$ is proportional to the first vector of
our fixed local frame.  We restrict our considerations from now on to $U$.

Differentiating the equation $D_{\alpha}{}^\beta\hv^\alpha=\rho\hv^\beta$,
and substituting \eqref{mobility_equation1} and $\nabla_\alpha
\rho=-\hv_\alpha$, we obtain
\begin{equation}\label{differentiation_of_eigenvalue_equation}
(\rho\delta_{\gamma}{}^\beta-D_\gamma{}^\beta)\nabla_{\alpha}\hv_{\beta}
=-\tfrac{1}{2}(g_{\alpha\gamma}\hv_\beta\hv^\beta-\hv_\alpha\hv_\gamma
-J_{\alpha}{}^{\beta}\hv_\beta J_{\gamma}{}^\epsilon\hv_\epsilon).
\end{equation}
Since $D_{\alpha}{}^\beta$ commutes with $\nabla_{\alpha}\hv^\beta$ by
Proposition \ref{source_of_things_awesome}, the block diagonal form of
$D_{\alpha}{}^\beta$ implies that $\nabla_{\alpha}\hv^\beta$ has the same
block diagonal form. Equation \eqref{differentiation_of_eigenvalue_equation}
therefore shows that the second and the third block of
$\nabla_{\alpha}\hv^\alpha$ are proportional to the identity with coefficient
of proportionality $-\frac{\hv_\beta\hv^\beta}{2(\rho-1)}$ and
$-\frac{\hv_\beta\hv^\beta}{2\rho}$ respectively. The condition
\eqref{second_line_satisfied} therefore reduces to the following three
equation
\begin{equation}\label{equivalent_conditions_1}
(\nabla_{\alpha}\hv_\beta) \hv^\beta=(-\mu+2B\rho)\hv_\alpha
\quad\hv_\beta\hv^\beta=2(\mu-2B)(\rho-1) \quad\hv_\beta\hv^\beta=2\rho\mu,
\end{equation}
which can be equivalently rewritten as 
\begin{equation}\label{equivalent_conditions_2}
(\nabla_{\alpha}\hv_\beta) \hv^\beta=2B(2\rho-1)\hv_\alpha
\quad \hv_\alpha\hv^\alpha=-4B(\rho-1)\rho\quad \mu=-2B(\rho-1).
\end{equation}
First note that, 
\begin{equation} \label{deri} 
 \hv_\alpha V^\alpha= \tfrac{d}{dt}_{|t=0} \phi_t^*\hp =  
-\tfrac{d}{dt}_{|t=0} \frac{\rho e^{at}}{\rho e^{at} + (1-\rho)e^{b t}} =
 (b-a) \rho(1- \rho),
\end{equation}
which is nowhere vanishing on $U$. It follows that
\begin{equation} \label{pullback_Lambda_V} 
\phi_t^*(\hv_\alpha V^\alpha)
=\frac{(b-a)\rho(1-\rho)e^{at}e^{b t}}{(\rho e^{at} + (1-\rho) e^{b t})^2}.
\end{equation} 
Let us write $P$ for the orthogonal projection of $TU$ to the line subbundle
spanned by $\hv^\alpha$. We have already noticed that $V$ preserves this
projection, which implies in particular that $\cL_V P(V)=0$. Note also that by
\eqref{deri} the vector field $P(V)$ is nowhere vanishing on $U$. Hence, we
deduce from \eqref{B_t} that
\begin{equation}\label{pullback_of_norm_of _P(V)}
\phi^*_t(g(P(V),P(V)))= g(G_tP(V), P(V))
=\frac{ g(P(V),P(V))} { (\rho e^{at} + (1- \rho)e^{b t})^2 e^{(am + b \tilde m)t}}.
\end{equation}  
Since by assumption $(m+1)a+(\tilde m+1)b=0$, equations
\eqref{pullback_Lambda_V} and \eqref{pullback_of_norm_of _P(V)} show that the
function
\begin{equation}\label{constant_function}
t\mapsto\phi^*_t\left(\frac{g(\hv, V)}{g(P(V),P(V))}\right)
=\frac{(b-a)\rho(1-\rho)}{g(P(V),P(V))}
\end{equation}
is constant. Since $g(\hv,V)=g(\hv, P(V))$, we therefore obtain
\[
\phi_t^*(\hv)=\frac{\rho(1-\rho)(b-a)}{g(P(V), P(V))} \phi_t^*(P(V)),
\]
and hence \eqref{pullback_Lambda_V} implies 
\begin{equation} \label{deri1}
\phi_t^*(\hv_\alpha\hv^\alpha)=\frac{(b-a)^2\rho^2(1-\rho)^2e^{at}e^{bt}}
{(\rho e^{at} + (1-\rho) e^{bt})^2 g(P(V), P(V))}.
\end{equation}
Differentiating identity \eqref{deri1} gives 
\begin{equation}\label{length} (\nabla_\alpha\hv_\beta \hv^\beta) V^\alpha
= \tfrac{d}{ dt }_{|t=0} \Phi_t^*(\hv_\beta\hv^\beta)
= \frac{(b-a)^3\rho^2(1-\rho)^2(2\rho-1) }{g(P(V), P(V))}.
\end{equation} 
Since $(\nabla_{\alpha}\hv_\beta)\hv^\alpha=\frac{1}{2}\nabla_\alpha
(\hv_\beta\hv^\beta)$, we can rewrite the first condition of
\eqref{equivalent_conditions_2} as
\begin{equation*}
\nabla_\alpha (\hv_\beta\hv^\beta)= 4B(2\rho-1)\hv_\alpha,
\end{equation*}
and we conclude from \eqref{length} and \eqref{deri} that contracting the
above identity with $V^\alpha$ yields
\begin{equation}\label{formula_for_B}
B=\frac{1}{4} \frac{(b-a)^2 \rho(1-\rho)}{g(P(V), P(V))}.
\end{equation}  
By \eqref{deri1} the second condition of \eqref{equivalent_conditions_2} reads
\begin{equation}
\frac{(b-a)^2\rho^2(1-\rho)^2}{g(P(V), P(V))}=4B(1-\rho)\rho,
\end{equation}
which is of course equivalent to \eqref{formula_for_B}. Since the third
condition of \eqref{equivalent_conditions_2}, given by $\mu=-2B(\rho-1)$,
simply defines $\mu$ in terms of $\rho$ and $B$, we conclude that there exist
functions $\mu$ and $B$ on $U$ such that \eqref{second_line_satisfied} is
satisfied. Note that by \eqref{formula_for_B} the function $B$ is also
positive as required. It remains to show that $B$ is constant (in a
sufficiently small neighbourhood of $x_0$), which implies in particular that
$\mu=-2B(\rho-1)$ is smooth and that \eqref{third_line_satisfied} is
satisfied.

The formula \eqref{formula_for_B} for $B$ shows that $B$ is proportional with
a constant coefficient to \eqref{constant_function}. Hence, we have
$\nabla_VB=0$. To show that the derivative of $B$ also vanishes along vector
field transversal to $V$ consider the $2n-1$ dimensional submanifold of $U$
given by the level set of $\rho$
\begin{equation}
M_{y}:=\{x\in U: \rho(x)=\rho(y)\},
\end{equation}
where $y$ is some arbitrary point in $U$.  Since the derivative of $\rho$ is
nontrivial along the c-projective vector field $V$ by \eqref{deri}, $V$ is
transversal to $M_{y}$. We claim that the derivative of $B$ at $y$ vanishes
along all vectors in $T_yM_y$. Indeed, note that in view of
\eqref{formula_for_B} this is equivalent to the vanishing of the derivative of
$g(P(V), P(V))$ at $y$ along tangent vectors of $M_{y}$. Since $\hv$ and
$P(V)$ are proportional to each other by definition, we have
\begin{equation*}
g(P(V), P(V))=\frac{(g(P(V), \hv))^2}{g(\hv,\hv)}.
\end{equation*}
It is thus sufficient to show that at $y$ the derivative of $g(P(V), \hv)$,
respectively $g(\hv,\hv)$, vanishes along tangent vectors of $M_{y}$.  By
\eqref{deri} this follows immediately for the derivative of $g(P(V),
\hv)=g(V, \hv)$. Now consider $g(\hv,\hv)$ and let $W\in T_yM_{y}$.  Then
we compute
\begin{align*}
W^\alpha \nabla_\alpha(\hv_\beta\hv^\beta)
&=2W^\alpha(\nabla_{\alpha}\hv_\beta) \hv^\beta\\
&=-2\mu W^\alpha \hv_\alpha+4BW^\alpha D_{\alpha\beta} \hv^\beta
=-2\mu W^\alpha \hv_\alpha+4B\rho W^\alpha \hv_\alpha.
\end{align*}
Since $\hv_\alpha=-\nabla_\alpha \rho$, we see that $W^\alpha \hv_\alpha$
vanishes and consequently so does $W^\alpha
\nabla_\alpha(\hv_\beta\hv^\beta)$. Hence, the derivative of $B$ vanishes at
$y\in U$. Since $y\in U$ was an arbitrary point, we conclude that $\nabla B$
is identically zero on $U$, which completes the proof.
\end{proof}

We can now complete the proof of Theorem~\ref{thm:obata}. Under the assumption 
that \eqref{equality} holds, Lemma \ref{the_very_last_Lemma} shows that in an 
open neighbourhood of any
regular point there exists a positive constant $B$ (which a priori may depend
on the neighbourhood) and a function $\mu$ such that the triple $(D, \hv,
\mu)$ satisfies, in addition to the mobility equation, the equations
\eqref{second_line_satisfied} and \eqref{third_line_satisfied}. Since the set
of regular points is open and dense, Theorem~\ref{Lambda_in_nullity} implies
that $B$ is actually the same constant at all regular points and that the
equations \eqref{second_line_satisfied} and \eqref{third_line_satisfied} hold
on $M$ for some smooth function $\mu$.  Theorem~\ref{Lambda_in_nullity} also
implies that the function $\hp=-\frac{1}{2}D_{\alpha}{}^{\alpha}$ satisfies
the equivalent conditions of Proposition~\ref{Tanno_and_nullity} on $M$ for a 
positive constant $B$. Since $\hv_\alpha=\nabla_\alpha\hp$ is not identically
zero, Tanno's result~\cite{Tanno} completes the proof. 

\section{Outlook}\label{s:outlook}

There has been considerable activity in c-projective geometry since we began
work on this article in 2013. Despite this, there remain many open questions
and opportunity for further work. In this final section, we survey some of the
developments and opportunities which we have not discussed already in the
article.

\subsection{Metrisability and symmetry}

One of the main focuses of this article has been metrisable c-projective
structures and their mobility. However, in later sections, we restricted
attention to integrable complex structures (the torsion-free case). It would
be interesting to extend more of the theory to non-integrable structures with
a view to applications in quasi-K\"ahler geometry, including $4$-dimensional
almost K\"ahler geometry---here some partial results have been obtained
in~\cite{ACG0}.

Even in the integrable case, however, a basic question remains wide open:
when is a c-projective structure metrisable? One would like to provide a
complete c-projectively invariant obstruction, analogous to the obstruction
found for the $2$-dimensional real projective case in~\cite{BDE}.

Additional questions concern the symmetry algebra
$\mathfrak{cproj}(J,[\nabla])$ of infinitesimal automorphisms of an almost
c-projective $2n$-manifold $(M,J,[\nabla])$.  As with any parabolic
geometry~\cite{InfautCap,csbook}, the prolongation of the infinitesimal
automorphism equation (see Section~\ref{infinitesimal_automorphisms},
Proposition~\ref{Infautconn2}) shows that $\mathfrak{cproj}(J,[\nabla])$ is
finite dimensional, with its dimension bounded above---in this case, by
$2(n+1)^2-2$. This bound is attained only in the c-projectively flat case,
and, as shown in~\cite{KMT}, in the non-flat case, the dimension of
$\mathfrak{cproj}(J,[\nabla])$ is at most $2n^2 - 2n + 4 + 2\delta_{3,n}$
(the so-called ``submaximal dimension''). The determination of the possible
dimensions of $\mathfrak{cproj}(J,[\nabla])$ remains an open question.

The symmetry algebra $\mathfrak{cproj}(J,[\nabla])$ acts (by Lie derivative)
on the space of solutions to the metrisability equation, and for any
nondegenerate solution, corresponding to a compatible metric $g$, it has
subalgebras $\mathfrak{isom}(J,g)\subseteq\mathfrak{aff}(J,g)$ of holomorphic
Killing fields and infinitesimal complex affine transformations. Thus the
presence of compatible metrics constrains $\mathfrak{cproj}(J,[\nabla])$
further, the Yano--Obata theorems being global examples of this.  Even
locally, if $(M,J,g)$ admits an \emph{essential} c-projective vector field,
i.e., an element
$X\in\mathfrak{cproj}(J,[\nabla])\setminus\mathfrak{isom}(J,g)$, then $g$
must have mobility $\geq 2$, and if $X\notin\mathfrak{aff}(J,g)$, then there
are metrics c-projectively, but not affinely, equivalent to $g$. For example,
nontrivial c-projective vector fields with higher order zeros are essential
(because a Killing vector field is determined locally uniquely by its $1$-jet
at a point), and such \emph{strongly essential local flows} exist only on
c-projectively flat geometries~\cite{melnick}.

Constraints on the possible dimensions of $\mathfrak{cproj}(J,[\nabla])
/\mathfrak{isom}(J,g)$ for K\"ahler manifolds are given in~\cite{MatRos}, and
an explicit classification of $4$-dimensional \bps/K\"ahler metrics admitting
essential c-projective vector fields is given in~\cite{BMMR}.

An open question here is whether locally nonlinearizable c-projective vector
fields exist on nonflat c-projective geometries.

\subsection{Applications in K\"ahler geometry}

The original motivation for c-projective geometry~\cite{OtsTash} was to extend
methods of projective geometry to K\"ahler metrics (and this is one reason why
we have concentrated so much on the metrisability equation). Thus one expects
ideas from c-projective geometry to be useful in K\"ahler geometry, and indeed
important concepts in K\"ahler geometry have c-projective origins: for
instance, Hamiltonians for Killing vector fields form the kernel of the
c-projective Hessian.

Apostolov et al.~\cite{ACG,ApostolovII,ApostolovIII,ApostolovIV} use
c-projectively equivalent metrics (in the guise of Hamiltonian $2$-forms) to
study \emph{extremal K\"ahler metrics}, where the scalar curvature of a
K\"ahler metric lies in the kernel of its c-projective Hessian, and it would
be natural to consider extremal quasi-K\"ahler metrics in the same light.
K\"ahler--Ricci solitons (and generalisations) admitting c-projectively
equivalent metrics have also been studied in special
cases~\cite{ApostolovIV,L-TF,M-TF}, but the picture is far from complete.

A more recent development is the work of \v Cap and Gover~\cite{CG3}, which
extends previous work on projective compactification of Einstein
metrics~\cite{CG2} to K\"ahler (and quasi-K\"ahler) metrics using c-projective
geometry.

Let us now touch on prospective applications in the theory of finite
dimensional integrable systems.  As we have seen in
Section~\ref{sec:integrability}, the Killing equations for Hermitian
symmetric Killing tensors are c-projectively invariant. This suggests to
study these equations from a c-projective viewpoint. We expect that in this
way one may find interesting new examples of integrable systems, in
particular on closed K\"ahler or Hermitian manifolds (note that only few such
examples are known).  Moreover, the analogy between metric projective and
c-projective geometries suggests that ideas and constructions from the theory
of integrable geodesic flows on $n$-dimensional Riemannian manifolds could be
used in the construction and description of integrable geodesics flows on
$2n$-dimensional K\"ahler manifolds.  For Killing tensors of valence two this
approach is very close to the one in~\cite{Kiyo1997}, and we expect similar
applications for Killing tensors of higher valence.

\subsection{Projective parabolic geometries}

We noted in the introduction that there are many analogies between methods and
results in projective and c-projective geometry, and we have followed the
literature in exploiting this observation. We have already noted a partial
explanation for the similarities: both are Cartan geometries modelled on flag
varieties $G/P$, one a complex version of the other. However, the c-projective
metrisability equation for compatible \bps/K\"ahler metrics is not the
complexification of the corresponding projective metrisability equation.
Instead, both are first BGG operators for a $G$-representation with a
$1$-dimensional $P$-subrepresentation. These representations determine
projective embeddings of the model $G/P$, namely, the Veronese embedding of
$\RP^n$ as rank one symmetric matrices in the projective space of
$S^2\R^{n+1}$, and the analogous projective embedding of $\CP^n$ using rank
one Hermitian matrices.

Symmetric and Hermitian matrices are examples of Jordan algebras, and this
relation with projective geometry is well known (see e.g.~\cite{Bertram}),
which suggests to define a \emph{projective parabolic geometry} as one in
which the model has a projective embedding into a suitable Jordan algebra.
Apart from projective and c-projective geometry, the examples include
quaternionic geometry, conformal geometry, and the geometry associated to the
Cayley plane over the octonions. In his PhD thesis~\cite{Frost}, G. Frost has
shown the much of the metrisability theory of these geometries can be
developed in a unified framework. Further, in addition to being analogous,
projective parabolic geometries are closely interrelated. For instance,
S. Armstrong~\cite{Arm} uses cone constructions to realise quaternionic and
c-projective geometry as holonomy reductions of projective Cartan connections,
while the \emph{generalised Feix--Kaledin construction}~\cite{BC} shows how to
build quaternionic structures from c-projective structures, modelled on the
standard embedding of $\CP^n$ in $\mathbb{HP}^n$.

\end{document}